\documentclass[10pt]{article}
\usepackage{amsmath}
\usepackage{amsthm}
\usepackage{amssymb}
\usepackage{xypic}
\usepackage{tikz}
\usepackage[margin=1in]{geometry}

\makeatletter
\renewcommand*\l@section[2]{%
   \ifnum \c@tocdepth >\m@ne
     \addpenalty{-\@highpenalty}%
     \vskip 1.0em \@plus\p@
     \setlength\@tempdima{1.5em}%
     \begingroup
       \parindent \z@ \rightskip \@pnumwidth
       \parfillskip -\@pnumwidth
       \leavevmode \bfseries
       \advance\leftskip\@tempdima
       \hskip -\leftskip
       #1\nobreak\leaders\hbox{\normalfont$\m@th
         \mkern \@dotsep mu\hbox{.}\mkern \@dotsep
         mu$}\hfill \nobreak\hb@xt@\@pnumwidth{\hss #2}\par
       \penalty\@highpenalty
     \endgroup
   \fi}
\makeatother

\linespread{1.2}

\newtheorem{theorem}{Theorem}[section]
\newtheorem{cor}[theorem]{Corollary}
\newtheorem{prop}[theorem]{Proposition}
\newtheorem{lemma}[theorem]{Lemma}

\theoremstyle{definition}
\newtheorem{definition}[theorem]{Definition}

\theoremstyle{definition}

\theoremstyle{remark}

\theoremstyle{definition}
\newtheorem{example}[theorem]{Example}

\theoremstyle{remark}
\newtheorem{remark}[theorem]{Remark}

\theoremstyle{remark}

\theoremstyle{definition}
\newtheorem{property}[theorem]{Property}

\theoremstyle{definition}


\renewcommand{\span}{\textrm{span}\hspace{2pt}}
\newcommand{\R}{\mathbb{R}}
\newcommand{\C}{\mathbb{C}}
\newcommand{\Z}{\mathbb{Z}}

\renewcommand{\hom}{\text{Hom}}
\newcommand{\fg}{\frak{g}}
\newcommand{\pt}{\displaystyle\frac{\partial}{\partial t}}

\begin{document}

\begin{titlepage}

\begin{tikzpicture}[remember picture,overlay]
\node [yshift=-2in] at (current page.north)
[text width = \textwidth, anchor=base, text centered]
{
\large A CLASSIFICATION OF TORIC, FOLDED-SYMPLECTIC MANIFOLDS
};
\node [yshift=-3.6in] at (current page.north)
[text width=.1\textwidth, anchor=base, text centered]
{
\large BY
};
\node [yshift=-4in] at (current page.north)
[text width = 0.7\textwidth, anchor=base, text centered]
{
\large DANIEL HOCKENSMITH
};
\node [yshift=-5.72in] at (current page.north)
[text width = 0.7\textwidth, anchor=base, text centered]
{
\large DISSERTATION
};
\node [yshift=-6in] at (current page.north)
[text width = 0.7\textwidth, anchor=base, text centered]
{
\normalsize Submitted in partial fulfillment of the requirements \\ for the degree of Doctor of Philosophy in Mathematics \\
in the Graduate College of the \\
University of Illinois at Urbana-Champaign, 2015
};

\node [yshift=-7.875in] at (current page.north)
[text width = 0.7\textwidth, anchor=base, text centered]
{
\normalsize Urbana, Illinois
};

\node [yshift=-8.375in] at (current page.north)
[text width = 0.7\textwidth, anchor=base]
{
\normalsize Doctoral Committee: \\ \mbox{ } \\
\hspace{.5in} Professor Ely Kerman, Chair \\
\hspace{.5in} Professor Eugene Lerman, Director of Research \\
\hspace{.5in} Professor Susan Tolman \\
\hspace{.5in} Doctor Jordan Watts
};
\end{tikzpicture}

\end{titlepage}

\pagebreak

\pagenumbering{roman}
\setcounter{page}{2}

\begin{center}
\Large{\textbf{ABSTRACT}}
\end{center}

Given a $G$-toric, folded-symplectic manifold with co-orientable folding hypersurface, we show that its orbit space is naturally a manifold with corners $W$ equipped with a smooth map $\psi: W \to \frak{g}^*$, where $\frak{g}^*$ is the dual of the Lie algebra of the torus, $G$.  The map $\psi$ has fold singularities at points in the image of the folding hypersurface under the quotient map and it is a unimodular local embedding away from these points.  Thus, to every $G$-toric, folded-symplectic manifold we can associate its orbit space data $\psi:W \to \fg^*$, a unimodular map with folds.  We fix a unimodular map with folds $\psi:W \to \fg^*$ and show that isomorphism classes of $G$-toric, folded-symplectic manifolds whose orbit space data is $\psi:W \to \fg^*$ are in bijection with $H^2(W; \mathbb{Z}_G\times \R)$, where $\mathbb{Z}_G= \ker(\exp:\frak{g} \to G)$ is the integral lattice of $G$.  Thus, there is a pair of characteristic classes associated to every $G$-toric, folded-symplectic manifold.  This result generalizes a classical theorem of Delzant as well as the classification of toric, origami manifolds, due to Cannas da Silva, Guillemin, and Pires, in the case where the folding hypersurface is co-orientable.

We spend a significant amount of time discussing the fundamentals of equivariant and non-equivariant folded-symplectic geometry.  In particular, we characterize folded-symplectic forms in terms of their induced map from the sheaf of vector fields into a distinguished sheaf of one-forms, we relate the existence of an orientation on the folding hypersurface of a fold-form to the intrinsic derivative of the contraction mapping from the tangent bundle to the cotangent bundle, and we show that $G$-toric, folded-symplectic manifolds are stratified by $K$-toric, folded-symplectic submanifolds, where $K$ varies over the subtori of $G$ and the action is principal on each stratum.  We show how these structures give rise to the rigid orbit space structure of a toric, folded-symplectic manifold used in the classification.  We also give a robust description of folded-symplectic reduction, which we use to construct local models of toric, folded-symplectic manifolds.

\pagebreak

\begin{center}
\Large{\textbf{ACKNOWLEDGEMENTS}}
\end{center}

This thesis is a culmination of my own work as well as the patience, encouragement, and thoughts of those around me.  I cannot thank Ely Kerman enough for introducing me to the world of differential geometry and Morse theory.  Furthermore, he has always been willing to lend his support, for which I am grateful.  Many thanks go to my colleagues who have been an inspiration through their superb intellect, wit, and curiosity.  In particular, thanks to Brian Collier for an unending willingness to discuss geometry and for offering the question: what is a Morse section of a line bundle?  Thanks to Daan Michiels for offering his perplexing question after a seminar on Morse theory: what are the other functions?  I have found some amusement in the answer that they are simply different types of Morse functions.  Thank you to Seth Wolbert and Sean Shahkarami for many discussions about differential geometry and for listening to some of my vague ramblings.

Many thanks go to the department of mathematics for fostering an environment of success, integrity, and interest.  My needs were always met, my questions always answered, and there was never an end to the wide variety of seminars available: it was truly a collegiate environment.  Thank you to the symplectic and differential geometers at UIUC.  In particular, thanks to Sue Tolman for serving on my committee, thanks to Jordan Watts for his diligence and insight, and thanks to Rui Fernandes for offering helpful comments regarding folded-symplectic reduction.  A special thanks to Jordan for organizing and maintaining the symplectic seminar.

Finally, I would like to thank two individuals to whom I am most grateful.  First, thank you to my adviser, Eugene Lerman.  I will be forever grateful to him for sharing his perspective and knowledge in regard to both symplectic geometry and mathematics in general.  His patience throughout my career is to be lauded.  Lastly, thank you to my wife, Anna Jean Wirth, for her ceaseless support, fantastic mind, and her ability to make the dreariest day beautiful.

\vfill

\let\thefootnote\relax\footnotetext{The author acknowledges support from National Science Foundation grant DMS 08-38434 ``EMSW21-MCTP: Research Experience for Graduate Students.''}

\pagebreak

\tableofcontents

\pagebreak

\cleardoublepage
\setcounter{page}{1}
\pagenumbering{arabic}

\section{Introduction}

A folded-symplectic manifold is a $2m$-dimensional manifold $M$ equipped with a closed $2$-form $\sigma$ so that $\sigma^m$ vanishes transversally and $\sigma$ restricted to its degenerate hypersurface $Z$ is maximally non-degenerate.  We call the hypersurface, $Z$, the \emph{folding hypersurface}.  Folded-symplectic forms arise naturally: the connected sum of two symplectic manifolds possesses a folded-symplectic structure \cite{CGW}, every four manifold possessses a folded-symplectic structure \cite{CdS}, and the pullback of a symplectic form by a map with fold singularities is a folded-symplectic form.  A toric, folded-symplectic manifold is a (connected) folded-symplectic manifold $(M^{2m},\sigma)$ equipped with an effective, Hamiltonian action of a torus of dimension $m$.  These completely integrable systems are generalizations of toric, symplectic manifolds, where we have simply inserted hypersurfaces on which the $2$-form governing the dynamics is allowed to degenerate.

The study of toric folded-symplectic manifolds is a union between two seemingly opposed mathematical viewpoints: toric symplectic geometry, where degeneracies are few and far between, and singularity theory, where one allows for smooth functions to degenerate in a controlled manner.  From the latter point of view, one could say folded-symplectic geometry began in 1969 with Martinet's study of generic singularities of differential forms \cite{M}.  Indeed, on a general four-manifold, the author shows that a two form has a singularity of type $\Sigma_{2,0}$ if it can be written in coordinates as $xdx\wedge dy + dz\wedge dt$, which is the canonical example of a folded-symplectic form.  Incidentally, one observes that $xdx\wedge dy + dz\wedge dt$ is the pullback of the standard symplectic structure $\omega_{\R^4}$ on $\R^4$ by the fold map $\psi(x,y,z,t)=(\frac{x^2}{2},y,z,t)$, which validates the name \emph{folded-symplectic}.

From the symplectic point of view, Cannas da Silva, Guillemin, and Woodward began a series of studies in folded-symplectic geometry in 1999-2000 focused on relating the existence of \emph{spin}-c structures to the existence of folded-symplectic forms (on connected sums of symplectic manifolds).  In \cite{CGW}, they describe an unfolding operation in which one may decompose a folded-symplectic manifold into disjoint symplectic pieces, provided the intersection $\ker(\sigma)\cap TZ$ defines a $1$-dimensional foliation of the folding hypersurface by circles.  This assumption that the null foliation induces a circle fibration permeates much of the literature on folded-symplectic manifolds.  In \cite{CGP}, Cannas da Silva, Guillemin, and Pires study \emph{toric origami manifolds}, which are compact toric folded-symplectic manifolds where the null foliation on the fold is generated by a locally free circle action, so that the folding hypersurface $Z$ fibrates over a compact base $B$.  They show that there is a one-to-one correspondence between toric origami manifolds and \emph{origami templates}, which are collections of unimodular polytopes in the dual of the Lie algebra of the torus, $\fg^*$.  This is a generalization of Delzant's classical theorem \cite{De} which states that there is a one-to-one correspondence between compact toric symplectic manifolds and unimodular polytopes in $\fg^*$.

The primary goal of this thesis is to extend these classification results to non-compact, toric, folded-symplectic manifolds where there is no assumption on the null-foliation on the fold.  Some work has already been done in this area: in \cite{LeeC} Lee gives a sufficient condition for the existence of isomorphisms between toric, folded-symplectic four manifolds and in \cite{KL} Karshon-Lerman classify all \emph{symplectic} toric manifolds.  In our classification, we make a mild assumption that the folding hypersurface is co-orientable.  As we show in chapter $4$, every folding hypersurface $Z$ with an Hamiltonian action of a Lie group $G$ may be equivariantly embedded into a folded-symplectic, Hamiltonian $G$-manifold $(M,\sigma)$ as a co-orientable folding hypersurface (q.v. corollary \ref{cor:preserves}), so this assumption isn't unfounded.  Furthermore, one can show that if the folding hypersurface is \emph{not} co-orientable, then the ambient folded-symplectic manifold $M$ cannot be orientable, in which case one need only pass to the orientable double cover $\tilde{M}$ of $M$ to be back in a situation where the folding hypersurface is co-orientable.  The primary reason we make this assumption about co-orientability is to avoid cases where the action of the torus $G$ on the folding hypersurface is \emph{not} effective.  We follow the approach of \cite{KL} in our classification of non-compact toric symplectic manifolds.  We first prove:

\begin{theorem}
Let $(M,\sigma,\mu:M\to \fg^*)$ be a toric, folded-symplectic manifold with moment map $\mu:M \to \fg^*$, where $\fg$ is the Lie algebra of the torus $G$ acting on $M$.  Assume the fold $Z\subseteq M$ is co-orientable.  Then,
\begin{enumerate}
\item $M/G$ is naturally a manifold with corners and
\item the moment map $\mu$ descends to a smooth map $\bar{\mu}:M/G \to \fg^*$, which is a unimodular map with folds (q.v. definition \ref{def:umf}).
\end{enumerate}
\end{theorem}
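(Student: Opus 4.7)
The plan is to reduce the theorem to a local statement about $G$-orbits: since $G$ is abelian, $\mu$ is $G$-invariant, so $\bar\mu : M/G \to \fg^*$ exists as a continuous map without any work, and what must be established is that around each orbit there is a manifold-with-corners chart on $M/G$ in which $\bar\mu$ is smooth and has either a unimodular local embedding or a fold singularity in the sense of Definition~\ref{def:umf}. These charts will be extracted from the local normal form theorems for toric folded-symplectic actions developed in earlier chapters, together with the stratification of $M$ by $K$-toric folded-symplectic submanifolds.

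For $p \in M \setminus Z$, the form $\sigma$ is symplectic on a $G$-invariant neighborhood $U$ of $G\cdot p$. Let $K \subseteq G$ be the stabilizer of $p$ and set $k = \dim K$. The equivariant symplectic slice theorem, combined with the Delzant-type observation that at a fixed point of a toric action the weights of the isotropy representation form a $\Z$-basis of the integral lattice of $K$, yields a $G$-equivariant symplectomorphism of $U$ onto an open neighborhood of the zero section in $G \times_K (\mathfrak{k}^\circ \oplus \C^k)$, where $K$ acts on $\C^k$ diagonally by the standard weights. Quotienting by $G$ identifies $U/G$ with an open subset of $\mathfrak{k}^\circ \times \R_{\geq 0}^k$, which is a manifold-with-corners chart in which $\bar\mu$ is a unimodular open embedding onto a relatively open subset of a unimodular cone in $\fg^*$.

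For $p \in Z$, co-orientability supplies a global, $G$-invariant transverse coordinate $t$ on a $G$-invariant neighborhood of $G\cdot p$; coupled with the folded-symplectic local model and reduction theorems proved earlier in the thesis, this produces a $G$-equivariant diffeomorphism of such a neighborhood onto a model $\bigl(G \times_K (\mathfrak{k}^\circ \oplus \C^k)\bigr) \times (-\varepsilon,\varepsilon)_t$ carrying a folded-symplectic form whose moment map is the classical toric symplectic moment map on the first factor together with one additional component equal to $t^2/2$. Descending to the quotient gives a chart $U/G \cong \R_{\geq 0}^k \times \R^{m-k-1} \times (-\varepsilon,\varepsilon)$ in which $\bar\mu$ is the composition of a unimodular open embedding with the one-variable fold $t \mapsto t^2/2$ in the last coordinate. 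This is exactly smooth, unimodular off the image of $Z$, and possesses a fold along the image of $Z$, matching Definition~\ref{def:umf}.

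To finish, I have to assemble these local charts into a global atlas. Overlap compatibility follows because any two local models at the same orbit are related by a $G$-equivariant diffeomorphism covering the identity of $\fg^*$, so the induced transition maps on $M/G$ are smooth maps of manifolds with corners; Hausdorffness and second countability of $M/G$ descend from $M$ because $G$ is compact. The principal technical obstacle is the construction of the local model at an orbit lying in $Z$ in the third paragraph: this requires the equivariant fold-Darboux theorem together with folded-symplectic reduction, and it is precisely here that the co-orientability hypothesis is indispensable, both for producing the global transverse coordinate $t$ and for ensuring that the reduced symplectic halves carry an effective toric $G$-action so that the classical Delzant local form applies on each side of the fold.
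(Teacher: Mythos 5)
Your outline follows essentially the same route as the paper: away from $Z$, the slice theorem together with the Guillemin--Sternberg normal form for isotropic toric orbits identifies $U/G$ with an open subset of $\mathfrak{k}^{o}\times(\R^+)^k$ on which $\bar\mu$ is a unimodular open embedding; at the fold, the equivariant normal form for a co-orientable folding hypersurface writes $\sigma=\psi^*\omega$ for an invariant symplectic form $\omega$ on $Z\times(-\varepsilon,\varepsilon)$ and the equivariant fold $\psi(z,t)=(z,t^2)$, so that $\bar\mu$ factors as a unimodular local embedding composed with the one-variable fold in $t$. This is exactly the factorization used in the paper, and your verification that the resulting map satisfies the definition of a unimodular map with folds (including tangency of $\ker(d\bar\mu)$ to the strata, which is read off the model) is sound.

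The genuine gap is in the atlas-assembly step. You write that any two local models at the same orbit are related by a $G$-equivariant diffeomorphism, ``so the induced transition maps on $M/G$ are smooth maps of manifolds with corners.'' An equivariant diffeomorphism upstairs only induces a \emph{homeomorphism} of the quotients a priori; smoothness of the transition maps in the corner charts is not automatic. The missing ingredient is the statement that the chart map $\C^k\to(\R^+)^k$, $z\mapsto(\vert z_1\vert^2,\dots,\vert z_k\vert^2)$, identifies $C^{\infty}((\R^+)^k)$ with the ring of invariant smooth functions $C^{\infty}(\C^k)^{\mathbb{T}^k}$ (Schwarz's theorem). This is what gives each chart $\phi:U\to V$ the property $\phi^*C^{\infty}(V)=C^{\infty}(\pi^{-1}(U))^G$, from which it follows that transition maps pull back smooth functions to smooth functions and are therefore diffeomorphisms of manifolds with corners. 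Without this (or an equivalent invariant-theory input) your charts endow $M/G$ only with a topological structure, and the ``naturally a manifold with corners'' claim does not close.
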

We then fix a manifold with corners $W$ and a unimodular map with folds $\psi:W \to \fg^*$ and we define a category $\mathcal{M}_{\psi}(W)$ (q.v. definition \ref{def:empsi} or below).

\begin{definition}
Let $W$ be a manifold with corners and let $\psi:W\to \fg^*$ be a unimodular map with folds, where $\fg$ is the Lie algebra of a torus $G$.  We define $\mathcal{M}_{\psi}(W)$ to be the category whose objects are triples:
\begin{displaymath}
(M,\sigma, \pi:M \to W)
\end{displaymath}
where $\pi$ is a quotient map and $(M,\sigma, \psi \circ \pi)$ is a toric, folded-symplectic manifold with co-orientable folding hypersurface, where the torus is $G$, with moment map $\psi \circ \pi$.  We refer to an object as a \emph{toric, folded-symplectic manifold over $\psi$}.  A morphism between two objects $(M_i,\sigma_i,\pi_i:M \to W)$, $i=1,2$, is an equivariant diffeomorphism $\phi:M_1 \to M_2$ that induces a commutative diagram:

\begin{displaymath}
\xymatrix{
M_1 \ar[rr]^{\phi} \ar[dr]^{\pi_1} & & M_2 \ar[dl]_{\pi_2} \\
                                   &W \ar[r]^{\psi}& \fg^*
}
\end{displaymath}
and satisfies $\phi^*\sigma_2=\sigma_1$.  That is, $\phi$ is an equivariant folded-symplectomorphism that preserves moment maps.  By definition, every morphism is invertible, hence $\mathcal{M}_{\psi}(W)$ is a groupoid.
\end{definition}

We seek to classify objects in $\mathcal{M}_{\psi}(W)$ up to isomorphism and we prove the following classifcation result.

\begin{theorem}
Isomorphism classes of objects in $\mathcal{M}_{\psi}(W)$ are in bijection with $H^2(W; \mathbb{Z}_G \times \R)$, where $\mathbb{Z}_G= \ker(\exp:\fg \to G)$ is the integral lattice of the torus $G$ that acts on objects of $\mathcal{M}_{\psi}(W)$.
\end{theorem}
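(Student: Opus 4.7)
The plan is to follow the Karshon--Lerman sheaf-theoretic strategy, adapted to the folded-symplectic setting. First, for each point $w \in W$ I would construct an explicit \emph{local model} $(M_U,\sigma_U,\pi_U)\in\mathcal{M}_\psi(U)$ over a sufficiently small neighborhood $U$ of $w$: off the image of the folding hypersurface this is the classical Delzant model produced by symplectic reduction, and near the fold it comes from the folded-symplectic reduction machinery developed in the earlier chapters. Combining the stratification of toric folded-symplectic manifolds by $K$-toric submanifolds with an equivariant Moser argument that extends across $Z$ -- using the characterization of fold-forms via the intrinsic derivative of the contraction map $TM\to T^*M$ -- one obtains a local normal form theorem showing that every object of $\mathcal{M}_\psi(W)$ is, over each $U$, isomorphic to the corresponding local model.

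Once local uniqueness is established, the prescription $U\mapsto\mathcal{M}_\psi(U)$ defines a gerbe on $W$ admitting a global section (obtained by patching local models together via the local isomorphisms constructed above). The gerbe is therefore trivial, and its set of isomorphism classes of global objects is a torsor over $\check{H}^1(W;\mathcal{A})$, where $\mathcal{A}$ is the abelian sheaf of self-automorphisms of the local model. To compute $\mathcal{A}$, I would observe that equivariance forces any automorphism of the local model over $U$ to have the form $x\mapsto x\cdot f(\pi(x))$ for a smooth $G$-valued function $f$, together with an additional one-parameter family of modifications coming from the indeterminacy of a primitive for $\sigma_U$ in the moment-map direction. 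This exhibits $\mathcal{A}$ as the quotient of a soft sheaf $\mathcal{B}\cong C^\infty(-;\,\fg\times\R)$ by the locally constant subsheaf $\underline{\mathbb{Z}_G\times\R}$. The long exact sequence associated to
\begin{equation*}
0\to\underline{\mathbb{Z}_G\times\R}\to\mathcal{B}\to\mathcal{A}\to 0
\end{equation*}
then collapses via $H^{\geq 1}(W;\mathcal{B})=0$ to the sought isomorphism $\check{H}^1(W;\mathcal{A})\cong H^2(W;\mathbb{Z}_G\times\R)$; declaring the basepoint of the torsor to be the family of local models glued by trivial transitions turns this into the claimed bijection.

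The principal obstacle is the behaviour near $Z$. On the fold the form $\sigma$ has a one-dimensional kernel, so a priori extra local automorphisms could flow along the null line field and enlarge $\mathcal{A}$. The toric hypothesis, however, forces this null direction to be tangent to a one-parameter subgroup of $G$, so such flows are already contained in the $G$-action and contribute no new sections of $\mathcal{A}$. The co-orientability of $Z$ assumed in the theorem is crucial here: it trivialises the null line field globally and rules out a $\mathbb{Z}/2$-twist in the transition from one side of $Z$ to the other, ensuring that the local description of $\mathcal{A}$ sheafifies coherently across the fold and that the Moser argument in step one actually closes up.
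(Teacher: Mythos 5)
Your route --- exhibit $\mathcal{M}_{\psi}$ as a gerbe and realize its set of isomorphism classes as a torsor over $\check{H}^1(W;\mathcal{A})$ for the band $\mathcal{A}$ --- is genuinely different from the paper's, which instead introduces the auxiliary category $\mathcal{B}_{\psi}(W)$ of toric folded-symplectic \emph{bundles}, classifies those directly by the pair of Chern classes $c_1\times c_{hor}\in H^2(W;\mathbb{Z}_G\times\R)$ (theorem \ref{thm:bundleclassification}), and then constructs a cutting functor $c:\mathcal{B}_{\psi}(W)\to\mathcal{M}_{\psi}(W)$ and proves it is an equivalence of categories. Your first step (local models via reduction, local uniqueness) is exactly lemma \ref{lem:cut} and lemma \ref{lem:locunique}, so that part is sound. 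The first genuine gap is non-emptiness: local existence plus local uniqueness makes $\mathcal{M}_{\psi}$ a gerbe, but a gerbe need not be neutral. ``Patching local models together via the local isomorphisms'' is precisely the step obstructed by the class of the gerbe in $\check{H}^2(W;\mathcal{A})$, which for your proposed $\mathcal{A}$ is $H^3(W;\mathbb{Z}_G\times\R)$ and has no reason to vanish. Until a global object is exhibited, the torsor may be empty and no bijection follows. The paper supplies the global object explicitly: $\psi^*(T^*G)$ with the pulled-back canonical form is an object of $\mathcal{B}_{\psi}(W)$ (remark \ref{rem:nonempty}), and applying $c$ gives one in $\mathcal{M}_{\psi}(W)$. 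Your argument needs some such construction.

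The second gap is the identification of the band. By theorem \ref{thm:has} an automorphism of the local model covering $\id_U$ has the form $x\mapsto f(\pi(x))\cdot x$, but the requirement that it preserve $\sigma$ is a nontrivial differential constraint: writing $f=\exp F$ and $\sigma=d\langle\psi\circ\pi,A\rangle+\pi^*\beta$, the gauge transformation changes $\sigma$ by $\langle d(\psi\circ\pi)\wedge d(F\circ\pi)\rangle$, so $\mathcal{A}$ is the sheaf of $G$-valued functions whose local lifts satisfy $\langle d\psi\wedge dF\rangle=0$, not the quotient $C^{\infty}(-;\fg\times\R)/\underline{\mathbb{Z}_G\times\R}$ you propose. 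One can still compute $\check{H}^1(\mathcal{A})\cong H^2(W;\mathbb{Z}_G\times\R)$, with the $\R$-summand entering through $F\mapsto\langle d\psi,F\rangle$ mapping onto closed $1$-forms, but the delicate point sits exactly at the fold: surjectivity of that map, and the Moser isotopies it feeds, require the relevant $1$-forms to vanish on $\ker(d\psi)$ along $\hat{Z}$ (equivalently, on $\ker(\sigma)$ along $Z$, cf.\ corollary \ref{cor:Ecotangent2}), which the paper arranges in proposition \ref{prop:map3} by correcting primitives by exact forms. Your closing remark that the null direction is absorbed into the $G$-action addresses the kernel of $\sigma$ on $Z$, not this solvability constraint, so the key analytic step near the fold is still missing.
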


The proof of these two results constitute the bulk of the material presented in chapters $5-8$.  The secondary goal of this thesis, comprising the remaining $3$ chapters, is to provide a foundational framework for the study of folded-symplectic forms.  Given the fact that folded-symplectic forms have recently found their way into fields such as four manifolds \cite{CdS} and Higgs bundles \cite{H}, it is the author's belief that a rigorous study of some of the structures associated to a fold form will be useful.

The organization of the thesis is as follows.  In chapter $2$ we begin with an introduction to jet bundles on manifolds with corners.  Jet bundles are typically only studied on manifolds (without corners).  Michor defines jet bundles for manifolds with corners in \cite{Mi2}, though they are defined using restrictions of jet bundles over $\R^n$ to quadrants.  We offer an equivalent approach to the construction of the first jet bundle which avoids restrictions and excessive choices of coordinates.  We then discuss the various structures associated with the first jet bundle, such as jet fields and connections, and move on to define fold maps for manifolds with corners.  There are two reasons we rigorously develop the notion of a fold singularity for maps between manifolds with corners.  First, the definition of a fold singularity of a map between manifolds with corners doesn't appear in the literature. It is tempting to take the definition of a fold singularity (e.g. in \cite{Ho} or \cite{GG}) and simply replace \emph{manifold} by \emph{manifold with corners}.  However, it is not difficult to construct examples of maps with only fold singularities under this definition whose locus of degeneracies is neither a manifold or a manifold with corners.  The issue is that a fold singularity is defined via a transversal intersection (q.v. definition \ref{def:folds}) and if one uses the traditional notion of transversality in the category of manifolds with corners, then inverse images of submanifolds with corners may fail to be submanifolds with corners (q.v. \cite{Mi2}).  Thus, the second reason we take time to develop the theory is because we must adopt an adequate notion of transversality and show that this notion leads to a theory of fold singularities in the category of manifolds with corners that is consistent with the theory in the category of manifolds.  We conclude the chapter with an exercise in the theory of first jet bundles where we give a generalization of Morse functions to Morse sections of $1$-dimensional fiber bundles.

We begin our study of folded-symplectic manifolds in chapter $3$, giving the definition along with some examples and then constructing a folded version of the cotangent bundle, which may be seen as an attempt to lengthen the list of naturally occurring sources of folded-symplectic forms.  This construction dualizes the construction of the $b$-cotangent bundle found in \cite{GMP} (example $9$).  We then discuss Moser's argument for folded-symplectic manifolds in detail, ultimately arriving at a characterization of folded-symplectic forms in terms of their induced maps on a distinguished pair of sheaves.  We conclude by discussing a non-equivariant normal form for a neighborhood of a co-orientable folding hypersurface, which was proven in the case where the manifold is compact and oriented in \cite{CGW} (the equivariant version may be found in chapter $4$).  To this end, we show that the fold inherits a canonical orientation from the fold form $\sigma$.  This fact is briefly discussed in \cite{M} using a choice of orientation near each point in the fold.  We give an alternate approach to show that the orientation arises from the intrinsic derivative of the contraction mapping $C_{\sigma}:TM \to T^*M$, $C_{\sigma}(X)=i_X\sigma$, from tangent bundle to cotangent bundle.

In chapters $4$ and $5$ we discuss Hamiltonian actions of Lie groups on folded-symplectic manifolds.  Most of the material in chapter $4$ is standard material in symplectic geometry that we will need to prove that the orbit space of a toric, folded-symplectic manifold (with co-orientable folding hypersurface) is a manifold with corners.  The two new results we prove are the following.  First, there is a well-defined symplectic normal bundle to orbit-type strata in an Hamiltonian folded-symplectic manifold.

\begin{prop}
Let $G$ be a compact, connected Lie group and let $(M,\sigma)$ be a folded-symplectic Hamiltonian $G$-manifold with moment map $\mu:M\to \fg^*$, where $\fg$ is the Lie algebra of $G$.  Suppose the folding hypersurface $Z$ is co-orientable.  Let $H\le G$ be a subgroup and suppose $M_H$ is nonempty.  Then there exists a vector bundle $\widetilde{(TM_H)}^{\sigma}\to M_H$ with the following properties:

\begin{enumerate}
\item $\widetilde{(TM_H)}^{\sigma}$ is a subbundle of $TM\big\vert_{M_H}$.
\item The restriction $\widetilde{(TM_H)}^{\sigma}\big\vert_{M\setminus Z}$ to the symplectic portion of $M$ is the vector bundle $T(M_H \setminus Z)^{\sigma}\to (M_H\setminus Z)$.
\item $TM\big\vert_{M_H}$ splits $H$-equivariantly as $TM\big\vert_{M_H} = TM_H \oplus \widetilde{(TM_H)}^{\sigma}$.
\item $\widetilde{(TM_H)}^{\sigma}$ equipped with the restriction of $\sigma$ is a symplectic vector bundle over $M_H$.
\item $\widetilde{(TM_H)}^{\sigma}\big\vert_{Z_H}$ is a subbundle of $TZ_H$.
\end{enumerate}
In other words, the symplectic normal bundle to $M_H\setminus Z$ extends across the fold $Z$ to give us a symplectic normal bundle to $M_H$ and, at points of the intersection $Z_H=M_H\cap Z$, it is tangent to the fold.
\end{prop}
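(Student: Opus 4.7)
The strategy is to define $\widetilde{(TM_H)}^{\sigma}$ as the smooth extension of the ordinary symplectic orthogonal $(TM_H)^{\sigma}$ across the fold, with the extension determined by the $G$-equivariant folded-symplectic normal form near $Z$. On the symplectic part $M_H \setminus Z$ the form $\sigma$ is symplectic and, by the standard theory of Hamiltonian $G$-manifolds developed earlier in this chapter, $M_H \setminus Z$ is a symplectic submanifold, so $(TM_H)^{\sigma}$ is an $H$-equivariant symplectic complement to $T(M_H \setminus Z)$ in $TM|_{M_H \setminus Z}$. Set $\widetilde{(TM_H)}^{\sigma} = (TM_H)^{\sigma}$ there; properties (1)--(4) are immediate on this open set.

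To extend across $Z$, I would apply the equivariant version of the co-orientable fold normal form (cf.\ corollary~\ref{cor:preserves}): on a $G$-invariant tubular neighborhood $U$ of $Z$ there is a $G$-equivariant identification $U \cong Z \times (-\epsilon,\epsilon)$ bringing $\sigma$ to an explicit folded form, and a $G$-equivariant splitting $TU = \pi^{*}TZ \oplus \langle \partial_t \rangle$, where $\pi \colon U \to Z$ is the projection. Since $Z$ is $G$-invariant, $M_H \cap U = Z_H \times (-\epsilon,\epsilon)$ and $TM_H|_U = \pi^{*}TZ_H \oplus \langle \partial_t \rangle$. At $p \in Z_H$ the two-dimensional $\ker \sigma_p$ is spanned by the null line $L_p \subset T_pZ$ and the transverse direction $\partial_t$. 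In the normal-form coordinates, the family $\{(TM_H)^{\sigma}_p : p \in M_H \cap U \setminus Z\}$ can be written out explicitly, and one checks by direct computation---after a Plücker-coordinate rescaling, if needed, to absorb factors of the normal coordinate $t$ that appear in degenerating directions---that this family converges in the Grassmannian bundle $\mathrm{Gr}(\dim M - \dim M_H, TM|_{M_H})$ to a limit subspace at each $p \in Z_H$ that (a) lies inside $T_pZ$, (b) is a complement of $T_pM_H$ in $T_pM$, since it meets $T_pM_H \cap T_pZ = T_pZ_H$ only at the origin, and (c) is symplectic for $\sigma|_p$, since it misses both generators $L_p$ and $\partial_t$ of $\ker \sigma_p$. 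Declaring this limit to be $\widetilde{(TM_H)}^{\sigma}|_{Z_H}$ produces a smooth $H$-equivariant subbundle of $TM|_{M_H}$ satisfying (1)--(5).

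The main obstacle is the explicit normal-form computation: the map $X \mapsto \iota^{*}(\iota_X \sigma) \colon TM|_{M_H} \to T^{*}M_H$ has kernel $(TM_H)^{\sigma}$ but loses rank by $\dim(T_pM_H \cap \ker \sigma_p)$ at $Z_H$, so the pointwise symplectic orthogonal actually jumps in dimension there. What must be shown is that, despite this jump, the Grassmannian section determined by $(TM_H)^{\sigma}$ on $M_H \setminus Z$ extends continuously (and in fact smoothly) to $Z_H$ and that its limit has the three geometric properties listed above. This is a bounded linear-algebra calculation inside the model $Z \times (-\epsilon,\epsilon)$, rendered tractable by co-orientability together with the fact that the rank loss is governed by a single smooth function of the transverse coordinate $t$. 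Once this is checked in the model, the smooth gluing of the extension with $(TM_H)^\sigma$ on $M_H \setminus Z$, and the $H$-equivariance of the whole construction, follow immediately from the equivariance of the normal form and the scalar nature of the rescaling.
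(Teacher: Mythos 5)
Your construction is essentially the paper's, run in a slightly different order: both arguments reduce to the equivariant normal form $\sigma=p^*i^*\sigma+d(t^2p^*\alpha)$ of proposition \ref{prop:eqfsnormal} and both end up identifying the fiber over $Z_H$ with $(T_pM_H)^{\omega}$, where $\omega=p^*i^*\sigma+d(tp^*\alpha)$ is the unfolded symplectic form and $\psi(z,t)=(z,t^2)$ satisfies $\psi^*\omega=\sigma$. The difference is in how the extension is \emph{defined}: you take the Grassmannian limit of $(TM_H)^{\sigma}$ from the symplectic side, whereas the paper defines the fiber at every point intrinsically as the set of vectors in $(T_pM_H)^{\sigma}$ admitting extensions to local sections valued in the distribution $(TM_H)^{\sigma}$, and then characterizes that set at fold points by the condition $i_X(dt\wedge p^*\alpha)\vert_{\ker\sigma_p}=0$. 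The paper's formulation sidesteps the smooth-convergence question (the fibers are spanned by manifestly smooth local sections once the dimension is shown constant), while yours makes the dimension jump of the pointwise orthogonal at $Z_H$ the visible issue. Your deferred computation does go through: writing a smooth frame $e_i=u_i+c_i\partial_t$ for $(TM_H)^{\omega}$ with $c_i(z,0)=0$, one gets $d\psi^{-1}(e_i(z,t^2))=u_i(z,t^2)+\tfrac{t}{2}\tilde c_i(z,t^2)\partial_t$, which is a smooth frame through $t=0$ limiting onto $(T_pM_H)^{\omega}\subset T_pZ$.

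One justification in your sketch is not valid as stated: in item (c) you argue the limit is symplectic for $\sigma_p$ ``since it misses both generators of $\ker\sigma_p$.'' For a proper subspace of a degenerate two-form, meeting the kernel trivially does not imply non-degeneracy of the restriction (a subspace can be $\sigma$-isotropic while intersecting $\ker\sigma_p$ only in $0$, unless its dimension equals the rank of $\sigma_p$, which it does not here since $\dim V=\dim M-\dim M_H$ while $\operatorname{rank}\sigma_p=\dim M-2$). The correct argument, which is the one the paper uses, is that the limit lies in $T_pZ$, where $\sigma_p$ and $\omega_p$ restrict to the same form $i_Z^*\sigma$, and the limit equals $(T_pM_H)^{\omega}$, which is symplectic for $\omega$ because $(M_H,\omega)$ is a symplectic submanifold of the local model. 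With that repair, and the frame computation above, your route gives a complete proof.
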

\noindent Second, we have the penultimate structure theorem for toric, folded-symplectic manifolds.

\begin{theorem}\label{thm:intro}
Let $(M,\sigma,\mu:M\to \fg^*)$ be a toric, folded-symplectic manifold with co-orientable folding hypersurface.  Then,
\begin{enumerate}
\item The orbit type strata $M_H$ are transverse to the folding hypersurface and each $(M_H,i_{M_H}^*\sigma, \mu\vert_{M_H}:M_H \to \frak{h}^o)$ is a toric, folded-symplectic manifold, hence $M$ is stratified by toric, folded-symplectic manifolds.
\item The orbit space $M/G$ is a manifold with corners and the boundary strata of $M/G$ are given by the images of the orbit-type strata $M_H/G$.
\item The moment map descends to $\psi:M/G\to \fg^*$, a unimodular map with folds.  Furthermore, since each $(M_H,i_{M_H}^*\sigma)$ is a toric, folded-symplectic manifold, the restriction of $\psi$ to $M_H/G$ is a map with fold singularities if we view it as a map into $\frak{h}^o$.  Hence $\psi:M/G \to \fg^*$ is a unimodular map with folds that restricts to maps with fold singularities on the boundary strata.
\item The null-foliation on $Z$ may be recovered from $\psi$, along with its orientation induced by $\sigma$ using the intrinsic derivative of $\psi$ and the map $(\cdot)_Z: \pi^*\operatorname{Im}(d\psi)^o \to \ker(\sigma)\cap TZ$ (q.v. lemma \ref{lem:nullfoliation}).
\item The remainder of the bundle $\ker(\sigma)$ can be constructed by choosing lifts of elements of $\ker(d\psi)$.
\item The representation of $H$ on the fibers of $(\widetilde{TM_H})^{\sigma}$ at $Z$ may be read from the orbital moment map.
\item The local structure of the folding hypersurface is determined by the image of $\psi(Z/G)$ (q.v. corollary \ref{cor:foldnorm}).
\end{enumerate}
Thus, the fold, the null foliation, the orientation, the kernel bundle, and the symplectic slice representation may all be recovered from the orbital moment map.  And, one may recover all symplectic invariants away from the fold just by reading the weights of the symplectic slice representation from the orbital moment map.
\end{theorem}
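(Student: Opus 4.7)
The plan is to assemble the seven parts from the structural results developed in earlier chapters, with the symplectic normal bundle proposition as the main workhorse. First, for part $(1)$, I would prove transversality by a direct dimension count: at any point $p \in Z_H = M_H \cap Z$, the splitting $T_pM = T_pM_H \oplus \widetilde{(TM_H)}^{\sigma}_p$ combined with $\widetilde{(TM_H)}^{\sigma}_p \subseteq T_pZ_H \subseteq T_pZ$ forces $T_pM_H \not\subseteq T_pZ$, for otherwise $T_pZ$ would contain all of $T_pM$. Having secured transversality, I would then verify that $(M_H, i_{M_H}^*\sigma)$ is folded-symplectic with folding hypersurface $Z_H$: away from $Z_H$ the pullback is symplectic by the classical Hamiltonian stratification result, and at $Z_H$ the transversality of $M_H$ to $Z$ together with the fact that $\sigma^m$ vanishes transversely at $Z$ and $\sigma\big\vert_Z$ is maximally non-degenerate gives the corresponding transversal vanishing and non-degeneracy for $i_{M_H}^*\sigma$ on $Z_H$. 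The toric condition is inherited from the restriction of the $G$-action to the free quotient action of $G/H$ on $M_H$.

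Next, for parts $(2)$ and $(3)$, I would work locally using an equivariant folded-symplectic slice theorem near an orbit, which is provided by the reduction and local normal form machinery developed in chapter $4$. Away from the fold, the classical Delzant/Marle--Guillemin--Sternberg picture presents the orbit space as a manifold with corners with moment map a unimodular local embedding; near a point of $Z$, the folded local model (Moser plus the slice theorem) shows that the quotient is still a manifold with corners and the descended map $\bar{\mu}$ has a fold singularity in the sense of the manifolds-with-corners fold theory from chapter $2$. Gluing these local pictures and invoking the corner-stratum description (boundary strata are the images $M_H/G$) gives the global statement that $\psi = \bar{\mu}$ is a unimodular map with folds whose restrictions to the corner strata $M_H/G$ are themselves maps with fold singularities into $\frak{h}^o$, consistent with part $(1)$.

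Finally, parts $(4)$--$(7)$ are the recovery statements, which I would derive from the local model. For part $(4)$, the line field $\ker(\sigma) \cap TZ$ is of rank one since $\sigma\big\vert_Z$ has corank one, and using lemma \ref{lem:nullfoliation} together with the intrinsic derivative discussion of chapter $3$ I would identify this line with the image of $\pi^*\im(d\psi)^o$ under the map $(\cdot)_Z$, transporting the canonical orientation of the fold coming from the intrinsic derivative of $C_{\sigma}$ to an orientation computable from $\psi$. Part $(5)$ follows by observing that at $Z$ the rank-two bundle $\ker(\sigma)$ splits as the null-foliation direction (tangent to $Z$) plus a transverse direction that projects isomorphically onto $\ker(d\psi)$ via $d\pi$, so lifts of $\ker(d\psi)$ reconstruct the second factor. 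For $(6)$, the isotropy representation of $H$ on the fibers of $\widetilde{(TM_H)}^{\sigma}$ is determined by the weights, which appear as the primitive normal directions to the corner stratum $\psi(M_H/G)$ in the orbital moment map image, exactly as in the smooth toric case. Part $(7)$ then follows from corollary \ref{cor:foldnorm} applied to $\psi(Z/G)$. The main obstacle, and the one that justifies the long chapter-$4$ detour, will be establishing the folded equivariant slice theorem carefully enough at points of $Z$ that the manifold-with-corners structure on $M/G$ and the fold structure on $\psi$ can be read off simultaneously; once that local model is in hand, parts $(4)$--$(7)$ amount to tracking how the structures from chapter $3$ transfer through the quotient.
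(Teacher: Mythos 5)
Your proposal is correct and follows essentially the same route as the paper: the theorem is assembled from lemma \ref{lem:stabtori} and corollary \ref{cor:eqfsnormal2} for part $(1)$, propositions \ref{prop:corners} and \ref{prop:umf} for parts $(2)$--$(3)$, lemmas \ref{lem:nullfoliation} and \ref{lem:lifts} for parts $(4)$--$(5)$, and lemma \ref{lem:attach1} together with corollary \ref{cor:foldnorm} for parts $(6)$--$(7)$. The only substantive caveat is in part $(1)$: transversality of $M_H$ to $Z$ alone does not force $(i_{M_H}^*\sigma)^k$ to vanish transversally (one also needs $\ker(\sigma)$ tangent to $M_H$ along $Z_H$, which the paper secures via lemma \ref{lem:preserves1} and the factorization $i_{M_H}^*\sigma=\psi^*i_{M_H}^*\omega$ through the equivariant fold map of corollary \ref{cor:eqfsnormal1}), but this input is already implicit in the normal-bundle splitting you invoke.
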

\noindent In the end of chapter $5$, we introduce the two categories $\mathcal{M}_{\psi}(W)$ and $\mathcal{B}_{\psi}(W)$ of toric folded-symplectic manifolds and bundles, respectively.  We have already seen $\mathcal{M}_{\psi}(W)$ in this introduction.  The category of toric, folded-symplectic bundles consists of principal $G$ bundles $\pi:P \to W$ equipped with a folded-symplectic structure so that the action of $G$ is Hamiltonian with moment map $\psi\circ \pi$.  We then argue that all objects of $\mathcal{M}_{\psi}(W)$ are locally isomorphic (q.v. lemma \ref{lem:locunique}).

In chapter $6$, we develop our last tool before we begin the process of classifying toric, folded-symplectic manifolds: folded-symplectic reduction.  We precisely describe the conditions under which the reduced space is symplectic and the conditions under which the reduced space is folded-symplectic with nonempty fold.  In particular, we show that one need only understand how the zero level set of the moment map intersects the folding hypersurface in order to accurately predict when the reduced space is symplectic or not.  We conclude the chapter by adapting the minimal coupling procedure of Sternberg \cite{St} to symplectic vector bundles over folded-symplectic manifolds and describing how this relates to the orbit-type strata of toric, folded-symplectic manifolds.  In particular, the structure theorem (q.v theorem \ref{thm:intro}) implies that the orbit type strata are themselves folded-symplectic with well-defined symplectic normal bundles, hence we may apply the minimal coupling procedure to equip a neighborhood of the zero section of the normal bundle with a folded-symplectic structure.

In chapters $7$ and $8$ we classify toric, folded-symplectic manifolds over a fixed unimodular map with folds up to isomorphism.  We begin by classifying objects in the category $\mathcal{B}_{\psi}(W)$ of toric, folded-symplectic bundles (with corners) over $\psi:W \to \fg^*$ (q.v. definition \ref{def:bpsi}), following the approach of \cite{KL}.  We prove:

\pagebreak

\begin{theorem}
Let $\psi:W \to \fg^*$ be a unimodular map with folds, where $\fg$ is the Lie algebra of a torus $G$.  Let $\mathcal{B}_{\psi}(W)$ be the category of toric, folded-symplectic bundles over $\psi$.  Then there is a bijection $b:\pi_0(\mathcal{B}_{\psi}) \to H^2(W; \mathbb{Z}_G \times \R)$.  That is, isomorphism classes of toric, folded-symplectic bundles over $\psi$ are parameterized by cohomology classes in $H^2(W,\mathbb{Z}_G\times \R)$.
\end{theorem}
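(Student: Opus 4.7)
The plan is to follow the sheaf-theoretic approach of Karshon--Lerman \cite{KL}. The first step is to establish a local uniqueness statement for $\mathcal{B}_{\psi}(W)$, analogous to lemma \ref{lem:locunique} for $\mathcal{M}_{\psi}(W)$: over a sufficiently small open $U \subseteq W$, every object of $\mathcal{B}_{\psi}(U)$ is isomorphic to a fixed standard local model $(U \times G, \sigma_U^{\mathrm{std}}, \mathrm{pr}_1)$. Picking a good cover $\{U_\alpha\}$ of $W$ together with local trivializations of a given object, two objects then differ by a \v{C}ech $1$-cocycle valued in the sheaf $\mathcal{S}$ on $W$ whose sections over $U$ are the automorphisms of the standard model in $\mathcal{B}_{\psi}(U)$. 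Standard descent yields $\pi_0(\mathcal{B}_{\psi}(W)) \cong H^1(W; \mathcal{S})$, so the theorem reduces to computing this cohomology group.

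The heart of the argument is to identify $\mathcal{S}$. Any automorphism of $U \times G$ covering the identity on $U$ and commuting with the $G$-action is of the form $(x, g) \mapsto (x, g \cdot h(x))$ for some smooth $h: U \to G$; the requirement that it preserve $\sigma_U^{\mathrm{std}}$ and the moment map is a differential equation on $h$. Choosing a reference connection $\alpha$ on the trivial bundle, I expect to write $\sigma_U^{\mathrm{std}} = d\langle \alpha, \psi \circ \mathrm{pr}_1\rangle + \mathrm{pr}_1^* \tau$ for a closed basic $2$-form $\tau$, and since $G$ is abelian the preservation condition, after locally lifting $h = \exp(\xi)$ to $\xi: U \to \fg$, should reduce to the vanishing of $\langle d\psi \wedge d\xi\rangle$. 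The ambiguity in this lift contributes the lattice $\mathbb{Z}_G$, while primitives that $\psi$ does not pin down contribute an additional $\R$ summand.

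This analysis should produce a short exact sequence of sheaves of abelian groups
\begin{equation*}
0 \longrightarrow \underline{\mathbb{Z}_G \times \R} \longrightarrow \mathcal{E} \longrightarrow \mathcal{S} \longrightarrow 0
\end{equation*}
where $\mathcal{E}$ is a fine sheaf built from smooth $\fg$-valued functions together with compatible real primitives, and the first map sends constants to the trivial automorphism with the corresponding lift. Since $\mathcal{E}$ admits partitions of unity, its higher cohomology vanishes, and the connecting homomorphism gives the desired bijection $H^1(W; \mathcal{S}) \xrightarrow{\sim} H^2(W; \mathbb{Z}_G \times \R)$. Geometrically, the bijection $b$ assigns to a bundle $(P, \sigma, \pi)$ the pair consisting of the Chern class of $P \to W$ in $H^2(W; \mathbb{Z}_G)$ and the de Rham class in $H^2(W; \R)$ of the basic $2$-form that remains after subtracting $d\langle \alpha, \psi \circ \pi\rangle$ from $\sigma$ for any choice of connection $\alpha$; invariance of this class under change of connection follows because replacing $\alpha$ by $\alpha + \pi^*\beta$ alters $\tau$ by the exact form $-d\langle \beta, \psi\rangle$.

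The main obstacle I anticipate is controlling $\mathcal{S}$ over the fold locus of $\psi$. At fold points $\sigma_U^{\mathrm{std}}$ genuinely degenerates, so the decomposition above into a ``connection piece'' plus a ``basic piece'' must be handled carefully to ensure both that automorphisms extend smoothly across the fold and that $\mathcal{E}$ remains fine there. Invoking the local normal form for a unimodular map with folds from chapter $2$, together with the canonical orientation of the fold produced from the intrinsic derivative in chapter $3$, should let the standard symplectic-side computation go through verbatim on each stratum of $W$ and glue across the fold. A secondary, routine issue is the availability of good covers and smooth partitions of unity on the manifold with corners $W$, which is standard.
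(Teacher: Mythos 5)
Your route (local uniqueness, then a \v{C}ech torsor argument identifying $\pi_0(\mathcal{B}_{\psi}(W))$ with $H^1(W;\mathcal{S})$ for the automorphism sheaf $\mathcal{S}$ of a standard local model, then a resolution of $\mathcal{S}$) is genuinely different from the paper's, which classifies the underlying principal bundle by $c_1\in H^2(W;\mathbb{Z}_G)$ via the exponential sequence and then, for a fixed bundle, proves that the horizontal class $c_{hor}(\sigma)=[\beta]$, where $\sigma=d\langle\psi\circ\pi,A\rangle+\pi^*\beta$, is a complete invariant (lemmas \ref{lem:map1}--\ref{lem:map2} and proposition \ref{prop:map3}). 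But your proposal has a real gap precisely where the folded-specific work lives: the local uniqueness statement for $\mathcal{B}_{\psi}$ you take as the ``first step.'' Over a small $U$ one must show that two invariant folded-symplectic forms on $U\times G$ with moment map $\psi\circ\mathrm{pr}_1$ differ by a gauge transformation; writing the difference as $\pi^*d\gamma$ and running Moser requires solving $i_{X_s}\sigma_s=-\pi^*\gamma$, and by proposition \ref{prop:Ecotangent1} and corollary \ref{cor:Ecotangent2} this is solvable only if the right-hand side vanishes on $\ker(\sigma_s)$ along the fold, i.e.\ only if $\gamma$ vanishes on $\ker(d\psi)$ along $\hat{Z}$ --- which an arbitrary primitive does not. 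The paper repairs this by replacing $\gamma$ with $\gamma+\gamma_0$ for an explicit exact correction $\gamma_0=-d\bigl(g\,\gamma(\tilde{X})\bigr)$ and then checking that the resulting $X_s$ is tangent to the (compact) fibers, so its flow is a globally defined gauge transformation. Your remark that the symplectic computation ``goes through verbatim on each stratum and glue across the fold'' misidentifies the difficulty: the obstruction sits at the fold, is not a stratumwise issue, and is removed by modifying the primitive, not by the normal form for $\psi$ or the orientation of the fold.

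The second gap is the asserted acyclicity of $\mathcal{E}$. With $\mathcal{E}(U)=\{(\xi,f)\in C^{\infty}(U,\fg)\times C^{\infty}(U):\ df=\langle\psi,d\xi\rangle\}$ you do get a short exact sequence with kernel $\underline{\mathbb{Z}_G\times\R}$, but $\mathcal{E}$ is not a $C^{\infty}$-module: multiplying $(\xi,f)$ by a bump function $\rho$ changes the two sides of the constraint by $f\,d\rho$ and $\langle\psi,\xi\rangle\,d\rho$ respectively, so ``admits partitions of unity, hence fine'' does not apply, and softness fails for the same reason (cutting off $\xi$ destroys the integrability condition $\langle d\psi\wedge d\xi\rangle=0$ needed for a primitive to exist). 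As written, the identification $H^1(W;\mathcal{S})\cong H^2(W;\mathbb{Z}_G\times\R)$ is therefore unproved; you would need either a different acyclic resolution of $\mathcal{S}$ or a direct computation. The paper avoids this entirely: sheaf cohomology is used only for the bundle via $0\to\mathbb{Z}_G\to\fg\to G\to 0$ (where $\underline{\fg}$ really is fine), and the $\R$-summand comes from the connection-independent class $c_{hor}$ together with the Moser/gauge argument of proposition \ref{prop:map3}. Since your closing paragraph already describes exactly the paper's bijection $b([P,\sigma])=c_1(P)\times c_{hor}(\sigma)$, the most economical fix is to prove well-definedness and completeness of these two classes directly, as in theorem \ref{thm:bundleclassification}, rather than routing the argument through $H^1(W;\mathcal{S})$.
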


\noindent We then develop a functor $c:\mathcal{B}{\psi}(W) \to \mathcal{M}_{\psi}(W)$ by cutting away the corners to introduce stabilizers.  This comprises the first three sections of chapter $8$.  To finish the classification, we show that $c$ is an equivalence of categories, hence it induces a bijection on isomorphism classes of objects.

As a final note to the reader, we will often use transversality for manifolds with corners (q.v. definition \ref{def:A1:trans}), which we denote as $\pitchfork_s$ for \emph{strong transversality}.  As we have already stated, the traditional notion of transversality does not suffice in the category of manifolds with corners.  If $f:W \to N$ is a map between manifolds with corners, $S\subseteq N$ a submanifold, and $f\pitchfork S$ in the sense of manifolds, then $f^{-1}(S)$ may simply be a topological space or a differential space (q.v. \cite{Mi2}).  To give it the structure of a submanifold with corners, we need to require that the restriction of $f$ to each boundary stratum of $W$ is transverse to $S$ (q.v. \cite{CD}).  This condition is clearly quite limiting, which is why we choose to call this type of transversality \emph{strong transversality}.

\pagebreak
\section{First Order Jet Bundles over Manifolds with Corners and Fold Maps}
The purpose of this section is two-fold.  First, we will need jets on manifolds with corners, so we would like to introduce the basic theory of the first-order jet bundle $J^1(E)$ of a fiber bundle $\pi:E \to M$ over a manifold with corners.  This task is somewhat cumbersome if we use the usual approach to jet bundles, which involves plastering the total space $J^1(E)$ with derivative coordinate charts as in \cite{GG, Mi2, Sa} and then either realizing $J^1(E)$ as a restriction $J^1(\tilde{E})\big\vert_E$, where $\tilde{E}$ is a manifold without corners containing $E$ or defining it locally as such a restriction and gluing the pieces together (as in \cite{Mi2}).  We present an alternate approach which does not rely on excessive coordinate computations and avoids the problem of embedding $E$ into a manifold $\tilde{E}$ without corners.  The approach relies on the understanding of $J^1(M\times F)$, the first jet bundle of the trivial fiber bundle $M\times F \to F$.  In short, if one knows what $J^1(M\times F)$ should be as a manifold with corners, then $J^1(E)$ is many copies of $J^1(U\times F)$, $U\subseteq M$ open, glued together which gives $J^1(E)$ the structure of a smooth manifold with corners.  This is proposition \ref{prop:jettopology}.  The advantage of our approach is that it is very clean while the obvious disadvantage is that it does not generalize to higher order jet bundles.  We prove that every first order jet bundle $J^1(E)$ is isomorphic to $\hom(\pi^*TM,V)$, where $V\to E$ is the vertical bundle, and such isomorphisms may be specified by a choice of connection $\chi$ on $E$.  Thus, a first order jet bundle has a very recognizable form as a $\hom$ bundle, albeit perhaps non-canonically.  This is the content of proposition \ref{prop:C2}.  We then use our understanding of the first-order jet bundle to generalize Morse functions to fiber bundles with fiber-dimension $1$, which represents original work.

The main purpose of this section is to introduce the notion of an equidimensional map with fold singularities on manifolds with corners, or simply a map with fold singularities between manifolds (with corners) of the same dimension.  Maps with fold singularities have found their place in much of mathematical literature, but there does not appear to be a definition of a fold singularity for maps between manifolds with corners.  The definition of a map with fold singularities using the intrinsic derivative, given in the appendix of \cite{Ho}, would seem to be adequate for generalizing fold maps to manifolds with corners, but then pathological examples arise where fold maps $f:M \to N$ become homeomorphisms or the folding hypersurface has connected components of varying dimensions.  Indeed, consider the following set of examples.

\begin{example}
Let $f:\R^2 \to \R^2$ be the map given by $f(x,y)=(x,y^2)$, which has fold singularities along the $x$-axis.  We consider three scenarios:

\begin{itemize}
\item Let $W=\{(x,y)\vert \mbox{ } y\ge 0\}$.  Then, using the definition of a fold singularity found in \cite{Ho}, $f\big\vert_W:W \to \R^2$ has fold singularities along $y=0$, but no folding is accomplished by $f\big\vert_W$.  In fact, it is a homeomorphism of $W$ onto itself.  This example may not raise too many objections, so consider the next example.

\item Let $W=\{(x,y) \vert \mbox{}  y\ge \vert x \vert\}$.  Then $f\big\vert_W: W \to \R^2$ has a fold singularity at $(0,0)$ (in the sense of \cite{Ho}).  If we are to mimic the behaviour of fold maps in the category of manifolds, the locus of degeneracies should be a codimension $1$ submanifold with corners, but it is a codimension $2$ submanifold with corners here.  Again, this may not be unreasonable, so let us consider another example.

\item Let $W\subset \R^2$ be any manifold with corners that lies inside the upper half plane ($y\ge 0$) whose intersection with the $x$-axis has a component of dimension $0$ and a component of dimension $1$.  Then $f\big\vert_W:W \to \R^2$ has fold singularities and its locus of degeneracies is a disjoint union of manifolds of varying dimensions.  This is highly undesirable.
\end{itemize}
\end{example}

\noindent As we have stated in the introduction, the primary issue is transversality: if $f:M\to N$ is a map of manifolds with corners and $S$ is a submanifold with corners, then $f\pitchfork S$ in the traditional sense of manifolds is not enough to guarantee that $f^{-1}(S)$ is a submanifold with corners.  One needs to require that $f$ restricted to each stratum of $M$ is transverse to $S$.

Our approach to defining fold maps is to use the theory of first order jet bundles on manifolds with corners that we develop in section 2.1 and adapt Guillemin and Golubitsky's definition of a submersion with fold singularities (definition 4.1 in \cite{GG}) to our needs.  In particular, we generalize the definition of a submersion with folds to arbitrary fiber bundles, we present a definition of an equidimensional map with fold singularities between manifolds with corners, we develop an extremely useful computational tool for understanding equidimensional fold maps, and we provide a normal form for fold maps, all of which is original work since it hasn't been done in the case of manifolds with corners.  The most important results that the reader ought to carry on to the subsequent chapters are the computational corollary \label{cor:folds4-0}:

\begin{cor}
Let $f:M \to N$ be a smooth map between two $m$-dimensional manifolds with corners.  Then $f$ is a map with fold singularities if and only if
\begin{enumerate}
\item The induced map $(df)^m: M \to \hom(\Lambda^m(TM), \Lambda^m(TN))$ is transverse (in the sense of manifolds with corners) to the zero section $\mathcal{O}$ of $\hom(\Lambda^m(TM),\Lambda^m(TN)) \to M \times N$ (i.e. locally the determinant of $df$ vanishes transversally), and
\item $\ker(df) \pitchfork ((df)^m)^{-1}(\mathcal{O})$.
\end{enumerate}
\end{cor}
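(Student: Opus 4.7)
The plan is to translate the condition using $(df)^m$ into the jet-bundle formulation of fold singularities developed earlier in the chapter and then extract the rank drop and the intrinsic derivative from the determinant line bundle. First I would note that, since $\dim M = \dim N = m$, the bundle $\hom(\Lambda^m TM, \Lambda^m TN) \to M\times N$ is a line bundle, so its zero section $\mathcal{O}$ has codimension $1$ in the total space. The map $(df)^m$ sends $p \in M$ to $\Lambda^m df_p$ lying in the fiber over $(p, f(p))$, and $(df)^m(p) \in \mathcal{O}$ iff $\Lambda^m df_p = 0$ iff $df_p$ fails to be an isomorphism. Thus, as a set, $((df)^m)^{-1}(\mathcal{O})$ equals the critical locus $\Sigma(f) = \{p \in M : \operatorname{rank}(df_p) < m\}$.

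For the forward implication, suppose $f$ has fold singularities in the sense of the definition given earlier. Pick a point $p \in \Sigma(f)$ and trivialize both top exterior power bundles on neighborhoods of $p$ and $f(p)$, so that $(df)^m$ becomes the local determinant function $J(q) = \det(df_q)$. The strong transversality of $(df)^m$ to $\mathcal{O}$ reduces, via this trivialization, to the assertion that $dJ$ is surjective on each boundary stratum of $M$ meeting $\Sigma(f)$. This is precisely the intrinsic-derivative hypothesis in the fold-singularity definition: at a fold point the intrinsic derivative $\delta_p \colon \ker(df_p) \to \operatorname{coker}(df_p)$ is an isomorphism, and via the canonical identification of these $1$-dimensional spaces with the normal line to $\mathcal{O}$, $\delta_p$ coincides (up to a nonzero scalar) with $dJ_p\big\vert_{\ker df_p}$. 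Condition $(2)$ is then simply the rewriting of $\ker(df) \pitchfork_s \Sigma(f)$ under the equality $\Sigma(f) = ((df)^m)^{-1}(\mathcal{O})$.

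For the reverse implication, suppose $(1)$ and $(2)$ hold. From $(1)$ together with the codimension of $\mathcal{O}$, the preimage $\Sigma(f) = ((df)^m)^{-1}(\mathcal{O})$ is a codimension-$1$ submanifold with corners of $M$, and strong transversality ensures this stratum by stratum. The nonvanishing of $dJ_p$ in local coordinates forces $\operatorname{rank}(df_p) = m - 1$ at every $p \in \Sigma(f)$, since any further drop in rank would make $J$ vanish to second order in the standard local normal form. The intrinsic derivative is then a linear map between the $1$-dimensional spaces $\ker(df_p)$ and $\operatorname{coker}(df_p)$ identifiable with $dJ_p\big\vert_{\ker df_p}$; by $(2)$ and $(1)$ this restriction is nonzero, so $\delta_p$ is an isomorphism. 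Combined with the transversality of $\ker(df)$ to $\Sigma(f)$ provided by $(2)$, the fold-singularity criterion is verified.

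The main obstacle I expect is the bookkeeping required in the corners setting: one must consistently verify transversality stratum by stratum, and carefully identify the normal line to the zero section of a line bundle over $M\times N$ with the normal line to $\Sigma(f)$ inside $M$, so that the comparison between $dJ$ and the intrinsic derivative $\delta_p$ holds along each corner stratum rather than merely at interior points. A secondary subtlety is ensuring that the identification $((df)^m)^{-1}(\mathcal{O}) = \Sigma(f)$ as smooth objects (not just as sets) respects the stratification inherited from the corner structure of $M$, which is where the strong-transversality hypothesis in $(1)$ does the real work.
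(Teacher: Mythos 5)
Your overall shape is right (reduce to the local determinant, identify $((df)^m)^{-1}(\mathcal{O})$ with the critical locus, rule out corank $\ge 2$ via second-order vanishing), and this is essentially the paper's route, which deduces the corollary from Proposition \ref{prop:folds4} and Lemma \ref{lem:folds1}. But two steps in your argument are gaps rather than proofs. First, you substitute the H\"ormander-style condition ``the intrinsic derivative $\delta_p\colon\ker(df_p)\to\operatorname{coker}(df_p)$ is an isomorphism'' for the paper's actual definition, which is stated as $j^1\phi\pitchfork_s S_1$ in the jet bundle together with $\ker(df)\pitchfork Z$. The equivalence between jet-level transversality to the corank-one stratum $S_1$ and transversality of $\det(df)$ to $0$ is precisely the content of Lemma \ref{lem:folds1}; it rests on showing that $\det$ is a submersion at every point of $L^1$ and vanishes identically on $L^1$, so that a direction mapped transversally to $L^1$ is a direction in which $\det(df)$ has nonzero derivative. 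You assert this identification (``$\delta_p$ coincides up to a nonzero scalar with $dJ_p\vert_{\ker df_p}$'') rather than proving it, and the paper's introduction to this chapter is explicit that the intrinsic-derivative formulation is exactly the one that breaks down for manifolds with corners, so it cannot simply be taken as the definition.

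Second, and more concretely, your forward direction would fail at the corners. You derive nonvanishing of $dJ_p$ only in the direction of $\ker(df_p)$, but strong transversality $\det(df)\pitchfork_s 0$ requires $dJ_p$ to be nonzero on the tangent space of \emph{each boundary stratum} through $p$, and nothing in the corollary's hypotheses forces $\ker(df_p)$ to be tangent to that stratum (that extra assumption appears only later, in Proposition \ref{prop:folds5}). The paper's argument avoids this by starting from $j^1\phi\pitchfork_s S_1$, which supplies a vector $v_0$ \emph{tangent to the stratum} whose image under $d(j^1\phi)$ is transverse to $TS_1$, and then concluding $d(\det\circ df)_p(v_0)\ne 0$. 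Finally, in the reverse direction your claim that corank $\ge 2$ forces $\det(df)$ to vanish to second order is correct, but ``the standard local normal form'' is not available at that stage; one needs the explicit column-decomposition argument of Lemma \ref{lem:folds1} (writing $c_n(\gamma(t))=tG+\sum a_ic_i(\gamma(t))$ along a curve transverse to the degeneracy locus) to see that the $t$-derivative of the determinant vanishes whenever two columns are simultaneously dependent.
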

\noindent which we will use without reserve to show that certain maps are fold maps, and the following factorization corollary to proposition \ref{prop:folds5},
\begin{cor}\label{cor:folds5}
Let $f:M^n \to N^n$ be an equidimensional map with fold singularities, suppose $f$ is strata-preserving, and suppose for all $p\in Z$, the folding hypersurface, $\ker(df_p)$ is tangent to the stratum of $M$ containing $p$ (i.e. $\ker(df) \to Z$ is stratified).  Then, for each $p\in Z$ there exist a neighborhood $U\subset M$, a strata-preserving fold map $\psi:U\to U$, and a strata-preserving open embedding $\phi:U \to N$ so that $f\vert_U=\phi\circ \psi$. Hence, locally, every map $f$ satisfying the conditions of the proposition factors as a diffeomorphism composed with a strata-preserving fold map.
\end{cor}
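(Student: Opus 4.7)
The plan is to reduce the statement directly to the normal form supplied by Proposition~\ref{prop:folds5}, using the extra hypotheses only to align that normal form with the corner structure. Fix $p \in Z$. Proposition~\ref{prop:folds5} should furnish a pair of charts $\alpha : U \to \alpha(U) \subseteq \mathbb{R}^n$ about $p$ and $\beta : V \to \beta(V) \subseteq \mathbb{R}^n$ about $f(p)$, with $f(U) \subseteq V$, in which
\[
(\beta \circ f \circ \alpha^{-1})(x_1, x_2, \ldots, x_n) = (x_1^2, x_2, \ldots, x_n).
\]
In particular $\ker(df_p)$ corresponds to the span of $\partial/\partial x_1$ and $Z \cap U$ to $\{x_1 = 0\}$ in these coordinates.

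The two extra hypotheses are used to adapt these charts to corners. Since $\ker(df_p) \subseteq T_p S$ for $S$ the stratum of $M$ through $p$, the direction $x_1$ is tangent to $S$; this allows the coordinate $x_1$ to be chosen freely along $\ker(df_p)$ without imposing any one-sided inequality, while the remaining $x_2, \ldots, x_n$ can be picked so that the local corner structure on $U$ is cut out by $\{x_i \ge 0 : i \in I\}$ for some $I \subseteq \{2, \ldots, n\}$. The strata-preserving hypothesis on $f$, together with matching dimensions, aligns the corresponding corner indices on $\beta(V)$ with the same $I$, so after shrinking one may assume $\alpha(U) = \beta(V)$. Writing $\sigma(x_1, \ldots, x_n) := (x_1^2, x_2, \ldots, x_n)$ and shrinking $U$ further so that $\alpha(U) = (-\epsilon, \epsilon) \times U_0$ with $\epsilon < 1$ (ensuring $\sigma(\alpha(U)) \subseteq \alpha(U)$), define
\[
\psi := \alpha^{-1} \circ \sigma \circ \alpha : U \to U, \qquad \phi := \beta^{-1} \circ \alpha : U \to N.
\]
Then $\phi$ is a strata-preserving open embedding by construction; $\psi$ acts as the identity on $x_2, \ldots, x_n$ and so preserves each corner face, and Corollary~\ref{cor:folds4-0} applied in the $x$-coordinates shows that $\psi$ is a fold map with fold $\{x_1 = 0\}$. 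The factorization $f|_U = \phi \circ \psi$ is immediate from the normal form identity.

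The main obstacle is the matching step: one must verify that the normal form of Proposition~\ref{prop:folds5} can be promoted to a pair of corner-compatible charts whose corner-index data $I$ agree on source and target. This is exactly where the strong transversality conventions of Section~2 and the hypothesis that $\ker(df) \to Z$ is stratified enter in an essential way; strata-tangency of the kernel line guarantees that no corner inequality is forced on $x_1$, and the strata-preserving property of $f$ then transports the corner index set across. Once this compatibility is in hand, the factorization is essentially tautological: $\psi$ is the part of $f$ that squares the kernel direction, while $\phi$ is the residual change of chart recording how the source and target charts differ.
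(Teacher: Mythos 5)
Your factorization is correct and is essentially the paper's: the corollary is read off directly from Proposition \ref{prop:folds5}, whose proof already yields $f=\phi\circ(\gamma\circ\Phi)$ with $\phi(\vec{x},t)=(\vec{x},0)+tF(\vec{x})$ a strata-preserving diffeomorphism and $\gamma\circ\Phi$ a strata-preserving fold map of a neighborhood of $p$ into itself, and your $\psi=\alpha^{-1}\circ\sigma\circ\alpha$, $\phi=\beta^{-1}\circ\alpha$ are just these two factors rewritten in the charts. The corner-index matching you flag as the main remaining obstacle needs no separate verification: it is precisely the content of the proposition's assertion that $\phi$ is a strata-preserving diffeomorphism (the step showing $F(\vec{x},t)$ is tangent to the stratum containing $f(\vec{x},t)$), so absorbing $\phi$ into the target chart produces corner-compatible charts automatically.
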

\noindent which we will use when we construct a cutting procedure for toric, folded-symplectic bundles (q.v. chapters $7$ and $8$).

\subsection{First Order Jet Bundles}
Our constructions are motivated by remark 2 on page 41 of \cite{GG} and part 7 of theorem 21.5 in \cite{Mi1}, which state that $J^1(M\times N)$ is canonically isomorphic to a $\hom$ bundle, $\hom(TM,TN)$, which we define below (q.v. \ref{def:hombdle}).  In particular, since we know what the first jet bundle of a product \emph{should} be, we essentially define the first jet bundle of a product to be $\hom(TM,TN)$ and argue that the first jet bundle of a general fiber bundle is a collection of these bundles glued together.  While the presentation of the first jet bundle found here is somewhat nonstandard, most of these results are \emph{not} new.  We will alert the reader to known results along the way.
\subsubsection{First Order Jets}
Let $\pi:E \to M$ be a smooth fiber bundle with typical fiber $F$ over a smooth $m$-dimensional manifold with corners, $M$, where $F$ may have corners.  We wish to define the first jet bundle $J^1(E)$ and give it a smooth atlas.

\begin{definition}\label{def:locsec}
A \emph{local section} of $E$ near $p\in M$ is a neighborhood $U$ of $p$ and a section $\phi:U \to E\vert_U \subset E$.  A local section near $p$ will be denoted as a pair $(U, \phi)$.  The space of local sections near $p$ will be denoted $S_p=\{(U,\phi)\vert \mbox{ } p\in U\}$.
\end{definition}

\begin{definition}\label{def:1equiv}
Two local sections $(U,\phi)$, $(V,\psi)$ are $1$-equivalent at $p\in M$ if:
\begin{enumerate}
\item $p\in U\cap V$,
\item $\phi(p)=\psi(p)$, and
\item $d\phi_p = d\psi_p$.
\end{enumerate}
\end{definition}
\noindent In coordinates, $(U,\phi)$ and $(V,\psi)$ are $1$-equivalent at $p$ if they agree at $p$ and their first partials are equal at $p$.  The following lemma is an immediate consequence of the definition.

\begin{lemma}\label{lem:equivrelation}
$1$-equivalence at $p\in M$ defines an equivalence relation $\sim$ on the set of local sections near $p$, $S_p=\{(U,\phi)\vert \mbox { } p\in U\}$.
\end{lemma}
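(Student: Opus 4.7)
The plan is to verify directly the three defining properties of an equivalence relation — reflexivity, symmetry, and transitivity — on the set $S_p$ of local sections near $p$. Since $1$-equivalence is built out of three conditions (overlap of domains at $p$, agreement of values at $p$, and agreement of differentials at $p$), and each of these conditions is phrased in terms of equalities or a membership assertion that is patently symmetric/transitive, the proof should amount to unwinding the definition.

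First, for reflexivity, given $(U,\phi)\in S_p$ I would note that $p\in U=U\cap U$ by definition of $S_p$, and that $\phi(p)=\phi(p)$ and $d\phi_p=d\phi_p$ trivially, so $(U,\phi)\sim(U,\phi)$. For symmetry, if $(U,\phi)\sim(V,\psi)$ then $p\in U\cap V=V\cap U$, $\psi(p)=\phi(p)$, and $d\psi_p=d\phi_p$, giving $(V,\psi)\sim(U,\phi)$; here one just uses that set-theoretic intersection and equality of maps and of linear maps are symmetric.

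For transitivity, suppose $(U,\phi)\sim(V,\psi)$ and $(V,\psi)\sim(W,\chi)$. Then $p\in U\cap V$ and $p\in V\cap W$, so in particular $p\in U\cap W$. Chaining the equalities at $p$ gives $\phi(p)=\psi(p)=\chi(p)$ and $d\phi_p=d\psi_p=d\chi_p$, so the three conditions of Definition \ref{def:1equiv} all hold for the pair $(U,\phi)$ and $(W,\chi)$, whence $(U,\phi)\sim(W,\chi)$.

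There is no real obstacle here: the definition of $1$-equivalence was designed so that each clause transports across chains of equivalences, and no analytic input (no use of the corner structure on $M$, no compatibility between charts, no regularity of $\phi$ beyond smoothness already assumed for a local section) enters the argument. The only ``thinking'' step is noticing that the domain condition $p\in U\cap V$ really does pass through a third section — and this is immediate because membership of $p$ in each of $U$, $V$, $W$ follows from the hypothesis $(U,\phi),(V,\psi),(W,\chi)\in S_p$, so transitivity of the domain clause is automatic and independent of the intermediate section.
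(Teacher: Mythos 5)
Your proof is correct and is exactly the verification the paper has in mind: the paper simply remarks that the lemma is ``an immediate consequence of the definition'' and omits the details, which are precisely the reflexivity/symmetry/transitivity checks you carry out. Nothing is missing.
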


\begin{definition}\label{def:1jetfiber}
We define the \emph{space of $1$-jets at $p\in M$} to be $(J^1_pE) = S_p/\sim$.  If $(U,\phi)$ is a local section near $p$ then we denote its equivalence class in $S_p/\sim$ by $j^1_p\phi$.
\end{definition}

\begin{definition}\label{def:firstjet}
We define \emph{the first order jet space} $J^1(E)$ (as a set) to be the set $J^1(E) = \sqcup_{p\in M}J^1_pE$.  It is equipped with a projection map to $M$, $\pi_1:J^1(E) \to M$, given by $\pi_1(j^1_p\phi)=p$ and a projection map to $E$, $\pi_{1,0}:J^1(E) \to E$, given by $\pi_{1,0}(j^1_p\phi)=\phi(p)$.  If $U\subset M$ is an open set, we define $J^1(E)\vert_U = \sqcup_{p\in U}J^1_pE = \pi_1^{-1}(U)$.
\end{definition}

Note that $\pi_{1,0}$ is well-defined since all elements of the equivalence class $j^1\phi$ have the same value $\phi(p)$ at $p$.  We can assign a topology and smooth structure to $J^1(E)$ as follows.  First, we need the following definition:

\begin{definition}\label{def:hombdle}
Let $M,F$ be two manifolds with corners.  Let $pr_1:M\times F \to M$ and $pr_2:M\times F \to F$ be the projections.  We define:
\begin{displaymath}
\hom(TM,TF):=\hom(pr_1^*TM, pr_2^*TF)
\end{displaymath}
which is a bundle over $M\times F$ with fiber $\hom(T_mM,T_fF)$.
\end{definition}

Now, let $U\subset M$ be an open set so that $E\vert_U$ is trivializable.  That is, there exists an isomorphism $\Phi_U$ of fiber bundles:

\begin{equation}\label{eq:triv}
\xymatrixcolsep{5pc}\xymatrix{
E\vert_U \ar[r]^{\Phi_U} \ar[dr]^{\pi} & U\times F \ar[d]^{pr_1}\\
                         & U
}
\end{equation}

The map $\Phi_U$ of equation \ref{eq:triv} defines a map of sets

\begin{displaymath}
\begin{array}{l l}
\tilde{\Phi}_U & : J^1(E)\vert_U \to \hom(TU,TF) \\
                & \mbox{\hspace{6mm}} j^1_p\phi \mbox{\hspace{2mm}}\mapsto (p,\Phi_U(\phi(p)),dpr_2(d\Phi_U(d\phi_p))) \\
\end{array}
\end{displaymath}
giving us a commutative diagram:

\begin{displaymath}
\xymatrixcolsep{5pc}\xymatrix{
J^1(E)\vert_U \ar[r]^{\tilde{\Phi}_U} \ar[d]^{\pi_{1,0}} & \hom(TU,TF) \ar[d]^{pr_U\times pr_F} \\
E\vert_U \ar[r]^{\Phi_U} \ar[dr]^{\pi}                   & U\times F \ar[d]^{pr_1} \\
                                                         & U                       \\
}
\end{displaymath}

\begin{lemma}\label{lem:jettopology}
The map $\tilde{\Phi}_U$ is a well-defined bijection with an inverse given by:
\begin{displaymath}
\tilde{\Phi}_U^{-1}(p,f,A) = j^1_p\phi
\end{displaymath}
where $j^1_p\phi$ is the equivalence class of local sections $(U,\phi)$ at $p$ whose value at $p$ is $\Phi_U^{-1}(p,f)$ and differential is $d\phi_p= (d\Phi_U)_{(p,f)}^{-1}(id_{T_pU}\oplus A)$.
\end{lemma}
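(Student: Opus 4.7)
The plan is to verify in turn that (i) $\tilde\Phi_U$ descends to equivalence classes, (ii) the formula for $\tilde\Phi_U^{-1}$ produces a well-defined element of $J^1(E)|_U$, and (iii) the two assignments are mutually inverse. Throughout I would work in the fixed trivialization $\Phi_U$ and write $(p,f) := \Phi_U(\phi(p))$.

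Step (i) is essentially tautological: if $(U,\phi) \sim (V,\psi)$ at $p$, then by Definition \ref{def:1equiv} we have $\phi(p) = \psi(p)$ and $d\phi_p = d\psi_p$, so the triple $(p, \Phi_U(\phi(p)), dpr_2 \circ d\Phi_U \circ d\phi_p)$ depends only on the equivalence class $j^1_p\phi$, and its third entry lies in $\hom(T_pU, T_fF)$ as required. For step (ii), given $(p,f,A)$, the proposed candidate $L := (d\Phi_U)_{(p,f)}^{-1}(\id_{T_pU} \oplus A)$ has the correct codomain and, using the intertwining relation $dpr_1 \circ d\Phi_U = d\pi$, satisfies $d\pi \circ L = \id_{T_pU}$ — a necessary condition for $L$ to be the differential of a section. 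The one substantive task is to produce a local section realizing $L$. I would do this by choosing corner-adapted coordinates $(x^1,\dots,x^m)$ near $p$ in $M$ and coordinates $(y^1,\dots,y^n)$ near $f$ in $F$, writing down the affine model $\tilde\phi(q) := (q,\, y(f) + A\cdot(x(q) - x(p)))$ in the trivialized bundle, and setting $\phi := \Phi_U^{-1} \circ \tilde\phi$. Any two local sections with the prescribed value and differential at $p$ are by construction $1$-equivalent, so the resulting class $j^1_p\phi$ depends only on the input data and the formula $\tilde\Phi_U^{-1}(p,f,A) := j^1_p\phi$ is unambiguous.

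Step (iii) is a direct unwinding of definitions: the forward map reads off the value and differential data, while the inverse formula reconstructs a $1$-jet from exactly that data, so the two compositions are the identity. The only conceptually subtle point is the realization of a prescribed $1$-jet at a boundary or corner point of $M$, but the affine formula above makes sense and is smooth in any corner chart, so no additional machinery beyond smoothness in half-space coordinates is required. The main obstacle is thus bookkeeping rather than any serious geometric content: the lemma is really asserting that, once a trivialization is fixed, a $1$-jet of a section is nothing more than a pair consisting of a point of $E$ and a linear map from $T_pU$ to the vertical direction at that point.
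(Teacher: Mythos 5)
Your proposal is correct and follows essentially the same three-step structure as the paper's proof: well-definedness on equivalence classes, realization of a prescribed value and differential by an explicit local section, and a direct verification that the two compositions are the identity. The only difference is that you construct the realizing section explicitly via an affine model in corner charts, whereas the paper simply asserts the existence of a map $\psi$ with $\psi(p)=f$ and $d\psi_p=A$; this is a harmless elaboration, not a different argument.
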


\begin{remark}
Before we begin, let us explain the notation $id_{T_pU \oplus A}$.  That is, let us review how we may recover the differential of a section $\phi$ of $U\times F$ given its differential $dpr_2(d\phi):TU \to TF$ in the vertical direction.  If we are given a section $\phi:U \to U\times F$ of the trivial fiber bundle $U\times F$, then it may be written as $\phi(p)=(p,g(p))$ for some smooth map $g:U\to F$.  Then $d\phi_p = id_{T_pU} \oplus dg_p$, using the canonical splitting $T(U\times F) = pr_1^*TU \oplus pr_2^*TF$, where $pr_i$ is projection onto the $i^{th}$ factor of $U\times F$.  Note that $dg_p\in \hom(T_pU, T_{g(p)}F)$. Thus, the differential of any section of $U\times F$ has the form $id_{T_pU} \oplus A$ for some $A\in \hom(T_pU, T_fF)$.

\vspace{2mm}

Conversely, if we are given an element $A\in \hom(T_pU,T_fF)$, then there is a map $g:V \to F$ defined on a neighborhood $V$ of $p$ satisfying $g(p)=f$ and $dg_p= A$.  Then $id_{T_pU} \oplus dg_p: T_pU \to T_pU \oplus T_fF$ is the differential of the local section $\phi(u)=(u,g(u))$.  Hence, we may append the identity map $id_{T_pU}$ to any element $A\in \hom(T_pU,T_fF)$ to transform it into the differential of a section.
\end{remark}

\begin{proof}[Proof of lemma \ref{lem:jettopology}] \mbox{ } \newline
\begin{enumerate}
\item $\tilde{\Phi}_U$ is well-defined since $(U_1,\phi_1)\sim (U_2,\phi_2)$ at $p$ if and only if $\phi_1(p)=\phi_2(p)$ and $(d\phi_1)_p = (d\phi_2)_p$, hence $dpr_2(d\Phi_U((d\phi_1)_p))=dpr_2(d\Phi_U((d\phi_2)_p))$.
\item We now show that the so-called inverse map, $\tilde{\Phi}_U^{-1}$ is a well-defined map whose image is in $J^1(E)$.  Namely, we must show that if we are given $(p,f,A)$, where $A\in \hom(T_pU, T_fF)$, then there exists a local section $(V,\phi)$ near $p$ whose value is $\Phi_U^{-1}(p,f)$ and whose differential is $d\phi_p=(d\Phi_U)_{(p,f)}^{-1}(id_{TU}\oplus A)$.
    \vspace{5mm}
    There exist neighborhoods $V\subset U$ of $p$ and $W\subset F$ of $f$ and a map $\psi:V\to W$ so that:
    \begin{enumerate}
    \item $\psi(p)=f$ and
    \item $d\psi_p = A$.
    \end{enumerate}
    Then $\psi$ defines a local section of $U\times F$ near $p\in U$: $\phi(x) = (x, \psi(x))$ and $(V, \Phi_U^{-1}\circ \phi)$ is a local section of $E$ near $p$.  The value of $\Phi_U^{-1}\circ \phi$ at $p$ is $\Phi_U^{-1}(p,f)$.  The differential of $\Phi_U^{-1}\circ \psi$ at $p$ is:
    \begin{displaymath}
    d(\Phi_U^{-1}\circ \phi)_p = d(\Phi_U)_{(p,f)}^{-1}(id_{T_pU}\oplus A)
    \end{displaymath}
    hence our definition of $\tilde{\Phi}\vert_U^{-1}(p,f,A)$ gives us a well-defined element of $J^1_pE$.

\item To see that the two maps are inverses, we compute:
\begin{displaymath}
\tilde{\Phi}_U \circ \tilde{\Phi}_U^{-1}(p,f,A) = \tilde{\Phi}_U(j^1_p\phi) = (\Phi_U(\phi(p)),dpr_2(d\Phi_U(d\phi_p)))=(p,f,dpr_2(id_{T_pU}\oplus A)) = (p,f,A)
\end{displaymath}
where we have used that $\tilde{\Phi}_U^{-1}(p,f,A)=j^1_p\phi$, where $d\phi_p = d\Phi_U^{-1}(id_{T_pU}\oplus A)$.  We also have,
\begin{displaymath}
\tilde{\Phi}\vert_U^{-1} \circ \tilde{\Phi}_U(j^1_p\phi) = \tilde{\Phi}\vert_U^{-1}(\Phi_U(\phi(p)),dpr_2(d\Phi_U(d\phi_p)))=j^1\eta
\end{displaymath}
where $j^1\eta$ is the equivalence class of local sections $(U',\eta)$ near $p$ whose value at $p$ is $\Phi_U^{-1}(\Phi_U(\phi(p)))=\phi(p)$ and whose differential at $p$ is:

\begin{displaymath}
(d\Phi\vert_U)_{(p,f)}^{-1}(id_{T_pU}\oplus dpr_2(d\Phi_U(d\phi_p))) = d\phi_p
\end{displaymath}
Therefore, $j^1\eta = j^1\phi$.
\end{enumerate}
\end{proof}
\noindent Since $\hom(TU,TF)$ is a topological space and $\tilde{\Phi}_U$ is a bijection, we can pull back the topology on $\hom(TU,TF)$ to $J^1(E)\vert_U$ so that $\tilde{\Phi}_U$ is a homeomorphism.  This gives us a global topology on $J^1(E)$ making the maps $\pi_{1,0}$ and $\pi_1$ continuous.

\begin{definition}\label{def:jettopology}
Let $\pi:E\to M$ be a fiber bundle and $J^1(E)$ the first order jet space.  The topology $\mathcal{T}$ on $J^1(E)$ is the finest topology so that for each open set $U\subset M$ where $E$ is trivializable, the induced map $\tilde{\Phi}_U$ of lemma \ref{lem:jettopology} is a homeomorphism.  We refer to $J^1(E)$ with the topology $\mathcal{T}$ as the \emph{first order topological jet space}.
\end{definition}

\begin{definition}\label{def:chart}
Let $\pi:E \to M$ be a fiber bundle over a manifold with corners $M$ with typical fiber $F$.  Let $U\subset M$ be an open subset so that $E\vert_U$ is trivializable and let $\tilde{\Phi}_U:J^1(E)\vert_U \to \hom(TU,TF)$ be the induced map of lemma \ref{lem:jettopology}.  We will say that the pair $(U,\tilde{\Phi}_U)$ is a \emph{chart} on $J^1(E)$ even though $U\subset M$.  This is because the domain of $\tilde{\Phi}_U$ is $\pi_1^{-1}(U)$, hence specifying $U$ uniquely specifies the domain of $\tilde{\Phi}_U$.  Therefore, to save on notation, we will use $(U,\tilde{\Phi}_U)$ instead of $(\pi_1^{-1}(U), \tilde{\Phi}_U)$ when referring to charts on $J^1(E)$.  We say a collection of charts $\{(U,\tilde{\Phi}_U)\}$ is a $C^{\infty}$-\emph{atlas} if the transition maps $\tilde{\Phi}_V \circ \tilde{\Phi}_U^{-1}$ are diffeomorphisms of manifolds with corners.
\end{definition}

\noindent Now, we have a collection of charts $\{(U,\tilde{\Phi}_U)\vert \mbox{ } E\vert_U \simeq U \times F\}$ on $J^1(E)$.  Our next task is to show that this collection forms a smooth atlas.

\begin{lemma}\label{lem:jettopology1}
Let $\pi:E \to M$ be a fiber bundle and let $J^1(E)$ be the first order topological jet space.  Let $(U,\tilde{\Phi}_U)$, $(V,\tilde{\Phi}_V)$ be two charts on $J^1(E)$ so that $U\cap V \ne \emptyset$.  Then $\tilde{\Phi}_V\circ \tilde{\Phi}_U^{-1}: \hom(T(U\cap V), TF) \to \hom(T(U\cap V), TF)$ is a diffeomorphism of manifolds with corners.
\end{lemma}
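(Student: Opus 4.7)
The plan is to compute the transition map $\tilde{\Phi}_V \circ \tilde{\Phi}_U^{-1}$ explicitly in terms of the underlying bundle transition $\Phi_V \circ \Phi_U^{-1}$, and to observe that the resulting formula is built from the differential of a smooth map between product manifolds with corners, so smoothness is automatic.

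First I would reduce to working over $W := U \cap V$. Because $\Phi_U$ and $\Phi_V$ are fiber-bundle trivializations of $E$, their composition $\tau := \Phi_V \circ \Phi_U^{-1} \colon W \times F \to W \times F$ is a diffeomorphism of manifolds with corners that commutes with projection to $W$. Hence $\tau$ has the form $\tau(p,f) = (p, \tau_2(p,f))$ for some smooth map $\tau_2 \colon W \times F \to F$, and the fiberwise map $f \mapsto \tau_2(p, f)$ is a diffeomorphism of $F$ for every $p \in W$.

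Next I would unwind the definitions of $\tilde{\Phi}_U^{-1}$ and $\tilde{\Phi}_V$ from Lemma \ref{lem:jettopology}. Given $(p,f,A) \in \hom(TW, TF)$, the preimage $\tilde{\Phi}_U^{-1}(p,f,A) = j^1_p \phi$ is the jet of any local section $\phi$ of $E$ with $\phi(p) = \Phi_U^{-1}(p,f)$ and $d\phi_p = (d\Phi_U)^{-1}_{(p,f)}(\mathrm{id}_{T_pW} \oplus A)$. Applying $\tilde{\Phi}_V$ yields a triple $(p', f', A')$ with $(p', f') = \Phi_V(\phi(p)) = \tau(p,f)$ and
\begin{displaymath}
A' \;=\; d\mathrm{pr}_2 \circ (d\Phi_V)_{\phi(p)} \circ (d\Phi_U)^{-1}_{(p,f)} (\mathrm{id}_{T_pW} \oplus A) \;=\; d\mathrm{pr}_2 \circ d\tau_{(p,f)} (\mathrm{id}_{T_pW} \oplus A).
\end{displaymath}
Using the canonical splitting $T(W \times F) = \mathrm{pr}_1^* TW \oplus \mathrm{pr}_2^* TF$, the derivative $d\tau_{(p,f)}$ decomposes into blocks involving the partial derivatives of $\tau_2$: writing $D_W \tau_2$ and $D_F \tau_2$ for differentiation in the $W$- and $F$-directions respectively, we obtain
\begin{displaymath}
\tilde{\Phi}_V \circ \tilde{\Phi}_U^{-1} (p, f, A) \;=\; \bigl(p,\; \tau_2(p,f),\; D_W \tau_2(p,f) + D_F \tau_2(p,f) \circ A \bigr).
\end{displaymath}

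Finally, this closed formula is visibly smooth in $(p, f, A)$ because $\tau_2$ is smooth and the expression is polynomial in the linear-map coordinate $A$. Applying exactly the same argument to $\Phi_U \circ \Phi_V^{-1}$ produces a smooth inverse, so $\tilde{\Phi}_V \circ \tilde{\Phi}_U^{-1}$ is a diffeomorphism of manifolds with corners. The only point requiring a moment's care is the corner structure: since $\tau$ is already a diffeomorphism of the manifolds-with-corners $W \times F$, and since $\hom(TW, TF)$ is built as a vector bundle over $W \times F$, the induced transition of total spaces automatically respects the corner stratification, so there is no new obstacle beyond the product construction itself. I expect this last compatibility with corners to be the only subtle point; everything else is a direct unwinding of Lemma \ref{lem:jettopology}.
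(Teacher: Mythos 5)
Your proof is correct and follows essentially the same route as the paper: compute $\tilde{\Phi}_V\circ\tilde{\Phi}_U^{-1}$ by unwinding the definitions from Lemma \ref{lem:jettopology}, reduce everything to the bundle transition $\Phi_V\circ\Phi_U^{-1}$, and conclude smoothness from the smoothness of that diffeomorphism. Your explicit block formula $(p,f,A)\mapsto(p,\tau_2(p,f),D_W\tau_2 + D_F\tau_2\circ A)$ is a nice bonus that the paper only records later (as the affineness statement in part 3 of Proposition \ref{prop:jettopology}), but it is not a different argument.
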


\begin{proof} \mbox{ } \newline
We compute:
\begin{displaymath}
\tilde{\Phi}_V (\tilde{\Phi}_U(p,f,A) = (\Phi_V (\Phi_U^{-1}(p,f)), dpr_2(d\Phi_V\circ d\Phi_U^{-1}(id_{TU}\oplus A))) = ((\Phi_V\circ \Phi_U^{-1})(p,f),d(pr_2 \circ \Phi_V \circ \Phi_U^{-1})(id_{TU}\oplus A))
\end{displaymath}
Since $\Phi_V\circ \Phi_U^{-1}$ is a diffeomorphism of manifolds with corners, the result follows.
\end{proof}

\begin{remark}
Recall that if we have a fiber bundle $\pi:E \to M$, then we can define the vertical subbundle $V$ of $TE$ to be the bundle whose fiber at $e$ is $V_e=\ker(d\pi_e)$.
\end{remark}

\begin{prop}\label{prop:jettopology}
Let $\pi:E \to M$ be a smooth fiber bundle and let $J^1(E)$ be the first order topological jet space with charts $\mathcal{A}=\{(U,\tilde{\Phi}_U) \vert \mbox{ } E\vert_U \simeq U \times F\}$.  Then $\mathcal{A}$ is a $C^{\infty}$ atlas and $J^1(E)$ is a smooth $C^{\infty}$ manifold with corners.  Furthermore,
\begin{enumerate}
\item The projection map $\pi_{1,0}:J^1(E) \to E$ gives $J^1(E)$ the structure of a smooth fiber bundle with fiber at $e$ isomorphic to $\hom(T_{\pi(e)}U, V_e)$, where $V_e$ is the fiber of the vertical bundle $V \to E$ of $E$.
\item The projection map $\pi_1:J^1(E) \to M$ gives $J^1(E)$ the structure of a smooth fiber bundle with fiber at $p$ isomorphic to $\hom(T_pU, TF)$.
\item The transition maps $\tilde{\Phi}_V \circ \tilde{\Phi}_U^{-1}: \hom(T(U\cap V), TF) \to \hom(T(U\cap V), TF)$ are \emph{affine} maps of vector bundles, hence $J^1(E)$ is generally not a vector bundle over $E$ or $M$.
\end{enumerate}
\end{prop}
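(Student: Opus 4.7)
The plan is to leverage Lemma \ref{lem:jettopology1}, which already shows that every transition map $\tilde{\Phi}_V \circ \tilde{\Phi}_U^{-1}$ is a diffeomorphism of manifolds with corners between open subsets of $\hom(T(U\cap V), TF)$. Since the trivializing opens $U \subseteq M$ cover $M$, the charts $(U, \tilde{\Phi}_U)$ cover $J^1(E)$, so $\mathcal{A}$ is automatically a $C^\infty$-atlas once we know $\hom(T(U\cap V), TF)$ is itself a smooth manifold with corners; but this is clear since it is an honest $\hom$-bundle over $(U\cap V)\times F$, and the corner strata of its total space are the preimages of the corner strata of $(U\cap V)\times F$. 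So the bulk of the proof is to unpack the already-established transition formula to extract the three additional structural claims.

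For claim (3), I would compute the transition maps explicitly. Because $\Phi_V \circ \Phi_U^{-1}: (U\cap V)\times F \to (U\cap V)\times F$ is a morphism of fiber bundles over $U\cap V$, it covers the identity and hence has the form $(p, f) \mapsto (p, g_{VU}(p, f))$ for some smooth $g_{VU}: (U\cap V)\times F \to F$. Plugging this into the formula in the proof of Lemma \ref{lem:jettopology1} and using the canonical splitting $T((U\cap V)\times F) = pr_1^* T(U\cap V) \oplus pr_2^* TF$, the third component of $\tilde{\Phi}_V\circ \tilde{\Phi}_U^{-1}(p, f, A)$ becomes
\[
dpr_2 \circ d(\Phi_V\circ\Phi_U^{-1})(id_{T_pU}\oplus A) \;=\; \partial_p g_{VU}(p,f) \;+\; \partial_f g_{VU}(p,f)\circ A,
\]
which is manifestly affine in $A$. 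The nonzero constant term $\partial_p g_{VU}$ (which vanishes only when the trivializations differ by a map depending only on $F$) shows that $J^1(E) \to E$ and $J^1(E) \to M$ generally fail to be vector bundles.

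For claims (1) and (2), I would identify the fibers of $\pi_{1,0}$ and $\pi_1$ using a chart $\tilde{\Phi}_U$. Fixing $e \in E\big\vert_U$ with $\Phi_U(e) = (p, f)$, the fiber $\pi_{1,0}^{-1}(e)$ is sent bijectively by $\tilde{\Phi}_U$ to $\{(p, f)\}\times \hom(T_pU, T_fF)$, so it is a vector space in a canonical way. To make the identification with $\hom(T_{\pi(e)}M, V_e)$, observe that the restriction $d(\Phi_U^{-1})_{(p,f)}: \{0\}\oplus T_fF \to V_e$ is a linear isomorphism onto the vertical fiber at $e$, so composing with this gives the stated identification. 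The affine transition formula above specializes, upon fixing $(p,f)$, to a genuine linear change of coordinates $A \mapsto \partial_f g_{VU}(p,f)\circ A$, confirming that $\pi_{1,0}$ carries the structure of a (linear) vector bundle, not merely an affine bundle. The analogous statement for $\pi_1$ is obtained by varying $f$ instead.

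The only genuine obstacle is bookkeeping: keeping careful track of the splittings $T((U\cap V)\times F) = pr_1^*T(U\cap V) \oplus pr_2^*TF$ and how $d(\Phi_V\circ\Phi_U^{-1})$ interacts with the ``append $id_{TU}$'' procedure used to encode differentials of sections. No new difficulty is introduced by corners: every claim reduces, via the charts $\tilde{\Phi}_U$, to a statement about the smooth $\hom$-bundle $\hom(TU, TF) \to U \times F$, whose manifold-with-corners structure is automatic from that of $U$ and $F$.
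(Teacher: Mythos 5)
Your overall route is the same as the paper's: use the charts $(U,\tilde{\Phi}_U)$ to identify $J^1(E)\vert_U$ with $\hom(TU,TF)$, observe that the transition maps commute with the projections to $U\times F$ and to $U$ (giving the two fiber-bundle structures of claims (1) and (2)), and then read off affinity of the fiberwise transitions from the composition $A \mapsto id_{T_uU}\oplus A \mapsto dpr_2\bigl(d(\Phi_V\circ\Phi_U^{-1})(id_{T_uU}\oplus A)\bigr)$. Your explicit computation via $\Phi_V\circ\Phi_U^{-1}(p,f)=(p,g_{VU}(p,f))$, yielding the fiberwise formula $A\mapsto \partial_p g_{VU}(p,f)+\partial_f g_{VU}(p,f)\circ A$, is a perfectly good and slightly more concrete way to see claim (3), and the fiber identifications via $d(\Phi_U^{-1})_{(p,f)}:\{0\}\oplus T_fF \to V_e$ are exactly what the paper does.

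There is, however, one genuinely wrong sentence: you assert that the transition formula ``specializes, upon fixing $(p,f)$, to a genuine linear change of coordinates $A\mapsto \partial_f g_{VU}(p,f)\circ A$, confirming that $\pi_{1,0}$ carries the structure of a (linear) vector bundle.'' Fixing $(p,f)$ does not kill the constant term: the fiberwise transition over $(p,f)$ is still $A\mapsto \partial_p g_{VU}(p,f)+\partial_f g_{VU}(p,f)\circ A$, which is affine and in general not linear. This directly contradicts part (3) of the proposition (and your own earlier remark that the nonzero constant term obstructs a vector bundle structure), as well as the paper's subsequent Proposition \ref{prop:C2}, where vector bundle structures on $\pi_{1,0}:J^1(E)\to E$ exist only after a choice of connection and are not canonical. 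Fortunately the proposition only claims fiber-bundle structures with fibers isomorphic, as vector spaces, to the relevant $\hom$ spaces, so deleting that sentence leaves your argument correct and essentially identical to the paper's proof.
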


\begin{remark}
Proposition 4.1.7, lemma 4.1.9, and corollary 4.1.10 of \cite{Sa} imply the first two results.
\end{remark}

\begin{remark}
The purpose of explicitly stating part $3$ of proposition \ref{prop:jettopology} is to make it clear that $\pi_1$ and $\pi_{1,0}$ are not necessarily vector bundles, despite their role as a generalization of the tangent and cotangent bundles.  However, they are \emph{close} to being vector bundles in the sense that the transition maps are affine maps when restricted to the fibers.  This result is encapsulated in proposition 4.6.3 in \cite{Sa}, which states that $J^1(E)$ is an affine bundle modeled on $\pi^*TM\otimes V$ with $V\to E$ being the vertical bundle.
\end{remark}

\begin{proof}[Proof of proposition \ref{prop:jettopology}] \mbox{ }

\begin{enumerate}
\item The charts $(U, \tilde{\Phi}_U)$ identify $J^1(E)\vert_U$ with $\hom(TU,TF)$ and $\pi_{1,0}$ with \newline $pr_U\times pr_F:\hom(TU,TF)\to U\times F$ and the transition maps are smooth maps of \emph{fiber} bundles.  That is, for any two charts $(U,\tilde{\Phi}_U)$, $(V,\tilde{\Phi}_V)$ the diagram:

    \begin{displaymath}
    \xymatrixcolsep{5pc}\xymatrix{
    \hom(T(U\cap V),TF) \ar[dr]^{pr_U\times pr_F} \ar[r]^{\tilde{\Phi}_V \circ \tilde{\Phi}_U^{-1}} & \hom(T(U\cap V),TF) \ar[d]^{pr_U \times pr_F} \\
                                                                                            & U \times F
    }
    \end{displaymath}
    commutes.  Therefore, $\pi_{1,0} : J^1(E) \to E$ is a smooth fiber bundle.  We can identify the typical fiber by identifying the fiber of $pr_U\times pr_F: \hom(TU,TF) \to U\times F$, which is $\hom(T_uU, T_fF) \simeq \hom(T_uU,V_e)$, where $\pi(e)=u$.

\item The charts $(U,\tilde{\Phi}_U)$ identify $\pi_1$ with the projection $pr_U: \hom(TU,TF) \to U$, which is a fiber bundle with typical fiber $\hom(T_uU, TF)$, hence $\pi_1\vert_U:J^1(E)\vert_U \to U$ has the structure of a fiber bundle with fiber $\hom(T_uU,TF)$.  The transition maps $\tilde{\Phi}_V \circ \tilde{\Phi}_U^{-1}$ make the diagram:

    \begin{displaymath}
    \xymatrixcolsep{5pc}\xymatrix{
    \hom(T(U\cap V), TF) \ar[dr]^{pr_U} \ar[r]^{\tilde{\Phi}_V \circ \tilde{\Phi}_U^{-1}} & \hom(T(U\cap V),TF) \ar[d]^{pr_U} \\
                                                                                  & U\cap V
    }
    \end{displaymath}
    commute, hence they are isomorphisms of fiber bundles.  Thus, $\pi_1:J^1(E) \to M$ is a smooth fiber bundle over $M$.

\item We check that $\tilde{\Phi}_V \circ \tilde{\Phi}_U^{-1}: \hom(T_u(U\cap V), T_fF) \to \hom(T_u(U\cap V), T_{f'}F)$ is affine (linear up to the addition of a constant term), where $f'=pr_2(\Phi_V \circ \Phi_U^{-1})(u,f)$.  Let $A\in \hom(T_u(U\cap V),T_fF)$.  Then $\tilde{\Phi}_V \circ \tilde{\Phi}_U^{-1}(A)$ is the composition:

    \begin{displaymath}
    A \to id_{T_uU} \oplus A \to dpr_2(d(\Phi_V \circ \Phi_U^{-1})(id_{T_uU} \oplus A))
    \end{displaymath}
    where the last arrow is a linear map and the first arrow is an affine map.  Therefore, the composition is an affine map.
\end{enumerate}
\end{proof}

\noindent Maps between jet bundles are simply maps of fiber bundles.  However, there is a distinguished class of maps that are induced by maps of fiber bundles.  They are studied in section 4.2 of \cite{Sa}.

\begin{definition}\label{def:jetmap}
Let $\pi:E \to M$, $\pi':E' \to M$ be two fiber bundles over a manifold with corners $M$.  Let $\gamma:E \to E'$ be a map of fiber bundles.  Then the induced map $\tilde{\gamma}:J^1(E) \to J^1(E')$ is defined as:

\begin{displaymath}
\tilde{\gamma}(j^1_p\phi) = j^1_p(\gamma(\phi))
\end{displaymath}
where $j^1_p\phi$ is the $1$-equivalence class of sections at $p$ with representative $\phi$.
\end{definition}

\begin{lemma}\label{lem:jetmap}
Let $\pi:E \to M$, $\pi':E'\to M$ be two fiber bundles over a manifold with corners $M$.  Let $\gamma:E\to E'$ be a map of fiber bundles and let $\tilde{\gamma}:J^1(E)\to J^1(E')$ be the induced map.  Then,
\begin{enumerate}
\item $\tilde{\gamma}$ is a smooth map of fiber bundles over $M$.
\item There is a commutative diagram:

\begin{displaymath}
\xymatrix{
J^1(E) \ar[r]^{\tilde{\gamma}} \ar[d]^{\pi_{1,0}} & J^1(E') \ar[d]^{\pi'_{1,0}} \\
E \ar[r]^{\gamma}                                 & E '
}
\end{displaymath}

\item $\tilde{\gamma}$ is a diffeomorphism if and only if $\gamma$ is a diffeomorphism.
\end{enumerate}
\end{lemma}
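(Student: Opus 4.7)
The plan is to reduce the whole statement to a local computation in the trivializing charts $(U, \tilde{\Phi}_U)$ constructed in lemma \ref{lem:jettopology}. First I would verify that $\tilde{\gamma}$ is well-defined: if two local sections $(U_1, \phi_1), (U_2, \phi_2)$ are $1$-equivalent at $p$, then $\phi_1(p) = \phi_2(p)$ and $d\phi_1|_p = d\phi_2|_p$, so the chain rule gives $(\gamma\circ\phi_1)(p) = (\gamma\circ\phi_2)(p)$ and $d(\gamma\circ\phi_1)|_p = d\gamma|_{\phi_1(p)}\circ d\phi_1|_p = d\gamma|_{\phi_2(p)}\circ d\phi_2|_p = d(\gamma\circ\phi_2)|_p$, so $\tilde{\gamma}$ does not depend on the choice of representative. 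Part (2) is then immediate from unraveling definitions, namely $\pi'_{1,0}(\tilde{\gamma}(j^1_p\phi)) = \pi'_{1,0}(j^1_p(\gamma\circ\phi)) = \gamma(\phi(p)) = \gamma(\pi_{1,0}(j^1_p\phi))$.

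For smoothness in part (1), I would pick trivializations $\Phi_U, \Phi'_U$ of $E$ and $E'$ over a common open set $U \subseteq M$, in terms of which $\gamma$ has the form $(p, f) \mapsto (p, \gamma_U(p, f))$ for some smooth $\gamma_U$. Unwinding the chart $\tilde{\Phi}_U$ using the description given in lemma \ref{lem:jettopology} and applying the chain rule to a section of the form $\phi(x) = \Phi_U^{-1}(x, g(x))$, the local coordinate expression of $\tilde{\gamma}$ takes the form $(p, f, A) \mapsto (p, \gamma_U(p, f),\, \partial_1\gamma_U|_{(p,f)} + \partial_2\gamma_U|_{(p,f)}\circ A)$, which is manifestly smooth in $(p, f, A)$ and commutes with the projections to $M$, establishing both smoothness and the fiber bundle property.

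The forward direction of part (3) is pure functoriality: the assignment $\gamma \mapsto \tilde{\gamma}$ respects composition and identities, since $j^1_p((\gamma\circ\delta)\circ\phi) = j^1_p(\gamma\circ(\delta\circ\phi))$ and $j^1_p(\mathrm{id}\circ\phi) = j^1_p\phi$, so an inverse $\gamma^{-1}$ (as a fiber bundle map) produces a smooth two-sided inverse $\widetilde{\gamma^{-1}}$ of $\tilde{\gamma}$. The main obstacle is the converse, where I must extract regularity of $\gamma$ from regularity of $\tilde{\gamma}$. The strategy is twofold: first, bijectivity of $\gamma$ follows from bijectivity of $\tilde{\gamma}$ together with the commuting diagram in (2) and the surjectivity of $\pi_{1,0}$, $\pi'_{1,0}$ (which have local sections because $J^1$ is an affine bundle by proposition \ref{prop:jettopology}); second, for smoothness of $\gamma^{-1}$, the local formula above shows that $\tilde{\gamma}$ being a diffeomorphism forces the affine map $A \mapsto \partial_1\gamma_U|_{(p,f)} + \partial_2\gamma_U|_{(p,f)}\circ A$ to be a bijection on each fiber, and a dimension count then forces the linear part $\partial_2\gamma_U|_{(p,f)}$ to be invertible at every $(p, f)$. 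The inverse function theorem (applied fiberwise, in the manifold-with-corners setting) then makes each $\gamma_U(p, \cdot)$ a local diffeomorphism, and combined with global bijectivity this yields smoothness of $\gamma^{-1}$, completing the proof.
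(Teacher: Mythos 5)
Your parts (1), (2), and the forward direction of (3) coincide with the paper's proof: the same chart computation via $\tilde{\Phi}_U$, the same unraveling of $\pi'_{1,0}\circ\tilde{\gamma}=\gamma\circ\pi_{1,0}$, and the same functoriality argument producing $\widetilde{\gamma^{-1}}=\tilde{\gamma}^{-1}$. Where you genuinely diverge is the converse of (3). The paper is constructive: it defines $\gamma^{-1}(e')=\pi_{1,0}(\tilde{\gamma}^{-1}(j^1_p\phi_{e'}))$ using locally constant sections, verifies directly that this is a two-sided set-theoretic inverse (so bijectivity of $\gamma$ comes for free), and proves smoothness by writing $\gamma^{-1}$ in charts as $(pr_U\times pr_F)\circ\tilde{\Phi}_U\circ\tilde{\gamma}^{-1}\circ(\tilde{\Phi}_U')^{-1}\circ\mathcal{O}$, where $\mathcal{O}$ is the zero-section embedding — no inverse function theorem needed. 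You instead extract invertibility of the vertical derivative $\partial_2\gamma_U$ from invertibility of the fiberwise affine maps and then invoke the inverse function theorem together with global bijectivity. Both routes work; the paper's buys an explicit formula for $\gamma^{-1}$ and sidesteps regularity theorems in the corners setting, while yours is the more standard differential-geometric argument and makes transparent \emph{why} $\gamma$ must be a local diffeomorphism.

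One step in your version needs more care: injectivity of $\gamma$ does not follow merely from the commuting diagram and surjectivity of the projections, since $\gamma(e_1)=\gamma(e_2)$ only tells you that the two disjoint fibers $\pi_{1,0}^{-1}(e_1)$ and $\pi_{1,0}^{-1}(e_2)$ both map into the single fiber over the common image, which is consistent with $\tilde{\gamma}$ being injective unless you also know those fiber maps are \emph{onto}. You do get this, but only after the dimension count: injectivity of $\tilde{\gamma}$ on fibers forces $\partial_2\gamma_U$ injective, $\dim J^1(E)=\dim J^1(E')$ forces $\dim F=\dim F'$, hence each affine fiber map is a bijection onto the whole target fiber, and two disjoint fibers cannot both surject onto the same fiber under an injective map. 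So the claim is salvageable from ingredients you already have, but the order of deductions as written is slightly circular and should be rearranged. The paper's constant-section construction avoids this issue entirely, which is the main advantage of its approach.
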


\begin{proof} \mbox{} \newline
\begin{enumerate}
\item We use a chart $(U,\tilde{\Phi}_U)$ on $J^1(E)\vert_U\simeq \hom(TU,TF)$ and a chart $(U,\tilde{\Phi}'_U)$ on $J^1(E')\vert_U \simeq \hom(TU,TF') $ to see that the map $\tilde{\gamma}$ in coordinates is:

    \begin{displaymath}
    \tilde{\Phi}'_U \circ \tilde{\gamma} \circ \tilde{\Phi}_U^{-1} (p,f,A) = (p,pr_2(\Phi'_U(\gamma(\Phi_U^{-1}(p,f)))), dpr_2(d(\Phi_U' \circ \gamma \circ \Phi_U^{-1})(id_{T_pU} \oplus A)))
    \end{displaymath}
    which is smooth since $\Phi_U'$, $\Phi_U^{-1}$, $\gamma$, and $pr_2:U\times F' \to F'$ are smooth.  It is a map of fiber bundles because:

    \begin{displaymath}
    \pi'(\tilde{\gamma}(j^1_p\phi))= \pi'(j^1_p(\gamma(\phi))) = p = \pi(j^1_p\phi)
    \end{displaymath}
    hence $\pi'\circ \tilde{\gamma} = \pi$.

\item We need to verify that $\pi'_{1,0}\circ \tilde{\gamma} = \gamma\circ \pi_{1,0}$.  We have that for any $s\in J^1(E)$, $s=j^1_p\phi$ and:

\begin{displaymath}
\pi'_{1,0}(\tilde{\gamma}(j^1_p\phi)) = \pi'_{1,0}(j^1_p(\gamma(\phi))) = \gamma(\phi(p)) = \gamma(\pi_{1,0}(j^1_p\phi))
\end{displaymath}
which proves that $\pi'_{1,0}(\tilde{\gamma}(s))=\gamma(\pi_{1,0}(s)).$

\item If $\gamma$ is a diffeomorphism, then it has a smooth inverse $\gamma^{-1}$, which is also a map of fiber bundles.  Hence $\gamma^{-1}$ induces a map of jet bundles $\tilde{\gamma^{-1}}:J^1(E') \to J^(E)$ and it is straightforward to see that $\tilde{\gamma}^{-1} = \tilde{\gamma^{-1}}$.  On the other hand, if $\tilde{\gamma}$ is a diffeomorphism then we can construct the inverse, $\gamma^{-1}$, of $\gamma$ as follows:

    \begin{enumerate}
    \item For each $e' \in E'$, let $(U_{e'},\phi_{e'})$ be a local section near $p=\pi'(e')$ so that $\phi_{e'}(u)=e'$ for all $u\in U_{e'}$.  That is, we choose a local section that is constant.  Similarly, for $e\in E$ we can define a local section $\phi_e$ near $\pi(e)$ satisfying $\phi(u)=e$ for all $u$ in a neighborhood of $p=\pi(e)$.  Then we may define:

        \begin{displaymath}
        \gamma^{-1}(e') = \pi_{1,0}(\tilde{\gamma}^{-1}(j^1_p\phi_{e'})
        \end{displaymath}
        and, using the definition of $\phi_e$, we have
        \begin{displaymath}
        \begin{array}{l c l}
        \gamma^{-1}(\gamma(e)) &= & \pi_{1,0}(\tilde{\gamma}^{-1}(j^1_p\phi_{\gamma(e)})) \\
                               &= & \pi_{1,0}(\tilde{\gamma}^{-1}(\tilde{\gamma}(j^1_p\phi_e))) \\
                               & = & \pi_{1,0}(j^1_p\phi_e) \\
                               & = & e
        \end{array}
        \end{displaymath}
        Where the second line follows since $\phi_{\gamma(e)}$ and $\gamma(\phi_e)$ are both local, constant sections near $p$ with value $\gamma(e)$ and $\tilde{\gamma}$ is one-to-one, hence $j^1_p\phi$ is the unique $1$-jet mapping to $j^1_p\phi_{\gamma(e)}$.  From part $2$, we also have that $\gamma \circ \pi_{1,0} \circ \tilde{\gamma}^{-1} = \pi_{1,0}'$, hence:

        \begin{displaymath}
        \begin{array}{lcl}
        \gamma(\gamma^{-1}(e')) & = & \gamma(\pi_{1,0}(\tilde{\gamma}^{-1}(j^1_p\phi_{e'}))) \\
                                & = & \pi_{1,0}'(j^1_p\phi_{e'}) \\
                                & = & e'
        \end{array}
        \end{displaymath}
        Therefore, $\gamma^{-1}$ is a set-theoretic inverse of $\gamma$.
    \item To see that $\gamma^{-1}$ is smooth, use charts $(U,\tilde{\Phi}_U)$, $(U,\tilde{\Phi}_U')$ on $J^1(E)$ and $J^1(E')$ respectively.  We have the diagram:

        \begin{displaymath}
        \xymatrixcolsep{5pc}\xymatrix{
        \hom(TU,TF) \ar[d]_{pr_U\times pr_F} \ar[r]^{\tilde{\Phi}_U'\circ \tilde{\gamma}\circ\tilde{\Phi}_U^{-1}} & \hom(TU,TF')\ar[d]^{pr_U\times pr_F} \\
                                   U \times F  \ar[r]^{\Phi_U' \circ \gamma \circ \Phi_U^{-1}}            &     U \times F' \ar@/^/[u]^{\mathcal{O}}
        }
        \end{displaymath}
        where the map $\mathcal{O}$ is the embedding of $U\times F'$ as the zero section.  Then $\gamma^{-1}$ in these coordinates is the composition:
        \begin{displaymath}
        \Phi_U \circ \gamma^{-1} \circ (\Phi_U')^{-1} = (pr_U \times pr_F) \circ (\tilde{\Phi}_U \circ \tilde{\gamma}^{-1} \circ (\tilde{\Phi}_U')^{-1}) \circ \mathcal{O}
        \end{displaymath}
        This is true because we may define the map $e \to j^1_{\pi(e)}\phi_e$, where $\phi_e$ is a locally constant section with value $e$ at $\pi(e)$, on $E\vert_U\simeq U\times F$.  In coordinates near $\pi(e)$, this map is exactly the zero section $\mathcal{O}$: $\phi_e$ is constant, hence all of its derivatives near $e$ vanish.
    \end{enumerate}
\end{enumerate}
\end{proof}

\begin{remark}
At the end of the proof of lemma \ref{lem:jetmap}, we defined a map $e \to j^1_{\pi(e)}\phi_e$ which sent $e$ to the equivalence class of a locally constant section defined near $\pi(e)$ with value $e$.  The reader may wonder why this map is only locally defined.  The concise answer is that the transition maps of $E$ may not send locally constant sections to locally constant sections.  To be more specific, let us suppose that it extends to a well defined section $\chi:E \to J^1(E)$.  Using a chart $(U,\tilde{\Phi}_U)$, one may readily see that
\begin{displaymath}
\tilde{\Phi}_U\circ \chi \circ \Phi_U^{-1}:U\times F \to \hom(TU,TF)
\end{displaymath}
\emph{must} be the zero section $\mathcal{O}$ since $\chi(e)$ is the equivalence class of a locally constant section whose derivatives vanish.  However, if we choose another chart $(V,\tilde{\Phi}_V)$ and consider:
\begin{displaymath}
\tau=(\tilde{\Phi}_V\circ \tilde{\Phi}_U^{-1}) \circ \mathcal{O} \circ (\Phi_V \circ \Phi_U^{-1})
\end{displaymath}
then there is no guarantee that this is the zero section of $\hom(T(U\cap V),TF)$ since the maps $\tilde{\Phi}_V\circ \tilde{\Phi}_U^{-1}$ are affine maps when restricted to the fibers (q.v. proposition \ref{prop:jettopology}, statement 3).  In diagrammatic form:

\begin{displaymath}
\xymatrixcolsep{5pc}\xymatrix{
\hom(T(U\cap V),TF) \ar[d]^{pr_U \times pr_F}  & \ar[l]_{\tilde{\Phi}_V^\circ \tilde{\Phi}_U^{-1}}^{\text{affine}} \hom(T(U\cap V), TF) \ar[d]^{pr_U \times pr_F} \\
(U\cap V) \times F  \ar@/^/[u]^{\tau}          & \ar[l]^{\Phi_V\circ \Phi_U^{-1}} (U\cap V)\times F \ar@/^/[u]^{\mathcal{O}}
}
\end{displaymath}
where $\tau$ is the pushforward of the zero section $\mathcal{O}$ using the top and bottom arrows.  Since the top arrow is an affine transformation, $\tau$ may not be the zero section.
\end{remark}

\begin{example}\label{ex:products}
Trivial fiber bundles are the focus of \cite{GG}, hence all of the facts (in the case of manifolds) we are about to present may be found there.  Let $M$ and $N$ be smooth manifolds with corners and let $M\times N \to M$ be the trivial fiber bundle over $M$ with fiber $N$.  We have essentially \emph{defined} $J^1(M\times N)$ to be $\hom(TM,TN)\to M\times N$ using the canonical map $\Phi:J^1(M\times N) \to \hom(TM,TN)$ given by:

\begin{displaymath}
\Phi(j_p^1\phi) = (p,\phi(p), dpr_2(d\phi_p))
\end{displaymath}
where $pr_2:M\times N \to N$ is the projection.  By lemma \ref{lem:jettopology}, this map is a bijection and therefore induces a smooth structure on $J^1(E)$ by pulling back the smooth structure on $\hom(TM,TN)$, which identifies $J^1(E)$ with $\hom(TM,TN)$.

\vspace{3mm}

Two interesting cases of jets of trivial bundles arise when the base space is $\R$ or the fiber is $\R$.

\begin{enumerate}
\item If $M=\R$, then the fiber bundle is $\R \times N \to \R$.  Let $pr_2:\R \times N \to N$ be the projection onto $N$.  Then any section $\phi(t)=(t,\gamma(t))$ gives rise to a curve $\gamma(t) = pr_2(\phi(t))$ and any curve gives rise to a section.  The first jet bundle $J^1(\R \times N)$ keeps track of the derivatives of these curves, hence it is no surprise that we obtain:

    \begin{displaymath}
    J^1(\R \times N) = \hom(T\R, TN) \simeq TN \times \R
    \end{displaymath}
    The extra factor of $\R$ arises because we are allowed to differentiate our curves $\gamma(t)$ at any $t\in \R$, hence the arbitrary convention of differentiating at $0$ and taking $\gamma'(0)$ as a tangent vector is removed.

\item If the fiber $N$ satisfies $N=\R$, then the bundle is $M\times \R$ and we have:

    \begin{displaymath}
    J^1(M\times \R) = \hom(TM,T\R) \simeq \hom(TM,\R)\times \R = T^*M \times \R
    \end{displaymath}
    The extra factor of $\R$ arises due to the fact that we must keep track of the image of the functions $f:M\to \R$ that we differentiate at $p \in M$ to obtain covectors $df_p\in T_pM$.  That is, the convention of forming the cotangent space $T_p^*M$ by differentiating functions $f:M\to \R$ at $p$ with value $f(p)=0$ is removed.
\end{enumerate}

\end{example}

\begin{example}\label{ex:vectorbundles}
This example is proposition 4.1.12 in \cite{Sa}.  Let $\pi:E \to M$ be a vector bundle.  Then there exists a canonical vector bundle structure on the bundle $\pi_1:J^1(E) \to M$.  We define:

\begin{itemize}
\item $j^1_p\phi + j^1_p\psi := j^1_p(\phi + \psi)$ and
\item $cj^1_p\phi := j^1_p(c\phi)$.
\end{itemize}
These operations are smooth since we are simply differentiating the addition and scalar multiplication operations on $E$.  We will see that vector bundle structures on $\pi_{1,0}:J^1(E) \to M$ are defined choices of connection on $E$, hence the vector bundle structures on $\pi_{1,0}$ are not canonically defined unless there is a canonical choice of connection.
\end{example}

\begin{example}\label{ex:mobius}
This is example 4.1.18 in \cite{Sa}, but we add a few more details.  Let $\R\times[0,1]/(t,0)\sim(-t,1)$ be the M\"{o}bius bundle over $S^1$.  It is a vector bundle, hence example \ref{ex:vectorbundles} reveals that $\pi_1:J^1(E) \to S^1$ has the structure of a vector bundle.  We claim that $J^1(E)\simeq E \oplus E$ as vector bundles over $S^1$.  We prove this fact by showing the two bundles have the same transition maps over the same cover of $S^1$.

\vspace{3mm}

For simplicity's sake, embed $S^1$ in $\R^2$ as the unit circle and cover it with the open sets $U_0=S^1\setminus \{(1,0)\}$ and $U_1=S^1\setminus \{(-1,0)\}$.  Let $U_{10}=U_1\cap U_0$ be the intersection.  The transition map for $E$ is:

\begin{displaymath}
\xymatrixrowsep{.5pc}\xymatrix{
\Phi_{10}:U_{10}\times \R \ar[r] &  U_{10}\times \R \\
           (p,t)  \ar[r]         &      (p,-t)
}
\end{displaymath}
meaning the induced transition map on $J^1(E)$ is given by:

\begin{displaymath}
\xymatrixrowsep{.5pc}\xymatrix{
\tilde{\Phi}_{10}:\hom(TU_{10},T\R) \ar[r]  & \hom(TU_{10},T\R)  \\
                   \tilde{\Phi}_{10}(p,t,A) \ar[r] & (p,-t,-A)
                   }
\end{displaymath}
Since $\hom(TU_{10},T\R)$ is a trivial $2$-plane bundle over $U_{10}$, we see that $J^1(E)$ and $E\oplus E$ have the same defining cover and transition map, meaning they are isomorphic.  Here, we use the vector field $\frac{\partial}{\partial \theta}$ on $S^1$ to trivialize $TS^1$.
\end{example}

\begin{example}\label{ex:principal}
Assume $G$ is a compact Lie group and $\pi:P\to M$ is a principal $G$-bundle.  Then $J^1(P)$ is equipped with an action of $G$ defined by:

\begin{displaymath}
g\cdot (j^1_p\phi) := j^1_p(g\cdot \phi)
\end{displaymath}
where $g\in G$.  This action is free:

\begin{displaymath}
g\cdot j_p^1\phi = j_p^1\phi  \mbox{ } \to \mbox{ } g\cdot\phi(p)=\phi(p) \mbox{ } \to \mbox{ } g=e
\end{displaymath}
since the action of $G$ on $P$ is free.  The action is proper since $G$ is compact, hence $J^1(P)/G$ is a manifold with corners.  The map $\pi_1:J^1(P) \to M$ is $G$-invariant and descends to a smooth map on the orbit space:

\begin{displaymath}
\bar{\pi}_1: J^1(P)/G \to M
\end{displaymath}
We will see that sections of $\bar{\pi}_1$ correspond to principal connections on $P$.
\end{example}

\subsubsection{Sections of $J^1(E)$ and Prolongations}
\noindent Given a local section $(U,\phi)$ of a fiber bundle $\pi:E \to M$, we would like to describe its prolongation $(U,j^1\phi)$ to a local section of $J^1(E)$ as in \cite{Sa}.  The prolongation will give us information about the derivative of the section $\phi$, hence first order singularities of $\phi$ will be encoded in the $1$-jet prolongation $j^1\phi$.

\begin{remark}
Here, a singularity is not a point where $\phi$ ceases to be smooth: a singular point will be a point where the differential of $\phi$ drops rank or corank in some way, which we cover in section 2.1.4.
\end{remark}

\begin{definition}\label{def:prolongation}
Let $(U,\phi)$ be a local section of a fiber bundle $\pi:E \to M$.  We define its \emph{prolongation} or $1$-\emph{jet}, $(U,j^1\phi)$, to a local section of $\pi_1:J^1(E)\to M$ to be the map:

\begin{displaymath}
(j^1\phi)(p) = j^1_p\phi
\end{displaymath}
where $j^1_p\phi$ is the equivalence class of sections at $p$ whose value is $\phi(p)$ and derivative is $d\phi_p$.
\end{definition}

\begin{lemma}\label{lem:prolongation}
Let $(U,\phi)$ be a local section of $\pi:E\to M$.  Then $(U,j^1\phi)$ is a smooth local section of $\pi_1:J^1(E)\to M$.
\end{lemma}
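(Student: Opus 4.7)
The plan is to verify the two defining properties of a smooth local section: (i) $\pi_1 \circ j^1\phi = \id_U$, and (ii) $j^1\phi: U \to J^1(E)$ is smooth. The first property is immediate from definitions: for any $p \in U$, we have $(j^1\phi)(p) = j^1_p\phi \in J^1_pE$, and $\pi_1(j^1_p\phi) = p$ by Definition \ref{def:firstjet}. So the substance of the lemma is smoothness.

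To check smoothness, I would pass to a chart. Cover $U$ by open sets $V \subseteq M$ over which $E$ is trivializable via isomorphisms $\Phi_V: E|_V \to V \times F$, and use the chart $(V, \tilde{\Phi}_V)$ on $J^1(E)$ from Definition \ref{def:chart}. Since smoothness is a local property, it suffices to prove that $\tilde{\Phi}_V \circ j^1\phi: U \cap V \to \hom(TV, TF)$ is smooth for each such $V$. Now $\Phi_V \circ \phi$ is a smooth local section of the trivial bundle $V \times F \to V$, so it has the form $p \mapsto (p, g(p))$ for some smooth map $g: U \cap V \to F$, and its differential splits as $d(\Phi_V \circ \phi)_p = \id_{T_pV} \oplus dg_p$ under the canonical splitting $T(V \times F) = pr_1^*TV \oplus pr_2^*TF$.

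Unpacking the definition of $\tilde{\Phi}_V$ from Lemma \ref{lem:jettopology}, we therefore compute
\begin{displaymath}
\tilde{\Phi}_V(j^1_p\phi) = \bigl(p,\, \Phi_V(\phi(p)),\, dpr_2(d\Phi_V(d\phi_p))\bigr) = (p,\, g(p),\, dg_p).
\end{displaymath}
This is the prolongation of $g$ in the trivialization, and it depends smoothly on $p$ because $g$ is smooth (so $p \mapsto g(p)$ is smooth into $F$, and $p \mapsto dg_p$ is smooth into $\hom(TV, TF)$ by the standard theory of the differential).

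I do not expect any real obstacle here; the only thing worth being careful about is the manifold-with-corners setting. The smoothness of $p \mapsto dg_p$ at boundary or corner points of $U \cap V$ is handled in the usual way (smoothness on a manifold with corners means local extendability to a smooth map on an open set of Euclidean space, and partial derivatives of such an extension are again smooth on the corner domain). Consequently $j^1\phi$ is smooth in each chart $(V, \tilde{\Phi}_V)$, and because the atlas $\{(V, \tilde{\Phi}_V)\}$ is $C^\infty$ by Lemma \ref{lem:jettopology1} and Proposition \ref{prop:jettopology}, smoothness is independent of the choice of trivialization. Hence $(U, j^1\phi)$ is a smooth local section of $\pi_1: J^1(E) \to M$.
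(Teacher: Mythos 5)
Your proof is correct and follows essentially the same route as the paper: pass to a chart $(V,\tilde{\Phi}_V)$ induced by a trivialization $\Phi_V$ of $E$, identify $\tilde{\Phi}_V\circ j^1\phi$ with the differential of $\Phi_V\circ\phi$ (equivalently, with $(p,g(p),dg_p)$), and conclude smoothness from smoothness of $\phi$ and $\Phi_V$. The extra care you take with the corner case and with chart-independence is consistent with, and slightly more explicit than, the paper's argument.
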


\begin{proof}\mbox{} \newline
Using a trivializiation $(V,\Phi_V)$ of $E$, we get a chart $(V,\tilde{\Phi}_V)$ and obtain the diagram:

\begin{displaymath}
\xymatrixcolsep{5pc}\xymatrix{
J^1(E)\vert_{U\cap V} \ar[d]^{\pi_{1,0}} \ar[r]^{\tilde{\Phi}_V} & \hom(T(U\cap V),TF) \ar[d]_{pr_U\times pr_F} \\
E \ar[d]^\pi \ar[r]^{\Phi_V}                             & (U\cap V)\times F \ar[d]^{pr_1} \\
U\cap V \ar@<-.5ex>[r]^{id} \ar@/^/[u]^{\phi} \ar@/^2pc/[uu]^{j^1\phi}                                         & U \cap V \ar@/^/[u]^{\Phi_V(\phi)} \ar@/_3pc/[uu]_{d(\Phi_V(\phi))}
}
\end{displaymath}
and we see that $j^1\phi$ is identified with the local section of $\hom(T(U\cap V),TF)$ given by $d(\Phi_V\circ \phi)$.  Since $\phi$ and $\Phi_V$ are smooth, $j^1\phi$ is smooth.
\end{proof}

\noindent We may also define smooth sections of $\pi_{1,0}:J^1(E) \to E$.

\begin{definition}\label{def:jetfields}
Let $\pi:E \to M$ be a fiber bundle over a manifold with corners and let $J^1(E)$ be its first jet bundle with projection map $\pi_{1,0}:J^1(E) \to E$.  A \emph{jet field} $\chi$ is a (global) section $\chi:E \to J^1(E)$ of $\pi_{1,0}$.
\end{definition}

\subsubsection{Connections, Jet Fields, and Vector Bundle Structures on $J^1(E)$}
Let $\pi:E\to M$ be a fiber bundle over a manifold with corners and let $J^1(E)$ be its first jet bundle.  We would like to study the relationships between Ehresmann connections on $E$, jet fields, and vector bundle structures on $J^1(E)$.

\begin{remark}\label{rem:connections}
Recall, if $\pi:E \to M$ is a fiber bundle over a manifold with corners $M$, then a (Ehresmann) connection $\chi$ is defined to be a smooth splitting of the tangent bundle $TE$ into $TE \simeq H \oplus V$, where $V=\ker(d\pi)$ is the \emph{vertical bundle} of $E$ and $H$ is a choice of \emph{horizontal bundle} transverse to $V$.  This choice gives us two projections:

\begin{itemize}
\item $p_H:TE \to H$ and
\item $p_V:TE \to V$.
\end{itemize}
where $p_H\big\vert_H = id_H$, $p_V\big\vert_V=id_V$, $p_H\big\vert_V=0$, and $p_V\big\vert_H=0$.  Conversely, given a projection operator $p_V:TE \to V$ satisfying $p_V\big\vert_V=id_V$, we may define $H=\ker(p_V)$ and $p_H:= id_{TE}-p_V$.  Similarly, if we are given an operator $p_H:TE \to TE$ satisfying $p_H^2=p_H$, $\ker(p_H)=V$, and $p_H\big\vert_{p_H(TE)} = id$, then we may define $H:= p_H(TE)$ as the horizontal subbundle and we obtain a vertical projection $p_V:= id_{TE}-p_V$.  Thus, a choice of an Ehresmann connection is equivalent to a choice of projection operator $p_V$ onto the vertical bundle or a choice of a projection operator $p_H$ onto a horizontal bundle.
\end{remark}

\begin{remark}
The following proposition is proposition 4.6.3 in \cite{Sa}.  Our proof is modeled closely on the one found in \cite{Sa}, but we avoid coordinates.
\end{remark}

\begin{prop}\label{prop:C1}
Let $\pi:E \to M$ be a fiber bundle over a manifold with corners and let $J^1(E)$ be its first jet bundle.  Let $\Gamma(J^1(E))$ be the space of jet fields, which are sections of $\pi_{1,0}:J^1(E) \to E$.  Let $\operatorname{Con}(E)$ denote the space of Ehresmann connections on $E$, which we will view as the space of projection operators $p_H:TE \to TE$ satisfying:

\begin{enumerate}
\item $p_H^2 = p_H$,
\item $\ker(p_H)=V$, and
\item $p_H\big\vert_{p_H(TE)} = id$.
\end{enumerate}
Then there is a bijection:

\begin{displaymath}
\Sigma:\Gamma(J^1(E)) \to \operatorname{Con(E)}
\end{displaymath}
hence a jet field on $E$ uniquely specifies a connection on $E$.
\end{prop}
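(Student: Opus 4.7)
The plan is to construct $\Sigma$ explicitly using the identification of $j^1_p\phi$ with the pair $(\phi(p), d\phi_p)$ and then to build an inverse from the horizontal projection of a connection. Given a jet field $\chi \in \Gamma(J^1(E))$ and a point $e \in E$, write $\chi(e) = j^1_{\pi(e)}\phi$ for some local section $\phi$ with $\phi(\pi(e)) = e$. The subspace
\[
H_e := \operatorname{image}(d\phi_{\pi(e)}) \subseteq T_eE
\]
depends only on the equivalence class $j^1_{\pi(e)}\phi$, not on the chosen representative, so it is well defined. Since $d\pi_e \circ d\phi_{\pi(e)} = \id_{T_{\pi(e)}M}$, the map $d\pi_e$ restricts to an isomorphism $H_e \to T_{\pi(e)}M$, so $H_e$ is complementary to $V_e = \ker(d\pi_e)$. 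I would then define $\Sigma(\chi)$ by the horizontal projection
\[
p_H(v) := d\phi_{\pi(e)}\bigl(d\pi_e(v)\bigr), \qquad v \in T_eE.
\]
By construction $p_H^2 = p_H$, $\ker(p_H) = V_e$, and $p_H|_{H_e} = \id_{H_e}$, which matches the three requirements on the connection.

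The inverse assignment goes as follows. Given a connection $p_H$ with horizontal bundle $H = p_H(TE)$, the map $d\pi|_{H_e} : H_e \to T_{\pi(e)}M$ is an isomorphism, and I let $s_e : T_{\pi(e)}M \to T_eE$ be its inverse followed by inclusion. Using a fiber-bundle trivialization $\Phi_U : E|_U \to U \times F$ near $e$, the linear map $d(\Phi_U)_e \circ s_e$ has the form $\id_{T_{\pi(e)}U} \oplus A_e$ for a unique $A_e \in \hom(T_{\pi(e)}U, T_fF)$ with $f = pr_2(\Phi_U(e))$; pick any local section $\psi$ of $U \times F$ with $\psi(\pi(e)) = (\pi(e),f)$ and $d\psi_{\pi(e)} = \id \oplus A_e$ (such $\psi$ exists by lemma \ref{lem:jettopology}), and set
\[
\Sigma^{-1}(p_H)(e) := j^1_{\pi(e)}(\Phi_U^{-1}\circ \psi).
\]
This depends only on $s_e$, hence only on $H_e$, hence only on $p_H$.

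Verifying that these two constructions are inverse to one another is immediate at the set-theoretic level: the differential $d\phi_{\pi(e)}$ recovered from $\Sigma(\chi)$ has image exactly $H_e$ and satisfies $d\pi \circ d\phi = \id$, so the equivalence class $j^1_{\pi(e)}\phi$ is determined by $H_e$; and in the opposite direction, applying $\Sigma$ to the connection built from $\chi$ recovers the horizontal image $H_e = \operatorname{image}(d\phi_{\pi(e)})$.

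The only real work is smoothness of both assignments, and this is the step I expect to be the main obstacle because $J^1(E)$ is only an affine bundle with chart-dependent model data. I would handle it by working in a trivializing chart $(U, \Phi_U)$ and invoking the identification $\tilde\Phi_U : J^1(E)|_U \to \hom(TU, TF)$ from lemma \ref{lem:jettopology}. In that chart a jet field $\chi$ is represented by a smooth map $A : U \times F \to \hom(TU, TF)$, and the formula above shows that the corresponding horizontal projection is
\[
p_H(v,w) = \bigl(v,\; A(p,f)(v)\bigr), \qquad (v,w) \in T_pU \oplus T_fF,
\]
which is manifestly smooth in $(p,f)$ and hence defines a smooth bundle endomorphism of $T(U\times F)$; transport back to $T(E|_U)$ via $d\Phi_U$. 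Conversely, a smooth connection $p_H$ on $E|_U$ transports under $d\Phi_U$ to a smooth family of horizontal subspaces in $T(U\times F)$, which in turn yields a smooth map $A : U \times F \to \hom(TU, TF)$ via the decomposition $\id \oplus A$ produced above; this is precisely the coordinate expression of $\tilde\Phi_U \circ \Sigma^{-1}(p_H)$. The transition maps $\tilde\Phi_V \circ \tilde\Phi_U^{-1}$ being diffeomorphisms (proposition \ref{prop:jettopology}) guarantees the assignments glue, completing the proof.
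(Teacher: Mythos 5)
Your proposal is correct and follows essentially the same route as the paper: the forward map is the identical formula $p_H = d\phi_{\pi(e)}\circ d\pi_e$, and your inverse via the chart decomposition $d\Phi_U\circ s_e = \id_{T_{\pi(e)}U}\oplus A_e$ is the same computation the paper performs when it writes $p_H = dpr_1 \oplus \beta\circ dpr_1$ in a trivialization. The only cosmetic difference is organizational — you exhibit a two-sided inverse pointwise and then check smoothness in charts, whereas the paper proves injectivity first and uses that uniqueness to glue the local inverse constructions — but the substance is the same.
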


\begin{proof} \mbox{ } \newline
Using the notation of proposition \ref{prop:C1}, we let $\Gamma(J^1(E))$ be the space of jet fields (sections of $\pi_{1,0}:J^1(E) \to E$) and let $\operatorname{Con}(E)$ denote the space of Ehresmann connections on $E$.  We will construct the map
\begin{displaymath}
\Sigma: \Gamma(J^1(E) \to \operatorname{Con}(E)
\end{displaymath}
show it is injective, and then argue that injectivity implies surjectivity using a gluing argument.

\begin{enumerate}
\item The map $\Sigma$ is relatively easy to define.  Let $\chi$ be a jet field on $E$.  Then for each $e\in E$, $\chi(e)=j^1_p\phi$ for some local section $\phi$ with $\phi(p)=e$.  This defines a map $(p_H)_e:T_eE \to TE$ given by $(p_H)_e:=d\phi_p \circ d\pi_e$ with $(p_H)_e^2=(p_H)_e$ and $\ker(p_H)_e=V_e$, hence $H_e=\operatorname{Image}(p_H)_e$ is a horizontal subspace of $TE$.  Now, $(p_H)_e$ is well-defined since it only depends on the derivative $d\phi_p$, which is the same for all elements of $j^1_p\phi$.  Performing the construction for each $e\in E$ gives us a projection operator $p_H:TE \to TE$.  It is smooth since $\chi$ is smooth (the derivatives $d\phi_p$ vary smoothly) and its image is a horizontal subbundle $H$ of $TE$.  Thus, $p_H$ defines an Ehresmann connection on $E$ which we'll denote $\Sigma(\chi)$.
\item Suppose $\Sigma(\chi_1)=\Sigma(\chi_2)$.  Then the horizontal subbundles $H_1,H_2$ agree, i.e. $H_1=H_2$, and the projection operators $p_H^i:TE \to H$, $i=1,2$ agree.  In order to show that $\chi_1(e)=\chi_2(e)$, we need only check that the derivatives of the representative sections agree.  For each point $e\in E$ we have $d\phi^1_p \circ d\pi_e = d\phi^2_p\circ d\pi_e$, where $\phi^i_p$ is a representative of $\chi_i(e)$.  Since $\pi$ is a submersion, this implies that $d\phi^1_p=d\phi^2_p$, hence $\chi_1(e)=\chi_2(e)$.
\item Now, let $p_V:TE \to V$ be a connection on $E$ with $p_H:TE \to H$ given by $p_H=id_{TE} - p_V$.  We will construct a jet field $\chi$ on $E$ so that $\Sigma(\chi)$ is $p_H:TE \to H$.  For each $p\in M$, let $U_p$ be a neighborhood so that $E\vert_{U_p}$ is trivializable, hence we may assume $E=U\times F$ with projection $p_V:TU\times TF \to pr_2^*TF$, where $pr_2:U\times F \to F$ is the projection onto the second factor.  We will construct the requisite jet field on each $U_p$ and show that these jet fields agree on overlaps $U_{p_1}\cap U_{p_2}$, $p_1,p_2\in M$, hence they glue together to give a global jet field.  Let $pr_1:U\times F \to U$ be the projection on the first factor.  Since $dpr_1(p_V)=0$, we have:
    \begin{displaymath}
    dpr_1(p_H)=dpr_1(id_{TU\times TF} - p_V) = dpr_1
    \end{displaymath}
    That is, we have:
    \begin{displaymath}
    p_H = dpr_1(p_H) \oplus dpr_2(p_H) = dpr_1 \oplus dpr_2(p_H)
    \end{displaymath}
    The term $dpr_2(p_H)$ is a $TF$-valued $1$-form on $U\times F$ that vanishes on vertical vectors, meaning it vanishes on $pr_2^*TF$.  We have an injective map $\circ dpr_1$:

    \begin{displaymath}
    \xymatrixcolsep{4pc}\xymatrix{
    \hom(TU,TF) \ar[r]^{\circ dpr_1} \ar[d] & \hom(TU\times TF, TF) \ar[d] \\
     U\times F   \ar[r]^{id}      &  U \times F
    }
    \end{displaymath}
      given by pre-composition with $dpr_1$.  The image consists of all maps for which the kernel is the vertical bundle $pr_2^*TF \to U\times F$.  Because $dpr_2(p_H)$ vanishes on vertical vectors, it is in the image of $\circ dpr_1$ and we have that $dpr_2(p_H)=\beta\circ dpr_1$ where $\beta$ is a unique (by injectivity of $\circ dpr_1$) smooth section of $\hom(TU,TF) \to U\times F$.  Note that $\hom(TU,TF)\simeq J^1(U\times F)$ so that $\beta$ corresponds to a jet field on $U\times F$.  At a point $(u,f)\in U\times F$, the jet corresponding to $\beta_{(u,f)}$ is the equivalence class:
    \begin{displaymath}
    j^1_u\phi = [(\phi(u)=(u,f),d\phi_u=(id_{T_uU} \oplus \beta))]
    \end{displaymath}
    Then the projection operator we obtain at $(u,f)$, using the recipe of part $1$ of the proof, is:
    \begin{displaymath}
    d\phi_u\circ (dpr_1)_{(u,f)} = id_{T_uU}\circ dpr_1 \oplus \beta \circ dpr_1 = dpr_1 \oplus dpr_2(p_H) = p_H
    \end{displaymath}
    which means $\beta$ gives us the correct connection on $U\times F$.

    \vspace{5mm}
    Now, by part $2$ of the proof, $\beta$ is the unique jet field on $E\vert_U$ corresponding to the connection $p_V:T(E\vert_U) \to V$.  Thus, we may cover $E$ by subsets of the form $E\vert_U$, where $E\vert_U$ is trivializable, and construct a local jet field $\beta_U$ on each such set corresponding to $p_V$ restricted to $E\vert_U$.  On overlaps $E\vert_{U_1} \cap E\vert_{U_2}=E\vert_{U_1\cap U_2}$, $\beta_{U_1}$ and $\beta_{U_2}$ both correspond to the unique jet field that map to the connection $p_V$, hence they agree.  Thus, the $\beta_{U_i}'s$ define a global jet field $\beta$ so that $\Sigma(\beta)=\chi$.  We therefore have that $\Sigma$ is a surjection.
\item $\Sigma$ is injective by part $2$ and surjective by part $3$, so we have proven that $\Sigma$ is a bijection.
\end{enumerate}
\end{proof}

\begin{prop}\label{prop:C2}
Let $\pi:E \to M$ be a fiber bundle over a manifold with corners, let $\tau_E:TE\to E$ be the projection, let $\tau_E:V\to E$ be the vertical bundle of $E$, and let $J^1(E)$ be the first jet bundle of $E$.  Then,

\begin{enumerate}
\item A choice of connection $\chi$ on $E$ specifies an isomorphism of fiber bundles, $F_{\chi}$, over $E$ given by:
\begin{displaymath}
\xymatrixcolsep{4pc}\xymatrixrowsep{1pc}\xymatrix{
F_{\chi}: J^1(E) \ar[r] & \hom(\pi^*TM, V) \\
j^1_p\phi  \ar[r]         & p_V(d\phi_p)
}
\end{displaymath}
where $p_V:TE \to V$ is the vertical projection afforded by $\chi$.
\item The isomorphism $F_{\chi}$ induces a vector bundle structure on $J^1(E)$ so that $F_{\chi}$ is an isomorphism of vector bundles.  Furthermore, all such structures are isomorphic.
\item If $U_1,U_2$ are two neighborhoods with trivializations $\Phi_i:E\vert_{U_i} \to U_i\times F$ and induced charts \newline $\tilde{\Phi}_{U_i}:J^1(E)\vert_{U_i} \to \hom(TU_i, TF)$, then the transition maps
    \begin{displaymath}
    \tilde{\Phi}_{12}=\tilde{\Phi}_{U_2}\circ \tilde{\Phi}_{U_1}^{-1}
    \end{displaymath}
    are linear with respect to the vector bundle structure induced by $F_{\chi}$.
\end{enumerate}
\end{prop}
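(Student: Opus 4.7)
The plan is to handle each of the three assertions in turn, using the local charts $(U,\tilde\Phi_U)$ from proposition \ref{prop:jettopology} as the main computational tool.

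For part $1$, I would first verify that $F_\chi$ is well-defined and lands in the correct bundle. Since the class $j^1_p\phi$ only records $\phi(p)$ and $d\phi_p$, the expression $p_V(d\phi_p)$ is unambiguous, and because $\pi(\phi(p))=p$ the result lies in $\hom(T_pM,V_{\phi(p)})$, which is exactly the fiber of $\hom(\pi^*TM,V)$ at $\phi(p)$. Smoothness is a local check in a trivialization, where $F_\chi$ becomes a composition of smooth maps. To establish bijectivity I would exploit the splitting $T_eE=H_e\oplus V_e$ induced by $\chi$: for any local section $\phi$ through $e=\phi(p)$, the identity $d\pi_e\circ d\phi_p=\id_{T_pM}$ combined with $\ker(d\pi_e)=V_e$ pins the horizontal part to $p_H(d\phi_p)=(d\pi_e|_{H_e})^{-1}$, which depends only on $\chi$. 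Hence $d\phi_p=(d\pi_e|_{H_e})^{-1}+p_V(d\phi_p)$ is recoverable from $F_\chi(j^1_p\phi)$, giving injectivity; surjectivity follows because any $A\in\hom(T_pM,V_e)$ yields a candidate derivative $(d\pi_e|_{H_e})^{-1}+A$ that can be realized as $d\phi_p$ for some local section through $e$ by the same local-extension argument employed in the proof of lemma \ref{lem:jettopology}.

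For part $2$, I would transport the canonical vector bundle structure on $\hom(\pi^*TM,V)\to E$ back to $J^1(E)\to E$ via $F_\chi$, which tautologically makes $F_\chi$ a vector bundle isomorphism. Given two connections $\chi_1,\chi_2$ producing structures $\mathcal V_1,\mathcal V_2$, the map $F_{\chi_2}^{-1}\circ F_{\chi_1}:(J^1(E),\mathcal V_1)\to(J^1(E),\mathcal V_2)$ is a composition of two vector bundle isomorphisms covering $\id_E$, hence a vector bundle isomorphism, which shows any two such structures on $J^1(E)$ are isomorphic as vector bundles over $E$.

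For part $3$, the key computation is to reconcile the affine transition maps from proposition \ref{prop:jettopology} with the linear structure induced by $F_\chi$. In each chart $(U_i,\tilde\Phi_{U_i})$, the horizontal subbundle from $\chi$ determines a smooth section $s_i:U_i\times F\to\hom(TU_i,TF)$ whose image in $J^1(E)|_{U_i}$ coincides with the zero section of the $F_\chi$-vector bundle structure, namely the set of horizontal jets $\{j^1_p\phi\mid p_V(d\phi_p)=0\}$. Because this zero section is globally defined, the affine transition $\tilde\Phi_{12}(A)=L_{12}(A)+c_{12}$ from proposition \ref{prop:jettopology} must send $s_1$ to $s_2$, forcing $c_{12}=s_2-L_{12}(s_1)$; substituting yields $\tilde\Phi_{12}(A)-s_2=L_{12}(A-s_1)$, so in the $F_\chi$-adapted fiber coordinates $A\mapsto A-s_i$ the transition reduces to the linear map $L_{12}$. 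The main obstacle I anticipate is the bookkeeping in this last step: carefully identifying $s_i$ as the local expression of the $F_\chi$-zero section in the chart $\tilde\Phi_{U_i}$, and checking that the linear piece $L_{12}$ extracted from the affine transition really is the fiberwise linear map governing the transition of the $F_\chi$-vector bundle structure.
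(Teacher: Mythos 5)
Your proposal is correct. Parts $1$ and $2$ follow essentially the same route as the paper: well-definedness and smoothness are checked in a trivialization, injectivity comes from the fact that the horizontal component of $d\phi_p$ is forced to be $(d\pi_e\vert_{H_e})^{-1}$ so that $d\phi_p$ is recoverable from $p_V(d\phi_p)$ (this is exactly the paper's explicit inverse $F_{\chi}^{-1}(e,A)=[(\pi(e),e,\tilde{d\pi}_e^{-1}+A)]$), and the structure-transport plus composition $F_{\chi_2}^{-1}\circ F_{\chi_1}$ handles part $2$. Where you genuinely diverge is part $3$. The paper proves linearity by a direct computation: it writes down explicit chart formulas for $+_J$ and $(c)_J$, records the identities $d\Phi_{12}(0\oplus p_V^1)=(0\oplus p_V^2)(d\Phi_{12})$ and $(dpr_1\oplus 0)\circ d\Phi_{12}=dpr_1\oplus 0$, and expands both sides of $\tilde{\Phi}_{12}(A_1+_JA_2)=\tilde{\Phi}_{12}(A_1)+_J\tilde{\Phi}_{12}(A_2)$ term by term. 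You instead observe that the transition is affine with linear part $L_{12}$, that the globally defined set of horizontal jets (the $F_{\chi}$-zero section, cf. proposition \ref{prop:C3}) appears in chart $i$ as a section $s_i$ with $F_{\chi}(A)=A-s_i$ there, and that $\tilde{\Phi}_{12}(s_1)=s_2$ forces the constant term, so that in the translated coordinates $A\mapsto A-s_i$ the transition is just $L_{12}$. This is the abstract statement that a distinguished section of an affine bundle linearizes the transition functions — precisely the content of the Saunders lemma the paper cites in the remark preceding its proof but does not use. Your route is shorter and more conceptual; the paper's computation buys explicit formulas for $+_J$ and $(c)_J$ in charts, which it reuses elsewhere. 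The one step you flag as bookkeeping — identifying $F_{\chi}$ in the chart with $A\mapsto A-s_i$ — is immediate from $p_V^i(\id_{T_uU}\oplus A)=p_V^i(0\oplus(A-s_i))=A-s_i$, since $0\oplus(A-s_i)$ is vertical-valued and $p_V^i$ restricts to the identity on the vertical bundle; with that in place your argument closes.
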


\begin{remark}
The third claim about linearity of the transition maps is proven in \cite{Sa} by first showing $J^1(E)$ is an affine bundle modeled on $\pi^*TM\otimes V$ (theorem 4.1.11) and then proving lemma 2.4.8, which states that a section of an affine bundle modeled on a vector bundle induces a vector bundle structure on the affine bundle.  Since connections \emph{are} sections of $J^1(E)$, this is one way to see that connections induce vector bundle structures on $J^1(E)$.  In proposition \ref{prop:C2}, the vector bundle structure is made explicit when we construct the isomorphism with $\hom(\pi^*TM,V)$.
\end{remark}

\begin{proof} \mbox{ } \newline
\begin{enumerate}
\item Let $\chi$ be a connection on $\pi:E\to M$ and let $p_V:TE \to V$ be the corresponding projection onto the vertical bundle.  We define the map $F_{\chi}$ to be:

    \begin{displaymath}
    F_{\chi}(j^1_p\phi) = p_V(d\phi_p)
    \end{displaymath}
    which is well defined since it only depends on $\phi(p)$ and $d\phi_p$.  It is a map of fiber bundles since $\pi_{1,0}(j^1_p\phi)=\phi(p) = pr_E(p_V(d\phi_p))$, where $pr_E:\hom(\pi^*E, V) \to E$ is the projection.  We can see smoothness as follows.  A choice of trivialization $\Phi_U:E\vert_U \to U\times F$ gives us an identification $J^1(E)\simeq \hom(TU,TF)$, $\pi^*TM \simeq pr_1^*TU$, $V=pr_2^*TF$, and $\hom(\pi^*TM,V) = \hom(TU,TF)$, where $pr_1:U\times F \to U$, $pr_2:U\times F \to F$ are projection onto the first and second factors, respectively.  The map $F_{\chi}$ is then the composition:

    \begin{displaymath}
    \xymatrixrowsep{.5pc}\xymatrixcolsep{5pc}\xymatrix{
    \hom(TU,TF) \ar[r] & \hom(TU,TU\times TF) \ar[r] & \hom(TU,TF) \\
    (u,f,A) \ar[r]     & (u,f,id_{T_uU} \oplus A)   \ar[r] & (u,f, p_V(id_{T_uU} \oplus A))
    }
    \end{displaymath}
    where $(u,f) \in U\times F$.  Each operation is smooth, hence $F_{\chi}$ is locally a composition of smooth operations.  Furthermore, we can see that $F_{\chi}$ is a fibrewise isomorphism using this local picture.  The map is injective since, for any $A_1,A_2 \in \hom(T_uU,T_fF)$:

    \begin{displaymath}
    p_V(id_{T_uU}\oplus A_1) = p_v(id_{T_uU}\oplus A_2) \iff p_V(0\oplus (A_1-A_2)) = 0 \iff A_1-A_2 = 0 \iff A_1=A_2
    \end{displaymath}
    since $p_V\vert_V = id_V$ and $(0\oplus A_1-A_2)$ maps into the vertical bundle of $U\times F$.  It is an affine map of vector bundles since $A \to id_{T_uU} \oplus A$ is affine and projection onto $V$ is linear.  Since it is an injective, affine map of vector of the same rank, it is an isomorphism of $\hom(TU,TF)$ with itself.

    This shows that $F_{\chi}$ is locally an isomorphism of fiber bundles.  Since it is globally a map of fiber bundles, it is therefore a global isomorphism of fiber bundles.

\item It is a general fact that if $\phi:E_1 \to E_2$ is an isomorphism of fiber bundles and $E_2$ is a vector bundle, then one can use $\phi$ to pull back the vector bundle structure on $E_2$ to $E_1$.  In our case, it will be useful to give explicit formulas for addition and scalar multiplication. To do so, we'll need a formula for the inverse of $F_{\chi}$.  Let $\tau_E:H\to E$ be the horizontal bundle and consider the pullback diagram:

    \begin{displaymath}
    \xymatrixcolsep{4pc}\xymatrix{
    H \ar@/_2pc/[ddr]^{\tau_E} \ar@/^2pc/[drr]^{d\pi} \ar@{-->}[dr]^{\tilde{d\pi}} &                          & \\
                                                                             & \pi^*TM \ar[d]^{\pi^*\tau_M} \ar[r]^{\bar{\pi}} & TM \ar[d]^{\tau_M} \\
                                                                             & E \ar[r]^{\pi}                                & M
    }
    \end{displaymath}
    The universal property of pullbacks guarantees the existence of a smooth map $\tilde{d\pi}:H \to \pi^*TM$.  Since $d\pi:H \to TM$ is a fibrewise isomorphism, we must have that $\tilde{d\pi}$ is an isomorphism of vector bundles over $E$.  We can now write:

    \begin{displaymath}
    F_{\chi}^{-1}(e, A)= [(\pi(e),e, \tilde{d\pi}_e^{-1} + A)]
    \end{displaymath}
    where $[(\pi(e),e, \tilde{d\pi}_e^{-1} + A)]$ is the equivalence class of local sections at $\pi(e)$ with value $e$ and derivative $\tilde{d\pi}_e^{-1} + A$.  Note that all we have done is added the horizontal portion of the derivative of a section, $\tilde{d\pi}^{-1}$, to the vertical portion, $A$, to reconstruct the whole derivative of the local section.  To see this explicitly, note that if $p_H:TE \to H$ is the horizontal projection then for any local section $\phi$ of $E$ at $p\in M$ we have:

    \begin{displaymath}
    \tilde{d\pi}_{\phi(p)} \circ p_H \circ d\phi_p = id_{T_pM} \iff p_H\circ d\phi_p = \tilde{d\pi}_{\phi(p)}^{-1}
    \end{displaymath}
    hence the horizontal portion $p_H\circ d\phi$ is indeed $\tilde{d\pi}^{-1}$.  We may use this fact to write out the vector bundle operations on $J^1(E)$.

    \vspace{2mm}
     We will use $+_J$ and $(c)_J$ to denote addition and scalar multiplication on the jet bundle $J^1(E)$.
    \begin{displaymath}
    \begin{array}{lcl}
    j^1_p\phi_1 +_J j^1_p\phi_2 & := & F_{\chi}^{-1}(F_{\chi}(j^1_p\phi_1) + F_{\chi}(j^1_p\phi_2)) \\
                                &  = & F_{\chi}^{-1}(p_V((d\phi_1)_p + (d\phi_2)_p)) = [(p,\phi_1(p), \tilde{d\pi}^{-1} + p_V((d\phi_1)_p + (d\phi_2)_p)] \\

    (c)_J(j^1_p\phi)           &  := & F_{\chi}^{-1}(cF_{\chi}(j^1_p\phi)) = [(p,\phi(p), \tilde{d\pi}^{-1} + cp_V(d\phi_p))]
    \end{array}
    \end{displaymath}
    Note that we are defining addition in the fiber of $J^1(E)$ over $e$, hence $\phi_1(p)=\phi_2(p)$ in the definition of $+_J$.  Now, it is an easy consequence of the definitions of $F_{\chi}$ and the vector space structure that any two choices of connection, $\chi_1,\chi_2$ give isomorphic vector bundle structures on $J^1(E)$.  Note that we have:

    \begin{displaymath}
    \xymatrixcolsep{5pc}\xymatrix{
    J^1(E) \ar[r]^{F_{\chi_1}}   & \hom(\pi^*TM, V) & \ar[l]_{F_{\chi_2}} J^1(E)
    }
    \end{displaymath}
    where each map is an isomorphism of vector bundles with respect to the induced vector bundle structures.  Hence the composite $F_{\chi_2}^{-1} \circ F_{\chi_1}:J^1(E) \to J^1(E)$ is an isomorphism of vector bundles.

\item We have two trivializations of $E$ which give us a commutative diagram:

\begin{displaymath}
\xymatrixcolsep{4pc}\xymatrix{
\hom(T(U_1 \cap U_2), TF) \ar[r]^-{\tilde{\Phi}_{U_1}^{-1}} \ar[d]& J^1(E) \ar[d]^{\pi_{1,0}} \ar[r]^-{\tilde{\Phi}_{U_2}} & \hom(T(U_1 \cap U_2), TF) \ar[d] \\
(U_1\cap U_2) \times F \ar[r]^{\Phi_{U_1}^{-1}} \ar[dr]^{pr_1}  & E                   \ar[r]^{\Phi_{U_2}}              & (U_1 \cap U_2)\times F \ar[dl]^{pr_1}    \\
                                                                & U_1\cap U_2  &                                                                               
}
\end{displaymath}
and we wish to check that the composite $\tilde{\Phi}_{12}:= \tilde{\Phi}_{U_2} \circ \tilde{\Phi}_{U_1}^{-1}$ is linear with respect to the induced vector bundle structures on $\hom(T(U_1\cap U_2), TF)$.  Let us give formulas for these structures.  We have a collection of maps:

\begin{displaymath}
\begin{array}{ll}
pr_1:(U_1\cap U_2)\times F \to (U_1 \cap U_2) & \text{Projection onto the first factor.} \\
pr_2:(U_1\cap U_2) \times F \to F             & \text{Projection onto the second factor.} \\
p_V^i:T(U_1\cap U_2) \times TF \to pr_2^*TF   & \text{Projection onto the vertical bundle in $U_i$.} \\
id_{TU} \oplus 0: TU \rightarrow TU\times TF  & \text{The section of $\hom(TU,TU\times TF)$ given by}\\
                                              & (id_{TU}\oplus 0)_{(u,f)}(v)= (v,0)\in T_uU\times T_fF
\end{array}
\end{displaymath}
We have the relations:
\begin{enumerate}
\item $p_V^i(0\oplus dpr_2)=dpr_2$, which follows since $p_V^i$ restricted to the vertical bundle is the identity.
\item $dpr_2(0 \oplus p_V^i) = dpr_2 + p_V^i(dpr_1 \oplus 0)$, which follows from the first identity since $p_V^i=p_V^i(dpr_1\oplus dpr_2)$.
\item Letting $\Phi_{12}=\Phi_{U_2}\circ \Phi_{U_1}^{-1}$, we have that $d\Phi_{12}(0 \oplus p_V^1) = (0\oplus p_V^2)(d\Phi_{12})$.  That is, the transition maps are connection preserving.  This is because the connections on $U_i\times F$ are induced by the global connection on $E$.
\item We also have $(dpr_1\oplus 0)\circ d\Phi_{12} = dpr_1 \oplus 0$ since $\Phi_{12}$ is a map of fiber bundles, hence
\begin{displaymath}
(dpr_1\oplus 0)(d\Phi_{12}(id_{T_uU} \oplus A)) = (dpr_1\oplus 0)(id_{T_uU}\oplus A) = id_{T_uU} \oplus 0
\end{displaymath}
\end{enumerate}
We also have nice formulas for addition, $+_J$, and scalar multiplication, $(c)_J$.  For $A_1,A_2\in \hom(T_uU,T_fF)$ we compute:

\begin{displaymath}
\begin{array}{lcl}
A_1 +_J A_2 & = & dpr_2(\tilde{d\pi}^{-1} + p_V^i((id_{T_uU} \oplus A_1) + (id_{T_uU} \oplus A_2)))\\
            & = & dpr_2(\tilde{d\pi}^{-1} + p_V^i(id_{T_uU} \oplus 0) + p_V^i(id\oplus (A_1 + A_2))) \\
            & = & dpr_2(id_{T_uU} \oplus p_V^i(id_{T_uU}\oplus(A_1 + A_2))) \\
            & = & p_V^i(id_{T_uU}\oplus(A_1 + A_2)) \\
            &   & \text{where the last line follows since $dpr_2(p_V)=p_V$ at $(u,f)$.} \\
            & \mbox{ } & \\
(c)_J(A)  & = & dpr_2(\tilde{d\pi}^{-1} + cp_V^i(id_{T_uU} \oplus A)) \\
          & = & dpr_2(id + p_V^i(c(id_{T_uU}\oplus A) - id_{T_uU}\oplus 0))\\
          & = & p_V^i(c(id_{T_uU}\oplus A) - id_{T_uU}\oplus 0)
\end{array}
\end{displaymath}
Recall that $\tilde{\Phi}_{12}(A) = dpr_2(d\Phi_{12}(id_{T_uU}\oplus A))$.  We now show the transition maps are linear.
\begin{displaymath}
\begin{array}{lcl}
\tilde{\Phi}_{12}(A_1+_J A_2) &=& dpr_2(d\Phi_{12}(id_{T_uU}\oplus(A_1+_J A_2))) \\
                               &=& dpr_2(d\Phi_{12}(id_{T_uU}\oplus(p_V^i(id_{T_uU}\oplus(A_1+A_2))))) \\
                               &=& dpr_2(d\Phi_{12}(id_{T_uU}\oplus 0) + dpr_2(d\Phi_{12}(0\oplus p_V^i(id_{T_uU}\oplus(A_1+A_2)))) \\
                               &=& dpr_2(d\Phi_{12}(id_{T_uU}\oplus 0) + dpr_2((0\oplus p_V^2)d\Phi_{12}(id_{T_uU}\oplus(A_1+A_2))) \\
                               &=& dpr_2(d\Phi_{12}(id_{T_uU}\oplus 0) + dpr_2(d\Phi_{12}(id_{T_uU}\oplus(A_1+A_2))) +\\
                               & & p_V^2(dpr_1\oplus 0)d\Phi(id_{T_uU}\oplus A_1 + A_2) \\
                               &=& dpr_2(d\Phi_{12}(id_{T_uU}\oplus 0) + dpr_2(d\Phi_{12}(id_{T_uU}\oplus(A_1+A_2))) + p_V^2(id_{T_uU}\oplus 0)\\
                               & & \text{where we have appealed to two of the previously listed identities.} \\
                               &\mbox{} & \\
\tilde{\Phi}_{12}(A_1)+_J \tilde{\Phi}_{12}(A_2) &=& p_V^2(id_{T_uU}\oplus(\tilde{\Phi}_{12}(A_1)+ \tilde{\Phi}_{12}(A_2))) \\
                                                 &=& p_V^2(id_{T_uU}\oplus 0) + p_V^2(0\oplus dpr_2(d\Phi_{12}(id_{T_uU}\oplus A_1 + id_{T_uU}\oplus A_2)))\\
                                                 &=& p_V^2(id_{T_uU}\oplus 0) + dpr_2(d\Phi_{12}(id\oplus 0)) + dpr_2(d\Phi_{12}(id\oplus(A_1+A_2)))
\end{array}
\end{displaymath}
Evidently, $\tilde{\Phi}_{12}(A_1+_J A_2)=\tilde{\Phi}_{12}(A_1)+_J \tilde{\Phi}_{12}(A_2)$.  For scalar multiplication, we have:

\begin{displaymath}
\begin{array}{lcl}
\tilde{\Phi}_{12}((c)_J A) & = & dpr_2(d\Phi_{12}(id_{T_uU} \oplus p_V^1(c(id_{T_uU}\oplus A) -id_{T_uU}\oplus 0))) \\
                           & = & dpr_2(d\Phi_{12}(id_{T_uU}\oplus 0))+dpr_2(d\Phi_{12}(0\oplus p_V^1)(c(id_{T_uU}\oplus A) - id_{T_uU}\oplus 0)) \\
                           & = & dpr_2(d\Phi_{12}(id_{T_uU}\oplus 0)) +dpr_2(0\oplus p_V^2)(d\Phi_{12}(c(id_{T_uU}\oplus A) -id_{T_uU}\oplus 0)) \\
                           & = & dpr_2(d\Phi_{12}(id_{T_uU}\oplus 0)) +dpr_2(d\Phi_{12}(c(id_{T_uU}\oplus A) -id_{T_uU}\oplus 0)) + \\
                           &   & p_V^2(dpr_1 \oplus 0)(c(id_{T_uU}\oplus A) -id_{T_uU}\oplus 0) \\
                           & = & cdpr_2(d\Phi_{12}(id \oplus A)) + (c-1)p_V^2(id_{T_uU}\oplus 0) \\
                           & \mbox {} & \\
(c)_J\tilde{\Phi}_{12}(A)  & = & p_V^2(c(id_{T_uU}\oplus \tilde{\Phi}_{12}(A)) - id_{T_uU} \oplus 0) \\
                           & = & (c-1)p_V^2(id_{T_uU}\oplus 0) + cp_V^2(0 \oplus \tilde{\Phi}_{12}(A)) \\
                           & = & (c-1)p_V^2(id_{T_uU}\oplus 0) + cp_V^2(0 \oplus dpr_2(d\Phi_{12}(id\oplus A)))\\
                           & = & (c-1)p_V^2(id_{T_uU}\oplus 0) + cdpr_2(d\Phi_{12}(id\oplus A))
\end{array}
\end{displaymath}
hence $\tilde{\Phi}_{12}((c)_J A)=(c)_J\tilde{\Phi}_{12}(A)$.
\end{enumerate}
\end{proof}

\begin{prop}\label{prop:C3}
Let $\pi:E \to M$ be a fiber bundle over a manifold with corners, $M$, equipped with a connection $\chi$.  Let $F_{\chi}:J^1(E) \to \hom(\pi^*TM,V)$ be the induced isomorphism of fiber bundles, which equips $J^1(E)$ with the structure of a vector bundle.  Let $\Sigma : \Gamma(\pi_{1,0}:J^1(E) \to E) \to \operatorname{Con}(E)$ be the bijection between jet fields and connections from proposition \ref{prop:C1}.  Let $\mathcal{O}\in \Gamma(\hom(\pi^*TM, V))$ be the zero section.  Then:

\begin{displaymath}
\Sigma^{-1}(\chi)=F_{\chi}^{-1}(\mathcal{O})
\end{displaymath}
Thus, the zero section of $J^1(E)$ corresponds to the connection $\chi$.
\end{prop}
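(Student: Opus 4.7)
The plan is to unwind both sides of the proposed equality and verify that they describe the same jet field in the same terms. First I would describe $F_{\chi}^{-1}(\mathcal{O})$ explicitly: by definition of $F_{\chi}$, for each $e \in E$ with $p=\pi(e)$, the jet $F_{\chi}^{-1}(\mathcal{O})(e)$ is the unique $1$-equivalence class $j^1_p\phi$ of local sections with $\phi(p)=e$ satisfying $p_V(d\phi_p) = 0$, i.e., with $d\phi_p : T_pM \to T_eE$ taking values entirely in the horizontal bundle $H_e$ of $\chi$. Since $\phi$ is a section of $\pi$, differentiating $\pi \circ \phi = \operatorname{id}_U$ at $p$ gives $d\pi_e \circ d\phi_p = \operatorname{id}_{T_pM}$, and combining this with the horizontality forces $d\phi_p = \widetilde{d\pi}_e^{-1}$, where $\widetilde{d\pi}:H \to \pi^*TM$ is the fibrewise isomorphism from the proof of proposition~\ref{prop:C2}.

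Next I would apply $\Sigma$ to the jet field $F_{\chi}^{-1}(\mathcal{O})$ using the recipe from proposition~\ref{prop:C1}: the associated connection has horizontal projection given at $e$ by $(p_H')_e = d\phi_p \circ d\pi_e = \widetilde{d\pi}_e^{-1} \circ d\pi_e$. The remaining task is to verify that this projection coincides with the horizontal projection $p_H$ of the original connection $\chi$. Given $v \in T_eE$, decompose $v = p_H(v) + p_V(v)$. Since $p_V(v) \in V_e = \ker(d\pi_e)$, we have $d\pi_e(v) = d\pi_e(p_H(v))$, and because $p_H(v) \in H_e$, where $\widetilde{d\pi}_e$ is a genuine isomorphism onto $\pi^*TM|_e$, we conclude $\widetilde{d\pi}_e^{-1}(d\pi_e(v)) = p_H(v)$. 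Thus $p_H' = p_H$ on each fiber, so $\Sigma(F_{\chi}^{-1}(\mathcal{O})) = \chi$, and applying $\Sigma^{-1}$ yields the desired identity.

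There is no real obstacle here; the only subtlety is keeping the two different roles of the symbol $\chi$ straight — once as a connection on $E$, and once (via $\Sigma^{-1}$) as a jet field on $E$ — and being careful to note that the uniqueness of $\phi$ used in step one is uniqueness of the $1$-jet class, not of a representative. No additional constructions are needed beyond the formulas for $F_\chi$, $F_\chi^{-1}$, and $\Sigma$ already established in propositions~\ref{prop:C1} and~\ref{prop:C2}.
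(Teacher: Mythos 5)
Your proof is correct and follows essentially the same route as the paper: both compute $\Sigma(F_{\chi}^{-1}(\mathcal{O}))$ via the recipe $(p_H')_e = d\phi_p\circ d\pi_e$ and check that this projection agrees with the $p_H$ of $\chi$ (the paper phrases the check as $p_H = \operatorname{id}_{TE}-p_V = d\phi_p\circ d\pi$, while you verify it by decomposing $v=p_H(v)+p_V(v)$ and using $\widetilde{d\pi}^{-1}$). The extra explicitness about $d\phi_p=\widetilde{d\pi}_e^{-1}$ is a harmless elaboration, not a different argument.
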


\begin{proof} \mbox{} \newline
We need only check that $\Sigma(F_{\chi}^{-1}(\mathcal{O}))$ corresponds to the horizontal projection operator $p_H:TE \to H$ induced by $\chi$.  We have:
\begin{displaymath}
j^1_p\phi \in F_{\chi}^{-1}(\mathcal{O}) \mbox{ } \iff p_V(d\phi_p)=0 \mbox{ } \iff (id_{TE})_{\phi(p)} = d\phi_p\circ d\pi_{\phi(p)} + (p_V)_{\phi(p)}  \end{displaymath}
hence $(p_H)_{\phi_(p)} = (id_{TE})_{\phi_(p)} - (p_V)_{\phi_(p)} =  d\phi_p\circ d\pi_{\phi(p)}$ and we see that the horizontal projections associated with $\chi$ and $\Sigma(F_{\chi}^{-1}(\mathcal{O}))$ are the same, hence they are the same connection and, since $\Sigma$ is invertible, we have:

\begin{displaymath}
\Sigma(F_{\chi}^{-1}(\mathcal{O}))=\chi \iff \Sigma^{-1}(\chi)=F_{\chi}^{-1}(\mathcal{O})
\end{displaymath}
To clarify the last statement of the proposition, note that with the vector bundle structure on $J^1(E)$ induced by $F_{\chi}$, the zero section is $F_{\chi}^{-1}(\mathcal{O})$.
\end{proof}

\begin{example}\label{ex:mobius2}
Let $\pi:E \to S^1$ be the M\"{o}bius bundle, which we view as the space $E=\R\times [0,1]/(t,0)\sim(-t,1)$.  We have seen that $\pi_1:J^1(E)\to S^1$ is endowed with a canonical vector bundle structure arising from addition and scalar multiplication on $E$.  By propositions \ref{prop:C1} and \ref{prop:C2}, we may specify a vector bundle structure on $\pi_{1,0}:J^1(E) \to E$ by specifying a jet field (connection) $\chi:E \to J^1(E)$.

\vspace{3mm}

Let $\chi:E \to J^1(E)$ be the jet field defined by $\chi(e)=j^1_p\phi$, where $\phi(p)=e$ and $\phi$ is constant in a neighborhood of $p$.  The transition map for $E$ corresponds to multiplication by $\pm 1$, hence if $\phi$ is locally constant in one trivialization, the transition map sends it to a locally constant section in another trivialization.  Therefore, $\chi$ defines a jet field on $E$.  Note that the collection of tangent spaces defined by $\chi$, i.e. the connection defined by $\chi$, have integral submanifolds:

\begin{itemize}
\item The zero section, which we view as an embedded copy of $S^1$ in $E$, and
\item the connected double covers of $S^1$ given by the orbits of the lifted action on $S^1$.
\end{itemize}

In the model $\R \times [0,1]/\sim$, we may equivalently describe these submanifolds as:

\begin{itemize}
\item The zero section $\{[(0,s)]\}$ and
\item the strips $\{[(t,s)]\} \cup \{[(-t,s)]\}$, where $t\in \R$ is fixed and nonzero.
\end{itemize}

\end{example}

\begin{example}\label{ex:vectorbundle2}
We have seen that if $\pi:E \to M$ is a vector bundle, then $\pi_1:J^1(E) \to M$ is endowed with a canonical vector bundle structure induced by addition and scalar multiplication on $E$.  A linear connection on $E$ is defined to be a choice of horizontal subspace $H_e \subset T_eE$ at each $e\in E$ that is invariant under (the derivatives of) the addition and scalar multiplication operations of $E$.  We can encode this in the language of jet fields by defining a linear connection on $E$ to be a jet field that induces a map of vector bundles:

\begin{displaymath}
\xymatrix{
\chi: E \ar[r] \ar[d]^\pi &  J^1(E) \ar[d]^{\pi_1} \\
M \ar[r]                    &   M}
\end{displaymath}
\end{example}

\begin{example}\label{ex:principal2}
As in example \ref{ex:principal}, let $G$ be a compact Lie group.  Let $\pi:P\to M$ be a principal $G$ bundle and recall from example \ref{ex:principal} that $J^1(P)$ is equipped with a free and proper action of $G$.  A principal connection on $P$ is a choice of $G$-invariant splitting $H \oplus V$ of $TP$. We can encode this in the language of jet fields by defining a principal connection to be a $G$-equivariant section $\chi:P \to J^1(P)$.

\vspace{3mm}

To see that this is the correct definition, let $p\in P$ and let $H_p$ be the horizontal distribution at $p$.  We may represent $H_p$ as the image of the differential of some section $\phi$ at $\pi(p)$.  That is, $H_p = \operatorname{Im}(d\phi_m)$, where $m=\pi(p)$.  Let $\tau_g:P \to P$ be the action of $g\in G$.  $G$-invariance then implies that:

\begin{displaymath}
H_{g\cdot p} = d\tau_g(\operatorname{Im}(d\phi_m)) = \operatorname{Im}(d(g\cdot \phi)_m) = d\tau_g(H_p).
\end{displaymath}
Thus $\chi(g\cdot p) = j^1_m(g\cdot \phi) = g\cdot (j^1_m\phi)$.  Now, $G$-equivariance of $\chi$ implies that $\chi$ descends to a section:

\begin{displaymath}
\bar{\chi}:P/G \to J^1(P)/G
\end{displaymath}
of $\bar{\pi}_1:J^1(P)/G \to M$.  Conversely, any section $\bar{\chi}:M \to J^1(P)/G$ lifts to a $G$-equivariant section $\chi:P \to J^1(P)/G$ as follows:

\begin{itemize}
\item $\bar{\chi}(m) = [j^1_m\phi]$ is an equivalence class of jets where $j^1_m\phi$ and $j^1_m\psi$ are in this equivalence class if and only if $j^1_m\phi = j^1_m(g\cdot \psi)$ for some $g\in G$.  Consequently, if $\psi(m)=\phi(m)$, then $g=e$ since the action of $G$ is free on $P$ and $d\phi_m = d\psi_m$.
\item The transitivity of the action of $G$ on $P$ implies that for each point $p\in \pi^{-1}(m)$ in the fiber over $m$, there exists an element $j^1_m\psi \in \bar{\chi}(m)$ so that $\psi(m)=p$.  By the preceding bullet, this element is unique.
\item Thus, we may define the lift $\chi(p)$ to be the unique element $j^1_m\phi \in \bar{\chi}(\pi(p))$ satisfying $\phi(m)=p$.  To see that $\chi$ is a \emph{smooth} lift, it's enough to check the claim locally, hence we may assume $P=M\times G$.  We leave this computation to the reader.
\end{itemize}
\end{example}

\subsubsection{1-Jets of Corank r}

In this section, we closely follow the constructions of \cite{GG}, but we adapt everything to arbitrary fiber bundles.  Given a local section $(U,\phi)$ of a fiber bundle $\pi:E \to M$, we have seen that $j^1\phi:U \to J^1(E)$, the $1$-jet of $\phi$, is a smooth local section of $\pi_1:J^1(E)\to M$.  We would like to garner as much information as possible about $\phi$ from the properties of the $1$-jet, $j^1\phi$, of $\phi$.  One way to do this is to study the rank or corank of the $1$-jet, which will give us information about the subset of points where the derivative of $\phi$ fails to be injective or surjective, depending whether we study rank or corank, respectively.

\begin{definition}\label{def:rank}
Let $\pi:E \to M$ be a fiber bundle over a manifold with corners $M$ equipped with a connection $\chi$.  That is, we have a projection $p_V:TE \to V$ to the vertical bundle of $E$ induced by the connection $\chi$.  We define $\operatorname{rank}:J^1(E) \to \Z$ and $\operatorname{corank}:J^1(E) \to \Z$ to be:

\begin{displaymath}
\begin{array}{l c l}
\operatorname{rank}(j^1_p\phi) &=& \operatorname{rank}(p_V(d\phi_p))\\
\operatorname{corank}(j^1_p\phi) &=& \operatorname{corank}(p_V(d\phi_p))
\end{array}
\end{displaymath}
\end{definition}
It is immediate from the definition that rank and corank are nonnegative.  In light of proposition \ref{prop:C2}, we have defined the rank of a $1$-jet $j^1_p\phi$, given $\chi$, to be the rank of the corresponding element $F_{\chi}(j^1_p\phi) \in \hom(\pi^*TM,V)_{\phi(p)}$.  We are interested in a particular subset of $J^1(E)$ defined as follows:

\begin{definition}\label{def:Sr}
Let $\pi:E \to M$ be a fiber bundle over a manifold with corners equipped with a connection $\chi$.  We define $S_r \subset J^1(E)$ to be the set:

\begin{displaymath}
S_r := \{j^1_p\phi \vert \mbox{ } \operatorname{corank}(j^1_p\phi)=r\}
\end{displaymath}
\end{definition}

Our next goal is to prove the following proposition:

\begin{prop}\label{prop:Sr}
Let $\pi:E \to M$ be a fiber bundle over a manifold with corners equipped with a connection $\chi$.  Then $S_r \subset J^1(E)$ is a sub-fiber bundle of $\pi_{1,0}:J^1(E) \to E$ with typical fiber given by $L^r_{m,n}$, where $m=\dim(M)$, $n=\dim(E)-\dim(M)$, and:

\begin{displaymath}
L^r_{m,n}:= \{ A \in \hom(\R^m, \R^n) \vert \mbox{ } \operatorname{corank}(A)=r\}
\end{displaymath}
Furthermore, $S_r$ has codimension $(m-q+r)(n-q+r)$, where $q=\min\{m,n\}$.
\end{prop}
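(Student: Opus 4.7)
The plan is to use the isomorphism $F_{\chi}:J^1(E) \to \hom(\pi^*TM, V)$ of proposition \ref{prop:C2} to translate the problem into a statement about the fiber bundle $\hom(\pi^*TM,V) \to E$, and then reduce to the classical fact from linear algebra that the set $L^r_{m,n}$ of corank-$r$ matrices is a submanifold of $\hom(\R^m,\R^n)$ of codimension $(m-q+r)(n-q+r)$.

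First, observe that the corank function of definition \ref{def:rank} is exactly the corank of the element $F_{\chi}(j^1_p\phi) \in \hom(T_pM, V_{\phi(p)})$, so $F_{\chi}(S_r)$ is precisely the subset of $\hom(\pi^*TM,V)$ whose fibre at $e\in E$ consists of corank-$r$ homomorphisms from $T_{\pi(e)}M$ to $V_e$. Since $F_{\chi}$ is an isomorphism of fiber bundles over $E$, it suffices to show that $F_{\chi}(S_r)$ is a sub-fiber bundle of $\hom(\pi^*TM,V)$ with fiber $L^r_{m,n}$ and codimension $(m-q+r)(n-q+r)$.

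Next, I would work locally. Pick an open $U\subseteq E$ small enough that $\pi^*TM\vert_U$ and $V\vert_U$ are trivializable, choose trivializations, and use these to identify $\hom(\pi^*TM,V)\vert_U \cong U\times \hom(\R^m,\R^n)$. Under this identification $F_{\chi}(S_r)\vert_U$ corresponds to $U\times L^r_{m,n}$. At this point I would invoke the classical result (e.g. lemma 5.3 and proposition 5.4 in chapter II of \cite{GG}) that $L^r_{m,n}\subset \hom(\R^m,\R^n)$ is a smooth submanifold of codimension $(m-q+r)(n-q+r)$; the proof proceeds by showing that near a corank-$r$ matrix $A_0$, one can block-decompose matrices and write the rank-dropping condition as the transversal vanishing of the $(q-r+1)\times (q-r+1)$ minors of the upper-left block, yielding a submanifold chart.

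Finally, I would verify that these local descriptions glue. Any two trivializations of $\pi^*TM$ and $V$ over $U_1\cap U_2$ differ by smooth maps into $GL(m,\R)$ and $GL(n,\R)$, respectively, and the induced transition map on $\hom(\R^m,\R^n)$ is $A \mapsto g_V(e)\, A\, g_M(e)^{-1}$, which is fibrewise a linear isomorphism and in particular rank-preserving. Hence each transition map carries $U\times L^r_{m,n}$ to itself, so the local descriptions patch together to give $S_r$ the structure of a sub-fiber bundle of $\pi_{1,0}:J^1(E)\to E$ with typical fiber $L^r_{m,n}$ and codimension $(m-q+r)(n-q+r)$ in $J^1(E)$. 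The main (but expected) obstacle is the codimension computation for $L^r_{m,n}$; since this is a standard fact from singularity theory, I would cite \cite{GG} rather than reproduce the block-minor argument.
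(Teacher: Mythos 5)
Your proposal is correct and follows essentially the same route as the paper: transport $S_r$ through the isomorphism $F_{\chi}$ of proposition \ref{prop:C2}, use the classical fact (the paper's lemma \ref{lem:Lrmn}, itself taken from \cite{GG}) that $L^r_{m,n}$ is a submanifold of the stated codimension, and observe that $L^r_{m,n}$ is preserved by the $GL(m,\R)\times GL(n,\R)$ transition maps so that the local pictures glue into a sub-fiber bundle. The only cosmetic difference is that the paper proves the $L^r_{m,n}$ lemma via the Schur-complement map $S'\mapsto D'-C'(A')^{-1}B'$ rather than citing it, and phrases your gluing step as invariance under the structure group.
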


The proof of the proposition is straightforward once we have the following lemma, which is lemma 5.2 and proposition 5.3 of \cite{GG} combined.

\begin{lemma}\label{lem:Lrmn}
Let $L^r_{m,n} = \{A \in \hom(\R^m, \R^n) \vert \mbox{ } \operatorname{corank}(A)=r\}$.  Then $L^r_{m,n}$ is a smooth submanifold of $\hom(\R^m,\R^n)$ of codimension $(m-q+r)(n-q+r)$, where $q=\min\{m,n\}$.
\end{lemma}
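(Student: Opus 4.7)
The plan is to reduce the problem to showing that, for each fixed rank $k$, the set of $m\times n$ matrices of rank exactly $k$ is a submanifold of codimension $(m-k)(n-k)$ in $\hom(\R^m,\R^n)$, and then specialize $k=q-r$. With the convention $\operatorname{corank}(A)=q-\operatorname{rank}(A)$ used in the preceding definitions, the condition $\operatorname{corank}(A)=r$ is equivalent to $\operatorname{rank}(A)=k:=q-r$, and the codimension in the lemma is $(m-k)(n-k)$. I will work locally around an arbitrary $A_{0}\in L^{r}_{m,n}$ and exhibit $L^{r}_{m,n}$ as the zero locus of a submersion.

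First I would choose bases of $\R^{m}$ and $\R^{n}$ adapted to $A_{0}$ so that, in block form,
\[
A_{0}=\begin{pmatrix} I_{k} & 0 \\ 0 & 0 \end{pmatrix},
\]
with $I_{k}$ the $k\times k$ identity. Because invertibility is an open condition, there is an open neighborhood $U\subset\hom(\R^{m},\R^{n})$ of $A_{0}$ on which every element can be written in block form
\[
A=\begin{pmatrix} P & Q \\ R & S \end{pmatrix}
\]
with $P\in\hom(\R^{k},\R^{k})$ invertible. On $U$ I would then define the smooth map
\[
\Phi:U\longrightarrow\hom(\R^{m-k},\R^{n-k}),\qquad \Phi(A)=S-RP^{-1}Q,
\]
namely the Schur complement of the $P$ block.

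The key linear-algebra fact to invoke is that, since $P$ is invertible, the block-LU factorization gives $\operatorname{rank}(A)=k+\operatorname{rank}(S-RP^{-1}Q)$; consequently $A\in L^{r}_{m,n}\cap U$ if and only if $\Phi(A)=0$. To see $\Phi$ is a submersion, observe that the partial derivative of $\Phi$ with respect to the $S$ block alone is the identity of $\hom(\R^{m-k},\R^{n-k})$, so $d\Phi_{A}$ is already surjective when restricted to that summand of $T_{A}\hom(\R^{m},\R^{n})$. By the regular value theorem, $\Phi^{-1}(0)=L^{r}_{m,n}\cap U$ is a smooth submanifold of $U$ of codimension $\dim\hom(\R^{m-k},\R^{n-k})=(m-k)(n-k)$. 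Substituting $k=q-r$ yields the asserted codimension $(m-q+r)(n-q+r)$, and since the construction works at every $A_{0}\in L^{r}_{m,n}$, the global conclusion follows.

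There is no genuine obstacle here; the only step that requires any care is the Schur-complement identity $\operatorname{rank}(A)=k+\operatorname{rank}(S-RP^{-1}Q)$, which is what forces the local defining map $\Phi$ to cut out exactly $L^{r}_{m,n}$ rather than the larger set of matrices of rank $\le k$. Once that is in hand, the submersion statement is immediate from the block decomposition, and the codimension count is bookkeeping.
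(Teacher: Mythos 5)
Your proof is correct and follows essentially the same route as the paper's: both work locally around a point with an invertible $k\times k$ block ($k=q-r$), use the Schur complement $S-RP^{-1}Q$ as a local defining map whose zero set is $L^r_{m,n}$, and verify the submersion property by differentiating in the $S$-block direction. No gaps.
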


\begin{proof} \mbox{ } \newline
Let $S$ be an $m\times n$ matrix where $S=\begin{pmatrix}A & B \\ C & D \\ \end{pmatrix}$, where $A$ is a $q-r \times q-r$ invertible matrix.  Define

\begin{displaymath}
T=\begin{pmatrix}
I_{q-r} & 0 \\
-CA^{-1} & I_r \\
\end{pmatrix}
\end{displaymath}
and note that $T$ is invertible.  Hence $\operatorname{rank}(S)=\operatorname{rank}(TS) = \operatorname{rank}(\begin{pmatrix} A & B \\ 0 & D-CA^{-1}B\\ \end{pmatrix})$.  The latter matrix has rank $q-r$ if and only if $D-CA^{-1}B=0$.  We can use this fact to see $L^r_{m,n}$ is a submanifold as follows.

Let $S \in L^r_{m,n}$ and choose bases of $\R^n$, $\R^m$ so that $S=\begin{pmatrix}A & B \\ C & D \\ \end{pmatrix}$ where $A$ is a $q-r\times q-r$ invertible matrix.  Let $U$ be the open neighborhood of $S$ consisting of all matrices of the form $S'=\begin{pmatrix}A' & B' \\ C' & D' \\ \end{pmatrix}$, with $A'$ a $q-r\times q-r$ invertible matrix.

\vspace{2mm}

Define $f:U\to \hom(\R^{n-q+r}, \R^{m-q+r})$ by $f(S')=D'-C'(A')^{-1}B'$.  We can see that $f$ is a submersion as follows.  Fix $A'$, $B'$, $C'$, and $D'$, let $D\in T_{f(S')}\hom(\R^{n-q+r}, \R^{m-q+r})$, and define $\gamma(t)= \begin{pmatrix}A' & B' \\ C' & D' + tD  \\ \end{pmatrix}$.  Then
\begin{displaymath}
\frac{\partial}{\partial t} f(\gamma(t)) = D
\end{displaymath}
which shows that $df_{S'}$ is surjective.  Consequently, $f^{-1}(0)$ is a smooth submanifold of $U$.  Since $\operatorname{rank}(S')=q-r \iff D'-C'(A')^{-1}B=0$, we have that $f^{-1}(0) = L^r_{m,n}\cap U$, which shows $L^r_{m,n}$ is a smooth submanifold of $\hom(\R^n, \R^m)$.  Since $\{0\}$ has codimension $(n-q+r)(m-q+r)$ in $\hom(\R^{n-q+r}, \R^{m-q+r})$, $L^r_{m,n}$ has codimension $(n-q+r)(m-q+r)$.
\end{proof}

\begin{proof}[proof of proposition \ref{prop:Sr}] \mbox{ } \newline
We have a fiber bundle $\pi:E \to M$ and a connection $\chi$.  Let $\tau_E:V\to E$ denote the vertical bundle of $E$ with projection $p_V:TE \to V$.  By proposition \ref{prop:C2}, $\chi$ defines an isomorphism of fiber bundles:

\begin{displaymath}
\xymatrixrowsep{.5pc}\xymatrix{
F_{\chi}:J^1(E) \ar[r] &  \hom(\pi^*TM, V)\\
 j^1_p\phi      \ar[r]   &   p_V(d\phi_p)
}
\end{displaymath}
Note that the typical fiber of $\hom(\pi^*TM,V)$ is $\hom(\R^m,\R^n)$, where $m=\dim(M)$ and $n=\dim(E)-\dim(M)$ is the dimension of the fibers of $E$.  By lemma \ref{lem:Lrmn}, $L^r_{m,n}$ is a submanifold of $\hom(\R^m,\R^n)$.  Since it is invariant under the structure group $GL(\R,n) \times GL(\R,m)$ of $\hom(\pi^*TM, V)$, it defines a smooth sub-fiber bundle $L_r\subset \hom(\pi^*TM, V)$ with typical fiber isomorphic to $L^r_{m,n}$.  The fibers of $L_r$ are exactly the elements with corank $r$ and

\begin{displaymath}
\operatorname{corank}(j^1_p\phi) = r \mbox{ } \iff \mbox{ } \operatorname{corank}(p_V(d\phi)p=r \mbox{ } \iff \mbox{ } \operatorname{corank}F_{\chi}(j^1_p\phi) = r
\end{displaymath}
by the definitions of corank and $F_{\chi}$, hence $F_{\chi}\vert_{S_r}:S_r \to L_r$ is a bijection.  Therefore,

\begin{displaymath}
S_r = F_{\chi}^{-1}(L_r)
\end{displaymath}
Since $L_r$ is a sub-fiber bundle and $F_{\chi}^{-1}$ is an isomorphism of fiber bundles, $S_r$ is a sub-fiber bundle of $J^1(E)$.  Since the codimension of $L_r$ is $(m-q+r)(n-q+r)$, where $q=\min\{m,n\}$, we have that the codimension of $S_r$ is $(m-q+r)(n-q+r)$.
\end{proof}

Since rank and corank depend on the connection, they do not behave well with general maps of fiber bundles.  However, these properties are preserved by connection-preserving diffeomorphisms of fiber bundles.

\begin{lemma}\label{lem:rank}
Let $\pi:E\to M$ and $\pi':E\to M$ be two fiber bundles over a manifold with corners $M$ with connections $\chi$, $\chi'$, respectively.  That is, we have projection maps $p_V:TE \to V$ and $p_{V'}:TE' \to V'$ to the vertical bundles of $E$ and $E'$, respectively.  Let $\tau:E\to E'$ be a diffeomorphism of fiber bundles so that $\tau^*\chi' = \chi$.  Then $\operatorname{rank}\circ \tilde{\tau} = \operatorname{rank}:J^1(E) \to \Z$ and $\operatorname{corank}\circ \tilde{\tau}=\operatorname{corank}:J^1(E)\to \Z$.  That is, the notion of rank and corank is preserved by connection-preserving diffeomorphisms of fiber bundles.
\end{lemma}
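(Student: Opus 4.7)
The plan is to unwind what connection-preservation means at the level of vertical projections, then use the chain rule together with the explicit formula $\tilde{\tau}(j^1_p\phi) = j^1_p(\tau \circ \phi)$ from Definition \ref{def:jetmap}, and finally exploit the fact that $d\tau$ restricts to a fiberwise isomorphism of vertical bundles.

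First I would translate the hypothesis $\tau^{*}\chi' = \chi$ into the statement
\[
p_{V'} \circ d\tau \;=\; d\tau \circ p_V,
\]
viewed as maps $TE \to V'$. This is just the observation from Remark \ref{rem:connections} that a connection is equivalently encoded by its vertical projection, so a diffeomorphism of fiber bundles pulls back a connection if and only if the differential intertwines the two vertical projections. Since $\tau$ is a morphism of fiber bundles, $d\tau$ certainly maps $V$ into $V'$, and because $\tau$ is a diffeomorphism, its restriction $d\tau|_V : V \to V'$ is a fiberwise linear isomorphism.

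Next, pick a representative local section $\phi$ of a 1-jet $j^1_p\phi \in J^1(E)$. By Definition \ref{def:jetmap}, $\tilde{\tau}(j^1_p\phi) = j^1_p(\tau \circ \phi)$, and the chain rule gives $d(\tau \circ \phi)_p = d\tau_{\phi(p)} \circ d\phi_p$. Applying Definition \ref{def:rank} and the intertwining identity above yields
\[
p_{V'}\bigl(d(\tau\circ\phi)_p\bigr) \;=\; p_{V'} \circ d\tau_{\phi(p)} \circ d\phi_p \;=\; d\tau_{\phi(p)} \circ p_V \circ d\phi_p.
\]
Thus the vertical part of the derivative downstairs is obtained from the vertical part upstairs by post-composing with the fiberwise isomorphism $d\tau_{\phi(p)}|_{V_{\phi(p)}} : V_{\phi(p)} \to V'_{\tau(\phi(p))}$.

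Finally, since post-composition with a linear isomorphism preserves both rank and corank of a linear map, we conclude $\operatorname{rank}(\tilde{\tau}(j^1_p\phi)) = \operatorname{rank}(j^1_p\phi)$ and likewise for corank. I do not foresee any real obstacle here; the only subtle point is writing down the intertwining identity $p_{V'} \circ d\tau = d\tau \circ p_V$ carefully and observing that it is precisely the content of the condition $\tau^{*}\chi' = \chi$ once one unpacks the identification of connections with vertical projections.
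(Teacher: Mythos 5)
Your proposal is correct and follows essentially the same route as the paper: both translate $\tau^*\chi'=\chi$ into the intertwining identity $p_{V'}\circ d\tau = d\tau\circ p_V$, apply it to $d(\tau\circ\phi)_p$, and conclude by noting that post-composing with the fiberwise isomorphism $d\tau|_V$ preserves rank and corank. No gaps.
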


\begin{proof} \mbox{} \newline
$\tau^*\chi' = \chi$ is equivalent to the projection operators satisfying:

\begin{displaymath}
p_V = d\tau^{-1} \circ p_{V'} \circ d\tau \iff d\tau \circ p_V = p_{V'} \circ d\tau
\end{displaymath}
Hence, for any element $j^1_p\phi \in J^1(E)$ we have:

\begin{displaymath}
\begin{array}{lcl}
\operatorname{rank}(\tilde{\tau}(j^1_p\phi)) & = & \operatorname{rank}(j^1_p(\tau \circ \phi)) \\
                                             & = & \operatorname{rank}(p_{V'}(d\tau(d\phi_p))) \\
                                             & = & \operatorname{rank}(d\tau(p_V(d\phi_p))) \\
                                             & = & \operatorname{rank}(p_V(d\phi_p)) \\
                                             & = & \operatorname{rank}(j^1_p\phi)
\end{array}
\end{displaymath}
where the second-to-last line follows since $d\tau\vert_V:V \to V'$ is an isomorphism, hence it doesn't change the rank.  Replacing rank by corank and performing the same computations, one sees that:

\begin{displaymath}
\operatorname{corank}(\tilde{\tau}(j^1_p\phi)) = \operatorname{corank}(j^1_p\phi)
\end{displaymath}
\end{proof}

\subsubsection{The Intrinsic Derivative}
We now discuss the intrinsic derivative of maps $f:M \to N$ between manifolds with corners and of maps $\rho:E \to F$ between vector bundles $E$, $F$ over a fixed manifold with corners, $M$.  We will need this notion when we study orientations on folding hypersurfaces in folded-symplectic manifolds.  The construction presented here is the same as the construction for manifolds (without corners) presented in \cite{GG}, p. 150.  The notion of an intrinsic derivative is due to Porteous \cite{P} and an interpretation in coordinates can be found in appendix C of \cite{Ho}.

Before we describe the intrinsic derivative of a map between manifolds, we first consider maps between vector bundles over the same base space.  Suppose we are given two vector bundles $E$ and $F$ over a manifold with corners $M$ and suppose $\operatorname{ranke}(E)\ge \operatorname{rank}(F)$.  If we have a smooth map $\rho: E\to F$ of vector bundles, then we may view $\rho$ as a smooth section:

\begin{displaymath}
\rho: M \hookrightarrow \hom(E,F)
\end{displaymath}
where we are abusing notation by referring to both the map and the section as $\rho$.  From now on, let us think of $\rho$ as a section of $\hom(E,F)\to M$.  Fix a point $p\in M$, let $K_p$ be the kernel $\ker(\rho_p)$, and let $L_p$ be the cokernel $\operatorname{coker}(\rho_p) := F_p/\rho_p(E_p)$.  Similar to the bundle $S_r$, we define a subfiber bundle of $\hom(E,F)$ whose fiber consists of the elements of corank $r$.

\begin{definition}\label{def:LrEF}
Let $E$ and $F$ be two vector bundles over a manifold with corners $M$.  We define the subset of $\hom(E,F)$ of elements of corank $r$:

\begin{displaymath}
L^r(E,F):= \{ A \in \hom(E_p,F_p) \mbox{ } \vert \mbox { } \operatorname{corank}(A)=r, \mbox{ } p\in M\}
\end{displaymath}
which is a subfiber bundle of $\hom(E,F)$ with typical fiber $L^r_{mn}$ by lemma \ref{lem:Lrmn}.
\end{definition}
If $r=\dim(L_p)$ is the corank of $\rho$ at the point $p\in M$, then we have that the section $\rho \in \Gamma(\hom(E,F))$ intersects $L^r(E,F)$ at $p$.  If we differentiate the section, we obtain a sequence of arrows:

\begin{displaymath}
\xymatrix{
T_pM \ar[r]^-{d\rho_p} & T_{\rho_p}(\hom(E_p,F_p)) \ar[r]^-q & \nu(L^r(E,F))_{\rho_p}:= T_{\rho_p}(\hom(E_p,F_p))/T_{\rho_p}L^r(E,F)
}
\end{displaymath}
where the map $q:T_{\rho_p}(\hom(E_p,F_p)) \to \nu(L^r(E_p,F_p))_{\rho_p}$ is just the projection.  Since $T_{\rho_p}\hom(E_p,F_p)$ is canonically identified with $\hom(E_p,F_p)$, there is a canonical map:

\begin{equation}\label{eq:canmap}
RP:T_{\rho_p}\hom(E_p,F_p) \to \hom(K_p,L_p)
\end{equation}
given by restricting a linear map in $T_{\rho_p}\hom(E_p,F_p)$ to the kernel of $\rho_p$, $K_p$, and projecting onto the cokernel of $\rho_p$, $L_p$.  As shown in lemma 3.2 in \cite{GG} (p.150), the kernel of this map is exactly $T_{\rho_p}L^r(E,F)_p$, hence it descends to a canonical isomorphism:

\begin{equation}\label{eq:canmap1}
RP: \nu(L^r(E,F))_{\rho_p} \to \hom(K_p,L_p)
\end{equation}
hence we may add another arrow to the above sequence to obtain a map:

\begin{displaymath}
\xymatrix{
T_pM \ar[r]^-{d\rho_p} & T_{\rho_p}(\hom(E_p,F_p)) \ar[r]^-q & \nu(L^r(E,F))_{\rho_p}\ar[r]^-{RP} & \hom(K_p,L_p).
}
\end{displaymath}
We define the intrinsic derivative $(D\rho)_p$ of $\rho$ at $p$ to be the composition of the above arrows, hence it is a map:

\begin{displaymath}
(D\rho)_p:T_pM \to \hom(K_p,L_p)
\end{displaymath}
Now, let us return to smooth maps between manifolds with corners $f:M \to N$ and let us assume that the dimensions of source and target spaces satisfy $\dim(M) \ge \dim(N)$.  Our discussion will be similar to the case of maps between vector bundles, but we'll need to be more careful since we won't have vector bundles over the same base space: $TM \to M$ and $TN \to N$ are vector bundles over different spaces, in general.

Differentiating $f$ gives us a map $df:TM \to TN$, which is a section of the fiber bundle $\hom(TM,TN) \to M$ and this is \emph{not} a vector bundle unless $N$ is a vector space: its fiber is $T_pM \times TN$.  Let $p\in M$ and let $r=\operatorname{corank}(df_p)$ be the corank of the differential at $p$.  Let $K_p = \ker(df_p)\subseteq T_pM$ be the kernel and let $L_p = T_{f(p)}N / df_p(T_pM)$ be the cokernel.  Note that, while the kernel is a subspace of the domain of $df_p$, the cokernel is not canonically identified with a subspace of $T_{f(p)}N$.  Using the above discussion of vector bundles with $E$ as $TM\to M\times N$, $F$ as $TN \to M \times N$, $\hom(E,F)$ as $\hom(TM,TN) \to M\times N$, and $L^r(E,F)$ as $L^r(TM,TN) \to M\times N$, we obtain a sequence of maps:

\begin{displaymath}
\xymatrix{
T_pM \ar[r]^-{(d(df))_p} & T_{df_p}(\hom(TM, TN)) \ar[r]^-{q} & \nu(L^r(TM,TN))_{\rho_p}\ar[r]^-{RP} & \hom(K_p,L_p)
}
\end{displaymath}
The composition give us a linear map:

\begin{displaymath}
F_p:T_pM \to \hom(\ker(df_p),\operatorname{coker}(df_p))
\end{displaymath}
and restriction to $\ker(df_p)$ gives us a quadratic map, which we call the intrinsic second derivative of $f$ at $p$, or just \emph{the intrinsic derivative of f at $p$}:

\begin{displaymath}
(Df)_p:\ker(df_p)\otimes \ker(df_p) \to \operatorname{coker}(df_p)
\end{displaymath}
given by $(Df)_p(\eta \otimes v) = F_p(\eta)(v)$.  The following examples are more of a guide for computing the intrinsic derivative in the case of maps between vector bundles and the case of maps between manifolds.

\begin{example}\label{ex:intrVB}
Let $E$ and $F$ be two vector bundles over a fixed manifold with corners $M$.  Let $\rho\in \Gamma(\hom(E,F))$ be a section of $\hom(E,F)$ and let $p\in M$.  Let's see how we can compute the intrinsic derivative at $p$.  Choose a basis $\{e_1,\dots,e_n\}$ for $E_p$ so that $e_i\in \ker(\rho_p)$ for $1\le i \le j$, where $j\le n$.  Choose a local frame of $F$ near $p$, $\{f_1,\dots,f_k\}$.

\begin{itemize}
\item Let $e_i \in \ker(\rho_p)$ and let $\tilde{e_i}$ be an extension to a local section of $E$ near $p$.
\item Then $\rho(\tilde{e}_i)= \sum_{l=1}^k a_lf_l$, where $a_l \in C^{\infty}(M)$ is smooth for each $l$.
\item Then, for each $X\in T_pM$, we can consider $\sum_{l=1}^k d(a_l)_p(X)f_l \in F_p$.
\item We can then send $\sum_{l=1}^kd(a_l)_p(X)f_l$ to its image in $F_p/\rho_p(E_p)$ and we claim that this gives us $D\rho_p(X)(e_i)$.
\item Indeed, if $\tilde{e}_i'$ is any other extension, then the difference $\tilde{e}_i-\tilde{e}_i'$ vanishes at $p$.  If $\{v_1,\dots,v_n\}$ is a local frame for $E$ near $p$, then we can write:
    \begin{displaymath}
    \tilde{e}_i-\tilde{e}_i'= \sum_{l=1}^n g_lv_l
    \end{displaymath}
    where the $g_l's$ vanish at $p$.  Then,
    \begin{displaymath}
    \rho_p(\tilde{e}_i-\tilde{e}_i')=\sum_{l=1}^n g_l\rho_p(v_l) = \sum_{l=1}^{n}\sum_{r=1}^k g_la_{lr}f_r
    \end{displaymath}
    where the $a_{lr}'s$ are smooth.  Then $d(g_la_{lr})_p = a_{lr}(p)d(g_l)_p$, since $g_l(p)=0$.  Thus, for any $X\in T_pM$,

    \begin{displaymath}
    \sum_{l=1}^{n}\sum_{r=1}^k d(g_l)_p(X)a_{lr}(p)f_r(p) = \sum_{l=1}^n d(g_l)_p(X)\rho_p(v_l)=\rho_p(\sum_{l=1}^n d(g_l)_p(X))
    \end{displaymath}
    hence if we apply the above recipe to any two extensions of $e_i$, the results differ by an element in the image of $\rho_p$, meaning we get the same element of the cokernel when we project.
\item If we choose a different local frame of $F$ near $p$, then the two frames are related by a local automorphism of $F$, which will induce an automorphism of $\operatorname{coker}(\rho_p)$ sending one coordinate representation of $D\rho_p(e_i)$ to the other, hence the map we get from $\ker(\rho_p)$ to $\operatorname{coker}(\rho_p)$ is coordinate independent.
\item Lastly, the reason that this is the intrinsic derivative is that we may extend the $e_i$'s to a local frame of $E$ and then $\rho$ becomes a matrix: all we are doing is differentiating the coefficients in this matrix, restricting to the kernel of $\rho_p$, and projecting to the cokernel of $\rho_p$, which is exactly the recipe of the intrinsic derivative.
\end{itemize}
\end{example}

\begin{example}\label{ex:intrFunc}
Given a smooth map $f:M \to N$ between manifolds with corners, we will show how one may compute the intrinsic derivative in coordinates.  In \cite{Ho}, H\"{o}rmander uses Taylor expansions in order to show the existence of the quadratic map defined above.  We offer an alternate approach in coordinates, but the reader is invited to peruse either.

Fix a point $p\in M$ and choose coordinates around $p$, $f(p)$ so that we may assume $f$ is a smooth map $f:\R^m \to \R^n$.  Technically, it is a smooth map defined on a quadrant since $M$ and $N$ have corners, but smoothness implies it extends to an open subset of $\R^m$ and $\R^n$ so we just extend it for the sake of simplicity.  In general, almost everything we are about to say only makes sense in coordinates, but the intrinsic derivative is a local construction so this is acceptable.  Now,

\begin{itemize}
\item choose a vector $v\in \ker(df_p)$,
\item extend it to a local vector field $\tilde{v}$ near $p$,
\item define the map $g:\R^m \to \R^n$ given by $g(x)=df_x(\tilde{v}(x))$, which has values in the fiber $\R^n$ of $T(\R^n)$,
\item and differentiate $g$ using $v$: $dg_p(v) = v(df(\tilde{v}))$.
\end{itemize}
Because $g$ takes values in the fiber, $\R^n$, of $T(\R^n)$, we may view it as a tangent vector to $\R^n$ at $f(p)$.  Any two extensions $\tilde{v}_1$ and $\tilde{v}_2$ of $v$ agree at $p$, hence the difference $\tilde{v}_1 - \tilde{v}_2$ vanishes at $p$.  Thus, if one differentiates $(df(\tilde{v}_1-\tilde{v}_2))$ in any direction at $p$, the result may be interpreted (in coordinates) as an element in the image of $df_p$.  Indeed, if $p$ is the origin in $\R^n$ and $\displaystyle X=\sum_{i=1}^n a_i\frac{\partial}{\partial x_i}$ is a vector field vanishing at the origin, then,

\begin{displaymath}
\frac{\partial}{\partial x_j}\big\vert_0 df(\sum_{i=1}^n a_i \frac{\partial}{\partial x_i}) = \sum_{i=1}^n\frac{\partial a_i}{\partial x_j} df_0(\frac{\partial}{\partial x_i})
\end{displaymath}
since the $a_i's$ vanish at $0$.  Thus, the difference $vdf(\tilde{v}_1) -vdf(\tilde{v}_2)$ is an element of $df_p(T_pM)$, hence the vector $v(df(\tilde{v}))$ is well defined as an element of $T_p\R^n/df_p(T_p\R^m)$.  That is, we have a well-defined element of $T_pN/df_p(T_M)$ and the above construction gives us a map from $T_pM$ into $\hom(\ker(df_p),\operatorname{coker}(df_p)$.  As in the vector bundle example, this is the intrinsic derivative since we have simply differentiated how the derivative of $f$ acts on elements of $T_pM$ and then restricted to $\ker(df_p)$ followed by projection to $\operatorname{coker(df_p)}$.
\end{example}

\subsection{Sections With Fold Singularities}
We now generalize the definition of a submersion with folds, definitions 4.1 and 4.2, found in \cite{GG} to arbitrary fiber bundles.
\subsubsection{Definition of a Section with Fold Singularities}
\begin{definition}\label{def:folds}
Let $\pi:E \to M$ be a fiber bundle over a manifold with corners $M$.  Let $\chi$ be a connection on $E$, let $p_V:TE \to V$ be the induced projection to the vertical bundle of $E$, and let $S_r$ be the submanifold of $J^1(E)$ of jets of corank $r$.  We say a local section $(U,\phi)$ has a fold singularity at $p\in U$ if:

\begin{enumerate}
\item $j^1\phi_p \in S_1$ (the derivative drops rank by $1$, where rank is determined by the connection),
\item $j^1\phi \pitchfork_s S_1$ at $p$, meaning $Z= j^1\phi^{-1}(S_1)$ is a submanifold with corners near $p$, and
\item $\ker(p_V(d\phi_p))\pitchfork T_p(Z)$.
\end{enumerate}
If for each $p\in U$ we have $j^1\phi_p\in S_0$ or $j^1\phi_p\in S_1$ and the above three conditions are satisfied at such points, then we say that $(U,\phi)$ is a section with fold singularities.  We call $Z=(j^1\phi)^{-1}(S_1)$ the \emph{fold locus} of $\phi$.  It has codimension $(\dim(M)-q + 1)(\dim(F) - q + 1)$, where $q=\min(\dim(M),\dim(F))$, by proposition \ref{prop:Sr} and lemma \ref{lem:Lrmn}.
\end{definition}

\begin{remark}
Let $F$ bet the typical fiber of a fiber bundle $\pi:E \to M$ and let $(U,\phi)$ be a section with fold singularities.  Then, we necessarily have that $\dim(M) \ge \dim(F)$.  To see this, recall that $\chi$ gives us an isomorphism of fiber bundles:

\begin{displaymath}
\xymatrixcolsep{4pc}\xymatrixrowsep{.5pc}\xymatrix{
F_{\chi}: J^1(E) \ar[r] & \hom(\pi^*TM, V) \\
(j^1_p\phi) \ar[r]        &  (p,\phi(p),p_V\circ d\phi_p)
}
\end{displaymath}
If $\dim(F) > \dim(M)$, then $\operatorname{rank}(V) > \operatorname{rank}(\pi^*TM)$, hence any element of the fiber of $\hom(\pi^*TM, V)$ would necessarily have corank $\ge 1$.  This means that if $(U,\phi)$ is a local section then $\operatorname{corank}(j^1_p\phi)\ge 1$ for all $p\in U$, meaning $(U,\phi)$ either does not satisfy condition $1$ or it satisfies condition $1$ and violates condition $2$ since the image of $j^1\phi$ would be contained in $S_1$.

We therefore have that the fold locus $(j^1\phi)^{-1}(S_1)$ has codimension $(\dim(M)-\dim(F)+1)$ by proposition \ref{prop:Sr}.
\end{remark}

\begin{definition}\label{def:folds1}
Let $f:M \to N$ be a smooth map of manifolds with corners.  We will say that $f:M \to N$ is a submersion with folds if the section $\phi(m)=(m,f(m))$ of $M\times N$ is a section with fold singularities, where we equip $M\times N$ with the standard connection.  The fold locus, $Z=(j^1\phi)^{-1}(S_1)$ is a smooth submanifold with corners of $M$ of codimension $(\dim(M)-\dim(N) + 1)$.
\end{definition}

\begin{lemma}\label{lem:folds0}
Let $\pi:E \to M$ be a fiber bundle over a manifold with corners with connection $\chi$.  Let $(U,\phi)$ be a section with fold singularities.  Suppose $\pi':E' \to M$ is another fiber bundle with connection $\chi'$ and $\tau:E\to E'$ is a connection-preserving isomorphism of fiber bundles.  Then $(U,\tau\circ \phi)$ is a section with fold singularities.
\end{lemma}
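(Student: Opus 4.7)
The plan is to verify the three conditions of Definition \ref{def:folds} for the section $(U,\tau\circ\phi)$, using the fact that prolongation is functorial and connection-preserving isomorphisms interact well with the structures in sight. Throughout, write $S_1\subset J^1(E)$ and $S_1'\subset J^1(E')$ for the corank-$1$ strata, and let $Z=(j^1\phi)^{-1}(S_1)\subset U$ be the fold locus of $\phi$. Let $\tilde\tau:J^1(E)\to J^1(E')$ be the induced map from Definition \ref{def:jetmap}. The key observation is that $j^1(\tau\circ\phi)=\tilde\tau\circ j^1\phi$ directly from the definition of prolongation, so all three conditions for $\tau\circ\phi$ can be read off from the corresponding conditions for $\phi$ once we understand how $\tilde\tau$ relates $S_1$ to $S_1'$.

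First I would invoke Lemma \ref{lem:rank}: since $\tau$ is connection-preserving, $\operatorname{corank}\circ\tilde\tau = \operatorname{corank}$, so $\tilde\tau$ restricts to a diffeomorphism $S_1\to S_1'$. This gives condition (1) immediately: $j^1\phi_p\in S_1$ forces $j^1(\tau\circ\phi)_p = \tilde\tau(j^1\phi_p)\in S_1'$. It also gives condition (2): since $\tilde\tau$ is a diffeomorphism of manifolds with corners (Lemma \ref{lem:jetmap}(3)) carrying $S_1$ diffeomorphically onto $S_1'$, strong transversality passes through $\tilde\tau$, i.e.\ $j^1(\tau\circ\phi)\pitchfork_s S_1'$ at $p$, and the fold loci agree set-theoretically: $(j^1(\tau\circ\phi))^{-1}(S_1') = (j^1\phi)^{-1}(\tilde\tau^{-1}(S_1')) = Z$.

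For condition (3), the connection-preserving hypothesis $\tau^*\chi'=\chi$ is equivalent, as in the proof of Lemma \ref{lem:rank}, to $d\tau\circ p_V = p_{V'}\circ d\tau$. Therefore
\[
p_{V'}\bigl(d(\tau\circ\phi)_p\bigr) \;=\; p_{V'}\bigl(d\tau_{\phi(p)}\circ d\phi_p\bigr) \;=\; d\tau_{\phi(p)}\bigl(p_V(d\phi_p)\bigr).
\]
Since $d\tau|_V:V\to V'$ is a fibrewise isomorphism of vertical bundles, composing with it does not change the kernel, so $\ker\bigl(p_{V'}(d(\tau\circ\phi)_p)\bigr) = \ker\bigl(p_V(d\phi_p)\bigr)$. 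Combined with the equality of fold loci from the previous paragraph, the condition $\ker\bigl(p_{V'}(d(\tau\circ\phi)_p)\bigr)\pitchfork T_pZ$ reduces to the hypothesis $\ker(p_V(d\phi_p))\pitchfork T_pZ$ on $\phi$.

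There is no genuine obstacle here: once one has (a) functoriality of the prolongation, (b) the fact that connection-preserving diffeomorphisms preserve corank (Lemma \ref{lem:rank}), and (c) the explicit description of the kernel of the vertical part of the differential, the three conditions of Definition \ref{def:folds} transfer verbatim from $\phi$ to $\tau\circ\phi$. The only point requiring a moment of care is that we use \emph{strong} transversality $\pitchfork_s$, but this is preserved by diffeomorphisms of manifolds with corners, which is exactly what $\tilde\tau$ is.
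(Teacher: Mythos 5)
Your proof is correct and follows essentially the same route as the paper's: both hinge on Lemma \ref{lem:rank} to see that the connection-preserving isomorphism carries $S_r$ to $S_r'$, so that the defining conditions of a fold singularity transfer from $\phi$ to $\tau\circ\phi$. You are somewhat more explicit than the paper — in particular your verification of condition (3) via $p_{V'}(d(\tau\circ\phi)_p)=d\tau_{\phi(p)}(p_V(d\phi_p))$ and the resulting equality of kernels is a detail the paper leaves implicit — but this is a refinement, not a different argument.
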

\begin{proof} \mbox{ } \newline
Let $V$ and $V'$ be the vertical bundles of $E$ and $E'$ respectively.  The map $\tau$ induces an isomorphism of fiber bundles (over $M$):

\begin{displaymath}
\tilde{\tau}: \hom(\pi^*TM, V) \to \hom(\pi'^*TM, V')
\end{displaymath}
and by proposition \ref{prop:C2} we have a series of isomorphisms of fiber bundles (over $M$) :

\begin{displaymath}
\xymatrixcolsep{4pc}\xymatrix{
J^1(E) \ar[r]^{F_{\chi}} & \hom(\pi^*TM,V) \ar[r]^{\tilde{\tau}} & \hom(\pi'^*TM, V') \ar[r]^{F_{\chi'}^{-1}} & J^1(E')
}
\end{displaymath}
which sends $S_r\subset J^1(E)$ to $S_r'\subset J^1(E')$ by lemma \ref{lem:rank}.  Therefore, $\phi$ is a section with fold singularities if and only if $\tau(\phi)$ is a section with fold singularities.
\end{proof}

\begin{cor}\label{cor:folds-diffeo}
Let $M$ and $N$ be smooth manifolds with corners and let $f:M\to N$ be a submersion with folds.  Let $\tau:N \to P$ be a diffeomorphism of manifolds with corners.  Then $\tau \circ f$ is a submersion with folds.
\end{cor}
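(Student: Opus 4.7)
The plan is to realize $\tau\circ f$ as the image of $f$ under an explicit connection-preserving isomorphism of fiber bundles and then invoke lemma \ref{lem:folds0} directly. By definition \ref{def:folds1}, saying that $f:M\to N$ is a submersion with folds means that the graph section $\phi_f:M\to M\times N$, $\phi_f(m)=(m,f(m))$, is a section with fold singularities of the trivial bundle $M\times N\to M$ equipped with its standard connection. Similarly, to prove the corollary we must show that $\phi_{\tau\circ f}:M\to M\times P$, $\phi_{\tau\circ f}(m)=(m,\tau(f(m)))$, is a section with fold singularities of $M\times P\to M$ with its standard connection.

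First, I would define the map
\begin{displaymath}
T: M\times N \to M\times P, \qquad T(m,n) = (m,\tau(n)).
\end{displaymath}
Since $\tau$ is a diffeomorphism of manifolds with corners, $T$ is a diffeomorphism of fiber bundles over $M$ (it covers $\operatorname{id}_M$). By construction, $\phi_{\tau\circ f} = T\circ \phi_f$, so lemma \ref{lem:folds0} reduces the claim to checking that $T$ is connection-preserving with respect to the standard connections on $M\times N$ and $M\times P$.

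The key step is this verification, which I expect to be routine rather than obstructive. The standard connection on $M\times N\to M$ is given by the splitting $T(M\times N)\simeq pr_1^*TM\oplus pr_2^*TN$, with vertical projection the projection onto $pr_2^*TN$; similarly for $M\times P$. Under these splittings, $dT_{(m,n)}$ acts as $(X,Y)\mapsto (X,d\tau_n(Y))$. Hence $dT$ sends the horizontal subbundle $pr_1^*TM\oplus 0$ into $pr_1^*TM\oplus 0$ and the vertical subbundle $0\oplus pr_2^*TN$ isomorphically onto $0\oplus pr_2^*TP$. Equivalently, if $p_V$ and $p_V'$ denote the vertical projections for $M\times N$ and $M\times P$ respectively, then $dT\circ p_V = p_V'\circ dT$, which is precisely the condition $T^*\chi' = \chi$ required in lemma \ref{lem:folds0}.

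With $T$ a connection-preserving diffeomorphism of fiber bundles and $\phi_f$ a section with fold singularities, lemma \ref{lem:folds0} yields that $T\circ \phi_f = \phi_{\tau\circ f}$ is a section with fold singularities of $M\times P\to M$. By definition \ref{def:folds1}, this is exactly the assertion that $\tau\circ f:M\to P$ is a submersion with folds, completing the argument. The fold locus is preserved set-theoretically, since $T$ is a diffeomorphism and the identification $F_{\chi}$ of proposition \ref{prop:C2} is intertwined with $F_{\chi'}$ via $\tilde T$ by the proof of lemma \ref{lem:folds0}.
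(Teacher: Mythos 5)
Your proposal is correct and follows essentially the same route as the paper: both apply lemma \ref{lem:folds0} to the bundle map $(m,n)\mapsto(m,\tau(n))$ between the trivial bundles $M\times N$ and $M\times P$ with their standard connections. Your version simply spells out the connection-preserving check that the paper leaves implicit.
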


\begin{proof} \mbox{ } \newline
Apply lemma \ref{lem:folds0} with $E=M\times N$, $E'=M\times N_1$, and with $\chi, \chi'$ as the standard connections.  $f$ is a submersion with folds if and only if the section $\phi(m)=(m,f(m))$ is a section with folds if and only if $\tau(\phi)(m)=(m,\tau(f(m)))$ is a section with folds if and only if $\tau(f(m))$ is a submersion with folds.
\end{proof}
\subsubsection{Equidimensional Fold Maps and Computations}

\begin{definition}\label{def:folds2}
Let $\pi:E \to M$ be a fiber bundle over a manifold with corners with connection $\chi$ and typical fiber $F$ satisfying $\dim(F)=\dim(M)$.  We say a local section $(U,\phi)$ is an \emph{equidimensional section with fold singularities} if $(U,\phi)$ is a section with fold singularities (q.v. definition \ref{def:folds1}).  When the dimensions of $F$ and $M$ are understood to be equal, we simply refer to $(U,\phi)$ as a section with fold singularities.  We call $Z=(j^1\phi)^{-1}(S_1)$ the \emph{folding hypersurface} of $\phi$ since it has codimension $1$.
\end{definition}

\begin{definition}\label{def:folds3}
Let $M$ and $N$ be two $m$-dimensional manifolds with corners and let $E=M\times N$ be the trivial fiber bundle over $M$ with standard flat connection, $H=TM\oplus 0$.  We say $f:M \to N$ is an \emph{equidimensional map with fold singularities} if the section $\phi(m)=(m,f(m))$ is a section with fold singularities.  When the dimensions of $M$ and $N$ are understood to be equal, we simply refer to $f$ as a \emph{map with fold singularities}.  We call $Z=(j^1\phi)^{-1}(S_1)$ the \emph{folding hypersurface} of $f$ since it is a codimension $1$ submanifold with corners of $M$.
\end{definition}

\begin{remark}
Recall that if $A:V_1 \to V_2$ is a linear map of vector spaces then $A^n$ denotes the induced map between the $n^{th}$ exterior powers: $A^n:\Lambda^n(V_1) \to \Lambda^n(V_2)$.  Similarly, if $A:V_1 \to V_2$ is a map of vector bundles then we may write $A^n$ for the induced map $A:\Lambda^n(V_1) \to \Lambda^n(V_2)$ between maps of vector bundles.
\end{remark}

We have an important computational tool allowing us to detect when sections have fold singularities.

\begin{prop}\label{prop:folds4}
Let $\pi:E \to M$ be a fiber bundle over a manifolds with corners $M$ with connection $\chi$, induced projection $p_V:TE \to V$ onto the vertical bundle, and typical fiber $F$ satisfying $\dim(F)=\dim(M)=n$.  Let $\mathcal{O}$ be the zero section of $\hom(\Lambda^n(\pi^*TM), \Lambda^n(V))$ (a vector bundle over $E$).  Then $(U,\phi)$ satisfies
\begin{enumerate}
\item $\operatorname{corank}(j^1_\phi)>0$ $\iff$ $j^1_p\phi \in S_1$ and
\item $j^1\phi \pitchfork_s S_1$
\end{enumerate}
if and only if $(p_V(d\phi))^n:M \to \hom(\Lambda^n(\pi^*TM),\Lambda^n(V))$ satisfies $(p_V(d\phi))^n\pitchfork_s \mathcal{O}$.
\end{prop}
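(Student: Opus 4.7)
The plan is to translate the conditions on $j^1\phi$ and $S_1$ into a transversality statement for the fiberwise top exterior power map, and then analyze that map explicitly along the corank stratification. Via the isomorphism $F_\chi:J^1(E)\to \hom(\pi^*TM,V)$ of Proposition \ref{prop:C2}, identify $j^1\phi$ with the section $\sigma := p_V(d\phi)$ (a section of $\phi^*\hom(\pi^*TM,V) \to M$), and identify $S_r\subset J^1(E)$ with the corank-$r$ subbundle $L_r := L^r(\pi^*TM,V)$ as in the proof of Proposition \ref{prop:Sr}.  Factor
\begin{equation*}
(p_V(d\phi))^n \;=\; \Psi \circ \sigma, \qquad \Psi(A) := \Lambda^n A,
\end{equation*}
where $\Psi:\hom(\pi^*TM,V)\to \hom(\Lambda^n\pi^*TM,\Lambda^n V)$ is fiber-preserving and $\Psi^{-1}(\mathcal{O}) = \bigcup_{r\geq 1} L_r$. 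The proposition then becomes: $\Psi\circ\sigma \pitchfork_s \mathcal{O}$ if and only if $\sigma$ avoids $L_r$ for all $r\geq 2$ and $\sigma \pitchfork_s L_1$.

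The main step is computing the vertical differential of $\Psi$ at a point $A$ of corank $r$. Choose a basis $e_1,\ldots,e_n$ of $T_pM$ with $e_1,\ldots,e_r \in \ker A$; for $B\in \hom(T_pM,V_{\phi(p)})$ regarded as a vertical tangent vector at $A$,
\begin{equation*}
d\Psi_A(B)(e_1\wedge\cdots\wedge e_n) \;=\; \sum_{i=1}^n Ae_1\wedge\cdots\wedge Be_i\wedge\cdots\wedge Ae_n.
\end{equation*}
When $r\geq 2$ every summand contains some $Ae_j=0$, so $d\Psi_A$ vanishes in the vertical direction; since $\Psi$ is fiber-preserving, its horizontal derivative is tangent to $\mathcal{O}$, and $\Psi$ fails to be transverse to $\mathcal{O}$ at $A$. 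When $r = 1$ only the $i=1$ term survives as $Be_1\wedge Ae_2\wedge\cdots\wedge Ae_n$; this kills $T_AL_1$ and descends to a map
\begin{equation*}
\overline{d\Psi}_A \colon \nu(L_1)_A \;\xrightarrow{\sim}\; \nu(\mathcal{O})_{\Psi(A)}
\end{equation*}
which, via the canonical identification $\nu(L_1)_A\cong \hom(\ker A,\operatorname{coker}A)$ from equation \ref{eq:canmap1} and the trivialization $\nu(\mathcal{O})_{\Psi(A)}\cong \hom(\Lambda^nT_pM,\Lambda^nV_{\phi(p)})$, is an isomorphism of one-dimensional spaces. Hence $\Psi$ is transverse to $\mathcal{O}$ precisely along $L_1$.

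Both directions then follow by composition-transversality. Forward: assuming (1) and (2), at each $p$ with $(\Psi\circ\sigma)(p)\in\mathcal{O}$ condition (1) forces $\sigma(p)\in L_1$, and the composition $\overline{d\Psi}_{\sigma(p)}\circ \overline{d\sigma}_p$ surjects onto $\nu(\mathcal{O})$ because $\overline{d\sigma}_p$ surjects onto $\nu(L_1)$ by (2) and $\overline{d\Psi}_{\sigma(p)}$ is an isomorphism, so $\Psi\circ\sigma\pitchfork \mathcal{O}$ at $p$. Reverse: if $\Psi\circ\sigma\pitchfork_s\mathcal{O}$, no $p$ can have $\sigma(p)\in L_r$ with $r\geq 2$, for then the vanishing of $d\Psi_{\sigma(p)}$ on the vertical would force $d(\Psi\circ\sigma)_p$ to have zero projection to $\nu(\mathcal{O})$, contradicting transversality; this gives (1), and the isomorphism $\overline{d\Psi}_A$ at $L_1$ converts the transversality with $\mathcal{O}$ back into transversality with $L_1$, giving (2). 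The hard part is the $d\Psi_A$ computation above; passing from $\pitchfork$ to $\pitchfork_s$ is routine since $L_1$ and $\mathcal{O}$ are defined uniformly across strata, so the argument applies to the restriction of $\sigma$ to each boundary stratum of $M$ without modification.
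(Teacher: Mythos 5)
Your proposal is correct, and the core computation is the right one, but you organize the argument along a genuinely different route from the paper. The paper reduces everything to a scalar statement, Lemma \ref{lem:folds1}, about the function $\det\circ\psi:M\to\R$, and proves that lemma by explicit matrix manipulations: block decompositions $S=\begin{pmatrix}A&B\\C&D\end{pmatrix}$, curves $\gamma(t)=[c_1,\dots,c_n+tv]$ through the space of matrices, and a column-by-column expansion to rule out corank $\ge 2$; the proposition then follows by a chain of local restatements in trivializations. You instead factor $(p_V(d\phi))^n=\Psi\circ\sigma$ with $\Psi=\Lambda^n$ a fiber-preserving bundle map, compute the vertical differential $d\Psi_A$ once and for all via the Leibniz formula, and read off both halves of the lemma from it: vanishing of $d\Psi_A$ on the fiber when $\operatorname{corank}(A)\ge 2$, and the induced isomorphism $\nu(L_1)_A\to\nu(\mathcal{O})_{\Psi(A)}$ (via the identification of equation \ref{eq:canmap1}) when $\operatorname{corank}(A)=1$, after which composition-transversality finishes both directions. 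The two proofs contain literally the same derivative-of-determinant identity --- the paper's $\frac{d}{dt}\det(c_1,\dots,c_n+tv)=\det(c_1,\dots,c_{n-1},v)$ is one term of your Leibniz sum, and its corank-$\ge 2$ curve argument is your observation that every Leibniz term then carries a zero factor --- but your packaging is coordinate-free, isolates the single computation that drives everything, and ties the statement to the intrinsic-derivative formalism of section 2.1.5 that the paper sets up but does not invoke here. What the paper's version buys in exchange is self-containedness: it never needs the identification $\nu(L^1)\cong\hom(\ker,\operatorname{coker})$ or the claim that the horizontal derivative of $\Psi$ contributes nothing modulo $T\mathcal{O}$, both of which you use and should state carefully (the latter is clearest in a trivialization, where the fiber component of $\Psi$ is independent of the base point). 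Your closing remark that the passage from $\pitchfork$ to $\pitchfork_s$ is stratum-by-stratum and requires no new ideas is correct and matches the paper's treatment.
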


The proposition is a direct consequence of the following lemma.

\begin{lemma}\label{lem:folds1}
Let $M$ be a manifold with corners and let $\psi:M\to \hom(\R^n,\R^n)$ be a smooth map.  Let $L^r:=L^r_{nn} \subset \hom(\R^n,\R^n)$ be the subset of matrices of corank $r$ (rank $n-r$).  Then $\det(\psi):M\to \R$ is (strongly) transverse to $0$ if and only if the two conditions:

\begin{enumerate}
\item $\operatorname{corank}(\psi(p))>0$ if and only if $\psi(p)\in L^1$ and
\item $\psi \pitchfork_s L^1$.
\end{enumerate}
are satisfied
\end{lemma}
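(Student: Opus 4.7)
The plan is to reduce the statement to a local computation of the determinant near matrices of various coranks, and then exploit the fact that $\det$ is a defining function for $L^1$ in the open set of matrices of corank at most one.

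First I would establish the key infinitesimal fact: viewing $\det$ as a smooth function on $\hom(\R^n,\R^n)$, one has $d(\det)_A \neq 0$ for $A \in L^1$, and $d(\det)_A = 0$ for $A \in L^r$ with $r \geq 2$. The first claim is standard: choosing bases that put $A$ in the form $\operatorname{diag}(\lambda_1,\ldots,\lambda_{n-1},0)$ with all $\lambda_i \neq 0$, the variation $A + tE_{nn}$ shows $d(\det)_A(E_{nn}) = \prod_{i<n}\lambda_i \neq 0$. For $A\in L^r$ with $r\geq 2$, the same diagonalization shows $\det$ factors locally as a product with at least two vanishing factors at $A$, so its differential vanishes at $A$. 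Consequently $L^1$ coincides, near any of its points, with the zero set of $\det$ inside the open set $U \subseteq \hom(\R^n,\R^n)$ of matrices of corank at most one; that is, $\det\big|_U$ is a (strong) defining function for $L^1$ as a codimension-one submanifold of $U$.

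Next I would prove the forward implication. Assume $\det\circ\psi \pitchfork_s 0$ (strongly transverse, so transverse on every boundary stratum of $M$). If $p$ satisfies $\det(\psi(p))=0$, then $\psi(p) \in L^r$ for some $r\geq 1$. If $r\geq 2$, then $d(\det)_{\psi(p)} = 0$ by the previous step, so $d(\det\circ\psi)_p = d(\det)_{\psi(p)} \circ d\psi_p = 0$ on every stratum through $p$, contradicting strong transversality to $0$. Hence condition (1) holds. For condition (2), note that near any $p$ with $\psi(p)\in L^1$, the image of $\psi$ lies in the open set $U$ where $L^1 = \det^{-1}(0)$; strong transversality of $\det\circ\psi$ to $0$ at $p$ is then equivalent, stratum by stratum, to strong transversality of $\psi$ to $L^1$ at $p$.

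For the reverse implication I would argue pointwise. At any $p$ with $\det(\psi(p))\neq 0$, transversality is automatic. At any $p$ with $\det(\psi(p))=0$, condition (1) forces $\psi(p)\in L^1$, after which condition (2) and the fact that $\det$ locally cuts out $L^1$ give strong transversality of $\det\circ\psi$ to $0$ at $p$ via the same chain of equivalences used above.

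The main obstacle is the first step -- namely, verifying that $d(\det)$ vanishes exactly on $\bigcup_{r\geq 2} L^r$ -- since this is what allows the equivalence of transversality conditions to be expressed cleanly. Everything else is then a more-or-less formal manipulation of defining functions, with the "strong" variant of transversality causing no extra difficulty because the equivalence $\det\circ\psi \pitchfork_s 0 \iff \psi \pitchfork_s L^1$ is realized on each boundary stratum separately through the same local defining function.
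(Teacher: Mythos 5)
Your proof is correct, and while the easy direction (conditions imply transversality) runs along the same lines as the paper's --- both rest on the fact that $\det$ is a submersion at each point of $L^1$ whose kernel there is exactly $T L^1$ --- your treatment of the hard direction is genuinely different at its key step. The paper excludes $\operatorname{corank}\ge 2$ by a hands-on computation: it puts $\det(\psi)^{-1}(0)$ in coordinates as $\{t=0\}$, writes $\psi$ in column form, and differentiates $\det(\psi(\gamma(t)))$ along a transverse curve to produce a determinant with two dependent columns, hence a contradiction. You instead observe that $d(\det)_A = 0$ whenever $\operatorname{corank}(A)\ge 2$, so that $d(\det\circ\psi)_p$ vanishes identically in \emph{every} direction and strong transversality fails immediately. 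This is cleaner and more conceptual: it packages the whole lemma as the statement that $\det$ is a defining function for $L^1$ on the open set of matrices of corank at most one, after which both implications become the standard equivalence $\psi\pitchfork S \iff f\circ\psi\pitchfork 0$ for a defining function $f$, applied stratum by stratum. The paper's version buys elementarity (no facts about $d(\det)$ beyond the submersion property at $L^1$ are needed) at the cost of a longer coordinate computation.

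Two small points to tighten. First, ``diagonalization'' of a corank-one $A$ should be read as independent changes of basis in source and target (rank normal form), since a corank-one matrix need not be diagonalizable by similarity; this is harmless because $\det$ changes only by a nonzero constant under such base changes. Second, the justification that $d(\det)_A=0$ for $\operatorname{corank}(A)\ge 2$ via ``a product with two vanishing factors'' is heuristic as stated; the clean argument is either $d(\det)_A(H)=\operatorname{tr}(\operatorname{adj}(A)H)$ with $\operatorname{adj}(A)=0$ when all $(n-1)\times(n-1)$ minors vanish, or the multilinear expansion $d(\det)_A(H)=\sum_i \det(c_1,\dots,h_i,\dots,c_n)$, in which every term contains $n-1$ linearly dependent columns of $A$. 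Neither point is a gap, only a place to make the writing precise.
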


\begin{proof} \mbox{ } \newline
\begin{enumerate}
\item Assume that $\operatorname{corank}(\psi(p))>0$ if and only if $\psi(p)\in L^1$ and $\psi \pitchfork_s L^1$.  We show that $\det(\psi)\pitchfork_s 0$.
    \begin{itemize}
    \item First, note that at any point $A$ of $L^1$ $\det: \hom(\R^n, \R^n) \to \R$ is a submersion.  To see this, write $A=\begin{bmatrix} c_1 & \dots & c_n \end{bmatrix}$ and, without loss of generality, assume $c_n$ is in the span of the first $n-1$ columns.  Pick any vector $v$ not in the span of the first $n-1$ rows and consider $\gamma(t)=\begin{bmatrix} c_1& \dots &c_{n-1} & c_n+tv\end{bmatrix}$.  Then:
        \begin{displaymath}
        \frac{d}{dt} \det(\gamma(t)) = \frac{d}{dt} \det(c_1,\dots,c_{n-1},c_n+tv) = \frac{d}{dt}(\det(A) + t\det(c_1,\dots, c_{n-1},v)) = \det(c_1,\dots,c_{n-1},v) \ne 0
        \end{displaymath}
        which shows $\det$ is a submersion at $A$.
    \item Note that the restriction $\det \vert_{L^1}$ is identically zero since all elements in this subset have rank $n-1$, hence its derivative restricted to vectors tangent to $L^1$ vanishes.  We also note that $L^1$ is a hypersurface in $\hom(\R^n,\R^n)$ by lemma \ref{lem:Lrmn}.  Therefore, if $A \in L^1$ and $v\in T_A\hom(\R^n,\R^n)$ is transverse to $T_AL^1$, we must have that $d(\det)_A(v) \ne 0$ since it is surjective at $A$ by the first bullet.  Otherwise, the differential would vanish at $A\in L^1$ since it would vanish along directions tangent to a hypersurface and a direction transverse to that hypersurface.
    \item Let $p\in M$ be a point for which $\psi(p)\in L^1$.  $\psi \pitchfork_s L^1$ by assumption, so there exists a vector $v_0 \in T_pM$, tangent to the stratum containing $p$, so that $d\psi_p(v_0) \pitchfork T_{\psi(p)}L^1$.  Then $d(\det\circ \psi)p(v_0)\ne 0$ by the previous bullet.
    \item Since this is true for any $p\in M$ with $\psi(p)\in L^1$ and $\psi(p)\in L^r \iff \mbox{ }r=0,1$, we have $\det(\psi)\pitchfork_s 0$.
    \end{itemize}
\item Conversely, assume that $\det(\psi)\pitchfork_s 0$.
    \begin{itemize}
    \item We first show that $\psi$ cannot intersect the subset of matrices of corank $r$, $L^r$, unless $r=1$ or $r=0$.  Assume that $\psi(p)\in L^r$ and $r>1$.  Since $\det(\psi)\pitchfork_s 0$, $\det(\psi)^{-1}(0)$ is a smooth submanifold with corners of $M$ transverse to boundary strata and there are coordinates $(x_1,\dots,x_{m-1},t)$ around $p$ with $\det(\psi)^{-1}(0)$ identified with the zero set $t=0$ and $p$ identified with the origin.
    \item We write $\psi(\vec{x},t)$ in column form as $\psi(\vec{x},t) = \begin{bmatrix} c_1&\dots& c_{n-2}&c_{n-1}&c_n \end{bmatrix}$ for some smooth maps $c_i:M \to \R^n$.
    \item We are assuming $\psi(0)\in L^r$ with $r>1$ so, without loss of generality, $c_{n-1}(0)$ and $c_n(0)$ are in the span of the first $n-2$ columns at $0$.  That is, there are constants $a_i$ so that:
        \begin{displaymath}
        c_n(0)=\sum_{i=1}^{n-2} a_i c_i(0)
        \end{displaymath}
    \item Consider the curve $\gamma(t) = (0,t)$ passing through $p$.  Since $c_n(\gamma(t))-\sum_{i=1}^{n-2} a_i c_i(\gamma(t))$ vanishes at $t=0$, we have that $c_n(\gamma(t))= tG(\vec{x},t) + \sum_{i=1}^{n-2} a_i c_i(\gamma(t))$ for some smooth map $G(\vec{x},t)$.

    \item By assumption, $\det(\psi) \pitchfork_s 0$.  Since $Z=\det(\psi)^{-1}(0)$ is a hypersurface, this implies that for any direction $v$ transverse to $Z$ at $p$, $v(\det(\psi)) \ne 0$.  Therefore, $\frac{d}{dt}\det(\psi(\gamma(t)))$ should be nonzero at $t=0$.
    \item However, we compute:
    \begin{displaymath}
    \begin{array}{lcl}
    \det(\psi(\gamma(t)) &=& \det( c_1 , \dots , c_{n-2} , c_{n-1} , tG(\vec{x},t) + \sum_{i=1}^{n-2} a_i c_i(\gamma(t))) \\
                         &=& \det(c_1, \dots, c_{n-2}, c_{n-1}, tG(\vec{x},t)) \\
                         &=& t\det(c_1, \dots, c_{n-2}, c_{n-1}, G(\vec{x},t)) \\
    \end{array}
    \end{displaymath}
    hence $\frac{d}{dt} \det(\psi(\gamma(t)))\big\vert_{t=0} = \det(c_1(0),\dots, c_{n-2}(0), c_{n-1}(0),G(0)) =0$ since $c_{n-1}(0)$ is in the span of the first $n-2$ columns by assumption.  This contradicts the assumption that $\det(\psi)\pitchfork_s 0$, hence we must have $r\le 1$ and $\psi$ may only intersect $L^1$ if $r>0$.  That is $\psi(p) \in L^r \mbox{ } \iff \mbox{ } r=1,0$.
    \item Now, since $L^1$ is a hypersurface (q.v. lemma \ref{lem:Lrmn}), we can deduce that $\psi \pitchfork_s L^1$ fairly easily.  If $p\in \det(\psi)^{-1}(0)$ and $d\psi_p \not\pitchfork T_{\psi(p)}L^1$, then we must have that $d\psi_p(T_pM)\subset T_{\psi(p)}L^1$, since $L^1$ is a hypersurface by lemma \ref{lem:Lrmn}.  But then $d(\det\circ \psi)_p =0$ since the derivative of the determinant vanishes along directions tangent to $L^1$, meaning $\det(\psi) \not\pitchfork_s 0$ at $p$, contradicting our assumption.
    \item Therefore, $\psi$ intersects $L^r$ if and only if $r\le 1$ and $\psi \pitchfork_s L^1$.
    \end{itemize}
\end{enumerate}
\end{proof}

\begin{proof}[proof of proposition \ref{prop:folds4}]
Lemma \ref{lem:folds1} gives us a straightforward proof of proposition \ref{prop:folds4}.  Recall that in the setting of proposition \ref{prop:folds4} we have a fiber bundle $\pi:E \to M$ with connection $\chi$, which gives us a projection $p_V:TE \to V$ onto the vertical bundle, and we have $S_1\subset J^1(E)$, the subset of jets of corank $1$.  Our goal is to show that $j^1\phi \pitchfork_s S_1$ (and only intersects $S_1, S_0$) if and only if $(p_V(d\phi))^n\pitchfork_s \mathcal{O}$.  By proposition \ref{prop:C2}, the connection $\chi$ induces an isomorphism of fiber bundles:

\begin{displaymath}
F_{\chi}: J^1(E) \to \hom(\pi^*TM, V)
\end{displaymath}
Let $L_r$ denote the submanifold of $\hom(p^*TM, V)$ of maps of corank $r$.  As a note in what follows, $L_r$ is always the subfiber bundle of $\hom(p^*TM,V)$ of elements of corank $r$ while $L^r$ is the subset of $\hom(\R^n, \R^n)$ of elements of corank $r$.  The relationship is that $L^r$ is the typical fiber of $L_r$.  We have:
\begin{displaymath}
\begin{array}{lc}
\text{$j^1\phi$ intersects $S_r$ only when $r=1,0$ and $j^1\phi \pitchfork_s S_1$} & \iff \\
\text{$F_{\chi}(j^1\phi)=p_V(d\phi)$ intersects $L_r$ only when $r=1,0$ and $p_V(d\phi)\pitchfork_s L_1$} &  \iff \\
\text{The previous line is true in any trivialization of $\hom(\pi^*TM, V)$}                        &\iff \\
\text{it is true for $p_V(d\phi):U \to E\vert_U \times \hom(\R^n \times \R^n)$, $p_V(d\phi)(u)=(\phi(u),A(u))$  }  &\iff \\
\text{$A:U \to \hom(\R^n, \R^n)$ only intersects $L^1$ and $A\pitchfork_s L^1$ } &\iff \\
\text{$\det(A):U \to \R$ satisfies $\det(A)\pitchfork_s 0$ by lemma \ref{lem:folds1}}  &\iff \\
\text{$A^n:\Lambda^n(\R^n) \to \Lambda^n(\R^n)$ is transverse to the zero section $\mathcal{O}\in \Gamma(\hom(\Lambda^n(\R^n),\Lambda^n(\R^n)))$} & \iff \\
p_V(d\phi)^n \pitchfork_s \mathcal{O} \in  \Gamma(\hom(\Lambda^n(\pi^*TM),\Lambda^n(V)))
\end{array}
\end{displaymath}
where the first $5$ statements are a local restatement of what it means to only intersect $S_1$ and $S_0$ and to intersect $S_1$ transversally.  The last line is a global restatement of the previous line.
\end{proof}

\begin{cor}\label{cor:folds4-0}
Let $f:M \to N$ be a smooth map between two $m$-dimensional manifolds with corners.  Then $f$ is a map with fold singularities if and only if
\begin{enumerate}
\item The induced map $(df)^m: M \to \hom(\Lambda^m(TM), \Lambda^m(TN))$ is transverse (in the sense of manifolds with corners) to the zero section $\mathcal{O}$ of $\hom(\Lambda^m(TM),\Lambda^m(TN)) \to M \times N$, and
\item $\ker(df) \pitchfork ((df)^m)^{-1}(\mathcal{O})$.
\end{enumerate}
\end{cor}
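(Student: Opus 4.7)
The plan is to deduce the corollary almost immediately from Proposition \ref{prop:folds4} once the trivial bundle structure and its canonical connection have been unpacked. Specifically, by Definition \ref{def:folds3} the statement ``$f:M \to N$ is an equidimensional map with fold singularities'' means that the section $\phi(m) = (m, f(m))$ of the trivial bundle $\pi:M\times N \to M$ is a section with fold singularities relative to the flat connection $H = TM \oplus 0$. For this connection the vertical bundle is $V = pr_2^*TN$ and the vertical projection is $p_V = 0 \oplus dpr_2$, so for the specific section $\phi$ one has the canonical identifications $p_V(d\phi_p) = df_p$ as a map $T_pM \to T_{f(p)}N$, and consequently $\ker(p_V(d\phi_p)) = \ker(df_p)$ and $(p_V(d\phi))^m = (df)^m$ as sections of $\hom(\Lambda^m(\pi^*TM), \Lambda^m(V)) \simeq \hom(\Lambda^m(TM), \Lambda^m(TN))$.

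With this dictionary in place, Definition \ref{def:folds} applied to $\phi$ splits into two pieces. The first two defining conditions, namely (i) that the corank is either $0$ or $1$ everywhere and (ii) that $j^1\phi \pitchfork_s S_1$ at the fold points, are precisely what Proposition \ref{prop:folds4} converts into the single condition $(p_V(d\phi))^m \pitchfork_s \mathcal{O}$. Translating through the identifications above, this reads exactly as condition (1) of the corollary, $(df)^m \pitchfork_s \mathcal{O}$. So invoking Proposition \ref{prop:folds4} handles the first half of the equivalence with essentially no extra work.

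The only remaining step is to match the third defining condition, $\ker(p_V(d\phi_p)) \pitchfork T_pZ$ with $Z = (j^1\phi)^{-1}(S_1)$, with condition (2) of the corollary. For this I would verify the set-theoretic identity $Z = ((df)^m)^{-1}(\mathcal{O})$: by Proposition \ref{prop:C2} the isomorphism $F_\chi$ sends $S_1$ to $L_1 \subset \hom(\pi^*TM, V)$, and the locus in $L_1$ is exactly where the top exterior power vanishes, since a linear endomorphism of $\R^m$ has corank $\geq 1$ iff its determinant is zero. Thus $(j^1\phi)(p) \in S_1$ iff $(df)^m_p = 0$, giving the equality as subsets of $M$. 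Once condition (1) of the corollary holds, both sides acquire the same submanifold-with-corners structure, so $T_pZ = T_p((df)^m)^{-1}(\mathcal{O})$, and under the identification $\ker(p_V(d\phi_p)) = \ker(df_p)$ the third defining condition becomes verbatim condition (2) of the corollary.

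Conjoining these two pieces yields the equivalence in both directions. I do not expect any real obstacle here; the content is all in Proposition \ref{prop:folds4} and the only thing to be careful about is bookkeeping the canonical identifications induced by the flat connection on $M \times N$ and confirming that $Z$ and $((df)^m)^{-1}(\mathcal{O})$ agree scheme-theoretically (as submanifolds with corners) once transversality is assumed, so that comparing their tangent spaces is legitimate.
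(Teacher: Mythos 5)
Your proposal is correct and follows essentially the same route as the paper, which simply observes that the corollary is Proposition \ref{prop:folds4} specialized to the trivial bundle $M\times N$ with the standard flat connection; you have just filled in the bookkeeping (the identification $p_V(d\phi)=df$ and the equality $Z=((df)^m)^{-1}(\mathcal{O})$, which indeed requires the corank-$\le 1$ hypothesis available in either direction) that the paper leaves implicit.
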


\begin{proof} \mbox{ } \newline
This is just a restatement of the proposition in the case that $E$ is the trivial fiber bundle and $\chi$ is the standard connection.
\end{proof}

\begin{remark}
While corollary \ref{cor:folds4-0} follows easily from proposition \ref{prop:folds4}, its utility should not be overlooked.  What do we need to check to determine if a map of manifolds with corners $f:M^n \to N^n$ has fold singularities?

\begin{enumerate}
\item Compute the determinant $\det(df)$ in local coordinates and check that it vanishes transversally.  Note that we are dealing with manifolds with corners, so this means that we must check the condition $\det(df) \pitchfork_s 0$ on each stratum of $M$.
\item In local coordinates, check that $\ker(df_p)\pitchfork \det(df)^{-1}(0)$ for all $p\in \det(df)^{-1}(0)$.
\end{enumerate}
This makes the problem of understanding fold maps $f:M^n \to N^n$ very simple from a computational point of view.
\end{remark}

\begin{cor}\label{cor:folds4-2}
Let $f:M\to N$ be a smooth map with fold singularities between two $m$-dimensional manifolds with corners with folding hypersurface $Z\subset M$.  Suppose $\phi:M_1 \to M$ is a diffeomorphism of manifolds with corners.  Then $f\circ\phi:M_1 \to N$ is a (equidimensional) map with fold singularities with folding hypersurface $\phi^{-1}(Z)$.
\end{cor}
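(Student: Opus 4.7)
The plan is to verify the two defining conditions of Corollary \ref{cor:folds4-0} for $f\circ\phi$, using that $\phi$ is a diffeomorphism and the chain rule. Throughout, write $g = f\circ\phi$, so that $dg_p = df_{\phi(p)}\circ d\phi_p$ for every $p\in M_1$, with $d\phi_p$ an isomorphism.

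First I would handle the transversality of $(dg)^m$ to the zero section $\mathcal{O}$ of $\hom(\Lambda^m(TM_1),\Lambda^m(TN))$. Since taking top exterior powers is functorial, $(dg)^m_p = (df)^m_{\phi(p)}\circ (d\phi)^m_p$, and $(d\phi)^m_p$ is an isomorphism of lines. In any local coordinates this simply says $\det(dg) = (\det(df)\circ\phi)\cdot\det(d\phi)$, with $\det(d\phi)$ nowhere zero. Thus $(dg)^m$ vanishes exactly on $\phi^{-1}(Z)$, where $Z = ((df)^m)^{-1}(\mathcal{O})$ is the folding hypersurface of $f$. Because $\det(d\phi)$ is a smooth non-vanishing factor and $\phi$ is a strata-preserving diffeomorphism, the derivative of $\det(dg)$ at any $p\in\phi^{-1}(Z)$ restricted to any stratum is a nonzero multiple of the derivative of $\det(df)$ at $\phi(p)$ restricted to the corresponding stratum of $M$, pulled back along $d\phi_p$. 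Strong transversality of $(df)^m$ to $\mathcal{O}$ therefore pulls back to strong transversality of $(dg)^m$ to $\mathcal{O}$, and $\phi^{-1}(Z)$ inherits the structure of a codimension-one submanifold with corners of $M_1$.

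Next I would check the kernel condition $\ker(dg)\pitchfork \phi^{-1}(Z)$ at points of $\phi^{-1}(Z)$. At such a point $p$, $d\phi_p$ is a linear isomorphism $T_pM_1 \to T_{\phi(p)}M$ that restricts to an isomorphism $T_p\phi^{-1}(Z)\to T_{\phi(p)}Z$ (since $\phi$ is a diffeomorphism of manifolds with corners sending $\phi^{-1}(Z)$ bijectively to $Z$), and it carries $\ker(dg_p)$ isomorphically onto $\ker(df_{\phi(p)})$. Transversality is preserved by linear isomorphisms, so
\[
\ker(df_{\phi(p)}) \pitchfork T_{\phi(p)}Z \quad\Longleftrightarrow\quad \ker(dg_p) \pitchfork T_p\phi^{-1}(Z).
\]
Since the left-hand side holds by hypothesis (as $f$ has fold singularities along $Z$), the right-hand side holds for all $p\in\phi^{-1}(Z)$.

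Both conditions of Corollary \ref{cor:folds4-0} are therefore satisfied by $g = f\circ\phi$, with folding hypersurface exactly $\phi^{-1}(Z)$. No step is really the main obstacle; the entire content is that diffeomorphisms preserve transversality and commute with the formation of kernels, so the hypotheses on $f$ transport verbatim to $g$. The only point that requires a moment of care is ensuring \emph{strong} transversality is preserved stratum-by-stratum, which follows because $\phi$, being a diffeomorphism of manifolds with corners, sends each boundary stratum of $M_1$ diffeomorphically onto the corresponding boundary stratum of $M$.
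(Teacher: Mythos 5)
Your proposal is correct and follows essentially the same route as the paper's own proof: both verify the two conditions of Corollary \ref{cor:folds4-0} by noting that $\det(d(f\circ\phi))$ differs from $\det(df)\circ\phi$ by the nonvanishing factor $\det(d\phi)$ and that $\ker(d(f\circ\phi)) = d\phi^{-1}(\ker(df))$, so both transversality conditions transport through the diffeomorphism. Your write-up simply spells out the stratum-by-stratum details a bit more explicitly than the paper does.
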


\begin{proof} \mbox{ } \newline
This statement follows from the definitions, but we list it here as a corollary to proposition \ref{prop:folds4} since the criteria for detecting fold singularities are now easier to describe.

\vspace{2mm}
We have that $(df)^m: M \to \hom(\Lambda^m(TM), \Lambda^m(TN))$ satisfies $(df)^m\pitchfork_s \mathcal{O}$, where $\mathcal{O}$ is the zero section, if and only if $(df\circ d\phi)^m : M_1 \to \hom(\Lambda^m(TM_1), \Lambda^m(TN))$ is transverse to the zero section since $\phi$ is a diffeomorphism, hence $(df\circ d\phi)^m$ is transverse the zero section and the degenerate hypersurface of $df\circ d\phi$ is $\phi^{-1}(Z)$.  $\ker(df\circ d\phi)=d\phi^{-1}(\ker(df))$ is transverse to $\phi^{-1}(Z)$ since $\ker(df)\pitchfork Z$ and $\phi$ is a diffeomorphism of manifolds with corners.  Thus, $f\circ \phi$ is a map with fold singularities.
\end{proof}

\begin{cor}\label{cor:folds4-1}
Let $f:M \to N$ be an equidimensional map with fold singularities.  Let $S\subset M$ be an $s$-dimensional submanifold with corners of $M$ and suppose $f(S) \subset R \subset N$, where $R$ is an $s$-dimensional submanifold with corners of $N$.  Then $f\vert_{S}:S \to R$ is an equidimensional map with fold singularities.

\vspace{2mm}

Furthermore, at points $p\in S$ where $\ker(df_p)\subset T_pS$, $df_p$ induces an isomorphism on the fibers of the normal bundles, $\nu(S)$ and $\nu(R)$, or $S$ and $R$.  That is:

\begin{displaymath}
df_p:\nu(S)_p \to \nu(R)_{f(p)}
\end{displaymath}
is an isomorphism.
\end{cor}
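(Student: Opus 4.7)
The plan is to verify the computational criterion of Corollary \ref{cor:folds4-0} for $f|_S$ after reducing $f$ to its fold normal form in coordinates adapted to $S$ and $R$. First I would locate the rank-drop locus of $f|_S$. Since $d(f|_S)_p = df_p|_{T_pS}$ and $\dim S = \dim R = s$, this map fails to be an isomorphism exactly when $\ker(df_p) \cap T_pS \ne 0$. Off $Z$, $df_p$ is invertible so $d(f|_S)_p$ is automatically iso. On $Z$, $\ker(df_p)$ is one-dimensional, so the rank-drop occurs precisely at
\[
Z_S := \{p \in S \cap Z : \ker(df_p) \subset T_pS\},
\]
which is exactly the locus where the second statement's hypothesis holds.

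Fix $p \in Z_S$. By the fold normal form for $f$, choose coordinates $(x_1,\dots,x_m)$ on $M$ near $p$ and $(u_1,\dots,u_m)$ on $N$ near $f(p)$ so that $f(x)=(x_1,\dots,x_{m-1},x_m^2)$, $Z = \{x_m=0\}$, and $\ker(df)=\langle \partial/\partial x_m\rangle$. Since $\partial/\partial x_m|_p \in T_pS$, $S \pitchfork_s Z$ at $p$, so I can parameterize $S$ locally by $\iota(y,t) = (c_1(y,t),\dots,c_{m-1}(y,t),\, t\tilde b(y,t))$ with $\tilde b(0,0)=1$, arranged so that $\{t=0\}$ cuts out $S\cap Z$ and $\iota_*(\partial/\partial t|_0) = \partial/\partial x_m|_p$ (forcing $\partial_t c_i(0,0) = 0$). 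Then $f\circ\iota(y,t) = (c_1,\dots,c_{m-1},\, t^2 \tilde b^2)$. The second-order derivative $\partial_t^2(f\circ\iota)(0) = (\ast,\dots,\ast,2)$ lies in $T_{f(p)}R$ (as the acceleration of a curve in $R$ with vanishing velocity), so $T_{f(p)}R$ is transverse to the hyperplane $df_p(T_pM)=\{u_m=0\}$. Hence $R$ is locally a graph over the coordinates $(u_1,\dots,u_{s-1},u_m)$, and after a minor adjustment of $y$-coordinates on $S$, $f|_S$ takes the form $(y,t)\mapsto (y_1,\dots,y_{s-1},\, t^2 \tilde b(y,t)^2)$. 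Its Jacobian determinant is $2t\tilde b^2 + O(t^2)$, vanishing transversally along $\{t=0\}$; and $\ker d(f|_S) = \langle \partial/\partial t\rangle$ is transverse to $\{t=0\}$. Corollary \ref{cor:folds4-0} then gives that $f|_S$ is a fold map near $p$.

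For the normal bundle claim, both $\nu(S)_p$ and $\nu(R)_{f(p)}$ have dimension $m-s$, so it suffices to show that $\overline{df_p}: \nu(S)_p \to \nu(R)_{f(p)}$ is injective, i.e. that $(df_p)^{-1}(T_{f(p)}R) = T_pS$. When $p \notin Z$, $df_p$ is an isomorphism and equality follows from equal dimension. When $p \in Z$ with $\ker(df_p) \subset T_pS$, the normal-form computation above shows $T_{f(p)}R \cap df_p(T_pM) = df_p(T_pS)$, and hence $(df_p)^{-1}(T_{f(p)}R) = \ker(df_p) + T_pS = T_pS$.

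The main obstacle I anticipate is verifying that the graph form for $R$ and the adapted parameterization of $S$ can be chosen simultaneously and cleanly — specifically, that the condition $f(S)\subset R$ is rigid enough to force both the $\tilde b(0,0) \ne 0$ normalization and the transversality $T_{f(p)}R \pitchfork df_p(T_pM)$ needed to realize $R$ as a graph whose last coordinate is $u_m$. In the category of manifolds with corners a second subtlety arises when $f(p)$ lies on a boundary stratum of $R$: the normal forms and graph parameterizations must preserve strata, which should follow from strong transversality of the original fold data on $f$ but requires careful bookkeeping of which strata of $S$, $M$, $R$, and $N$ the various coordinate neighborhoods meet.
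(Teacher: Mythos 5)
Your overall strategy is sound and your normal-bundle argument (reading off the transversality of $T_{f(p)}R$ to $\operatorname{Im}(df_p)$ from the acceleration of the curve $t\mapsto f(\iota(y_0,t))$, whose velocity vanishes at $t=0$) is a clean substitute for what the paper extracts from the invertibility of a block of $df$. But there is a genuine gap at the very first step: you invoke the fold normal form $f(x)=(x_1,\dots,x_{m-1},x_m^2)$ for a map between manifolds \emph{with corners}. In this paper that normal form is Proposition \ref{prop:folds5}, which (a) is proved \emph{after} and \emph{by means of} Corollary \ref{cor:folds4-1} — its proof explicitly cites this corollary to see that $df_p$ carries directions transverse to a stratum isomorphically onto the normal bundle of the image stratum — so your argument is circular relative to the paper's development; and (b) requires hypotheses absent from the corollary's statement, namely that $f$ is strata-preserving and that $\ker(df_p)$ is tangent to the stratum of $M$ containing $p$ for every $p\in Z$. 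Without the latter, the coordinate $x_m$ along the kernel cannot be taken to range over all of $\R$ at boundary points (the kernel may point out of the stratum), and no strata-compatible change of coordinates puts $f$ in the form $(x_1,\dots,x_{m-1},x_m^2)$. The subtlety you flag at the end about strata bookkeeping is in fact fatal to the normal-form approach in the stated generality, not merely a technicality.

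The fix — and the paper's actual route — is to avoid the normal form entirely. Choose coordinates only to the extent of exhibiting $S$ and $R$ as coordinate subspaces, pick a curve $\gamma(t)$ in $S$ with $\gamma(0)=p$ and $\gamma'(0)\in\ker(df_p)$, and observe that $f(S)\subset R$ forces $df_{\gamma(t)}$ into block-lower-triangular form $\begin{bmatrix} A & 0 \\ B & d(f\vert_S)\end{bmatrix}$, so that $\det(df_{\gamma(t)})=\det(A(\gamma(t)))\det(d(f\vert_S)_{\gamma(t)})$. Since $\det(d(f\vert_S))$ vanishes at $t=0$ while $\frac{d}{dt}\big\vert_0\det(df_{\gamma(t)})\ne 0$ (because $\gamma'(0)$ is transverse to $Z$ and $f$ has fold singularities), the product rule yields simultaneously that $\det(A(p))\ne 0$ (giving the normal-bundle isomorphism) and that $\det(d(f\vert_S))$ vanishes transversally (giving, with $\ker(d(f\vert_S)_p)\pitchfork T_p(Z\cap S)$, the fold condition via Corollary \ref{cor:folds4-0}). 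This uses only the computational criterion already established and no choice of normal form, so it applies at corner points and imposes no extra hypotheses on $f$.
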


\begin{proof}\mbox{ } \newline
In light of proposition \ref{prop:folds4}, we need only check that the determinant of $d(f\vert_S)$ vanishes transversally in coordinates and the kernel of $d(f\vert_S)$ is transverse to the degenerate hypersurface.  By the definition of a submanifold with corners, we may choose coordinates $(x_1,\dots, x_m)$ near $S$ so that $S=\{x_1=0, \dots,x_{m-s}=0\}$, where some of these coordinates may be defined on half spaces $\R^+$, but it will be of no consequence.  Since $\dim(R)$ is also $s$, we may choose coordinates $(y_1,\dots,y_m)$ near $R$ so that $R=\{y_1=0,\dots,y_{m-s}=0\}$.  We now have three cases to consider:

\begin{enumerate}
\item If $Z\cap S =\emptyset$, then $d(f\vert_S)$ never drops rank and so it is trivially a map with fold singularities.
\item If $p\in Z\cap S$ but the one-dimensional subspace $\ker(df_p)$ is not contained in $T_pS$, then $d(f\vert_{S_1})$ has maximal rank at $p$.
\item If $p\in Z\cap S$ and $\ker(df_p)\subset T_pS$, then we may choose a curve $\gamma(t)$ in $S$ so that $\gamma(0)=p$ and $\gamma'(0)\ne 0$ is in $\ker(d\psi_p)$.  Along this curve, the differential $df$ has the form:

    \begin{equation}\label{eq:restriction}
    df_{\gamma(t)} =
    \begin{bmatrix}
    A(\gamma(t)) & 0 \\
    B(\gamma(t)) & d(f\vert_S)_{\gamma(t)}
    \end{bmatrix}
    \end{equation}
    Since $\gamma'(0)$ is transverse to the folding hypersurface $Z$ and $f$ has fold singularities, we must have that $\frac{d}{dt}\big\vert_0(\det(df_{\gamma(t)}))\ne 0$ by proposition \ref{prop:folds4}.  Using the form of $df_{\gamma(t)}$ in equation \ref{eq:restriction}, we find that:

    \begin{equation}\label{eq:restriction2}
    \begin{array}{lcl}
    \frac{d}{dt}\big\vert_0 \det(df_{\gamma(t)}) & = & \frac{d}{dt}\big\vert_0 \det(A(\gamma(t)))\det(d(f\vert_S)_{\gamma(t)}) \\
                                                 & = & \det(d(f\vert_S)_{\gamma(0)})\frac{d}{dt}\big\vert_0 \det(A(\gamma(t))) + \det(A(\gamma(0)))\frac{d}{dt}\big\vert_0\det(d(f\vert_S)_{\gamma(t)}) \\
                                                 & = & \det(A(p))\frac{d}{dt}\big\vert_0\det(d(f\vert_S)_{\gamma(t)}) \ne 0
    \end{array}
    \end{equation}
    Thus, neither $\det(A(p))$ or $\frac{d}{dt}\big\vert_0\det(d(f\vert_S)_{\gamma(t)})$ may be $0$.  In particular, $\det(d(f\vert_S))$ vanishes transversally.  The computation also reveals that the degenerate hypersurface of $f\vert_S$ is exactly $S\cap Z$ near $p$.  Since $\ker(df)\pitchfork TZ$, we have that $\ker(d(f\vert_S)_p)\pitchfork (T_p(Z\cap S))$, hence $f\vert_S$ is an equidimensional map with fold singularities.

    \vspace{2mm}
    To prove that $df_p$ induces an isomorphism on the fibers of the normal bundles, we appeal to equation \ref{eq:restriction} and our computation in \ref{eq:restriction2}.  In our chosen coordinates, we use the standard metric on $\R^n$ to see that $A(p)$ maps $(T_pS)^{\perp}$ to $(T_{f(p)}R)^{\perp}$.  Since $\det(A(p))\ne 0$ by \ref{eq:restriction2}, we have that $A(p)$ has maximal rank.  Since $\dim(S)=\dim(R)$ and $\dim(M)=\dim(N)$, the normal bundle $\nu(S)$ and $\nu(R)$ have the same rank.  Thus, $A(p)$ is an isomorphism.
\end{enumerate}
\end{proof}

\begin{example}
Here is an example of corollaries \ref{cor:folds4-0} and \ref{cor:folds4-1} in action.  Consider the map $f:\R^3 \to \R^3$ given by $f(x,y,z)=(x,y,z^2)$.  $\det(df)= 2z$ vanishes transversally at $z=0$ and $\ker(df_{z=0}) = \frac{\partial}{\partial z}$ is transverse to $z=0$, hence $f$ has fold singularities. Let $S$ be the paraboloid $y-z^2-x^2=0$ and let $R$ be the parabolic sheet $y-z-x^2=0$.  $f(S) \subset R$ and so $f\vert_S:S \to R$ is a map with fold singularities, which is straightforward to compute directly:

\vspace{3mm}
\noindent Both surfaces are the graphs of functions $y(x,z)$, so we may identify them with the $xz$-plane by projecting.  Under this identification, $f\vert_S(x,z)=(x,z^2)$ is a map with fold singularities with folding hypersurface given by $z=0$.  Thus, $f\vert_S$ has fold singularities along the parabola $y=x^2$, $z=0$ with kernel spanned by $\frac{\partial}{\partial z}$.
\end{example}

\begin{prop}\label{prop:folds5}
Let $f:M^n \to N^n$ be an equidimensional map with fold singularities, suppose $f$ is strata-preserving, and suppose for all $p\in Z$, the folding hypersurface, $\ker(df_p)$ is tangent to the stratum of $M$ containing $p$.  Then, for each $p\in Z$ there exist coordinates near $p$ and $f(p)$ so that:
\begin{displaymath}
f(x_1,\dots,x_{n-1}, t) = (x_1, \dots, x_{n-1}, 0 ) + t^2F(x_1,\dots,x_{n-1})
\end{displaymath}
where $t\in \mathbb{R}$ and $F(\vec{x})$ is a smooth map such that $\phi(\vec{x},t) := (\vec{x},0) + tF(\vec{x})$ is a strata-preserving diffeomorphism in a neighborhood of $0$. Hence, locally, every map $f$ satisfying the conditions of the proposition factors as a diffeomorphism composed with a strata-preserving fold map.
\end{prop}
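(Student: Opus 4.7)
The plan is to choose adapted coordinates on $M$ and $N$ in stages, arriving at the claimed normal form via Hadamard's lemma and a straightening argument using the fold involution of $f$.

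First I would exploit the two hypotheses, namely the fold condition $\ker(df) \pitchfork Z$ and the tangency $\ker(df_p) \subset T_p\Sigma$ where $\Sigma$ is the stratum containing $p$, to choose local coordinates $(x_1, \ldots, x_{n-1}, t)$ near $p$ in which $p = 0$, $Z = \{t = 0\}$, and $\partial/\partial t$ spans $\ker(df)|_Z$ while remaining tangent to the strata.  Since $\ker(df_p) \cap T_pZ = 0$, the restriction $f|_Z$ is a strata-preserving immersion, so I can similarly choose coordinates $(y_1, \ldots, y_{n-1}, s)$ on $N$ near $f(p)$ with $f(Z) = \{s = 0\}$ locally, and then, after adjusting the $\vec{x}$-coordinates along $Z$ via the local diffeomorphism $f|_Z$, arrange that $f(\vec{x}, 0) = (\vec{x}, 0)$.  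In these coordinates Hadamard's lemma applied twice (using $\partial_t f|_{t=0} = 0$) produces a smooth map $g: U \to \R^n$ with $f(\vec{x}, t) = (\vec{x}, 0) + t^2 g(\vec{x}, t)$.  Expanding $\det(df) = 2t\, g_n(\vec{x}, 0) + O(t^2)$ and invoking the fold condition (transverse vanishing of $\det(df)$ along $Z$) forces $g_n(0, 0) \ne 0$; this is the non-degeneracy that will eventually guarantee $\phi$ is a diffeomorphism.

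The main obstacle is that $g(\vec{x}, t)$ still depends on $t$, whereas the statement demands an $F$ depending only on $\vec{x}$.  I would handle this by making $f$ even in its last variable.  Because $\det(df)$ vanishes transversely along $Z$ and $f$ is locally two-to-one near $Z$, the implicit function theorem applied to the equation $f(q) = f(q')$ produces a smooth fold involution $\sigma$ on a neighborhood of $Z$, fixing $Z$ pointwise and satisfying $f \circ \sigma = f$; strata-preservation of $f$ forces $\sigma$ to be strata-preserving as well.  Symmetrizing the coordinates under $\sigma$ yields new coordinates $(\vec{\xi}, \tau)$ in which $\sigma(\vec{\xi}, \tau) = (\vec{\xi}, -\tau)$, so that $f(\vec{\xi}, \tau)$ is even in $\tau$.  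Hadamard's lemma then produces a smooth map $\phi_0$ with $f(\vec{\xi}, \tau) = \phi_0(\vec{\xi}, \tau^2)$, and $\phi_0$ is a local diffeomorphism at $0$ because its derivative in the second variable at $u = 0$ has nonzero last component, inherited from $g_n(0, 0) \ne 0$.

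Finally, setting $F(\vec{\xi}) := \partial_u \phi_0(\vec{\xi}, 0)$ and $\psi(\vec{\xi}, u) := (\vec{\xi}, 0) + u F(\vec{\xi})$, I change coordinates on $N$ via the strata-preserving diffeomorphism $\psi \circ \phi_0^{-1}$.  In the new coordinates
\begin{displaymath}
f(\vec{\xi}, \tau) \;=\; \psi(\vec{\xi}, \tau^2) \;=\; (\vec{\xi}, 0) + \tau^2 F(\vec{\xi}),
\end{displaymath}
and the map $\phi$ in the statement is precisely $\psi$, a diffeomorphism because $F_n(0) \ne 0$.  The hardest technical points are the smoothness of the fold involution $\sigma$, established by applying the implicit function theorem to the pair of equations $f(q) = f(q')$ with the fold condition supplying the non-degenerate partial derivative, and tracking strata-preservation through each coordinate change; the latter holds because every object used in the construction (the line field $\ker(df)|_Z$, the immersion $f|_Z$, the involution $\sigma$, and the diffeomorphism $\phi_0$) is built from strata-respecting data.
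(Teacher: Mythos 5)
Your overall architecture is sound and genuinely different from the paper's. The paper avoids both the fold involution and the even-function theorem: after normalizing so that $f(\vec{x},t)=(\vec{x},0)+t^2F(\vec{x},t)$ and showing $\phi(\vec{x},t)=(\vec{x},0)+tF(\vec{x},0)$ is a strata-preserving diffeomorphism, it defines $\gamma(\vec{x},t)=(\vec{x},t^2)$ and builds the source change of coordinates $\Phi$ explicitly out of the two half-inverses $(\phi\circ\gamma|_{t\ge 0})^{-1}\circ f$ and $(\phi\circ\gamma|_{t\le 0})^{-1}\circ f$, verifying smoothness by the direct computation $\Phi(\vec{x},t)=(x_1+t^2a_1,\dots,x_{n-1}+t^2a_{n-1},\sqrt{a_n}\,t)$ with $a_n\ne 0$. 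That last coordinate $\sqrt{a_n}\,t$ is precisely your symmetrized variable, so the two proofs are doing the same thing; the paper's version buys you an elementary, self-contained smoothness check, while yours buys conceptual clarity at the cost of importing two classical theorems. The paper also spends real effort showing that the vector $F(\vec{x},t)$ is tangent to the stratum containing $f(\vec{x},t)$, which is what makes $\phi$ strata-preserving; your closing sentence asserts this but does not supply the argument.

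Two of your ingredients are under-justified as stated. First, the implicit function theorem applied directly to $f(q)=f(q')$ does not produce the involution: linearizing that equation in $q'$ at a point $q'=q\in Z$ gives $df_q$, which is singular there, so there is no nondegenerate partial derivative until you first factor out the diagonal (e.g.\ pass to coordinates where $f(\vec{y},t)=(\vec{y},h(\vec{y},t))$ with $h=t^2u$, $u(0)\ne 0$, and apply the IFT to $\tau u(\vec{y},\tau)^{1/2}=-t\,u(\vec{y},t)^{1/2}$ or its square). Second, the factorization of a smooth even function as a smooth function of $\tau^2$ is Whitney's theorem on even functions, not Hadamard's lemma; iterating Hadamard only gives finite-order statements, and full smoothness of $\phi_0$ in the variable $u=\tau^2$ at $u=0$ (which you need, since $\phi_0$ must be a diffeomorphism) requires the genuine Whitney argument. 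Both facts are true and classical, so these are citation/justification gaps rather than fatal errors, but as written neither step is proved.
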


\begin{proof} \mbox{ } \newline
First, we show that one may assume $M=N=Z\times \R$ and $f:Z\times \R \to Z\times \R$ is a map that folds along $Z\times \{0\}$ with kernel $\frac{\partial}{\partial t}$ and $f(z,0)=(z,0)$ for all $z\in Z$.
\begin{itemize}
\item Let $p\in Z$.  Because $\ker(df_x)$ is tangent to strata for all $x\in Z$, we may choose a local section of the bundle $\ker(df) \to Z$ near $p$ and extend it to a stratified vector field in a neighborhood of $p$.  Its flow then induces coordinates near $p$ of the form $(z,t)$, where $t\in \R$ and $z\in Z$, so we may assume that $M=Z \times \R$ where the folding hypersurface is $Z\times \{0\}$ and $\ker(df_{(z,0)})=\span\{\frac{\partial}{\partial t}\}$.  Furthermore, we may choose coordinates on $N$ near $f(p)$ and so we may assume $N=\R^n$.
\item To summarize, we have a map $f:Z\times \R \to \R^n$, $\dim(Z)=n-1$, with fold singularities at $Z\times \{0\}$ and $\ker(df)$ is spanned by $\frac{\partial}{\partial t}$.
\item Since $f(z,t)-f(z,0)$ vanishes at $t=0$ we can write $f(z,t)= f(z,0)+ tG(z,t)$ for some smooth map $G(z,t)$.  Since $df_{(z,0)}(\frac{\partial}{\partial t})=0$, we have that $G(z,0)=0$ for all $z\in Z$, hence $G(z,t)=tF(z,t)$ for some smooth map $F(z,t)$.  This gives us the general formula $f(z,t)=f(z,0) + t^2F(z,t)$. Note that since $f$ is strata-preserving, $f\big\vert_{t=0}$ is strata-preserving.  Since the $t$ coordinate is defined on $\R$ and not a half-space $\R^+$, this implies that if $k$ half-space coordinates of $f(z,t)$ vanish, the same $k$ coordinates of $f(z,0)$ must vanish, which means that the same $k$ coordinates of $t^2F(z,t)$ must vanish, meaning that $F(z,t)$ is a vector tangent to the stratum containing $f(z,t)$, at $f(z,t)$, for all $t\ne 0$.  Smoothness of $F(z,t)$ then implies that we may also interpret $F(z,0)$ as a vector tangent to the stratum containing $f(z,0)$, hence $F(z,t)$ may be interpreted as a vector tangent to the stratum containing $f(z,t)$ at the point $f(z,t)$.

\item Choose a local frame for $Z$ near $p$, $\{e_1,\dots, e_{n-1}\}$ and extend it to a local frame on $Z\times \R$ near $(p,0)$, $\{e_1, \dots, e_{n-1}, \frac{\partial}{\partial t}\}$.  Then $df$, near $(p,0)$ may be written:

\begin{displaymath}
df=
\begin{bmatrix}
df(e_1) & \dots & df(e_{n-1}) & 2tF + t^2dF(\frac{\partial}{\partial t}) \\
\end{bmatrix}
\end{displaymath}
Since $\det(df)\pitchfork_s 0$ by corollary \ref{cor:folds4-0}, we have:
\begin{displaymath}
\frac{\partial}{\partial t}\big\vert_{t=0}(\det(df)) = \det(df(e_1),\dots, df(e_{n-1}), F) \ne 0
\end{displaymath}
Which means that at points of $Z\times \{0\}$, the differential of the mapping $\phi(z,t)= f(z,0) + tF(z,0)$ has maximum rank.  We claim that $\phi$ is a stratified map in a neighborhood of any point $p=(z_0,0)$.  By corollary \ref{cor:folds4-1}, $df_p$ maps directions transverse to the stratum containing $p$ isomorphically onto the normal bundle of the stratum containing $f(p)$.  Since $f\big\vert_{t=0}$ is an immersion, this implies that $f\big\vert_{t=0}$ locally embeds $Z\times \{0\}$ as a codimension $1$ submanifold with corners of $N$ transverse to strata.  The vector $F(z,0)$ is tangent to the stratum containing $f(z,0)$ and is transverse to the embedded image of $Z\times \{0\}$.  Thus, there is a neighborhood of $p$ on which $\phi$ is stratified since is a map that embeds a hypersurface transverse to strata and sends a path $(z,t)$ to a path through $f(z,0)$ along the stratum containing $f(z,0)$, transverse to the embedded hypersurface.  Since $\phi$ is stratified and $d\phi\big\vert_{t=0}$ has maximal rank, there is a neighborhood of $p=(z_0,0)$ on which $\phi$ is a diffeomorphism of manifolds with corners.  Notice that $\phi(z,0)=f(z,0)$ for all $z\in Z$, hence $\phi^{-1}(f(z,0))=(z,0)$ and, in particular, $\phi^{-1}(f(z_0,0))=(z_0,0)$.

\item Thus, if we postcompose with $\phi^{-1}$ then we have a map $\phi^{-1} \circ f : V_1\subset \to V_2$, where $V_i\subset Z\times \R$ is an open neighborhood of $(z_0,0)$ for each $i$.  Consequently, we may assume that we have a map $f:Z\times \R \to Z \times \R$ that folds along $Z\times \{0\}$ with kernel $\frac{\partial}{\partial t}$ and $f(z,0)=(z,0)$ for all $z\in Z$.
\end{itemize}
To finish the proof, we may choose coordinates $\vec{x}=(x_1,\dots x_{n-1})$ near $p$ that identify $p$ with the origin.  We then have a map $f:\R^{n-1}\times \R \to \R^{n-1}\times \R$ satisfying $f(\vec{x},0)= (\vec{x},0)$.  Using the same tricks in the first part of the proof, we may write $f(\vec{x},t)= (\vec{x},0) + t^2F(\vec{x},t)$ where $\phi(\vec{x},t)= (\vec{x},0) + tF(\vec{x},0)$ is a diffeomorphism near $0$.  Let $\gamma(\vec{x},t)=(\vec{x},t^2)$ and note that it folds along the set $\{t=0\}$.  Furthermore, the map $\psi(\vec{x},t) = (\vec{x},0) + t^2F(\vec{x},0)$ is the composition $\psi= \phi \circ \gamma$.  The maps:

\begin{displaymath}
\begin{array}{ll}
\psi^+ = & \phi \circ \gamma\big\vert_{t\ge 0} \\
\psi^- = & \phi \circ \gamma\big\vert_{t\le 0}
\end{array}
\end{displaymath}
are homeomorphisms near $0$ since $\phi$ is a homeomorphism near $0$ and $\gamma(\vec{x},t)=(\vec{x},t^2)$ restricted to $t\ge 0$ or $t\le 0$ is a homeomorphism.  We now define a continuous map:

\begin{displaymath}
\Phi(\vec{x},t) =
\begin{cases}
((\psi^+)^{-1}\circ f)(\vec{x},t) & \mbox{if } t\ge 0 \\
((\psi^-)^{-1}\circ f)(\vec{x},t) & \mbox{if } t\le 0
\end{cases}
\end{displaymath}
Note that $(\psi^+)^{-1}(f(\vec{x},0)) = (\psi^+)^{-1}(\vec{x},0) = (\vec{x},0) = (\psi^-)^{-1}(\vec{x},0) = (\psi^-)^{-1}(f(\vec{x},0))$ so that $\Phi$ is well-defined and continuous at $t=0$.  It is strata-preserving since it is a composition of strata-preserving maps.

\vspace{5mm}

Our first claim is that $\Phi$ is a diffeomorphism of manifolds with corners near $0$.  Let $e_1, \dots, e_{n-1}$ be the standard basis vectors for $\R^{n-1}$ and observe that $\{e_1,\dots, e_{n-1},F(\vec{x},0)\}$ is a linearly independent set for $\vec{x}$ near $0$, which is equivalent to $\phi$ being a diffeomorphism near $0$ since the image of $d\phi_0$ using the standard basis is precisely this set.  Therefore, there exist smooth functions $a_1, \dots, a_n$ so that:

\begin{displaymath}
F(\vec{x},t) = (a_1, \dots, a_{n-1}, a_nF(\vec{x},0))
\end{displaymath}
near $0$, where $a_n \ne 0$ near $0$ since $F(\vec{x},0)$ and $F(\vec{x},t)$ agree at $t=0$.  Then, we may write:

\begin{displaymath}
f(\vec{x},t) = (\vec{x},0) + t^2F(\vec{x},t)= (x_1 +t^2a_1, \dots, x_{n-1}+t^2a_{n-1}, a_nt^2F(\vec{x},0))
\end{displaymath}
and if we apply $(\gamma\big\vert_{t\ge0})^{-1}$ or $(\gamma\big\vert_{t\le 0})^{-1})$ we obtain the formula:
\begin{displaymath}
\Phi(\vec{x},t)=
\begin{cases}
(\gamma\big\vert_{t\ge0})^{-1}(\phi^{-1}(f(\vec{x},t))) =  (x_1 +t^2a_1, \dots, x_{n-1}+t^2a_{n-1}, \sqrt{a_n}t) & t\ge 0 \\
(\gamma\big\vert_{t\le0})^{-1}(\phi^{-1}(f(\vec{x},t))) =  (x_1 +t^2a_1, \dots, x_{n-1}+t^2a_{n-1}, \sqrt{a_n}t) & t\le 0
\end{cases}
\end{displaymath}
where $\sqrt{a_n}$ is smooth near $0$ since $a_n$ is nonzero in a neighborhood of $p$.  Hence,

\begin{displaymath}
\Phi(\vec{x},t)= (x_1 +t^2a_1, \dots, x_{n-1}+t^2a_{n-1}, a_nt^2, \sqrt{a_n}t)
\end{displaymath}
 for some smooth functions $a_i$ with $a_n\ne 0$ near $0$, meaning $\Phi$ is a smooth map near $0$.  It is a diffeomorphism near $0$ since $\Phi$ restricted to the fold, $\{t=0\}$, is the identity map and $d\Phi_0(\frac{\partial}{\partial t}) = \sqrt{a_n(0)}\ne 0$.  Thus, $d\Phi_0$ is an isomorphism and there exists a neighborhood of $0$ on which it is a diffeomorphism since it is a strata-preserving map.

Our second claim is that $\Phi$ leads to a normal form and factorization of $f$ as a diffeomorphism composed with a fold map.  By definition of $\Phi$, if we apply the map $\gamma(z,t)=(z,t^2)$ on the left, we get:

\begin{displaymath}
\gamma \circ \Phi = \phi^{-1} \circ f
\end{displaymath}
where $\phi(\vec{x},t)=(\vec{x},0) + tF(\vec{x},0)$ is a diffeomorphism.  Since $\gamma$ has fold singularities and $\Phi$ is a diffeomorphism, $\gamma \circ \Phi$ is a map with fold singularities, hence $f=\phi \circ (\gamma \circ \Phi)$ is a factorization of $f$ into a diffeomorphism composed with a fold map, which is folded by corollary \ref{cor:folds-diffeo}.  If we precompose with $\Phi^{-1}$, we get:

\begin{displaymath}
(f\circ \Phi^{-1})(\vec{x},t) = (\phi\circ \gamma)(\vec{x},t) = (\vec{x},0) + t^2F(\vec{x},0)
\end{displaymath}
which proves the proposition.
\end{proof}

\begin{example}\label{ex:stdfoldmap}
The map $f:\R^n\times \R \to \R^n \times \R$ given by $f(\vec{x},t)=(\vec{x},t^2)$ is a (equidimensional) map with fold singularities.  This example of a fold map is, in some sense, the \emph{only} example.  By proposition \ref{prop:folds5}, one can write every fold map as $\psi(\vec{x},t)=(\vec{x},0) + t^2F(\vec{x})$, where $F$ is transverse to the image of $d\psi$ at $\{t=0\}$.  Furthermore, if the map $\psi$ is strata-preserving then, as we saw in the proof of \ref{prop:folds5}, $F(\vec{x})$ may be viewed as a vector tangent to the stratum containing $\psi(\vec{x},t)$.  Thus, we may define a new coordinate on the target space near $\psi(\vec{x},t)$ using $F(\vec{x})$.  Then the map $\psi$ has the form $\psi(\vec{x},t)=(\vec{x},t^2)$.
\end{example}

\begin{example}\label{ex:foldspheres}
Let $S^2$ be the $2$-sphere embedded into $\R^3$ as the level set $x^2+y^2+z^2=1$.  Then the projection map $p:S^2 \to \R^2$ given by $p(x,y,z)=(x,y)$ has fold singularities along the equator $S^1\subset S^2$.  For example, near the point $(1,0,0)$, the map looks like the composition:

\begin{displaymath}
(y,z) \to (\sqrt{1-y^2-z^2},y,z) \to (\sqrt{1-y^2-z^2},y)
\end{displaymath}
The determinant of the differential of this map is $\displaystyle \frac{-z}{\sqrt{1-y^2-z^2}}$, which vanishes transversally at $z=0$.  The kernel of the differential of the map is given by $\displaystyle \frac{\partial}{\partial z}$, which is transverse to the equator.  By corollary \ref{cor:folds4-0}, $p$ is a map with fold singularities along $S^1$.  One may perform the same construction for each sphere $S^n$ so that every sphere admits a fold map into $\R^n$ with folding hypersurface given by the equator $S^{n-1}$.  Note that one may also perform the same construction for closed surfaces of genus $g$.
\end{example}

\begin{example}
Let $W\subset \R^2$ be the half space $W=\{(x,y)\vert y\ge 0\}$ and consider the map $f:W \to \R^2$ given by $f(x,y)=(x,y^2)$.  As we discussed in the beginning of the chapter, this map has fold singularities in the traditional sense.  However, it does not have fold singularities according to definition \ref{def:folds3}.  According to corollary \ref{cor:folds4-0}, we can compute the determinant of $df$ to determine if $f$ has fold singularities.  The determinant is $2y$, which vanishes along the boundary $y=0$.  Thus, $\det(df)$ is not transverse (in the sense of manifolds with corners) to $0$ and so it does not have fold singularities.
\end{example}

\subsection{Example: Generalizing Morse Functions to Fiber Bundles}
\subsubsection{What is a Morse Function?}
We give a definition of a Morse function and relate it to $1$-jets for the purpose of generalizing Morse functions to sections of fiber bundles with $1$-dimensional fibers.  We do not give the standard definition where one defines the Hessian and requires it to be non-degenerate at critical points.  Instead, we give an equivalent, geometric formulation which will facilitate our discussion.

\begin{remark}
Let us assume throughout this section that all manifolds and fiber bundles are without boundary, hence they are simply $C^{\infty}$ manifolds.  We will use the term \emph{manifold} to mean a manifold without boundary.
\end{remark}

\begin{definition}\label{def:Morse}
Let $M$ be a manifold (without corners) and let $\mathcal{O}\in \Gamma(T^*M)$ be the zero section.  We say a smooth map $f:M\to \R$ is a \emph{Morse function} if $df \pitchfork \mathcal{O}$ as a map $df:M \to T^*M$.
\end{definition}

We now discuss how this definition relates to our study of jet bundles.  Let $E=M\times \R$ and let \newline $\pi:E\to M$ be the trivial fiber bundle over $M$ with standard flat connection.  That is, the horizontal bundle is $TM \oplus 0 \subset TM \times T\R$ and the vertical projection $p_V:TM \times T\R \to T\R$ is $p_V(X,Y) = Y$.  The first jet bundle $J^1(E)$ is $\hom(TM,T\R)$ (q.v. example \ref{ex:products}), which is canonically isomorphic to $T^*M \times \R$ as a fiber bundle over $M\times \R$.  A section $\phi(m)=(m,f(m))$ is the graph of a function $f:M\to \R$ and its $1$-jet $j^1\phi$ at $m$ is $j^1\phi(m)=(m,df_m,f(m))$ in $T^*M \times \R$.

The submanifold $S_r\subset J^1(E)$ of jets of corank $1$ is empty for $r\ge 1$ since $\operatorname{corank}(df)=0,1$.  The submanifold $S_1\subset J^1(E)$ is exactly the zero section of $T^*M \times \R \to M\times \R$:

\begin{displaymath}
\operatorname{corank}(j^1_m\phi) = 1 \iff \mbox{ } \operatorname{corank}(p_V(d\phi_m)) = 1 \iff \mbox{ } \operatorname{corank}(df_m)=1 \iff df_p=0
\end{displaymath}

Recall, we may view $1$-jet fields on $E$ as connections on $E$ by proposition \ref{prop:C1}.  We claim that the zero section $\chi:M \times \R \to T^*M \times \R$ of the first jet bundle is exactly the standard flat connection on $M\times \R$:

\begin{displaymath}
\begin{array}{lcl}
(X,Y)\in T_mM \times T_t\R \text{ is horizontal} & \iff & \text{for $\phi(m)=(m,f(m))$ representing $\chi(m,t)$}\\
                                                 &      & \text{we have $d\phi_m\circ d\pi_{(m,t)}(X,Y)=(X,Y)$} \\
                                                 & \iff & d\phi_m(X)=(X,Y)\\
                                                 & \iff & (X,df_m(X))=(X,Y) \\
                                                 & \iff & Y = df_m(X) = 0 \text{ (since $\chi$ is the zero section, we have $df_m=0$) }\\
                                                 & \iff & (X,Y)=(X,0) \\
                                                 & \iff & \text{The horizontal bundle is $\pi^*TM\oplus 0$.}
\end{array}
\end{displaymath}
By our definition of a Morse function, we have:

\begin{displaymath}
\begin{array}{lcl}
\text{$f$ is Morse} & \iff & df \pitchfork \mathcal{O}, \text{ where $\mathcal{O}$ is the zero section of $T^*M$,} \\
                    & \iff & j^1\phi \pitchfork \mathcal{O}\times \R, \text{ where $\phi(m)=(m,f(m))$} \\
                    & \iff & j^1\phi \text{ is transverse to the zero section, $\chi$, of $J^1(M\times \R)$} \\
                    & \iff & j^1\phi \pitchfork \chi \text{ (just a restatement of the previous line)}
\end{array}
\end{displaymath}
Thus, a Morse function $f:M\to \R$ may be viewed as a section (graph) $\phi:M \to M\times \R$ whose $1$-jet is transverse to the standard flat connection on $M\times \R \to M$, viewed as either a section of $\pi_{1,0}:J^1(M\times \R) \to M\times \R$ or a submanifold of $J^1(E)$.  This leads us to a possible definition of what a Morse section of a fiber bundle should be, where the fiber has dimension $1$.
\subsubsection{$\chi$-Morse Functions and Sections}
\begin{remark}
Again, we are assuming all manifolds appearing in this section are manifolds without boundary.
\end{remark}
\begin{definition}\label{def:Morse1}
Let $\pi:E \to M$ be a fiber bundle with typical fiber $F$, $\dim(F)=1$, and a connection $\chi$, which we view as a section $\chi:E \to J^1(E)$ by proposition \ref{prop:C1}.  A local section $(U,\phi)$ is $\chi$-Morse if $j^1\phi \pitchfork_s \chi$.  That is, the $1$-jet of $\phi$ is transverse to the connection $\chi$.
\end{definition}

\begin{definition}\label{def:Morse2}
Let $\pi:M\times \R \to M$ be the trivial fiber bundle with a connection $\chi$, which we view as a section $\chi:M\times \R \to J^1(M\times \R)$.  Then a $\chi$-\emph{Morse function} $f:M\to \R$ is a function such that $\phi_f(m)=(m,f(m))$ is a $\chi$-Morse section of $M\times \R$.
\end{definition}

We have a nice geometric interpretation of what it means to be $\chi$-Morse.

\begin{lemma}\label{lem:Morse1}
Let $\pi:E \to M$ be a fiber bundle with typical fiber $F$, $\dim(F)=1$, and a connection $\chi:E \to J^1(E)$.  Let $p_V:TE \to V$ be the projection onto the vertical bundle.  Let $(U,\phi)$ be a local section of $E$. Then,
\begin{displaymath}
\phi \pitchfork \chi \mbox{ } \iff \mbox { } p_V(d\phi) \pitchfork \mathcal{O}
\end{displaymath}
where $\mathcal{O}\in \Gamma(\hom(\pi^*TM,V))$ is the zero section.
\end{lemma}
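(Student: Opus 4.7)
The plan is to transport the transversality condition across the fiber-bundle isomorphism $F_{\chi}:J^1(E)\to \hom(\pi^*TM,V)$ constructed in Proposition \ref{prop:C2}. Two facts from what precedes do essentially all the work: (i) by the very definition of $F_{\chi}$, one has $F_{\chi}\circ j^1\phi = p_V\circ d\phi$ as sections of $\hom(\pi^*TM,V)\to U$; and (ii) by Proposition \ref{prop:C3}, the connection $\chi$, viewed as a section $\chi:E\to J^1(E)$, is carried by $F_{\chi}$ to the zero section $\mathcal{O}$ of $\hom(\pi^*TM,V)\to E$, i.e.\ $F_{\chi}\circ \chi = \mathcal{O}$. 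Thus $F_{\chi}$ identifies the picture $(j^1\phi,\chi)$ inside $J^1(E)$ with the picture $(p_V\circ d\phi,\mathcal{O})$ inside $\hom(\pi^*TM,V)$.

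First I would unwind the transversality condition pointwise. Since $\chi$ is a smooth section of $\pi_{1,0}:J^1(E)\to E$, its image is an embedded submanifold of $J^1(E)$ of dimension $\dim E$, and similarly $j^1\phi(U)$ is an embedded submanifold of dimension $\dim M$. The assertion $j^1\phi\pitchfork \chi$ means: at every $m\in U$ for which $j^1\phi(m)$ lies in the image of $\chi$ (equivalently, $j^1\phi(m)=\chi(\phi(m))$), the tangent spaces to the two submanifolds together span $T_{j^1\phi(m)}J^1(E)$. The analogous statement holds for $p_V(d\phi)\pitchfork \mathcal{O}$ inside $\hom(\pi^*TM,V)$.

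Next I would invoke the trivial but crucial fact that a diffeomorphism preserves transversality: if $F:X\to Y$ is a diffeomorphism and $A,B\subseteq X$ are submanifolds, then $A\pitchfork B$ at $x$ if and only if $F(A)\pitchfork F(B)$ at $F(x)$, because $dF_x$ is a linear isomorphism carrying $T_xA$ to $T_{F(x)}F(A)$ and $T_xB$ to $T_{F(x)}F(B)$. Applying this to $F=F_{\chi}$, $A=j^1\phi(U)$, $B=\chi(E\vert_U)$, and using (i)--(ii), one obtains $j^1\phi\pitchfork\chi$ at $m$ if and only if $(p_V\circ d\phi)(U)\pitchfork \mathcal{O}(E\vert_U)$ at $m$. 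One also needs the matching of intersection sets: $j^1\phi(m)\in \chi(E)$ iff $F_{\chi}(j^1\phi(m))=p_V(d\phi_m)$ lies in the image of $\mathcal{O}$, i.e.\ is zero — again immediate from (i)--(ii). Combining these observations yields the stated equivalence.

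There is no real obstacle here: the substance of the lemma is already contained in Propositions \ref{prop:C2} and \ref{prop:C3}, and the only thing to verify is the naturality of transversality under the fiber-bundle diffeomorphism $F_{\chi}$, which is a standard one-line argument. Note that the $\dim F=1$ hypothesis plays no role in the equivalence itself; it is relevant only for interpreting the right-hand side (since $\mathcal{O}$ has codimension $\dim M$ in $\hom(\pi^*TM,V)$ precisely when $\dim F=1$), which is why the statement is phrased in that setting.
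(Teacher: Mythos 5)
Your proposal is correct and follows exactly the paper's argument: both transport the problem through the isomorphism $F_{\chi}$ of Proposition \ref{prop:C2}, use Proposition \ref{prop:C3} to identify $F_{\chi}\circ\chi$ with the zero section $\mathcal{O}$, and conclude by the invariance of transversality under a diffeomorphism. You simply spell out the pointwise details more explicitly than the paper does.
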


\begin{proof}
Proposition \ref{prop:C2} shows that the connection $\chi$ induces an isomorphism of fiber bundles:

\begin{displaymath}
\xymatrixcolsep{4pc}\xymatrixrowsep{3pc}\xymatrix{
J^1(E) \ar[r]^{F_{\chi}} \ar[d]^{\pi_{1,0}} & \hom(\pi^*TM, V) \ar[d] \\
E \ar[d]^{\pi} \ar@/^/[u]^{\chi} \ar[r]^{id}&  E \ar[d]^{\pi}         \\
M \ar@/^2pc/[uu]^{j^1\phi}   \ar[r]^{id}         &  M \ar@/_2pc/[uu]_{p_V(d\phi)}
}
\end{displaymath}
where $F_{\chi}(j^1_m\phi)=p_V(d\phi_m)$.  Furthermore, $F_{\chi}\circ \chi$ is the zero section by proposition \ref{prop:C3}.  Thus, $j^1\phi \pitchfork \chi$ if and only if $F_{\chi}(j^1\phi) \pitchfork \mathcal{O}$ if and only if $p_V(d\phi) \pitchfork \mathcal{O}$.
\end{proof}

We may now make sense of a critical point.
\begin{definition}\label{def:Morse3}
Let $\pi:E \to M$ be a fiber bundle with typical fiber $F$, $\dim(F)=1$, and a connection $\chi: E \to J^1(E)$.  Let $p_V:TE \to V$ be the projection onto the vertical bundle.  Let $(U,\phi)$ be a local section.  We say $m\in M$ is a \emph{$\chi$-critical point} of $M$ if $p_V(d\phi_m)=0$.  Equivalently, $\operatorname{corank}(j^1_m\phi)=1$, hence $j^1_m\phi$ intersects $S_1$.
\end{definition}

\begin{example}
Let $M=\R$ and let $E=\R \times \R$ with coordinates $(s,t)$ and let $\chi$ be the connection defined by the vertical projection:

\begin{displaymath}
\begin{array}{c}
p_V:T\R \times T\R \to T\R \\
\displaystyle p_V(a\frac{\partial}{\partial s}, b\frac{\partial}{\partial t}) = (b-a)\frac{\partial}{\partial t}
\end{array}
\end{displaymath}
with corresponding horizontal projection $\displaystyle p_H(a\frac{\partial}{\partial s}, b\frac{\partial}{\partial t}) = a\frac{\partial}{\partial s} + a\frac{\partial}{\partial t}$.  A section $\phi(m)=(m,f(m))$ is $\chi$-Morse if $p_V(d\phi)=(df-ds)$ is transverse to the zero section $\iff$ $(\frac{\partial f}{\partial s} -1) \pitchfork_s 0$ $\iff$ the slope of the graph of $f$ passes through $1$ transversally, which is true if and only if $\frac{\partial^2 f}{\partial^2 s} \ne 0$ at $\chi$-critical points.

Note that the horizontal distribution is involutive, hence the Frobenius theorem allows us to to integrate this distribution and obtain smooth submanifolds parameterized by the fiber $\R$, $S_c\subset \R \times \R$, where $c\in \R$.  These submanifolds are simply the graphs of $f(s)=s+c$, $c\in \R$.  At a $\chi$-critical point $p$ of a Morse section $\phi$, the section becomes tangent to $S_c$ for some $c$.  Since $\frac{\partial^2 f}{\partial^2 s}(p) \ne 0$, the section $\phi$ is concave up or concave down, meaning it locally looks like the parabola $x^2$ intersecting $f(x)=x$ at $x=1$ or the parabola $-x^2$ intersecting $f(x)=x$ at $x=-1$.
\end{example}

\begin{example}
Building on the previous example, we may consider a flat connection $\chi$ on fiber bundle $\pi:E \to M$ with fiber $F$, $\dim(F)=1$.  Then the horizontal bundle $H$ is involutive and we may integrate it to obtain a foliation of $E$ by submanifolds $S_p$, parameterized by $p\in F$, of dimension $\dim(M)$ transverse to the fibers of $\pi$ whose tangent spaces are exactly the horizontal subspaces.

A local section $(U,\phi)$ of $E$ is $\chi$-Morse if whenever $\phi(U)$ is tangent to $S_p$, its vertical differential $p_V(d\phi)$ vanishes transversely in all directions.  Hence, in any direction through a $\chi$-critical point, $\phi$ will look like a the graph of a parabola intersecting a submanifold above or below it tangentially.
\end{example}

\subsubsection{$\chi$-Morse Functions are generic}

There are several questions one can, and probably should, ask about $\chi$-Morse functions $f:M\to \R$, where $\chi$ is a connection on $M\times \R$:

\begin{enumerate}
\item First and foremost, how useful are they?  For a choice of connection $\chi$, do we get a homology theory using the $\chi$-critical points of $f$ as we do in the standard case of a Morse function?

\item To this end, how do we define the index of a $\chi$-critical point $p\in M$?  Presumably, the connection $\chi$ will allow us to produce a non-degenerate quadratic form on $T_pM$ whose index will be the index of the $\chi$-critical point.  But, how will this work and what will it tell us?

\item And, if one is to produce a homology theory from a $\chi$-Morse function, then one ought to be able to show $\chi$-Morse functions \emph{exist}.  Do they exist?  And, are they generic?
\end{enumerate}

We address the last question.  To this end, we will need the Thom Transversality Theorem, which is  Theorem 4.9 in \cite{GG}.  The general statement involves $k$-jets: we will only provide the statement in the case where $k=1$, but the reader should note that Thom proved a more powerful theorem than the one we give.

\begin{theorem}\label{thm:Thom}
Let $M$ and $N$ be smooth manifolds (without corners), let $M\times N \to M$ be the trivial fiber bundle, and for each $f\in C^{\infty}(M,N)$ let $\phi_f(m)=(m,f(m))$ denote the induced section of $M\times N$.  Let $W \subset J^1(M\times N)$ be a smooth submanifold of the first jet bundle and let:

\begin{displaymath}
T_W = \{f\in C^{\infty}(M,N) \vert \mbox{ } j^1\phi_f \pitchfork W\}
\end{displaymath}
Then $T_W$ is a residual subset of $C^{\infty}(M,N)$ in the $C^{\infty}$ topology and it is open and dense if $W$ is a closed subset.
\end{theorem}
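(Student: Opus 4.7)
The plan is to reduce the theorem to a countable collection of local statements and then prove each local statement by a finite-dimensional Sard-style perturbation argument. Recall that $C^\infty(M,N)$ with its Whitney $C^\infty$ topology is a Baire space, so any countable intersection of open-dense subsets is residual. First I would cover the submanifold $W\subset J^1(M\times N)$ by countably many relatively compact open pieces $W_i\subset W$ whose closures sit inside trivializing charts of $J^1(M\times N)$, and cover $M$ by countably many compact sets $K_j$ each contained in a coordinate chart on $M$ whose image lies in a coordinate chart on $N$. Writing
\[
T_{ij} = \{\, f\in C^\infty(M,N) \mid j^1\phi_f \pitchfork W_i \text{ at every point of } (j^1\phi_f)^{-1}(\overline{W_i})\cap K_j\,\},
\]
one has $T_W=\bigcap_{i,j}T_{ij}$, so it suffices to show each $T_{ij}$ is open and dense.

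Openness of $T_{ij}$ is routine: transversality to a submanifold is a $C^1$-open condition on a compact set, and the Whitney $C^\infty$ topology refines the $C^1$ topology on compact subsets, so small $C^\infty$ perturbations of $f$ preserve transversality at the finitely many points of $(j^1\phi_f)^{-1}(\overline{W_i})\cap K_j$ (using compactness to get a uniform transversality margin). When $W$ is closed one can use $W$ itself in place of the $W_i$ and avoid the countable decomposition; closedness then upgrades openness on compacts to openness on $M$.

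For density, I would invoke the parametric transversality theorem (a direct consequence of Sard applied to the projection of a transverse preimage). Choose charts identifying a neighborhood of $K_j$ in $M$ with an open set $U\subset\mathbb{R}^m$ and a neighborhood of $f(K_j)$ in $N$ with an open set $V\subset\mathbb{R}^n$, so that the local trivialization of $J^1(M\times N)$ identifies it with $U\times V\times \hom(\mathbb{R}^m,\mathbb{R}^n)$. Let $\rho$ be a bump function equal to $1$ on $K_j$ and supported in a slightly larger compact set inside the chart, and introduce the finite-dimensional parameter space $P=\mathbb{R}^n\oplus \hom(\mathbb{R}^m,\mathbb{R}^n)$ of affine perturbations
\[
F_{(b,A)}(x) = f(x) + \rho(x)\bigl(b + A\cdot(x-x_0)\bigr),
\]
for a basepoint $x_0$. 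The evaluation map
\[
\Psi: U\times P \longrightarrow J^1(M\times N), \qquad (x,b,A)\longmapsto j^1\phi_{F_{(b,A)}}(x),
\]
is a submersion at every point over $K_j$, since varying $b$ shifts the value and varying $A$ shifts the derivative independently. Parametric transversality then gives that for almost every $(b,A)\in P$ the section $j^1\phi_{F_{(b,A)}}$ is transverse to $W_i$ on $K_j$, and such $(b,A)$ may be taken arbitrarily small so that $F_{(b,A)}$ is arbitrarily $C^\infty$-close to $f$, proving density.

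The main obstacle is constructing the finite-dimensional perturbation family so that $\Psi$ is a genuine submersion onto its image while simultaneously keeping the support of the perturbation inside an arbitrarily small $C^\infty$-neighborhood of $f$; the use of the bump function $\rho$ resolves this, and one must verify that even after multiplication by $\rho$ the map $(b,A)\mapsto j^1\phi_{F_{(b,A)}}$ still hits all directions of $J^1$ transversally at points in $K_j$, where $\rho\equiv 1$ so that first-order information in $(b,A)$ passes through unchanged. Assembling the pieces, $T_W=\bigcap_{i,j} T_{ij}$ is residual, and is open-dense when $W$ is closed.
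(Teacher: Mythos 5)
The paper does not actually prove this statement: it is quoted as Theorem 4.9 of \cite{GG} (Golubitsky--Guillemin) and used as a black box, so there is no in-paper proof to compare against. Your argument is essentially a reconstruction of the standard Golubitsky--Guillemin proof specialized to $k=1$: write $T_W$ as a countable intersection of sets that are open and dense in the Whitney $C^\infty$ topology (which is Baire), get openness from compactness, and get density from a finite-dimensional family of affine perturbations $f(x)+\rho(x)(b+A(x-x_0))$ together with the parametric transversality theorem; the submersivity computation for $\Psi$ at points where $\rho\equiv 1$ is exactly right, and for $k=1$ affine perturbations indeed suffice where the general case uses degree-$k$ polynomials. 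So the architecture is correct and is the intended one.

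Two points in your bookkeeping need tightening before this is a proof. First, your countable family is not $f$-independent as written: you cover $M$ by compact sets $K_j$ ``contained in a coordinate chart on $M$ whose image lies in a coordinate chart on $N$,'' but the image depends on $f$, and later you choose a chart around $f(K_j)$; the sets $T_{ij}$ must be defined once and for all so that $T_W=\bigcap_{i,j}T_{ij}$ makes sense. The standard repair is to attach the charts to the pieces of $W$ rather than to $f$: choose the $W_i$ so that each closure $\overline{W_i}$ is a compact subset of $W$ whose image under $J^1(M\times N)\to M\times N$ lies in a product chart $U_a\times V_b$; then any $x$ with $j^1\phi_f(x)\in\overline{W_i}$ automatically satisfies $(x,f(x))\in U_a\times V_b$, and the perturbation can be performed in those coordinates for an arbitrary $f$. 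Second, the definition of $T_{ij}$ should demand $j^1\phi_f\pitchfork W$ (the whole submanifold) at every point of $(j^1\phi_f)^{-1}(\overline{W_i})\cap K_j$, with $\overline{W_i}\subset W$ compact. If you only require transversality to $W_i$, the condition is vacuous at points whose jet lies in $\overline{W_i}\setminus W_i$, and the limit argument for openness breaks down (a sequence $f_n\to f$ failing transversality inside $W_i$ can converge to a point of the frontier of $W_i$ in $W$ where your condition says nothing); requiring transversality to $W$ on the compact set $\overline{W_i}$ restores the closed-graph/limit argument. Minor slips worth correcting in the same pass: the relevant preimage is a compact set, not ``finitely many points''; transversality of the prolongation $j^1\phi_f$ is a $C^1$ condition on $j^1\phi_f$, hence a $C^2$ condition on $f$ (harmless, since the Whitney $C^\infty$ topology refines $C^2$); and $\rho$ should equal $1$ on a neighborhood of $K_j$ so that no $d\rho$ terms enter the first-order data at the points you care about.
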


Almost for free, we obtain:

\begin{prop}\label{prop:Morse}
Let $\pi:M \times \R \to M$ be the trivial fiber bundle over $M$ and let $\chi: M\times \R \to J^1(M\times \R)$ be a connection on $M\times \R$.  Then the set:

\begin{displaymath}
C^{\infty}_{\chi}(M) = \{f\in C^{\infty}(M) \vert \mbox{ } \text{f is $\chi$-Morse} \}
\end{displaymath}
is an open, dense subset of $C^{\infty}(M)$.
\end{prop}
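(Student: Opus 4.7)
The plan is to reduce the statement directly to the Thom Transversality Theorem (Theorem \ref{thm:Thom}) applied with target manifold $N = \mathbb{R}$ and with the distinguished submanifold $W \subset J^1(M \times \mathbb{R})$ taken to be the image $\chi(M \times \mathbb{R})$ of the connection. Under this identification, the set $T_W$ of smooth functions whose $1$-jet is transverse to $W$ is, by Definition \ref{def:Morse2} and Definition \ref{def:Morse1}, exactly $C^{\infty}_{\chi}(M)$. The strategy is therefore to verify the hypotheses of Theorem \ref{thm:Thom} for this choice of $W$, namely that $W$ is a closed smooth submanifold of $J^1(M \times \mathbb{R})$.

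First, I would observe that the connection, viewed via Proposition \ref{prop:C1} as a section $\chi : M \times \mathbb{R} \to J^1(M \times \mathbb{R})$ of $\pi_{1,0}$, is a smooth embedding. Indeed, $\pi_{1,0} \circ \chi = \mathrm{id}_{M \times \mathbb{R}}$ forces $\chi$ to be injective and an immersion, and it is proper as a map onto its image since $\pi_{1,0}$ provides a continuous left inverse. Consequently, $W := \chi(M \times \mathbb{R})$ is a smooth embedded submanifold. It is moreover closed: if $\xi_n = \chi(m_n, t_n) \to \xi$ in $J^1(M \times \mathbb{R})$, then $(m_n, t_n) = \pi_{1,0}(\xi_n) \to \pi_{1,0}(\xi) =: (m,t)$, and continuity of $\chi$ gives $\xi_n \to \chi(m,t)$, so $\xi = \chi(m,t) \in W$.

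Next, I would unpack what it means for $j^1\phi_f$ to be transverse to $W$. Since $\chi$ is a section, at any point $p = \chi(m,t) \in W$ the tangent space splits as $T_p J^1(M \times \mathbb{R}) = T_p W \oplus V_p$, where $V_p = \ker d(\pi_{1,0})_p$ is the fiber direction. Therefore $j^1\phi_f \pitchfork W$ at a point $m_0$ with $j^1_{m_0}\phi_f \in W$ is equivalent to the vertical component of $d(j^1\phi_f)_{m_0}$ being surjective onto $V_p$, which by Proposition \ref{prop:C3} and Lemma \ref{lem:Morse1} is the same as $p_V(d\phi_f)$ being transverse to the zero section of $\hom(\pi^*TM, V)$ at $m_0$. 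This is precisely the condition that $f$ be $\chi$-Morse in the sense of Definition \ref{def:Morse2}, so $T_W = C^{\infty}_{\chi}(M)$ as subsets of $C^{\infty}(M,\mathbb{R}) = C^{\infty}(M)$.

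Finally, since $W$ is a closed smooth submanifold, Theorem \ref{thm:Thom} yields that $T_W$ is open and dense in $C^{\infty}(M)$ in the $C^{\infty}$ topology, which is the desired conclusion. The only step requiring real care is the verification that the image of a section is closed and a smooth embedded submanifold; everything else is a matter of matching definitions. There is no genuine obstacle, because the ambient manifolds carry no corners (per the remark preceding Definition \ref{def:Morse}), so the strong transversality appearing in Definition \ref{def:Morse1} coincides with ordinary transversality and Thom's theorem applies verbatim.
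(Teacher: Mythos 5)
Your proposal is correct and follows essentially the same route as the paper: take $W=\chi(M\times\R)$, observe that it is a closed embedded submanifold (being the image of a section), identify $T_W$ with $C^{\infty}_{\chi}(M)$ via Definitions \ref{def:Morse1}--\ref{def:Morse2}, and invoke Theorem \ref{thm:Thom}. Your verification that $W$ is closed and that transversality to $W$ matches the $\chi$-Morse condition is more detailed than the paper's, but the argument is the same.
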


\begin{proof} \mbox{ } \newline
Let $W$ be the submanifold $\chi(M\times \R)$ of $J^1(M\times \R)$.  $W$ is then a closed submanifold and $f$ is $\chi$-Morse if and only if the induced section $\phi_f$ satisfies $j^1\phi_f \pitchfork \chi$ $\iff$ $j^1\phi_f \pitchfork W$.  Therefore, the set $T_W$ described in the hypotheses of the Thom transversality theorem is exactly the set $C^{\infty}_{\chi}(M)$.  Since $W$ is closed, $T_W$ is open and dense in $C^{\infty}(M)$.  We therefore have that $C^{\infty}_{\chi}(M)\subset C^{\infty}(M)$ is an open dense subset.
\end{proof}

As a last exercise, we show that if $f\in C^{\infty}(M)$, then it is $\chi$-Morse for some connection $\chi$ on $M\times \R$.

\begin{lemma}\label{lem:Morse2}
Let $M$ be a smooth manifold (without corners) and let $f\in C^{\infty}(M)$ be a smooth function.  Then there exists a connection $\chi$ on the bundle $M\times \R \to M$ so that $f$ is $\chi$-Morse.
\end{lemma}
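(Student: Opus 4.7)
The plan is to reduce the existence of a suitable connection to the classical existence of Morse functions, by choosing a connection that ``absorbs'' the derivative of $f$. Concretely, I will parameterize connections on $M\times \R$ by ordinary $1$-forms on $M$ and then pick one that converts $df$ into the differential of a standard Morse function.

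\emph{Step 1: parameterize the connections.} The vertical bundle of $\pi : M\times \R \to M$ is $V = \ker(d\pi)$, canonically trivialized by $\partial/\partial t$, and since $T_{(m,t)}(M\times \R) = T_mM \oplus T_t\R$, specifying an Ehresmann connection is equivalent (by Remark \ref{rem:connections}) to specifying a vertical projection $p_V : T(M\times \R) \to V$ that is the identity on $V$. Any such $p_V$ has the form
\[
p_V(X,Y) \;=\; Y - \alpha_m(X)
\]
for a unique smooth $1$-form on $M\times \R$; taking $\alpha \in \Omega^1(M)$ pulled back from $M$ gives a well-defined connection, which I denote $\chi_\alpha$ (equivalently, its jet field is obtained from Proposition \ref{prop:C1}).

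\emph{Step 2: translate the Morse condition.} For the graph section $\phi_f(m) = (m,f(m))$, the differential is $(d\phi_f)_m(X) = (X, df_m(X))$, so
\[
p_V\bigl((d\phi_f)_m(X)\bigr) \;=\; df_m(X) - \alpha_m(X).
\]
Thus $p_V(d\phi_f) = df - \alpha$, viewed as a section $M \to T^*M \cong \hom(\pi^*TM,V)\big|_{\phi_f(M)}$. By Lemma \ref{lem:Morse1}, $f$ is $\chi_\alpha$-Morse if and only if $df - \alpha : M \to T^*M$ is transverse to the zero section $\mathcal{O}$.

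\emph{Step 3: choose $\alpha$.} Every smooth manifold (without boundary) admits a Morse function in the classical sense; fix any such $g \in C^\infty(M)$ and set $\alpha := df - dg \in \Omega^1(M)$. Then $df - \alpha = dg$, which is transverse to $\mathcal{O} \subset T^*M$ by hypothesis on $g$, so $\chi_\alpha$ makes $f$ a $\chi_\alpha$-Morse function.

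The only real ``obstacle'' is bookkeeping in Step 1: checking that an arbitrary $\alpha \in \Omega^1(M)$ does define a valid connection and that the identification of $p_V(d\phi_f)$ with the $T^*M$-valued section $df - \alpha$ matches the zero section of $\hom(\pi^*TM,V)$ used in Lemma \ref{lem:Morse1}. Once this is set up, the conclusion is essentially a one-line invocation of classical Morse theory.
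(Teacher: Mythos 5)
Your proof is correct, and it arrives at essentially the same family of connections as the paper, but by a slightly different route that is worth noting. The paper's proof also builds the connection out of an exact $1$-form: it applies Proposition \ref{prop:Morse} (genericity of $\chi$-Morse functions, via Thom transversality) to the connection $\chi_f$ determined by $df$, obtaining a $g$ with $dg \pitchfork df$ as sections of $T^*M$, and then invokes the \emph{symmetry} of transversality of two sections to conclude that $f$ is $\chi_g$-Morse. You instead make the affine structure explicit — parameterizing connections by $\alpha \in \Omega^1(M)$ and computing $p_V(d\phi_f) = df - \alpha$ — and then simply solve $df - \alpha = dg$ for $\alpha$, where $g$ is any classical Morse function. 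Unwinding the notation, your connection is $\chi_{f-g}$ and the paper's is $\chi_g$ with $f-g$ classically Morse, so the two constructions produce the same connections up to relabeling the auxiliary function. What your version buys is a cleaner logical dependency: you quote only the bare existence of a Morse function on $M$ (rather than the full open-dense statement plus the symmetry argument), and the ``absorb $df$ into the connection'' idea is made transparent by the linear-algebra step $p_V(d\phi_f) = df - \alpha$. The one piece of bookkeeping you flag — that an arbitrary $\alpha\in\Omega^1(M)$ gives a valid vertical projection, and that transversality of $p_V(d\phi_f)$ to the zero section of $\hom(\pi^*TM,V)$ over the graph matches transversality of $df-\alpha$ to $\mathcal{O}\subset T^*M$ — is routine and is the same identification the paper carries out in its discussion following Definition \ref{def:Morse}, so there is no gap.
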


\begin{proof}
Given a smooth function $f:M \to \R$, its differential $df$ defines a section of $T^*M$ which we may use to define a connection $\chi_f$ on $M\times \R$. Recall that $J^1(M\times \R) = T^*M \times \R$ as a bundle over $M\times \R$.  The connection $\chi_f:M\times \R \to T^*M \times \R$ is then given as:

\begin{displaymath}
\chi_f(m,t) = (m,df_m, t)
\end{displaymath}
By proposition \ref{prop:Morse} there exists a $\chi_f$-Morse function $g:M \to \R$, meaning, if $\phi_g$ is the graph of $g$, $j^1\phi_g \pitchfork \chi_f$, but this is true if and only if $dg \pitchfork df$ as sections of $T^*M$.  Using this fact, we define a connection:

\begin{displaymath}
\chi_g(m,t)=(m,dg_m,t)
\end{displaymath}
and note that $df \pitchfork dg$ if and only if $j^1\phi_f \pitchfork_s \chi_g$ if and only if $f$ is $\chi_g$-Morse.
\end{proof}

Thus, with our new definition of $\chi$-Morse, every function may be realized as a $\chi$-Morse function for some $\chi$ and therefore has a home inside a generic subset of $C^{\infty}(M)$.

\section{Folded Symplectic Manifolds}
We introduce the notion of a folded-symplectic manifold and show how they arise in a very fundamental way by dualizing a standard construction in $b$-symplectic geometry.  Some constructions will involve manifolds with corners while others will use manifolds without boundary: we will always explicitly state which type of manifold we are using in the definitions, lemmas, and propositions.  Our main goals in this section, listed in order of desirability, are:

\begin{enumerate}
\item to develop a normal form for the folding hypersurface inside a folded-symplectic manifold,
\item to show that being folded-symplectic is equivalent to inducing an isomorphism of sheaves $\sigma^\#: \Gamma(TM) \to S$, where $S$ is a distinguished sheaf of $1$-forms on $M$, discussed below, and finally
\item to develop a Moser-type argument for deformations of folded-symplectic structures.
\end{enumerate}
Of course, we will see that these goals are all related: we will need a Moser argument to develop the normal form for the fold and, to this end, we will need to understand when one can solve Moser's equation, which is intimately related to the second goal.

\subsection{Definition and Examples}

\begin{definition}\label{def:fsform}
Let $M$ be a $2m$-dimensional manifold with corners.  We say $\sigma\in\Omega^2(M)$ is \emph{folded-symplectic} if
\begin{enumerate}
\item $d\sigma = 0$
\item $\sigma^m \pitchfork_s \mathcal{O}$, where $\mathcal{O} \subset \Lambda^{2m}(T^*M)$ is the zero section, hence $Z=(\sigma^m)^{-1}(\mathcal{O})$ is a codimension $1$ submanifold with corners intersecting the strata of $M$ transversally.
\item If $i_Z:Z\hookrightarrow M$ is the inclusion, $i_Z^*\sigma$ has maximal rank, $2m-2$.
\end{enumerate}
We say $(M,\sigma)$ is a \emph{folded-symplectic manifold} with corners and we call $Z\subset M$ the \emph{fold} or the \emph{folding hypersurface}.
\end{definition}

\begin{definition}\label{def:fsmaps}
Let $(M,\sigma_1)$, $(N,\sigma_2)$ be two folded-symplectic manifolds with corners.  We say a smooth map $\phi:M\to N$ is \emph{folded-symplectic} if $\phi^*\sigma_2 = \sigma_1$.  If $\phi$ is a diffeomorphism, we say it is a \emph{folded-symplectomorphism}.
\end{definition}

\begin{definition}\label{def:nullbundles}
Let $(M,\sigma)$ be a folded-symplectic manifold with corners and let $Z\subset M$ be the folding hypersurface of $\sigma$ with inclusion $i_Z:Z\hookrightarrow M$.  Assume that $Z$ is nonempty.  We define two vector subbundles of $i_Z^*TM$:

\begin{enumerate}
\item $\ker(\sigma) \to Z$ is $2$-plane bundle over $Z$ whose fiber at a point $z\in Z$ is:
\begin{displaymath}
\ker(\sigma_z) = \{v \in T_zZ \vert \mbox{ } i_v\sigma_z=0\}
\end{displaymath}

\item $\ker(i_Z^*\sigma) \to Z$ is the rank $1$ vector bundle over $Z$ defined to be the intersection $\ker(i_Z^*\sigma):= \ker(\sigma) \cap TZ$.  It may be viewed as a vector subbundle of $i_Z^*TM$ or $TZ$.
\end{enumerate}
\end{definition}

\begin{remark}
Note that since $\ker(i_Z^*\sigma)$ is a rank $1$ vector subbundle of $TZ$, it is trivially involutive.  Hence, by the Frobenius theorem, it defines an integrable distribution on $Z$ and we obtain a foliation of $Z$ by $1$-dimensional leaves.
\end{remark}

\begin{definition}\label{def:nullfibration}
Let $(M,\sigma)$ be a folded-symplectic manifold with corners and let $Z\subset M$ be the folding hypersurface of $\sigma$.  Assume that $Z$ is nonempty and let $\ker(i_Z^*\sigma)$ be the rank $1$ vector subbundle of $TZ$ of definition \ref{def:nullbundles}.  Let $\mathcal{F}$ be the foliation of $Z$ induced by this bundle.  We refer to this foliation as the \emph{null-foliation}.
\end{definition}

\begin{example}\label{ex:fs1}
Let $(M,\omega)$ be a symplectic manifold (with or without corners).  It is trivially folded-symplectic with folding hypersurface $Z=\emptyset$.  Since $Z$ is empty, the bundles $\ker(\sigma)$ and $\ker(i_Z^*\sigma)$ are not defined, though one could view them as just empty sets.  Consequently, there is no null-foliation to consider.
\end{example}

\begin{example}\label{ex:fs2}
Let $(M^{2n},\omega)$ be a symplectic manifold with corners and let $\psi:N^{2n} \to M^{2n}$ be a map with folds.  Then $\sigma=\psi^*\omega$ is a folded-symplectic form on $M$.  To see why this is true, note that in any choice of coordinates near $p\in M$ $\det(d\psi) \pitchfork_s 0$ by corollary \ref{cor:folds4-0}, hence in any choice of coordinates coordinates $(\psi^*\omega)^n = \det(d\psi) \omega^n \pitchfork_s \mathcal{O}$.  This computation also shows that the folding hypersurface $Z$ of $\psi$ is the folding hypersurface of $\psi^*\omega$.  Furthermore:

\begin{enumerate}
\item The bundle $\ker(\sigma)$ has fiber: $\ker(\sigma_z) = (d\psi_z)^{-1}(d\psi_z(T_zM)^{\omega})$.
\item In particular, it contains $\ker(d\psi_z)$.  Since $\ker(d\psi_z) \pitchfork T_zZ$ and $Z$ is a hypersurface, we may write:
\begin{displaymath}
\ker(\sigma_z) = (d\psi_z)^{-1}(d\psi_z(T_zZ)^{\omega})
\end{displaymath}
Since $Z$ is a hypersurface and $d\psi_z\big\vert_{TZ}$ is injective, $d\psi_z(T_zZ)$ is a codimension $1$ subspace of $T_{\psi(z)}M$, meaning it is co-isotropic.  Therefore, the $1$-dimensional subspace $d\psi_z(T_zZ)^{\omega}$ is contained in $d\psi_z(T_zZ)$ and has a unique $1$-dimensional preimage in $T_zZ$, which shows that $i_Z^*(\psi^*\omega)$ has a $1$-dimensional kernel at any point $z\in Z$, meaning it has maximal rank.  This verifies the third condition of definition \ref{def:fsform}.
\item Since $\omega$ is closed, $\psi^*\omega$ is closed, which completes the verification of all three conditions in definition \ref{def:fsform}.
\end{enumerate}
\end{example}

\begin{example}\label{ex:fs3}
Let $\sigma \in \Omega^2(\R^{2n})$ be defined by
\begin{displaymath}
\sigma = x_1dx_1\wedge dx_2 + dx_3\wedge dx_4 + \dots + dx_{2n-1}\wedge dx_{2n}
\end{displaymath}
Then $\sigma$ is folded-symplectic with fold $Z$ defined by $x_1=0$.
\begin{enumerate}
\item The bundle $\ker(\sigma)\to Z$ is framed by the vector fields $\displaystyle \{\frac{\partial}{\partial x_1}, \frac{\partial}{\partial x_2}\}$ along $Z$.
\item The bundle $\ker(i_Z^*\sigma)$ is framed by the vector field $\displaystyle \frac{\partial}{\partial x_2}$ along $Z$.
\end{enumerate}
\end{example}

\begin{example}\label{ex:fs4}
Let $\pi: S^{2n} \to \R^{2n}$ be the projection $\pi(\vec{x},z) =\vec{x}$ and let $\omega_{\R^{2n}}$ be the standard symplectic form on $\R^{2n}$.  $\pi$ is a map which folds along the equator $S^{2n-1}$, hence $\sigma=\pi^*\omega_{\R^{2n}}$ is a folded-symplectic form on $S^{2n}$ by example \ref{ex:fs2}.  \begin{enumerate}
\item The bundle $\ker(\sigma) \to S^{2n-1}$ is spanned by $\displaystyle \frac{\partial}{\partial z}$ (the kernel of $d\pi$ at the equator) and the vector field(s) induced by the diagonal action of $S^1$ on $S^{2n-1}$, $\lambda\cdot (z_1, \dots, z_n) = (\lambda z_1, \dots , \lambda z_n)$, where $z_i$ are complex numbers.  This is because the kernel of the restriction of $\omega_{\R^{2n}}$ to $S^{2n-1}$ is spanned by this vector field.

\item Consequently, the null foliation on the fold $Z=S^{2n-1}$ is given by the orbits of the diagonal $S^1$ action on $S^{2n-1}$.
\end{enumerate}
\end{example}

\begin{example}\label{ex:fs5}
For $m>0$, let $G=\mathbb{Z}/2\mathbb{Z}$ act on $\R^{m}$ by reflection: $\vec{x} \to -\vec{x}$.  Then the projection $\pi:S^{2n} \to \R^{2n}$ of example \ref{ex:fs4} is $G$-equivariant.  Since $\omega_{\R^{2n}}$ is $G$-invariant, $\sigma=\pi^*\omega_{\R^{2n}}$ is $G$-invariant.  Let $p:S^{2n} \to \R P^{2n} (=S^{2n}/G)$ be the orbit map.  Since $\sigma$ is $G$-invariant, we have $\sigma=p^*\bar{\sigma}$ for some $\bar{\sigma}\in \Omega^2(\R P^{2n})$.  Since $p$ is a covering map, it is a local diffeomorphism and $\bar{\sigma}$ has fold singularities along $\R P ^{2n-1}$.

\begin{enumerate}
\item The null-foliation on $\R P^{2n-1}$ is given by the image of the orbits of the diagonal action of $S^1$ on $S^{2n-1}$.
\item The null direction transverse to $\R P^{2n-1}$ at a point $[(\vec{x},0)]$ is given by $dp_{(\vec{x},0)}(\frac{\partial}{\partial z})$.
\end{enumerate}
\end{example}

The following lemma demonstrates how one may construct a folded-symplectic manifold from an odd-dimensional manifold equipped with a closed $2$-form of maximal rank.  We will also see how this lemma allows us to decide when some $2$-forms are folded-symplectic or not.

\begin{lemma}\label{lem:symplectize}
Let $Z^{2n-1}$ be an odd-dimensional manifold with corners.  Let $\omega \in \Omega^2(Z)$ be a closed $2$-form of maximal rank $2n-2$ and let $\ker(\omega)\to Z$ be the bundle whose fiber is $\ker(\omega)_z:= \{v\in T_zZ \vert \mbox{ } i_v\omega=0\}$.  Let $\mu \in \Omega(Z\times \R)$ and let $p:Z\times \R \to Z$ be the projection.  Then

\begin{displaymath}
\sigma :=p^*\omega + t\mu
\end{displaymath}
is folded-symplectic in a neighborhood of $Z\times \{0\}$ with fold $Z\times \{0\}$ if and only if for all $z\in Z$ we have $\mu\big\vert_{\ker{p^*\omega}}$ is nondegenerate in a neighborhood of $Z\times \{0\}$.
\end{lemma}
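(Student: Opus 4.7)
The plan is to verify the three defining conditions of a folded-symplectic structure directly, via a local computation at $Z\times\{0\}$. The closedness condition is presumably implicit in the setup (the stated condition on $\mu$ does not imply $d\sigma=0$ on its own, so some additional assumption — e.g., that $\mu$ is chosen so that $\sigma$ is closed — must be in force); I would address this separately and focus on the rank and transverse-vanishing conditions.

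The rank condition is immediate: pulling $\sigma$ back along the inclusion $i:Z\times\{0\}\hookrightarrow Z\times\R$ gives $i^*\sigma=\omega$, since $p\circ i=\id_Z$ preserves $\omega$ and the $t\mu$ term vanishes at $t=0$, so $i^*\sigma$ has the required maximal rank $2n-2$ by hypothesis. For the transverse-vanishing condition, I would expand $\sigma^n=(p^*\omega+t\mu)^n$ binomially. The crucial observation is that because $Z$ is $(2n-1)$-dimensional, $\omega^n=0$ identically on $Z$, hence $(p^*\omega)^n=0$ on $Z\times\R$, and the expansion begins at order $t$:
\[
\sigma^n \;=\; n\,t\,\mu\wedge(p^*\omega)^{n-1} + t^2\eta
\]
for some smooth $2n$-form $\eta$. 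Since $\Lambda^{2n}T^*(Z\times\R)$ is a line bundle, transverse vanishing of the top form $\sigma^n$ along $Z\times\{0\}$ is equivalent to the leading coefficient being nowhere zero, i.e.\ $\mu\wedge(p^*\omega)^{n-1}$ nowhere vanishing on $Z\times\{0\}$. Strong transversality in the manifold-with-corners sense then follows by restricting the same derivative computation to each boundary stratum $S\times\R$ of $Z\times\R$.

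The remaining step is a pointwise linear algebra identification of the kernel of $(p^*\omega)^{n-1}$. At $(z,0)$, the form $\omega_z^{n-1}$ on the $(2n-1)$-dimensional space $T_zZ$ has kernel equal to the line $\ker(\omega_z)$; lifting via $dp_{(z,0)}$ to $T_zZ\oplus\R\partial_t$, the kernel of $(p^*\omega)^{n-1}_{(z,0)}$ is the $2$-plane $\ker(\omega_z)\oplus\R\partial_t$, which coincides with the fiber $\ker(p^*\omega)_{(z,0)}$. By the standard fact that a $2$-form wedged with a $(2n-2)$-form whose kernel is a $2$-plane $K$ is a volume form iff the $2$-form restricts nondegenerately to $K$, one obtains that $\mu\wedge(p^*\omega)^{n-1}$ is nonvanishing at $(z,0)$ iff $\mu|_{\ker(p^*\omega)_{(z,0)}}$ is nondegenerate; openness of nondegeneracy propagates the equivalence to a neighborhood of $Z\times\{0\}$.

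The one conceptually noteworthy point — and the place where the bookkeeping could mislead — is the appearance of the $\partial_t$ direction in the kernel of $(p^*\omega)^{n-1}$: the required condition is on $\mu|_{\ker(p^*\omega)}$, which is a $2$-form on a $2$-plane, rather than on $\mu$ restricted to $\ker(\omega)\subset TZ$ (a $2$-form on a line, automatically zero). Once this identification is in hand, the closedness hypothesis and this single linear-algebraic observation are the only potential obstacles; everything else is computation.
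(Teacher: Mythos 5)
Your proof is correct and follows essentially the same route as the paper's: expand $\sigma^n$, identify the order-$t$ coefficient as $\mu\wedge(p^*\omega)^{n-1}$, and reduce transverse vanishing of the top power to nondegeneracy of $\mu$ on the $2$-plane $\ker(p^*\omega)=\ker(\omega)\oplus\R\,\partial_t$ along $Z\times\{0\}$, with the maximal-rank condition on $Z\times\{0\}$ being immediate since $i^*\sigma=\omega$. Your remark about closedness is fair --- the paper's proof also leaves $d\sigma=0$ unaddressed, and the lemma is only ever applied where $\sigma$ is manifestly closed.
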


\begin{proof}
The top power of $\sigma$ is:

\begin{displaymath}
\sigma^n = t(p^*\omega)^{n-1} \wedge \mu + t^2 \beta
\end{displaymath}
where $\beta \in \Omega^{2n}(Z\times \R)$ is some $2n$-form, which shows that $\sigma^n$ vanishes at $Z\times \{0\}$.  We have $\sigma^n \pitchfork_s \mathcal{O}$, where $\mathcal{O}$ is the zero section, if and only if $\mu$ doesn't vanish on the directions on which $p^*\omega$ vanishes.  That is, $\mu$ doesn't vanish on $\ker(p^*\omega)$ at points of $Z\times \{0\}$, hence it doesn't vanish on $\ker(p^*\omega)$ in a neighborhood of $Z\times \{0\}$.  In particular, ince $p^*\omega$ vanishes on $\frac{\partial}{\partial t}$ and $p^*\ker(\omega)$, we have:

\begin{displaymath}
\sigma^n \pitchfork_s \mathcal{O} \mbox{} \iff \text{ $\mu_{(z,0)}(\frac{\partial}{\partial t}, v)\ne 0$ for all $v\in \ker(\omega_z)$}
\end{displaymath}
By definition of $\sigma$, its restriction to the hypersurface $Z\times \{0\}$ has maximal rank, since it is essentially $\omega$ on $Z$, which shows that it is folded-symplectic if and only if the hypotheses of the lemma are satisfied.
\end{proof}

\begin{cor}\label{cor:symplectize}
Let $Z^{2n-1}$ be a manifold with corners and suppose $\sigma \in \Omega^2(Z\times \R)$ is folded-symplectic with fold $Z\times \{0\}$.  Furthermore, suppose $\ker(\sigma)$ contains the subbundle framed by $\frac{\partial}{\partial t}$ along $Z\times \{0\}$.  Let $i:Z \to Z\times \R$ be the inclusion as the zero section and let $p:Z\times \R \to Z$ be the projection.  Then:

\begin{enumerate}
\item $\sigma = p^*i^*\sigma + t\mu$ for some $2$-form $\mu\in \Omega^2(Z\times \R)$.
\item $\mu\vert_{\ker(\sigma)}$ is non-degenerate.
\end{enumerate}
\end{cor}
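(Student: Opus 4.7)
The plan is to obtain part 1 by showing $\alpha := \sigma - p^*i^*\sigma$ vanishes to first order at $Z \times \{0\}$ and then invoking Hadamard's lemma coefficient-wise, and to derive part 2 from the transversality condition $\sigma^n \pitchfork_s \mathcal{O}$ along the lines of the proof of lemma \ref{lem:symplectize}.

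First I would work in local coordinates $(z_1,\dots,z_{2n-1},t)$ on $Z \times \R$ and decompose
\begin{displaymath}
\alpha \;=\; \sum_{i<j} a_{ij}(z,t)\,dz_i\wedge dz_j \;+\; \sum_i b_i(z,t)\,dz_i\wedge dt,
\end{displaymath}
so that divisibility of $\alpha$ by $t$ reduces to verifying $a_{ij}(z,0)=0$ and $b_i(z,0)=0$. The identity $p\circ i = \mathrm{id}_Z$ gives $i^*\alpha = i^*\sigma - i^*\sigma = 0$, which forces all $a_{ij}(z,0)=0$. For the $b_i$ I would contract with $\frac{\partial}{\partial t}$: the form $p^*i^*\sigma$ has no $dt$-component, so $i_{\frac{\partial}{\partial t}}(p^*i^*\sigma) \equiv 0$, while the hypothesis $\frac{\partial}{\partial t}\in \ker(\sigma)$ along $Z\times\{0\}$ gives $i_{\frac{\partial}{\partial t}}\sigma\big\vert_{t=0}=0$. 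Consequently $i_{\frac{\partial}{\partial t}}\alpha\big\vert_{t=0} = -\sum_i b_i(z,0)\,dz_i = 0$, so each $b_i(z,0)=0$. Hadamard's lemma applied coefficient-wise then writes $\alpha = t\mu$ for a smooth $\mu\in\Omega^2(Z\times\R)$.

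For part 2 I would first identify $\ker(\sigma)$ along the fold. Writing $v = a\frac{\partial}{\partial t}+w$ with $w\in T_zZ$ and using that $(p^*i^*\sigma)_{(z,0)}$ annihilates $\frac{\partial}{\partial t}$, one sees $v\in\ker(\sigma_{(z,0)})$ iff $w\in\ker(i^*\sigma_z)$. Since $i^*\sigma$ has maximal rank $2n-2$, $\ker(i^*\sigma_z)$ is one-dimensional; picking a local spanning section $w_z$ gives $\ker(\sigma_{(z,0)}) = \mathrm{span}\{\tfrac{\partial}{\partial t},\, w_z\}$. I would then expand
\begin{displaymath}
\sigma^n \;=\; (p^*i^*\sigma + t\mu)^n \;=\; n\,t\,(p^*i^*\sigma)^{n-1}\wedge \mu \;+\; O(t^2),
\end{displaymath}
noting that $(p^*i^*\sigma)^n = 0$ since $(i^*\sigma)^n$ is a $2n$-form on the $(2n-1)$-manifold $Z$. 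The condition $\sigma^n \pitchfork_s \mathcal{O}$ at $Z\times\{0\}$ is therefore equivalent to $(p^*i^*\sigma)^{n-1}\wedge\mu \ne 0$ there. Because the kernel of $(p^*i^*\sigma)^{n-1}_{(z,0)}$ is precisely $\mathrm{span}\{\tfrac{\partial}{\partial t}, w_z\}$, nonvanishing of this wedge is equivalent to $\mu_{(z,0)}(\tfrac{\partial}{\partial t}, w_z)\ne 0$, which is exactly the non-degeneracy of $\mu\vert_{\ker(\sigma)}$.

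The main obstacle is the divisibility step: the identity $p\circ i = \mathrm{id}_Z$ only controls the pure $dz_i\wedge dz_j$ part of $\alpha$ at $t=0$, and without separately invoking the kernel hypothesis on $\frac{\partial}{\partial t}$ one would miss the $dz_i\wedge dt$ coefficients and could not apply Hadamard's lemma to conclude $\alpha = t\mu$. Part 2 is then a direct repackaging of the transversality computation already carried out in lemma \ref{lem:symplectize}; the only new input is the explicit description of $\ker(\sigma)$ at the fold that the hypothesis on $\frac{\partial}{\partial t}$ permits.
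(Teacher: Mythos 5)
Your proposal is correct and follows essentially the same route as the paper: part 1 is the observation that $\sigma - p^*i^*\sigma$ vanishes along $Z\times\{0\}$ (using $p\circ i = \mathrm{id}_Z$ for the tangential components and the hypothesis $\frac{\partial}{\partial t}\in\ker(\sigma)$ for the $dt$-components) followed by Hadamard's lemma, and part 2 is the transversality computation of lemma \ref{lem:symplectize} together with the identification $\ker(p^*i^*\sigma)\big\vert_{Z\times\{0\}}=\ker(\sigma)$. The only cosmetic difference is that you carry out the divisibility argument coefficient-wise in coordinates and re-derive the lemma's computation inline, where the paper invokes lemma \ref{lem:had} and lemma \ref{lem:symplectize} directly.
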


\begin{proof}
\begin{enumerate}
\item Consider the difference $\sigma -p^*i^*\sigma$.  Since both $p^*i^*\sigma$ and $\sigma$ vanish on $\frac{\partial}{\partial t}$ along $Z\times \{0\}$, this difference vanishes at $Z\times \{0\}$.  Thus, $\sigma - p^*i^*\sigma = t\mu$ for some $2$-form $\mu\in \Omega^2(Z\times \R)$ by lemma \ref{lem:had}, which shows $\sigma = p^*i^*\sigma + t\mu$.
\item $(Z,i^*\sigma)$ is an odd-dimensional manifold with a closed $2$-form of maximal rank.  By lemma \ref{lem:symplectize}, $\sigma$ is folded-symplectic with fold $Z\times \{0\}$ if and only if $\mu$ doesn't vanish on $\ker(p^*i^*\sigma)$ at $Z\times \{0\}$.  Since $\ker(p^*i^*\sigma)$ is spanned by $\frac{\partial}{\partial t}$ and elements of $\ker(i^*\sigma)$ along $Z\times \{0\}$, we have that $\ker(p^*i^*\sigma)\big\vert_{Z\times \{0\}} = \ker(\sigma)$.  Since $\sigma$ is folded, we must have that $\mu\big\vert_{\ker{\sigma}}$ is non-degenerate.
\end{enumerate}
\end{proof}

\begin{example}\label{ex:symplectize1}
Consider $S^1$ equipped with the $2$-form $0\in \Omega^2(S^1)$, which is a closed $2$-form of maximal rank.  Then the cylinder $S^1\times \R$ may be given a fold structure using $\sigma = 0 + t(2dt\wedge d\theta) = d(t^2d\theta)$.
\end{example}

\begin{example}\label{ex:symplectize2}
Take any contact manifold $(Z,\alpha)$, where $\alpha$ is a contact $1$-form on $Z$.  Let $p:Z\times \R \to Z$ be the projection.  Then:

\begin{displaymath}
\sigma := p^*(d\alpha) + t(2dt\wedge p^*\alpha) + t^2p^*d\alpha = d(1+t^2)(p^*\alpha)
\end{displaymath}
is folded-symplectic in a neighborhood of $Z\times \{0\}$ with folding hypersurface $Z\times \{0\}$.
\end{example}

\begin{example}\label{ex:symplectize3}
To generalize example \ref{ex:symplectize2}, consider any oriented, odd-dimensional manifold $Z$ with a closed $2$-form $\omega$ of maximal rank.  Since $Z$ is oriented, the bundle $\ker(\omega)$ is oriented, meaning we may choose a non-vanishing $1$-form $\alpha \in \Omega^1(Z)$ that doesn't vanish on $\ker(\omega)\to Z$.  Then $\sigma = p^*\omega + d(t^2p^*\alpha)$ is folded-symplectic in a neighborhood of $Z\times \{0\}$ since $dt\wedge p^*\alpha$ is non-degenerate on $\ker(p^*\omega)$ along $Z\times \{0\}$.
\end{example}

\begin{remark}
We will show that if $(M,\sigma)$ is a folded-symplectic manifold with folding hypersurface $Z\subset M$, then $Z$ is canonically oriented by $\sigma$, hence $(Z,i_Z^*\sigma)$ is an odd-dimensional, oriented manifold with a closed $2$-form of maximal rank.  Thus, \emph{every} folding hypersurface ever conceived is the type of hypersurface discussed in example \ref{ex:symplectize3}.  Furthermore, each such hypersurface admits a symplectization and, by example \ref{ex:symplectize3}, a folded-symplectization.  We will see that neighborhoods of co-orientable folding hypersurfaces look like the symplectization of the folding hypersurface pulled back by a folding map.
\end{remark}

We will now give some more utility to the constructions of examples \ref{ex:symplectize1}, \ref{ex:symplectize2}, and \ref{ex:symplectize3} by providing a partial normal form for a neighborhood of the fold $Z$ of a folded-symplectic manifold $(M,\sigma)$ in the case that $Z$ is co-orientable.  We will need this normal form when we discuss vector bundle and sheaf-theoretic characterizations of folded-symplectic forms.

\begin{lemma}\label{lem:fsnormal}
Let $(M,\sigma)$ be a folded-symplectic manifold with corners and suppose for all $p\in Z$ $\ker(\sigma_p)$ is tangent to the stratum of $M$ containing $p$.  Furthermore, suppose $Z$ is co-orientable.  Then there exists a a neighborhood $U$ of $Z$, a neighborhood $V$ of the zero section of $Z\times \R$, and an isomorphism $\phi:V \to U$ so that:
\begin{displaymath}
\phi^*\sigma = p^*i^*\sigma + t\mu
\end{displaymath}
where $i:Z \hookrightarrow Z\times \R$ is the inclusion as the zero section, $p:Z\times \R \to Z$ is the projection, and $\mu\in \Omega^2(U)$ is some $2$-form satisfying $\mu\big\vert_{\ker(\phi^*\sigma)}$ is non-degenerate.
\end{lemma}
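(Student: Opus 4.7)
The plan is to reduce the statement to corollary \ref{cor:symplectize} by constructing a tubular neighborhood $\phi\colon V \to U$ of $Z$ in which the coordinate vector field $\partial/\partial t$ on $Z\times \R$ pushes forward to a distinguished section of $\ker(\sigma)|_Z$. The key structural observation is that since $\sigma$ has maximal rank $2m-2$ along $Z$, the bundle $\ker(\sigma)|_Z \subset i_Z^*TM$ has rank $2$ and contains the rank-$1$ subbundle $\ker(i_Z^*\sigma) = \ker(\sigma) \cap TZ$. The projection $\ker(\sigma)|_Z \to \nu(Z) := i_Z^*TM/TZ$ therefore has $\ker(i_Z^*\sigma)$ as its kernel, inducing a fibrewise isomorphism $\ker(\sigma)|_Z/\ker(i_Z^*\sigma) \cong \nu(Z)$.

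First I would invoke the co-orientability hypothesis to pick a non-vanishing section of $\nu(Z)$, and then lift it, using any splitting of the short exact sequence $0 \to \ker(i_Z^*\sigma) \to \ker(\sigma)|_Z \to \nu(Z) \to 0$ (which exists since we are dealing with vector bundles over a paracompact base), to a non-vanishing section $X$ of $\ker(\sigma)|_Z$ transverse to $TZ$. Because $X_p \in \ker(\sigma_p)$ and by hypothesis $\ker(\sigma_p)$ is tangent to the stratum of $M$ containing $p$, the section $X$ is itself tangent to strata along $Z$. I would then extend $X$ to a stratified vector field $\tilde X$ defined on a neighborhood of $Z$ in $M$ and define
\begin{displaymath}
\phi\colon V \to U, \qquad \phi(z, t) := \operatorname{Flow}^{\tilde X}_t(z),
\end{displaymath}
for $V$ a sufficiently small neighborhood of $Z\times\{0\}$ in $Z\times \R$ and $U := \phi(V)$. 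Since $\tilde X$ is stratified and $X$ is transverse to $TZ$, after shrinking $V$ and $U$ this yields a strata-preserving diffeomorphism of manifolds with corners satisfying $\phi\circ i = i_Z$ and $\phi_*(\partial/\partial t)|_{(z,0)} = X(z)$.

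Next I would verify that $\phi^*\sigma$ satisfies the hypotheses of corollary \ref{cor:symplectize}. Folded-symplectic forms pull back to folded-symplectic forms under diffeomorphisms, and because $\phi^{-1}(Z) = Z\times\{0\}$, the folding hypersurface of $\phi^*\sigma$ is exactly $Z\times\{0\}$. Since $X(z) \in \ker(\sigma_z)$ and $\phi_*(\partial/\partial t)|_{(z,0)} = X(z)$, we have $\partial/\partial t \in \ker(\phi^*\sigma)_{(z,0)}$ for every $z\in Z$. Corollary \ref{cor:symplectize} then applies, producing $\mu \in \Omega^2(V)$ with $\mu|_{\ker(\phi^*\sigma)}$ non-degenerate and
\begin{displaymath}
\phi^*\sigma = p^*\, i^*(\phi^*\sigma) + t\mu.
\end{displaymath}
Since $\phi\circ i = i_Z$, $i^*(\phi^*\sigma) = i_Z^*\sigma$, which is the $i^*\sigma$ of the statement, giving the claimed decomposition.

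The step I expect to be the main technical obstacle is producing a stratified extension $\tilde X$ of $X$ whose flow is strata-preserving; this is the one point where the hypothesis that $\ker(\sigma_p)$ is tangent to strata is genuinely used, and without it the tubular neighborhood would fail to respect the corner structure of $M$. Once that extension is in place, the lifting of a co-orientation of $Z$ to a non-vanishing section of $\ker(\sigma)|_Z$ is a routine vector-bundle argument, and the final appeal to corollary \ref{cor:symplectize} is automatic.
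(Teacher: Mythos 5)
Your proposal is correct and follows essentially the same route as the paper's proof: the same short exact sequence $0 \to \ker(i_Z^*\sigma) \to \ker(\sigma) \to \nu(Z) \to 0$, the same use of co-orientability to trivialize $\nu(Z)$ and lift to a non-vanishing section of $\ker(\sigma)$ transverse to $TZ$, the same stratified extension and flow to build $\phi$, and the same final appeal to corollary \ref{cor:symplectize}. No gaps.
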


\begin{remark}\label{rem:whatismu}
We will use \ref{lem:fsnormal} to construct the induced orientation on the fold, first discovered by Martinet in \cite{M}, and we will also use it when we develop a sheaf-theoretic characterization of the notion of being folded-symplectic.  The most important detail we would like to emphasize is that the $2$-form $\mu$ appearing in lemma \ref{lem:fsnormal} functions as the intrinsic derivative of the contraction map $C_{\sigma}:TM \to T^*M$, $C_{\sigma}(p,X)=i_X\sigma_p$, at points of the folding hypersurface.  We'll discuss this more in the future.
\end{remark}

\begin{proof}
We will construct $\phi$ by considering the flow of a stratified vector field on $M$ whose values at $Z$ lie inside $\ker(\sigma)$ and whose image under $\phi^{-1}$ will be $\frac{\partial}{\partial t}$. We have a short exact sequence of vector bundles:

\begin{displaymath}
0 \to \ker(i_Z^*\sigma) \to \ker(\sigma) \to \ker(\sigma)/\ker(i_Z^*\sigma) \to 0
\end{displaymath}
The bundle $\ker(\sigma)/\ker(i_Z^*\sigma)$ is canonically isomorphic to the normal bundle $\nu(Z)= i_Z^*TM/ TZ$ of $Z$ via the inclusion $j:\ker(\sigma) \hookrightarrow i_Z^*TM$.  The kernel of the inclusion followed by projection to $\nu(Z)$ is exactly $\ker(i_Z^*\sigma)$, hence $j$ descends to an isomorphism $\ker(\sigma)/\ker(i_Z^*\sigma) \simeq \nu(Z)$.

Since $Z$ is co-orientable, $\nu(Z)$ is trivializable.  Thus, there exists a non-vanishing section $\bar{w} \in \Gamma(\ker(\sigma)/\ker(i_Z^*\sigma))$ which lifts to a non-vanishing section $w \in \Gamma(\ker(\sigma))$ so that $w(z)\notin T_zZ$ for all $z\in Z$.  Since $\ker(\sigma)$ is tangent to the strata of $M$, $w$ is stratified, hence we may extend $w$ to a stratified vector field $\tilde{w}$ on $M$.  Since stratified vector fields may be integrated, we may consider the flow $\tilde{\phi}$ of $\tilde{w}$, which is defined on an open subset of $M\times \R$ containing $M\times \{0\}$.  In particular, it is defined on an open neighborhood $\tilde{V}$ of the zero section of $Z\times \R$.  We may then define:

\begin{displaymath}
\xymatrixcolsep{3pc}\xymatrixrowsep{.5pc}\xymatrix{
\phi: V\subset Z\times \R \ar[r] &  M \\
(z,t) \in V \ar[r]               & \tilde{\phi}(z,t)}
\end{displaymath}
It is a bijection in a neighborhood of $Z\times \{0\}$ since $\tilde{w}$ is non-vanishing in a neighborhood of $Z\times \{0\}$ and integral curves do not intersect.  It is a strata-preserving map since $\tilde{w}$ is a stratified vector field.  Hence, it is locally a diffeomorphism of manifolds with corners in a neighborhood of $Z\times \{0\}$ since $d\phi$ has maximal rank at $Z\times \{0\}$:
\begin{enumerate}
\item $\phi\big\vert_{Z\times \{0\}}(z,0)= z$ and
\item $d\phi_{(z,0}(\frac{\partial}{\partial t}) = \tilde{w}(z)$
\end{enumerate}
This means that there is a neighborhood $V\subset \tilde{V}$ of $Z\times \{0\}$ and a neighborhood $U=\phi(V)$ of $Z$ so that $\phi:V \to U$ is a diffeomorphism of manifolds with corners.

\vspace{5mm}

Since $d\phi_{(z,0)}(\frac{\partial}{\partial t})= \tilde{w}(z) \in \ker(\sigma_z)$, $\frac{\partial}{\partial t} \in \ker(\phi^*\sigma)_{(z,0)}$ for all $z\in Z$.  By corollary \ref{cor:symplectize}, $\phi^*\sigma = p^*i^*\sigma + t\mu$ for some $2$-form $\mu$ and $\mu\big\vert_{\ker(\phi^*\sigma)}$ is non-degenerate.
\end{proof}

\begin{cor}\label{cor:fsnormal}
Let $(M^{2n},\sigma)$ be a folded-symplectic manifold with corners with folding hypersurface $Z\subset M$.  Suppose that for all $z\in Z$ $\ker(\sigma_z)$ is tangent to the stratum of $M$ containing $z$.  Then if $Z$ is co-orientable, it is orientable.
\end{cor}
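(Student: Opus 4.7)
The plan is to combine the normal form of lemma \ref{lem:fsnormal} with the short exact sequences relating $\ker(i_Z^*\sigma)$, $\ker(\sigma)$, $\nu(Z)$, and $TZ$ to produce an orientation on $TZ$. Since orientability of a vector bundle is preserved by short exact sequences (if two of the three bundles are orientable, so is the third), it suffices to orient enough of the pieces.

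First I would apply lemma \ref{lem:fsnormal} to obtain a diffeomorphism $\phi:V\to U$ with $\phi^*\sigma = p^*i^*\sigma + t\mu$ and $\mu\big\vert_{\ker(\phi^*\sigma)}$ non-degenerate. Restricting to $Z\times\{0\}$ and pushing forward by $d\phi$, we get a non-degenerate $2$-form on the rank-$2$ bundle $\ker(\sigma)\to Z$; any non-degenerate $2$-form on a rank-$2$ bundle canonically orients it. Since $Z$ is co-orientable by hypothesis, the normal bundle $\nu(Z)\to Z$ is trivializable and hence orientable. The proof of lemma \ref{lem:fsnormal} identified the short exact sequence
\begin{displaymath}
0 \to \ker(i_Z^*\sigma) \to \ker(\sigma) \to \nu(Z) \to 0,
\end{displaymath}
and two of the three bundles here are now oriented, so $\ker(i_Z^*\sigma)\to Z$ inherits an orientation.

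Next I would orient the quotient $TZ/\ker(i_Z^*\sigma)$. By definition of a folded-symplectic form, $i_Z^*\sigma$ is closed and has maximal rank $2n-2$ on the $(2n-1)$-dimensional manifold $Z$, with kernel precisely $\ker(i_Z^*\sigma)$. Hence $i_Z^*\sigma$ descends to a fibrewise non-degenerate $2$-form $\bar{\omega}$ on the rank-$(2n-2)$ bundle $TZ/\ker(i_Z^*\sigma)$, and $\bar{\omega}^{n-1}$ is a nowhere vanishing top form on this quotient bundle, providing an orientation. Applying the short exact sequence
\begin{displaymath}
0 \to \ker(i_Z^*\sigma) \to TZ \to TZ/\ker(i_Z^*\sigma) \to 0
\end{displaymath}
with both ends oriented yields an orientation of $TZ$, proving that $Z$ is orientable.

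There is no real obstacle here beyond carefully verifying that lemma \ref{lem:fsnormal} is applicable — its hypothesis that $\ker(\sigma_z)$ be tangent to the stratum containing $z$ is exactly the standing assumption of the corollary, and co-orientability is the other hypothesis. Everything else is linear algebra plus the standard fact that orientability behaves additively in short exact sequences of real vector bundles. In the full story (the remark following the corollary), one upgrades this to the statement that $\sigma$ induces a \emph{canonical} orientation, but for orientability per se the argument above is self-contained.
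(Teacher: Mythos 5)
Your argument is correct and is essentially the paper's proof: both rest on lemma \ref{lem:fsnormal} and on the non-degeneracy of $\mu$ on $\ker(\sigma)$, with the co-orientation supplying the transverse null direction. The paper simply packages your two short exact sequences into the single nowhere-vanishing top form $\Omega=(i_Z^*\sigma)^{n-1}\wedge\alpha$ with $\alpha=i^*(i_{\frac{\partial}{\partial t}}\mu)$, which is the same orientation you produce bundle-by-bundle.
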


\begin{proof}
Let $p:Z\times \R \to Z$ be the projection and let $i:Z \to Z\times \R$ be the inclusion as the zero section.  By lemma \ref{lem:fsnormal}, we may assume a neighborhood of $Z$ is a neighborhood of the zero section $V\subset Z\times \R$ with fold form $p^*i^*\sigma + t\mu$, where $\mu\big\vert_{\ker(p^*i^*\sigma)}$ is non-degenerate in a neighborhood of $Z\times \{0\}$.  Consequently, $i^*(i_{\frac{\partial}{\partial t}}\mu)$ is a $1$-form on $Z$ that does not vanish on $\ker(i_Z^*\sigma)$.  We may then define an orientation on $Z$ using the form:

\begin{displaymath}
\Omega= (i_Z^*\sigma)^{n-1}\wedge \alpha
\end{displaymath}
\end{proof}

\begin{remark}
We will see that the orientation defined in the proof of corollary \ref{cor:fsnormal} may be canonically defined using the the folded symplectic form $\sigma$ and its kernel bundle $\ker(\sigma) \to Z$.  In fact, we will see that if $(M,\sigma)$ is any folded-symplectic manifold, the folding hypersurface $Z\subset M$ is endowed with an orientation induced by $\sigma$, regardless of co-orientability.
\end{remark}

\subsection{Cotangent Bundles and Folded Structures}
While we now have the basic facts and examples of folded-symplectic manifolds established, we are left wondering where one might find a naturally occurring folded-symplectic structure.  To answer this question, let us consider for a moment the case of $b$-symplectic structures on manifolds without boundary.  A $b$-symplectic manifold is a $2n$-dimensional manifold (without boundary) equipped with a Poisson structure $\Pi \in \Gamma(\Lambda^2(TM))$ such that $\Pi^n \pitchfork \mathcal{O}$, where $\mathcal{O}$ is the zero section of $\Gamma(\Lambda^{2n}(TM))$.  Thus, folded-symplectic manifolds are, in a sense, dual to the notion of a $b$-symplectic manifold where we consider $T^*M$ instead of $TM$.

Now, $b$-symplectic structures occur quite naturally (see \cite{GMP,GMPS} for details).  Given a manifold without boundary $M$ and a hypersurface $Z\subset M$, one may form the $Z$-tangent bundle $T_ZM$, specified up to isomorphism, whose space of global sections is isomorphic to the space of vector fields $\Gamma_Z(TM)$ that are tangent to $Z$ at points of $Z$.  Dualizing, one obtains the $Z$-cotangent bundle $T_Z^*M := (T_ZM)^*$.  There is a map $i:T_ZM \to TM$ induced by the inclusion $\Gamma_Z(TM) \hookrightarrow \Gamma(TM)$ into the space of vector fields on $M$.  Its dual $i^*$ gives us a map $i^*:T^*M \to T^*_ZM$ and the pushforward of the canonical Poisson structure on $T^*M$ gives us a $b$-symplectic structure on $T^*_ZM$.  In what follows, we dualize this construction to produce vector bundles that \emph{always} have a folded-symplectic structure, albeit perhaps non-canonically.

\begin{remark}
We work with manifolds without boundary for the sake of convenience.  However, in what follows, one need only assume all vector bundles are stratified subbundles of $TM$ in order to generalize to the case of manifolds with corners.  There is one instance where we use a tubular neighborhood theorem for manifolds without boundary (q.v. proof of lemma \ref{lem:Vcotangent3}): it is also true in the case of manifolds with corners but the proof appears to be outside the scope of this document (q.v. \cite{MN} p. 4).
\end{remark}

\subsubsection{Constructing the bundles $T^*_V M$}

\begin{definition}\label{def:Vcotangent}
Let $M$ be a manifold without boundary and let $Z\subset M$ be a closed hypersurface, i.e. a codimension $1$ submanifold of $M$.  Let $V$ be a rank $1$ vector subbundle of $i_Z^*TM$ so that for all $z\in Z$ the fiber $V_z$ over $z$ is transverse to $T_zZ$.  For each open set $U\subset M$ we define:
\begin{displaymath}
\Omega_V^1(U):= \{\alpha \in \Omega^1(U) \vert \mbox{ } \alpha\vert_V = 0\}
\end{displaymath}
to be the space of all $1$-forms on $U$ vanishing on $V$.  If $U\cap Z = \emptyset$, then this is just $\Omega^1(U)$.  The restriction maps are defined using the pullbacks $i^*$ by the inclusion maps $i:U \hookrightarrow V$.
\end{definition}

\begin{lemma}\label{lem:Vcotangent1}
Let $M$ be a manifold without boundary, let $Z\subset M$ be a closed hypersurface, and let $V$ be a rank $1$ vector subbundle of $i_Z^*TM$ transverse to $TZ$.  Then $\Omega_V^1(\cdot)$ is a sheaf of $C^{\infty}(M)$ modules on $M$.
\end{lemma}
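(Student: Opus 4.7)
The plan is to verify the three defining properties of a sheaf of $C^{\infty}(M)$-modules in succession, exploiting the fact that $\Omega^1(\cdot)$ is already known to be a sheaf of $C^{\infty}(M)$-modules and that the defining condition $\alpha|_V=0$ is \emph{pointwise} in nature.

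First I would check that each $\Omega_V^1(U)$ is a $C^{\infty}(U)$-module. Given $\alpha, \beta \in \Omega_V^1(U)$ and $f \in C^{\infty}(U)$, the form $\alpha + f\beta$ lies in $\Omega^1(U)$, and for every $z \in U \cap Z$ and $v \in V_z$ one has $(\alpha + f\beta)_z(v) = \alpha_z(v) + f(z)\beta_z(v) = 0$, so the vanishing condition is preserved. Next I would verify the presheaf axioms. For $U' \subset U$ open with inclusion $i: U' \hookrightarrow U$, and $\alpha \in \Omega_V^1(U)$, the restriction $i^*\alpha \in \Omega^1(U')$ vanishes on $V_z$ for each $z \in U' \cap Z$ since these are already points of $U \cap Z$; thus restriction maps $\Omega_V^1(U) \to \Omega_V^1(U')$ are well-defined, and they are $C^{\infty}$-linear and compose correctly because they are restrictions of the presheaf maps for $\Omega^1(\cdot)$.

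Next I would verify the two sheaf axioms. Let $\{U_i\}_{i \in I}$ be an open cover of an open set $U \subset M$. For the locality (separation) axiom, suppose $\alpha \in \Omega_V^1(U)$ satisfies $\alpha|_{U_i} = 0$ for all $i$; then $\alpha = 0$ as an element of $\Omega^1(U)$ by the sheaf axiom for $\Omega^1(\cdot)$, and a fortiori as an element of $\Omega_V^1(U)$. For the gluing axiom, suppose we are given $\alpha_i \in \Omega_V^1(U_i)$ with $\alpha_i|_{U_i \cap U_j} = \alpha_j|_{U_i \cap U_j}$ for all $i, j$. Since $\Omega^1(\cdot)$ is a sheaf, these glue to a unique $\alpha \in \Omega^1(U)$ with $\alpha|_{U_i} = \alpha_i$.

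The only thing left to check — and the step worth actually writing — is that the glued form $\alpha$ lies in the subsheaf $\Omega_V^1(U)$. This is where the pointwise nature of the condition is essential: for any $z \in U \cap Z$, pick some $i$ with $z \in U_i$; then for any $v \in V_z$ we have $\alpha_z(v) = (\alpha_i)_z(v) = 0$ because $\alpha_i \in \Omega_V^1(U_i)$. Hence $\alpha \in \Omega_V^1(U)$, which completes the gluing axiom. I do not anticipate any genuine obstacle; the proof is entirely formal, and its content is simply the observation that any subpresheaf of a sheaf cut out by a pointwise linear condition on a fixed subbundle is automatically a subsheaf and inherits the $C^{\infty}(M)$-module structure.
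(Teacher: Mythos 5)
Your proof is correct and follows essentially the same route as the paper: direct verification of the separation and gluing axioms, with the key point being that the defining condition $\alpha|_V = 0$ is pointwise and linear. The only difference is in how the glued section is produced — you invoke the sheaf property of $\Omega^1(\cdot)$ and then check the vanishing condition chartwise, whereas the paper builds the glued form explicitly via a partition of unity $\sum_i \psi_i \omega_i$; your version is slightly cleaner since it sidesteps the (unverified in the paper) step that the partition-of-unity sum actually restricts to $\omega_j$ on each $U_j$.
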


\begin{proof} \mbox{ } \newline
$\Omega_V^1(\cdot)$ is a sub-presheaf of $\Omega^1(M)$, hence we need only check the conditions that ensure it's a sheaf.

\begin{itemize}
\item If $U\subset M$ is open, $\{U_i\}$ is a cover of $U$, and we have $\omega \in \Omega_V^1(U)$ such that $\omega\big\vert_{U_i}=0$ for all $i$, then $\omega=0$ since $\omega$ evaluated at any point $p\in U$ must be zero.
\item Using the same cover, if we are given $\omega_i \in \Omega_V^1(U_i)$ for each $i$ so that $\omega_i\big\vert_{U_i\cap U_j} = \omega_j\big\vert_{U_i\cap U_j}$, then we may choose a partition of unity $\{\psi_i\}$ subordinate to $\{U_i\}$, refining the cover to guarantee local finiteness if necessary, and define $\omega= \sum_i \psi_i \omega_i$.  If $p\in Z\cap U$, then for each $v\in V_p$ we have:

    \begin{displaymath}
    \omega_p(v) = \sum_i (\psi_i\omega_i)(v) = 0
    \end{displaymath}
    since either $\psi_i(p)=0$ or $(\omega_i)_p(v)=0$.  Therefore, the $\omega_i's$ glue together to give a unique section $\omega \in \Omega_V^1(U)$.
\end{itemize}
\end{proof}

\begin{lemma}\label{lem:Vcotangent2}
Let $M$ be a manifold without boundary, $Z\subset M$ a closed hypersurface, and $V$ a rank $1$ vector subbundle of $i_Z^*TM$ transverse to $TZ$.  Let $\Omega_V^1(\cdot)$ be the corresponding sheaf of $1$-forms vanishing on $V$ along $Z$.  Then $\Omega_V^1(\cdot)$ is a locally free sheaf of $C^{\infty}(M)$ modules.  Furthermore, for each $z\in Z$ there exists a neighborhood $U$ of $z$ and coordinates $(x_1,\dots, x_{n-1},t)$ on $U$ so that $\Omega_V^1(U)$ is generated by $\{dx_1,\dots, dx_{n-1}, tdt\}$.
\end{lemma}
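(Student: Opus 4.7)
The plan is to reduce everything to a local normal-form computation. Away from $Z$ the sheaf $\Omega^1_V$ coincides with $\Omega^1$, which is locally free of rank $n$ in any coordinate chart, so only points $z\in Z$ require work. At such a point, the goal is to produce coordinates in which $Z$ is the zero locus of the last coordinate and $V$ is aligned with $\partial/\partial t$; then the generating set $\{dx_1,\dots,dx_{n-1},t\,dt\}$ will fall out of Hadamard's lemma (the same lemma \ref{lem:had} invoked in the proof of corollary \ref{cor:symplectize}).

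First I would build the adapted coordinates. Fix $z\in Z$ and choose coordinates $(x_1,\dots,x_{n-1})$ on a neighborhood of $z$ in $Z$. Because $V\to Z$ is a trivial line bundle on a small enough neighborhood, pick a nonvanishing local section $X\in \Gamma(V)$ defined near $z$, and extend it (using a bump function and the local product structure of a tubular neighborhood of $Z$) to a smooth vector field $\tilde X$ on an open set $U\ni z$ in $M$ that is still transverse to $Z$. Since $\tilde X$ is a nonvanishing vector field, the straightening theorem produces coordinates $(x_1,\dots,x_{n-1},t)$ on a possibly smaller neighborhood $U$ of $z$ in which $Z\cap U=\{t=0\}$ and $\tilde X=\partial/\partial t$. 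In particular $V_{(x,0)}=\operatorname{span}\{\partial/\partial t|_{(x,0)}\}$ for all $x\in Z\cap U$.

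Next I would identify the sections. Any $\omega\in \Omega^1(U)$ can be written uniquely as
\[
\omega=\sum_{i=1}^{n-1}f_i(x,t)\,dx_i+g(x,t)\,dt.
\]
The condition $\omega\in \Omega^1_V(U)$ says $\omega_z(\partial/\partial t)=0$ for every $z\in Z\cap U$, which is the single equation $g(x,0)=0$. By Hadamard's lemma there exists a smooth $h\in C^\infty(U)$ with $g(x,t)=t\,h(x,t)$, and then
\[
\omega=\sum_{i=1}^{n-1}f_i\,dx_i+h\cdot(t\,dt),
\]
so $\{dx_1,\dots,dx_{n-1},t\,dt\}$ generates $\Omega^1_V(U)$ as a $C^\infty(U)$-module. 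For freeness, suppose $\sum_i a_i\,dx_i+b\cdot(t\,dt)=0$ in $\Omega^1_V(U)$ with $a_i,b\in C^\infty(U)$. Pairing with $\partial/\partial x_j$ gives $a_j\equiv 0$, and the remaining identity $b\,t\,dt=0$ forces $b(x,t)\,t=0$ pointwise; since $\{t\ne 0\}$ is dense in $U$ and $b$ is smooth, $b\equiv 0$. Thus the generators form a $C^\infty(U)$-basis, proving local freeness (of rank $n$) at every point of $Z$.

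The main technical obstacle is the coordinate construction: one must leverage transversality of $V$ to $TZ$ to extend a local frame of $V\to Z$ to a vector field on $M$ that is everywhere transverse to $Z$ and can be straightened, so that both the hypersurface $Z$ and the line field $V$ acquire their standard models simultaneously. Once the coordinates $(x_1,\dots,x_{n-1},t)$ are in place, the remaining steps are the standard Hadamard factorization and a density argument, and local freeness of $\Omega^1_V$ on $M\setminus Z$ is immediate from ordinary differential calculus.
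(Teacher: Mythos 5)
Your proposal is correct and follows essentially the same route as the paper: extend a nonvanishing local section of $V$ to a vector field transverse to $Z$, use its flow to produce coordinates $(x_1,\dots,x_{n-1},t)$ with $Z=\{t=0\}$ and $V=\operatorname{span}(\partial/\partial t)$ along $Z$, and read off the generators. You additionally spell out the Hadamard factorization and the freeness check, which the paper asserts without detail; these additions are correct.
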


\begin{proof}
Let $p\in M$ be a point.  If $p\in M\setminus Z$ then there exists a neighborhood $U$ of $p$ such that $U\cap Z = \emptyset$ and an isomorphism $\phi:U\to V$ onto a neighborhood $V$ of the origin in $\R^n$, where $n=\dim(M)$.  $\Omega_V^1(U)$ is then generated by the pullbacks of $dx_1,\dots,dx_n$ under $\phi$.

If $p\in Z$, then we may choose a local, nonvanishing section $w$ of $V$ and extend it to a nonvanishing vector field $\tilde{w}$ in a neighborhood $U$ of $p$.  Furthermore, we may shrink $U$ so that the intersection $U\cap Z$ has coordinates defined on it.  If we shrink $U$ sufficiently and require that its closure is compact, then the flow $\tilde{\phi}$ of $\tilde{w}$ gives us an isomorphism:

\begin{displaymath}
\phi: U\cap Z \times (-\epsilon, \epsilon) \to U
\end{displaymath}
for some $\epsilon > 0$, defined by $\phi(z,t)= \tilde{\phi}(z,t)$.  Here, we are using the fact that $V$ is transverse to $TZ$ and is thus identifiable with the normal bundle $\nu(Z)$ to $Z$.  Furthermore, $d\phi_{(z,0)}(\frac{\partial}{\partial t})$ is a nonzero element of $V_z$, hence the fibers of $V$ are identified with the span of $\frac{\partial}{\partial t}$ along $(U\cap Z)\times \{0\}$.  Choosing coordinates $x_1,\dots, x_{n-1}$ on $U\cap Z$, we have that $\Omega^1_V(U)$ is spanned by the pullbacks of $\{dx_1,\dots dx_{n-1},tdt\}$.
\end{proof}

\begin{cor}\label{cor:Vcotangent}
Let $M$ be a manifold without boundary, $Z\subset M$ a closed hypersurface, and $V\to Z$ a rank $1$ subbundle of $i_Z^*TM$ transverse to $TZ$.  Then there exists a vector bundle $T_V^*M \to M$ whose rank is $\dim(M)$ and whose space of global sections $\Gamma(T_V^*M)$ is isomorphic to $\Omega^1_V(M)$.  Furthermore, the inclusion $i:\Omega^1_V(M) \to \Omega^1(M)$ induces a map $i:T_V^*M \to T^*M$.
\end{cor}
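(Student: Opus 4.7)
The plan is to build $T_V^*M$ by hand from local frames produced by Lemma \ref{lem:Vcotangent2} and glue them together with smoothly varying transition matrices, much like the standard construction of the cotangent bundle but with the modified local generators $\{dx_1,\dots,dx_{n-1},t\,dt\}$ near points of $Z$. The main technical content will be verifying that the change-of-frame coefficients are smooth on overlaps; this is the one place the transversality hypothesis on $V$ is really used, since it is what guarantees that the distinguished generator $t\,dt$ near $Z$ behaves well under coordinate changes adapted to $V$.

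First I would produce an open cover $\{U_\alpha\}$ of $M$ such that on each $U_\alpha$ Lemma \ref{lem:Vcotangent2} supplies a basis $\mathcal{F}_\alpha = \{\omega_\alpha^1,\dots,\omega_\alpha^n\}$ of $\Omega_V^1(U_\alpha)$ as a free $C^\infty(U_\alpha)$-module of rank $n = \dim M$: either the usual $\{dx_1,\dots,dx_n\}$ when $U_\alpha\cap Z = \emptyset$, or $\{dx_1,\dots,dx_{n-1},t\,dt\}$ when $U_\alpha$ meets $Z$ in the coordinate slice $\{t=0\}$ with $\partial_t$ framing $V$. Define $T_V^*M\big|_{U_\alpha} := U_\alpha \times \R^n$ and, for an overlap $U_\alpha\cap U_\beta$, define the transition matrix $g_{\alpha\beta}:U_\alpha\cap U_\beta \to GL(n,\R)$ by expanding $\omega_\alpha^i = \sum_j (g_{\alpha\beta})_{ij}\,\omega_\beta^j$, where the coefficients are the unique smooth functions extracted from the local freeness statement. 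The cocycle condition $g_{\alpha\gamma} = g_{\alpha\beta}g_{\beta\gamma}$ is automatic from the uniqueness of the expansion of a section in any given local basis.

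The only genuine step is to check that the $(g_{\alpha\beta})_{ij}$ are smooth and invertible on all of $U_\alpha \cap U_\beta$, including at points of $Z$. Away from $Z$ this is the usual change-of-coordinates computation for $T^*M$. At a point $z \in Z\cap U_\alpha \cap U_\beta$, both frames contain $n-1$ generators pulled back from hypersurface coordinates plus one distinguished generator of the form $t\,dt$ adapted to $V$; expanding each generator of $\mathcal{F}_\alpha$ in terms of $\mathcal{F}_\beta$ produces coefficients that are a priori defined only generically, but the transversality of $V$ to $TZ$ forces the distinguished generators $t_\alpha\,dt_\alpha$ and $t_\beta\,dt_\beta$ to differ by a smooth nonvanishing factor (the derivative of the change-of-defining-function for $Z$), so all coefficients extend smoothly across $\{t=0\}$ and $g_{\alpha\beta}$ lands in $GL(n,\R)$. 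This is where I expect the main obstacle to sit, and it is handled by a direct coordinate computation using that near $Z$ we have $t_\alpha = u\,t_\beta$ for a smooth nowhere-zero $u$, whence $t_\alpha\,dt_\alpha = u^2\,t_\beta\,dt_\beta + u\,t_\beta^2\,du$ which is still a smooth combination of $\mathcal{F}_\beta$ since $t_\beta^2\,du = t_\beta(t_\beta\,du)$ lies in the span of $\mathcal{F}_\beta$ as a smooth $C^\infty$-multiple.

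Having assembled $T_V^*M$, I would finish by identifying $\Gamma(T_V^*M)$ with $\Omega_V^1(M)$: the local tautological isomorphism $\Gamma(U_\alpha\times\R^n) \simeq \Omega_V^1(U_\alpha)$ sending a smooth tuple $(f_1,\dots,f_n)$ to $\sum f_i\,\omega_\alpha^i$ is compatible with the transition data by construction, and the sheaf axioms of $\Omega_V^1(\cdot)$ proved in Lemma \ref{lem:Vcotangent1} guarantee that these local isomorphisms glue to a global isomorphism of sheaves. Finally, the inclusion of sheaves $\Omega_V^1(\cdot)\hookrightarrow \Omega^1(\cdot)$ is a morphism of locally free $C^\infty(M)$-modules of the same rank, hence induces a bundle map $i:T_V^*M \to T^*M$; concretely, $i$ sends a local frame $\omega_\alpha^i$ to itself viewed as an ordinary $1$-form, and in the distinguished near-$Z$ frame $i$ sends $(f_1,\dots,f_{n-1},f_n)$ to $\sum_{j<n} f_j\,dx_j + f_n\,t\,dt$, which makes it clear that $i$ is a fiberwise isomorphism off $Z$ and drops rank by one along $Z$ (the image of $V$'s annihilator becomes the full cotangent space, whereas the fiber of $T_V^*M$ contains the extra generator $t\,dt$ that vanishes at $t=0$).
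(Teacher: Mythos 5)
Your proof is correct, but it takes a more hands-on route than the paper. The paper's own proof is essentially one line: since Lemma \ref{lem:Vcotangent2} shows $\Omega^1_V(\cdot)$ is a locally free sheaf of $C^{\infty}(M)$-modules, the (non-compact) Serre--Swan equivalence between such sheaves and vector bundles over $M$ immediately produces $T_V^*M$, and functoriality of that equivalence applied to the inclusion of sheaves gives the map $i:T_V^*M \to T^*M$. You instead build the bundle explicitly from the local frames $\{dx_1,\dots,dx_{n-1},t\,dt\}$ and check the cocycle by hand; the computation $t_\alpha\,dt_\alpha = u^2\,t_\beta\,dt_\beta + u\,t_\beta^2\,du$ (with $t_\alpha = u\,t_\beta$, $u$ nonvanishing, from Hadamard's lemma applied to two defining functions of $Z$) correctly shows the distinguished generator transforms smoothly, and the same vanishing argument ($dx_j(\partial_{t_\beta})|_Z = 0$ because $dx_j$ kills $V$ along $Z$) handles the remaining frame elements, with invertibility at $Z$ following from the block-triangular shape of the transition matrix. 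What your approach buys is that it makes visible exactly where transversality of $V$ to $TZ$ enters and what the transition functions of $T_V^*M$ actually are; what the paper's approach buys is brevity and the automatic uniqueness of $T_V^*M$ up to isomorphism, since all of your smoothness checks are absorbed into the statement that $\Omega^1_V(\cdot)$ is locally free. Either argument is acceptable here.
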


\begin{proof} \mbox{ }\newline
The category of locally free sheaves of $C^{\infty}(M)$ modules on $M$ and the category of vector bundles over $M$ are equivalent by the non-compact version of the Serre-Swan theorem.  For a discussion of this equivalence, see \cite{R}.
\end{proof}

\begin{remark}
The bundle $T_V^*M$ is defined up to isomorphism, hence once we have fixed a representative $T_V^*M$ the map $i:T_V^*M \to T^*M$ is unique up to a precomposition $i\circ \Phi$ with an automorphism $\Phi:T_V^*M \to T_V^*M$.  In coordinates around $p\in Z$, we may frame $T_V^*M$ by $\{dx_1,\dots,dx_{n-1},tdt\}$ (q.v. lemma \ref{lem:Vcotangent2}) and the map $i:T_V^*M\to T^*M$ is:

\begin{displaymath}
i(x_1,\dots,x_{n-1},t,p_1,\dots,p_{n-1},p_n) = (x_1,\dots,x_{n-1},t,p_1,\dots,p_{n-1},tp_n)
\end{displaymath}
Note that this map is \emph{not} a map with fold singularities: while its determinant vanishes transversally, its kernel $\displaystyle \frac{\partial}{\partial p_n}$ is always tangent to the degenerate hypersurface, $\{t=0\}$.
\end{remark}

\subsubsection{Uniqueness up to Isomorphism}
The isomorphism class of $T_V^*M$ is independent of $V$.  Indeed, we have the following lemma:

\begin{lemma}\label{lem:Vcotangent3}
Let $M$ be a manifold without boundary and let $Z\subset M$ be a closed hypersurface.  Suppose we have chosen two vector subbundles $V_1$, $V_2$ of $i_Z^*TM$ satisfying $V_i \pitchfork TZ$ for $i=1,2$.  There exists an isomorphism $\Phi:TM \to TM$ inducing an isomorphism of locally free sheaves $\Phi^*:\Omega_{V_2}^1(\cdot) \to \Omega_{V_1}^1(\cdot)$.
\end{lemma}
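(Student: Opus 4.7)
The plan is to construct a vector bundle automorphism $\Phi:TM\to TM$ over $\mathrm{id}_M$ satisfying $\Phi(V_1)=V_2$ pointwise on $Z$; the claimed sheaf isomorphism then follows immediately from the definition $(\Phi^*\alpha)(X):=\alpha(\Phi X)$, since $\alpha|_{V_2}=0$ forces $(\Phi^*\alpha)|_{V_1}=\alpha|_{\Phi(V_1)}=\alpha|_{V_2}=0$, and $\Phi^{-1}$ (which sends $V_2$ back to $V_1$) supplies the inverse map $\Omega^1_{V_1}(\cdot)\to\Omega^1_{V_2}(\cdot)$. Compatibility with restrictions to opens is automatic from the locality of $\alpha\mapsto \alpha\circ\Phi$.

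First I would build an automorphism $\Phi_Z$ of $i_Z^*TM$ with the required behaviour on the subbundles. Since $V_1,V_2$ are each transverse to $TZ$ inside $i_Z^*TM$, they give splittings $i_Z^*TM=TZ\oplus V_i$ and corresponding projections $p_i:i_Z^*TM\to V_i$ with kernel $TZ$. Using the splitting induced by $V_1$, define $\Phi_Z:=\mathrm{id}$ on $TZ$ and $\Phi_Z:=p_2|_{V_1}$ on $V_1$. Because $V_1\cap TZ=0$, the restriction $p_2|_{V_1}:V_1\to V_2$ is a fibrewise isomorphism, so $\Phi_Z\in\mathrm{Aut}(i_Z^*TM)$. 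A one-line computation in a local adapted basis $\{e_1,\ldots,e_{n-1},v_1\}$ with $e_i\in TZ$, $v_1\in V_1$ shows the matrix of $\Phi_Z$ is upper triangular with ones on the diagonal, so $\det \Phi_Z\equiv 1$.

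Second I would extend $\Phi_Z$ to all of $M$ in two stages. By the tubular neighborhood theorem fix a retraction $\pi:U\to Z$ from an open neighborhood $U$ of $Z$ in $M$, and choose any linear connection on $TM$ so that parallel transport along fibres of $\pi$ produces an identification $TM|_U\cong \pi^*(i_Z^*TM)$. The pullback $\tilde\Phi:=\pi^*\Phi_Z$ is then a bundle automorphism of $TM|_U$ restricting to $\Phi_Z$ over $Z$. Since $\det\tilde\Phi\equiv 1$, shrinking $U$ if necessary one may write $\tilde\Phi=\exp(A)$ for a smooth section $A\in\Gamma(\mathrm{End}(TM|_U))$ (a local logarithm exists because $\tilde\Phi$ is close to the identity on a small enough tubular neighborhood, as $\Phi_Z|_{TZ}=\mathrm{id}$). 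Choosing a cutoff $\rho:M\to[0,1]$ equal to $1$ near $Z$ and supported in $U$, define
\[
\Phi|_p := \exp\!\bigl(\rho(p)\,A_p\bigr) \quad \text{on } U, \qquad \Phi|_p := \mathrm{id}_{T_pM} \quad \text{on } M\setminus U.
\]
This is a smooth global bundle automorphism with $\Phi|_Z=\Phi_Z$, and in particular $\Phi(V_1)=V_2$ on $Z$, completing the construction.

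The main obstacle is the gluing step: the naive convex combination $\rho\tilde\Phi+(1-\rho)\mathrm{id}$ can easily exit the invertible locus. The computation $\det\Phi_Z\equiv 1$ is the technical lever that rescues the argument, for it places $\tilde\Phi$ fibrewise in $\mathrm{GL}^+$ and makes the passage to logarithms legitimate, keeping the interpolation inside $\mathrm{Aut}(TM|_U)$. Everything else---the fibrewise construction on $Z$, the tubular extension, and the sheaf-theoretic consequence---is then formal.
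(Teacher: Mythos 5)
Your overall architecture is sound and is close to the paper's: the paper likewise works in a tubular neighborhood $U$ of $Z$, extends $TZ$ to a subbundle $E\subset TU$ and $V_1,V_2$ to line subbundles $\tilde V_1,\tilde V_2$, takes the shear $\Phi_1=\mathrm{id}_E\oplus(p_2\circ i_1)$, and glues it to the identity with a partition of unity subordinate to $\{U,\,M\setminus Z\}$; your fibrewise $\Phi_Z$ on $i_Z^*TM$ plus the parallel-transport identification $TM|_U\cong\pi^*(i_Z^*TM)$ is a perfectly good substitute for that extension step, and your formal deduction of the sheaf isomorphism (with the direction $\Omega^1_{V_2}\to\Omega^1_{V_1}$, which in fact matches the statement better than the paper's own wording) is fine.

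The genuine flaw is the justification of the gluing step. You claim a logarithm of $\tilde\Phi$ exists ``because $\tilde\Phi$ is close to the identity on a small enough tubular neighborhood,'' but shrinking $U$ does not make $\tilde\Phi$ close to $\mathrm{id}$: over $Z$ itself $\tilde\Phi|_Z=\Phi_Z$, which is a fixed shear sending $V_1$ to $V_2$ and is not close to the identity whenever $V_1\neq V_2$. Nor does the fallback ``$\det\equiv 1$, hence fibrewise in $\mathrm{GL}^+$, hence logarithms are legitimate'' work as a general principle: a real matrix of positive determinant need not admit a real logarithm (e.g. $\mathrm{diag}(-2,-1/2)$), and even pointwise existence would not by itself give a smooth section $A$. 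Fortunately your own first-paragraph computation supplies the correct lever: $\Phi_Z-\mathrm{id}$ kills $TZ$ and takes values in $TZ$, so it is nilpotent with square zero; hence $N:=\tilde\Phi-\mathrm{id}$ is fibrewise nilpotent on all of $U$ (it is conjugate to $\pi^*(\Phi_Z-\mathrm{id})$), $A:=N$ is an exact smooth logarithm ($\exp(N)=\mathrm{id}+N=\tilde\Phi$), and $\exp(\rho A)=\mathrm{id}+\rho N$ is automatically invertible, so your construction goes through with no shrinking at all. The same observation shows that the ``naive'' convex combination $\rho\,\tilde\Phi+(1-\rho)\,\mathrm{id}=\mathrm{id}+\rho N$ never leaves the invertible locus, which is precisely why the paper's partition-of-unity gluing of $\mathrm{id}_E\oplus(p_2\circ i_1)$ with $\mathrm{id}$ is legitimate; so the obstacle you flagged is not actually present for this particular unipotent map, and either gluing, once justified by nilpotency rather than by closeness to the identity or by $\det=1$, completes the proof.
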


\begin{proof}
We produce an isomorphism $\Phi_1:TU \to TU$ in a neighborhood $U$ of $Z$ that induces an isomorphism $\Phi\big\vert_Z:V_1 \to V_2$ and then glue it to the identity map $id:TM \to TM$ using a partition of unity.  For the first step, we'll need to extend the bundles $V_1$ and $V_2$ to neighborhoods of $Z$.

\begin{enumerate}
\item For each $i\in \{1,2\}$, $V_i$ is transverse to $TZ$, hence the tubular neighborhood theorem gives us a neighborhood $U$ of $Z$, neighborhoods $W_i\subseteq V_i$ of the zero sections, and isomorphisms $\phi_i:W_i \to U$ satisfying:

    \begin{itemize}
    \item $\phi_i(z,0)=z$ and
    \item $d\phi_{(z,0)}(e_i) \in (V_i)_z$ for all $e_i$ tangent to the fiber of $V_i$ at $(z,0)$.
    \end{itemize}

    This isomorphism allows us to extend the bundles $V_i$ to the neighborhood $U$ as follows.  If $\pi_i:V_i \to Z$ is the projection, then we define the extension of $V_i$ to be $d\phi_i(\ker(d\pi_i))$, where we restrict $\ker(d\pi_i)$ to $W_i$.

\item Thus, we have a neighborhood $U$ of $Z$ and extensions $\tilde{V}_i$ of $V_i$ to vector subbundles of $TU$.  Using the same technique as in step $1$, we may extend $TZ \to Z$ to a vector subbundle of $TU$ (this requires a choice of connection).  Let us call this extension $E$.  Now, $E$ is complementary to $\tilde{V}_i$ at points of $Z$ for each $i$, hence we may shrink $U$ so that $E$ is complementary to both $\tilde{V}_i's$ on $U$, giving us two splittings:
    \begin{itemize}
    \item $TU \simeq E \oplus \tilde{V}_1$ with projection $p_1:TU \to \tilde{V}_1$ and inclusion $i_1:\tilde{V}_1 \to E\oplus \tilde{V}_1$,
    \item $TU \simeq E \oplus \tilde{V}_2$ with projection $p_2:TU \to \tilde{V}_2$ and inclusion $i_2:\tilde{V}_2 \to E \oplus \tilde{V}_2$.
    \end{itemize}
    We may use these maps to define a an isomorphism $\phi_1:\tilde{V}_1 \to \tilde{V}_2$ given by $p_2 \circ i_1$.  To see this, note that at each point $p\in U$, $p_2(i_1(v))= 0$ if and only if $i_1(v)\in E_p$ if and only if $v=0$, hence it is an injective map of line bundles and is therefore an isomorphism.
\item Now, define $\Phi_1:TU \to TU$ by $\Phi_1 = Id_E \oplus \phi_1$.
\item We finish the construction by choosing a partition of unity $\psi_1, \psi_2$ subordinate to the cover $\{U, M\setminus Z\}$ and defining $\Phi:TM \to TM$ by $\Phi= \psi_1(\Phi_1) + \psi_2 Id$.  $\Phi$ is an isomorphism of vector bundles since it is an isomorphism over $M\setminus U$ and on $U$ we have:
    \begin{itemize}
    \item $\Phi\vert_E = Id_E$ and
    \item $\Phi\vert_{V_1} = \phi_1$
    \end{itemize}
    hence $\Phi\vert_U$ is an isomorphism of $TU$.
\end{enumerate}
Now, note that along $Z$, $\Phi:V_1 \to V_2$ is an isomorphism.  Therefore, $\Phi^*:T^*M \to T^*M$ induces a map of sections $\Phi^*:\Omega^1(M) \to \Omega^1(M)$ which restricts to a map of $C^{\infty}(M)$-modules $\Phi^*: \Omega_{V_1}^1(M) \to \Omega_{V_2}^1(M)$.  Since $\Phi$ is invertible, $\Phi^*$ is invertible and is therefore an isomorphism of modules.  Restricting $\Phi^*$ to open subsets of $M$ gives the requisite isomorphism of sheaves.
\end{proof}

\begin{cor}\label{cor:Vcotangent1}
Let $M$ be a manifold without boundary, let $Z\subset M$ be a closed hypersurface, and let $V_1$, $V_2$ be two rank $1$ vector subbundles of $i_Z^*TM$ transverse to $TZ$.  Then there exists an isomorphism $\Phi: T_{V_2}^*M \to T_{V_1}^*M$.
\end{cor}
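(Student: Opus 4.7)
The proof plan is essentially a one-step application of Lemma \ref{lem:Vcotangent3} combined with the Serre--Swan correspondence that was already invoked to construct the bundles $T_V^*M$ in Corollary \ref{cor:Vcotangent}. Specifically, I would first invoke Lemma \ref{lem:Vcotangent3} to produce a vector bundle automorphism $\Phi \colon TM \to TM$ whose transpose sends $\Omega^1_{V_1}(\cdot)$-sections isomorphically onto $\Omega^1_{V_2}(\cdot)$-sections as a map of locally free sheaves of $C^\infty(M)$-modules.

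Next I would appeal to the (non-compact) Serre--Swan theorem, which is already the tool being used in Corollary \ref{cor:Vcotangent} to define $T_V^*M$ from $\Omega^1_V(\cdot)$. This equivalence of categories is not just an object-level correspondence: it turns isomorphisms of locally free $C^\infty(M)$-modules into isomorphisms of the associated vector bundles. So the sheaf isomorphism $\Phi^* \colon \Omega^1_{V_2}(\cdot) \to \Omega^1_{V_1}(\cdot)$ from Lemma \ref{lem:Vcotangent3} corresponds under Serre--Swan to an isomorphism of vector bundles
\begin{displaymath}
\tilde{\Phi} \colon T_{V_2}^*M \to T_{V_1}^*M,
\end{displaymath}
which is exactly what the corollary asserts.

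Because the entire argument passes through the equivalence of categories, there is nothing substantial to verify beyond noting that the sheaf map constructed in Lemma \ref{lem:Vcotangent3} is indeed an isomorphism (not merely a morphism) of $C^\infty(M)$-modules; this is immediate since $\Phi$ is an automorphism of $TM$, hence $\Phi^*$ has inverse $(\Phi^{-1})^*$, and both preserve the submodules of forms vanishing on $V_1$ and $V_2$ respectively by the construction of $\Phi$. There is no real obstacle here: the work has already been done in Lemma \ref{lem:Vcotangent3}, and the corollary is simply the dualization of that statement under Serre--Swan. If anything, the only point worth spelling out in the written proof is that the map $\tilde{\Phi}$ can be given concretely in local coordinates $(x_1, \ldots, x_{n-1}, t)$ adapted to $Z$ (as in Lemma \ref{lem:Vcotangent2}) using the local frames $\{dx_1, \ldots, dx_{n-1}, t\, dt\}$, to emphasize that the isomorphism respects the distinguished generators near $Z$.
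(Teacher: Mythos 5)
Your proof is correct and is essentially identical to the paper's: both invoke Lemma \ref{lem:Vcotangent3} to obtain the isomorphism of locally free sheaves $\Omega^1_{V_2}(\cdot) \to \Omega^1_{V_1}(\cdot)$ and then pass through the (non-compact) Serre--Swan equivalence to get the bundle isomorphism $T_{V_2}^*M \to T_{V_1}^*M$. The only remark is that the direction of the induced sheaf map is immaterial since it is invertible, so your brief wobble on that point costs nothing.
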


\begin{proof} \mbox{ } \newline
By lemma \ref{lem:Vcotangent3}, there exists an isomorphism of locally free sheaves of $C^{\infty}(M)$ modules $\Phi^*:\Omega_{V_2}^1(\cdot) \to \Omega_{V_1}^1(\cdot)$.  Since the category of locally free sheaves of $C^{\infty}(M)$ modules and the category of vector bundles over $M$ are equivalent (q.v. \cite{R}), this map induces an isomorphism between any choice of representatives $T_{V_1}^*M$ and $T_{V_2}^*M$ of the vector bundles isomorphism classes assocatiated to $\Omega_{V_1}^1(M)$ and $\Omega_{V_2}^1(M)$, respectively.
\end{proof}

\subsubsection{Folded Structures on $T_V^*M$}
\begin{lemma}\label{lem:Vcotangent4}
Let $M$ be a manifold without boundary, let $Z\subset M$ be a closed hypersurface, and let $V$ be a rank $1$ subbundle of $i_Z^*TM$ transvserse to $TZ$.  Let $\pi_V:T_V^*M\to M$ be a representative of the isomorphism class of vector bundles associated to $\Omega_V^1(M)$ and let $i:T_V^*M \to T^*M$ be the map induced by the inclusion $\Omega_V^1(M) \to \Omega^1(M)$.  Let $\omega_{T^*M}$ be the canonical symplectic structure on $T^*M$.  Then $i^*\omega_{T^*M}$ is a folded-symplectic form with folding hypersurface $\pi_V^{-1}(Z)$.
\end{lemma}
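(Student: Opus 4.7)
The plan is to reduce the verification to an explicit local coordinate computation, leveraging the local frame description of $\Omega_V^1$ from lemma \ref{lem:Vcotangent2}. Away from $\pi_V^{-1}(Z)$, the sheaf $\Omega_V^1(\cdot)$ coincides with $\Omega^1(\cdot)$, so $i$ is a local diffeomorphism onto its image and $i^*\omega_{T^*M}$ is symplectic there, which trivially satisfies the folded-symplectic conditions. All of the work therefore happens near points of $\pi_V^{-1}(Z)$.

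Fix a point $p \in Z$ and choose coordinates $(x_1,\dots,x_{n-1},t)$ on a neighborhood $U$ of $p$ as furnished by lemma \ref{lem:Vcotangent2}, so that $Z = \{t=0\}$ and $\Omega_V^1(U)$ is freely generated by $\{dx_1,\dots,dx_{n-1},\,t\,dt\}$. These generators trivialize $T_V^*M$ over $U$, yielding coordinates $(x_1,\dots,x_{n-1},t,p_1,\dots,p_{n-1},p_n)$ in which a point represents the covector $\sum p_i\, dx_i + p_n(t\,dt) \in \Omega_V^1$. Under the inclusion $i:T_V^*M \to T^*M$, this covector is sent to $\sum p_i\,dx_i + (tp_n)\,dt$, so in the standard Darboux coordinates $(x_1,\dots,x_{n-1},t,q_1,\dots,q_{n-1},q_n)$ on $T^*M$, the map reads
\[
i(x_1,\dots,x_{n-1},t,p_1,\dots,p_{n-1},p_n) = (x_1,\dots,x_{n-1},t,p_1,\dots,p_{n-1},t p_n).
\]

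Pulling back $\omega_{T^*M}=\sum_{j=1}^{n-1} dx_j\wedge dq_j + dt\wedge dq_n$ and using $d(tp_n) = p_n\,dt + t\,dp_n$ and $dt\wedge dt = 0$ gives the clean expression
\[
i^*\omega_{T^*M} = \sum_{j=1}^{n-1} dx_j \wedge dp_j + t\, dt\wedge dp_n.
\]
Closedness is immediate, as this is a sum of exact forms. For the top power, all cross terms involving repeated differentials die and one finds
\[
(i^*\omega_{T^*M})^n = \pm\, n!\, t\, dx_1\wedge\cdots\wedge dx_{n-1}\wedge dt\wedge dp_1\wedge\cdots\wedge dp_n,
\]
which vanishes precisely on $\{t=0\} = \pi_V^{-1}(Z)\cap \pi_V^{-1}(U)$, and does so transversally since the coefficient is just $t$ times a nonvanishing volume form (and there are no corners in play). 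Finally, the pullback to the hypersurface $\{t=0\}$ is $\sum_{j=1}^{n-1} dx_j\wedge dp_j$, whose kernel along $\pi_V^{-1}(Z)$ is spanned by $\partial/\partial p_n$; this is a $1$-dimensional kernel in a $(2n-1)$-dimensional tangent space, so the restricted form has the maximal rank $2n-2$ required by definition \ref{def:fsform}.

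Since these three conditions are verified on a neighborhood of every point of $\pi_V^{-1}(Z)$ and hold trivially off of $\pi_V^{-1}(Z)$, the form $i^*\omega_{T^*M}$ is folded-symplectic with folding hypersurface exactly $\pi_V^{-1}(Z)$. The only subtlety is that $T_V^*M$ is only defined up to isomorphism and $i$ only up to precomposition with an automorphism of $T_V^*M$, but any such automorphism pulls back the folding hypersurface $\pi_V^{-1}(Z)$ to itself and preserves the class of folded-symplectic forms (any diffeomorphism pulls back a folded-symplectic form to a folded-symplectic form with the corresponding fold), so the statement is independent of the representative chosen. I do not anticipate any essential obstacle; the entire content is the local coordinate computation above.
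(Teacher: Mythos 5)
Your proposal is correct and follows essentially the same route as the paper: away from $Z$ the map $i$ is a local diffeomorphism so the pullback is symplectic, and near $Z$ one uses the local frame $\{dx_1,\dots,dx_{n-1},t\,dt\}$ from lemma \ref{lem:Vcotangent2} to write $i$ in coordinates as $(x,t,p,p_n)\mapsto(x,t,p,tp_n)$ and compute the pullback explicitly, recovering the model fold form of example \ref{ex:fs3}. Your closing remark on independence of the representative of $T_V^*M$ matches the remark the paper places immediately after the lemma.
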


\begin{proof} \mbox{ } \newline
Away from $Z\subset M$, $i$ is an isomorphism of vector bundles, hence it is a local diffeomorphism, and $i^*\omega_{T^*M}$ is nondegenerate.  At a point $p\in Z$, we have seen that we may choose coordinates $x_1,\dots,x_{n-1}, t$ near $p$ so that $V$ is framed by $\frac{\partial}{\partial t}$ along $Z=\{t=0\}$ and $T_V^*(M)$ is framed by $\{dx_1,\dots, dx_{n-1},tdt\}$.  That is, $T_V^*M$ is isomorphic to a trivial vector bundle spanned by these sections.  Furthermore, in these coordinates, the map $i$ is given by:

\begin{displaymath}
i(x_1,\dots,x_{n-1},t,p_1,\dots, p_{n-1},p_n) = (x_1,\dots,x_{n-1},t,p_1,\dots,p_{n-1},tp_n)
\end{displaymath}
In these coordinates, the canonical symplectic form $\omega_{T^*M}$ is $\omega_{T^*M}= (\sum_{i=1}^{n-1}dq_i\wedge dx_i) + dq_n\wedge dt$ and we have:

\begin{displaymath}
i^*\omega_{T^*M}= (\sum_{i=1}^{n-1}dp_i\wedge dx_i) + tdp_n \wedge dt
\end{displaymath}
which is the folded-symplectic form of example \ref{ex:fs3} whose folding hypersurface (in these coordinates) is $\{t=0\}=\pi_V^{-1}(Z)$.  Thus, $i^*\omega_{T^*M}$ is a folded-symplectic form whose folding hypersurface is $\pi_V^{-1}(Z)$.  As a final note, $i^*\omega_{T^*M}$ vanishes on $\frac{\partial}{\partial t}$ when $t=0$, illustrating that, locally, null directions transverse to the folding hypersurface are given by the fibers of $V$.
\end{proof}

\begin{remark}
As a closing remark to the construction, consider the map $i:T_V^*M \to T^*M$.  We know that $T_V^*M$ is unique up to isomorphism, hence the map $i$ is canonical up to a precomposition with an automorphism of $T_V^*M$.  Precomposing with any automorphism $\Phi:T_V^*M\to T_V^*M$ gives us a new map $i\circ \Phi: T_V^*M \to T^*M$ which again induces a folded-symplectic structure $(i\circ \Phi)^*\omega_{T^*M}$.  However, $(\Phi^{-1})^*$ is a folded-symplectomorphism between $(i\circ \Phi)^*\omega_{T^*M}$ and $i^*\omega_{T^*M}$, hence the folded-symplectic structure we have induced on $T_V^*M$ is canonical up to folded-symplectomorphism.
\end{remark}

\subsection{Isotopies Between Fold Forms}
Now that we know where one can find a plethora of examples of folded-symplectic manifolds, let us determine how to construct isomorphisms between folded-symplectic structures.  Our method of choice will be isotopies.  Suppose we are given a folded-symplectic manifold with corners $(M,\sigma)$ and a $1$-form $\beta$.  When can we solve the equation

\begin{equation}\label{eq:Moser1}
i_X\sigma = \beta
\end{equation}
for some smooth vector field $X$ on $M$?  Certainly, if $\sigma$ has a nonempty folding hypersurface $Z$, then one cannot always solve equation \ref{eq:Moser1} for $X$.  Indeed, let $M=\R^2$, $\sigma=x_1dx_1\wedge dx_2$, and $\beta= dx_1$.  Then the solution to equation \ref{eq:Moser1} is $X=\displaystyle \frac{1}{x_1}\frac{\partial}{\partial x_2}$, which is undefined at the fold $Z=\{x_1=0\}$.  The reasons for understanding equation \ref{eq:Moser1} are two-fold:

\begin{enumerate}
\item First, we would like to understand when a closed $2$-form is folded-symplectic in terms of maps of vector bundles, sheaves, or modules.  For example, a closed $2$-form $\omega\in \Omega^2(M)$ is symplectic if and only if the map $\omega^\#(X)=i_X\omega$ from $TM$ to $T^*M$ is an isomorphism of vector bundles.  Equivalently, $\omega$ induces an isomorphism of sheaves $\omega^\#:\operatorname{Vec}(\cdot) \to \Omega^1(\cdot)$ or $C^{\infty}(M)$ modules, $\omega^\#:\operatorname{Vec}(M) \to \Omega^1(M)$.  What is the corresponding condition for folded-symplectic forms?

\item Secondly, and perhaps most importantly, we would like to develop a Moser-type argument to produce folded-symplectomorpisms from families of fold forms, thereby providing a convenient way to construct folded-symplectomorphisms.  We'll use the discussion of $1$ to determine when solutions to Moser's equation (eq. \ref{eq:Moser1}) exist.
\end{enumerate}
\subsubsection{Isotopies and Moser's argument}
We begin by reviewing Moser's argument in the context of symplectic geometry (q.v. \cite{MS} for details).  Given a manifold with corners $M$ and a smooth path of symplectic forms $\omega_t\in \Omega^2(M)$, we would like to produce a family of diffeomorphisms $\phi_t:M \to M$ satisfying:

\begin{equation}\label{eq:Moser2}
\psi_t^*\omega_t=\omega_0
\end{equation}
Generally, this cannot be done.  But, if we assume the derivative is exact:

\begin{equation}\label{eq:Moser3}
\frac{d}{dt}\omega_t=d\beta_t
\end{equation}
where $\beta_t\in \Omega^1(M)$ is a smooth path of $1$-forms, then we have a better chance of producing $\psi_t$ by representing it as a flow of a time-dependent vector field $X_t$ on $M$.  If we assume $\displaystyle \frac{d}{dt}\psi_t = X_t\circ \psi_t$ and $\psi_0=Id_M$, then differentiating equation \ref{eq:Moser3} gives us:

\begin{equation}\label{eq:Moser4}
0=\psi_t^*(\frac{d}{dt}\omega_t + \pounds_{X_t}\omega_t) = \psi_t^*(\dot{\omega}_t + d(i_{X_t}\omega_t))
\end{equation}
Since $\psi_t$ is assumed to be a diffeomorphism, this will be true if and only if:

\begin{equation}\label{eq:Moser5}
\dot{\omega}_t + d(i_{X_t}\omega_t) =0 \mbox{ } \iff \mbox{ } d(i_{X_t}\omega_t) = -d\beta_t
\end{equation}
where we are using the assumption $\dot{\omega}_t = d\beta_t$.  Equation \ref{eq:Moser5} may be solved by solving:

\begin{equation}\label{eq:Moser6}
i_{X_t}\omega_t = -\beta_t
\end{equation}
Since we are assuming $\omega_t$ is symplectic for all $t$, we have a smooth path of vector bundle isomorphisms $\omega_t^\#:TM \to T^*M$ with smooth inverse $\omega_t^{\flat}:T^*M \to TM$.  Equation \ref{eq:Moser6} is then solved by setting $X_t = -\omega_t^{\flat}(\beta_t)$.  If $X_t$ is stratified and $M$ is compact, we may integrate $X_t$ to obtain a path of diffeomorphisms $\psi_t:M\to M$ satisfying $\psi_t^*\omega_t=\omega_0$.

In general, $M$ is not compact and so $X_t$ needn't have a globally defined flow $\psi_t$.  To circumvent this problem, we use the following lemma:

\begin{lemma}\label{lem:Moser1}
Let $M$ be a manifold with corners and suppose $X_t$ is a stratified, time-dependent vector field on $M$ (i.e. it is tangent to the boundary strata at points of $\partial(M)$).  Let $N\subset M$ be a submanifold with corners so that $X_t(p)=0$ for all $p\in N$ and for all $t$.  Then there exists a neighborhood $U$ of $N$ on which the flow $\psi_t$ is defined for all $t\in [0,1]$.  Thus, we have a smooth path of open embeddings $\psi_t:U \to M$ satisfying $\psi_t(n)=n$ for all $n\in N$.
\end{lemma}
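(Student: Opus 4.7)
The plan is to extend the elementary ODE fact---that integral curves starting near a stationary trajectory enjoy an extended lifetime---to the stratified setting of manifolds with corners. First I would form the suspension $\widetilde{X} = \partial_t + X_t$ on $M \times [0,1]$. Since $X_t$ is stratified on $M$, $\widetilde{X}$ is stratified on $M \times [0,1]$ and its integral curves remain inside their stratum. The hypothesis $X_t(p) = 0$ for $p \in N$ says that for every $p \in N$ the constant path $t \mapsto p$ is an integral curve of $X_t$ defined on the entire closed interval $[0,1]$.

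For each $p \in N$ I would then invoke the standard fact that the domain of the flow of a smooth (stratified) vector field is open in $M \times \R$: since $\{p\} \times [0,1]$ sits inside this domain, there is an open neighborhood $V_p$ of $p$ in $M$ and a smooth map $\Psi_p : V_p \times [0,1] \to M$ with $\Psi_p(\cdot, 0) = \mathrm{id}_{V_p}$ and $\partial_t \Psi_p(q,t) = X_t(\Psi_p(q,t))$. This reduces to the familiar boundaryless statement stratum by stratum, using that $\widetilde{X}$ never carries a trajectory out of its stratum; the $C^{\infty}$ dependence on initial data gives smoothness of $\Psi_p$. Setting $U_0 = \bigcup_{p \in N} V_p$ produces an open neighborhood of $N$ on which a smooth family $\psi_t : U_0 \to M$ is defined for all $t \in [0,1]$, with $\psi_0 = \mathrm{id}_{U_0}$ and $\psi_t\vert_N = \mathrm{id}_N$.

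Finally, to upgrade $\psi_t$ to an open embedding, I would run the same argument on the reversed time-dependent field to obtain a backward flow $\psi_{-t}$ defined on an open neighborhood $U_0'$ of $N$, and then shrink to an open neighborhood $U \subset U_0 \cap U_0'$ of $N$ with the property that $\psi_t(U) \subset U_0'$ and $\psi_{-t}(U) \subset U_0$ for every $t \in [0,1]$. This is possible because $\psi_0 = \mathrm{id}$, the joint dependence on $(q,t)$ is continuous, and one can pick $U$ pointwise near each $n \in N$ and take the union. On such a $U$ the identity $\psi_{-t} \circ \psi_t = \mathrm{id}_U$ shows that $\psi_t$ is injective, and since $d(\psi_0)_n = \mathrm{id}$ for $n \in N$, $\psi_t$ is a local diffeomorphism on a neighborhood of $N$, hence an open embedding after one further shrinking of $U$.

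The hard part is not the argument itself but ensuring that smooth dependence on initial conditions and openness of the flow's domain remain valid in the stratified category of manifolds with corners; this is precisely where the hypothesis that $X_t$ is stratified is used, since integral curves then stay within a single stratum and the classical theorems apply on that stratum and propagate to its closure. Everything else---covering $N$ by the $V_p$'s, arranging injectivity via the inverse flow---is a routine packaging of the standard Moser-type construction.
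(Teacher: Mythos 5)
Your proposal is correct and follows essentially the same route as the paper's proof: suspend the time-dependent field to $(X_t,\partial/\partial t)$ on the product with $\R$, observe that the constant curves through $N$ live for all $s\in[0,1]$, use openness of the flow's domain together with compactness of $\{p\}\times[0,1]$ to get neighborhoods $V_p$, and take their union. The paper handles invertibility the same way you do, via $F_{-s}\circ F_s=\mathrm{id}$, so no further comment is needed.
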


\begin{remark}
We impose the condition that $X_t$ is stratified so that its flow defines diffeomorphisms of manifolds with corners.
\end{remark}

\begin{proof}
The trick is to pass to the product $M\times \R$ and consider the vector field $Y=(X_t, \frac{\partial}{\partial t})$.  We claim that its flow will give us the time-dependent flow of $X_t$.  Its flow $F_s$ has the form
\begin{displaymath}
F_s(m,t)=(G_s(m,t), t+s)
\end{displaymath}
and we have:
\begin{displaymath}
F_{-s}(F_s(m,t))=(G_{-s}(G_s(m,t), t+s),t)=(m,t)
\end{displaymath}
Hence, for fixed $s$ and $t$ the map $m \to G_s(m,t)$ is a diffeomorphism with smooth inverse given by $y \to G_{-s}(y, s+t)$.  Furthermore, we have:
\begin{displaymath}
\frac{d}{ds}(G_s(m,0),s)=\frac{d}{ds}F_s(m,0) = (Y \circ F_s)(m,0) = (X_s(G_s(m,0)), \frac{\partial}{\partial t})
\end{displaymath}
hence $\displaystyle \frac{d}{ds}G_s(m,0) = X_s(G_s(m,0))$ and so $m \to G_s(m,0)$ is the flow of $X_s$.  Now, if the flow $F_s$ exists at a point $(m,0)$ for all $s\in [0,1]$, then there exists a neighborhood $V_m$ of $(m,0)$ so that $F_s$ is defined on $U$ for all $t\in[0,1]$ since the flow is defined on an open domain and $(m,0)\times [0,1]$ is compact.  If we define $U_m=V_m\cap (M\times \{0\})$, then $U_m$ defines a neighborhood of $m\in M$ on which the flow $G_s(p,0)$ exists for all time $s\in [0,1]$.  We form such neighborhoods for each $m\in N$ and take the union $U=\bigcup_{m\in N} U_m$.  Then the flow $\psi_s(p):=G_s(p,0)$ is defined for all time $s\in [0,1]$ on $U$.  For each $s$, $\psi_s:U \to M$ is an open embedding into $M$.  Furthermore, since the vector field $X_s$ satisfies $X_s(m)= 0$ for all $m\in N$, $\psi_s(m)=G_s(m,0)=m$ for all $s$, hence $\psi_s\vert_N = Id_N$.
\end{proof}

Now, returning to Moser's argument, let us consider the folded-symplectic setting.  Suppose we have a $2m$-dimensional manifold with corners $M$ and a path $\sigma_t\in \Omega^2(M)$ of folded-symplectic forms satisfying:

\begin{displaymath}
Z=((\sigma_t)^m)^{-1}(\mathcal{O}) \text{ is independent of $t$.}
\end{displaymath}
That is, we assume the folding hypersurface is fixed in time.  Furthermore, let us assume that the kernel bundles $\ker(\sigma_t)\to Z$ have fibers tangent to the faces of $M$ for all $t$.  Following the setup for Moser's argument in the symplectic setting, we assume:

\begin{displaymath}
\frac{d}{dt}\sigma_t = d\beta_t \text{ for a family of smooth $1$-form $\beta_t\in \Omega^1(M)$.}
\end{displaymath}
If we begin with the equation $\psi_t^*\sigma_t = \sigma_0$ and differentiate, then we arrive at the equation:
\begin{equation}\label{eq:Moser7}
-d\beta_t = di_{X_t}\sigma_t
\end{equation}
where $X_t$ is the time-dependent vector field generating $\psi_t$.  It is therefore sufficient to solve:
\begin{equation}\label{eq:Moser8}
-\beta_t=i_{X_t}\sigma_t \text{ (just as in the symplectic case)}
\end{equation}
Thus, we have arrived at the question we originally posed:
\begin{itemize}
\item Given a folded-symplectic form $\sigma$ and a $1$-form $\beta$, can we find a smooth vector field $X$ such that $i_X\sigma=\beta$?
\end{itemize}
This is the subject of the next section.
\subsubsection{Solving $i_X\sigma= \beta$ to Obtain Isotopies}
Suppose $(M,\sigma)$ is a folded-symplectic manifold with corners with folding hypersurface $Z\subset M$.  Let $\ker(\sigma)\to Z$ be the kernel bundle over $Z$ and suppose it is stratified.  That is, suppose its fibers are tangent to the boundary strata of $M$.  Similar to the construction of the folded cotangent bundle $T_V^*M$, we will construct a module of $1$-forms vanishing on $\ker(\sigma)$ at points of $Z$, which will give us a vector bundle $T_{\ker(\sigma)}^*M$.  We will use this vector bundle to characterize folded-symplectic forms on $M$ whose fold is $Z$ and whose kernel bundle is $\ker(\sigma)$.

\begin{remark}
In much of what follows, it is not necessary to assume that $\ker(\sigma)$ is stratified.  It will only be used when we show that $\sigma^\#:TM \to T_{\ker(\sigma)}^*M$ is an isomorphism if and only if $\sigma$ is folded-symplectic.  Furthermore, we needn't begin any constructions assuming we have a folded-symplectic form.  The main ingredients in what follows are:
\begin{itemize}
\item A hypersurface $Z\subset M$ inside a manifold with corners, transverse to strata.
\item A $2$-plane bundle $E \to Z$ so that $E\cap TZ$ is a rank $1$ vector bundle.
\end{itemize}
which is devoid of any folded-symplectic geometry: we just have a hypersurface and a vector bundle.  One may then study all forms whose degenerate set is $Z$ and whose kernel distribution on $Z$ contains the bundle $E\to Z$.  What we will effectively show is that if $\sigma$ is such a form, then it is folded-symplectic with folding hypersurface $Z$ and kernel bundle $E\to Z$ if and only if it induces an isomorphism of sheaves $\sigma^\#:\operatorname{Vec}(\cdot) \to \Omega^1_E(\cdot)$, where $\operatorname{Vec}(\cdot)$ is the sheaf of vector fields and $\Omega^1_E(\cdot)$ is the sheaf of $1$-forms vanishing on $E$ at $Z$.  However, on a first run through the constructions, it is perhaps easier to begin with a folded-symplectic form and a fixed kernel bundle $\ker(\sigma)$ so that one has a grounding in the folded-symplectic world.
\end{remark}

\begin{definition}\label{def:Ecotangent1}
Let $(M,\sigma)$ be a folded-symplectic manifold with corners with folding hypersurface $Z\subset M$.  Let $\ker(\sigma)\to Z$ be the kernel bundle over the fold $Z$ and suppose it is stratified.  That is, for all $p\in Z$, $\ker(\sigma)_p$ is tangent to the stratum of $M$ containing $p$.  We define a presheaf $\Omega_{\ker(\sigma)}^1(\cdot)$ of $C^{\infty}(M)$ modules on $M$ as follows.  Given an open subset $U\subset M$,

\begin{displaymath}
\Omega_{\ker(\sigma)}^1(U) = \{\beta \in \Omega^1(U) \vert \mbox{ } \beta\big\vert_{\ker(\sigma)} = 0 \text{ at $Z\cap U$}\}
\end{displaymath}
to be the space of $1$-forms on $U$ that vanish on $\ker(\sigma)$ at points of $Z\cap U$.  The restriction maps are given by pullbacks $i^*$ induced by inclusions $i:V \hookrightarrow U$.
\end{definition}

\begin{remark}
We take a moment to make sense of everything stated in the definition.  Note that vanishing on $\ker(\sigma)$ is a linear condition, hence $\Omega_{\ker(\sigma)}^1(U)$ is a submodule of $\Omega^1(U)$ for all $U\subset M$. Since the restriction maps of $\Omega_{\ker(\sigma)}^1(\cdot)$ are induced by those of $\Omega^1(\cdot)$, we have that $\Omega_{\ker(\sigma)}^1(\cdot)$ is a sub-presheaf of $\Omega^1(\cdot)$, hence it is a presheaf.
\end{remark}

\begin{lemma}\label{lem:Ecotangent1}
Let $(M,\sigma)$ be a folded-symplectic manifold with corners with folding hypersurface $Z\subset M$.  Suppose that the fibers of $\ker(\sigma)\to Z$ are tangent to the strata of $M$.  Then the presheaf $\Omega^1_{\ker(\sigma)}(\cdot)$ of forms vanishing on $\ker(\sigma)$ is a sheaf of $C^{\infty}(M)$ modules on $M$.
\end{lemma}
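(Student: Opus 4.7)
The plan is to follow the same template used in the proof of lemma \ref{lem:Vcotangent1}. Since $\Omega^1_{\ker(\sigma)}(\cdot)$ is by construction a sub-presheaf of the full sheaf $\Omega^1(\cdot)$ of smooth $1$-forms, the task reduces to verifying the two sheaf axioms, both of which essentially amount to observing that ``vanishing on $\ker(\sigma)$ along $Z$'' is a pointwise condition.

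For the identity (locality) axiom, suppose $U\subset M$ is open, $\{U_i\}$ is an open cover of $U$, and $\omega\in\Omega^1_{\ker(\sigma)}(U)$ satisfies $\omega\big\vert_{U_i}=0$ for every $i$. Since $\Omega^1(\cdot)$ is a sheaf, $\omega=0$ as an element of $\Omega^1(U)$, so a fortiori $\omega=0$ in $\Omega^1_{\ker(\sigma)}(U)$. For the gluing axiom, start with a compatible family $\omega_i\in\Omega^1_{\ker(\sigma)}(U_i)$ with $\omega_i\big\vert_{U_i\cap U_j}=\omega_j\big\vert_{U_i\cap U_j}$. The sheaf property of $\Omega^1(\cdot)$ produces a unique $\omega\in\Omega^1(U)$ whose restriction to each $U_i$ is $\omega_i$. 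To show $\omega\in\Omega^1_{\ker(\sigma)}(U)$, pick any $p\in Z\cap U$ and any $v\in\ker(\sigma)_p$; then $p\in U_i$ for some $i$, and $\omega_p(v)=(\omega_i)_p(v)=0$ because $\omega_i$ vanishes on $\ker(\sigma)$ along $Z\cap U_i$. Alternatively, one can construct $\omega$ directly via a partition of unity $\{\psi_i\}$ subordinate to $\{U_i\}$, setting $\omega=\sum_i\psi_i\omega_i$; at any $p\in Z\cap U$ and $v\in\ker(\sigma)_p$ each summand $\psi_i(p)(\omega_i)_p(v)$ vanishes since either $\psi_i(p)=0$ or $(\omega_i)_p(v)=0$, giving the same conclusion.

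There is no real obstacle here; the argument is entirely parallel to lemma \ref{lem:Vcotangent1} and does not use the standing hypothesis that $\ker(\sigma)$ be stratified. That tangency assumption is only needed downstream, when one wants to identify $\Omega^1_{\ker(\sigma)}(\cdot)$ with the sections of an honest vector bundle $T^*_{\ker(\sigma)}M$ via a local frame argument (analogous to lemma \ref{lem:Vcotangent2}) and to use that frame to analyze the induced contraction map $\sigma^\#$. Thus the lemma itself is a purely formal consequence of the definition and of the sheaf property of $\Omega^1(\cdot)$.
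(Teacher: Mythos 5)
Your proof is correct and follows essentially the same route as the paper: both verify the two sheaf axioms by noting that vanishing on $\ker(\sigma)$ is a pointwise (linear) condition, with the paper using the partition-of-unity construction for gluing exactly as in your alternative. Your remark that the stratification hypothesis is not actually used here, but only in the subsequent local-freeness and $\sigma^\#$ arguments, is also accurate.
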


\begin{proof}
Let $U\subseteq M$ be open and let $\{U_i\}_{i\in I}$ be an open cover of $U$.
\begin{enumerate}
\item Suppose we have $\tau \in \Omega_{\ker(\sigma)}^1(U)$ such that $\tau\big\vert_{U_i} =0 $ for each $U_i$.  Then $\tau=0$ since it is $0$ at every point of $U$.
\item Suppose we have a collection of forms $\{\tau_i\in \Omega_{\ker(\sigma)}^1(U_i)\}_{i\in I}$ satisfying:
    \begin{displaymath}
    \tau_i\big\vert_{U_i\cap U_j} = \tau_j\big\vert_{U_i\cap U_j}
    \end{displaymath}
    for all $i,j\in I$.  Then, refining the cover if necessary, we may choose a partition of unity $\{\psi_i\}$ subordinate to the cover $\{U_i\}$ and define:
    \begin{displaymath}
    \tau:= \sum_{i\in I} \psi_i \tau_i
    \end{displaymath}
    Since vanishing on $\ker(\sigma)$ is a linear condition, $\tau$ vanishes on $\ker(\sigma)$ at points of $U\cap Z$.
\end{enumerate}
\end{proof}

\begin{lemma}\label{lem:Ecotangent2}
Let $(M,\sigma)$ be a manifold with corners with folding hypersurface $Z\subset M$ and let $\ker(\sigma) \to Z$ be the kernel bundle of $\sigma$.  Suppose that the fibers of $\ker(\sigma)$ are tangent to the strata of $M$.  Let $\Omega^1_{\ker(\sigma)}(\cdot)$ be the sheaf of $1$-forms vanishing on $\ker(\sigma)$ along $Z$.  Then $\Omega^1_{\ker(\sigma)}(\cdot)$ is a locally free sheaf of $C^{\infty}(M)$ modules.
\end{lemma}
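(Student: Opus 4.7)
The plan is to reduce local freeness to the two kinds of points in $M$ and, at a fold point, to produce adapted coordinates in which $\ker(\sigma)$ is straightened out, after which Hadamard's lemma identifies a free basis.

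At a point $p \in M \setminus Z$, the sheaf $\Omega^1_{\ker(\sigma)}$ coincides with $\Omega^1$ on a neighborhood disjoint from $Z$, so it is freely generated by any coordinate differentials $dx_1,\dots,dx_n$ there, and has rank $n = \dim M$. The work is therefore at a point $p \in Z$. First I would build a local frame $\{v,w\}$ of $\ker(\sigma) \to Z$ on a neighborhood of $p$ in $Z$ with $v \in \Gamma(\ker(i_Z^*\sigma))$ tangent to $Z$ and $w$ transverse to $Z$; both are stratified because the fibers of $\ker(\sigma)$ are by hypothesis tangent to the stratum of $M$ containing their base point, and this property descends to $Z$ since $Z \pitchfork_s$ strata.

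Next, I would extend $w$ to a stratified vector field $\tilde{w}$ on a neighborhood $U$ of $p$ in $M$, nonvanishing and transverse to $Z$. Its flow produces a strata-preserving diffeomorphism from a neighborhood of the zero section in $Z \times \R$ onto $U$ (exactly as in the proof of lemma \ref{lem:fsnormal}), giving coordinates $(z,t)$ in which $Z = \{t=0\}$ and $\tilde{w} = \partial/\partial t$. Working on $U \cap Z$, the stratified, nonvanishing vector field $v$ is likewise straightened by its flow (combined with a transverse coordinate chart on $Z$) to $v = \partial/\partial x_1$ for some stratified coordinates $x_1,\dots,x_{n-1}$ on $U \cap Z$. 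The upshot is coordinates $(x_1,\dots,x_{n-1},t)$ on $U$ in which $Z = \{t=0\}$ and
\[
\ker(\sigma)\big|_{U\cap Z} = \operatorname{span}\Bigl\{\tfrac{\partial}{\partial x_1},\,\tfrac{\partial}{\partial t}\Bigr\}.
\]

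Finally, I would compute $\Omega^1_{\ker(\sigma)}(U)$ directly. Any $\beta \in \Omega^1(U)$ may be written uniquely as $\beta = f_1\,dx_1 + \sum_{i\ge 2} f_i\,dx_i + g\,dt$ with $f_i,g \in C^\infty(U)$, and $\beta|_{\ker(\sigma)} = 0$ on $Z \cap U$ is equivalent to $f_1(x,0) = 0$ and $g(x,0) = 0$. Because the $t$-coordinate is a full (not half-space) coordinate, Hadamard's lemma (lemma \ref{lem:had}) gives smooth $\tilde{f}_1, \tilde{g} \in C^\infty(U)$ with $f_1 = t\tilde{f}_1$ and $g = t\tilde{g}$, so
\[
\beta \;=\; \tilde{f}_1 (t\,dx_1) \;+\; \sum_{i\ge 2} f_i\,dx_i \;+\; \tilde{g}(t\,dt).
\]
Thus $\{t\,dx_1,\,dx_2,\dots,dx_{n-1},\,t\,dt\}$ generates $\Omega^1_{\ker(\sigma)}(U)$ over $C^\infty(U)$, and the decomposition is unique since the $t$-factorization given by Hadamard is unique and the $dx_i, dt$ are linearly independent in $\Omega^1(U)$. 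Hence $\Omega^1_{\ker(\sigma)}$ is free of rank $n$ on $U$, which completes the local freeness claim.

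The main obstacle is the coordinate-construction step: one must simultaneously straighten $v$ and $w$ while preserving the corner stratification and ensuring $Z$ remains the hypersurface $\{t=0\}$. The stratification hypothesis on $\ker(\sigma)$ is exactly what makes the flows of $v$ and $\tilde{w}$ strata-preserving, and it is the same device used in lemma \ref{lem:fsnormal}, so the construction goes through verbatim after choosing a stratified frame of $\ker(\sigma)$ adapted to the short exact sequence $0 \to \ker(i_Z^*\sigma) \to \ker(\sigma) \to \nu(Z) \to 0$.
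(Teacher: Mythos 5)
Your proof is correct and follows essentially the same route as the paper: away from $Z$ the sheaf is just $\Omega^1$, and near a fold point one exhibits the local basis by multiplying the covectors dual to a frame of $\ker(\sigma)$ by the transverse coordinate $t$, using Hadamard's lemma to factor the coefficients. The only cosmetic difference is that you additionally straighten the kernel frame into coordinate vector fields $\partial/\partial x_1,\partial/\partial t$, whereas the paper works with a general frame $\{e_1,\dots,e_{2m-2},f_1,f_2\}$ and its dual coframe $\{e_i^*,tf_1^*,tf_2^*\}$; this extra step is harmless and makes the Hadamard factorization slightly more explicit.
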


\begin{proof}
Let $2m=\dim(M)$.  Suppose $p\in M\setminus Z$.  Then there exists a neighborhood $U$ of $p$ diffeomorphic to an open subset of the quadrant $(\R^+)^{2m}$ satisfying $U\cap Z=\emptyset$.  Then $\Omega_{\ker(\sigma)}^1(U)$ is spanned by the pullbacks of $dx_1,\dots,dx_{2m}$.

\vspace{5mm}

Now, suppose that $p\in Z$.  Since $Z$ is transverse to the boundary strata $\partial^k(M)$ for all $k\ge 0$, we may choose a stratified vector field transverse to $Z$ whose flow gives us a diffeomorphism of a neighborhood $U$ of $p$ with $U\cap Z \times (-\epsilon, \epsilon)$, where $U\cap Z$ is identified with the zero section $U\cap Z \times \{0\}$.  This gives us coordinates $(z,t)$ around $p$, where $z$ lies in the folding hypersurface.  Choose a local frame $\{f_1,f_2\}$ of $\ker(\sigma)$ and extend it to a local frame $\{e_1,\dots, e_{2m-2}, f_1,f_2\}$ of $i_Z^*TM$ on $U\cap Z$.  Extend this frame to a frame on $U\cap Z \times (-\epsilon, \epsilon)$ by lifting the vector fields on $U\cap Z$ back to the product.  We redefine the $e_i$'s and $f_i$'s to be these vector fields.  Then the set:

\begin{displaymath}
\{e_1^*, \dots, e_{2m-2}^*, tf_1^*, tf_2^*\}
\end{displaymath}
is a basis for $\Omega_{\ker(\sigma)}^1(U)$, where $t$ is the coordinate on $(-\epsilon, \epsilon)$.  This is because $\{e_1^*,\dots,e_{2m-2}^*, f_1^*, f_2^*\}$ is a coframe for $T^*U$ and any form vanishing on $\ker(\sigma)$ at $t=0$ must have vanishing $f_1^*$ and $f_2^*$ terms at $t=0$.
\end{proof}

\begin{cor}\label{cor:Ecotangent1}
Let $(M,\sigma)$ be a folded-symplectic manifold with corners, let $Z\subset M$ be its folding hypersurface, let $\ker(\sigma)\to Z$ be the kernel bundle of $\sigma$ and suppose it is stratified (its fibers are tangent to the strata of $M$), and let $\Omega_{\ker(\sigma)}^1(\cdot)$ be the locally free sheaf of $1$-forms vanishing on $\ker(\sigma)$ along $Z$.  Then there exists a vector bundle $T_{\ker(\sigma)}^*M$ whose space of global sections is isomorphic to $\Omega_{\ker(\sigma)}^1(\cdot)$.  Furthermore, $\sigma$ induces a map of locally free sheaves:

\begin{displaymath}
\sigma^\#: \operatorname{Vec}(\cdot) \to \Omega_{\ker(\sigma)}^1(\cdot)
\end{displaymath}
given by $X \to i_X\sigma$.  Hence $\sigma$ induces a map $\sigma^\#:TM \to T_{\ker(\sigma)}^*M$.
\end{cor}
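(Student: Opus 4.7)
The plan is to derive the corollary as an immediate consequence of the preceding material, essentially repackaging Lemmas \ref{lem:Ecotangent1} and \ref{lem:Ecotangent2} through the (non-compact) Serre–Swan equivalence and then verifying that contraction with $\sigma$ respects the relevant submodule.

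First I would invoke the non-compact Serre–Swan theorem cited in the proof of corollary \ref{cor:Vcotangent} (see \cite{R}), which gives an equivalence between the category of locally free sheaves of $C^\infty(M)$-modules of finite rank on $M$ and the category of finite-rank smooth vector bundles over $M$. By Lemma \ref{lem:Ecotangent2}, the sheaf $\Omega^1_{\ker(\sigma)}(\cdot)$ is locally free of rank $2m$ (the local frame constructed in that proof, $\{e_1^*,\dots,e_{2m-2}^*,tf_1^*,tf_2^*\}$, exhibits this). Hence there is a vector bundle $T^*_{\ker(\sigma)}M \to M$, unique up to isomorphism, whose sheaf of smooth sections is $\Omega^1_{\ker(\sigma)}(\cdot)$, with $\Gamma(T^*_{\ker(\sigma)}M) \simeq \Omega^1_{\ker(\sigma)}(M)$. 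This settles the first assertion.

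Next I would construct the sheaf morphism $\sigma^\#$. For each open $U \subseteq M$, define $\sigma^\#_U : \operatorname{Vec}(U) \to \Omega^1(U)$ by $X \mapsto i_X\sigma\vert_U$. The key verification is that this map lands in the submodule $\Omega^1_{\ker(\sigma)}(U)$: for $z \in U \cap Z$ and $v \in \ker(\sigma)_z$, antisymmetry gives
\begin{displaymath}
(i_X\sigma)_z(v) \;=\; \sigma_z(X(z),v) \;=\; -\sigma_z(v,X(z)) \;=\; -(i_v\sigma_z)(X(z)) \;=\; 0,
\end{displaymath}
since $v$ lies in the kernel of $\sigma_z$. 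The map is $C^\infty(U)$-linear because $i_{fX}\sigma = f\,i_X\sigma$, and it commutes with restriction along inclusions $V \hookrightarrow U$ because $(i_X\sigma)\vert_V = i_{X\vert_V}(\sigma\vert_V)$. Thus $\sigma^\# : \operatorname{Vec}(\cdot) \to \Omega^1_{\ker(\sigma)}(\cdot)$ is a well-defined morphism of sheaves of $C^\infty(M)$-modules.

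Finally, since the sheaves $\operatorname{Vec}(\cdot)$ and $\Omega^1_{\ker(\sigma)}(\cdot)$ correspond under Serre–Swan to the vector bundles $TM$ and $T^*_{\ker(\sigma)}M$ respectively, the morphism $\sigma^\#$ of sheaves is the sections of a uniquely determined morphism of vector bundles, which I also denote $\sigma^\# : TM \to T^*_{\ker(\sigma)}M$. There is essentially no obstacle here beyond being careful that the target sheaf is the correct submodule; the antisymmetry computation above is the only content, and the local frames from Lemma \ref{lem:Ecotangent2} make it easy to write $\sigma^\#$ explicitly in coordinates near a point of $Z$, which will be useful in the subsequent section when characterizing when $\sigma^\#$ is an isomorphism (equivalently, when $\sigma$ is folded-symplectic with the prescribed kernel data).
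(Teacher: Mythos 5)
Your proposal is correct and follows essentially the same route as the paper: invoke the Serre--Swan equivalence to obtain $T^*_{\ker(\sigma)}M$ from the locally free sheaf of Lemma \ref{lem:Ecotangent2}, check that $i_X\sigma$ vanishes on $\ker(\sigma)$ along $Z$ by the (anti)symmetry of $\sigma$, and note $C^\infty$-linearity and compatibility with restrictions to get the induced bundle map. The only difference is that you spell out the antisymmetry computation and the explicit local-frame remark, which the paper leaves implicit.
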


\begin{proof} \mbox{ } \newline
Since $\Omega_{\ker(\sigma)}^1(\cdot)$ is a locally free sheaf of $C^{\infty}(M)$ modules, there exists a vector bundle $T_{\ker(\sigma)}^*M$ whose space of sections is isomorphic to $\Omega_{\ker(\sigma)}^1(M)$.  Now, if $X\in \operatorname{Vec}(U)$ is a vector field on an open subset $U\subset M$, then for all $z\in U\cap Z$ and for all $v\in \ker(\sigma)_z$, we have:

\begin{displaymath}
(i_X\sigma)(v)=\sigma(X,v)=0
\end{displaymath}
hence $i_X\sigma \in \Omega_{\ker(\sigma)}^1(U)$, thus $\sigma^\#(X) = i_X\sigma$ defines a map of sheaves since it commutes with restrictions.  It is a map of locally free sheaves since $\sigma^\#$ is $C^{\infty}(U)$ linear for each open subset $U\subset M$.  It therefore induces a map of vector bundles, which we also denote $\sigma^\#$:

\begin{displaymath}
\sigma^\#: TM \to T_{\ker(\sigma)}^*M
\end{displaymath}
\end{proof}

\begin{prop}\label{prop:Ecotangent1}
Let $(M,\sigma)$ be a folded-symplectic manifold with corners, $Z\subset M$ its folding hypersurface, $\ker(\sigma)\to Z$ its kernel bundle, where we assume $\ker(\sigma)$ is stratified (its fibers are tangent to the strata of $M$), and let $\Omega^1_{\ker(\sigma)}(M)$ be the space of $1$-forms vanishing on $\ker(\sigma)$ along $Z$.  Then,

\begin{enumerate}
\item The map $\sigma^\#:\operatorname{Vec}(\cdot) \to \Omega_{\ker(\sigma)}^1(\cdot)$ is an isomorphism of locally free sheaves and therefore induces an isomorphism of vector bundles $\sigma^\#:TM \to T_{\ker(\sigma)}^*M$. In particular, $\sigma^\#:\Gamma(TM) \to \Omega_{\ker(\sigma)}^1(M)$ is an isomorphism of $C^{\infty}(M)$ modules.
\item If $\sigma_0$ is another closed $2$-form on $M$ so that the degenerate set of $\sigma_0$ is $Z$ and $\sigma_0$ vanishes on $\ker(\sigma)$, then $\sigma_0$ is folded-symplectic if and only if the induced map $\sigma_0^\#$ is an isomorphism.
\end{enumerate}
\end{prop}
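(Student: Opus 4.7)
The plan is to reduce both statements to fiberwise isomorphism checks, using the fact that $TM$ and $T^*_{\ker(\sigma)}M$ both have rank $2m$. For Part 1, away from $Z$ the sheaf $\Omega^1_{\ker(\sigma)}(\cdot)$ coincides with $\Omega^1(\cdot)$, so $T^*_{\ker(\sigma)}M$ agrees with $T^*M$ there and $\sigma^\#$ reduces to the standard musical isomorphism of the symplectic form $\sigma|_{M\setminus Z}$. At a point $p\in Z$ I would work locally: every hypersurface is locally co-orientable, so Lemma \ref{lem:fsnormal} furnishes a neighborhood in which $\sigma = p^*i_0^*\sigma + t\mu$ with $\mu|_{\ker(\sigma)}$ non-degenerate. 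I would choose a local frame $\{v,\partial_t,e_1,\dots,e_{2m-2}\}$ of $TM$ where $v$ is a non-vanishing section of $\ker(i_Z^*\sigma)$ extended off $Z$, $\partial_t$ is the coordinate field normal to $Z$, and the $e_i$ span a complement to $\langle v\rangle$ in $TZ$; by Lemma \ref{lem:Ecotangent2} the corresponding frame of $T^*_{\ker(\sigma)}M$ is $\{tv^*, t\,dt, e_1^*,\dots,e_{2m-2}^*\}$.

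The key calculation is that $\sigma^\#(v) = t\,i_v\mu$ (the pullback term vanishes because $v$ lies in $\ker(i_0^*\sigma)$ throughout $Z$) and $\sigma^\#(\partial_t) = t\,i_{\partial_t}\mu$, while $\sigma^\#(e_i) = i_{e_i}i_0^*\sigma + t\,i_{e_i}\mu$. Reading off the matrix of $\sigma^\#$ in the adapted frames at a point of $Z$, the $tv^*$ and $t\,dt$ coefficients of the columns for $e_i$ vanish (since $i_0^*\sigma(e_i,v) = 0$ and $i_0^*\sigma$ has no $dt$ component), so the matrix is block triangular
\[
\begin{pmatrix} B & 0 \\ * & A \end{pmatrix}, \qquad B = \begin{pmatrix} 0 & -c \\ c & 0 \end{pmatrix},
\]
with $c = \mu(v,\partial_t)|_p\ne 0$ by non-degeneracy of $\mu|_{\ker(\sigma)}$, and $A = [i_0^*\sigma(e_i,e_j)]|_p$ the matrix of $i_0^*\sigma$ restricted to the symplectic complement $W = \mathrm{span}\{e_i\}$ of $\ker(i_Z^*\sigma)$ in $TZ$, which is invertible. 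Thus $\det\sigma^\#|_p = c^2\det A\ne 0$, so $\sigma^\#$ is a fiberwise isomorphism on all of $Z$ and, by continuity plus the symplectic case off $Z$, globally.

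For Part 2, the direction $(\Rightarrow)$ is immediate from Part 1 once one observes that $\sigma_0$ vanishing on $\ker(\sigma)|_Z$ forces $\ker(\sigma)\subseteq\ker(\sigma_0)$; both being rank-$2$ subbundles, we get $\ker(\sigma)=\ker(\sigma_0)$ and hence $T^*_{\ker(\sigma)}M = T^*_{\ker(\sigma_0)}M$, so Part 1 applied to $\sigma_0$ gives that $\sigma_0^\#$ is an isomorphism. For $(\Leftarrow)$, post-composing $\sigma_0^\#$ with $i:T^*_{\ker(\sigma)}M\to T^*M$ yields $\sigma_0^{\flat}:X\mapsto i_X\sigma_0$, whose fiberwise kernel on $Z$ equals $\ker(\sigma_0)$; since $i|_p$ has a $2$-dimensional kernel at $p\in Z$ while $\sigma_0^\#$ is iso, $\ker(\sigma_0)_p$ is $2$-dimensional, giving $\sigma_0$ corank $2$ on $Z$. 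Combined with $\ker(\sigma)\subseteq\ker(\sigma_0)$, this forces $\ker(\sigma_0) = \ker(\sigma)$ on $Z$, whose intersection with $TZ$ is $1$-dimensional, so $i_Z^*\sigma_0$ has maximal rank $2m-2$. For transverse vanishing of $\sigma_0^m$, the local frames show $i$ has matrix $\mathrm{diag}(t,t,1,\dots,1)$, so $\det(i) = t^2\cdot u$ with $u$ a nonvanishing unit; the Pfaffian identity then yields
\[
\mathrm{Pf}(\sigma_0)^2 \;=\; \det(i\circ\sigma_0^\#) \;=\; \det(i)\det(\sigma_0^\#) \;=\; t^2\cdot(\text{nonvanishing}),
\]
so $\mathrm{Pf}(\sigma_0) = \pm t\cdot(\text{nonvanishing})$ and $\sigma_0^m$ vanishes transversally along $Z$. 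Together with closedness of $\sigma_0$, this shows $\sigma_0$ is folded-symplectic.

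The main obstacle is the frame computation in Part 1: the triangular structure relies on a coordinated choice of frames for $TM$ and $T^*_{\ker(\sigma)}M$ so that the vanishing of $i_0^*\sigma$ on $\ker(i_Z^*\sigma)$ kills the off-diagonal block, while the non-degeneracy of $\mu|_{\ker(\sigma)}$ makes the $B$-block invertible. A related subtlety in Part 2 is that the factor $t^2$ in $\det(i)$ is exactly what matches the desired vanishing order of $\mathrm{Pf}(\sigma_0)$ — which is precisely the reason $T^*_{\ker(\sigma)}M$ constructed from $\Omega^1_{\ker(\sigma)}$ is the correct ``folded'' cotangent bundle to characterize folded-symplecticity of forms on $M$ with prescribed fold and kernel bundle.
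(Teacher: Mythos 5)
Your proof is correct, but it takes a genuinely different route from the paper's. For Part 1 the paper works at the level of sheaves: injectivity of $\sigma^\#$ follows from density of $M\setminus Z$, and surjectivity is proved by explicitly solving $i_X\sigma=\beta$ near the fold --- writing $\beta$ in the coframe $\{i_{e_1}\sigma,\dots,i_{e_{2m-2}}\sigma,\,i_f\mu,\,i_{\partial/\partial t}\mu\}$, observing that the last two coefficients must vanish at $t=0$ so one can divide them by $t$, and then gluing the local solutions by uniqueness. You instead verify pointwise invertibility of the matrix of $\sigma^\#$ in adapted frames; this is shorter and yields the isomorphism by Cramer's rule, whereas the paper's version produces the solution $X=(\sigma^\#)^{-1}(\beta)$ explicitly (which it reuses in the Moser argument). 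For the converse in Part 2 the paper first shows $i_Z^*\sigma_0$ has maximal rank by exhibiting each $e_i^*$ in the image of $(i^*\sigma_0)^\#$, then shows by contradiction that the transverse term $\mu$ is non-degenerate on $\ker(\sigma)$ (otherwise $t\,dt$ could not be in the image of $\sigma_0^\#$), and finally invokes Lemma \ref{lem:symplectize}; your kernel-dimension count through $i\circ\sigma_0^\#$ together with the identity $\operatorname{Pf}(\sigma_0)^2=\det(i)\det(\sigma_0^\#)=t^2\cdot(\text{unit})$ reaches the same conclusion more directly and cleanly explains why the factor $t^2$ in $\det(i)$ is exactly the right vanishing order.

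Two small corrections that do not affect the conclusion. First, in your matrix the vanishing block is the lower-left one, not the upper-right: the $e_j^*$-components of $\sigma^\#(v)=t\,i_v\mu$ and $\sigma^\#(\partial_t)=t\,i_{\partial_t}\mu$ are $t\mu(v,e_j)$ and $t\mu(\partial_t,e_j)$, which die at $t=0$, whereas the $tv^*$- and $t\,dt$-coefficients of $\sigma^\#(e_i)$ are $\mu(e_i,v)$ and $\mu(e_i,\partial_t)$, which need not vanish. The matrix is still block triangular, so $\det\sigma^\#|_p=c^2\det A$ stands. Second, the step ``$\ker(\sigma_0)\cap T_pZ$ is one-dimensional, hence $i_Z^*\sigma_0$ has maximal rank'' silently uses the linear-algebra fact that restricting a skew form to a hyperplane not containing its radical preserves the rank; this is true but should be recorded, since a radical contained in $T_pZ$ would drop the rank by two.
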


\begin{remark}
We are showing that $\sigma_0^n \pitchfork_s 0$ and $i_Z^*\sigma_0$ has maximal rank if and only if $\sigma_0^\# :\Gamma(TM) \to \Omega^1_{\ker(\sigma)}$ is an isomorphism of locally free sheaves of $C^{\infty}(M)$ modules.
\end{remark}

\begin{proof}
\begin{enumerate}
\item Let $U\subset M$ be open.  If $U\cap Z=\emptyset$, then there is nothing to prove since $\sigma\vert_U$ is non-degenerate, hence $\sigma^\#\vert_U$ is invertible.  Now, assume $U\cap Z\ne \emptyset$.  We show that $\sigma^\#: \operatorname{Vec}(U) \to \Omega^1_{\ker(\sigma)}(U)$ is injective, hence local solutions to the equation $i_X\sigma = \beta$ are unique.  We use this uniqueness to show surjectivity by gluing local solutions together.
    \begin{enumerate}
    \item Suppose $X,Y \in \operatorname{Vec}(U)$ and $\sigma^\#(X) = \sigma^\#(Y)$.  Since $\sigma^\#$ is a map of sheaves, this identity holds on $U\setminus Z$, which is open and dense in $U$.  We have:

        \begin{displaymath}
        \begin{array}{lcl}
        \sigma^\#(X)=\sigma^\#(Y) & \iff & i_X\sigma = i_Y\sigma \\
                                  & \iff & i_{X-Y}\sigma = 0  \\
                                  & \iff & i_{X-Y}\sigma = 0 \text{ on $U\setminus Z$} \\
                                  & \iff & X-Y = 0  \text{ on $U\setminus Z$ since $\sigma\vert_{U\setminus Z}$ is symplectic}\\
                                  & \iff & X=Y \text{ on $U\setminus Z$} \\
                                  & \iff & X=Y \text{ on $U$ since $U\setminus Z$ is dense.}
        \end{array}
        \end{displaymath}
        Thus $\sigma^\#$ is injective on $U$.
    \item Now, let $\beta \in \Omega_{\ker(\sigma)}^1(U)$.  If $V\subset U$ is an open subset, then the injectivity of $\sigma^\#$ implies that local solutions $i_X\sigma\vert_V = \beta\vert_V$ are unique.  Hence, if $\{V_i\}$ is a cover of $U$ with a set $\{X_i\}$ of solutions to $i_X\sigma\vert_{V_i} = \beta\vert_{V_i}$ on each $V_i$, then we have:

        \begin{displaymath}
        X_i\big\vert_{V_i\cap V_j} = X_j \big\vert_{V_i\cap V_j}
        \end{displaymath}
        hence these local solutions glue to give a global solution $X\in \operatorname{Vec}(U)$ to the equation $i_X\sigma = \beta$.  We show that such a cover of $U$ exists.
        \begin{itemize}
        \item On $U\setminus Z$, $\sigma$ is symplectic and so the equation $i_X\sigma\big\vert_{U\setminus Z} = \beta\big\vert_{U\setminus Z}$ has a solution.
        \item For each $z\in U\cap Z$, $\ker(\sigma)$ being stratified implies that we may choose a neighborhood $V_z$ of $z$ and a diffeomorphism

        \begin{displaymath}
        \phi: V_z\cap Z \times (-\epsilon, \epsilon) \to V_z
        \end{displaymath}
        so that $\phi^*\sigma=p^*i^*\sigma +t\mu$ for some $2$-form $\mu$, where $p:V_z\cap Z \times (-\epsilon, \epsilon) \to V_z\cap Z$ is the projection and $i:V_z\cap Z \to V_z \cap Z \times (-\epsilon, \epsilon)$ is the inclusion as the zero section.

        \vspace{3mm}
        Choose a local frame $f$ of $\ker(i_Z^*\sigma)$ near $p$ and extend it to a local frame $\{e_1,\dots,e_{2m-2},f\}$ of $Z$ near $z$.  We may lift these vector fields to vector fields on the product $V_z\cap Z \times (-\epsilon, \epsilon)$ (using the standard connection), where we use the same notation $e_i$ and $f$, and extend to a local frame:
        \begin{displaymath}
        \{e_1, \dots, e_{2m-2},f,\frac{\partial}{\partial t}\}
        \end{displaymath}
        where $f$ and $\frac{\partial}{\partial t}$ satisfy:

        \begin{displaymath}
        \begin{array}{l}
        i_f\sigma = i_f(p^*i^*\sigma + t\mu) = 0 + ti_f\mu = t(i_f\mu) \\
        i_{\frac{\partial}{\partial t}}\sigma = i_{\frac{\partial}{\partial t}}(p^*i^*\sigma + t\mu) = t(i_{\frac{\partial}{\partial t}}\mu)
        \end{array}
        \end{displaymath}
        Now, since $\sigma$ is folded-symplectic, lemma \ref{lem:symplectize} implies that $\mu$ is non-degenerate on $\ker(\phi^*\sigma)$, hence:

        \begin{displaymath}
        \mu(f,\frac{\partial}{\partial t}) \ne 0
        \end{displaymath}
        in a neighborhood of $(V_z\cap Z) \times \{0\}$.  Hence, the set:

        \begin{displaymath}
        \{i_{e_1}\sigma, \dots, i_{e_{2m-2}}\sigma, i_f\mu, i_{\frac{\partial}{\partial t}}\mu\}
        \end{displaymath}
        is a local frame of the cotangent bundle in a neighborhood of $(z,0)$.  Hence, we may write:

        \begin{displaymath}
        \beta = \sum_{i=1}^{2m-2}a_i(i_{e_i}\sigma) + b(i_f\mu) + c(i_{\frac{\partial}{\partial t}}\mu)
        \end{displaymath}
        for some smooth functions $a_i, b,$ and $c$.  Since $\beta$ vanishes on $f$ and $\frac{\partial}{\partial t}$ at $t=0$, we must have that $b=tb_0$ and $c=tc_0$ for some smooth functions $b_0$ and $c_0$.  Define:

        \begin{displaymath}
        X=\sum_{i=1}^{2m-2} a_ie_i + b_0f + c_0\frac{\partial}{\partial t}
        \end{displaymath}
        Then,

        \begin{displaymath}
        i_X\sigma = \sum_{i=1}^{2m-2} a_i(i_{e_i}\sigma) + tb_0(i_f\mu) + tc_0(i_{\frac{\partial}{\partial t}}\mu) = \beta
        \end{displaymath}
        Thus, for each $z\in U\cap Z$ there exists a neighborhood $U_z$ and a solution $X_z$ of the equation $i_X\sigma =\beta$ on $U_z$.  Since $\{U_z\}_{z\in U\cap Z} \cup U\setminus Z$ covers $U$ and we have solutions on each open subset, we may glue them together by uniqueness to obtain a global solution $X \in \operatorname{Vec}(U)$ to $i_X\sigma = \beta$.  Thus, $\sigma^\#$ is surjective.
        \end{itemize}
    \item Since $\sigma^\#$ is a map of locally free sheaves of $C^{\infty}(M)$-modules and it is an isomorphism on each open subset of $U$ it is an isomorphism of locally free sheaves of $C^{\infty}(M)$-modules.  Therefore, it induces an isomorphism of vector bundles:
    \begin{displaymath}
    \sigma^\#: TM \to T_{\ker(\sigma)}^*(M)
    \end{displaymath}
    \end{enumerate}
\item Now, suppose $\sigma_0$ is another closed $2$-form on $M$ that degenerates at all points of $Z$ (and only at $Z$) and vanishes on $\ker(\sigma)$.  Part $1$ of the proposition implies that $\sigma_0^\#:\operatorname{Vec}(\cdot) \to \Omega^1_{\ker(\sigma)}(\cdot)$ is an isomorphism if $\sigma_0$ is folded-symplectic.  We must show the \emph{only if} portion, so we assume that $\sigma_0^\#$ is an isomorphism.  We first show that $i_Z^*\sigma_0$ has maximal rank and then use this to argue it is folded-symplectic.
    \begin{enumerate}
    \item Consider a point $p\in Z$.  Since $\ker(\sigma)$ is stratified and transverse to $Z$, we may choose a neighborhood $U$ of $p$ and assume that $M = U\cap Z \times (-\epsilon,\epsilon)$ where $U\cap Z$ is identified with the zero section and $\frac{\partial}{\partial t}$ is in $\ker(\sigma)$ (q.v. lemma \ref{lem:fsnormal}).  Hence, we may assume $M=Z\times (-\epsilon, \epsilon)$ and drop the intersection notation $U\cap Z$ from our discussion.  Since this is a local calculation, we may also assume that $Z$ is contractible so that any vector bundle over $M$ is trivializable.  Lastly, since $\sigma_0$ vanishes on $\ker(\sigma)$, it vanishes on $\frac{\partial}{\partial t}$ so that:

        \begin{displaymath}
        \sigma_0 = p^*i^*\sigma_0 + t\mu
        \end{displaymath}

        \vspace{3mm}
        Choose a local frame $f$ of $\ker(i_Z^*\sigma)$, extend it to a local frame $\{e_1,\dots, e_{2m-2},f\}$ of $Z$, and then extend this to a local frame of $M$ by lifting the vector fields to the product $Z\times \R$ using the standard connection.  We are first going to show that $i^*\sigma_0$, the restriction of $\sigma_0$ to $Z$, has maximal rank.

        Since $e_i^*$ vanishes on $\ker(\sigma)$ at $Z\times \{0\}$, it is an element of $\Omega^1_{\ker(\sigma)}(M)$.  Since $\sigma_0^\#$ is an isomorphism (by assumption), we know there exists a vector field $X_i$ satisfying:

        \begin{displaymath}
        i_{X_i}\sigma_0=e_i^*
        \end{displaymath}

        If we write $\displaystyle X_i=\sum_{j=1}^{2m-2} a_{ij}e_j + b_if + c_i\frac{\partial}{\partial t}$ and use the fact that $\displaystyle i_{\frac{\partial}{\partial t}}\sigma_0\big\vert_Z = i_f \sigma_0\big\vert_Z=0$, we have:

        \begin{equation}\label{eq:surjects}
        i_{X_i}\sigma_0 = e_i^* \to \mbox{ } i^*(i_{X_i}\sigma_0) = e_i^* \to \mbox{ } i_{\sum a_{ij}e_j}(i^*\sigma_0) = e_i^*\big\vert_Z \in \Gamma(TZ)
        \end{equation}
        where $i:Z\hookrightarrow Z\times \R$ is the inclusion as the zero section.  Since $\sum a_{ij}e_j$ a section of $TZ$ at points of $Z\times \{0\}$, equation \ref{eq:surjects} shows that $e_i^*$ is in the image of the contraction mapping $(i^*\sigma_0)^\#:TZ \to T^*Z$.  Since we can choose any $e_i$ and equation \ref{eq:surjects} holds, we have that the map $(i^*\sigma_0)^\#:TZ \to T^*Z$ sends each fiber of $TZ$ onto a $2m-2$-dimensional subspace of the corresponding fiber of $T^*Z$, meaning $i^*\sigma_0$, the restriction of $\sigma_0$ to the fold, has maximal rank.

    \item Now, recall that we have written $\sigma_0$ as $\sigma_0=p^*i^*\sigma_0 + t\mu$ for some $2$-form $\mu$.  Since $i^*\sigma_0$ has maximal rank, lemma \ref{lem:symplectize} implies that $\sigma_0$ is folded if and only if $\mu\big\vert_{\ker(\sigma)}$ is non-degenerate, hence we would like to show $\mu$ is non-degenerate.  We are again assuming that we have a frame $\{e_1,\dots,e_{2m-2},f,\frac{\partial}{\partial t}\}$, where $f\in \ker(\sigma)$ at points of $Z\times \{0\}$.  Assume towards a contradiction that there is a point $(p,0)\in Z\times \{0\}$ where $\mu(f,\frac{\partial}{\partial t})=0$.  Then we claim there is no vector field $X$ satisfying $i_X\sigma_0 = tdt$.  Indeed, $X$ would need to satisfy:

        \begin{itemize}
        \item $(i_X\sigma_0)(e_i)=0$ for all $i$ and
        \item $(i_X\sigma_0)(\frac{\partial}{\partial t}) = t\mu(X,\frac{\partial}{\partial t})=t$
        \end{itemize}
        The first condition implies that at points $(z,0)$ we have $X\in \ker(\sigma)$, hence $\displaystyle X=af + b\frac{\partial}{\partial t}$ for some constants $a$ and $b$ at $(z,0)$.  The second condition implies that $\mu(X,\frac{\partial}{\partial t})=1$, hence $a\ne 0$.  At $(z,0)$, we then have that
        \begin{displaymath}
        1=\mu(X,\frac{\partial}{\partial t})=a\mu(f,\frac{\partial}{\partial t})=0
        \end{displaymath}
        since we are assuming $\mu$ is degenerate on the $\ker(\sigma)$ at $(z,0)$.  Thus, $\mu(X,\frac{\partial}{\partial t})=0\ne 1$, which means $tdt$ cannot be in the image of $\sigma_0^\#$, which contradicts the assumption that $\sigma_0^\#: \operatorname{Vec}(\cdot)\to \Omega^1_{\ker(\sigma)}(\cdot)$ is an isomorphism of sheaves since it cannot be surjective near $(z,0)$.

        \vspace{5mm}

        This means that $\mu\big\vert_{\ker(\sigma)}$ is non-degenerate and lemma \ref{lem:symplectize} implies that $\sigma_0=p^*i^*\sigma_0 + t\mu$ is folded-symplectic in a neighborhood of $Z\times \{0\}$.  Since this is true at each point $z\in Z$ and, by assumption, $\sigma_0$ only degenerates at $Z$, we have that $\sigma_0$ is folded-symplectic on $M$.
    \end{enumerate}
\end{enumerate}
\end{proof}

\begin{cor}\label{cor:Ecotangent2}
Let $(M,\sigma)$ be a folded-symplectic manifold with corners with folding hypersurface $Z\subset M$ and stratified kernel bundle $\ker(\sigma)\to Z$ (i.e. the fibers of $\ker(\sigma)$ are tangent to the boundary strata of $M$).  Let $\beta \in \Omega^1(M)$ be a $1$-form.  Then there is a smooth vector field $X\in \Gamma(TM)$ satisfying $i_X\sigma = \beta$ if and only if $\beta$ vanishes on $\ker(\sigma)$ at points of $Z$.  That is, if and only if $\beta \in \Omega^1_{\ker(\sigma)}(M)$.
\end{cor}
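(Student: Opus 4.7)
The plan is to deduce this corollary directly from Proposition \ref{prop:Ecotangent1}, which does essentially all of the work. The proposition establishes that the map
\begin{displaymath}
\sigma^\#: \operatorname{Vec}(\cdot) \to \Omega^1_{\ker(\sigma)}(\cdot), \quad X \mapsto i_X\sigma,
\end{displaymath}
is an isomorphism of locally free sheaves of $C^\infty(M)$-modules, and in particular induces an isomorphism on global sections $\sigma^\#: \Gamma(TM) \to \Omega^1_{\ker(\sigma)}(M)$. So the corollary is really a matter of unpacking what this isomorphism says about the solvability of $i_X\sigma = \beta$.

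For the forward direction, suppose $X \in \Gamma(TM)$ satisfies $i_X\sigma = \beta$. Then for any $z \in Z$ and any $v \in \ker(\sigma)_z$, $\beta_z(v) = \sigma_z(X, v) = 0$ by the antisymmetry of $\sigma$ and the definition of $\ker(\sigma)$. Hence $\beta \in \Omega^1_{\ker(\sigma)}(M)$. This is just restating that $\sigma^\#$ takes values in $\Omega^1_{\ker(\sigma)}(\cdot)$ (which was observed in Corollary \ref{cor:Ecotangent1}).

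For the reverse direction, suppose $\beta \in \Omega^1_{\ker(\sigma)}(M)$. Since $\sigma^\#: \Gamma(TM) \to \Omega^1_{\ker(\sigma)}(M)$ is a bijection by part (1) of Proposition \ref{prop:Ecotangent1}, there is a (unique) $X \in \Gamma(TM)$ with $\sigma^\#(X) = \beta$, i.e. $i_X\sigma = \beta$. No obstacle arises here since all the real work has been done in the proposition; the only subtlety worth flagging is that we need the hypothesis that $\ker(\sigma)$ is stratified, which is exactly the hypothesis under which Proposition \ref{prop:Ecotangent1} applies, and which enters through the normal form of Lemma \ref{lem:fsnormal} used in constructing local solutions to $i_X\sigma = \beta$.
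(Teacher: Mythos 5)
Your proposal is correct and follows exactly the paper's own route: both directions are read off from the sheaf isomorphism $\sigma^\#:\operatorname{Vec}(\cdot)\to\Omega^1_{\ker(\sigma)}(\cdot)$ of Proposition \ref{prop:Ecotangent1}, applied to global sections. Nothing further is needed.
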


\begin{proof} \mbox{ } \newline
By proposition \ref{prop:Ecotangent1}, the map $\sigma^\#:\operatorname{Vec}(\cdot) \to \Omega^1_{\ker(\sigma)}(\cdot)$ is an isomorphism of sheaves, hence $\sigma^\#:\Gamma(TM) \to \Omega^1_{\ker(\sigma)}(M)$ is an isomorphism of global sections.  Thus, $i_X\sigma = \beta$ if and only if $\sigma^\#(X)=\beta$ if and only if $\beta \in \Omega^1_{\ker(\sigma)}(M)$.
\end{proof}

\begin{prop}\label{prop:Moser}
Let $(M,\sigma)$ be a $2m$-dimensional folded-symplectic manifold with corners and suppose $\sigma_t$ is a smooth path of folded-symplectic forms satisfying:

\begin{enumerate}
\item $\sigma_0 = \sigma$,
\item The fold $Z=(\sigma_t^m)^{-1}(\mathcal{O})$ is independent of $t$, and
\item The bundles $\ker(\sigma_t)$ are stratified (their fibers are tangent to the boundary strata of $M$).
\end{enumerate}

Then, if $\dot{\sigma}_t=d\beta_t$ for some path of $1$-forms $\beta_t$ so that $\beta_t \in \Omega^1_{\ker(\sigma_t)}(M)$ for all $t$, there exists a smooth time-dependent vector field $X_t$ satisfying $i_{X_t}\sigma_t = -\beta_t$.  If $X_t$ is stratified and vanishes at $Z$ for all $t$, then we may integrate $X_t$ in a neighborhood $U$ of $Z$ to obtain a path of open embeddings:

\begin{displaymath}
\phi_t:U \to M
\end{displaymath}
satisfying $\phi_t(z)=z$ and $\phi_t^*\sigma_t = \sigma_0 =\sigma$ for all $t\in [0,1]$.
\end{prop}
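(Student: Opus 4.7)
The plan is to follow the classical Moser argument, producing the isotopy $\phi_t$ as the flow of a time-dependent vector field $X_t$ chosen so that $\phi_t^*\sigma_t$ is constant in $t$. The crucial point in the folded-symplectic setting is that one cannot expect to solve $i_X\sigma = \beta$ for arbitrary $\beta$; instead, the sheaf-theoretic characterization from Proposition \ref{prop:Ecotangent1} tells us precisely that the map $\sigma_t^{\#}$ is an isomorphism of locally free sheaves onto $\Omega^1_{\ker(\sigma_t)}(\cdot)$, so the hypothesis $\beta_t \in \Omega^1_{\ker(\sigma_t)}(M)$ is exactly what is required to solve Moser's equation.

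First I would produce the vector field $X_t$. For each fixed $t$, Corollary \ref{cor:Ecotangent2} gives a unique vector field $X_t$ satisfying $i_{X_t}\sigma_t = -\beta_t$. To see that $X_t$ is smooth jointly in $(p,t)$, I argue locally. Away from $Z$, each $\sigma_t$ is symplectic and $X_t = -(\sigma_t^{\#})^{-1}(\beta_t)$ depends smoothly on $t$ because $(\sigma_t^{\#})^{-1}$ does. Near a point $z \in Z$, following the explicit construction in the proof of Proposition \ref{prop:Ecotangent1}, I would produce a local frame $\{e_1,\ldots,e_{2m-2},f,\tfrac{\partial}{\partial t_0}\}$ adapted to a normal form as in Lemma \ref{lem:fsnormal}, write $\sigma_t = p^*i^*\sigma_t + t_0\mu_t$ and expand $\beta_t$ in the corresponding coframe. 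The defining conditions $\beta_t(f)=\beta_t(\tfrac{\partial}{\partial t_0}) = 0$ along $Z$ force the coefficients of $f^*$ and $(\tfrac{\partial}{\partial t_0})^*$ to factor through $t_0$, giving an explicit formula for $X_t$ whose coefficients depend smoothly on $(p,t)$. Uniqueness guarantees these local expressions glue to a globally defined smooth time-dependent vector field.

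Next, assuming that $X_t$ is stratified and vanishes at $Z$ for all $t$, I would invoke Lemma \ref{lem:Moser1} to obtain a neighborhood $U$ of $Z$ on which the flow $\phi_t$ is defined for all $t \in [0,1]$, giving a smooth family of open embeddings $\phi_t : U \to M$ with $\phi_t|_Z = \mathrm{id}_Z$. The standard Moser computation, using $d\sigma_t = 0$ and $i_{X_t}\sigma_t = -\beta_t$, then yields
\[
\frac{d}{dt}\phi_t^*\sigma_t = \phi_t^*\bigl(\dot\sigma_t + \mathcal{L}_{X_t}\sigma_t\bigr) = \phi_t^*\bigl(d\beta_t + d\,i_{X_t}\sigma_t\bigr) = \phi_t^*\bigl(d\beta_t - d\beta_t\bigr) = 0,
\]
so $\phi_t^*\sigma_t = \phi_0^*\sigma_0 = \sigma$, as claimed.

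The main obstacle is verifying joint smoothness of $X_t$ across the fold: away from $Z$ the argument is essentially symplectic, but near $Z$ the map $\sigma_t^{\#}$ degenerates as a map $TM \to T^*M$ and one must pass to the modified target $T^*_{\ker(\sigma_t)}M$. The hypothesis that each $\ker(\sigma_t)$ is stratified is precisely what permits the local-freeness argument of Lemma \ref{lem:Ecotangent2} and the surjectivity argument of Proposition \ref{prop:Ecotangent1} to go through uniformly in $t$; once these ingredients are in place and a local normal form is fixed, the remaining work is the routine Moser bookkeeping combined with the neighborhood-of-a-fixed-set integration provided by Lemma \ref{lem:Moser1}.
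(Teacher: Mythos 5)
Your proposal is correct and follows essentially the same route as the paper: invert $\sigma_t^{\#}$ using Proposition \ref{prop:Ecotangent1} (the hypothesis $\beta_t\in\Omega^1_{\ker(\sigma_t)}(M)$ being exactly the solvability condition), integrate near $Z$ via Lemma \ref{lem:Moser1}, and run the standard Moser computation. The one point where the paper is more careful than your sketch is joint smoothness in $t$ near the fold: rather than fixing a normal form for each $t$ separately (note that a single coordinate $t_0$ with $\tfrac{\partial}{\partial t_0}\in\ker(\sigma_t)$ for \emph{all} $t$ does not exist a priori, since the kernels vary with $t$), the paper assembles the kernels $\ker(\sigma_t)$ into a single smooth bundle $E$ over $Z\times\R$, chooses a section of $E$ tangent to the leaves $Z\times\{t\}$, and flows to obtain coordinates adapted to the whole family at once, which is the precise mechanism that makes your ``coefficients depend smoothly on $(p,t)$'' step go through.
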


\begin{proof} \mbox{ } \newline

For each $t\in \R$, $\sigma_t$ defines an isomorphism of $C^{\infty}(M)$ modules:

\begin{displaymath}
\sigma_t^\#: \Gamma(TM) \to \Omega_{\ker(\sigma_t)}^1(M)
\end{displaymath}
by proposition \ref{prop:Ecotangent1}.  Since $\beta_t\in \Omega_{\ker(\sigma_t)}^1(M)$ for all $t\in \R$, we have that $X_t:=(\sigma_t^\#)^{-1}(-\beta_t)$ defines a smooth vector field on $M$ for each $t\in \R$.  Even though the target space of $\sigma_t^\#$ varies in $t$, we claim that $X_t$ is also smooth in $t$.  However, we put this issue on hold momentarily and address the remaining claim of the proposition: assume $X_t$ is smooth, stratified, and vanishes on $Z$.  Then lemma \ref{lem:Moser1} implies that there exists a neighborhood $U$ of $Z$ on which the flow $\phi_t$ of $X_t$ exists for all $t\in [0,1]$.  $X_t$ satisfies:

\begin{displaymath}
i_{X_t}\sigma=-\beta_t
\end{displaymath}
which is equation \ref{eq:Moser8}, hence its flow satisfies $\phi_t^*\sigma_t = \sigma_0$.

\vspace{4mm}

We now address the smoothness of $X_t$.  At first glance, this would appear to be obvious since $\sigma_t^\#$ and $\beta_t$ are both smooth in $t$.  However, the target space $\Omega_{\ker(\sigma_t)}^1(M)$ varies in $t$, so it isn't immediately clear that the composite $(\sigma_t^\#)^{-1}(\beta_t)$ is smooth in $t$ unless we say something like ``$\Omega_{\ker(\sigma_t)}^1(M)$ varies smoothly in $t$.''  But, its not clear what we mean by such a statement unless we have built some machinery.  If the reader is convinced that $X_t$ is smooth because $\Omega_{\ker(\sigma_t)}^1(M)$ is a smoothly varying submodule of $\Omega^1(M)$, then he or she may pass on to the next section.

\vspace{4mm}

We show smoothness in $t$ by adapting our constructions on $M$ to the product $M\times \R$.  Also, we only check smoothness of $X_t$ in $t$ near points of the fold $Z\subset M$ since each $\sigma_t$ is symplectic away from $Z$, hence $(\sigma_t^\#)^{-1}(-\beta_t)$ is smooth away from $Z$.  The family of $1$-forms $\sigma_t$ defines a smooth map $\Phi$ of vector bundles over $M\times \R$:

\begin{displaymath}
\xymatrix{
TM \times \R \ar[r]^\Phi \ar[d]^{\tau_M\times Id} & T^*M\times \R \ar[d]^{\tau^*_M \times Id} \\
M\times \R \ar[r]                                 &   M\times \R
}
\end{displaymath}
where $\Phi$ is defined at a point $(m,t)$ as $\Phi(m,X,t) = i_X(\sigma_t)_m$, the contraction of $X$ with $\sigma_t$.  Let $\tilde{\sigma}$ be the $2$-form $\tilde{\sigma}\in \Omega^2(M\times \R)$ defined at a point $(m,t)$ by:

\begin{displaymath}
\tilde{\sigma}_{(m,t)}(X,\frac{\partial}{\partial t}) = (\sigma_t)_m(X)
\end{displaymath}
where $X\in T_mM$ is a tangent vector.  Let $\tilde{\beta}\in \Omega^1(M\times \R)$ be the $1$-form defined similarly as:

\begin{displaymath}
\tilde{\beta}_{(m,t)}(X,\frac{\partial}{\partial t}) = (\beta_t)_m(X)
\end{displaymath}
Now, $TM\times \R$ is a vector subbundle of $TM\times T\R$, hence $E:=\ker(\Phi\vert_{Z\times \R})$ defines a vector subbundle of $i_Z^*TM \times T\R \to Z\times \R$.  Note that $E\to Z\times \R$ has fiber $E_{(z,t)} = \ker(\sigma_t)_z$, hence $E$ is a stratified vector bundle over $Z\times \R$ and gathers the bundles $\ker(\sigma_t)\to Z$ into a smooth vector bundle over $Z\times \R$.  Note that $\beta$ is a $1$-form on $Z\times \R$ that vanishes on $\frac{\partial}{\partial t}$ everywhere on $M\times \R$ and vanishes on $E$ at $Z\times \R$.

\vspace{3mm}

Since $\ker(\sigma_t)\pitchfork TZ$ and $\ker(\sigma_t) = E\vert_{Z\times \{t\}}$, we have that $E \pitchfork TZ \times T\R$.  Therefore, near a point $(z,t) \in Z\times \R$, we may choose a nonvanishing section $w\in \Gamma(E)$ transverse to $Z\times \R$.  The section $w$ is stratified and tangent to the leaves $Z\times \{t\}$ for all $t\in \R$.  This is because $E$ is stratified and tangent to the leaves $Z\times \{t\}$ for all $t\in \R$.  Consequently, we may extend it to a non-vanishing stratified vector field $\tilde{w}$ in a neighborhood $U$ of $(z,t)$ that is tangent to $M\times \{t\}$ for all $t\in \R$.  Its flow then defines a diffeomorphism (on a perhaps smaller neighborhood of $(z,t)$ in $U$):

\begin{displaymath}
\phi:U\cap(Z\times \R)\times (-\epsilon,\epsilon) \to U
\end{displaymath}
where the coordinates are $(z,t,s)$ and $\frac{\partial}{\partial s}$ is identified with $\tilde{w}$ under $d\phi$.  Consequently, we may assume that our manifold is $Z\times \R \times (-\epsilon, \epsilon)$ with coordinates $(z,t,s)$, $M\times \{t\}$ is identified with $Z\times \{t\}\times (-\epsilon, \epsilon)$, $Z\times \R$ is identified with $Z\times \R \times \{0\}$, and $E \to Z\times \R \times \{0\}$ has fiber containing $\frac{\partial}{\partial s}$ at all points.

\vspace{3mm}

Let $p:Z\times \R \times (-\epsilon,\epsilon) \to Z\times \R$ be the projection and $i:Z\times \R \to Z \times \R \times (-\epsilon, \epsilon)$ the inclusion as the zero section.  $i_{\frac{\partial}{\partial s}}\tilde{\sigma}$ vanishes at $s=0$, we may write:

\begin{displaymath}
\tilde{\sigma} = p^*i^*\tilde{\sigma} + s\mu
\end{displaymath}
for some $2$-form $\mu$ where:

\begin{itemize}
\item $\tilde{\sigma}_t = p^*i^*\tilde{\sigma}_t + s\mu_t = \sigma_t + p^*i^*\sigma_t +s\mu_t$ is folded on $Z\times \{t\} \times (-\epsilon, \epsilon)$ for all $t\in \R$,
\item $\mu\big\vert_E$ is non-degenerate on $E$ since $\mu_t \big\vert_{\ker(\sigma)_t}$ is non-degenerate by lemma \ref{lem:symplectize}, and
\item $i_{\frac{\partial}{\partial t}}\mu = 0$ and $i_{\frac{\partial}{\partial t}}\sigma =0$.
\end{itemize}

Now, choose a local frame $\{e_1,\dots, e_{2m-2}, f, \frac{\partial}{\partial t}\}$ of $Z\times \R$ near $(z,t)$, where the $e_i's$ are tangent to the leaves $Z\times \{t\}$ and $f \in \Gamma(E)$ is a section of the kernel bundle tangent to the leaves $Z\times \{t\}$.  Extend this frame to the product:

\begin{displaymath}
\{e_1,\dots, e_{2m-2},f,\frac{\partial}{\partial t}, \frac{\partial}{\partial s}\}
\end{displaymath}
by lifting each $e_i$ and $\frac{\partial}{\partial t}$ to the product and appending $\frac{\partial}{\partial s}$.

\vspace{3mm}

Since $f$ and $\frac{\partial}{\partial s}$ span $E$ near $(z,t)$ and $\mu\big\vert_E$ is non-degenerate, we have $\mu(f,\frac{\partial}{\partial s})\ne 0$. We then obtain a coframe:

\begin{displaymath}
\{i_{e_1}\sigma, \dots, i_{e_{2m-2}}\sigma, i_f\mu, i_{\frac{\partial}{\partial s}}\mu, dt\}
\end{displaymath}
near $(z,t)$.  Thus, $\tilde{\beta}$ may be written:

\begin{displaymath}
\tilde{\beta} = \sum_{i=1}^{2m-2} a_i(i_{e_i}\sigma) + c_1(i_f\mu) + c_2(i_{\frac{\partial}{\partial s}}\mu) + c_3dt
\end{displaymath}
for some choice of smooth function $a_i$ and $c_i$ near $(z,t)$ on $Z\times \R \times (-\epsilon, \epsilon)$.

\begin{enumerate}
\item Since $\tilde{\beta}(\frac{\partial}{\partial t})=0$, $c_3=0$.
\item Since $\tilde{\beta}(\frac{\partial}{\partial s})$ and $\tilde{\beta}(f)$ vanish at $s=0$, the condition $\mu(\frac{\partial}{\partial s}, f)\ne 0$ at $Z\times \R \times \{0\}$ implies that $c_2$ and $c_1$ vanish at $s=0$.  Thus, $c_2=s\tilde{c}_2$ and $c_1=s\tilde{c}_2$ for some smooth function $\tilde{c}_1$ and $\tilde{c}_2$.
\item The time dependent vector field is then given by:

\begin{displaymath}
X = \sum_{i=1}^{2m-2}a_ie_i + \tilde{c}_1f + \tilde{c}_2\frac{\partial}{\partial s}
\end{displaymath}
restricted to the leaves $Z\times \{t\} \times (-\epsilon, \epsilon)$.  This is because each of the vectors appearing in the defining equation of $X$ are tangent to the leaves $Z\times \{t\} \times (-\epsilon, \epsilon)$ by construction.

\item Since $a_i$, $\tilde{c}_1$, and $\tilde{c}_2$ are smooth function of $z$, $t$, and $s$, we have that $X$ defines a smooth, time-dependent vector field on $Z\times (-\epsilon,\epsilon)$, which shows that $X_t$ is smooth at points of the fold.
\end{enumerate}
\end{proof}

\subsection{Local Structure Near the Fold}
We now seek to strengthen corollary \ref{cor:fsnormal} to include non-orientable folded-symplectic manifolds and refine lemma \ref{lem:fsnormal} to a statement about folded-symplectic manifolds without boundary.  We will prove the following two propositions:

\begin{prop}\label{prop:orientation}
Let $(M,\sigma)$ be a folded-symplectic manifold with corners.  Let $Z\subset M$ be the folding hypersurface.  Then $Z$ possesses a canonical orientation induced by $\sigma$.  Equivalently, the null bundle $\ker(\sigma)\cap TZ$ possesses a canonical orientation induced by $\sigma$.
\end{prop}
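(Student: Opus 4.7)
The plan is to build the orientation on the null line $L := \ker(\sigma) \cap TZ$ directly from the intrinsic derivative of the contraction map $C_\sigma \colon TM \to T^*M$, $C_\sigma(X) = i_X\sigma$, viewed as a section of $\hom(TM, T^*M)$. Orienting $L$ is equivalent to orienting $Z$: $i_Z^*\sigma$ has kernel $L$ and rank $2m-2$, hence descends to a non-degenerate form on $TZ/L$ whose top power is a canonical volume form on the quotient, so an orientation of $L$ corresponds to an orientation of $TZ$ via the short exact sequence $0 \to L \to TZ \to TZ/L \to 0$.

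First I would set up the intrinsic derivative at $z \in Z$. Because $C_{\sigma,z}$ is skew, its image is exactly the annihilator of $\ker(\sigma)_z$ in $T_z^*M$, and restriction furnishes a canonical isomorphism $\operatorname{coker}(C_{\sigma,z}) \xrightarrow{\sim} \ker(\sigma)_z^*$. Under this identification the intrinsic derivative becomes
$$(DC_\sigma)_z \colon T_zM \to \hom(\ker(\sigma)_z, \ker(\sigma)_z^*),$$
which I repackage as a bilinear form $B_V$ on $\ker(\sigma)_z$ for each $V \in T_zM$. Using the recipe of Example \ref{ex:intrVB} together with the fact that $(i_{\tilde X}\sigma)_z = 0$ for any extension $\tilde X$ of $X \in \ker(\sigma)_z$, one finds $B_V(X, X') = V(\sigma(\tilde X, \tilde X'))\big|_z$ modulo the image of $C_{\sigma,z}$; antisymmetry of $B_V$ is then inherited directly from antisymmetry of $\sigma$.

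The crucial step is the vanishing $B_V = 0$ for $V \in T_zZ$. The clean reason is that $C_\sigma$ maps $Z$ entirely into the corank-$2$ stratum $L^2(TM, T^*M) \subset \hom(TM, T^*M)$, so $dC_\sigma(V) \in T_{C_\sigma(z)}L^2$ for every $V$ tangent to $Z$ and is killed by the projection onto the normal bundle $\nu(L^2)$ that defines the intrinsic derivative. Hence $(DC_\sigma)_z$ descends to a linear map $T_zM/T_zZ \to \hom(\ker(\sigma)_z, \ker(\sigma)_z^*)$, and a short computation in the local Darboux-type model $\sigma = x_1 dx_1\wedge dx_2 + \sum_{i\ge 2} dx_{2i-1}\wedge dx_{2i}$ shows that $B_{\partial/\partial x_1}$ restricts to the non-degenerate form $dx_1 \wedge dx_2$ on $\ker(\sigma)_z = \mathrm{span}(\partial_{x_1}, \partial_{x_2})$, so the descended map is in fact an injection.

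With $B$ in hand I would define the orientation. The composition $\ker(\sigma)_z \hookrightarrow T_zM \twoheadrightarrow T_zM/T_zZ$ has kernel $L_z$ and so descends to a canonical isomorphism $\ker(\sigma)_z/L_z \xrightarrow{\sim} T_zM/T_zZ$ of $1$-dimensional spaces. Given a nonzero class $[V] \in T_zM/T_zZ$, choose any lift $X_V \in \ker(\sigma)_z$ of the corresponding class in $\ker(\sigma)_z/L_z$, and declare $Y \in L_z$ positively oriented iff $B_V(X_V, Y) > 0$. Independence of the lift follows because changing $X_V$ by $Y_0 \in L_z$ contributes $B_V(Y_0, Y)$, which vanishes by antisymmetry on the line $L_z$; independence of $V$ modulo positive rescaling follows from $B_{\lambda V}(\lambda X_V, Y) = \lambda^2 B_V(X_V, Y)$. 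Smoothness of $B$ in $z$ makes the resulting orientation on $L \to Z$ smooth, and the canonical volume form on $TZ/L$ then transfers it to $Z$. I expect the main obstacle to be the vanishing step $B_V|_{T_zZ} = 0$, since this is exactly where the folded hypothesis $\sigma^m \pitchfork_s 0$ is forced to enter; should the corank-stratification argument feel delicate, the fallback is to work in the local representation $\sigma = p^*i^*\sigma + t\mu$ of Lemma \ref{lem:fsnormal} (available around each point, independently of global co-orientability), where the vanishing collapses to the observation that $V \cdot t = 0$ for $V \in T_zZ$.
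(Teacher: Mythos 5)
Your argument is correct, and it is precisely the intrinsic-derivative route that the paper sketches in Remark \ref{rem:intrinsic} but deliberately declines to write out; the paper's official proof instead performs the equivalent computation directly, defining $\alpha_x(v)=w_x(\sigma(\tilde w,\tilde v))$ for a local kernel section $w$ transverse to $Z$ in the model $\sigma=p^*i^*\sigma+t\mu$ of Lemma \ref{lem:fsnormal}, verifying by hand that $\alpha$ is independent of the extension $\tilde v$, that $\alpha(v)=g^2\mu(\tfrac{\partial}{\partial t},v)$ on $\ker(i_Z^*\sigma)$ (your $\lambda^2$ factor), and that $\Omega=(i_Z^*\sigma)^{m-1}\wedge\alpha$ glues to a global orientation. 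Your version buys conceptual clarity: the vanishing of $B_V$ for $V\in T_zZ$ via tangency to the corank-$2$ stratum $L^2(TM,T^*M)$ explains \emph{why} the construction descends to the normal bundle, where the paper's proof only exhibits this through the explicit formula. The one step you should not lean on is the ``Darboux-type model'' $x_1dx_1\wedge dx_2+\sum dx_{2i-1}\wedge dx_{2i}$: the paper never proves such a normal form (it is Martinet's theorem, which is not established in this document), so the non-degeneracy of $B_V$ on $\ker(\sigma)_z$ must come from your stated fallback, namely the local splitting $\sigma=p^*i^*\sigma+t\mu$ together with Corollary \ref{cor:symplectize}, which guarantees $\mu\big\vert_{\ker(\sigma)}$ is non-degenerate. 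With that substitution your proof is complete and matches the paper's intent.
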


\begin{remark}
This strengthens corollary \ref{cor:fsnormal} by showing that there are no choices in defining the orientation on $Z$ \emph{and} the orientation exists even in the case where $Z$ is not co-orientable.
\end{remark}

\begin{definition}\label{def:orientation}
Let $(M,\sigma)$ be a folded-symplectic manifold with corners.  Let $Z\subset M$ be the folding hypersurface.  We call the orientation on $Z$ afforded by proposition \ref{prop:orientation} the $\sigma$-induced orientation or the \emph{orientation induced by} $\sigma$.  We also refer to the orientation on $\ker(\sigma)\cap TZ$ as the \emph{orientation induced by} $\sigma$.
\end{definition}

\begin{prop}\label{prop:fsnormal}
Let $(M,\sigma)$ be a folded-symplectic manifold without boundary.  Let $Z\subset M$ be the folding hypersurface with kernel bundle $\ker(\sigma)\to Z$ and suppose $Z$ is co-orientable.  Then there exists a neighborhood $U$ of $Z$, a neighborhood $V\subset Z\times \R$ of the zero section, and a diffeomorphism $\phi: V \to U$ of manifolds (without boundary) satsifying:

\begin{enumerate}
\item $\phi(z,0)= z$ for all $z\in Z$ and
\item $\phi^*\sigma = p^*i^*\sigma + d(t^2p^*\alpha)$, where $p:Z\times \R \to Z$ is the projection, $i:Z \to Z\times \R$ is the inclusion as the zero section, and $\alpha\in \Omega^1(Z)$ is a $1$-form that doesn't vanish on $\ker(i_Z^*\sigma)$ (and orients it in the canonical way, necessarily).
\end{enumerate}
\end{prop}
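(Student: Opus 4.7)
The plan is to apply a Moser deformation argument starting from the normal form provided by lemma \ref{lem:fsnormal}. I first use that lemma to identify a neighborhood $U_0$ of $Z$ in $M$ with a neighborhood $V_0 \subset Z\times\R$ of the zero section via a diffeomorphism $\psi_0$ satisfying $\psi_0^*\sigma = \sigma_1 := p^*i^*\sigma + t\mu$, where $\mu|_{\ker(\sigma_1)}$ is non-degenerate along $Z\times\{0\}$. To choose the target 1-form $\alpha$, I note that since $Z$ is co-orientable and carries the canonical orientation of proposition \ref{prop:orientation}, while $TZ/\ker(i_Z^*\sigma)$ is oriented by $i_Z^*\sigma$, the line bundle $\ker(i_Z^*\sigma) \to Z$ is orientable and hence trivializable. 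I therefore select $\alpha \in \Omega^1(Z)$ nowhere vanishing on $\ker(i_Z^*\sigma)$ and compatible with the canonical orientation, so that $\alpha(f)$ and $\mu(\partial/\partial t, f)$ have the same sign for every nonzero $f\in\ker(i_Z^*\sigma)$. Set $\sigma_0 := p^*i^*\sigma + d(t^2 p^*\alpha)$ as the target form.

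I next consider the linear path $\tau_s := (1-s)\sigma_1 + s\sigma_0$ for $s\in[0,1]$ and verify that each $\tau_s$ is folded-symplectic with fold $Z\times\{0\}$ on a uniform neighborhood of the zero section. Each $\tau_s$ has the form $p^*i^*\sigma + t\mu_s$ for a smooth family $\mu_s$, so by lemma \ref{lem:symplectize} foldedness reduces to the non-degeneracy of $\mu_s$ restricted to the $2$-plane $\operatorname{span}(\partial/\partial t, f)$ at $Z\times\{0\}$, where $f$ spans $\ker(i_Z^*\sigma)$. A direct computation gives $\mu_s(\partial/\partial t, f)|_{t=0} = (1-s)\mu(\partial/\partial t, f) + 2s\alpha(f)$, and the orientation-matching condition on $\alpha$ forces these two summands to share a sign, so no convex combination vanishes.

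The final ingredient is a Moser primitive obtained from the relative Poincar\'e lemma on $Z\times\R$. Decomposing any closed form $\omega$ as $\omega_1(t) + dt\wedge\omega_2(t)$ with $t$-families $\omega_i(t)$ of forms on $Z$, a closed $\omega$ vanishing on $Z\times\{0\}$ admits the explicit primitive $H(z,t) := \int_0^t \omega_2(z,s)\,ds$, which is a $1$-form containing no $dt$-component. Applied to the difference $\sigma_0 - \sigma_1$, the integral yields an $H$ of order $2$ along $Z\times\{0\}$ with $H(\partial/\partial t) \equiv 0$, so $H$ lies in $\Omega^1_{\ker(\tau_s)}(V_0)$ for every $s$. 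Proposition \ref{prop:Moser} then produces a time-dependent vector field $X_s$ with $i_{X_s}\tau_s = -H$, and the local coframe argument of proposition \ref{prop:Ecotangent1} expresses the coefficients of $X_s$ as those of $H$ divided by at most one factor of $t$; the order-$2$ vanishing of $H$ therefore forces $X_s|_{Z\times\{0\}} = 0$. Lemma \ref{lem:Moser1} yields a flow $\phi_s$ defined on a neighborhood $V \subseteq V_0$ of $Z\times\{0\}$ for all $s\in[0,1]$, fixing $Z\times\{0\}$ pointwise and satisfying $\phi_s^*\tau_s = \sigma_1$; setting $\phi := \psi_0 \circ \phi_1^{-1}$ then produces the diffeomorphism claimed by the proposition.

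The main obstacle is the coupling between the two conditions that make the Moser argument succeed: the orientation-matching choice of $\alpha$ is precisely what keeps $\tau_s$ folded-symplectic along the deformation, while the order-$2$ vanishing of the Poincar\'e primitive along $Z\times\{0\}$ is precisely what forces the Moser vector field to vanish on $Z$ so that its flow can be integrated over the whole interval $[0,1]$ and pointwise fixes $Z$.
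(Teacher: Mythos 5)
Your proposal is correct and follows essentially the same route as the paper's proof: reduce to the collar $\sigma_1 = p^*i^*\sigma + t\mu$ via lemma \ref{lem:fsnormal}, interpolate linearly to $p^*i^*\sigma + d(t^2p^*\alpha)$ with $\alpha$ orienting $\ker(i_Z^*\sigma)$ compatibly so that lemma \ref{lem:symplectize} keeps the path folded, take the relative Poincar\'e primitive of $\sigma_0-\sigma_1$ (which vanishes to second order in $t$), and run Moser so that the resulting vector field vanishes on $Z\times\{0\}$ and integrates near the fold. The only cosmetic difference is that the paper fixes $\alpha = i^*(i_{\partial/\partial t}\mu)$ outright rather than choosing an arbitrary orientation-compatible form, and it extracts the vanishing of the Moser field by writing the primitive as $t\beta_s$ and using $tX_s$, which is the same order-of-vanishing argument you give via the coframe of proposition \ref{prop:Ecotangent1}.
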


\begin{remark}
Proposition \ref{prop:fsnormal} is a slight generalization of theorem $1$ in \cite{CGW}.  In particular, we do not require $M$ to be compact and orientable.  The equivariant version of this proposition is proposition \ref{prop:eqfsnormal}.
\end{remark}

The advantage of strengthening lemma \ref{lem:fsnormal} to the form of proposition \ref{prop:fsnormal} may be found in its corollary:

\begin{cor}\label{cor:fsnormal1}
Let $(M,\sigma)$ be a folded-symplectic manifold without boundary.  Let $Z\subset M$ be the folding hypersurface with kernel bundle $\ker(\sigma) \to Z$ and suppose $Z$ is co-orientable.  Let $\psi:Z\times \R \to Z\times \R$ be the fold map $\psi(z,t)=(z,t^2)$.  Then there exists a neighborhood $U$ of $Z$, a neighborhood $V$ of the zero section of $Z\times \R$, and a diffeomorphism of manifolds (without boundary) $\phi:V \to U$ so that:

\begin{displaymath}
\phi^*\sigma = \psi^*\omega
\end{displaymath}
for some symplectic form $\omega\in \Omega^2(V)$.
\end{cor}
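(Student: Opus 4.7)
The plan is to apply proposition \ref{prop:fsnormal} to reduce to a normal form and then reinterpret that normal form as the pullback, via the fold map $\psi(z,t)=(z,t^2)$, of a symplectic form on $Z\times\R$. First, I apply proposition \ref{prop:fsnormal} to obtain a neighborhood $U$ of $Z$, a neighborhood $V\subseteq Z\times\R$ of the zero section, and a diffeomorphism $\phi:V\to U$ satisfying $\phi(z,0)=z$ and $\phi^*\sigma = p^*i^*\sigma + d(t^2 p^*\alpha)$, where $\alpha\in\Omega^1(Z)$ does not vanish on $\ker(i_Z^*\sigma)$.

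Next, I define $\omega \in \Omega^2(V)$ by $\omega := p^*i^*\sigma + d(s\, p^*\alpha)$, where $s$ denotes the coordinate on the $\R$-factor of $Z\times\R$ regarded as an independent function (not as $t^2$). Since $p\circ\psi = p$ and $\psi^*s = t^2$, pulling back yields
\begin{displaymath}
\psi^*\omega = p^*i^*\sigma + d(t^2\, p^*\alpha) = \phi^*\sigma,
\end{displaymath}
which is the required identity, provided $\omega$ is symplectic on (a possibly smaller) neighborhood of $Z\times\{0\}$.

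It remains to verify that $\omega$ is symplectic. Closedness is automatic: $d\omega = p^*i^*(d\sigma) + d^2(s\, p^*\alpha) = 0$. For non-degeneracy, expand using $d(s\, p^*\alpha) = ds\wedge p^*\alpha + s\, p^*d\alpha$, and note that $(ds\wedge p^*\alpha)^2 = 0$; moreover, any term appearing in $\omega^n$ (with $2n=\dim M$) that contains no factor of $ds$ is a $2n$-form pulled back from the $(2n-1)$-dimensional manifold $Z$ and so must vanish. Only the terms with exactly one factor of $ds\wedge p^*\alpha$ survive, giving
\begin{displaymath}
\omega^n\big|_{(z,0)} = n\, p^*\bigl((i_Z^*\sigma)^{n-1}\bigr)\big|_{z}\wedge ds\wedge p^*\alpha\big|_{z},
\end{displaymath}
which is non-zero precisely because $\alpha$ restricted to the line bundle $\ker(i_Z^*\sigma)$---which is the kernel of $(i_Z^*\sigma)^{n-1}$---is non-vanishing. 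By continuity, $\omega^n$ is non-vanishing on a neighborhood of $Z\times\{0\}$; shrinking $V$ to this neighborhood (and $U=\phi(V)$ correspondingly) completes the construction.

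The main (and really the only) obstacle is the non-degeneracy verification, but this reduces to the elementary observation that $(i_Z^*\sigma)^{n-1}\wedge\alpha$ is a volume form on $Z$ exactly when $\alpha$ is non-zero along the null line bundle of $i_Z^*\sigma$, which is the hypothesis built into the normal form supplied by proposition \ref{prop:fsnormal}.
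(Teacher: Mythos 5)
Your argument is correct and is essentially the same as the paper's (the paper carries it out for the equivariant version, Corollary \ref{cor:eqfsnormal1}): starting from the normal form $p^*i^*\sigma + d(t^2p^*\alpha)$ of Proposition \ref{prop:fsnormal}, one defines $\omega = p^*i^*\sigma + d(t\,p^*\alpha)$ on the target copy of $Z\times\R$, checks it is symplectic near $Z\times\{0\}$ because $dt\wedge p^*\alpha$ is non-degenerate on $\ker(\sigma)$, and observes $\psi^*\omega = \phi^*\sigma$. Your explicit expansion of $\omega^n$ to verify non-degeneracy is a correct and slightly more detailed rendering of the same step.
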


Hence, if $Z$ is co-orientable then there is a neighborhood $U$ of $Z$, a symplectic form $\omega\in \Omega^2(U)$ and a fold map $f:U \to U$ satisfying $\sigma = f^*\omega$.  This means that every folded-symplectic form with co-orientable folding hypersurface, $Z$, looks like the pullback of a symplectic structure by a fold map in the neighborhood of $Z$.
\subsubsection{Canonical Orientation on the Fold}

Before we begin, let us recall a construction of the Hessian in Morse Theory.  Given a manifold without boundary, a smooth function $f:M \to \R$, and a critical point $p\in M$, one may define the Hessian at $p$ $Hf_p:T_pM \times T_pM \to \R$ as follows:

\begin{displaymath}
Hf_p(X,Y) = X(df(\tilde{Y}))
\end{displaymath}
where $\tilde{Y}$ is any extension of the vector $Y\in T_pM$ to a local vector field.  It turns out that the degeneracy of $df$ at $p$ is enough to guarantee that the Hessian is independent of the choice of extension $Y$.  Note that, using our discussion of the intrinsic derivative, we may also interpret the Hessian as the intrinsic derivative of $f$ at $p$.

\vspace{5mm}

Given a folded-symplectic manifold with corners $(M,\sigma)$, we will perform a similar construction using the degeneracies of $\sigma$ in order to ensure that we have a well-defined result.  The spirit of the proof is very similar to that of the proof that $Hf_p$ is well-defined, though it is slightly more technical since we now have a $2$-form instead of a $1$-form $df$.

\begin{remark}\label{rem:intrinsic}
In what follows, we are computing an intrinsic derivative and using it to define the orientation on $Z$.  However, since the reader may not be familiar with the intrinsic derivative (q.v. section 2.1.5), we simply compute everything directly.  We outline the intrinsic derivative approach in this remark.

Consider the map of vector bundles given by contraction with $\sigma$ (we avoid calling it $\sigma^\#$ since we have reserved that notation for a different map):

\begin{displaymath}
\xymatrixrowsep{.5pc}\xymatrixcolsep{3pc}\xymatrix{
C_{\sigma}: TM \ar[r] & T^*M \\
 C_{\sigma}(p,X) \ar[r] & i_X\sigma_p
}
\end{displaymath}
This is a map of vector bundles over $M$, hence we may view it as a section of $\hom(TM,T^*M)$ and compute its intrinsic derivative at points $z\in Z$.  Its kernel at $z\in Z$ is just $\ker(\sigma_z)$ and its cokernel may be identified with any $2$-dimensional subspace $V$ of $T_z^*Z$ transverse to $\ker(\sigma_z)^o$.  The intrinsic derivative at $z$ gives us a map:

\begin{displaymath}
\xymatrixcolsep{3pc}\xymatrix{
(DC_{\sigma})_z:T_zM \ar[r] &  \hom(\ker(\sigma_z), V)
}
\end{displaymath}
Because $\operatorname{corank}(C_{\sigma}) = 2$ along $Z$, we have that $d(C_{\sigma})_z(T_zZ)$ is tangent to $L^2(TM, T^*M)_{C_{\sigma_z}}$, hence projecting to the normal bundle of $L^2(TM,T^*M)$ gives us the zero map.  Thus, $(Df)_z:T_zM \to \hom(\ker(\sigma_z),V)$ descends to a map:

\begin{displaymath}
\xymatrixcolsep{3pc}\xymatrix{
(DC_{\sigma})_z:\nu(Z)_z \ar[r] & \hom(\ker(\sigma_z), V)
}
\end{displaymath}
where $\nu(Z)=TM\big\vert_Z / TZ$ is the normal bundle.  We can define an orientation at $z$ as follows: pick a direction $w\in T_zM$ transverse to $z$, which gives us a nonzero element of $\nu(Z)_z$.  We then define a nonzero element $v$ of $\ker(\sigma_z)\cap T_zZ$ to be \emph{positively oriented} if:

\begin{displaymath}
((DC_{\sigma})_z([w])(w))(v) > 0
\end{displaymath}
that is, if the element $(Df)_z([w])(w) \in V\subset T_z^*Z$ evaluated on $v$ gives a positive number.  There is something to prove here.  Namely, one must show that $((DC_{\sigma})_z([w])(w))(v)$ is nonzero, but this follows from the fact that $\sigma$ induces a symplectic structure on $\ker(\sigma)$ by differentiating in a direction normal to $Z$ (q.v. lemma \ref{lem:fsnormal}).  If the reader is convinced, he or she may skip the proof of proposition \ref{prop:orientation}.  On the other hand, if the reader isn't convinced, then let us offer a more explicit proof below.
\end{remark}

\newpage

\begin{proof}[Proof of proposition \ref{prop:orientation}]
\mbox{ }
Let $\dim(Z)=2m-1$.  We define the orientation on the fold $Z$ as follows.  Let $p\in Z$ be any point in the fold and choose a local non-vanishing section $w\in \Gamma(\ker(\sigma))$ near $p$ transverse to $Z$.  Fix an extension $\tilde{w}$ of $w$ to a local vector field on $M$ near $p$.  We then define a $1$-form $\alpha\in \Omega^1(Z)$ on $Z$ near $p$ by the equation:

\begin{equation}\label{eq:alpha}
\alpha_x(v) = w_x(\sigma(\tilde{w},\tilde{v}))
\end{equation}
where $x\in Z$ is a point near $p$ and $\tilde{v}$ is any extension of $v\in T_xZ$ to a local vector field on $M$ near $p$.  This is just differentiating the local function $\sigma(\tilde{w},\tilde{v})$ in the direction $w_x$.  We claim that the $2m-1$ form:

\begin{equation}\label{eq:Omega}
\Omega := (i_Z^*\sigma)^{m-1}\wedge \alpha
\end{equation}
defines an orientation near $p$ which is independent of the choices we have made along the way.  So, given $p\in Z$, we have several goals:

\begin{enumerate}
\item Show that the $1$-form $\alpha$ defined near $p$ by equation \ref{eq:alpha} does not depend on the choice of extension $\tilde{v}$.
\item Show that the induced orientation in a neighborhood of $p$ defined by $\Omega$ in equation \ref{eq:Omega} is independent of the choice of $w$ and its extension $\tilde{w}$.
\item Show that the preceding two facts allow us to glue the orientations together on $Z$ to give a globally defined orientation.
\end{enumerate}

We proceed in the above order:

\begin{enumerate}
\item To show that $\alpha$ is well-defined, we use the local model of lemma \ref{lem:fsnormal}.  That is, we assume $M$ is a neighborhood of the zero section of $Z\times \R$ and $\sigma=p^*i^*\sigma + t\mu$, where $p:Z\times \R \to Z$ is the projection, $i:Z \to Z\times \R$ is the inclusion as the zero section, and $\mu$ is some $2$-form that is non-degenerate on $\ker(\sigma)$.  In this local model, we have that $\frac{\partial}{\partial t} \in \Gamma(\ker(\sigma))$ at points of $Z\times \{0\}$.  Therefore, any section $w\in \Gamma(\ker(\sigma))$ transverse to $Z\times \{0\}$ has the form:

    \begin{displaymath}
    w=g\frac{\partial}{\partial t} + X, \text{ where $X\in \ker(i_Z^*\sigma)$ and $g$ is non-vanishing}.
    \end{displaymath}
    Its extension $\tilde{w}$ then has the form:

    \begin{displaymath}
    \tilde{w} = \tilde{g}\frac{\partial}{\partial t} + \tilde{X}
    \end{displaymath}
    where $\tilde{X}$ is an extension of $X$ to a local vector field in a neighborhood of $Z\times \{0\}$ and $\tilde{g}$ is a local extension of $g$ to a smooth function in a neighborhood of $Z\times \{0\}$.  Note that, by definition of extension, $\tilde{X}(z,0)=X(z)\in \ker(i_Z^*\sigma)$ and $\tilde{g}(z,0)=g(z) \ne 0$.  We compute:

    \begin{displaymath}
    \begin{array}{l}
    w(\sigma(\tilde{w},\tilde{v})) =  \\
    \displaystyle (g\frac{\partial}{\partial t}\big\vert_{t=0} + X(z))(p^*i^*\sigma(\tilde{w},\tilde{v}) + t\mu(\tilde{w},\tilde{v})) = \\
    \displaystyle g\frac{\partial}{\partial t}\big\vert_{t=0}(\tilde{g}t\mu(\frac{\partial}{\partial t},\tilde{v}) + p^*i^*\sigma(\tilde{X},\tilde{v}) + t\mu(\tilde{X},\tilde{v}) = \\
    \displaystyle (g^2\mu(\frac{\partial}{\partial t}, v) + g\mu(X,v))\big\vert_{t=0} + g\frac{\partial}{\partial t}\big\vert_{t=0} p^*i^*\sigma(\tilde{X},\tilde{v})
    \end{array}
    \end{displaymath}
    where the third line follows since $X(z)$ is tangent to $Z$ and the quantity $\sigma(\tilde{w},\tilde{v})_{(z,0)} = \sigma(w,\tilde{v})_{(z,0)}=0$ vanishes along $Z\times \{0\}$.  Now, $\tilde{X}\big\vert_{t=0} = X \in \ker(i_Z^*\sigma)$, hence $\tilde{X}=p^*X + tY$, where $p^*X=(X,0)$ is the pullback of the vector field $X\in \Gamma(TZ)$ to the product $Z\times \R$ and $Y$ is some vector field.  We then have:

    \begin{displaymath}
    g\frac{\partial}{\partial t}\big\vert_{t=0}p^*i^*\sigma(\tilde{X},\tilde{v}) = gp^*i^*\sigma(Y,v)\big\vert_{t=0}
    \end{displaymath}
    which depends only on the extension $\tilde{w}$ of $w$.  But, we fixed this extension, hence each of the terms of:

    \begin{equation}\label{eq:alpha1}
    \alpha(v)=w(\sigma(\tilde{w},\tilde{v})) = (g^2\mu(\frac{\partial}{\partial t}, v) + g\mu(X,v) + gp^*i^*\sigma (Y,v))\big\vert_{t=0}
    \end{equation}
    do not depend on the extension of $v$ to $\tilde{v}$ and we obtain a well-defined $1$-form $\alpha$ on $Z$ in a neighborhood of $p$.  Furthermore, if $v\in \Gamma(\ker(i_Z^*\sigma))$ lies in the kernel of $\sigma$, then equation \ref{eq:alpha1} yields:

    \begin{displaymath}
    \alpha(v) = g^2\mu(\frac{\partial}{\partial t},v)\big\vert_{t=0}
    \end{displaymath}
    which is nonzero since $\mu$ is non-degenerate on the kernel bundle $\ker(\sigma) \to Z$.  Thus, $\alpha\big\vert_{\ker(i_Z^*\sigma)}$ is nonvanishing, which allows us to define the orientation:

    \begin{equation}\label{eq:Omega1}
    \Omega:= (i_Z^*\sigma)^{m-1}\wedge \alpha = g^2(i_Z^*\sigma)^{m-1}\wedge \mu(\frac{\partial}{\partial t}, \cdot)
    \end{equation}
    To see the rightmost equality, simply choose a local frame $v\in \Gamma(\ker(i_Z^*\sigma))$, extend to a local frame of $Z$ near $p$, and evaluate $\Omega$ on this frame.  Many of the terms will vanish since $p^*i^*\sigma(v,\cdot) = 0$.  Since $g$ is nowhere vanishing on $Z$, $g^2$ is nowhere vanishing and $\Omega$ defines an orientation form on $Z$.  Note that we are using the fact that $i_Z^*\sigma$ has maximal rank $2m-2$.

\item Now, if $w_0$ is any other section of $\ker(\sigma)$ transverse to $Z\times \{0\}$, then we may write:
    \begin{displaymath}
    w_0  = f\frac{\partial}{\partial t} + X_0
    \end{displaymath}
    for some $X_0\in \ker(i_Z^*\sigma)$ and some smooth function $f\in C^{\infty}(Z)$.  Fix an extension $\tilde{w}_0$ of $w_0$ and let $\alpha_0$ be the $1$-form generated by our construction.  If we let $v\in \Gamma(\ker(i_Z^*\sigma))$ be a section of the kernel and use equation \ref{eq:alpha1}, we obtain:

    \begin{displaymath}
    \alpha_0(v) = f^2\mu(\frac{\partial}{\partial t}, v)
    \end{displaymath}
    hence the induced orientation form $\Omega_0$ is:

    \begin{displaymath}
    \Omega_0 = (i_Z^*\sigma)^{m-1} \wedge \alpha_0 = f^2(i_Z^*\sigma)^{m-1}\wedge \mu(\frac{\partial}{\partial t} ,\cdot)
    \end{displaymath}
    That is, all orientation forms constructed in this manner are positive multiples of each other since they are positive multiples of:

    \begin{displaymath}
    (i_Z^*\sigma)^{m-1} \wedge \mu(\frac{\partial}{\partial t},\cdot)
    \end{displaymath}
    Thus, given $p\in Z$, we have defined an orientation on a neighborhood $U_p\subset Z$ of $p$ that is independent of our choices.
\item Now, cover $Z$ by the neighborhoods $\{U_p\}_{p\in Z}$, where each $U_p$ is equipped with our orientation.  On an overlap $U_{p_0}\cap U_{p_1}$, these orientations must agree since any section $w_i$ of $\ker(\sigma)$ on $U_i$ restricts to a section of $\ker(\sigma)$ on the intersection $U_{p_0}\cap U_{p_1}$.  Hence, if we perform the construction of the orientation on $U_{p_0}\cap U_{p_1}$ using $w_0$ and $w_1$, we find that the induced orientations agree since they are independent of the choices of sections of $\ker(\sigma)$.  That is, the orientation of $U_{p_0}$ agrees with that of $U_{p_1}$ on the intersection.  Thus, our construction produces a global orientation on $Z$.
\end{enumerate}
\end{proof}
\subsubsection{Normal Form for the Fold}

We will need the following weak version of the Poincar\'{e} lemma to prove proposition \ref{prop:fsnormal}.

\begin{lemma}\label{lem:poincare}
Let $Z$ be a manifold with corners and let $\sigma\in \Omega^k(Z\times \R)$ be a $k$-form such that $i_Z^*\sigma = 0$, where $i_Z:Z\times \{0\} \to Z\times \R$ is the inclusion as the zero section.  Let $r_s(z,t)=(z,st)$ be the deformation retract of $Z\times \R$ onto $Z\times \{0\}$.  Then:

\begin{displaymath}
\sigma = d\int_0^1 \frac{1}{s}r^*_s(i_{t\frac{\partial}{\partial t}}\sigma)ds = d(t\int_0^1 r_s^*(i_{\frac{\partial}{\partial t}}\sigma)ds
\end{displaymath}
\end{lemma}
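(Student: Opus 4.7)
The plan is to prove this by the standard Poincar\'{e}-style homotopy argument associated to the deformation retract $r_s$, with one subtlety about smoothness of the integrand at $s = 0$. Note that as written, the lemma requires $d\sigma = 0$ for the conclusion $\sigma = d(\cdot)$ to make sense (this is implicit from the application context of a difference of two closed folded-symplectic forms); I will treat $\sigma$ as closed throughout, which is the intended hypothesis.

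First I will identify the time-dependent vector field $Y_s$ on $Z\times \R$ generating the isotopy $r_s$. Computing $\tfrac{d}{ds}r_s(z,t) = (0,t)$ and re-expressing the result at the target point $(z,st)$ in terms of the target coordinate, I find $Y_s = \tfrac{t}{s}\tfrac{\partial}{\partial t}$ (a priori singular at $s=0$, but see below). Then I apply the fundamental identity $\tfrac{d}{ds}r_s^*\sigma = r_s^*(\pounds_{Y_s}\sigma)$ and expand via Cartan's magic formula: $\pounds_{Y_s}\sigma = di_{Y_s}\sigma + i_{Y_s}d\sigma$. Using $d\sigma = 0$, the second term vanishes, so $\tfrac{d}{ds}r_s^*\sigma = r_s^*(di_{Y_s}\sigma) = d\bigl(r_s^*(i_{Y_s}\sigma)\bigr)$.

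Next I integrate from $s=0$ to $s=1$. Since $r_1=\operatorname{id}$ and $r_0 = i_Z\circ p$ where $p:Z\times\R\to Z$ is the projection onto $Z\times\{0\}$, the hypothesis $i_Z^*\sigma=0$ gives $r_0^*\sigma = p^*i_Z^*\sigma = 0$. Therefore $\sigma = r_1^*\sigma - r_0^*\sigma = \int_0^1 d\bigl(r_s^*(i_{Y_s}\sigma)\bigr)\, ds = d\int_0^1 r_s^*(i_{Y_s}\sigma)\, ds$, where the interchange of $d$ with the integral is standard once smoothness of the integrand is verified. To get the two forms of the answer in the lemma, I simply compute: since $r_s^*(t) = st$ and pullback is $C^\infty$-linear over functions, $r_s^*\bigl(\tfrac{1}{s}i_{t\partial_t}\sigma\bigr) = \tfrac{1}{s}\cdot r_s^*(t)\cdot r_s^*(i_{\partial_t}\sigma) = t\cdot r_s^*(i_{\partial_t}\sigma)$, and likewise $r_s^*(i_{Y_s}\sigma) = r_s^*\bigl(\tfrac{t}{s}i_{\partial_t}\sigma\bigr) = t\cdot r_s^*(i_{\partial_t}\sigma)$, which gives both expressions and their equality.

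The main obstacle is verifying that the integrand $\tfrac{1}{s}r_s^*(i_{t\partial_t}\sigma)$ extends smoothly to $s = 0$, so that the integral and its exterior derivative are well defined. The apparent $1/s$ singularity is cancelled by the factor $r_s^*(t) = st$, giving the non-singular expression $t\cdot r_s^*(i_{\partial_t}\sigma)$. Smoothness of this in $(z,t,s)$ follows because $i_{\partial_t}\sigma$ is a smooth $(k-1)$-form on $Z\times\R$ and $r_s$ depends smoothly on $s\in[0,1]$, so $(z,t,s)\mapsto r_s^*(i_{\partial_t}\sigma)_{(z,t)}$ is smooth on $Z\times\R\times [0,1]$. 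Smoothness in turn justifies differentiation under the integral sign, completing the proof.
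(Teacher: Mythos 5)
Your argument is correct. The paper itself offers no proof of this lemma — it simply cites Lemma 31.16 of Michor's \emph{Topics in Differential Geometry} — and what you have written is precisely the standard homotopy-operator argument that underlies that reference: compute the time-dependent generator $Y_s=\frac{t}{s}\frac{\partial}{\partial t}$ of $r_s$, apply $\frac{d}{ds}r_s^*\sigma=r_s^*(\pounds_{Y_s}\sigma)=d\bigl(r_s^*(i_{Y_s}\sigma)\bigr)$, integrate from $0$ to $1$ using $r_0^*\sigma=p^*i_Z^*\sigma=0$, and observe that the apparent $1/s$ singularity cancels against $r_s^*(t)=st$ so the integrand extends smoothly to $s=0$. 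Your identification of the two displayed expressions via $\frac{1}{s}r_s^*(i_{t\frac{\partial}{\partial t}}\sigma)=t\,r_s^*(i_{\frac{\partial}{\partial t}}\sigma)$ is also right. One point worth emphasizing: you are correct that the hypothesis $d\sigma=0$ is genuinely needed and is silently omitted from the statement — e.g.\ $\sigma=t\,dz$ on $\R\times\R$ satisfies $i_Z^*\sigma=0$ but $i_{t\frac{\partial}{\partial t}}\sigma=0$, so the claimed formula would give $\sigma=0$; the lemma is only ever applied in the paper to the closed form $\sigma_1-\sigma_0$, so restoring that hypothesis, as you do, is the right reading.
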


\begin{proof} \mbox{ } \newline
This is lemma 31.16 in \cite{Mi1}.
\end{proof}

\begin{proof}[Proof of proposition \ref{prop:fsnormal}] \mbox{ }
By assumption, we have a folded-symplectic maniold $(M,\sigma)$ without boundary and a co-orientable folding hypersurface $Z\subset M$.  By lemma \ref{lem:fsnormal}, we may assume that a neighborhood of $Z$ is a neighborhood $W$ of the zero section of $Z\times \R$ with folded-symplectic form:

\begin{displaymath}
\sigma = p^*i^*\sigma + t\mu
\end{displaymath}
where $p:Z\times \R \to Z$ is the projection, $i:Z \to Z\times \R$ is the inclusion as the zero section, and $\mu$ is a $2$-form on $W$.  Let $\alpha \in \Omega^1(Z)$ be the $1$-form defined by:

\begin{displaymath}
\alpha := i^*(i_{\frac{\partial}{\partial t}}\mu)
\end{displaymath}
By lemma \ref{lem:symplectize}, $\mu\big\vert_{\ker(\sigma)}$ is non-degenerate, hence $\alpha \big\vert_{\ker(i_Z^*\sigma)}$ is non-vanishing.  In fact, it defines the canonical orientation of $Z$ in proposition \ref{prop:orientation}.  Define:

\begin{displaymath}
\sigma_0:= p^*i^*\sigma + d(\frac{t^2}{2}p^*\alpha)
\end{displaymath}
and
\begin{displaymath}
\sigma_1:= p^*i^*\sigma + t\mu
\end{displaymath}
and let $\sigma_s$ be the path:

\begin{equation}\label{eq:foldpath}
\sigma_s:= (1-s)\sigma_0 + s\sigma_1= p^*i^*\sigma + t((1-s)(dt\wedge p^*\alpha + \frac{t}{2}p^*d\alpha) + s\mu)
\end{equation}
Note that $\ker(\sigma_s)=\ker(\sigma)$ is independent of $s$.  Define $\mu_s:= (1-s)(dt\wedge p^*\alpha + \frac{t}{2}p^*d\alpha) + s\mu$ and note that for any oriented section $f \in \Gamma(\ker(i_Z^*\sigma))$, we have:

\begin{displaymath}
\mu_s\big\vert_{t=0}(\frac{\partial}{\partial t}, f) = \mu(\frac{\partial}{\partial t},f) \ne 0
\end{displaymath}
hence $\mu_s\big\vert_{\ker(\sigma)} = \mu_s\big\vert_{\ker(\sigma_s)}$ is non-degenerate.  Lemma \ref{lem:symplectize} implies that $\sigma_s$ is folded for each $s$.  Thus, there exists a neighborhood $\tilde{V}$ of $Z\times \{0\}$ on which $\sigma_s$ is folded for all $s\in [0,1]$.  Now, $\sigma_1-\sigma_0$ is closed and vanishes on $Z\times \{0\}$, hence lemma \ref{lem:poincare} implies:

\begin{displaymath}
\dot{\sigma}_s = \sigma_1-\sigma_0 = d(t\int_0^1r_s^*(i_{\frac{\partial}{\partial t}}(\sigma_1-\sigma_0))ds)
\end{displaymath}
where $r_s(z,t)=(z,st)$ is the deformation retract onto $Z\times \{0\}$.  Since $i_{\frac{\partial}{\partial t}}(\sigma_0-\sigma_1)$ vanishes at $Z\times \{0\}$, we may write it as $t\eta$ for some $1$-form $\eta$, hence:

\begin{displaymath}
\dot{\sigma}_s = d(t\int_0^1r_s^*(t\eta)ds) = d(t^2\int_0^1 sr_s^*(\eta)ds)
\end{displaymath}
Let $\beta_s = t\int_0^1 sr_s^*(\eta)ds$ and note that it vanishes at $Z\times \{0\}$, hence it vanishes on $\ker(\sigma)$ and therefore gives us an element $\beta_s \in \Omega^1_{\ker(\sigma_s)}(M) = \Omega^1_{\ker(\sigma)}(M)$ for all $s$.  By proposition \ref{prop:Moser}, there exists a smooth time-dependent vector field $X_s$ on $\tilde{V}$ satisfying:

\begin{displaymath}
i_{X_s}\sigma_s = -\beta_s
\end{displaymath}
defined as $X_s:=(\sigma_s^\#)^{-1}(-\beta_s)$.  Now, $\dot{\sigma}_s= t\beta_s$, hence $tX_s$ is the unique vector field satisfying:

\begin{displaymath}
i_{tX_s}\sigma_s = -t\beta_s = -\dot{\sigma}_s
\end{displaymath}
Hence, the flow $\phi_s$ of $tX_s$ satisfies $\phi_s^*\sigma_s =\sigma_0$ by proposition \ref{prop:Moser}.  Since $tX_s$ vanishes at $Z\times \{0\}$, we may again use proposition \ref{prop:Moser} to obtain a neighborhood $V\subset{\tilde{V}}$ of $Z\times \{0\}$ on which the flow exists for all time $s\in [0,1]$.  We then have an open embedding:

\begin{displaymath}
\phi_1: V \to Z\times \R
\end{displaymath}
satisfying $\phi_1(z,0)=(z,0)$ and $\displaystyle\phi_1^*\sigma_1 = \phi_1^*\sigma = \sigma_0 = p^*i^*\sigma + d(\frac{t^2}{2}p^*\alpha)$.
\end{proof}

\begin{remark}
There is a significant difficulty in generalizing proposition \ref{prop:fsnormal} to manifolds with corners.  Namely, the time-dependent vector field $X_s$ constructed in the proof may $not$ be stratified, meaning its flow does $not$ generate diffeomorphisms of manifolds with corners.  Consider the following example:

\vspace{3mm}
\begin{example}
This example discusses the difficulties inherent in producing stratified, time-dependent vector fields.  Let $M= (\R^+)^2 \times \R^2$ with coordinates $(x_1,x_2, t, y)$ and let

\begin{displaymath}
\begin{array}{lcl}
\sigma_0 & =& dx_1\wedge dx_2 + tdt\wedge dy +dx_1 \wedge dy \\
\sigma_1 & =& dx_1\wedge dx_2 + tdt\wedge dy +dx_1 \wedge dy + 3t^2dt\wedge dx_1 \\
\end{array}
\end{displaymath}
and let $\sigma_s$ be the linear path $\sigma_s = (1-s)\sigma_0 + s\sigma_1$, which we may write as:

\begin{displaymath}
\sigma_s = \sigma_0 + s3t^2dt\wedge dx_1 = dx_1\wedge dx_2 + tdt\wedge dy +dx_1 \wedge dy + 3st^2dt\wedge dx_1
\end{displaymath}
Then $\sigma_s$ is folded for all $s$ since:

\begin{itemize}
\item $\sigma_s^2 = tdx_1\wedge dx_2 \wedge dt\wedge dy$, hence $Z=\{t=0\}$ and
\item $i_Z^*\sigma_s= dx_1\wedge dx_2 + dx_1\wedge dy$ has rank $2$ on $(\R^+) \times \{0\}\times \R$.
\end{itemize}
The derivative $\dot{\sigma}_s$ is:

\begin{displaymath}
\dot{\sigma}_s = 3t^2dt\wedge dx_1
\end{displaymath}
There are infinitely many candidates for primitives $\beta_s$ of $\dot{\sigma}_s$, where \emph{primitive} means $d\beta_s = \dot{\sigma}_s$.  We consider two of them and show that the vector fields produced from the equation $i_{X_s}\sigma_s = -\beta_s$ are not stratified.

\begin{enumerate}
\item In the proof of proposition \ref{prop:fsnormal}, the primitive of $\dot{\sigma}_s$ would be $\beta_s= t^3dx_1$.  The vector field satisfying the equation $i_{X_s}\sigma_s = -\beta_s$ is then $ct^3\frac{\partial}{\partial x_2}$, which is not a stratified vector field on $M$.  Indeed, it is transverse to the stratum $\{x_2=0\}$ when $t\ne 0$, meaning its flow is not strata-preserving and therefore does not induce diffeomorphisms of manifolds with corners.

\item Let us try using the primitive $\beta_s= -3x_1t^2dt$.  Then the vector field $X_s$ satisfying $i_{X_s}\sigma_s = -\beta_s$ is:
\begin{displaymath}
X_s = 3x_1t\frac{\partial}{\partial y} + 3x_1 t \frac{\partial}{\partial x_2}
\end{displaymath}
which is not stratified since it is transverse to the set $\{x_2=0\}$ when $x_1\ne 0$ and $t\ne 0$.
\end{enumerate}

The point is that one must find a family of primitives $\beta_s$ for $\dot{\sigma}_s$ so that for each $s\in \R$ $\beta_s$ lies in the image of the submodule of stratified vector fields under the map:

\begin{displaymath}
\sigma_s^\#:\Gamma(TM) \to \Omega_{\ker(\sigma_s)}^1(M)
\end{displaymath}
and it is not clear that this is always possible.
\end{example}
\end{remark}





\pagebreak
\section{Hamiltonian Group Actions on Folded-Symplectic Manifolds}
The purpose of this chapter is two-fold.  We first review group actions and symplectic group actions for the benefit of the reader, standardizing our notation and recalling some important results about both general group actions and torus actions.  This material comprises the bulk of the first two sections and could rightfully be placed in an appendix.  However, because Hamiltonian group actions are an integral component of our study, we wish to place the background alongside our development of new theory for folded-symplectic manifolds.  In particular, we review symplectic representations of tori, Hamiltonian actions on symplectic manifolds, and the normal form of Guillemin and Sternberg \cite{GS2} for isotropic orbits in Hamiltonian $G$-manifolds.  We will need all of these tools in order to discuss the local (equivariant) invariants of a toric, folded-symplectic manifold.  Along the way, we will show that every folding hypersurface admits an equivariant symplectization and an equivariant folded-symplectization where the fold is co-orientable.  Coupling the normal form of Guillemin-Sternberg with the symplectization of the folding hypersurface produces a local uniqueness result for folding hypersurfaces in toric, folded-symplectic manifolds (q.v. lemma \ref{lem:locunique}), which we will extend to a local uniqueness statement for toric, folded-symplectic manifolds.  After we have reviewed the requisite group action theory, we will develop an equivariant analog of the normal form proposition \ref{prop:fsnormal} for the folding hypersurface, which will facilitate our proofs of structural results about toric, folded-symplectic manifolds.

\subsection{Group Actions on Manifolds}
We give a definition of a group action on manifolds \emph{with corners}.  However, most subsequent lemmas, definitions, theorems, and propositions will consider only group actions on manifolds \emph{without corners}.  We will distinguish between manifolds with corners and those without when necessary.

\begin{definition}\label{def:action}
Let $G$ be a Lie group and let $M$ be a manifold with corners.  An action of $G$ on $M$ is a homomorphism:

\begin{displaymath}
\tau:G \to \operatorname{Diff}(M)
\end{displaymath}
so that the action map $\mathcal{A}:G\times M \to M \times M$ given by $\mathcal{A}(g,m)=(\tau(g)(m),m)$ is smooth.  For the sake of notation, for each $g\in G$ we write$\tau_g:=\tau(g)$ and refer to it as \emph{the action of g on M}.  We often write $g\cdot p$ for the action of an element $g\in G$ on $p\in M$.  We reserve the right to switch between $g\cdot p$ and $\tau_g(p)$ freely.
\end{definition}

\begin{definition}\label{def:orbit}
Let $G$ be a Lie group that acts on a manifold without corners $M$.  Let $p\in M$ be a point.  We define the \emph{orbit of p} to be the set:

\begin{displaymath}
G\cdot p:= \{m\in M\vert \mbox{ } \exists g\in G \text{ such that } m=g\cdot p\}
\end{displaymath}
We define the \emph{stabilizer of p} to be the subgroup:

\begin{displaymath}
G_p:=\{ g\in G \vert \mbox{ }g\cdot p=p\}
\end{displaymath}
\end{definition}

\begin{definition}\label{def:proper}
Let $G$ be a Lie group and let $M$ be a manifold with corners.  Suppose $\tau:G \to \operatorname{Diff}(M)$ is an action of $G$ on $M$.  We say the action is \emph{proper} if the action map $\mathcal{A}:G\times M \to M \times M$ is proper.  That is, if $K\subset M\times M$ is compact, then $\mathcal{A}^{-1}(K)$ is also compact.
\end{definition}

The following lemma is standard in the study of group actions on manifolds.  Its proof may be found in \cite{Ka, Mi1}.

\begin{lemma}\label{lem:orbits}
Let $G$ be a Lie group acting properly on a manifold without corners $M$.  For each $p\in M$, the stabilizer $G_p$ is a compact subgroup of $G$, hence it is a Lie subgroup of $G$, $G/G_p$ is a smooth manifold, and the action map $\mathcal{A}:G\times M \to M \times M$ induces a map $\mathcal{A}': G/G_p \to M$ given by $\mathcal{A}'([g])=g\cdot p$.  The map $\mathcal{A}'$ is a smooth embedding whose image is the orbit $G\cdot p$, hence the orbits of a proper Lie group action are embedded submanifolds.
\end{lemma}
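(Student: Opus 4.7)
The plan is to establish the conclusions in the order in which the structures they involve must be built: first the compactness of $G_p$, then the manifold structure on $G/G_p$, then the map $\mathcal{A}'$ and its smoothness, injectivity, immersivity, and finally the fact that it is a topological embedding. Compactness of $G_p$ is immediate from properness: $G_p\times \{p\}$ is the preimage of the compact singleton $\{(p,p)\}\subset M\times M$ under the proper map $\mathcal{A}$, so it is compact; projecting to the first factor shows $G_p$ is compact. Since $G_p$ is also closed in $G$ (it is the preimage of $(p,p)$ under the continuous map $g\mapsto (g\cdot p,p)$), Cartan's closed subgroup theorem makes $G_p$ a Lie subgroup, and hence $G/G_p$ carries the structure of a smooth manifold for which the projection $\pi:G\to G/G_p$ is a surjective submersion.

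Next I would check that $\mathcal{A}'([g]):=g\cdot p$ is well-defined (if $g_1G_p=g_2G_p$ then $g_1=g_2h$ with $h\in G_p$, so $g_1\cdot p=g_2\cdot p$), injective ($g_1\cdot p=g_2\cdot p$ forces $g_1^{-1}g_2\in G_p$), and smooth (the composite $\mathcal{A}'\circ\pi:G\to M$, $g\mapsto g\cdot p$, is smooth and $\pi$ is a submersion, so $\mathcal{A}'$ is smooth). For the immersion property, I would compute the differential at the identity coset: identifying $T_{[e]}(G/G_p)$ with $\fg/\fg_p$, the differential sends $[X]$ to the fundamental vector field $X_M(p)\in T_pM$. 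Its kernel on $\fg$ is precisely $\fg_p$, so on the quotient it is injective. Equivariance of $\mathcal{A}'$ under left translation on $G/G_p$ and under $\tau$ on $M$ then propagates injectivity of the differential to every point of $G/G_p$, so $\mathcal{A}'$ is an injective immersion.

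The main obstacle is upgrading the injective immersion $\mathcal{A}'$ to a topological embedding; this is precisely the step where properness is indispensable (an orbit of a smooth but non-proper action can be, e.g., an irrational winding of the torus, which is an injective immersion but not an embedding). The strategy is to show $\mathcal{A}'$ is a proper map, from which it follows that it is closed, and a closed continuous injection from a Hausdorff space to a Hausdorff space is a homeomorphism onto its image. To verify properness of $\mathcal{A}'$: given a compact $K\subset M$, the set $\mathcal{A}^{-1}(K\times \{p\})\subset G\times M$ is compact by properness of $\mathcal{A}$, and its image $L$ in $G$ under projection is compact. Then
\[
(\mathcal{A}')^{-1}(K)=\pi(L)
\]
is compact as the continuous image of a compact set, which is what we needed. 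Combined with the immersion step, this makes $\mathcal{A}'$ a smooth embedding. Finally, the image of $\mathcal{A}'$ is by definition the orbit $G\cdot p$, so $G\cdot p$ inherits the structure of an embedded submanifold of $M$, completing the proof.
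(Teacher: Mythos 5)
Your proof is correct and is essentially the standard argument: the paper does not prove this lemma itself but defers to the cited references (Kawakubo, Michor), and your write-up — compactness of $G_p$ from properness, the closed subgroup theorem, the injective immersion $\mathcal{A}'$ via the kernel computation $\ker(X\mapsto X_M(p))=\fg_p$ and equivariance, and finally properness of $\mathcal{A}'$ upgrading the injective immersion to an embedding — is exactly the argument those sources give. The only point worth making explicit is that the step ``proper $\Rightarrow$ closed'' uses that the target $M$ is locally compact Hausdorff, which holds since $M$ is a manifold.
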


We will need the analog of lemma \ref{lem:orbits} in the case where $M$ is a manifold with corners.

\begin{cor}\label{cor:orbits}
Let $G$ be a Lie group acting properly on a manifold with corners.  Then for each $p\in M$, the stabilizer, $G_p$ is a closed subgroup of $G$, $G/G_p$ is a smooth manifold, and the orbit $G\cdot p$ is an embedded submanifold of the stratum containing $p$.  In particular, they are embedded submanifolds with corners of $M$ whose boundary is empty.
\end{cor}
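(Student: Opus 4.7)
The plan is to reduce the corollary-with-corners to the boundary-free Lemma \ref{lem:orbits} by restricting the action to the stratum through $p$. The central observation is that every element $\tau_g \in \operatorname{Diff}(M)$ is a diffeomorphism of manifolds with corners, and the stratification of $M$ by depth is an intrinsic invariant of such diffeomorphisms: if $q \in M$ lies in the codimension-$k$ stratum $\partial^k M$, then $\tau_g(q)$ must also lie in $\partial^k M$, since depth is preserved by local diffeomorphisms of quadrants. Consequently, for each $k \ge 0$ the stratum $\partial^k M$ is $G$-invariant, and in particular the open stratum $S$ containing $p$ is a $G$-invariant smooth submanifold of $M$ without boundary.

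The next step is to verify that the restricted action $G \times S \to S$ is proper. This follows formally: the action map $\mathcal{A}_S : G \times S \to S \times S$ is the restriction of $\mathcal{A} : G \times M \to M \times M$ to the preimage of $S \times S$. Given a compact set $K \subset S \times S$, it is also compact in $M \times M$, so $\mathcal{A}^{-1}(K)$ is compact in $G \times M$ by properness of $\mathcal{A}$, and hence $\mathcal{A}_S^{-1}(K) = \mathcal{A}^{-1}(K) \cap (G \times S)$ is compact in $G \times S$.

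Having reduced to a proper action on the boundaryless manifold $S$, I would directly invoke Lemma \ref{lem:orbits} applied to $(G, S, p)$. It yields that the stabilizer $G_p$ (which is defined only in terms of the action at $p$, hence unchanged by restricting to $S$) is a compact Lie subgroup of $G$, that $G/G_p$ is a smooth manifold, and that the induced injection $G/G_p \hookrightarrow S$ is a smooth embedding whose image is the orbit $G \cdot p$. Thus $G \cdot p$ is an embedded submanifold of $S$. Since $S$ itself sits in $M$ as an embedded submanifold without boundary (open strata of a manifold with corners carry no boundary faces of their own), the orbit $G \cdot p$ is an embedded submanifold with corners of $M$ whose boundary is empty.

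The only step that requires any real care is the first one, namely the $G$-invariance of strata. The cleanest way to justify it is to note that the local model of $M$ near a depth-$k$ point is $(\R^+)^k \times \R^{\dim M - k}$, and any diffeomorphism between such local models must send the origin-stratum to the origin-stratum (for example, because $q$ lies in $\partial^k M$ iff every neighborhood of $q$ fails to be diffeomorphic to an open subset of a quadrant of depth less than $k$). Once this is in hand, the remainder of the argument is a routine restriction-to-a-submanifold argument, and no further analysis of the corner structure is needed.
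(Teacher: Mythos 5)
Your proposal is correct and follows essentially the same route as the paper: restrict the action to the $G$-invariant stratum $\partial^k(M)$ containing $p$, check that the restricted action map is still proper by intersecting preimages of compact sets, and then invoke Lemma \ref{lem:orbits} for the boundaryless case. The only difference is that you spell out why diffeomorphisms of manifolds with corners preserve depth, which the paper takes as immediate from the definition of the action.
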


\begin{proof} \mbox{ } \newline
Let $\partial^k(M)$ be the stratum of $M$ containing $p$.  By definition of a group action, $G$ acts by diffeomorphisms of manifolds with corners, hence it preserves the $k$-boundary $\partial^k(M)$ for each $k\ge 0$.  This means that we may consider the restricted action map $\mathcal{A}:G\times \partial^k(M) \to \partial^k(M) \times \partial^k(M)$, which is also proper since any compact subset $K\subset \partial^k(M)\times \partial^k(M)$ is also a compact subset of $M\times M$, hence $\mathcal{A}^{-1}(K)\subset G\times \partial^k(M)$ is compact.  It follows from lemma \ref{lem:orbits} that $G\cdot p$ is an embedded submanifold of $\partial^k(M)$, hence it is an embedded submanifold with corners of $M$ whose boundary is empty.
\end{proof}

\begin{definition}\label{def:induced}
Let $G$ be a Lie group acting properly on a manifold $M$ with corners.  Let $\fg=\operatorname{Lie}(G)$ and let $\exp:\fg \to G$ be the exponential map.  For each $X\in \fg$ we define the induced vector field $X_M\in \Gamma(TM)$ pointwise by the equation:

\begin{displaymath}
X_M(p):=\frac{d}{dt}\big\vert_0 \exp(tX)\cdot p
\end{displaymath}
If we write $\mathcal{A}_p:G \to M$ for the map given by $\mathcal{A}_p(g)=g\cdot p$ then $X_M(p)= d(\mathcal{A}_p)_e(X)$.
\end{definition}

The following statement is a corollary to lemma \ref{lem:orbits}.
\begin{cor}\label{cor:orbits1}
Let $G$ be a Lie group acting properly on a manifold $M$ with corners.  Let $\fg=\operatorname{Lie}(G)$ be the Lie algebra of $G$.  Then for each point $p\in M$, the tangent space to the orbit is:

\begin{displaymath}
T_p(G\cdot p) := \{X_M(p) \vert \mbox{ } X\in \fg\}
\end{displaymath}
That is, the tangent space to the orbit at $p$ is generated by the induced vector fields at $p$.
\end{cor}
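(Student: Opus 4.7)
The plan is to reduce the corollary to the embedding statement already established in Corollary \ref{cor:orbits} together with the definitional identity $X_M(p) = d(\mathcal{A}_p)_e(X)$. Since Corollary \ref{cor:orbits} tells us that the orbit $G \cdot p$ is an embedded submanifold with corners (with empty boundary) of the stratum containing $p$, and in fact arises as the image of a smooth embedding $\mathcal{A}_p' : G/G_p \to M$ defined by $[g] \mapsto g \cdot p$, the tangent space $T_p(G\cdot p)$ is precisely the image of $d(\mathcal{A}_p')_{[e]}$.

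First, I would factor the orbit map $\mathcal{A}_p : G \to M$, $g \mapsto g \cdot p$, through the quotient as $\mathcal{A}_p = \mathcal{A}_p' \circ \pi$, where $\pi : G \to G/G_p$ is the natural projection. Since $G_p$ is a closed (hence Lie) subgroup of $G$ by Corollary \ref{cor:orbits}, $\pi$ is a smooth submersion, so $d\pi_e : \fg \to T_{[e]}(G/G_p)$ is surjective. Composing with $d(\mathcal{A}_p')_{[e]}$, which is injective because $\mathcal{A}_p'$ is an embedding, shows that the image of $d(\mathcal{A}_p)_e : \fg \to T_p M$ coincides with the image of $d(\mathcal{A}_p')_{[e]}$, and the latter is by definition $T_p(G\cdot p)$.

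Second, I would invoke the definition of the induced vector field: $X_M(p) = \tfrac{d}{dt}\big|_0 \exp(tX)\cdot p = d(\mathcal{A}_p)_e(X)$. Hence
\begin{displaymath}
\{X_M(p) \mid X \in \fg\} = d(\mathcal{A}_p)_e(\fg) = T_p(G\cdot p),
\end{displaymath}
which is the desired equality.

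There is no real obstacle here; the statement is a direct unpacking of the definitions once one has the orbit as an embedded submanifold. The only point that might merit explicit mention is why $d(\mathcal{A}_p')_{[e]}$ is injective even though $M$ has corners: this follows because $\mathcal{A}_p'$ takes values in the single stratum $\partial^k(M)$ containing $p$ (the action preserves strata), and on that ordinary manifold the embedding is smooth in the usual sense, so injectivity of its differential is the standard consequence of being an embedding. Apart from this observation, the proof is a one-line application of the chain rule to $\mathcal{A}_p = \mathcal{A}_p' \circ \pi$.
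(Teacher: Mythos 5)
Your proof is correct and follows essentially the same route as the paper: factor $\mathcal{A}_p$ through the submersion $G \to G/G_p$ and the embedding $\mathcal{A}_p'$, then identify the image of $d(\mathcal{A}_p)_e$ with $T_p(G\cdot p)$ and with the span of the induced vector fields. Your extra remark about the orbit lying in a single stratum is already handled by Corollary \ref{cor:orbits}, but it does no harm to make it explicit.
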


\begin{proof}\mbox{ } \newline
Let $p\in M$ and let $G_p$ be its stabilizer.  The map $\mathcal{A}_p:G \to G\cdot p$ given by $\mathcal{A}_p(g)=g \cdot p$ factors through the projection $\pi:G \to G/G_p$:

\begin{displaymath}
\xymatrix{
G \ar[r]^{\mathcal{A}_p} \ar[d]^{\pi} & G\cdot p \\
G/G_p \ar[ur]^{\mathcal{A}'}
}
\end{displaymath}
where $\mathcal{A}'$ is the embedding of $G/G_p$ into $M$ given by lemma $\mathcal{A}'([g])=g\cdot p$ (q.v. lemma \ref{lem:orbits}).  Since $\mathcal{A}'$ is an embedding and $\pi$ is a submersion, the composite $\mathcal{A}'\circ \pi$ is a submersion, hence $\mathcal{A}_p$ is a submersion.  The image of $d\mathcal{A}_p$ is therefore a surjection, meaning the tangent space $T_p(G\cdot p)$ is generated by the image of $d(\mathcal{A}_p)_e$, which is the space of induced vector fields at $p$.
\end{proof}

\begin{definition}\label{def:diffslice}
Let $G$ be a Lie group acting properly on a manifold without corners $M$.  Let $p\in M$ and let $T_p(G\cdot p)$ be the tangent space to the orbit at $p$, which is well-defined by lemma \ref{lem:orbits}.  We define the \emph{differential slice} $W$ at $p$ to be the quotient space
\begin{displaymath}
W:=T_pM/(T_p(G\cdot p).
\end{displaymath}
We have a linear action of $G_p$ on $T_pM$ given by

\begin{displaymath}
g \cdot v = d\tau_g(v)
\end{displaymath}
which turns $T_pM$ into a representation of $G_p$.  That is, we have a homomorphism $\rho:G_p \to GL(T_pM)$.  Since $T_p(G\cdot p)$ is an invariant subspace of this representation, the representation $\rho:G_p \to GL(T_pM)$ descends to a representation $\bar{\rho}:G_p \to W$, hence we refer to $W$ as the \emph{differential slice representation}.
\end{definition}

\begin{remark}
Let $G$ be a Lie group acting acting properly on a manifold without corners $M$.  Let $p\in M$, $H=G_p$, and let $W$ be the differential slice representation at $p$.  We may form the vector bundle $G\times_H W$, which is the quotient of $G\times W$ by the action $h\cdot(g,v)=(gh^{-1},h\cdot v)$.  It inherits an action of $G$ given by $g_0\cdot [(g,v)]=[(g_0g,v)]$.  We will see that this vector bundle equipped with this action of $G$ is a local model for the action of $G$ on $M$ near $p$.
\end{remark}

The following theorem, due to Palais, is critical in the study of group actions on manifolds and reduces the study of neighborhoods of orbits to an exercise in representation theory.  A proof can be found in \cite{ACL} (in the case where $G$ is compact), \cite{Ka}, \cite{Me}, or \cite{Mi1}.

\begin{theorem}[The Differential Slice Theorem]\label{thm:slice}
Let $G$ be a Lie group acting properly on a manifold without corners $M$ and let $p\in M$ be a point with stabilizer $G_p$ and differential slice representation $W$.  There exists an invariant neighborhood $U_1$ of $p$, an invariant neighborhood $U_2$ of the zero section of $G \times _{G_p} W$, and an equivariant diffeomorphism $\phi:U_1 \to U_2$ satisfying $\phi(p)=[(e,0)]$, where $e\in G$ is the identity.
\end{theorem}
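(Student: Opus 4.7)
The plan is to build an honest slice $S$ through $p$ that is $G_p$-invariant and transverse to the orbit, then twist it up to $G\times_{G_p} S$ and show the natural action map is an equivariant diffeomorphism onto a neighborhood of the orbit. Since $G_p$ is compact by lemma \ref{lem:orbits}, we have two standard averaging tricks available to us, and these are the core of the argument.

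\textbf{Step 1: Build a $G_p$-invariant complement to the orbit.} First I would produce an inner product on $T_pM$ invariant under the linear $G_p$-action $g\cdot v = d\tau_g(v)$ by averaging any inner product against Haar measure on $G_p$. Let $V\subset T_pM$ be the orthogonal complement of $T_p(G\cdot p)$; then $V$ is $G_p$-invariant and the quotient map $T_pM\to W$ restricts to a $G_p$-equivariant isomorphism $V\xrightarrow{\sim} W$. Second, I would produce a $G$-invariant Riemannian metric on an invariant neighborhood of the orbit: average any metric against Haar measure on $G_p$ near $p$ to get a $G_p$-invariant metric on a $G_p$-invariant neighborhood of $p$, then transport it along the orbit using the $G$-action to obtain a $G$-invariant metric on a $G$-invariant neighborhood $N$ of $G\cdot p$. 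This uses properness to ensure that the transport is well defined once we quotient by $G_p$.

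\textbf{Step 2: Construct the slice.} Let $\exp^g$ denote the geodesic exponential of this $G$-invariant metric. For $\epsilon>0$ small, the set $S := \exp^g_p(V_\epsilon)$, where $V_\epsilon\subset V$ is the $\epsilon$-ball, is an embedded submanifold through $p$ transverse to $G\cdot p$, and because both the metric and $V$ are $G_p$-invariant, $S$ is $G_p$-invariant and $\exp^g_p:V_\epsilon \to S$ is a $G_p$-equivariant diffeomorphism. Composing with the isomorphism $V\cong W$ identifies $S$ with a $G_p$-invariant neighborhood of $0$ in $W$.

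\textbf{Step 3: Twist and check it is a local diffeomorphism.} Form $G\times_{G_p} S$ and define
\[
\Psi: G\times_{G_p} S \longrightarrow M, \qquad \Psi([g,s]) = g\cdot s.
\]
This is well defined, smooth, and $G$-equivariant, and sends $[e,p]$ to $p$. A direct computation of $d\Psi_{[e,0]}$ using the splitting $T_{[e,0]}(G\times_{G_p}S)\cong \fg/\fg_p \oplus V$ shows that its image is $T_p(G\cdot p) \oplus V = T_pM$, so $\Psi$ is a local diffeomorphism at $[e,0]$; by $G$-equivariance it is a local diffeomorphism along the whole zero section.

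\textbf{Step 4: Promote local injectivity to global injectivity via properness.} This is the main obstacle and it is where properness is essential. Suppose no invariant neighborhood of the zero section on which $\Psi$ is injective existed. Then there would be sequences $[g_n, s_n]\ne [g'_n, s'_n]$ with $s_n, s'_n\to 0$ and $g_n\cdot s_n = g'_n\cdot s'_n$. Writing $h_n = g_n^{-1}g'_n$, we would have $s_n = h_n\cdot s'_n$, so the pairs $(h_n, s'_n)$ lie in a preimage under the action map $\mathcal{A}$ of a compact set (the closure of $\{(s_n, s'_n)\}$). Properness of $\mathcal{A}$ forces $\{h_n\}$ to have a convergent subsequence, $h_n\to h\in G$; passing to the limit gives $h\in G_p$, so eventually $[h_n, s'_n] = [e, h_n\cdot s'_n] = [e, s_n]$ inside the twisted product, contradicting the supposed non-equality. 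This yields an invariant neighborhood $U_2$ of the zero section on which $\Psi$ is a $G$-equivariant diffeomorphism onto an invariant neighborhood $U_1$ of $p$. Setting $\phi = \Psi^{-1}$ and composing the $S$-factor with the identification $S\cong $ (open in $W$) gives the desired map.
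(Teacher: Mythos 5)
The paper does not give its own proof of this theorem: it is quoted as Palais' slice theorem with the proof deferred to \cite{ACL}, \cite{Ka}, \cite{Me}, and \cite{Mi1}. Your proposal reconstructs the standard argument from those sources ($G_p$-invariant metric, geodesic slice, twisted product, properness for injectivity), and the architecture is correct, but two steps need repair.

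First, the claim in Step 1 that you can ``transport'' the metric along the orbit to obtain a $G$-\emph{invariant} metric on a neighborhood $N$ of $G\cdot p$ is not justified: a point $q\in N$ can lie in $\tau_g(U)\cap \tau_{g'}(U)$ with $g'\notin gG_p$, and the two pushforward metrics at $q$ need not agree, so the transported tensor is not well defined off the orbit itself. Fortunately you do not need this: $G_p$-invariance of the metric on a $G_p$-invariant neighborhood of $p$ already makes $\exp_p$ a $G_p$-equivariant map, which is all that Step 2 actually uses to produce the $G_p$-invariant slice $S$.

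Second, and more seriously, the last line of Step 4 fails as written. From $h_n\to h\in G_p$ you conclude that ``eventually $[h_n,s'_n]=[e,h_n\cdot s'_n]$,'' but that identity in $G\times_{G_p}S$ requires $h_n\in G_p$, and $G_p$ is in general not open in $G$, so convergence to an element of $G_p$ does not place $h_n$ in $G_p$ for large $n$. The standard repair uses the local injectivity you already established in Step 3: since $h\cdot p=p$, both $[h_n,s'_n]$ and $[e,s_n]$ converge to $[e,0]$ in $G\times_{G_p}S$ (note $[h,0]=[e,0]$), and $\Psi([h_n,s'_n])=h_n\cdot s'_n=s_n=\Psi([e,s_n])$; injectivity of $\Psi$ on a neighborhood of $[e,0]$ then forces $[h_n,s'_n]=[e,s_n]$ for large $n$, whence $[g'_n,s'_n]=g_n\cdot[h_n,s'_n]=g_n\cdot[e,s_n]=[g_n,s_n]$, the desired contradiction. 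With these two fixes your proof is complete and agrees with the argument in the cited references.
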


\begin{definition}\label{def:orbittype}
Let $G$ be a Lie group acting on a manifold without corners $M$.  Let $H\le G$ be a Lie subgroup and let $(H)$ denote the conjugacy class of $H$.  We define:
\begin{displaymath}
\begin{array}{l}
M_H = \{p\in M \vert \mbox{ } G_p=H\} \\
M_{(H)} =\{p \in M \vert \mbox{ } G_p\in (H)\}
\end{array}
\end{displaymath}
We refer to $M_{(H)}$ as the \emph{orbit-type stratum of type} $H$.
\end{definition}

\begin{remark}\label{rem:abelian}
Suppose $G$ is an abelian Lie group acting on a manifold with corners.  For any Lie subgroup $H\le G$ we have $(H)=\{H\}$, hence $M_H=M_{(H)}$.  Thus, when we begin to consider toric actions, we will refer to $M_H$ as an \emph{orbit-type stratum}.  Note that, in general, the relationship between $M_H$ and $M_{(H)}$ is that $G\cdot M_H = \{g\cdot p \vert \mbox{ } p\in M_H, \mbox{ } g\in G\}= M_{(H)}$ since the stabilizers of points in the same orbit are related by conjugation.
\end{remark}

The following is a standard corollary to the slice theorem (q.v. \cite{Ka}).

\begin{cor}\label{cor:slice1}
Let $G$ be a Lie group acting properly on a manifold without corners $M$.  Let $H\le G$ be a Lie subgroup.  Then $M_H$ is a smooth submanifold of $M$ and $M_{(H)}$ is a smooth submanifold of $M$, hence $\bigsqcup_{H\le G} M_{(H)}$ is a decomposition of $M$ into disjoint, smooth submanifolds.
\end{cor}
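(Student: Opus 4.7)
The plan is to invoke the Differential Slice Theorem (Theorem \ref{thm:slice}) to reduce the claim to an explicit linear-algebraic computation in the local model $G\times_{G_p} W$, and then identify $M_H$ and $M_{(H)}$ with manifestly smooth subsets of that model. The global statement will then follow by gluing these local descriptions.

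First I would fix $p\in M$ with stabilizer $K:=G_p$, which is a compact Lie subgroup of $G$ since the action is proper, and let $W=T_pM/T_p(G\cdot p)$ be the differential slice representation of $K$. By the Slice Theorem there is a $G$-equivariant diffeomorphism of an invariant neighborhood of $p$ onto an invariant neighborhood of the zero section of $G\times_K W$. A direct check shows that the stabilizer of a point $[(g,v)]\in G\times_K W$ is $g K_v g^{-1}$, where $K_v\le K$ is the $K$-stabilizer of $v\in W$. This is the computation I would carry out first, since every subsequent identification hinges on it.

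Next, I would identify the orbit-type sets inside the slice model. For $M_{(H)}$: a point $[(g,v)]$ has stabilizer conjugate to $H$ iff $K_v$ is $G$-conjugate to $H$. Using compactness of $K$, the relation $K_v\le K$ together with $K_v$ being conjugate to $H$ in $G$ forces $K_v$ to be a subgroup of $K$ of the same dimension and number of components as $H$; a short argument then shows $K_v$ is a fixed conjugate of $H$ inside $K$, so after replacing $H$ by a conjugate we may assume $v\in W^H:=\{w\in W\mid h\cdot w=w,\ \forall h\in H\}$, a linear subspace. Thus $M_{(H)}$ corresponds in the slice to the image of $G\times W^H$, namely $G\times_H W^H$, which is a smooth vector bundle over $G/H$ and therefore an embedded submanifold of $G\times_K W$. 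For $M_H$: the condition becomes $gH g^{-1}=H$ and $v\in W^H$, i.e. $g\in N(H)$ and $v\in W^H$, so $M_H$ corresponds locally to $N(H)\times_H W^H\cong (N(H)/H)\times W^H$, which is a smooth submanifold since $N(H)$ is a closed subgroup of $G$ and $H$ acts trivially on $W^H$.

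Finally, I would assemble these local descriptions. Because the slice identifications are $G$-equivariant, the local charts on $M_H$ (resp. $M_{(H)}$) arising from different basepoints agree on overlaps with the subset determined intrinsically by the stabilizer condition; hence $M_H$ and $M_{(H)}$ are globally smooth submanifolds of $M$. The decomposition $M=\bigsqcup_{(H)}M_{(H)}$ is tautologically disjoint by definition of conjugacy class of stabilizer. The main obstacle I anticipate is the small representation-theoretic lemma in the previous paragraph that upgrades ``$K_v\le K$ is $G$-conjugate to $H$'' to ``$K_v=H$'' (after possibly replacing $H$ by a conjugate); this uses the compactness of $K$ in an essential way and is where care is required, since the conjugation is in $G$ rather than in $K$. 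Everything else reduces to linear algebra in $W$ and the bundle structure of $G\times_K W$.
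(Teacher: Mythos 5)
Your proposal follows essentially the same route as the paper: apply the Differential Slice Theorem, compute stabilizers in the model $G\times_{G_p}W$, and identify $M_H$ with $\nu(H)\times_H W^H$ and $M_{(H)}$ with $G\times_H W^H$, which are manifestly smooth. One small caveat: the conjugacy-rigidity step you flag ("$K_v\le K$ and $K_v$ $G$-conjugate to $H$ forces $K_v$ to be a fixed conjugate of $H$ inside $K$") is false at an arbitrary basepoint, since a compact group $K$ can contain several mutually non-$K$-conjugate subgroups that are all $G$-conjugate to $H$; but the submanifold condition only needs checking at points of the set itself, where after conjugating you may take $K=H$, and then the step reduces to the standard fact that a closed subgroup of a compact Lie group conjugate to the whole group (same dimension, same number of components) equals it — this is precisely the reduction the paper makes by starting with $p\in M_H$, describing $M_H$ first, and then obtaining $M_{(H)}=G\cdot M_H=G\times_H W^H$.
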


\begin{proof} \mbox{ } \newline
Let $p$ be a point in $M$ with stabilizer $H=G_p$ and let $W$ be the differential slice representation.  Let $W^H$ be the invariant subspace of vectors fixed by the action of $H$.  We will show that, around $p$, $M_H$ is isomorphic to $\nu(H) \times_H W^H$ and $M_{(H)}=G\times_H W^H$.
\begin{enumerate}
\item Let $p\in M_H$.  By theorem \ref{thm:slice} there is an invariant neighborhood of $p$ equivariantly diffeomorphic to a neigbhorhood of the zero section in $G\times_H W$, where $W=T_pM/(T_p(G\cdot p))$ is the differential slice at $p$.  Let $W^H$ be the subspace of $W$ fixed by $H$ and let $\nu(H)$ be the normalizer of $H$ in $G$.  We claim that in these coordinates $M_H$ corresponds to $\nu(H)\times_H W^H$.  We first show that $M_H \subset \nu(H)\times_H W^H$.  Indeed, suppose $[(g,v)]\in G\times_H W$ is a point with stabilizer $H$ and $h\in H$ is an arbitrary element.  Then, we have:

    \begin{equation}\label{eq:subman}
    h\cdot[(g,v)] = [(hg,v)]=[(g,v)] \iff \text{There exists $g_0\in H$ such that $(hgg_0^{-1},g_0\cdot v)=(g,v)$.}
    \end{equation}
    Thus, $g^{-1}hg=g_0\in H$.  Since $h$ was arbitrary, we have $g^{-1}hg\in H$ for all $h\in H$, hence $g$ is in the normalizer of $H$: $g\in \nu(H)$.  We still need to show that $v$ is fixed by $H$.  We have
    \begin{equation}\label{eq:subman1}
    h\cdot([g,v]) = ([hg,v])=[(g(g^{-1}hg),v)]=[(g,gh^{-1}g^{-1}v)]=[(g,v)]
    \end{equation}
    which is true if and only if there exists $g_1\in H$ such that $gg_1^{-1}=g$ and $g_1gh^{-1}g^{-1}v=v$.  The first equation implies $g_1=e$, hence $gh^{-1}g^{-1}v=v$.  Since $h$ was arbitrary and $g\in \nu(H)$, we have that $H$ fixes $v$.  Thus, $[(g,h)]\in \nu(H)\times_H W^H$ and $M_H \subseteq \nu(H)\times_H W^H$.

     Let us show the reverse inclusion.  If we begin with a point $[(g,v)]\in \nu(H)\times_H W^H$ then for all $h\in H$, $g\in \nu(H)$ and the calculation of equation \ref{eq:subman1} modulo the last equality demonstrates that $h\cdot [(g,v)] = [(g,v)]$, hence $H$ is contained in the stabilizer of $[(g,v)]$.  If $\eta\in G$ is some element such that $\eta \cdot[(g,v)]=[(g,v)]$, then the calculation of equation \ref{eq:subman} demonstrates that $\eta g= gh^{-1}$ for some $h\in H$, hence $\eta = gh^{-1}g^{-1}$, which is an element of $H$ since $g$ lies in the normalizer of $H$.  Thus, near $p$, $M_H$ is equivariantly diffeomorphic to a neighborhood of the zero section of $\nu(H) \times_H W^H$.

\item By remark \ref{rem:abelian}, the set $M_{(H)}$ is simply the set $G\cdot M_H$.  Thus, in the model $G\times_H W$ we have that $M_{(H)}$ corresponds to $G\cdot(\nu(H) \times_H W^H)$. Since the action of $G$ on itself is transitive, this set is $G\times _H W^H$, which gives $M_{(H)}$ the structure of a smooth submanifold.
\end{enumerate}
\end{proof}

\begin{cor}\label{cor:slice2}
Let $M$ be a manifold without corners and let $G$ be a Lie group acting properly on $M$.  Let $H\le G$ be a subgroup and consider $M_H$.  For each $p\in M_H$, we have:
\begin{displaymath}
T_pM_H= (T_pM)^H
\end{displaymath}
where $(T_pM)^H$ is the subspace of vectors fixed by the action of $H$.
\end{cor}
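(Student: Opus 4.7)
My plan is to reduce everything to a linear-algebra computation in the slice model and then identify the $H$-fixed subspace with the tangent space of $M_H$ using compactness. First I would invoke the differential slice theorem (Theorem \ref{thm:slice}) to replace a neighborhood of $p$ in $M$ with an invariant neighborhood of $[(e,0)]$ in $G\times_H W$, where $H=G_p$ and $W=T_pM/T_p(G\cdot p)$ is the differential slice representation. Under this identification, the computation in the proof of Corollary \ref{cor:slice1} shows that $M_H$ is locally $\nu(H)\times_H W^H$, so the problem becomes a question at the origin of $G\times_H W$.

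Next I would compute both tangent spaces in this model. Differentiating the curves $t\mapsto[(\exp(tX),0)]$ and $t\mapsto[(e,tv)]$ yields the splitting $T_{[(e,0)]}(G\times_H W) = \mathfrak{g}/\mathfrak{h} \oplus W$, and writing $h\cdot[(\exp(tX),0)] = [(\exp(t\Ad_h X),0)]$ together with $h\cdot[(e,tv)] = [(e,t h\cdot v)]$ identifies the $H$-action on this tangent space as the adjoint action on $\mathfrak{g}/\mathfrak{h}$ direct-summed with the slice representation on $W$. Therefore $(T_pM)^H = (\mathfrak{g}/\mathfrak{h})^H \oplus W^H$, while $T_pM_H = \mathfrak{n}/\mathfrak{h} \oplus W^H$, where $\mathfrak{n}$ is the Lie algebra of $\nu(H)$. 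The theorem thus reduces to the identity $(\mathfrak{g}/\mathfrak{h})^H = \mathfrak{n}/\mathfrak{h}$.

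The inclusion $\mathfrak{n}/\mathfrak{h}\subseteq (\mathfrak{g}/\mathfrak{h})^H$ is the easy direction: $H$ is normal in $\nu(H)$ by definition, so $H$ acts trivially on $\nu(H)/H$ and hence trivially on its tangent space $\mathfrak{n}/\mathfrak{h}$ at the identity coset. For the reverse inclusion I would use that $H=G_p$ is compact by Lemma \ref{lem:orbits}. Given $[X]\in(\mathfrak{g}/\mathfrak{h})^H$, meaning $\Ad_h X - X \in \mathfrak{h}$ for all $h\in H$, I would average against normalized Haar measure to produce $\widetilde X := \int_H \Ad_h X\,dh$, which is $\Ad_H$-invariant and satisfies $\widetilde X - X \in \mathfrak{h}$ because $\mathfrak{h}$ is a closed subspace containing each integrand. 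Differentiating the $\Ad$-invariance then gives $[\widetilde X,\mathfrak{h}]=0\subseteq \mathfrak{h}$, so $\widetilde X$ lies in the Lie algebra of $\nu(H)$, and $[X]=[\widetilde X]\in\mathfrak{n}/\mathfrak{h}$.

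The only non-routine ingredient is the Haar-averaging step used to lift an $H$-fixed class in $\mathfrak{g}/\mathfrak{h}$ to a genuinely $H$-fixed vector in $\mathfrak{g}$; the compactness of stabilizers of proper actions is exactly what makes this lift available, and it is where the properness hypothesis is essential.
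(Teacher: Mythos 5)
Your proof is correct and follows the same basic strategy as the paper's: reduce to the slice model $G\times_H W$, split the tangent space as (tangent to the orbit) $\oplus$ (slice), and match the $H$-fixed part of each summand against the local model $\nu(H)\times_H W^H$ of $M_H$. The one place you go further is worth noting: the paper simply \emph{asserts} that the vectors in $T_p(G\cdot p)$ fixed by $H$ are $T_p(\nu(H)\cdot p)$ (equivalently, $(\mathfrak{g}/\mathfrak{h})^H=\mathfrak{n}/\mathfrak{h}$) and then finishes by a dimension count, whereas you actually prove this identity by Haar-averaging over the compact stabilizer — this is the genuinely nontrivial linear-algebra content of the corollary, and your write-up is the more complete one. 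One small repair in your last step: the infinitesimal condition $[\widetilde X,\mathfrak{h}]\subseteq\mathfrak{h}$ obtained by differentiating only places $\widetilde X$ in the normalizer of $\mathfrak{h}$ (i.e.\ of the identity component of $H$), which is not enough if $H$ is disconnected. Instead use the group-level invariance directly: $\Ad_h\widetilde X=\widetilde X$ for all $h\in H$ gives $h\exp(t\widetilde X)h^{-1}=\exp(t\Ad_h\widetilde X)=\exp(t\widetilde X)$, so the one-parameter subgroup generated by $\widetilde X$ centralizes, hence normalizes, $H$, and $\widetilde X\in\mathfrak{n}$ as desired.
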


\begin{remark}
This is proposition 3.3 in \cite{GS1}.
\end{remark}

\begin{proof}\mbox{ } \newline
Since $H$ acts trivially on $M_H$, its induced action on $T_pM_H$ is trivial, so we certainly have $T_pM_H\subseteq (T_pM)^H$.  Thus, we focus on showing the reverse inclusion.  Using an $H$-invariant metric on $T_pM$, we may write $T_pM = T_p(G\cdot p ) \oplus E$, where $E$ is some invariant complimentary subspace isomorphic to the differential slice representation.  Then $(T_pM)^H = T_p(G\cdot p)^H \oplus E^H$.  The vectors in $T_p(G\cdot p)$ fixed by $H$ are given by $T_p(\nu(H)\cdot p)$, where $\nu(H)$ is the normalizer of $H$.

By the slice theorem, a neighborhood of the orbit in $M_H$ is isomorphic to a neighborhood of the zero section of $\nu(H)\times_H W^H = \nu(H)/H \times W^H$, where $W\simeq E$ is the differential slice.  The point $p$ corresponds to $[(e,0)]$ and the tangent space at $[(e,0)]$ is exactly $T_{[e]}\nu(H)/H \oplus W^H$, which demonstrates that the dimension of the tangent space of $T_pM_H$ is exactly the dimension of $(T_pM)^H = T_p(G\cdot p)^H \oplus E^H$, hence the two spaces are the same.
\end{proof}

A highly nontrivial consequence of the slice theorem is the existence of what is known as a \emph{principal orbit type} in $M$.

\begin{prop}\label{prop:princ}
Let $G$ be a compact Lie group acting on a manifold $M$ without corners.  If the orbit space $M/G$ is connected, then there exists a unique conjugacy class $(H)$ of subgroups of $G$ such that the orbit type stratum $M_{(H)}$ is open and dense in $M$.
\end{prop}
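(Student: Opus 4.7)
The plan is to establish the result in three stages: a local statement proved by induction on dimension combined with the slice theorem, an open-density statement for the set of ``principal'' points, and a connectedness argument to conclude uniqueness.

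First I would prove a local version: for every $p\in M$ with stabilizer $H=G_p$, there exists an invariant neighborhood $U$ of the orbit $G\cdot p$ and a subgroup $K\le H$ such that $\{q\in U : G_q\in (K)\}$ is open and dense in $U$. The slice theorem (Theorem \ref{thm:slice}) reduces this to the analogous assertion for the linear $H$-action on the differential slice $W$, since the stabilizer of $[(g,w)]\in G\times_H W$ equals $gH_wg^{-1}$, where $H_w$ is the stabilizer of $w$ in $H$. For the linear version I would induct on $\dim H+\dim W$: equip $W$ with an $H$-invariant inner product and apply the inductive hypothesis to the $H$-action on the unit sphere $S(W)$, whose total dimension is strictly smaller. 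Scaling the resulting open dense set of principal points on $S(W)$ yields an open dense cone in $W\setminus\{0\}$ on which all stabilizers lie in a single $H$-conjugacy class, and transport through $G\times_H W$ converts this to a single $G$-conjugacy class. The base cases $\dim W=0$ and $\dim H=0$ are immediate.

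Second, define $M^{\circ}\subseteq M$ to be the set of points $p$ admitting an invariant neighborhood in which every stabilizer is $G$-conjugate to $G_p$. By the local statement just proved, $M^{\circ}$ is open and dense in $M$, and the assignment $p\mapsto(G_p)$ is locally constant on $M^{\circ}$. Being $G$-invariant, this assignment descends to a locally constant map $\bar c:\pi(M^{\circ})\to\{\text{conjugacy classes of closed subgroups of }G\}$, where $\pi:M\to M/G$ is the orbit projection.

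Third, since $\pi$ is an open map, $\pi(M^{\circ})$ is open and dense in $M/G$, and the hypothesis that $M/G$ is connected forces $\pi(M^{\circ})$ to be connected as well. The locally constant map $\bar c$ must therefore be constant, yielding a single conjugacy class $(H)$ with $M^{\circ}\subseteq M_{(H)}$. Hence $M_{(H)}$ is open and dense in $M$; uniqueness follows because distinct orbit-type strata are disjoint by Corollary \ref{cor:slice1}, so no other class $(H')$ can have $M_{(H')}$ open and dense. I expect the main obstacle to be the linear induction step: one must ensure that the principal conjugacy class obtained from $S(W)$ is independent of the choice of $H$-invariant metric and genuinely produces an open dense subset of all of $W$ rather than merely of the sphere. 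This is handled by observing that any two $H$-invariant metrics are related by an equivariant linear automorphism of $W$, and that the cone over any invariant open dense subset of $S(W)$, together with the origin, is open and dense in $W$.
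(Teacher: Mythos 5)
Your proposal is correct and follows essentially the same route as the paper's (summarized) proof: reduce to the linear slice representation via the slice theorem, induct on dimension through the unit sphere $S(W)$, and glue the local principal orbit types using connectedness of the orbit space together with the disjointness of distinct orbit-type strata. The only point to watch is that applying the inductive hypothesis to $S(W)$ requires $S(W)/H$ to be connected, which fails when $\dim W=1$; that case (and $\dim W=0$) must be treated as a base case, as you implicitly do.
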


\begin{proof}[Summary of the proof] \mbox{ } \newline
This is theorem 4.27 in \cite{Ka}.  The proof is several pages, so we opt to summarize it for the purpose of explaining how it is derived from the slice theorem.  A condensed version of the proof may be found in Eckhard Meinrenken's notes, \cite{Me}.  The strategy is as follows: one first fixes a point $p\in M$ with stabilizer $G_p$ and shows that such an orbit-type stratum exists in the model $G\times_{G_p} W$, where $W$ is the differential slice.  This is done using induction on the dimension of $M$ and constructing an equivariant retraction of $G\times_{G_p}(W\setminus \{0\})$ onto the unit sphere bundle $G\times_{G_p} S(V)$, where $S(V)$ is the unit sphere in $V$ for some choice of invariant metric.  One then covers $M$ by invariant neighborhoods isomorphic to the local models $G\times_H W$, where $H$ and $W$ depend on the choice of point $p\in M$.  A unique, open, dense orbit type stratum exists on each neighborhood.  Uniqueness guarantees that the orbit-type strata agree on overlaps, hence there is a unique orbit-type stratum $M_{(H)}\subset M$ that is open and dense.
\end{proof}

We'll need proposition \ref{prop:princ} in our discussion of effective, abelian group actions.

\begin{definition}\label{def:effective}
Let $G$ be a Lie group acting on a manifold with corners $M$.  We say that the action of $G$ is \emph{effective} if the homomorphism $\tau:G \rightarrow \operatorname{Diff}(M)$ is injective.  Equivalently, for each $g\in G$ with $g\ne e$ there exists a point $p\in M$ such that $\tau_g(p)\ne p$.
\end{definition}

We have several structural lemmas pertaining to effective, abelian group actions.

\begin{lemma}\label{lem:eff1}
Let $M$ be a manifold without corners and suppose $G$ is a compact abelian group acting effectively on $M$.  Then the set $M_{e}$ is open and dense in $M$.  That is, the set where the action of $G$ is free is open and dense in $M$.
\end{lemma}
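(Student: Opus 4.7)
The plan is to combine the existence of a principal orbit type (proposition \ref{prop:princ}) with the rigidity afforded by effectiveness. I will first produce a candidate principal stabilizer $H$ via proposition \ref{prop:princ}, then show $H$ is forced to be trivial by a continuity argument using effectiveness.

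First I would reduce to the case where $M/G$ is connected: the orbit space decomposes into connected components, and it suffices to show $M_e \cap M'$ is open and dense in $M'$ for each component $M'$ of the $G$-saturation, since an open and dense subset of each component assembles to an open and dense subset of $M$. So from now on assume $M/G$ is connected. By proposition \ref{prop:princ}, there is a unique conjugacy class $(H)$ of subgroups of $G$ such that the orbit-type stratum $M_{(H)}$ is open and dense. Because $G$ is abelian, every subgroup is normal and hence its own conjugacy class, so $(H)=\{H\}$ and $M_{(H)}=M_H$ is open and dense in $M$ (q.v. remark \ref{rem:abelian}).

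The main step is to show $H=\{e\}$. By definition of $M_H$, every element $h\in H$ satisfies $\tau_h(p)=p$ for all $p\in M_H$. The action map $\mathcal{A}:G\times M\to M\times M$ of definition \ref{def:action} is smooth, so for fixed $h\in H$ the map $\tau_h:M\to M$ is continuous. Since $M_H$ is dense in $M$, the closed set $\{p\in M\mid \tau_h(p)=p\}$ contains $M_H$ and hence equals $M$. Thus $\tau_h=\mathrm{id}_M$, so $h$ lies in the kernel of the homomorphism $\tau:G\to \operatorname{Diff}(M)$. Effectiveness (definition \ref{def:effective}) means this kernel is trivial, so $h=e$, whence $H=\{e\}$.

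The conclusion is now immediate: $M_e = M_{\{e\}}=M_{H}=M_{(H)}$ is open and dense. I do not anticipate any serious obstacle; the only subtlety is the reduction to connected components of $M/G$, which is necessary because proposition \ref{prop:princ} is stated under a connectedness hypothesis, and the mild vigilance that in the abelian setting stabilizers are constant along orbits so the orbit-type stratum and the strict stabilizer stratum coincide.
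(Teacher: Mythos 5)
Your core argument is exactly the paper's: invoke proposition \ref{prop:princ} to obtain a principal orbit type $M_{(H)}$ that is open and dense, use commutativity to identify $M_{(H)}$ with $M_H$, and then note that each $h\in H$ fixes the dense set $M_H$ pointwise, hence fixes all of $M$ by continuity, so $H=\{e\}$ by effectiveness. That portion is essentially word-for-word the proof in the paper.

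The one place you go beyond the paper --- the reduction to connected components of $M/G$ --- does not work, and in fact cannot be made to work without strengthening the hypotheses. Effectiveness of the $G$-action on $M$ does not restrict to effectiveness on a $G$-saturated component: take $G=\mathbb{Z}/2\mathbb{Z}$ acting on $M=S^1\sqcup S^1$ trivially on the first circle and by the antipodal map on the second. This action is effective, yet $M_{\{e\}}$ is the second circle, which is not dense in $M$; so the lemma as stated is actually false for disconnected $M$, and your component-by-component argument breaks precisely at the step ``$h$ lies in the kernel of $\tau:G\to\operatorname{Diff}(M)$,'' since after restricting to one component you only know that $\tau_h$ is the identity on that component, which effectiveness on $M$ does not forbid. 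The paper's proof silently makes the same assumption you are trying to justify (it must, in order to invoke proposition \ref{prop:princ}), and every later use of the lemma is for connected toric manifolds; the honest fix is therefore to add connectedness of $M$ (or of $M/G$) to the hypotheses rather than to attempt a reduction to it.
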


\begin{proof} \mbox{ } \newline
By proposition \ref{prop:princ}, there exists an orbit-type stratum $M_{(H)}$ that is open and dense in $M$.  Since $G$ is abelian, $M_{(H)}=M_H$, hence there is a subgroup $H\le G$ such that $M_H$ is open and dense in $M$.  If $H$ is not $\{e\}$, then there exists $h\in H$, $h\ne e$ and $\tau_h$ acts trivially on $M_H$.  Since $M_H$ is open and dense and the action is smooth, $\tau_h$ fixes all of $M$.  This means that the action is not effective, contradicting our assumptions.
\end{proof}

\begin{lemma}\label{lem:eff2}
Let $M$ be a manifold without corners and suppose $G$ is a compact abelian group acting effectively on $M$.  Then $G$ acts effectively on every invariant, open subset of $M$.
\end{lemma}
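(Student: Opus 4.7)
The plan is to deduce this immediately from the preceding lemma \ref{lem:eff1}, which asserts that the free locus $M_e = \{p \in M \mid G_p = \{e\}\}$ is open and dense in $M$ whenever $G$ is a compact abelian group acting effectively on $M$. The key observation is that density of $M_e$ in $M$ means every nonempty open subset of $M$ must contain a point with trivial stabilizer, and the existence of even one such point in an invariant subset is enough to force effectiveness of the restricted action.

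More precisely, let $U \subseteq M$ be an invariant open subset; we may assume $U$ is nonempty (otherwise there is nothing to check). Since $M_e$ is dense in $M$ by lemma \ref{lem:eff1} and $U$ is open, the intersection $U \cap M_e$ is nonempty. Pick any point $p \in U \cap M_e$, so that $G_p = \{e\}$. Now suppose some $g \in G$ acts trivially on $U$, meaning $\tau_g(u) = u$ for every $u \in U$. Applying this to $p$ gives $g \in G_p = \{e\}$, so $g = e$. Thus the restriction homomorphism $G \to \operatorname{Diff}(U)$ is injective, which is exactly the effectiveness of the action on $U$.

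There is no real obstacle to this argument, as it is essentially a one-line consequence of the previous lemma; the only subtlety worth flagging is the (harmless) edge case $U = \emptyset$, which one either excludes by convention or handles vacuously.
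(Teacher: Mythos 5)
Your proof is correct and follows essentially the same route as the paper: both arguments invoke Lemma \ref{lem:eff1} to find a point of $U$ with trivial stabilizer, which immediately rules out a nontrivial element acting trivially on $U$. The paper phrases it as a proof by contradiction while you argue directly, but the substance is identical.
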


\begin{proof}\mbox{ } \newline
Assume that there is an invariant open subset $U\subset M$ so that the action of $G$ is not effective.  Then there exists $g\in G$, $g\ne e$, such that $\tau_g\vert_U = id_U$, which means the stabilizer of every point in $U$ is nontrivial.  However, lemma \ref{lem:eff1} implies that $G$ acts freely on an open dense subset of $M$, hence it acts freely on an open dense subset of $U$.  Thus, we have arrived at a contradiction and we must have that $G$ acts effectively on \emph{all} invariant, open subsets of $M$.
\end{proof}

\begin{lemma}\label{lem:eff3}
Let $M$ be a manifold without corners and suppose $G$ is a compact abelian group acting effectively on $M$.  Let $p\in M$, let $H=G_p$ be the stabilizer, and let $W=T_pM/T_p(G\cdot p)$ be the differential slice representation of $H$ at $p$.  Then $H$ acts effectively on $W$.  Equivalently, the representation $\rho:H \to GL(W)$ is faithful.
\end{lemma}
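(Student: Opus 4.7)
The plan is to show that any element $h\in H$ acting trivially on $W$ must fix a whole invariant neighborhood of the orbit $G\cdot p$, and then invoke lemma \ref{lem:eff2} to conclude $h=e$. The two ingredients I will combine are the differential slice theorem (theorem \ref{thm:slice}) and the hypothesis that $G$ is abelian, which is what allows local triviality of $h$ on $W$ to propagate to local triviality on $M$.

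First I would apply theorem \ref{thm:slice} to obtain an invariant neighborhood $U$ of $p$ and an equivariant diffeomorphism $\phi:U\to U_2$, where $U_2$ is an invariant neighborhood of the zero section in $G\times_H W$. Recall that points of $G\times_H W$ are equivalence classes $[(g,v)]$ under $(g,v)\sim(gh^{-1},h\cdot v)$, and the $G$-action is $g_0\cdot[(g,v)]=[(g_0 g,v)]$. Now suppose $h\in H$ satisfies $h\cdot v = v$ for every $v\in W$. Using that $G$ is abelian and the defining equivalence relation, I compute
\[
h\cdot[(g,v)] \;=\; [(hg,v)] \;=\; [(gh,v)] \;=\; [(g,h\cdot v)] \;=\; [(g,v)],
\]
so $h$ fixes every point of $G\times_H W$, and hence via $\phi$ every point of $U$.

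Next, because the action of $G$ on $M$ is effective, lemma \ref{lem:eff2} guarantees that the restricted action on the invariant open set $U$ is also effective. Since $\tau_h\vert_U = \mathrm{id}_U$ by the previous step, effectiveness on $U$ forces $h=e$. This shows that the kernel of $\rho:H\to GL(W)$ is trivial, i.e.\ $\rho$ is faithful, which is the desired conclusion.

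The only slightly delicate point is the computation $[(hg,v)]=[(g,h\cdot v)]$, which relies essentially on abelian-ness of $G$ to commute $h$ past $g$; for nonabelian $G$ this argument fails and the statement itself is false in general. Everything else is a direct application of previously established results, so I do not anticipate any genuine obstacle.
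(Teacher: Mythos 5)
Your proposal is correct and follows essentially the same route as the paper: apply the differential slice theorem to model a neighborhood of $p$ by $G\times_H W$, use abelianness to move $h$ past $g$ in the class $[(g,v)]$ and conclude $h$ fixes the whole invariant neighborhood, then invoke the effectiveness of the restricted action (lemma \ref{lem:eff2}) to force $h=e$. The only cosmetic difference is a sign convention in the identity $[(gh,v)]=[(g,h\cdot v)]$ versus the paper's $[(g,h^{-1}\cdot v)]$, which is immaterial here since $h$ is assumed to act trivially on $W$.
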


\begin{proof}\mbox{ } \newline
Suppose there is an element $h\in H$ such that $h\ne e$ and $h$ fixes all elements of $W$.  By the slice theorem, an invariant neighborhood of $p$ is isomorphic to an invariant neighborhood of the zero section of $G\times_H W$.  We then have that for any element $[(g,v)]\in G\times_H W$:

\begin{displaymath}
h\cdot [(g,v)] = [(hg,v)]= [(gh,v)] = [(g,h^{-1}\cdot v)]= [(g,v)]
\end{displaymath}
which means that $h$ fixes an invariant neighborhood of $p$.  This contradicts the conclusion of lemma \ref{lem:eff3}, hence we must have that the action of $h$ on $W$ is effective and the representation $\rho: H \to GL(W)$ is faithful.
\end{proof}

\subsection{Hamiltonian Actions in the Symplectic Case}
We introduce Hamiltonian actions for symplectic manifolds, study symplectic representations on vector spaces, and introduce a local normal form for proper, Hamiltonian actions on symplectic manifolds.  The two key components of this sections are the study of symplectic weights for representations of tori and the existence of a local normal form.  Everything in this section is review.

First, recall that a symplectic manifold is a manifold $M$ equipped with a closed, non-degenerate two form $\omega$.  Equivalently, it is a folded-symplectic manifold $(M,\omega)$ where the folding hypersurface $Z\subset M$ is empty.  A symplectic vector space is a vector space $V$ equipped with a non-degenerate element $\omega \in \Lambda^2(V^*)$, hence $(V,\omega)$ is a symplectic manifold since the form is constant, hence closed.  The group $\operatorname{Symp}(V,w)$ is the group of linear automorphisms of $V$ preserving $\omega$.  Let us recall some basic constructions:

\begin{enumerate}
\item If $V_1\subseteq V$ is a subspace, then $V_1^{\omega} = \{v\in V \mbox{ } \vert \mbox { } \omega(v,v_1)=0 \text{ for all $v_1\in V_1$}\}$
\item A subspace $V_1\subseteq V$ is isotropic if $V_1\subseteq V_1^{\omega}$.
\item A subspace $V_1 \subseteq V$ is coisotropic if $V_1^{\omega} \subseteq V_1$.
\item A subspace is Lagrangian if $V_1=V_1^{\omega}$.  Lagrangian subspaces are maximally isotropic: there are no larger isotropic subspaces in which they are contained.
\item Finally, if $(M,\omega)$ is a symplectic manifold, then a submanifold $N$ is isotropic/coisotropic/Lagrangian if the fibers of $TN$ are isotropic/coisotropic/Lagrangian in $TM\big\vert_N$
\end{enumerate}

\begin{definition}\label{def:Ham}
Let $(M,\omega)$ be a symplectic manifold and let $G$ be a Lie group which acts properly on $M$.  Let $\fg$ be the Lie algebra of $G$ and let $\fg^*$ be its dual.  We say the action is \emph{Hamiltonian} if:

\begin{enumerate}
\item for all $g\in G$, $\tau_g^*\omega=\omega$, where $\tau_g:M \to M$ is the action of $g$ on $M$, and
\item there exists an equivariant map $\mu:M\to \fg^*$ satisfying:
\begin{displaymath}
i_{X_M}\omega = -d\langle \mu, X\rangle, \text{ for all $X\in \fg$}
\end{displaymath}
\end{enumerate}
That is, the action preserves the symplectic form $\omega$ and the induced vector fields $X_M$ are Hamiltonian vector fields for the functions $\langle \mu, X \rangle$.
\end{definition}

\subsubsection{Symplectic Representations of Tori}

\begin{definition}\label{def:symrep}
Let $G$ be a Lie group.  A \emph{symplectic representation} of $G$ is a homomorphism $\rho:G \to \operatorname{Symp}(V,\omega)$, where $(V,\omega)$ is some symplectic vector space.
\end{definition}

\begin{lemma}\label{lem:symrep}
Let $G$ be a Lie group and let $\rho:G \to \operatorname{Symp}(V,\omega)$ be a symplectic representation.  Then the action of $G$ on $V$ is Hamiltonian with moment map defined by:

\begin{displaymath}
\langle \mu(V), X \rangle = -\frac{1}{2}\omega(d\rho_e(X)v,v)
\end{displaymath}
where $X\in \fg$ is a Lie algebra element.
\end{lemma}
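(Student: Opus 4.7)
The plan is to verify the two conditions in Definition~\ref{def:Ham} directly. Invariance of $\omega$ is built into the hypothesis $\rho(G)\subseteq\operatorname{Symp}(V,\omega)$, so the only work is to check that the proposed $\mu:V\to\fg^*$ is (i) equivariant with respect to the coadjoint action on $\fg^*$ and (ii) satisfies the moment map condition $i_{X_V}\omega=-d\langle\mu,X\rangle$ for every $X\in\fg$.

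First I would record the two computational facts that drive the argument. The induced vector field at $v\in V$ is $X_V(v)=\frac{d}{dt}\big\vert_0\rho(\exp(tX))v=d\rho_e(X)\,v$, so if we write $A:=d\rho_e(X)$ then $X_V(v)=Av$; and differentiating $\rho(\exp(tX))^{*}\omega=\omega$ at $t=0$ gives the infinitesimal symplectic condition $\omega(Au,w)+\omega(u,Aw)=0$ for all $u,w\in V$. Using $V$'s natural identification of tangent spaces with $V$ itself, I compute
\[
d\langle\mu,X\rangle_v(u)=\tfrac{d}{dt}\Big\vert_0\!\left(-\tfrac{1}{2}\omega(A(v+tu),v+tu)\right)=-\tfrac{1}{2}\bigl(\omega(Au,v)+\omega(Av,u)\bigr),
\]
and the infinitesimal symplectic identity gives $\omega(Au,v)=-\omega(v,Au)=\omega(Av,u)$, so the right-hand side collapses to $-\omega(Av,u)$. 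Hence $d\langle\mu,X\rangle_v(u)=-\omega(X_V(v),u)=-(i_{X_V}\omega)_v(u)$, which is exactly condition (ii).

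Next I would verify equivariance. For $g\in G$, the standard computation $\rho(g)^{-1}\,d\rho_e(X)\,\rho(g)=d\rho_e(\operatorname{Ad}(g^{-1})X)$ combined with $\rho(g)^{*}\omega=\omega$ yields
\[
\langle\mu(\rho(g)v),X\rangle=-\tfrac{1}{2}\omega\bigl(d\rho_e(X)\rho(g)v,\rho(g)v\bigr)=-\tfrac{1}{2}\omega\bigl(d\rho_e(\operatorname{Ad}(g^{-1})X)v,v\bigr)=\langle\mu(v),\operatorname{Ad}(g^{-1})X\rangle,
\]
which says $\mu(\rho(g)v)=\operatorname{Ad}^{*}(g)\mu(v)$. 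Together with (ii) this certifies that $\mu$ is a moment map and the action is Hamiltonian.

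There is no real obstacle here: the entire proof is a two-line calculation once one recognizes that $d\rho_e(X)$ is infinitesimally symplectic. The only item worth flagging carefully is the sign convention in Definition~\ref{def:Ham} (namely $i_{X_M}\omega=-d\langle\mu,X\rangle$), which is what fixes the factor $-\tfrac12$ in the formula for $\mu$; reversing the sign convention would force the opposite sign on $\mu$.
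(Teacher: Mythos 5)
Your proof is correct and follows essentially the same route as the paper: a direct differentiation of $-\tfrac{1}{2}\omega(d\rho_e(X)(v+t\eta),v+t\eta)$ combined with the infinitesimal symplectic identity to collapse the two terms into $-(i_{X_V}\omega)$. The only difference is that you also verify $\operatorname{Ad}^*$-equivariance of $\mu$, a check the paper's proof omits even though its Definition~\ref{def:Ham} requires it, so your version is if anything slightly more complete.
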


\begin{proof}\mbox{ } \newline
We first note that the vector field induced by $X\in \fg$ is $X_V(v)= \frac{d}{dt}\big\vert_0 \rho(\exp(tX))v=d\rho_e(X)v$.  Fix a vector $v\in V$  and let $\eta \in T_vV$.  We compute:

\begin{displaymath}
\begin{array}{lcl}
\langle d\mu_v(\eta),X \rangle & = & \frac{d}{dt}\big\vert_0 (-\frac{1}{2}\omega(d\rho_e(X)(v+t\eta), v+t\eta)) \\
                               & = & -\frac{1}{2}(\omega(d\rho_e(X)v, \eta) + \omega(d\rho_e(X)\eta,v)) \\
                               & = & -\frac{1}{2}(\omega(d\rho_e(X)v,\eta) - \omega(\eta, d\rho_e(X)v) \\
                               & = & -\frac{1}{2}(\omega(d\rho_e(X)v,\eta)+\omega(d\rho_e(X)v,\eta) \\
                               & = & -\omega(d\rho_e(X)v,\eta) \\
                               & = & -(i_{X_V(v)}\omega)(\eta)
\end{array}
\end{displaymath}
Thus, $\mu$ is a moment map for the action of $G$.
\end{proof}

We now discuss symplectic representations of tori and their weights in a modicum of detail.  These representations may be viewed as complex representations of tori and the theory of these representations has been well studied (q.v. \cite{Ad,Th}), hence the material in this section is review.  We'll need the material in this section when we study orbit spaces of toric, folded-symplectic manifolds.

\begin{definition}
Let $G$ be a torus with Lie algebra $\fg$.  Then the exponential map $\exp:\fg \to G$ is a group epimorphism, i.e. it is surjective.  We define its kernel $\mathbb{Z}_G:=\ker(\exp)$ to be the \emph{integral lattice of} $G$.  The dual lattice $\mathbb{Z}_G^*$ is the set of all elements $\beta\in \fg^*$ for which $\beta(X)\in \mathbb{Z}$ for every $X\in \mathbb{Z}_G$.
\end{definition}

\begin{remark}
Given a torus $G$ with exponential map $\exp:\fg \to G$, we can define an action of $G$ on $\C^n$ if we are given a (multi)set of weights $\{\beta_1, \dots,\beta_n\}$.  We have:

\begin{displaymath}
\exp(X) \cdot (z_1, \dots, z_n) := (e^{2\pi i \beta_1(X)}z_1, \dots, e^{2\pi i \beta_n(X)} z_n)
\end{displaymath}
which gives us a well-defined action of $G$ since the integral lattice $\mathbb{Z}_G$ is mapped to the identity via this recipe, hence $\ker(\exp)$ is sent to the identity transformation and so the above recipe gives a well-defined action of $\fg/\ker(\exp) = G$ on $\C^n$.
\end{remark}

\begin{definition}\label{def:weights}
Let $G$ be a torus.  A \emph{character} is a homomorphism $\chi:G \to U(1)$ and a \emph{weight} is a differential of a character at the identity, $\beta=d\chi_e$.  Note that there is a one-to-one correspondence between weights and characters which can be seen from the commutative diagram:

\begin{displaymath}
\xymatrix{
\fg \ar[r]^{\beta} \ar[d]^{\exp_G} & \operatorname{Lie}(U(1)) \ar[d]^{\exp_{U(1)}} \\
G \ar[r]^{\xi}                     & U(1)
}
\end{displaymath}
If we identify $\operatorname{Lie}(U(1))$ with $\R$, then a weight $\beta$ is an element of $\fg^*$.  The character associated to $\beta$ is a homomorphism of tori, hence $\beta$ maps $\mathbb{Z}_G$ to $\mathbb{Z}\subset \R$ and so $\beta \in \mathbb{Z}_G^*$.  Thus, we refer to the set $\mathbb{Z}_G^*$ as the set of weights for $G$.
\end{definition}

\begin{example}\label{ex:complexrep}
Let $\rho:G \to GL(\C^n)$ be a complex representation of a torus $G$.  Then the standard theory of toric representations tells us that there is a multiset of weights associated to $\rho$, $\{\beta_1,\dots, \beta_n\}$, which specifies the representation up to isomorphism.  These weights exist because every complex representation of a torus splits as a direct sum of $1$-dimensional complex representations by Schur's lemma (q.v. proposition 3.7 in \cite{Ad}) and the action of $G$ preserves an invariant metric on each summand, hence $\rho$ is really a map $\rho:G \to \oplus_{i=1}^n U(1)$.  Projection onto the $i^{th}$ factor gives us a character, which gives us a corresponding weight $\beta_i$. The action of $G$ on a summand $\C$ is given explicitly by $\exp(X)\cdot z = e^{2\pi\beta_i(X)}z$, where $X\in \fg$ is a Lie algebra element.  Thus, the weight specifies the representation up to isomorphism.

If we include the symplectic structure $\omega_{\C}=\displaystyle \frac{i}{2\pi}dz \wedge d\bar{z}$ on $\C$ into our calculations, then the action $\exp(X)\cdot z = e^{2\pi\beta_i(X)}z$ is an Hamiltonian action and the moment map is $\mu(z)=\vert z \vert^2\beta_i$.  Thus, if we consider the action of $G$ on $\C^n$ with weights $\{\beta_1, \dots, \beta_n\}$, the action is Hamiltonian with moment map:

\begin{displaymath}
\mu(z_1,\dots,z_n)=\sum_{i=1}^n \vert z_i \vert^2 \beta_i
\end{displaymath}
\end{example}

\begin{lemma}\label{lem:sympweights}
Let $G$ be a torus.  There exists a one-to-one correspondence between multisets of weights $\{\beta_1, \dots, \beta_n\}$, $\beta_i\in \mathbb{Z}_G^*$, and isomorphism classes of symplectic representations $\rho:G \to \operatorname{Symp}(V,\omega)$, where $\dim(V)=2n$.
\end{lemma}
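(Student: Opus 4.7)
The plan is to reduce symplectic representations of the torus $G$ to complex (unitary) representations, for which the classification by weights is standard (as invoked in example \ref{ex:complexrep}), and then to show the correspondence descends to isomorphism classes in both directions. First I would equip $(V,\omega)$ with a $G$-invariant $\omega$-compatible complex structure $J$: starting from any $\omega$-compatible $J_0$, I average the induced inner product $g_0(\cdot,\cdot)=\omega(\cdot,J_0\cdot)$ over $G$ using Haar measure to get a $G$-invariant inner product $g$, and then produce an $\omega$-compatible $G$-invariant $J$ from $g$ and $\omega$ by the usual polar-decomposition trick. This turns $\rho:G\to \operatorname{Symp}(V,\omega)$ into a unitary representation on $(V,J,g)$.

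Next I would apply Schur's lemma to the unitary representation of the torus $G$ to obtain an orthogonal (hence $\omega$-orthogonal, since $J$ is $G$-invariant and $\omega$-compatible) decomposition $V=\bigoplus_{i=1}^n L_i$ into $G$-invariant complex lines. On each $L_i$, $G$ acts through a character $\chi_i:G\to U(1)$; letting $\beta_i=d(\chi_i)_e\in \mathbb{Z}_G^*$ be the associated weight, I read off a multiset $\{\beta_1,\dots,\beta_n\}$. This multiset is independent of the choice of decomposition: two such decompositions refine to a common one on isotypic components, so the multiset of characters appearing (with multiplicity equal to the complex dimension of each isotypic piece) is an invariant of the unitary representation, and hence of the underlying symplectic representation.

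For the inverse direction, given a multiset $\{\beta_1,\dots,\beta_n\}$ I define $\rho:G\to \operatorname{Symp}(\mathbb{C}^n,\omega_{\mathbb{C}^n})$ exactly as in example \ref{ex:complexrep}, namely $\exp(X)\cdot(z_1,\dots,z_n)=(e^{2\pi i\beta_1(X)}z_1,\dots,e^{2\pi i\beta_n(X)}z_n)$ with $\omega_{\mathbb{C}^n}=\sum \tfrac{i}{2\pi}dz_k\wedge d\bar z_k$. This is manifestly a symplectic representation, and by construction its weights are $\{\beta_1,\dots,\beta_n\}$, so the composition weights-to-representation-to-weights is the identity.

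The main obstacle I expect is the other composition: showing that two symplectic representations with the same multiset of weights are symplectically isomorphic. The subtlety is that the invariant compatible $J$ is not canonical, so I need to argue that any two choices give unitarily isomorphic structures and that such unitary isomorphisms are automatically symplectic. For this, I would observe that the space of $G$-invariant $\omega$-compatible complex structures is contractible (it is acted on transitively by the $G$-invariant part of the symplectic group, and the stabilizer is the invariant unitary group), so any two invariant $J$'s on $(V,\omega)$ are conjugate by a $G$-equivariant symplectomorphism. Having fixed $J$, a unitary $G$-isomorphism between two representations with the same weight multiset is then built by matching up the isotypic decompositions via an equivariant complex-linear isometry (using that any two Hermitian lines carrying the same character of $G$ admit a $G$-equivariant unitary isomorphism); this map is automatically $\omega$-preserving because $\omega(\cdot,\cdot)=g(J\cdot,\cdot)$. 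Combining these two steps produces an equivariant symplectic isomorphism between any two symplectic representations with the same weight multiset, completing the bijection.
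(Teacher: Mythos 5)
Your proposal is correct and follows essentially the same route as the paper: average to get an invariant $\omega$-compatible $J$, decompose the resulting unitary representation into weight lines via Schur's lemma, use uniqueness of the isotypic decomposition to make the weight multiset well defined, and invert by the explicit standard model on $\C^n$. The only cosmetic difference is how independence of the choice of $J$ is handled — the paper connects two invariant compatible structures by a path through the affine space of invariant metrics and invokes discreteness of the weight lattice, whereas you invoke transitivity of the invariant symplectic group on invariant compatible structures; both are valid and rest on the same contractibility fact.
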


\begin{proof}\mbox{ } \newline
Choose an invariant almost complex structure $J:V \to V$ compatible with $\omega$, so that $\omega(\cdot, J\cdot)$ is an invariant metric.  The choice of $J$ identifies $V$ with $\C^n$, hence we have a complex representation of a torus.  Using the metric, for example, we may split $\C^n$ into a direct sum of $1$-dimensional complex representations: $V\simeq \oplus_{i=1}^n V_i$.  This is the combination of the facts that all representations split into a direct sum of unique irreducibles (unique up to permutation of summands) and that $G$ is a torus (q.v. lemma 3.25 and theorem 3.24 in \cite{Ad}).  These $V_i's$ will also be irreducible \emph{real} representations (q.v. \cite{B,Th}).  Now, the $V_i's$ are mutually orthogonal with respect to the metric, hence they are symplectically orthogonal as well since the metric is $\omega(\cdot, J\cdot)$.  $J$ restricts to an almost complex structure on each $V_i$, hence we have a splitting $(V,\omega) = \oplus_{i=1}^n (V_i,\omega_i)$ where each factor is linearly, symplectically isomorphic to $(\C, \frac{i}{2\pi}dz\wedge d\bar{z})$ via the choice of $J$.  We note that this splitting is independent of the choice of $J$ since the splitting of a representation into isotypicals is unique up to a reordering of the factors (q.v. \cite{Ad,B,Th}).  Since the action preserves the metric, each representation $\rho_i: G \to \operatorname{Symp}(V_i,\omega_i)$ is really a character $\chi_i:G \to  U(1)$.  Thus, a choice of a $J$ gives us a multi-set of weights $\{\beta_1, \dots, \beta_n\}$.

\vspace{5mm}

The weights do not depend on the choice of $J$.  Indeed, the space of all invariant almost complex structures is contractible since it is diffeomorphic to the space of invariant metrics, which is an affine space.  Thus, for two different choices of invariant almost complex structures, $J_1$ and $J_2$, there is a continuous path $\gamma(t)$ connecting them.  For each $(V_i,\omega_i)$, the path of almost complex structures, $\gamma(t)$, gives us a continuous path of characters $\chi_t: G\to U(1)$, hence a continuous path of weights $(\beta_i)_t$.  Since the set of weights is discrete, we have that $(\beta_i)_t$ does not depend on $t$, hence the weights do not depend on the choice of invariant almost complex structure.

\vspace{5mm}

Now, because the splitting of $(V,\omega)$ into isotypicals $\oplus_{i=1}^n(V_i,\omega_i)$ is unique up to reordering of the factors, the multiset of weights corresponding to $(V,\omega)$ is unique.  That is, $G$ cannot act on a summand $(V_i,\omega_i)$ with two different weights, hence the correspondence $\text{Representations} \to \text{Weights}$ is injective.  Furthermore, if we are given a multiset of weights $\{\beta_1,\dots,\beta_n\}$ then we may construct a symplectic representation of $G$ using the recipe $\exp(X)\cdot(z_1,\dots, z_n) = (e^{2\pi i\beta_i(X)}z_1,\dots,e^{2\pi i\beta_n(X)}z_n)$, where $X\in \fg$ is a Lie algebra element.  By our discussion, every symplectic representation of $G$ is isomorphic to a complex representation of this form and the multiset of weights corresponding to this representation is unique.  Thus, the correspondence between symplectic representations of tori and multisets of weights is both injective and surjective.
\end{proof}

\begin{cor}\label{cor:sympweights}
Let $\rho: G \to \operatorname{Symp}(V,\omega)$ be a symplectic representation of a torus $G$.  Then there exist weights $\beta_1,\dots, \beta_n$ associated to $\rho$ and an isomorphism of symplectic representations $\phi:(V,\omega) \to (\C^n,\omega_{\C^n})$, where $G$ acts on the $i^{th}$ factor of $\C^n$ with weight $\beta_i$.  Furthermore, the moment map for the action of $G$ on $\C^n$ is:

\begin{displaymath}
\mu(z_1,\dots,z_n)=\sum_{i=1}^n \vert z_i\vert^2\beta_i
\end{displaymath}
\end{cor}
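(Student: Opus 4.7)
The plan is to extract this corollary from the preceding Lemma \ref{lem:sympweights} and the moment map computation of Lemma \ref{lem:symrep} (together with Example \ref{ex:complexrep}). The existence of the weights $\beta_1,\dots,\beta_n$ and of a symplectic, $G$-equivariant isomorphism $\phi:(V,\omega)\to(\C^n,\omega_{\C^n})$ is essentially already contained in the proof of Lemma \ref{lem:sympweights}; the only thing left to do is (i) make that isomorphism explicit enough to carry the moment map, and (ii) verify the closed-form expression $\mu(z)=\sum|z_i|^2\beta_i$.

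For step (i), I would first choose a $G$-invariant, $\omega$-compatible almost complex structure $J$ on $V$; such a $J$ exists because invariant compatible almost complex structures form a nonempty (by averaging) contractible set. This $J$ produces a $G$-invariant Hermitian inner product $h(\cdot,\cdot)=\omega(\cdot,J\cdot)+i\omega(\cdot,\cdot)$, turning $V$ into a unitary representation of $G$. By Schur's lemma for tori, $V$ splits uniquely (up to reordering) as an orthogonal direct sum $V=\bigoplus_{i=1}^n V_i$ of $J$-complex one-dimensional $G$-invariant subspaces; since the splitting is orthogonal for $h$, it is also symplectically orthogonal for $\omega$. On each $V_i$, $G$ acts through a character $\chi_i:G\to U(1)$ whose derivative at the identity is a weight $\beta_i\in\mathbb{Z}_G^*$. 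Next I would pick a unit vector $e_i\in V_i$ (unit with respect to a rescaled inner product chosen so that $\omega(e_i,Je_i)=1$) and declare $\phi(\sum z_i e_i)=(z_1,\dots,z_n)$. Unwinding definitions, $\phi^*\omega_{\C^n}=\omega$ because on each summand both sides agree with $dx_i\wedge dy_i$, and $\phi$ is $G$-equivariant because $\exp(X)\cdot e_i = e^{2\pi i\beta_i(X)}e_i$ by construction of $\beta_i$.

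For step (ii), once $\phi$ identifies $(V,\omega,\rho)$ with $(\C^n,\omega_{\C^n})$ equipped with weights $\beta_1,\dots,\beta_n$, I would apply Lemma \ref{lem:symrep} with $v=(z_1,\dots,z_n)$ and $d\rho_e(X)v=(2\pi i\beta_1(X)z_1,\dots,2\pi i\beta_n(X)z_n)$, reducing the computation of $\langle\mu(v),X\rangle=-\tfrac{1}{2}\omega_{\C^n}(d\rho_e(X)v,v)$ to a one-variable calculation on each summand. A direct computation on $(\C,\tfrac{i}{2\pi}dz\wedge d\bar z)$ gives $-\tfrac12\omega_{\C}(2\pi i\beta(X)z,z)=|z|^2\beta(X)$, so summing over the $n$ factors yields the claimed formula $\mu(z)=\sum_{i=1}^n|z_i|^2\beta_i$.

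The only genuinely delicate point, and hence the step most deserving of care, is normalization: the form $\omega_{\C^n}=\sum\tfrac{i}{2\pi}dz_i\wedge d\bar z_i$ used in Example \ref{ex:complexrep} differs from the naive $\sum dx_i\wedge dy_i$ by a factor of $\tfrac{1}{\pi}$, so the unit vectors $e_i$ must be scaled accordingly (equivalently, the invariant Hermitian inner product must be normalized so that $h(e_i,e_i)=1/\pi$) in order that the moment map emerge with exactly the coefficient $|z_i|^2$ rather than $2\pi|z_i|^2$ or $\pi|z_i|^2$. Everything else is bookkeeping: uniqueness of the multiset $\{\beta_i\}$ is Lemma \ref{lem:sympweights}, and independence of $\phi$'s weights from the choices of $J$ and of the $e_i$ follows from the uniqueness of the isotypical decomposition together with the discreteness of $\mathbb{Z}_G^*$ along the contractible space of invariant compatible $J$'s.
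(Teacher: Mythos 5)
Your proposal is correct and follows exactly the route the paper takes: the paper's proof is literally the one-line citation of Lemma \ref{lem:sympweights} (invariant $J$, Schur decomposition into weight lines) together with Example \ref{ex:complexrep} (the moment map $\sum_i |z_i|^2\beta_i$), which is what you reconstruct in detail. Your normalization check on $(\C,\tfrac{i}{2\pi}dz\wedge d\bar z)$ is a worthwhile addition the paper leaves implicit, and it does come out to $|z|^2\beta(X)$ as you claim.
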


\begin{proof} \mbox{ } \newline
This result follows from lemma \ref{lem:sympweights} and example \ref{ex:complexrep}.
\end{proof}

\begin{lemma}\label{lem:sympweights2}
Suppose $\rho:G \to \operatorname{Symp}(V,\omega)$ is a faithful symplectic representation of a compact abelian group, $G$.  Then
\begin{enumerate}
\item $\dim(G)\le \frac{1}{2}\dim(V)$ and
\item if $\dim(G)=\frac{1}{2}\dim(V)$, then $G$ is connected, hence it is a torus.  The weights corresponding to this representation form a $\mathbb{Z}$-basis for the set of weights $\mathbb{Z}_G^*$.
\end{enumerate}
\end{lemma}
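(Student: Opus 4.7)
My plan is to reduce everything to the established classification of symplectic representations of tori from Lemma \ref{lem:sympweights}, applied to the identity component $G_0 \le G$. Since $G$ is compact and abelian, $G_0$ is a torus with $\fg_0 = \fg$, and $\rho\vert_{G_0}$ is a symplectic representation; Corollary \ref{cor:sympweights} then supplies weights $\beta_1,\ldots,\beta_n \in \mathbb{Z}_G^*$ (with $2n = \dim V$) together with a symplectic isomorphism $V \cong \C^n$ on which $G_0$ acts diagonally by characters $\chi_i$ with $d(\chi_i)_e = \beta_i$.

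The next observation I use is that because $G$ is compact and abelian, $\rho(G)$ is a commuting family of unitary operators with respect to the invariant compatible metric of Lemma \ref{lem:sympweights}, hence simultaneously diagonalizable. After relabeling, this produces characters $\tilde\chi_i : G \to U(1)$ extending the $\chi_i$, and the joint map $\tilde\chi := (\tilde\chi_1,\ldots,\tilde\chi_n) : G \to U(1)^n$ is injective precisely because $\rho$ is faithful.

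For part (1), I restrict $\tilde\chi$ to $G_0$ to obtain a homomorphism $\chi : G_0 \to U(1)^n$ whose differential at the identity is $X \mapsto (\beta_1(X),\ldots,\beta_n(X))$. Since $\rho\vert_{G_0}$ is faithful and $G_0$ is connected, $\ker\chi$ is trivial, so $d\chi_e$ is injective; the weights $\{\beta_i\}$ therefore span $\fg^*$ over $\R$, yielding $\dim G = \dim G_0 \le n = \tfrac{1}{2}\dim V$. For part (2), assume equality. Then $d\chi_e$ is a linear isomorphism, so $\chi$ is a local diffeomorphism, and being an injective homomorphism between compact connected Lie groups of the same dimension, it is an isomorphism $G_0 \cong U(1)^n$. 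Since $\tilde\chi\vert_{G_0} = \chi$ is already surjective onto $U(1)^n$ and $\tilde\chi$ is injective, every $g \in G$ lies in $G_0$; thus $G = G_0$ is a torus. Finally, $d\tilde\chi_e : \fg \to \R^n$ is a Lie algebra isomorphism carrying $\mathbb{Z}_G = \ker(\exp_G)$ onto $\ker(\exp_{U(1)^n}) = \mathbb{Z}^n$, and $\beta_i$ is the $i$-th coordinate functional of $d\tilde\chi_e$; since the coordinate functionals form a $\mathbb{Z}$-basis of $(\mathbb{Z}^n)^*$, the set $\{\beta_i\}$ is a $\mathbb{Z}$-basis of $\mathbb{Z}_G^*$.

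The main obstacle is largely bookkeeping: one must verify that the $G_0$-characters $\chi_i$ truly extend to characters of all of $G$ (via simultaneous diagonalizability, rather than via any splitting $G \cong G_0 \times F$), and that the identifications of integral lattices under $d\tilde\chi_e$ are compatible with duality. Once these algebraic identifications are in place, no further analytic work is needed.
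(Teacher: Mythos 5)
Your proof is correct and follows essentially the same route as the paper: pass to the identity component $G_0$, which is a torus; embed it in the diagonal torus $U(1)^n\subset U(n)$ to get the dimension bound from injectivity of the differential; and, in the equality case, deduce $G=G_0$ from injectivity of $\rho$ plus surjectivity onto $U(1)^n$, after which the lattice statement follows because a group isomorphism of tori carries $\ker(\exp)$ to $\ker(\exp)$. The one organizational difference is how each argument gets all of $\rho(G)$ (not just $\rho(G_0)$) into the diagonal torus: the paper conjugates $\rho(G_0)$ into the standard maximal torus and then asserts $\rho(G)$ lies in that same torus, which tacitly uses that a maximal torus of $U(n)$ is its own centralizer; you instead simultaneously diagonalize the entire commuting family $\rho(G)$ at the outset, which makes that containment automatic and is arguably slightly more self-contained. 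Both versions are standard, and your bookkeeping of the characters $\tilde\chi_i$ extending the $\chi_i$ and of the duality $d\tilde\chi_e(\mathbb{Z}_G)=\mathbb{Z}^n$ is sound.
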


\begin{proof} \mbox{ } \newline
\begin{enumerate}
\item Let $\dim(V)=2n$.  A choice of invariant almost complex structure $J$ on $V$ compatible with $\omega$ allows us to view $\rho$ as a homomorphism $\rho:G \to U(n)$, where $U(n)$ is the unitary group, since $G$ preserves the metric $\omega(\cdot, J\cdot)$.  Let $G^0$ be the connected component of the identity in $G$, which is a torus since $G$ is compact and abelian.  The image $\rho(G^0)$ of $G^0$ is then a subtorus of $U(n)$, which lies inside a maximal torus of $U(n)$.  All such tori are conjugate (q.v. corollary 4.23 in \cite{Ad}), so we may assume it is the standard $n$-torus.  Thus,
    \begin{displaymath}
    \dim(G)=\dim(G_0) = \dim(\rho(G^0)) \le n
    \end{displaymath}
    where we have used that $\rho$ is injective.  Since $n=\frac{1}{2}\dim(V)$, we have $\dim(G) \le \frac{1}{2}\dim(V)$.

\item If $\dim(G)=\frac{1}{2}\dim(V)=n$, then $\rho(G^0)$ is a torus of dimension $n$ in $U(n)$, where $G^0$ is the connected component of the identity.  Since all maximal tori in $U(n)$ are of dimension $n$ and $\rho(G^0)$ has dimension $n$, we conclude that $\rho(G^0)$ is itself a maximal torus, hence $\rho\vert_{G^0}$ surjects onto an $n$-dimensional torus in $U(n)$.  Since $\rho(G)$ is contained in this torus and $\rho$ is injective, we must have $G^0=G$, else there are two points sent to the same element in $U(n)$.

\item Finally, if we decompose $(V,\omega)$ into its $2$-dimensional isotypical subspaces $(V,\omega)=\oplus_{i=1}^n (V_i,\omega_i)$, then $\rho$ is a map $\rho:G \to U(1)^n$, the characters $\chi_i:G \to U(1)$ are given by projection onto the $i^{th}$ factor of $U(1)^n$, and the weights are $\beta_i=(d\chi_i)_e$.  By part $2$, if we assume that $\dim(G)=n$, then $\rho$ is an isomorphism of tori, meaning $d\rho_e$ maps $\ker(\exp_G)$ isomorphically onto $\ker(\exp_{U(1)^n})=\mathbb{Z}^n$.  Since $d\rho_e= (\beta_1,\dots,\beta_n)$, this can only happen if the $\mathbb{Z}$ span of $\{\beta_1,\dots,\beta_n\}$ is all of $\mathbb{Z}_G^*$.
\end{enumerate}
\end{proof}

\subsubsection{Symplectic Normal Form}

We would now like to state a few important results in the theory of Hamiltonian actions on symplectic manifolds, which we will use to study the local uniqueness of toric, folding hypersurfaces and, consequently, toric folded-symplectic manifolds..  The following lemma is due to Guillemin and Sternberg and is a staple of the study of Hamiltonian group actions.  It is theorem 3.5 in $\cite{GS1}$.
\begin{lemma}\label{lem:symporbit}
Let $(M,\omega)$ be a symplectic manifold with a proper, Hamiltonian action of a Lie group $G$ and corresponding moment map $\mu:M\to \fg^*$.  Let $H\le G$ be a subgroup.  Then the set $M_H$ is a symplectic submanifold of $M$.
\end{lemma}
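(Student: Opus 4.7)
The plan is to show that at every point $p \in M_H$ the restriction $\omega_p|_{T_pM_H}$ is non-degenerate. Since $M_H$ is already known to be a smooth submanifold by corollary \ref{cor:slice1}, and since corollary \ref{cor:slice2} identifies $T_pM_H$ with the subspace $(T_pM)^H$ of $H$-fixed vectors, the problem reduces to a linear statement about the symplectic representation of $H$ on $T_pM$.

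First I would observe that if $M_H$ is empty the claim is vacuous, so fix $p\in M_H$. Then $H = G_p$ is the stabilizer of $p$, and since the action of $G$ is proper, $H$ is compact by lemma \ref{lem:orbits}. The Hamiltonian hypothesis says that $\tau_g^*\omega=\omega$ for every $g\in G$, so in particular every $h\in H$ acts linearly and symplectically on $(T_pM,\omega_p)$. The key step is then to produce an $H$-invariant, $\omega_p$-compatible almost complex structure $J$ on $T_pM$: since the space of $\omega_p$-compatible almost complex structures on $T_pM$ is a non-empty contractible space on which the compact group $H$ acts, one can average any such $J_0$ over $H$ (or equivalently over an $H$-invariant metric) to obtain the desired $H$-invariant $J$. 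Set $g(\cdot,\cdot):=\omega_p(\cdot,J\cdot)$, which is then an $H$-invariant inner product on $T_pM$.

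Now decompose $T_pM = W \oplus W^{\perp}$ orthogonally with respect to $g$, where $W := (T_pM)^H = T_pM_H$. Both $g$ and $J$ are $H$-invariant, so $J$ commutes with the $H$-action; hence $JW\subseteq W$ and $JW^\perp\subseteq W^\perp$, i.e.\ $W$ and $W^\perp$ are $J$-invariant. For any $X\in W$ and $Y\in W^\perp$ one then has
\[
\omega_p(X,Y) \;=\; g(X,JY) \;=\; 0,
\]
since $JY\in W^\perp$. Finally, for any nonzero $X\in W$, the vector $JX$ also lies in $W$ and
\[
\omega_p(X,JX) \;=\; g(X,X) \;>\; 0,
\]
so $\omega_p|_W$ is non-degenerate. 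Thus $T_pM_H$ is a symplectic subspace of $(T_pM,\omega_p)$ for every $p$, and $M_H$ is a symplectic submanifold of $(M,\omega)$.

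The only step that requires care is the existence of the $H$-invariant compatible $J$; this is the point at which properness is essential, because it is what delivers compactness of $H$ and hence the averaging procedure. Once $J$ is in hand, the rest is bookkeeping: the isotypical/invariant decomposition automatically produces a symplectically orthogonal splitting, from which non-degeneracy on the fixed subspace is immediate.
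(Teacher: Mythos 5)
Your proof is correct and follows essentially the same route as the paper: identify $T_pM_H$ with $(T_pM)^H$ via corollary \ref{cor:slice2}, use properness to get compactness of $H$ and hence an $H$-invariant $\omega$-compatible $J$, and conclude non-degeneracy from $\omega_p(v,Jv)>0$ with $Jv$ again $H$-fixed. The extra orthogonal splitting $T_pM=W\oplus W^\perp$ is harmless but not needed; the paper goes straight from ``$Jv$ is fixed'' to non-degeneracy on the fixed subspace.
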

\begin{proof} \mbox{ } \newline
We need only show that the tangent space to $M_H$ is symplectic.  By corollary \ref{cor:slice2}, for $p\in M_H$ the tangent space $T_pM_H$ is $(T_pM)^H$, the subspace of vectors fixed by the action of $H$.  We claim this subspace is symplectic.  We may assume $H$ is compact since the action is proper, hence we may choose an invariant almost complex structure $J$ compatible with $\omega$.  Then, for $v\in (T_pM)^H$ with $v\ne 0$, $(d\tau_h)_p(v)=v$ implies $(d\tau_h)_p(Jv)=J(d\tau_h)_p(v)=Jv$, hence $Jv$ is also fixed by the action of $H$.  Since $\omega(v,Jv)>0$, we see that $\omega$ is nondegenerate on $(T_pM)^H$, hence it is nondegenerate on $T_pM_H$.  Thus, $(M_H, i^*\omega)$ is a symplectic submanifold of $M$.
\end{proof}

Since $M_H$ is a symplectic submanifold, the bundle $TM_H^{\omega}$ is a symplectic vector bundle over $TM_H$.  Since it is complementary to $TM_H$ in $TM\big\vert_{M_H}$, it is canonically isomorphic to the normal bundle $\nu(M_H)$ and we see that there is a canonical symplectic structure on $\nu(M_H)$. Instead of studying the symplectic normal bundle to $M_H$, we could study the symplectic normal bundle to an orbit $G\cdot p$, the fibers of which are called the \emph{symplectic slice representation}.

\begin{definition}\label{def:sympslice}
Let $(M,\omega)$ be a symplectic manifold with a proper, Hamiltonian action of a Lie group $G$ and let $p\in M$ be a point with stabilizer $H:=G_p$.  We define the \emph{symplectic slice} to be:

\begin{displaymath}
V:= \frac{T_p(G\cdot p)^{\omega}}{T_p(G\cdot p) \cap T_p(G\cdot p)^{\omega}}
\end{displaymath}
Since the form $\omega$ is $G$ invariant, it is $H$-invariant and $T_p(G\cdot p)^{\omega}$ is an invariant subspace of $T_pM$.  $T_p(G\cdot p)$ is also an invariant subspace since the orbits are invariant under the action of $G$, hence their intersection $T_p(G\cdot p)^{\omega}\cap T_p(G\cdot p)$ is invariant and $V$ inherits the structure of a representation of $H$.  Since $H$ preserves $\omega$, the representation is symplectic, hence we often call $V$ the \emph{symplectic slice representation}.
\end{definition}

The significance of the symplectic slice representation is that its data along with the pullback of the symplectic form $\omega$ to the orbit $G\cdot p$ determines an invariant neighborhood of $G\cdot p$ up to equivariant symplectomorphism.  This is the content of the equivariant constant rank embedding theorem, which we do not discuss here (q.v. \cite{LS, Me}).  We will be concerned with group actions for which the orbits are isotropic.  That is, the pullback to the orbit is $0$: $i^*\omega =0$.  In this case, the equivariant constant rank embedding theorem is the equivariant version of Weinstein's isotropic embedding theorem.  There is a convenient normal form for neighborhoods of such orbits discovered by Guillemin and Sternberg (\cite{GS1}).

\begin{theorem}\label{thm:sympnorm}
Let $(M,\omega)$ be a symplectic manifold with a proper, Hamiltonian action of a Lie group $G$ with moment map $\mu$ and let $p\in M$ be a point with stabilizer $G_p=H$.  Suppose the orbit $G\cdot p$ is an isotropic submanifold of $M$ ($i^*\omega =0$) and let $V$ be the symplectic slice representation at $p$.  Let $\fg$ be the Lie algebra of $G$ and choose an $H$-equivariant splitting $\fg^*=\frak{h}^*\oplus \frak{h}^o$, where $\frak{h}$ is the Lie algebra of $h$ and $\frak{h}^o$ is its annihilator.  Then there exists a symplectic structure on the total space of
\begin{displaymath}
E=G\times_H \frak{h}^o \oplus V,
\end{displaymath}
an invariant neighborhood $U_1$ of the zero section of $E$, an invariant neighborhood $U_2$ of $G\cdot p$, and an equivariant symplectomorphism $\phi:U_1\to U_2$ satisfying $\phi([e,0,0])=p$ so that the moment map $\mu \circ \phi$ is given by:

\begin{equation}
\mu\circ \phi(g,\eta,v) = Ad^*(g)(\eta + \Phi_V(v)) + \mu(p)
\end{equation}
where the action of $G$ on $E$ is given by $g_0\cdot [(g,\eta,v)] = [(g_0g,\eta,v)]$ and $\Phi_V$ is the canonically defined moment map for the action of $H$ on $V$ (q.v. lemma \ref{lem:symrep}).
\end{theorem}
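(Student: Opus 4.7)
First I would identify the differential slice and reduce to a homogeneous bundle model. By Palais's differential slice theorem (Theorem~\ref{thm:slice}), an invariant neighborhood of $G\cdot p$ is equivariantly diffeomorphic to a neighborhood of the zero section of $G\times_H W$, where $W = T_pM/T_p(G\cdot p)$. Isotropy of $G\cdot p$ gives $T_p(G\cdot p)\subseteq T_p(G\cdot p)^{\omega}$, yielding the short exact sequence of $H$-representations
\begin{equation*}
0 \to T_p(G\cdot p)^{\omega}/T_p(G\cdot p) \to W \to T_pM/T_p(G\cdot p)^{\omega} \to 0,
\end{equation*}
whose left term is, by definition, the symplectic slice $V$ and whose right term is identified via the isomorphism $\omega_p^{\#}:T_pM\to T_p^*M$ with $T_p^*(G\cdot p) \cong (\fg/\frak{h})^* = \frak{h}^o$. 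Compactness of $H$ splits the sequence equivariantly, so $W \cong \frak{h}^o\oplus V$, and the differential slice theorem produces an equivariant diffeomorphism $\Phi:U_1\to U_2$ from a neighborhood of the zero section of $E := G\times_H(\frak{h}^o\oplus V)$ onto a neighborhood of $G\cdot p$ in $M$, sending $[e,0,0]$ to $p$.

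Next I would construct a canonical symplectic form $\omega_E$ on $E$ by Marsden--Weinstein reduction, following the Marle--Guillemin--Sternberg model. Equip $T^*G\times V$ with its product symplectic structure and let $H$ act diagonally, on $T^*G$ by the cotangent lift of right multiplication and on $V$ by the symplectic representation. Via the left trivialization $T^*G\cong G\times\fg^*$ and the $H$-equivariant splitting $\fg^* = \frak{h}^*\oplus\frak{h}^o$, the $H$-moment map becomes $(g,\xi+\eta,v)\mapsto -\xi+\Phi_V(v)$ with $\xi\in\frak{h}^*$ and $\eta\in\frak{h}^o$; its zero level set is $\{(g,\Phi_V(v)+\eta,v)\}$, and reducing at level $0$ yields $E$ equipped with a symplectic form $\omega_E$. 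The residual left $G$-action descends to $E$, remains Hamiltonian, and a direct computation through the reduction gives
\begin{equation*}
\mu_E([g,\eta,v]) = \Ad^*(g)(\eta+\Phi_V(v)),
\end{equation*}
where $\Phi_V(v)\in\frak{h}^*$ is identified with its image in $\fg^*$ through the chosen splitting.

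Finally I would apply an equivariant Moser argument to identify $\omega_E$ with $\Phi^*\omega$ near the zero section. Both forms are $G$-invariant and symplectic, and the identifications made in the first step were arranged precisely so that they agree at $[e,0,0]$; $G$-equivariance then forces agreement along the entire zero section. Their difference is closed, $G$-invariant, and vanishes on the zero section, so an equivariant relative Poincare lemma (obtained by averaging the standard radial primitive over the compact group $H$) produces a $G$-invariant primitive $\alpha$ vanishing on the zero section. Set $\omega_t = \Phi^*\omega + t(\omega_E - \Phi^*\omega)$, which remains nondegenerate in a neighborhood of the zero section; solving $\iota_{X_t}\omega_t = -\alpha$ yields a $G$-invariant time-dependent vector field $X_t$ vanishing on the zero section. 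By the argument of Lemma~\ref{lem:Moser1} its flow is defined on an invariant neighborhood for all $t\in[0,1]$ and fixes the zero section, so composing its time-$1$ map with $\Phi$ produces the desired equivariant symplectomorphism $\phi$. The moment map formula follows because $\mu\circ\phi$ and $\mu_E + \mu(p)$ are both $G$-equivariant and agree at $[e,0,0]$.

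The principal obstacle is the reduction step: verifying that the symplectic reduction of $T^*G\times V$ at level $0$ by $H$ is precisely $E = G\times_H(\frak{h}^o\oplus V)$, and that the residual $G$-moment map takes the stated form. Both computations hinge on using the $H$-equivariant splitting $\fg^*=\frak{h}^*\oplus\frak{h}^o$ carefully so that $\Phi_V(v)$, a priori in $\frak{h}^*$, may be added to elements of $\frak{h}^o\subset\fg^*$ and so that $\Ad^*$ transports the sum correctly. A lesser but genuine difficulty in the Moser step is producing a $G$-invariant primitive $\alpha$ that vanishes on the zero section with enough order to guarantee that the Moser vector field $X_t$ vanishes there and its flow remains in an invariant neighborhood for all $t\in[0,1]$.
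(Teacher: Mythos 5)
Your proposal is correct, and it shares the paper's central construction: the model $E=G\times_H(\frak{h}^o\oplus V)$ is obtained in both cases as the reduced space $(T^*G\times V)//_0H$, with the moment map formula read off from the reduction. Where you diverge is in how the local model is matched to $M$: the paper's sketch defers this entirely to the equivariant constant rank (isotropic) embedding theorem, cited as a black box from \cite{LS, Me} — one only checks that $G\cdot[e,0,0]$ is isotropic and that its symplectic slice is $V$ — whereas you unpack that theorem explicitly via the differential slice theorem followed by an equivariant Moser argument. Your route is more self-contained and fits the toolkit the thesis develops elsewhere (lemma \ref{lem:Moser1}, the equivariant Moser argument of proposition \ref{prop:eqfsnormal}); the paper's route is shorter but leans on an unproved external result. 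One step in your argument deserves more care than you give it: for Moser to apply you need $\Phi^*\omega$ and $\omega_E$ to agree as bilinear forms on the \emph{full} tangent space $T_{[e,0,0]}E$, not merely to both vanish on the isotropic zero section. This forces the identification $W\cong\frak{h}^o\oplus V$ to be chosen so that the $\omega_p$-pairing between $T_p(G\cdot p)$ and the $\frak{h}^o$ summand matches the canonical pairing $\fg/\frak{h}\otimes\frak{h}^o\to\R$, and so that $\omega_p$ restricted to the $V$ summand is the slice form — a Witt-type linear extension argument. Once that is arranged, $G$-invariance propagates the pointwise agreement along the whole zero section and the rest of your Moser step (relative Poincar\'e lemma with averaging, $X_t$ vanishing on the zero section, lemma \ref{lem:Moser1} for the flow) goes through as stated.
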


\begin{proof}[Sketch of a Proof] \mbox{ } \newline
We provide a short sketch of how one might prove theorem \ref{thm:sympnorm}.  Suppose $(M_i,\omega_i)$, $i=1,2$ are two symplectic manifolds, each with a proper Hamiltonian action of $G$ and moment maps $\mu_i$.  Let $p_i\in M_i$ be a point in $M_i$ for $i=1,2$ and suppose the stabilizers $G_{p_1}$, $G_{p_2}$ agree, say $G_{p_i}=H$.  Furthermore, suppose that the orbits $G_\cdot p_1$ and $G\cdot p_2$ are isotropic and that $\mu_1(p_1)=\mu_2(p_2)$.  The equivariant constant rank embedding theorem states that there is an equivariant symplectomorphism between neighborhoods $U_i$ of $G\cdot p_i$ if and only if the symplectic slice representations are linearly, symplectically isomorphic.  Thus, to prove the theorem it suffices to prove that
\begin{itemize}
\item there exists a symplectic structure on $E$,
\item the orbit $G\cdot ([e,0,0])$ is isotropic, and
\item the symplectic slice representations for $G\cdot p$ and $G\cdot([e,0,0])$ are isomorphic.
\end{itemize}
The symplectic structure comes from the fact that $E$ is the reduced space $(T^*G \times V)//_0H$.  We will discuss symplectic reduction in chapter 6, so the reader may need to take this as a black box for now.  The orbit of $([e,0,0])$ is then the image of the $0$ section of $T^*G \times V$ in the reduced space.  Since the zero section in $T^*G \times V$ is isotropic, its image in the reduced space is isotropic.  By construction, the symplectic slice representation at $([e,0,0])$ is precisely $V$.  Thus, the equivariant constant rank embedding theorem gives us an isomorphism between a neighborhood of $G\cdot p$ in $M$ and a neighborhood of the zero section of $G\times_H \frak{h}^o \oplus V$.  The equation for the moment map comes from computing the moment map for the reduced space.
\end{proof}
We will come back to this normal form theorem when we discuss toric actions on folded-symplectic manifolds.

\subsection{Hamiltonian Actions on Folded-Symplectic Manifolds}
We define Hamiltonian actions for folded-symplectic manifolds and construct an equivariant analog of proposition \ref{prop:fsnormal} for folded-symplectic manifolds with co-orientable folding hypersurfaces, which gives us a useful normal form for a neighborhood of the folding hypersurface.  Our normal form generalizes theorem $1$ in \cite{CGW} since we do not require compactness or orientability of the folded-symplectic manifold.  We then use this normal form to study the structure of folded-symplectic manifolds equipped with Hamiltonian actions of Lie groups.  Along the way, we prove a new result that reveals that every folding hypersurface in a folded-symplectic Hamiltonian $G$-manifold may be realized as:

\begin{enumerate}
\item a hypersurface in a \emph{symplectic}, Hamiltonian $G$-manifold or
\item a co-orientable folding hypersurface in a folded-symplectic, Hamiltonian $G$-manifold.
\end{enumerate}
That is, regardless of whether or not a folding hypersurface is co-orientable in the original ambient manifold, we can always extract it and equivariantly embed it into a folded-symplectic manifold as a co-orientable folding hypersurface.

We begin with the definition of an Hamiltonian action.

\begin{definition}\label{def:fsham}
Let $(M,\sigma)$ be a folded-symplectic manifold without corners and let $G$ be a Lie group.  We say an action of $G$ on $M$ is Hamiltonian if:

\begin{enumerate}
\item $\tau_g^*\sigma = \sigma$ for all $g\in G$ and
\item There exists an equivariant map $\mu:M\to \fg^*$ satisfying
    \begin{displaymath}
    i_{X_M}\sigma = -d\langle \mu,X\rangle, \text{ for all lie algebra elements $X\in \fg$.}
    \end{displaymath}
\end{enumerate}
We call $\mu$ the moment map and refer to $(M,\sigma)$ with the action of $G$ as a \emph{folded-symplectic Hamiltonian $G$-manifold}.
\end{definition}

\begin{lemma}\label{lem:preserves}
Let $G$ be a Lie group and let $(M,\sigma)$ be a folded-symplectic Hamiltonian $G$-manifold with folding hypersurface $Z$.  Then the action of $G$ preserves the fold, $Z$.  That is, $Z$ is an invariant submanifold of $M$.  Furthermore, the action of $G$ preserves $\ker(\sigma)\to Z$ and if $Z$ is equipped with its orientation induced by $\sigma$, then the action of $G$ on $Z$ is orientation-preserving.
\end{lemma}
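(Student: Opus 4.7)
The plan is to dispatch the three assertions in order, with the orientation statement following formally from the naturality of the canonical construction in proposition \ref{prop:orientation}.

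First I would prove that $Z$ is $G$-invariant. By definition $Z = (\sigma^m)^{-1}(\mathcal{O})$, where $\mathcal{O}\subset \Lambda^{2m}(T^*M)$ is the zero section. Since $\tau_g^*\sigma = \sigma$, one has $\tau_g^*(\sigma^m) = (\tau_g^*\sigma)^m = \sigma^m$, and because $(d\tau_g)_p$ is a linear isomorphism, $\sigma^m_p = 0$ if and only if $\sigma^m_{\tau_g(p)} = 0$. Hence $\tau_g(Z) = Z$. Next, to see that the action preserves the rank $2$ bundle $\ker(\sigma)\to Z$, pick $z\in Z$ and $v\in \ker(\sigma_z)$. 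For any $w\in T_{\tau_g(z)}M$,
\begin{displaymath}
\sigma_{\tau_g(z)}\bigl((d\tau_g)_z(v),w\bigr) = (\tau_g^*\sigma)_z\bigl(v,(d\tau_g)_z^{-1}(w)\bigr) = \sigma_z\bigl(v,(d\tau_g)_z^{-1}(w)\bigr) = 0,
\end{displaymath}
so $(d\tau_g)_z(v)\in \ker(\sigma_{\tau_g(z)})$. Intersecting with $TZ$ (which is $G$-invariant since $Z$ is), the same argument shows $(d\tau_g)$ preserves $\ker(i_Z^*\sigma)$ as well.

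For the orientation claim I would invoke the fact that the $\sigma$-induced orientation of proposition \ref{prop:orientation} depends only on $\sigma$: the orientation form $\Omega = (i_Z^*\sigma)^{m-1}\wedge \alpha$ built in that proof is independent of the local non-vanishing section $w\in \Gamma(\ker(\sigma))$ transverse to $Z$ and of its extension $\tilde{w}$. Concretely, given $\tau_g$ and a local section $w$ of $\ker(\sigma)$ transverse to $Z$ near $z$ with extension $\tilde{w}$, the pushforward $w' := (d\tau_g)\circ w \circ \tau_g^{-1}$ is a local section of $\ker(\sigma)$ transverse to $Z$ near $\tau_g(z)$ by parts one and two, and $\tilde{w}' := (d\tau_g)\circ\tilde{w}\circ\tau_g^{-1}$ is an extension. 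Running the construction of $\Omega$ near $\tau_g(z)$ with these choices and using $\tau_g^*\sigma = \sigma$, one checks directly that the resulting $1$-form $\alpha'$ satisfies $\tau_g^*\alpha' = \alpha$ and $\tau_g^*(i_Z^*\sigma)^{m-1} = (i_Z^*\sigma)^{m-1}$. Therefore $\tau_g^*\Omega' = \Omega$, and since both $\Omega$ and $\Omega'$ represent the canonical orientation (by the independence-of-choices statement in proposition \ref{prop:orientation}), $\tau_g$ is orientation-preserving on $Z$.

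I do not expect any serious obstacles; the only place one must be slightly careful is the bookkeeping in the orientation argument, where it is tempting to simply say ``the construction is canonical, so any symplectomorphism preserves it,'' whereas a rigorous check requires writing the pushed-forward local data and verifying that the recipe of proposition \ref{prop:orientation} produces compatible forms on the two neighborhoods. Once that verification is done, the statement about $\ker(i_Z^*\sigma)$ being oriented in the $G$-equivariant sense is immediate from the identification of its orientation with that of $Z$ via $(i_Z^*\sigma)^{m-1}$.
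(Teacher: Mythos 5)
Your proposal is correct and follows essentially the same route as the paper: invariance of $Z$ via $\tau_g^*(\sigma^m)=\sigma^m$, invariance of $\ker(\sigma)$ by the same contraction computation, and orientation-preservation by pushing forward the local section $w$ and its extension via $d\tau_g$ and appealing to the independence-of-choices in proposition \ref{prop:orientation}. The only cosmetic difference is that you package the final step as an equality of orientation forms $\tau_g^*\Omega'=\Omega$, whereas the paper verifies the pointwise positivity condition directly via the chain rule; these are the same computation.
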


\begin{proof}\mbox{ } \newline
We first show that the action preserves $Z$ and then show it is orientation preserving.
\begin{itemize}
\item For all $g\in G$, we have $\tau_g^*\sigma = \sigma$ by the definition of an Hamiltonian action, where $\tau_g$ is the action of $g$ on $M$.  Let $2m=\dim(M)$ be the dimension of $M$.  Then $\tau_g^*\sigma = \sigma$ implies $\tau_g^*(\sigma^m)=\sigma^m$.  Since $\tau_g$ is a diffeomorphism, we must have that $\tau_g(Z) \subset Z$.  Since the same logic holds for $g^{-1}$ and $\tau_g^{-1}=\tau_{g^{-1}}$, we have that $\tau_{g}^{-1}(Z) \subset Z$ and so $\tau_g(Z)=Z$.

\item To see that the action of $G$ preserves $\ker(\sigma)$, we pick a point $p\in Z$, a vector $v\in \ker(\sigma_p)$, an element $g\in G$, and we compute:

    \begin{displaymath}
    i_{d\tau_g(v)}\sigma = \sigma_{g\cdot p}(d\tau_g(v), \cdot ) = \sigma_p(v, d\tau_{g^{-1}}(\cdot)) = 0
    \end{displaymath}
    hence $\ker(\sigma)$ is an invariant subbundle of $TM\big\vert_Z$.  Since $Z$ is invariant, we also have that $\ker(\sigma)\cap TZ$ is an invariant subbundle of $TZ$.

\item  We now want to show the action is orientation preserving.  It is enough to show the action on $\ker(\sigma)\cap TZ$ is orientation preserving since the orientation on $Z$ is equivalent to a choice of orientation on $\ker(\sigma)\cap TZ$.  Let $p\in Z$ and choose a co-orientable neigbourhood $U\subseteq Z$ of $p$.  We may choose any non-vanishing section $w$ of $\ker(\sigma)$ on $U$ that is transverse to $Z$ and extend it to a local vector field $\tilde{w}$ in a neighborhood $\tilde{U}$ of $p$.  An element $v$ of $\ker(\sigma_p)\cap T_pZ$ is then positively oriented if, for any extension of $v$ to a vector field $\tilde{v}$, we have:
    \begin{equation}\label{eq:recipe}
    w_p(\sigma(\tilde{w},\tilde{v}))= d(\sigma(\tilde{w},\tilde{v}))_p(w_p) >0
    \end{equation}
    Fix an element $g\in G$.  We define a function:
    \begin{displaymath}
    \xymatrixrowsep{.5pc}\xymatrixcolsep{3pc}\xymatrix{
    f:\tilde{U} \ar[r] & \R \\
    f(x) \ar[r] & \sigma_x(\tilde{w}(x),\tilde{v}(x))
    }
    \end{displaymath}
    and a function $h:\tau_g(\tilde{U})\to \R$ given by:
    \begin{displaymath}
    \xymatrixrowsep{.5pc}\xymatrixcolsep{3pc}\xymatrix{
    h:\tau_g(\tilde{U}) \ar[r] & \R \\
    h(\tau_g(x)) \ar[r] & \sigma_{\tau_g(x)}(d\tau_g(\tilde{w}(x)),d\tau_g(\tilde{v}(x)))
    }.
    \end{displaymath}
    Then $h\circ \tau_g = f$ by construction.  We then have:
    \begin{displaymath}
    d(\sigma(\tilde{w},\tilde{v}))_p(w_p) = df(w_p) = dh(d\tau_g(w_p))= d(\sigma(d\tau_g(\tilde{w}),d\tau_g(\tilde{v})))_{g\cdot p}(d\tau_g(w_p))
    \end{displaymath}
    Since $w_p$ is an element of $\ker(\sigma_p)$, $d\tau_g(w_p)$ is an element of $\ker(\sigma_p)$ by part $2$.  Since $w_p$ is transverse to $Z$ and $\tau_g$ is a diffeomorphism that preserves $Z$, $d\tau_g(w_p)$ is transverse to $Z$.  Since $\ker(\sigma)\cap TZ$ is an invariant subbundle of $TZ$, $v \in \ker(\sigma_p)\cap T_pZ$ if and only if $d\tau_g(v) \in \ker(\sigma_{g\cdot p})\cap T_{g\cdot p}Z$.  Lastly, we note that $d\tau_g(\tilde{w})$ and $d\tau_g(\tilde{v})$ are extensions of $d\tau_g(w_p)$ and $d\tau_g(v)$, respectively.  Thus, $d\tau_g(v)$ is positively oriented if and only if:

    \begin{displaymath}
    d(\sigma(d\tau_g(\tilde{w}),d\tau_g(\tilde{v})))_{g\cdot p}(d\tau_g(w_p)) >0
    \end{displaymath}
    by equation \ref{eq:recipe}.  But $d(\sigma(d\tau_g(\tilde{w}),d\tau_g(\tilde{v})))_{g\cdot p}(d\tau_g(w_p))>0$ if and only if $d(\sigma(\tilde{w},\tilde{v}))_p(w_p) = df(w_p)>0$, as we have shown.  Hence $v$ is positively oriented if and only if $d\tau_g(v)$ is positively oriented, which shows that the action is orientation-preserving.
\end{itemize}
\end{proof}

\begin{cor}\label{cor:preserves}
Let $G$ be a compact Lie group and let $(M,\sigma)$ be a folded-symplectic Hamiltonian $G$-manifold with folding hypersurface $Z$ and moment map $\mu:M \to \fg^*$.  Then there exists an equivariant co-isotropic embedding of $Z$ into a symplectic, Hamiltonian $G$-manifold $(M_0,\omega_0)$.  There exists an equivariant fold map $\psi:M_0 \to M_0$ which folds along the image of $Z$, hence $Z$ also embeds into the folded-symplectic Hamiltonian $G$-manifold $(M_0,\psi^*\omega_0)$ as a co-orientable folding hypersurface.
\end{cor}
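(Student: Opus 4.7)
The plan is to build $M_0$ intrinsically from the data on $Z$ alone, ignoring the ambient $M$ and in particular bypassing whether $Z$ is co-orientable in $M$. I would take $M_0:=Z\times\R$ (with coordinate $s$ and $p:M_0\to Z$ the projection) and equip it with a symplectic form that restricts to $i_Z^*\sigma$ on $Z\times\{0\}$ and whose null line along $Z\times\{0\}$ recovers $\ker(i_Z^*\sigma)$. Applying the fold map $\psi(z,t)=(z,t^2)$ will then automatically produce the folded-symplectic partner of $\omega_0$ in the normal form already studied in example \ref{ex:symplectize3} and proposition \ref{prop:fsnormal}.

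The key preliminary step, and the part I expect to do the real work, is producing a $G$-invariant $1$-form $\alpha\in\Omega^1(Z)$ that is non-vanishing on the null bundle $\ker(i_Z^*\sigma)$. Since this null line bundle is canonically oriented by proposition \ref{prop:orientation}, it is trivializable, so some $\alpha_0\in\Omega^1(Z)$ is positive on it. By lemma \ref{lem:preserves} the $G$-action preserves both the null bundle and its canonical orientation, so averaging (compactness of $G$ is essential here), $\alpha:=\int_G \tau_g^*\alpha_0\,dg$, yields a $G$-invariant form that remains strictly positive on the oriented null bundle. Everything downstream of this step is a direct verification.

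With $\alpha$ in hand, I would set
\begin{displaymath}
\omega_0 := p^*(i_Z^*\sigma) + d(s\,p^*\alpha)
\end{displaymath}
on $M_0=Z\times\R$, letting $G$ act trivially on the $\R$ factor. This form is manifestly closed and $G$-invariant; at $s=0$ its top power is a nonzero multiple of $p^*\bigl((i_Z^*\sigma)^{m-1}\wedge\alpha\bigr)\wedge ds$, and the wedge $(i_Z^*\sigma)^{m-1}\wedge\alpha$ is exactly the canonical volume form on $Z$ from proposition \ref{prop:orientation}, so $\omega_0$ is nondegenerate on a neighborhood of $Z\times\{0\}$, to which I shrink $M_0$. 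I would then define a moment map by
\begin{displaymath}
\langle \mu_0(z,s),X\rangle := \langle\mu(z),X\rangle + s\,\alpha_z(X_Z(z)),\qquad X\in\fg,
\end{displaymath}
and verify $i_{X_{M_0}}\omega_0=-d\langle\mu_0,X\rangle$ by a short computation using the original moment-map equation on $Z$ together with the identity $i_{X_Z}(d\alpha)=-d(i_{X_Z}\alpha)$, which follows from Cartan's formula and the $G$-invariance of $\alpha$; equivariance of $\mu_0$ is inherited from the same invariance.

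Finally, the zero-section embedding $i:Z\hookrightarrow M_0$, $z\mapsto(z,0)$, is equivariant and satisfies $i^*\omega_0=i_Z^*\sigma$; it is coisotropic because at $(z,0)$ the symplectic orthogonal $(T_zZ)^{\omega_0}$ consists of vectors $(v,c\,\partial_s)$ with $i_v(i_Z^*\sigma)+c\alpha=0$ in $T_z^*Z$, and since $\alpha$ is non-vanishing on $\ker(i_Z^*\sigma)$ it lies outside the image of the contraction map, forcing $c=0$ and $v\in\ker(i_Z^*\sigma)\subset T_zZ$. The map $\psi(z,t)=(z,t^2)$ is $G$-equivariant (trivial action on $\R$) and is a fold map in the sense of corollary \ref{cor:folds4-0}, and
\begin{displaymath}
\psi^*\omega_0 = p^*(i_Z^*\sigma) + d(t^2\,p^*\alpha)
\end{displaymath}
is folded-symplectic with co-orientable folding hypersurface $Z\times\{0\}$ (co-oriented by $\partial/\partial t$); the $G$-action is Hamiltonian for $\psi^*\omega_0$ with moment map $\mu_0\circ\psi$, and the same zero-section embedding realizes $Z$ inside $(M_0,\psi^*\omega_0)$ as required.
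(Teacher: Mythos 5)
Your proposal is correct and follows essentially the same route as the paper: average a $1$-form positive on the canonically oriented null bundle, form the symplectization $p^*(i_Z^*\sigma)+d(t\,p^*\alpha)$ on $Z\times\R$ with the moment map $\mu|_Z+tF$, and apply the equivariant fold $(z,t)\mapsto(z,t^2)$. The only (cosmetic) omission is that the paper shrinks to $|t|\le 1$ so that $\psi$ maps $M_0$ into itself.
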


\begin{proof}\mbox{ } \newline
Since $\ker(\sigma)\cap TZ$ is oriented, we may choose a non-vanishing section $v$ and choose a corresponding $1$-form $\alpha \in \Omega^1(TZ)$ so that $\alpha(v)=1$.  Since $G$ is compact, we may average $\alpha$ via the equation:

\begin{displaymath}
\frac{1}{\vert G\vert} \int_G \tau_g^*\alpha dg
\end{displaymath}
so that it is $G$ invariant.  Since the action of $G$ is orientation preserving, $\alpha(v)\ne 0$.  We then form the $G$-space:

\begin{displaymath}
(Z\times \R, \omega_0=p^*(i_Z^*\sigma) + d(tp^*\alpha)), \text{ $g\cdot (z,t)=(g\cdot z, t)$}.
\end{displaymath}
where $p:Z\times \R \to Z$ is the projection.  Since $\sigma$ is closed, $\omega_0$ is closed.  Since $\sigma$ is $G$-invariant and $\alpha$ is $G$- invariant, $\omega_0$ is $G$-invariant.  We have $\displaystyle dt\wedge p^*\alpha(\frac{\partial}{\partial t},v) >0$, hence $\omega_0$ is non-degenerate at $Z\times \{0\}$ and is non-degenerate in an invariant neighborhood $M_0$ of the zero section $Z\times \{0\}$.  A moment map for the action of $G$ on $M_0$ is given by $\mu_s(z,t)= \mu\big\vert_Z(z) + tF(z)$, where $F(z)$ is defined by the equation $\langle F(z), X \rangle = \alpha(X_Z)$, hence the action of $G$ on $M_0$ is Hamiltonian.  We therefore have an equivariant co-isotropic embedding into a symplectic, Hamiltonian $G$-manifold.  To obtain the fold map, simply restrict $M_0$ to the set of points where $\vert t \vert \le 1$ and set $\psi(z,t)=(z,t^2)$.
\end{proof}

\begin{lemma}\label{lem:preserves1}
Suppose $G$ is a connected Lie group and $(M,\sigma)$ is a folded-symplectic Hamiltonian $G$-manifold, where the action of $G$ is proper.  Then for each $p\in Z$, $\ker(\sigma_p)$ is a representation of $G_p$.  If the fold is co-orientable, this representation is trivial.
\end{lemma}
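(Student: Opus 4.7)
The first claim is immediate from lemma \ref{lem:preserves}: for any $g \in G_p$, the differential $d\tau_g$ is a linear automorphism of $T_pM$ (since $g\cdot p = p$), and because $G$ preserves the subbundle $\ker(\sigma) \to Z$, this automorphism restricts to a linear map $\ker(\sigma_p) \to \ker(\sigma_p)$. Packaging these gives the desired representation $G_p \to GL(\ker(\sigma_p))$.

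For the triviality under co-orientability, my plan is to study the $G_p$-equivariant short exact sequence
\begin{displaymath}
0 \to \ker(i_Z^*\sigma)_p \to \ker(\sigma_p) \to \nu(Z)_p \to 0,
\end{displaymath}
where $\nu(Z)_p = T_pM/T_pZ$ is identified with the quotient $\ker(\sigma_p)/\ker(i_Z^*\sigma)_p$ via the inclusion $\ker(\sigma_p) \hookrightarrow T_pM$ (the kernel of the composition to $\nu(Z)_p$ is precisely $\ker(\sigma_p) \cap T_pZ = \ker(i_Z^*\sigma)_p$, and the composition is surjective because $\ker(\sigma_p)$ is transverse to $T_pZ$). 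All three terms are $G_p$-invariant, and the two endpoints are one-dimensional. Note that properness of the $G$-action gives compactness of $G_p$.

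I will show each endpoint is a trivial one-dimensional $G_p$-representation. For $\ker(i_Z^*\sigma)_p$, proposition \ref{prop:orientation} equips this line with a canonical orientation induced by $\sigma$, and lemma \ref{lem:preserves} says $G$ acts preserving this orientation. Hence $G_p$ acts on the oriented line $\ker(i_Z^*\sigma)_p$ by multiplication by a positive scalar; since $\mathbb{R}_{>0}$ contains no nontrivial compact subgroup and $G_p$ is compact, this scalar must be $1$. For $\nu(Z)_p$, co-orientability yields a nowhere-vanishing global section $s$ of $\nu(Z) \to Z$, which trivializes $\nu(Z)$ and defines a continuous map $\lambda: G\times Z \to \mathbb{R}^*$ by $d\tau_g(s(q)) = \lambda(g,q)\, s(g\cdot q)$. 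Since $\lambda(e,\cdot)=1$ and $G$ is connected, $\lambda(g,q) > 0$ for all $(g,q)$. The map $g \mapsto \lambda(g,p)$ is then a continuous homomorphism $G_p \to \mathbb{R}_{>0}$, and compactness of $G_p$ again forces it to be trivial.

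Finally, because $G_p$ is compact, every short exact sequence of finite-dimensional $G_p$-representations splits equivariantly, so
\begin{displaymath}
\ker(\sigma_p) \;\cong\; \ker(i_Z^*\sigma)_p \,\oplus\, \nu(Z)_p
\end{displaymath}
as $G_p$-representations, and both summands are trivial. The delicate step in the argument is the co-orientability step: without it, $G_p$ could in principle act by $-1$ on $\nu(Z)_p$, because there would be no continuous global orientation to obstruct this, and the connectedness of $G$ alone would not rule out that possibility. Hence co-orientability is used precisely to convert ``preserves a $\mathbb{Z}/2$-valued orientation cocycle'' into ``acts by a positive scalar,'' after which compactness of $G_p$ closes the argument.
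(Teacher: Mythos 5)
Your proof is correct and follows essentially the same route as the paper: both arguments split $\ker(\sigma_p)$ into the tangential line $\ker(i_Z^*\sigma)_p$ (trivial because $G$ preserves the canonical $\sigma$-orientation and a compact subgroup of $GL(1,\R)$ acting positively must be trivial) and a normal direction identified with $\nu(Z)_p$ (trivial by co-orientability plus connectedness of $G$ plus compactness of $G_p$). The only cosmetic difference is that you phrase the normal-direction step via an explicit cocycle $\lambda$ and a homomorphism $G_p\to\R_{>0}$, while the paper chooses an invariant complement $V\subset\ker(\sigma_p)$ and argues directly on that oriented line; the content is the same.
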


\begin{proof} \mbox{ } \newline
For all $g\in G$, we have $\tau_g^*\sigma = \sigma$ by definition of an Hamiltonian action.  Thus, for $p\in Z$, $v\in T_pM$, and $h\in G_p$, we have $i_v\sigma_p = 0$ if and only if $i_{(d\tau_h)_pv}\sigma_p =0$, hence $\ker(\sigma_p)$ is an invariant subspace of $T_pM$ for the action of $G_p$.  Thus, $\ker(\sigma_p)$ is a representation of $G_p$.

\vspace{3mm}

By lemma \ref{lem:preserves}, the action of $G$ preserves $\ker(\sigma)\cap TZ$, hence $\ker(\sigma_p)\cap T_pZ$ is a representation of $G_p$.  Lemma \ref{lem:preserves} also implies that $G$ acts on $\ker(\sigma)\cap TZ$ by orientation-preserving isomorphisms, hence the action of $G_p$ on $\ker(\sigma_p)\cap T_pZ$ is orientation preserving.  Since the action of $G$ is proper, $G_p$ is compact by lemma \ref{lem:orbits}.  $\ker(\sigma_p)\cap T_pZ$ is $1$-dimensional, hence we may assume that we have a representation $\rho: G_p \to GL(\R)=\R\setminus \{0\}$ of a compact Lie group on $\R$ that is also orientation-preserving.  For each $h\in G_p$, $\rho(h)=\pm 1$.  Otherwise, the set $\{\rho(h)^n\vert n\in \mathbb{Z}\}$ is unbounded if $\vert\rho(h)\vert>1$, hence the image of $\rho$ is not compact, or its closure contains $0$ if $\vert\rho(h)\vert<1$ and the image is similarly non-compact.  In either case, we have a contradiction since $G_p$ itself is compact.  Since the action is orientation preserving, $\rho(G_p)=\{1\}$, hence the representation of $G_p$ on $\ker(\sigma_p)\cap T_pZ$ is trivial.

Similarly, if $Z$ is co-orientable then $G$ preserves a choice of co-orientation on $Z$.  Co-orientability of $Z$ is equivalent to the orientability of the bundle $\ker(\sigma)/\ker(i_Z^*\sigma)$, since this bundle is isomorphic to the normal bundle of $Z$.  Thus, if we fix a co-orientation on $Z$ and choose an invariant complement $V$ to $\ker(\sigma_p)\cap T_pZ$ in $\ker(\sigma_p)$, then $V$ is an oriented $1$-dimensional real representation of $G_p$ and the action is orientation preserving.  Thus, the representation is trivial on $V$.  Since $\ker(\sigma_p)\simeq V \oplus (\ker(\sigma_p))\cap T_pZ$, we have that the representation $\ker(\sigma_p)$ of $G_p$ is trivial.
\end{proof}

The following is an equivariant analog of proposition \ref{prop:fsnormal}.  As with proposition \ref{prop:fsnormal}, it is a generalization of theorem 1 in \cite{CGW}: we do not require that the folded-symplectic manifold be orientable and we do not require compactness.

\begin{prop}\label{prop:eqfsnormal}
Let $G$ be a compact, connected Lie group and suppose $(M,\sigma)$ is a folded-symplectic Hamiltonian $G$-manifold with moment map $\mu:M\to \fg^*$, where $\fg$ is the Lie algebra of $G$.  If the folding hypersurface $Z$ is co-orientable, then there exists an invariant neighborhood $U_1$ of the zero section of $Z\times \R$, an invariant neighborhood $U_2$ of $Z$ in $M$, and an equivariant diffeomorphism:

\begin{displaymath}
\phi:U_1 \to U_2, \text{ where the action on $U_1$ is $g\cdot (z,t)=(g\cdot z, t)$}
\end{displaymath}
such that $\phi^*\sigma = p^*i^*\sigma + d(t^2p^*\alpha)$ and $\phi(z,0)=z$ for all $z\in Z$.  Here, $p:Z\times \R \to Z$ is the projection, $i:Z\to Z\times \R$ is the inclusion as the zero section, and $\alpha\in \Omega^1(Z)^G$ is an invariant $1$-form that does not vanish on $\ker(i_Z^*\sigma)$ and orients it in the canonical way.
\end{prop}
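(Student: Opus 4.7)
The plan is to promote the non-equivariant normal form of Proposition \ref{prop:fsnormal} to the equivariant setting by running through each step of that argument and checking that all the objects we construct can be chosen $G$-invariantly. The key mechanism is averaging over the compact group $G$, together with the uniqueness of solutions to Moser's equation (Proposition \ref{prop:Ecotangent1}), which forces invariant inputs to produce invariant outputs.

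First I would construct an equivariant tubular neighborhood. By Lemma \ref{lem:preserves}, the bundle $\ker(\sigma) \to Z$ is $G$-invariant and contains the $G$-invariant rank-one subbundle $\ker(i_Z^*\sigma)$. Co-orientability of $Z$ means the line bundle $\ker(\sigma)/\ker(i_Z^*\sigma) \cong \nu(Z)$ is orientable, and since $G$ is connected it must act on $\nu(Z)$ by orientation-preserving isomorphisms. Choose any non-vanishing section $w$ of $\ker(\sigma)$ transverse to $Z$, extend to a stratified vector field $\tilde{w}$ on a neighborhood of $Z$, and average: $W := \int_G \tau_{g*}\tilde{w}\, dg$. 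Because each $\tau_{g*}\tilde{w}$ projects into the (oriented, hence positively-scaled) fibers of $\nu(Z)$ along $Z$, the averaged vector field $W$ is still non-vanishing and transverse to $Z$ along $Z$, invariant under $G$, and takes values in $\ker(\sigma)$ at $Z$. Its flow yields an equivariant diffeomorphism $\phi_0 : V \to U_2$ from an invariant neighborhood $V \subseteq Z \times \mathbb{R}$ of the zero section, with $d\phi_0(\partial/\partial t) = W$ along $Z$, where $G$ acts on $V$ by $g \cdot (z,t) = (g \cdot z, t)$.

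Next, pulling back by $\phi_0$ and applying Corollary \ref{cor:symplectize}, we obtain $\phi_0^*\sigma = p^*i^*\sigma + t\mu$ for some $G$-invariant $2$-form $\mu$ (invariance comes from the equivariance of $\phi_0$ and the $G$-invariance of $\sigma$), with $\mu\vert_{\ker(\phi_0^*\sigma)}$ non-degenerate. Define the invariant $1$-form $\alpha := i^*(i_{\partial/\partial t}\mu) \in \Omega^1(Z)^G$; this is the same form that defines the canonical orientation of $Z$ in Proposition \ref{prop:orientation}, so it is non-vanishing on $\ker(i_Z^*\sigma)$ and orients it positively. Set $\sigma_0 := p^*i^*\sigma + d(t^2 p^*\alpha)$ and $\sigma_1 := \phi_0^*\sigma$, both of which are $G$-invariant, closed, folded-symplectic on a neighborhood of $Z \times \{0\}$, and share the same fold and kernel bundle (with $\partial/\partial t \in \ker(\sigma_s)$ along $Z$). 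Form the linear path $\sigma_s = (1-s)\sigma_0 + s\sigma_1$, which by the argument of Proposition \ref{prop:fsnormal} remains folded-symplectic on some invariant neighborhood $\tilde V$ of $Z \times \{0\}$ for all $s \in [0,1]$.

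Now I would run the Moser argument equivariantly. The difference $\sigma_1 - \sigma_0$ is invariant, closed, and vanishes on $Z \times \{0\}$, so Lemma \ref{lem:poincare} combined with the fact that $i_{\partial/\partial t}(\sigma_1-\sigma_0)$ also vanishes on $Z$ gives a primitive of the form $\dot\sigma_s = d\beta_s$ with $\beta_s = t^2 \int_0^1 s\, r_s^*\eta\, ds$ where $r_s(z,t) = (z,st)$. Because $r_s$ commutes with the $G$-action and $\eta$ is constructed from $G$-invariant data, $\beta_s$ is automatically $G$-invariant; moreover $\beta_s$ vanishes on $Z \times \{0\}$, hence $\beta_s \in \Omega^1_{\ker(\sigma_s)}(\tilde V)$. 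By Proposition \ref{prop:Ecotangent1}, the equation $i_{X_s}\sigma_s = -\beta_s$ has a unique smooth solution $X_s = (\sigma_s^\#)^{-1}(-\beta_s)$. Uniqueness forces $X_s$ to be $G$-invariant: for any $g \in G$, $\tau_g^* X_s$ also solves the equation (because $\sigma_s$ and $\beta_s$ are invariant), so it must equal $X_s$. The vector field $tX_s$ then satisfies $i_{tX_s}\sigma_s = -\dot\sigma_s$ and vanishes on $Z$, so by Lemma \ref{lem:Moser1} its flow is defined on a neighborhood of $Z$ for all $s \in [0,1]$; we may shrink this neighborhood to an invariant one $V' \subseteq \tilde V$ by taking the union of $G$-orbits (compact because $G$ is compact). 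The time-$1$ flow $\phi_1 : V' \to V'$ is an equivariant diffeomorphism with $\phi_1\vert_{Z \times \{0\}} = \mathrm{id}$ and $\phi_1^*\sigma_1 = \sigma_0$. The composition $\phi := \phi_0 \circ \phi_1$ is then the desired equivariant diffeomorphism.

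The main obstacle I expect is the careful bookkeeping in the first step: producing the invariant transverse section of $\ker(\sigma)$ requires simultaneously controlling invariance, non-vanishing, and transversality after averaging, which is exactly where co-orientability and connectedness of $G$ are needed (to guarantee the action on the relevant quotient line bundle is orientation-preserving so that averaging does not annihilate the section). Once that step is done, the rest of the proof is a matter of observing that every subsequent construction (the form $\mu$, the $1$-form $\alpha$, the path $\sigma_s$, the primitive $\beta_s$, the solution $X_s$, and its flow) is canonical enough to inherit invariance automatically from invariance of the input data.
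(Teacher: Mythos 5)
Your proposal is correct and follows essentially the same route as the paper's proof: construct the equivariant tubular neighborhood by averaging a transverse section of $\ker(\sigma)$ (co-orientability plus connectedness of $G$ guaranteeing the average stays transverse), obtain an invariant $\alpha$ and an invariant primitive for $\dot{\sigma}_s$, and use uniqueness of the solution of $i_{X_s}\sigma_s=-\beta_s$ to force $G$-invariance and hence an equivariant Moser isotopy. The only slip is notational: since your $\beta_s$ already carries the factor $t^2$, the Moser field is your $X_s$ itself, which satisfies $d(i_{X_s}\sigma_s)=-\dot{\sigma}_s$ and vanishes along $Z\times\{0\}$, so the extra factor of $t$ (and the equation $i_{tX_s}\sigma_s=-\dot{\sigma}_s$, which equates a $1$-form with a $2$-form) should be dropped.
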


\begin{proof} \mbox{ } \newline
The proof is essentially the same as the proof of proposition \ref{prop:fsnormal}, except that we must ensure all constructions are equivariant.  We therefore walk the reader through the relevant steps and assume all claims unrelated to invariance or equivariance have been proven (as they have been in proposition \ref{prop:fsnormal}).
\begin{itemize}
\item We first note that $G$ preserves the orientation on $\ker(\sigma)\cap TZ$ induced by $\sigma$ (q.v. lemma \ref{lem:preserves}).  Furthermore, since $G$ is connected, it preserves any choice of co-orientation on the fold.
\item Since $\ker(\sigma)\cap TZ$ is oriented, we may choose a $1$-form $\alpha \in \Omega^1(Z)$ so that $\alpha\big\vert_{\ker(\sigma)\cap TZ}$ is nonvanishing and positive for any oriented section.  Since $G$ is compact, we may average $\alpha$ using the formula:
    \begin{displaymath}
    \frac{1}{\vert G\vert} \int_G \tau_g^*\alpha dg
    \end{displaymath}
    hence we may assume $\alpha$ is $G$-invariant.  Since $G$ preserves the orientation on $\ker(\sigma)\cap TZ$, we have that this invariant form is still non-vanishing on $\ker(\sigma)\cap TZ$.
\item Now, as in proposition \ref{prop:fsnormal}, we choose a vector $w$ on $M$ so that for each point $z\in Z$ we have $w(z) \in \ker(\sigma_z)$ and $w(z)$ is transverse to $T_zZ$.  We may then average $w$ via the formula:
    \begin{displaymath}
    \frac{1}{\vert G \vert} \int_G d\tau_g(w_{(\tau_g^{-1}(z))})dg
    \end{displaymath}
    hence we may assume $w$ is $G$-invariant.  Since $G$ preserves the co-orientation of $Z$ induced by $w$, the averaged vector field is still transverse to $Z$ at points of $Z$.  Since the action of $G$ preserves the subbundle $\ker(\sigma)$, the averaged vector field still has values in $\ker(\sigma)$ at points of $Z$.
\item Since $w$ is $G$-invariant, its flow $\Phi(z,t)$ is $G$-equivariant.
\item We use the flow $\Phi$ of $w$ to define $\tilde{\phi}:Z\times \R \to M$ by
    \begin{equation}\label{eq:flowdiff}
    \phi(z,t)=\Phi(z,t)
    \end{equation}
     which satisfies $\phi(z,0)=z$.  Furthermore, it is a diffeomorphism in a neighborhood $U$ of $Z\times \{0\}$.  By the slice theorem, every neighborhood of a point in $Z\times \R$ contains an invariant neighborhood of the same point, hence we may shrink the neighborhood $U$ to assume it's invariant.
\item $\tilde{\phi}^*\sigma$ and $p^*i^*\sigma + d(t^2p^*\alpha)$ are $G$-invariant, agree at $Z\times \{0\}$, and induce the same orientation on $Z$, hence the linear path
    \begin{displaymath}
    \sigma_s := s\tilde{\phi}^*\sigma + (1-s)(p^*i^*\sigma + d(t^2p^*\alpha)=p^*i^*\sigma + (1-s)(d(t^2p^*\alpha) + s\mu= p^*i^*\sigma +\mu_s
    \end{displaymath}
    is folded-symplectic in a neighborhood of $Z\times \{0\}$ by lemma \ref{lem:symplectize} ($\mu_s$ is non-degenerate on the kernel bundle $\ker(\sigma_s)=\ker(\sigma_0)$) and invariant under the action of $G$.
\item As discussed in proposition \ref{prop:fsnormal}, the derivative $\dot{\sigma}_s$ is exact: $\dot{\sigma}_s = -d\beta_s$.  Furthermore, we may choose $\beta_s$ so that it vanishes to second order at $Z\times \{0\}$.  Since the group $G$ is compact, we may average $\beta_s$:

    \begin{displaymath}
    \tilde{\beta}_s=\frac{1}{\vert G \vert} \int_G \tau_g^*\beta_s dg
    \end{displaymath}
    We have
    \begin{displaymath}
    -d\tilde{\beta}_s = -\frac{1}{\vert G \vert} \int_G \tau_g^*d\beta_s dg = \frac{1}{\vert G \vert} \int_G \sigma_s dg = \dot{\sigma}_s
    \end{displaymath}
    hence we may assume that the primitive for $\dot{\sigma}_s$ is $G$-invariant.  Since the action of $G$ preserves the fold and $\beta_s$ vanishes to second order at $Z\times \{0\}$, we have that $\tilde{\beta}_s$ also vanishes to second order at $Z\times \{0\}$.
\item Since $\tilde{\beta}_s$ vanishes on $\ker(\sigma_s)=\ker(\sigma_0)$ at $Z\times \{0\}$, proposition \ref{prop:Moser} gives us a unique time-dependent vector field $X_s$ so that $i_{X_s}\sigma_s = -\tilde{\beta}$.  We claim that $X_s$ is $G$-invariant:
    \begin{displaymath}
    \sigma_s(d\tau_g(X_s),\cdot )_{g\cdot p} = \sigma_s(X_s,d\tau_{g^{-1}} \cdot)_p = (i_{X_s}\sigma_s)(d\tau_{g^{-1}}\cdot)_p = -(\tau_{g^{-1}}^*\tilde{\beta_s})_p = -(\tilde{\beta_s})_p
    \end{displaymath}
    Since $X_s$ is unique, we must have $d\tau_g(X_s)=X_s$.
\item The flow of $X_s$ is therefore equivariant and gives us an equivariant isotopy $\phi_s$ so that $\phi_s^*\sigma_s = \sigma_0$.  Taking $\phi=\tilde{\phi}\circ \phi_1$, where $\tilde{\phi}$ is defined using a flow in equation \ref{eq:flowdiff}, gives us our requisite diffeomorphism from a neighborhood of the zero section of $Z\times \R$ onto a neighborhood of $Z$ in $M$.
\end{itemize}
\end{proof}

\begin{cor}\label{cor:eqfsnormal1}
Let $G$ be a compact, connected Lie group and let $(M,\sigma)$ be a folded-symplectic Hamiltonian $G$-manifold with moment map $\mu:M\to \fg^*$, where $\fg$ is the Lie algebra of $G$.  If the folding hypersurface $Z$ is co-orientable then there exists an invariant neighborhood $U$ of $Z$, a symplectic form $\omega\in \Omega^2(U)$ for which the action of $G$ is Hamiltonian, an equivariant map with fold singularities $\psi:U \to U$ which folds along $Z$ so that $\psi^*\omega= \sigma$, and a symplectic moment map $\mu_s$ so that $\psi^*\mu_s = \mu$ on $U$.
\end{cor}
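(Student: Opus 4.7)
The plan is to combine the equivariant normal form of proposition \ref{prop:eqfsnormal} with an explicit construction on the product $Z\times\mathbb{R}$: I will produce a symplectic form there whose pullback by the standard fold map $(z,t)\mapsto(z,t^2)$ recovers the normal form of $\sigma$, and then transport everything back to $M$ via the equivariant diffeomorphism supplied by the normal form.

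First I would apply proposition \ref{prop:eqfsnormal} to obtain, after possibly shrinking, an invariant neighborhood of the form $U_1 = Z\times(-\epsilon,\epsilon)$ with $\epsilon<1$, an invariant neighborhood $U_2$ of $Z$ in $M$, and an equivariant diffeomorphism $\phi:U_1\to U_2$ with $\phi(z,0)=z$ and $\phi^*\sigma = p^*i^*\sigma + d(t^2 p^*\alpha)$, where $\alpha\in\Omega^1(Z)^G$ does not vanish on $\ker(i_Z^*\sigma)$. On $Z\times\mathbb{R}$, with second coordinate $s$, I would define
\[
\omega_0 \;:=\; p^*i^*\sigma + d(s\, p^*\alpha).
\]
This form is closed and $G$-invariant by construction, and an easy calculation at points of $Z\times\{0\}$ shows that $\omega_0$ is non-degenerate there: the rank-$(2n-2)$ form $p^*i^*\sigma$ has the two null directions $\partial/\partial s$ and an oriented section of $\ker(i_Z^*\sigma)$, which are paired nontrivially by $ds\wedge p^*\alpha$ using that $\alpha$ does not vanish on the null bundle. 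By continuity, after shrinking $\epsilon$ further, $\omega_0$ is symplectic on all of $U_1$.

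Next I would let $\eta(z,t) := (z,t^2)$, an equivariant fold map $U_1\to U_1$ with folding hypersurface $Z\times\{0\}$, and verify by direct expansion that $\eta^*\omega_0 = p^*i^*\sigma + d(t^2 p^*\alpha) = \phi^*\sigma$. Transporting back, the required data are $\omega := (\phi^{-1})^*\omega_0$ and $\psi := \phi\circ\eta\circ\phi^{-1}$ on $U := U_2$: then $\omega$ is symplectic and $G$-invariant, $\psi$ is an equivariant map with fold singularities and folding hypersurface $Z$ (by corollaries \ref{cor:folds-diffeo} and \ref{cor:folds4-2} applied to the conjugation by $\phi$), and $\psi^*\omega = \sigma$ is immediate.

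It remains to produce the symplectic moment map $\mu_s$. A Cartan-calculus computation on the model, using the invariance $\mathcal{L}_{X_Z}\alpha=0$ to convert $i_{X_Z}p^*d\alpha$ into $-p^*d(i_{X_Z}\alpha)$, should show that
\[
\tilde\mu(z,s) \;:=\; \mu|_Z(z) + s\,F(z), \qquad \langle F(z),X\rangle := \alpha(X_Z(z)),
\]
is an equivariant moment map for $(Z\times\mathbb{R},\omega_0)$. Setting $\mu_s := \tilde\mu\circ\phi^{-1}$, the equality $\psi^*\mu_s = \mu$ on $U$ reduces to checking $\eta^*\tilde\mu = \phi^*\mu$ on $U_1$. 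The main subtlety is pinning this down \emph{on the nose} rather than merely up to a locally constant $\fg^*$-valued map: both sides are moment maps for the same Hamiltonian action on $(U_1,\phi^*\sigma)$, so they agree up to a locally constant function, and evaluating at $t=0$ (where $\phi$ is the identity on $Z$ and $\tilde\mu$ reduces to $\mu|_Z$) forces the constant to vanish on each connected component of the product neighborhood $U_1$.
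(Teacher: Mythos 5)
Your proposal is correct and follows essentially the same route as the paper: invoke proposition \ref{prop:eqfsnormal} to reduce to the model $p^*i^*\sigma + d(t^2p^*\alpha)$ on $Z\times\R$, take $\omega = p^*i^*\sigma + d(tp^*\alpha)$ with the standard fold $(z,t)\mapsto(z,t^2)$, and use the moment map $\mu|_Z + tF$ with $\langle F,X\rangle=\alpha(X_Z)$. Your extra care in transporting via $\phi$ and in pinning down the locally constant ambiguity of the moment map by evaluating at $t=0$ only makes explicit what the paper leaves implicit.
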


\begin{proof}\mbox{ } \newline
According to the equivariant normal form proposition \ref{prop:eqfsnormal}, a local model for an invariant neighborhood of $Z$ is given by a neighborhood $V$ of the zero section of $Z\times \R$ equipped with the fold form
\begin{displaymath}
\sigma_0=p^*i^*\sigma + d(t^2p^*\alpha),
\end{displaymath}
where $\alpha\in \Omega^1(Z)$ positively orients the bundle $\ker(\sigma)\cap TZ$ and $\ker(\sigma_0) = (\ker(\sigma)\cap TZ) \oplus (\span(\frac{\partial}{\partial t}))$.  As a reminder, $p:Z\times \R \to Z$ is the projection and $i:Z \to Z\times \R$ is the inclusion as the zero section.  The form:

\begin{displaymath}
\omega = p^*i^*\sigma + d(tp^*\alpha)
\end{displaymath}
is non-degenerate in a neighborhood of $Z\times \{0\}$ since $dt \wedge p^*\alpha$ is symplectic on $\ker(\sigma_0)$.  It is closed since both of the summands are closed $2$-forms.  Thus, $\omega$ is symplectic in a neighborhood $V_1$ of $Z\times \{0\}$.

Take the intersection of $V\cap V_1$ with the set of points where $\vert t \vert <1$ and redefine $V$ to be this set.  Then the map $\psi(z,t)=(z,t^2)$ maps $V$ to $V$ and folds along $Z\times \{0\}$, $\omega$ is symplectic on $V$, and $\psi^*\omega = \sigma$.  A symplectic moment map for the action of $G$ is given by:

\begin{displaymath}
\mu_s(z,t) = \mu_Z(z) + tF
\end{displaymath}
where $\mu_Z = \mu\vert_Z$ and $F$ is defined by $\langle F, X \rangle = \alpha(X_M)$.  The folded-symplectic moment map is given by
\begin{displaymath}
\mu(z,t)=\mu_Z(z) + t^2F
\end{displaymath}
hence $\psi^*\mu_s = \mu$.  To summarize, we have a commutative diagram:

\begin{displaymath}
\xymatrix{
(U,\sigma) \ar[r]^\psi \ar[dr]^\mu &(U,\omega) \ar[d]^{\mu_s} \\
                                   & \fg^*
}
\end{displaymath}
\end{proof}

The following isn't entirely a corollary of proposition \ref{prop:eqfsnormal}: one can show the orbit-type strata are transverse to $Z$ without constructing the normal form.  However, the normal form makes this fact obvious, so we list it as a corollary.

\begin{cor}\label{cor:eqfsnormal2}
Let $G$ be a compact, connected Lie group and let $(M,\sigma)$ be a folded-symplectic Hamiltonian $G$-manifold (without corners) with moment map $\mu:M \to \fg^*$, where $\fg$ is the Lie algebra of $G$.  If the folding hypersurface $Z$ is co-orientable, then the orbit-type strata $M_{(H)}$ intersect $Z$ transversely.  Furthermore, if $H\le G$ is a subgroup, then $M_H \pitchfork Z$ and $(M_H, i^*\sigma)$ is a folded-symplectic manifold.
\end{cor}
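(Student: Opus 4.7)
The plan is to reduce everything to the equivariant normal form of Proposition \ref{prop:eqfsnormal} and then combine it with the symplectization of Corollary \ref{cor:eqfsnormal1} together with the standard fact (Lemma \ref{lem:symporbit}) that orbit-type strata are symplectic in a symplectic Hamiltonian $G$-manifold. Since transversality is local and vacuous away from $Z$, it suffices to work in an invariant tubular neighborhood of $Z$.

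First I would invoke Proposition \ref{prop:eqfsnormal} to obtain an equivariant diffeomorphism between an invariant neighborhood $U$ of $Z$ in $M$ and an invariant neighborhood $V$ of the zero section in $Z \times \R$ on which $G$ acts by $g \cdot (z,t) = (g \cdot z, t)$. In this model the stabilizer of $(z,t)$ is $G_z$, independent of $t$, so $(Z \times \R)_H = Z_H \times \R$ and $(Z \times \R)_{(H)} = Z_{(H)} \times \R$. Both contain the $\partial/\partial t$ direction and are therefore manifestly transverse to the zero section $Z \times \{0\}$. Translating back along the equivariant diffeomorphism gives $M_H \pitchfork Z$ and $M_{(H)} \pitchfork Z$. (One can also derive $M_{(H)} \pitchfork Z$ from $M_H \pitchfork Z$ by writing $M_{(H)} = G \cdot M_H$ and using that $G$ acts by diffeomorphisms preserving $Z$.)

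Next, to show that $(M_H, i^*\sigma)$ is folded-symplectic, I would verify the three conditions of Definition \ref{def:fsform}. Closedness is immediate. On $M_H \setminus Z$ the ambient form $\sigma$ is symplectic, so by Lemma \ref{lem:symporbit} applied to $(M \setminus Z, \sigma)$ the restriction $i^*\sigma$ is symplectic on $M_H \setminus Z$; in particular its top power is nowhere zero there. For the behavior along $Z_H := M_H \cap Z$, I would apply Corollary \ref{cor:eqfsnormal1} to produce an invariant neighborhood $U$ of $Z$, a $G$-invariant symplectic form $\omega$ on $U$ making the $G$-action Hamiltonian, and an equivariant fold map $\psi: U \to U$ with $\psi^*\omega = \sigma$. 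Equivariance of $\psi$ implies $\psi(M_H \cap U) \subseteq M_H \cap U$, and Lemma \ref{lem:symporbit} applied to $(U,\omega)$ shows $M_H \cap U$ is a symplectic submanifold of $(U,\omega)$. In the local coordinates $Z_H \times \R$ furnished by the normal form, $\psi|_{M_H}$ is simply $(z,t) \mapsto (z,t^2)$, which is an equidimensional map with fold singularities along $Z_H$ with kernel $\mathrm{span}(\partial/\partial t)$ transverse to $Z_H$. Hence $i^*\sigma = (\psi|_{M_H})^*(\omega|_{M_H})$ is the pullback of a symplectic form by an equidimensional fold map, and Example \ref{ex:fs2} concludes that $(M_H, i^*\sigma)$ is folded-symplectic with folding hypersurface exactly $Z_H$.

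The main obstacle is the bookkeeping in the last step: one must verify that the equivariant fold map $\psi$ from Corollary \ref{cor:eqfsnormal1} restricts to an equidimensional fold map on $M_H$ whose folding hypersurface is precisely $Z_H$. This is why I propose first reducing to the explicit local model $Z_H \times \R$, where $\psi|_{M_H}$ is literally the standard fold $(z,t)\mapsto (z,t^2)$ of Example \ref{ex:stdfoldmap} and the kernel/transversality conditions of Corollary \ref{cor:folds4-0} are visible by inspection. Once this local verification is in place, the folded-symplectic structure on $M_H$ follows formally from Example \ref{ex:fs2}.
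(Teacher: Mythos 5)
Your proposal is correct and follows essentially the same route as the paper: reduce to the equivariant normal form $Z\times\R$ to read off transversality of the strata, then use the equivariant fold map $\psi$ of Corollary \ref{cor:eqfsnormal1} together with Lemma \ref{lem:symporbit} to exhibit $i^*\sigma$ as the pullback of a symplectic form on $M_H$ by a fold map. The only cosmetic difference is that you verify $\psi|_{M_H}$ is an equidimensional fold map by inspection in the coordinates $Z_H\times\R$, whereas the paper invokes the general restriction result, Corollary \ref{cor:folds4-1}; both are valid.
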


\begin{proof}\mbox{ } \newline
In the local model of proposition \ref{prop:eqfsnormal}, the orbit type strata are given by $M_{(H)}=Z_{(H)} \times \R$, hence they intersect the fold $Z\times\{0\}$ transversely.  To prove the second claim, we begin by noting that $M_H = Z_H\times \R$ in the local model, hence $M_H\pitchfork Z$.  By lemma \ref{lem:symporbit}, $M_H\setminus Z$ is a symplectic submanifold of $M\setminus Z$, hence we need only check that $(M_H,i^*\sigma)$ is folded-symplectic near $Z$.  By corollary \ref{cor:eqfsnormal1} we have a commutative diagram:
\begin{displaymath}
\xymatrix{
(U,\sigma) \ar[r]^\psi \ar[dr]^\mu &(U,\omega) \ar[d]^{\mu_s} \\
                                   & \fg^*
}
\end{displaymath}
where $U$ is an invariant neighborhood of the fold, $\omega$ is symplectic, and $\mu_s$ is a symplectic moment map for the action of $G$.  The map $\psi$ is an equivariant fold map.  Equivariance implies that $\psi$ restricts to a map $\psi:M_H\cap U \to M_H\cap U$.  By corollary \ref{cor:folds4-1}, $\psi$ is a map with fold singularities.  Note that $\psi$ is guaranteed to have singularities along $M_H\cap Z$ since $\omega$ is non-degenerate, $\sigma = \psi^*\omega$, and $\sigma$ has singularities along $M_H\cap Z$.  Now, lemma \ref{lem:symporbit} implies that $(M_H\cap U,i^*\omega)$ is a symplectic submanifold, hence $(M_H\cap U, \psi^*i^*\omega)$ is folded-symplectic.  However, $\psi \circ i = i \circ \psi$, where $i:M_H\cap U \to M$ is the inclusion, hence $\psi^*i^*\omega = i^*\psi^*\omega = i^*\sigma$ and $(M_H\cap U, i^*\sigma)$ is folded-symplectic.  Thus, $(M_H,i^*\sigma)$ is a folded-symplectic submanifold of $M$.
\end{proof}

The following proposition allows us to study the normal bundle of $M_H$ and argue that it is naturally a symplectic vector bundle.  This proposition will also give us a canonical, invariant complement to the bundle $TM_H$ inside $TM\vert_{M_H}$, which we will use when we study orbit spaces.
\begin{prop}\label{prop:normbundle}
Let $G$ be a compact, connected Lie group and let $(M,\sigma)$ be a folded-symplectic Hamiltonian $G$-manifold with moment map $\mu:M\to \fg^*$, where $\fg$ is the Lie algebra of $G$.  Suppose the folding hypersurface $Z$ is co-orientable.  Let $H\le G$ be a subgroup and suppose $M_H$ is nonempty.  Then there exists a vector bundle $\widetilde{(TM_H)}^{\sigma}\to M_H$ with the following properties:

\begin{enumerate}
\item $\widetilde{(TM_H)}^{\sigma}$ is a subbundle of $TM\big\vert_{M_H}$.
\item The restriction $\widetilde{(TM_H)}^{\sigma}\big\vert_{M\setminus Z}$ to the symplectic portion of $M$ is the vector bundle $T(M_H \setminus Z)^{\sigma}\to (M_H\setminus Z)$.
\item $TM\big\vert_{M_H}$ splits $H$-equivariantly as $TM\big\vert_{M_H} = TM_H \oplus \widetilde{(TM_H)}^{\sigma}$.
\item $\widetilde{(TM_H)}^{\sigma}$ equipped with the restriction of $\sigma$ is a symplectic vector bundle over $M_H$.
\item $\widetilde{(TM_H)}^{\sigma}\big\vert_{Z_H}$ is a subbundle of $TZ_H$.
\end{enumerate}
In other words, the symplectic normal bundle to $M_H\setminus Z$ extends across the fold $Z$ to give us a symplectic normal bundle to $M_H$ and, at points of the intersection $Z_H=M_H\cap Z$, it is tangent to the fold.
\end{prop}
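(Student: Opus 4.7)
The plan is to build $\widetilde{(TM_H)}^{\sigma}$ as the canonical $H$-invariant complement to $TM_H$ in $TM|_{M_H}$, defined fiberwise as the sum of the non-trivial $H$-isotypic components of $T_pM$. Concretely, since $H$ is closed in the compact group $G$, it is itself compact, and averaging gives a smooth bundle endomorphism $P\colon TM|_{M_H}\to TM|_{M_H}$, $P(v) = \int_H d\tau_h(v)\,dh$. One checks that $P^2 = P$ and, by corollary \ref{cor:slice2}, that $\operatorname{image}(P) = (T_pM)^H = T_pM_H$. I define $\widetilde{(TM_H)}^{\sigma}:=\ker P$. This is manifestly a smooth, $H$-invariant subbundle of $TM|_{M_H}$ complementary to $TM_H$, so (1) and (3) are immediate.

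For (2), I fix $p \in M_H \setminus Z$ and take $v \in T_pM_H = (T_pM)^H$. The map $u \mapsto \sigma_p(v,u)$ is $H$-equivariant from $T_pM$ to $\R$ (trivial representation), so Schur's lemma forces it to vanish on every non-trivial $H$-isotypic. Hence $\ker P \subseteq (T_pM_H)^{\sigma}$, and since both subspaces are complementary to $T_pM_H$ in $T_pM$ they have equal dimension and therefore coincide. Thus $\widetilde{(TM_H)}^{\sigma}$ agrees with the usual symplectic normal bundle off the fold.

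At a point $p\in Z_H$, lemma \ref{lem:preserves1} applies (using that $Z$ is co-orientable and $G$ is connected) to tell us that $\ker(\sigma_p)$ is a trivial $H$-representation, so $\ker(\sigma_p)\subseteq (T_pM)^H = T_pM_H$. Therefore $\ker(\sigma_p)\cap \widetilde{(T_pM_H)}^{\sigma} = 0$, which combined with the closedness of $\sigma$ gives (4). For (5), the fiber $\ker(\sigma_p)$ contains a direction transverse to $Z$ (since $\ker(\sigma_p)$ is two-dimensional while $\ker(i_Z^*\sigma)_p = \ker(\sigma_p)\cap T_pZ$ is one-dimensional), and triviality of $\ker(\sigma_p)$ as an $H$-module makes $T_pM/T_pZ$ a trivial $H$-representation. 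Consequently every non-trivial $H$-isotypic of $T_pM$ must lie in $T_pZ$, i.e., $\widetilde{(T_pM_H)}^{\sigma}|_{Z_H}\subseteq TZ|_{Z_H}$.

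The main obstacle, and the reason co-orientability of the fold is hypothesized, is \emph{(4)}: without the triviality of $\ker(\sigma_p)$ as an $H$-representation, some non-trivial isotypic of $T_pM$ could land inside $\ker(\sigma_p)$ and the extended complement would fail to be symplectic across $Z$. So the whole argument really hinges on pairing the clean representation-theoretic description of $\widetilde{(TM_H)}^{\sigma}$ with the geometric input supplied by lemma \ref{lem:preserves1}.
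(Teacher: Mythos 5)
Your construction is genuinely different from the paper's and, modulo one step discussed below, it works. The paper defines the fiber of $\widetilde{(TM_H)}^{\sigma}$ at $p$ as the set of vectors admitting local extensions to sections of $TM\big\vert_{M_H}$ with values in $(TM_H)^{\sigma}$, and then identifies this set, via the equivariant normal form $\sigma=p^*i^*\sigma+d(t^2p^*\alpha)$ of proposition \ref{prop:eqfsnormal} and the associated fold map $\psi(z,t)=(z,t^2)$ onto a symplectic model $(Z\times\R,\omega)$, with the honest symplectic normal bundle $TM_H^{\omega}$ pulled back across the fold; properties (3)--(5) are then inherited from $\omega$. You instead take the canonical representation-theoretic complement $\ker P$ (sum of the nontrivial $H$-isotypics), which makes (1) and (3) free, reduces (2) to Schur's lemma plus a dimension count, and isolates the geometric content of (4) and (5) in lemma \ref{lem:preserves1} (triviality of $\ker(\sigma_p)$ as an $H$-module), exactly where the co-orientability hypothesis enters. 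Your route avoids the normal form entirely and is pointwise and coordinate-free; since $M_H\setminus Z$ is dense in $M_H$ (by corollary \ref{cor:eqfsnormal2}) and both bundles are smooth and agree there, the two constructions necessarily produce the same bundle. Your argument for (5) -- that $T_pM/T_pZ$ is spanned by the image of an $H$-fixed vector of $\ker(\sigma_p)$, so every nontrivial isotypic lies in $T_pZ$ -- is a clean replacement for the paper's model computation.

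The one step that is not justified as written is (4). From "$\ker(\sigma_p)\cap\widetilde{(T_pM_H)}^{\sigma}=0$" you cannot conclude that $\sigma_p$ restricts nondegenerately to $\widetilde{(T_pM_H)}^{\sigma}$, and closedness of $\sigma$ is irrelevant here: in $(\R^6,\,dx_1\wedge dx_2+dx_3\wedge dx_4)$ the plane spanned by $e_1$ and $e_3+e_5$ meets the radical trivially yet the form restricts to zero on it. What saves you is the $\sigma$-orthogonality $\sigma_p(T_pM_H,\ker P)=0$, which your Schur argument for (2) actually establishes at \emph{every} $p\in M_H$, not just off $Z$ (the functional $u\mapsto\sigma_p(v,u)$ is $H$-invariant for $H$-fixed $v$ regardless of where $p$ sits). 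Given that orthogonality, take $0\neq u\in\ker P$; since $u\notin\ker(\sigma_p)$ there is $w\in T_pM$ with $\sigma_p(u,w)\neq 0$, and writing $w=w_0+w_1$ with $w_0\in T_pM_H$, $w_1\in\ker P$ gives $\sigma_p(u,w_1)=\sigma_p(u,w)\neq 0$. You should state the orthogonality at fold points explicitly and run this two-line argument; with that insertion the proof is complete.
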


\begin{remark}
There's a straightforward way to see why this bundle should exist.  By corollary \ref{cor:eqfsnormal1}, we have a local model for a neighborhood $U$ of the folding hypersurface:
\begin{displaymath}
\xymatrix{
(U,\sigma) \ar[r]^{\psi} & (U,\omega)
}
\end{displaymath}
where $\psi$ is an equivariant fold map that folds along $Z$, $\omega$ is symplectic, and $\psi^*\omega=\sigma$.  The bundle $TM_H^{\omega}$ is well-defined and complementary to $TM_H$ in $TM \big\vert_H$.  By corollary \ref{cor:folds4-1}, $\psi:M_H \to M_H$ induces an isomorphism on each fiber of the normal bundle $d\psi:\nu(M_H)_p \to \nu(M_H)_{\psi(p)}$.  Since $\nu(M_H) \simeq TM_H^{\omega}$, we are led to believe that $\psi$ will allow us to simply pull back the bundle $TM_H^{\omega}$.  We do so using the recipe:

\begin{displaymath}
(\widetilde{M_H})^{\sigma}_p :=
\begin{cases}
d\psi_{\psi(p)}^{-1})(TM_H^\omega) & \text{if } p\notin Z_H \\
(d\psi\big\vert_Z)_{\psi(p)}^{-1}(TM_H^{\omega}) & \text{if } p\in Z_H
\end{cases}
\end{displaymath}
Of course, it's not immediately obvious that this works, which is why we need to prove something.  Furthermore, we don't want to use a model to define a bundle which appears to arise from the data intrinsic to a folded-symplectic Hamiltonian $G$-manifold, so we define this symplectic normal bundle without appealing to fold maps and symplectic forms, which require choices.
\end{remark}

\begin{proof}\mbox{ } \newline
Let $p\in M_H$.  We define the fiber of $\widetilde{(TM_H)}^{\sigma}$ at $p$ to be the elements of $T_pM_H$ that extend to local sections of the restricted tangent bundle $TM\big\vert_{M_H}$ with values in the distribution $(TM_H)^{\sigma}$.  It is straightforward to check that this is a vector subspace of $(T_pM_H)^{\sigma}$.  We argue that the dimension of $E_p$ is constant and, since all elements of $E_p$ extend to local sections with values in $(TM_H)^{\sigma}$, this fact implies $\widetilde{(TM_H)}^{\sigma}$ is a vector subbundle of $TM\big\vert_{M_H}$.

If $p\in M_H \setminus Z$, then by definition of $\widetilde{(TM_H)}^{\sigma}$, the fiber $\widetilde{(TM_H)}^{\sigma}_p$ is just $(T_pM_H)^{\sigma}$, which proves the second claim of the proposition.  Thus, we need only consider the case where we  are at the fold: $p\in M_H\cap Z$.  To this end, we may use proposition \ref{prop:eqfsnormal} to assume that our manifold is $Z\times \R$ with folded-symplectic form $\sigma=p^*i^*\sigma + d(t^2p^*\alpha)$ for some $1$-form $\alpha \in \Omega^1(Z)$ orienting $\ker(\sigma)\cap TZ$ positively.  Here, $p:Z\times \R \to Z$ is the projection and not the point $p$.  Let us fix some notation.  The kernel of $\sigma$ at $Z\times \{0\}$ is:

\begin{displaymath}
\ker(\sigma) = \span(\frac{\partial}{\partial t}) \oplus (\ker(\sigma)\cap TZ)
\end{displaymath}
and the orientability of $\ker(\sigma)\cap TZ$ means we may choose an invariant, nonvanishing section $v\in\Gamma(\ker(\sigma)\cap TZ)$ so that:
\begin{equation}\label{eq:kersigma}
\ker(\sigma) = \span(\frac{\partial}{\partial t}) \oplus \span(v)
\end{equation}
where $v$ extends to a globally defined  vector field on $M_H=Z_H\times \R$ via the recipe $\tilde{v}(z,t)=v(z)$.  We'll refer to the extension as $v$ from now on and note that $v$ takes values in $\ker(\sigma)\cap TZ$ at $Z\times \{0\}$.  We may also assume that $p^*\alpha(v)=1$ since $p^*\alpha(v)$ is nonvanishing.

As a reminder, the connectedness of the group $G$ implies that the representation of $H$ on $\ker(\sigma_p)$ is trivial by lemma \ref{lem:preserves1}.  Thus, $H$ fixes $\ker(\sigma)\big\vert_{M_H\cap Z}$ and corollary \ref{cor:slice2} implies that $\ker(\sigma)\big\vert_{M_H\cap Z}$ is a subbundle of $T(M_H\cap Z)$.  That is, $\ker(\sigma)$ is tangent to $M_H$ at points of $M_H\cap Z$.  Now, we wish to prove the following claim which gives a precise description of when a vector in $T_p(M_H\cap Z)$ admits an extension to a local section of $TM\big\vert_{M_H}$ with values in $TM_H^{\sigma}$:

\vspace{3mm}

\noindent\textbf{Claim:}Let $p\in Z\times \{0\}$. A vector $X\in \widetilde{(TM_H)}^{\sigma}_p$ extends to a local section of $TM\big\vert_{M_H}$ with values in $(TM_H)^{\sigma}$ if and only if the covector $i_X(dt\wedge p^*\alpha_p)$ vanishes on $\ker(\sigma_p)$.  In particular, $\ker(\sigma_p)\cap \widetilde{(TM_H)}^{\sigma}_p= \{0\}$.

\vspace{3mm}
To prove the \emph{only if} portion, we show that if the covector $i_X(dt\wedge p^*\alpha_p)$ doesn't vanish on $\ker(\sigma_p)$, then any extension of $X$ to a local vector field will have values not in $(TM_H)^{\sigma}$ in arbitrarily small neighborhoods of $p$.  If the covector $i_X(dt\wedge p^*\alpha_p)$ doesn't vanish on $\ker(\sigma_p)$, then there is an element $Y\in \ker(\sigma_p)$ such that $(dt\wedge p^*\alpha)(X,Y) \ne 0$.  As discussed above, $Y$ is a linear combination of $v(p)\in \ker(\sigma_p)\cap T_pZ$ and $\frac{\partial}{\partial t}$, which are both tangent to $M_H$ at $p$, hence $Y$ admits an extension $\tilde{Y}$ to a vector field on $M_H$ which is a linear combination of $v$ and $\frac{\partial}{\partial t}$.  In particular, the extension of $Y$ satisfies:

\begin{equation}
i_{\tilde{Y}}p^*i^*\sigma = i_{gv + f\frac{\partial}{\partial t}}p^*i^*\sigma = 0
\end{equation}
hence its contraction with $\sigma$ is:
\begin{equation}\label{eq:contraction}
i_{\tilde{Y}}\sigma = i_{\tilde{Y}}d(t^2p^*\alpha)
\end{equation}
Any extension $\tilde{X}$ of $X$ satisfies $(dt\wedge p^*\alpha)(\tilde{X},\tilde{Y})\ne 0$ in some neighborhood of $p\in M_H$, which depends on the choice of extension.  Thus, by equation \ref{eq:contraction} we have:
\begin{equation}
\sigma(\tilde{X},\tilde{Y}) = 2t(dt\wedge p^*\alpha)(\tilde{X},\tilde{Y}) + t^2p^*d\alpha(\tilde{X},\tilde{Y})
\end{equation}
which vanishes transversally at $t=0$ since $(dt\wedge p^*\alpha(\tilde{X},\tilde{Y})\big\vert_{t=0}=(dt\wedge p^*\alpha)(X,Y)\ne 0$, hence $\sigma(\tilde{X},\tilde{Y})$ is nonzero for arbitrarily small, nonzero values of $t$.  Thus, $\tilde{X}$ does not take values in $(TM_H)^{\sigma}$ in a neighborhood of $p$.

\vspace{3mm}
For the \emph{if} part, we'll need to construct a vector bundle on $M_H\cap Z$ first.  We begin by noting that the local model for a neighborhood of the fold gives us a diagram:

\begin{displaymath}
\xymatrixcolsep{5pc}\xymatrix{
(Z\times \R,\sigma=p^*i^*\sigma + d(t^2p^*\alpha) \ar[r]^{\psi(z,t)=(z,t^2)} & (Z\times \R, \omega=p^*i^*+d(tp^*\alpha))
}
\end{displaymath}
where $\psi^*\omega=\sigma$.  At a point $(z,0)\in Z_H\times \{0\}=M_H\cap Z)$ we must have that $(T_{(z,0)}M_H)^{\omega}\subset T_{(z,0)}(Z\times \{0\})$.  This is because both null directions $\frac{\partial}{\partial t}$ and $v$ are tangent to $M_H$ and $(i_v\omega)_{(z,0)}=-dt$, hence any vector $X$ transverse to $Z$ at $(z,0)$ pairs with $v$ to give $\omega(v,X)\ne 0$.

Since $(T_{(z,0)}M_H)^{\omega}\subset T_{(z,0)}(Z\times \{0\})$ for any $z\in Z_H$ and $\psi\big\vert_{Z\times \{0\}}$ is the identity, we can form the bundle $d\psi\big\vert_Z^{-1}(TM_H^{\omega})\vert_Z=TM_H^\omega\big\vert_Z$ along $M_H\cap Z$.  Since $\psi^*\omega=\sigma$, we have that the fibers of $TM_H^{\omega}\big\vert_{M_H\cap Z}$ are subspaces of the fibers of $TM_H^{\sigma} \to M_H\cap Z$.

Now, to finish the proof of our claim, we note that if $X\in (T_pM_H)^{\sigma}$ satisfies $i_X(dt\wedge p^*\alpha)\big\vert_{\ker(\sigma_p)}=0$, then $X$ must be an element of the fiber of $d\psi^{-1}(TM_H^{\omega})\vert_Z$.  Certainly, it must lie in $T_pZ$ since it cannot be transverse to $Z$ else $i_X(dt\wedge p^*\alpha)$ would not vanish on $v$.  We then have:

\begin{displaymath}
i_{d\psi(X)}\omega = i_X\omega = (i_Xp^*i^*\sigma + ti_Xp^*d\alpha)\big\vert_{t=0} = i_Xp^*i^*\sigma = i_X\sigma
\end{displaymath}
hence $i_X\omega$ vanishes on $T_pM_H$ because $i_X\sigma$ vanishes on $T_pM_H$ by assumption.  Thus, we may extend $X$ to a local section of the bundle $TM_H^{\omega}\big\vert_{M_H\cap Z}$ and then extend it to a section of $TM\big\vert_{M_H}$ via $\tilde{X}(z,t)=X(z)$.  This extension does not necessarily have values in $(TM_H)^{\sigma}$, so we correct it.  The contraction is:

\begin{equation}
i_{\tilde{X}}\sigma = i_{\tilde{X}}p^*i^*\sigma -2tdtp^*\alpha(\tilde{X}) + t^2i_{\tilde{X}}p^*d\alpha
\end{equation}
We then compute:
\begin{equation}
i_{\tilde{X} - p^*\alpha(\tilde{X})v} \sigma = i_{\tilde{X}}p^*i^*\sigma +t^2i_{\tilde{X} - p^*\alpha(\tilde{X})v}dp^*\alpha
\end{equation}
Note that the $1$-form $t^2i_{\tilde{X} - p^*\alpha(\tilde{X})v}dp^*\alpha$ vanishes at the fold, hence proposition \ref{prop:Ecotangent1} guarantees the existence of a vector field $\Gamma$ such that $i_{\Gamma}=-t^2i_{\tilde{X} - p^*\alpha(\tilde{X})v}dp^*\alpha$.  Then we have:

\begin{equation}
i_{\tilde{X}-p^*\alpha(\tilde{X})v + \Gamma}\sigma = i_{\tilde{X}}p^*i^*\sigma
\end{equation}
which vanishes on $M_H$ if and only if it vanishes on $TM_H$ at points of $Z\times \{0\}$ since
\begin{itemize}
\item $(i_{\tilde{X}}p^*i^*\sigma)_{(z,t)} = (i_Xp^*i^*\sigma)_{(z,0)}$ by definition of $\tilde{X}$ and
\item $M_H=Z_H\times \R$, hence the tangent bundle is $TZ_H \times T\R$, hence a $1$-form that is independent of $t$ and vanishes on $TZ_H \times T\R$ at points of $Z\times \{0\}$ will vanish on $TZ_H\times T\R$.
\end{itemize}
Thus, we have shown that
\begin{itemize}
\item if $i_X(dt\wedge p^*\alpha)\big\vert_{\ker(\sigma)}=0$ at a point $(z,0)$, then it lies in $TM_H^{\omega}$ at $(z,0)$ and
\item if $X$ is in the fiber of $TM_H^{\omega}$ at $(z,0)$, then it extends to a local section of $TM\big\vert_{M_H}$ with values in $TM_H^{\sigma}$,
\end{itemize}
which finishes the proof of the claim.  Now, from our discussion, we have show that the dimension of the subspace of vectors $X\in (T_pM_H)^{\sigma}$ satisfying $i_X(dt\wedge p^*\alpha)\big\vert_{\ker(\sigma)}=0$ is exactly the rank of $(TM_H)^{\omega}$, which is fixed, hence the dimension of $\widetilde{(TM_H)}^{\sigma}_p$ is independent of the choice of $p\in M_H$ and $\widetilde{(TM_H)}^{\sigma}$ is a vector bundle over $M_H$.

\vspace{3mm}
To prove the remaining claims of the proposition, note that there is nothing to prove away from $Z$ since the bundle away from $Z$ is $TM_H^{\sigma}$, which is a symplectic vector bundle and complementary to $M_H$.  Thus, we consider a point $p\in Z$.  Using the local model, we have shown that the fiber at $p$ is $T_pM_H^{\omega}$, where $\omega$ is some symplectic form for which the action is Hamiltonian.  Since $T_pM_H^{\omega}$ is complementary to $T_pM_H$, we have that $\widetilde{(TM_H)}^{\sigma}$ is complementary to $TM_H$.  Secondly, we have shown that $\sigma$ restricted to $T_pM_H^{\omega}$ is $\omega$ restricted to $T_pM_H^{\omega}$.  Since $\omega$ is symplectic, $\sigma$ must be symplectic.  Thus, $\widetilde{(TM_H)}^{\sigma}$ is a symplectic vector bundle.
\end{proof}

\begin{cor}\label{cor:normbundle}
Let $G$ be a compact, connected Lie group and let $(M,\sigma)$ be a folded-symplectic Hamiltonian $G$-manifold with moment map $\mu:M\to \fg^*$, where $\fg$ is the Lie algebra of $G$.  Suppose the folding hypersurface $Z$ is co-orientable.  Let $H\le G$ be a subgroup and suppose $M_H$ is nonempty.  Then the normal bundle $\nu(M_H) = (TM\vert_{M_H})/TM_H$ is canonically a symplectic vector bundle.
\end{cor}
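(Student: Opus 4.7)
The plan is to derive the corollary directly from Proposition \ref{prop:normbundle}, using the splitting it provides to transport the symplectic structure from $\widetilde{(TM_H)}^{\sigma}$ to the quotient $\nu(M_H) = TM\vert_{M_H}/TM_H$. The key observation is that the construction of $\widetilde{(TM_H)}^{\sigma}$ in the proof of Proposition \ref{prop:normbundle} was intrinsic: its fiber at $p$ was defined as the set of vectors in $(T_pM_H)^{\sigma}$ admitting local extensions to sections of $TM\vert_{M_H}$ with values in $(TM_H)^\sigma$, a definition that depends only on $\sigma$ and $M_H$, not on any auxiliary choice.

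First I would observe that the splitting $TM\vert_{M_H} = TM_H \oplus \widetilde{(TM_H)}^{\sigma}$ from part 3 of Proposition \ref{prop:normbundle} induces a canonical isomorphism of vector bundles
\[
\Phi \colon \widetilde{(TM_H)}^{\sigma} \xrightarrow{\;\sim\;} \nu(M_H) = TM\vert_{M_H}/TM_H
\]
given by the quotient projection restricted to $\widetilde{(TM_H)}^{\sigma}$. Injectivity follows from $\widetilde{(TM_H)}^{\sigma} \cap TM_H = 0$ (complementarity), and surjectivity is just the splitting. Then I would use $\Phi$ to push forward the symplectic form on $\widetilde{(TM_H)}^{\sigma}$ (part 4 of Proposition \ref{prop:normbundle}) to a fibrewise nondegenerate $2$-form on $\nu(M_H)$.

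Next I would argue that the resulting symplectic structure on $\nu(M_H)$ does not depend on any choices. The only potential ambiguity would be in the bundle $\widetilde{(TM_H)}^{\sigma}$ itself, but as noted above this bundle is specified intrinsically by $(M,\sigma)$ and the submanifold $M_H$. Equivalently, one can describe the symplectic form on $\nu(M_H)$ in purely intrinsic terms: for $[X],[Y] \in \nu(M_H)_p$, choose any representatives $\tilde X, \tilde Y \in T_pM$ lying in $\widetilde{(TM_H)}^{\sigma}_p$ and set $\omega_{\nu}([X],[Y]) := \sigma_p(\tilde X, \tilde Y)$; well-definedness follows from the fact that altering $\tilde X$ or $\tilde Y$ by an element of $TM_H$ does not change the pairing, since $\widetilde{(TM_H)}^{\sigma} \subset (TM_H)^\sigma$. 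This description uses only $\sigma$ and $M_H$, confirming canonicity.

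The main obstacle, already handled by Proposition \ref{prop:normbundle}, was ensuring that $\sigma$ restricted to $\widetilde{(TM_H)}^{\sigma}$ remains nondegenerate across the fold $Z_H$; on the symplectic part $M\setminus Z$ this is automatic from $\widetilde{(TM_H)}^{\sigma}\vert_{M\setminus Z} = T(M_H\setminus Z)^{\sigma}$, and at points of $Z_H$ the local model from Proposition \ref{prop:eqfsnormal} identifies the restriction with $TM_H^{\omega}$ for a genuine symplectic form $\omega$. Given this, there is nothing further to check, and the corollary follows immediately from the push-forward of structure along the canonical isomorphism $\Phi$.
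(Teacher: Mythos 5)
Your proposal is correct and matches the paper's proof: both restrict the quotient projection $TM\vert_{M_H}\to\nu(M_H)$ to the complementary symplectic subbundle $\widetilde{(TM_H)}^{\sigma}$ from Proposition \ref{prop:normbundle} and transport $\sigma$ along the resulting isomorphism. Your extra remarks on intrinsicness of the pairing are a harmless elaboration of the same argument.
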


\begin{proof}\mbox{ } \newline
According to proposition \ref{prop:normbundle}, the bundle $\widetilde{(TM_H)}^\sigma$ is a symplectic vector bundle complementary to $TM_H$ inside the restricted tangent bundle $TM\big\vert_{M_H}$.  The projection:

\begin{displaymath}
p: TM\big\vert_{M_H} \to TM\big\vert_{M_H}/(TM_H)
\end{displaymath}
therefore restricts to an isomorphism
\begin{displaymath}
p: \widetilde{(TM_H)}^\sigma \to \nu(M_H).
\end{displaymath}
We define the symplectic structure on $\nu(M_H)$ to be $\omega$ such that $p^*\omega = \sigma \big\vert_{\widetilde{(TM_H)}^\sigma}$, which must be non-degenerate since $p$ is an isomorphism.
\end{proof}

\begin{cor}\label{cor:normbundle1}
Let $G$ be a torus and let $(M,\sigma)$ be a folded-symplectic Hamiltonian $G$-manifold with moment map $\mu:M\to \fg^*$, where $\fg$ is the Lie algebra of $G$.  Suppose the folding hypersurface $Z$ is co-orientable.  Let $H\le G$ be a subgroup and suppose $M_H$ is nonempty.  If the action of $G$ is effective, then the action of $H$ on the fibers of $\widetilde{(TM_H)}^{\sigma}$ is effective, hence the representations $\rho:H\to GL(\widetilde{(TM_H)}^{\sigma}_p)$ are faithful, symplectic representations of $H$.
\end{cor}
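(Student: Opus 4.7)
The plan is to reduce effectiveness on $\widetilde{(TM_H)}^{\sigma}_p$ to effectiveness on the differential slice representation, where the result is already known by Lemma \ref{lem:eff3}. The key input is the $H$-equivariant splitting $TM|_{M_H} = TM_H \oplus \widetilde{(TM_H)}^{\sigma}$ from Proposition \ref{prop:normbundle}, together with the fact that the $H$-action on $T_pM_H$ is trivial (Corollary \ref{cor:slice2} identifies $T_pM_H$ with $(T_pM)^H$).

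First I would observe that since $G$ is a torus (hence abelian), for any $X \in \fg$ and any $h \in H$ one has $h \cdot X_M(p) = \frac{d}{dt}|_0 h\exp(tX)\cdot p = X_M(p)$, so the tangent space to the orbit satisfies $T_p(G\cdot p) \subseteq (T_pM)^H = T_pM_H$. Therefore the quotient map $T_pM \to W := T_pM/T_p(G\cdot p)$ carries the $H$-equivariant splitting $T_pM = T_pM_H \oplus \widetilde{(TM_H)}^{\sigma}_p$ to an $H$-equivariant splitting
\[
W \;=\; T_pM_H/T_p(G\cdot p)\;\oplus\;\widetilde{(TM_H)}^{\sigma}_p,
\]
where the first summand carries the trivial $H$-representation and the second summand is identified with its image under the projection.

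Next I would show effectiveness. Suppose $h \in H$ acts as the identity on $\widetilde{(TM_H)}^{\sigma}_p$. Since $H$ also acts trivially on $T_pM_H$, the splitting above shows that $h$ acts trivially on $W$. By Lemma \ref{lem:eff3}, the effectiveness of $G$ on $M$ forces the slice representation of $H$ on $W$ to be faithful, so $h = e$. Thus $\rho \colon H \to GL(\widetilde{(TM_H)}^{\sigma}_p)$ is faithful.

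Finally, to verify that $\rho$ lands in the symplectic group, I would note that $H$ preserves $\sigma$ by hypothesis, and the splitting of Proposition \ref{prop:normbundle} is $H$-equivariant, so $H$ preserves the restriction $\sigma|_{\widetilde{(TM_H)}^{\sigma}_p}$, which is nondegenerate by part (4) of that proposition. Hence $\rho$ is a faithful symplectic representation. No step here looks genuinely difficult; the only subtlety is checking $T_p(G\cdot p) \subseteq T_pM_H$ so that the slice representation inherits the splitting from $T_pM$, and for this the abelianness of $G$ is essential (without it one would have $T_p(G\cdot p) \subseteq (T_pM)^H$ only for the normalizer, not for the full orbit).
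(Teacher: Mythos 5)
Your proof is correct and follows essentially the same route as the paper: both reduce the claim to the effectiveness of the differential slice representation (Lemma \ref{lem:eff3}) using the $H$-equivariant splitting from Proposition \ref{prop:normbundle} together with the triviality of the $H$-action on $T_pM_H$. Your extra care in checking $T_p(G\cdot p)\subseteq T_pM_H$ so that the splitting descends to the slice is a fine (and correct) way to package the same argument.
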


\begin{proof}\mbox{ } \newline
Let $p\in M_H$.  The action of $H$ on the differential slice $T_pM/T_p(G\cdot p)$ is effective by lemma \ref{lem:eff3}, hence the action of $H$ on $T_pM$ is effective.  Since the tangent space $T_pM$ splits as $\tilde{(TM_H)}^{\sigma}_p \oplus T_pM_H$ and the action of $H$ on $T_pM_H$ is trivial, we must have that the action of $H$ on $\widetilde{(TM_H)}^{\sigma}_p$ is effective.  Equivalently, the representation $\rho:H \to GL(\widetilde{(TM_H)}^{\sigma}_p)$ is faithful.
\end{proof}

\begin{cor}\label{cor:normbundle2}
Let $(M,\sigma)$ be a folded-symplectic manifold with an Hamiltonian action of a Lie group $G$ and moment map $\mu:M\to \fg*$, where $\fg=\operatorname{Lie}(G)$.  Consider the fold $Z$.  For each subgroup $H\le G$ there exists a vector subbundle of $TZ\big\vert_{Z_H}$, $(\widetilde{TZ_H})^{\sigma}$ which is a symplectic vector bundle and such that $TZ\big\vert_{Z_H}= TZ_H \oplus (\widetilde{Z_H})^{\sigma}$, $H$-equivariantly.
\end{cor}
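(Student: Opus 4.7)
The plan is to define $(\widetilde{TZ_H})^{\sigma}$ as the restriction of the symplectic normal bundle $\widetilde{(TM_H)}^{\sigma}$ constructed in Proposition \ref{prop:normbundle} to the intersection $Z_H = M_H \cap Z$, and then check that each of the asserted properties is an immediate consequence of that proposition together with the transversality of $M_H$ and $Z$ (Corollary \ref{cor:eqfsnormal2}). Co-orientability of $Z$ is a local condition around each point of $Z$, so we may work in invariant co-orientable neighborhoods (or, failing that, pass to the orientable double cover of $M$ as discussed in the introduction) so that Proposition \ref{prop:normbundle} applies; because the bundle $\widetilde{(TM_H)}^{\sigma}$ is intrinsic to $(M,\sigma)$, the local constructions glue to a well-defined global subbundle of $TZ\big\vert_{Z_H}$.

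With the definition $(\widetilde{TZ_H})^{\sigma} := \widetilde{(TM_H)}^{\sigma}\big\vert_{Z_H}$ in hand, property (5) of Proposition \ref{prop:normbundle} --- ``at points of the intersection $Z_H = M_H \cap Z$, [the bundle] is tangent to the fold'' --- shows $(\widetilde{TZ_H})^{\sigma} \subseteq TZ\big\vert_{Z_H}$. Transversality of $M_H$ and $Z$ gives $TZ_H = TZ \cap TM_H$ at every point of $Z_H$. Given $v \in TZ\big\vert_{Z_H}$, property (3) of Proposition \ref{prop:normbundle} writes $v = v_1 + v_2$ uniquely with $v_1 \in TM_H$ and $v_2 \in (\widetilde{TZ_H})^{\sigma} \subseteq TZ$; then $v_1 = v - v_2 \in TZ \cap TM_H = TZ_H$, so $TZ\big\vert_{Z_H} = TZ_H \oplus (\widetilde{TZ_H})^{\sigma}$ as required.

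The $H$-equivariance of this splitting is inherited from the $H$-equivariance in Proposition \ref{prop:normbundle}, since restriction to the $H$-invariant subset $Z_H$ commutes with the $H$-action. Non-degeneracy of $\sigma$ on the fibers of $(\widetilde{TZ_H})^{\sigma}$ is immediate from property (4) of that proposition, because over $Z_H$ the fibers of $(\widetilde{TZ_H})^{\sigma}$ coincide with the fibers of the ambient symplectic vector bundle $\widetilde{(TM_H)}^{\sigma}$. The substantive content has already been supplied by Proposition \ref{prop:normbundle} --- the (initially surprising) fact that the symplectic normal bundle extends across the fold \emph{tangent to $Z$} --- and once that is granted, the present corollary is a straightforward exercise in linear algebra, with no further obstacles.
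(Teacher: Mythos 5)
Your core idea --- set $(\widetilde{TZ_H})^{\sigma} := \widetilde{(TM_H)}^{\sigma}\big\vert_{Z_H}$ and derive the splitting from property (5) of Proposition \ref{prop:normbundle} together with $TZ_H = TZ\cap TM_H$ --- is exactly the linear algebra the paper uses, and that part of your argument is fine. The gap is in how you get yourself into a position to invoke Proposition \ref{prop:normbundle} at all. The corollary is stated for an arbitrary Lie group $G$ and makes no co-orientability assumption on $Z$, whereas Proposition \ref{prop:normbundle} (and Corollary \ref{cor:eqfsnormal2}, which you need for transversality) require $G$ compact and \emph{connected} and $Z$ co-orientable; you never address connectedness, and connectedness is genuinely used there (via Lemma \ref{lem:preserves1}) to force the stabilizer to act trivially on $\ker(\sigma)$, which is what makes $\ker(\sigma)$ tangent to $M_H$ along $Z_H$. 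Your two proposed fixes for co-orientability also don't go through as stated: an \emph{invariant} co-orientable neighborhood need not exist, since the normal bundle of $Z$ restricted to a single orbit $G\cdot z$ can already be a non-orientable line bundle (orbits need not be simply connected), and shrinking to a contractible but non-invariant neighborhood destroys the $G$-structure the proposition needs. Passing to the orientable double cover requires a descent argument you don't supply, and is delicate because the lifted action can have smaller stabilizers, so the strata $Z_H$ upstairs need not be the preimages of the $Z_H$ downstairs.

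The paper avoids all of this with a different move: by Corollary \ref{cor:preserves}, $Z$ (with its restricted form and $G$-action) embeds equivariantly and co-isotropically into a \emph{new} folded-symplectic Hamiltonian $G$-manifold --- essentially $Z\times\R$ with the form $p^*(i_Z^*\sigma)+d(t^2p^*\alpha)$ --- in which the fold is co-orientable and the $G$-action preserves the co-orientation by construction (it acts only on the $Z$ factor). One then applies Proposition \ref{prop:normbundle} in that ambient manifold and restricts to $Z_H$; the connectedness hypothesis becomes unnecessary because its only role was to guarantee preservation of the co-orientation, which the model provides for free. This also delivers the real point of the corollary, namely that the symplectic normal data along $Z_H$ is intrinsic to $Z$ and independent of the original embedding. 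To repair your proof, replace the ``local co-orientable neighborhoods / double cover'' step with an appeal to Corollary \ref{cor:preserves}; the rest of your argument then goes through unchanged.
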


\begin{proof}
By corollary \ref{cor:preserves}, we can equivariantly embed $Z$ into a folded-symplectic Hamiltonian $G$ manifold $(M,\sigma)$ as a co-orientable folding hypersurface, where the action of $G$ preserves the co-orientation. The result then follows by taking $(\widetilde{TM_H})^{\sigma}\big\vert_{Z_H}$ since the fibers along $Z_H$ are tangent to $Z$.  Note that we aren't assuming $G$ is connected, but we only did this in the statement of proposition \ref{prop:normbundle} to ensure that the action of $G$ preserved the co-orientation of $Z$.  This is automatically built into the equivariant embedding of $Z$ into $(M,\sigma)$, hence we can freely drop the connectedness assumption.
\end{proof}

\begin{remark}
Let $(M,\sigma)$ be a folded-symplectic manifold with an Hamiltonian action of a Lie group compact, connected Lie group $G$.  Corollary \ref{cor:normbundle2} essentially states that the data encoded in the fibers of the generalized symplectic normal bundle $(\widetilde{TM_H})^{\sigma}$ along $Z_H = M_H\cap Z$ are intrinsic to the folding hypersurface.  Again, as in corollary \ref{cor:preserves}, it appears as if the folding hypersurface does not seem to mind how it is embedded into a folded-symplectic manifold.
\end{remark}

\pagebreak
\section{Toric Folded-Symplectic Manifolds}
We now turn our attention to toric folded-symplectic manifolds.  These manifolds have a maximal number of commuting Hamiltonian functions whose differentials become linearly dependent at the folding hypersurface.  Thus, toric folded-symplectic manifolds may be viewed as somewhat tractable examples of degenerate, completely integrable systems.  We define these manifolds in the case that they do not have corners.  We'll need a separate definition for corners when we introduce the category $\mathcal{B}_{\psi}$ and we choose to delay it for now to avoid confusion between the cases with corners and the cases without corners.

The main results of this section are as follows.  We show that the orbit space of a toric, folded-symplectic manifold with a co-orientable folding hypersurface is a manifold with corners.  To prove this result, we first show that the stabilizers in a toric folded-symplectic manifold are tori and then argue that these manifolds are locally standard.  We then show that the moment map descends to what we call a unimodular map with folds.  We spend a fair amount of time studying what information one can read from the orbital moment map, which turns out to be a great deal.  In particular, one can reproduce the null foliation on the folding hypersurface \emph{and} its induced orientation directly from the orbital moment map.  These results all represent original work which is motivated by results seen in the studies of origami manifolds (q.v. \cite{CGP, HP}).

\subsection{Definitions and Basic Properties}

\begin{definition}\label{def:TFSmanifold}
A \emph{toric folded-symplectic manifold} (without corners) is a \emph{connected} folded-symplectic manifold $(M,\sigma)$ with an effective, Hamiltonian action of a torus $G$, where $\dim(G)=\frac{1}{2}\dim(M)$.  We denote a toric, folded-symplectic manifold as a triple $(M,\sigma,\mu:M \to \fg^*)$, where $\mu$ is a moment map for the action of $G$.  We often to omit $G$ from the notation as it is usually implied that we have fixed a torus, $G$.
\end{definition}

\begin{remark}
We are going to classify toric, folded-symplectic manifolds with co-orientable folding hypersurfaces.  Hence, throughout much of this section, the reader will see the phrase \emph{with co-orientable folding hypersurface} appear in the hypotheses of the lemmas and propositions.
\end{remark}

\begin{lemma}\label{lem:isoorbits}
Let $(M,\sigma,\mu:M\to \fg^*)$ be a toric, folded-symplectic manifold.  Let $p\in M$ be a point and let $G\cdot p$ be the orbit through $p$.  Then $G\cdot p$ is istotropic: $i_{G\cdot p}^*\sigma =0$.
\end{lemma}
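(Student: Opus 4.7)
The plan is to verify isotropy by checking that $\sigma$ vanishes on pairs of tangent vectors to $G \cdot p$ at an arbitrary point $p \in M$. By corollary \ref{cor:orbits1}, the tangent space $T_p(G \cdot p)$ is spanned by the values $X_M(p)$ of induced vector fields, as $X$ ranges over $\fg$. So it suffices to show that $\sigma_p(X_M(p), Y_M(p)) = 0$ for all $X, Y \in \fg$.

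First I would use the defining property of the moment map from definition \ref{def:fsham} to compute
\begin{equation*}
\sigma(X_M, Y_M) \;=\; (i_{X_M}\sigma)(Y_M) \;=\; -d\langle \mu, X \rangle (Y_M) \;=\; -Y_M\!\bigl(\langle \mu, X \rangle\bigr).
\end{equation*}
The right-hand side is the derivative of the smooth function $\langle \mu, X\rangle$ along the induced vector field $Y_M$; equivalently, it is $\tfrac{d}{dt}\big|_0 \langle \mu(\exp(tY)\cdot p), X\rangle$.

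Next I would use equivariance of $\mu$ together with the fact that $G$ is a torus. Since $G$ is abelian, the coadjoint action on $\fg^*$ is trivial, and equivariance of $\mu$ therefore says $\mu(g \cdot q) = \mu(q)$ for every $g \in G$ and $q \in M$. Hence the function $\langle \mu, X\rangle$ is $G$-invariant, and in particular constant along every $G$-orbit. Consequently $Y_M\bigl(\langle \mu, X \rangle\bigr) \equiv 0$, which combined with the previous display yields $\sigma_p(X_M(p), Y_M(p)) = 0$ as required.

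Since $p$ was arbitrary, $i_{G\cdot p}^*\sigma = 0$. There is no real obstacle here: the only substantive ingredient beyond the definitions is that the coadjoint action of a torus is trivial, which forces the components of $\mu$ to be $G$-invariant. The argument makes no use of whether $p$ lies on the fold $Z$ or in the symplectic locus, because the moment map equation and equivariance both hold globally on $M$.
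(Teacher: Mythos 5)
Your proposal is correct and follows exactly the same route as the paper's proof: reduce to induced vector fields via corollary \ref{cor:orbits1}, apply the moment map identity, and use that the coadjoint action of a torus is trivial so $\langle\mu,X\rangle$ is constant along orbits. No gaps.
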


\begin{proof}
By corollary \ref{cor:orbits1}, the tangent space $T_p(G\cdot p)$ is generated by the induced vector fields: $T_p(G\cdot p)= \{X_M(p) \vert \mbox{ } X\in \fg\}$. Thus, we need only show that $\sigma_p(X_M(p), Y_M(p))$ for every pair of induced vector fields.  Let $X,Y \in\fg$ be Lie algebra elements.  We compute:

\begin{equation}\label{eq:isotropic}
\sigma(X_M(p),Y_M(p)) = (i_{X_M}\sigma)(Y_M(p)) = -d(\langle \mu, X \rangle)_p(Y_M(p))
\end{equation}
The action of $G$ on $\fg^*$ is trivial since $G$ is abelian, hence the equivariance of $\mu$ implies $G$-invariance of $\mu$: $\mu(g\cdot p ) =Ad^*(g)\mu(p)=\mu(p)$.  $G$-invariance of $\mu$ implies the $G$-invariance of $\langle \mu, X \rangle$, hence $\langle \mu, X\rangle$ is constant along orbits and its derivative vanishes along directions tangent to orbits.  Thus, the right-hand side of equation \ref{eq:isotropic} is zero and we have:

\begin{displaymath}
\sigma(X_M(p),Y_M(p))=0
\end{displaymath}
for any choice of $X,Y\in \fg$, which means $\sigma$ restricted to orbits is $0$.
\end{proof}

\begin{lemma}\label{lem:stabtori}
Let $(M,\sigma,\mu:M\to \fg^*)$ be a toric, folded-symplectic manifold.  Suppose the folding hypersurface $Z\subset M$ is co-orientable.  Then the stabilizer of a point $p\in M$ is a subtorus $H=G_p$ of $G$ and $\dim(M_H)=2(\dim(G)-\dim(H))$.  Consequently, $(M_H,i_{M_H}^*\sigma, \mu\big\vert_{M_H}:M_H \to \frak{h}^o)$ is a toric, folded-symplectic $G/H$ manifold, where $\frak{h}^o$ is the annihilator of $\frak{h}=Lie(H)$ in $\frak{g}^*$.  The kernel bundle $i^*_{M_H}\sigma \to Z_H$ is given by $\ker(\sigma)\big\vert_{Z_H} \to Z_H$.
\end{lemma}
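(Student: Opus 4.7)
The plan is to reduce everything to the toric symplectic case, using corollary \ref{cor:eqfsnormal1} near the fold (which presents an invariant neighborhood of $Z$ as an equivariant fold-map pullback of a symplectic toric manifold $(U,\omega)$) and working directly on $M\setminus Z$ elsewhere. The crucial observation is that the $G$-action on any invariant open set is independent of whether we equip it with $\sigma$ or with $\omega$, so stabilizers and orbit-type strata coincide as sets in both pictures, and $(U,\omega)$ is itself a symplectic toric $G$-manifold (effectiveness transfers by lemma \ref{lem:eff2}).

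For the first two assertions I would apply the Guillemin--Sternberg normal form theorem \ref{thm:sympnorm} at $p$ in whichever symplectic manifold contains it; since orbits are isotropic by lemma \ref{lem:isoorbits}, the local model is $G\times_H(\frak{h}^o\oplus V)$, and comparing dimensions forces $\dim V=2\dim H$. Combined with faithfulness of the symplectic slice representation $V$ (via lemma \ref{lem:eff3}), lemma \ref{lem:sympweights2} forces $H$ to be connected, hence a subtorus. The dimension formula $\dim M_H=2(\dim G-\dim H)$ follows by computing $(T_pM)^H$ in this local model: corollary \ref{cor:slice2} gives $T_pM_H=(T_pM)^H$, and $H$ acts trivially on $\fg/\frak{h}$ and on $\frak{h}^o$ (since $G$ is abelian) while $V^H=\{0\}$ because the weights of $V$ form a $\mathbb{Z}$-basis of $\mathbb{Z}_H^*$, so none of them is zero.

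To promote this to the structure of a toric folded-symplectic $G/H$-manifold on $M_H$, I would invoke corollary \ref{cor:eqfsnormal2} for the folded-symplectic structure on $(M_H,i_{M_H}^*\sigma)$ and check effectiveness of the induced $G/H$-action directly: any $g$ acting trivially on $M_H$ lies in $G_q=H$ for every $q\in M_H$. For the moment map, $X_M$ vanishes along $M_H$ for every $X\in\frak{h}$ by corollary \ref{cor:slice2}, so $d\langle\mu,X\rangle\big\vert_{M_H}=0$, meaning after adjusting $\mu$ by a constant we may assume $\mu\big\vert_{M_H}$ takes values in $\frak{h}^o$; the moment map identity for the $G/H$-action then follows by pulling back $i_{X_M}\sigma=-d\langle\mu,X\rangle$ under $i_{M_H}$, since any $X\in\fg$ represents a well-defined class in $\fg/\frak{h}$ whose induced vector field on $M_H$ is just $X_M\big\vert_{M_H}$.

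Finally, for the kernel bundle identification at $p\in Z_H$, lemma \ref{lem:preserves1} combined with connectedness of $G$ and co-orientability of $Z$ shows that $H$ acts trivially on $\ker(\sigma_p)$, so $\ker(\sigma_p)\subseteq (T_pM)^H=T_pM_H$, which already gives $\ker(\sigma_p)\subseteq\ker(i_{M_H}^*\sigma)_p$. For the reverse inclusion I would use proposition \ref{prop:normbundle} to decompose $T_pM=T_pM_H\oplus\widetilde{(TM_H)}^\sigma_p$, noting that this splitting is $\sigma$-orthogonal by the very construction of $\widetilde{(TM_H)}^\sigma$ as a subbundle of $(TM_H)^\sigma$; then any $X\in T_pM_H$ annihilating $T_pM_H$ under $\sigma$ automatically annihilates $\widetilde{(TM_H)}^\sigma_p$, hence all of $T_pM$, so it lies in $\ker(\sigma_p)$. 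The main obstacle I anticipate is keeping the dimension bookkeeping straight when transferring between the folded picture and the symplectic model near the fold; but the equivariant fold map of corollary \ref{cor:eqfsnormal1} and the bundle $\widetilde{(TM_H)}^\sigma$ of proposition \ref{prop:normbundle} were constructed precisely to make this transfer essentially formal.
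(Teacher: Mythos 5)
Your proposal is correct, but it reaches the two main claims by a noticeably different route than the paper. For the assertion that $H$ is a subtorus and $\dim(M_H)=2(\dim(G)-\dim(H))$, the paper never invokes the Guillemin--Sternberg normal form theorem \ref{thm:sympnorm}: instead it picks $p\in M_H\setminus Z$ (possible since $M_H\pitchfork Z$), gets the lower bound $\dim(M_H)\ge 2(\dim G-\dim H)$ from isotropy of orbits inside the symplectic submanifold $M_H\setminus Z$, and gets the upper bound by sandwiching the rank of the intrinsically defined bundle $\widetilde{(TM_H)}^{\sigma}$ of proposition \ref{prop:normbundle} between $2\dim H$ (faithfulness of the fiber representation, corollary \ref{cor:normbundle1}, plus lemma \ref{lem:sympweights2}) and $\dim M-\dim M_H$; connectedness of $H$ then also comes from lemma \ref{lem:sympweights2}. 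Your version instead transfers to the symplectic model of corollary \ref{cor:eqfsnormal1} near the fold, applies theorem \ref{thm:sympnorm}, and computes $(T_pM)^H$ in the model $G\times_H(\frak{h}^o\oplus V)$; this works, and it buys a very concrete picture of $M_H$, at the cost of front-loading machinery the paper only deploys later (proposition \ref{prop:torsympnorm}, corollary \ref{cor:foldnorm}) and of two small points you should make explicit: faithfulness of $V$ follows from lemma \ref{lem:eff3} only after observing that $H$ acts trivially on the $\frak{h}^o$ summand of the differential slice (true since $G$ is abelian), and since $M_H$ may be disconnected, different components land in different translates $\eta+\frak{h}^o$, so one should project to $(\fg/\frak{h})^*\simeq\frak{h}^o$ as the paper does rather than subtract a single constant. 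For the kernel bundle, your argument ($\ker(\sigma_p)\subseteq T_pM_H$ via lemma \ref{lem:preserves1} and corollary \ref{cor:slice2}, reverse inclusion via the $\sigma$-orthogonal splitting $T_pM=T_pM_H\oplus\widetilde{(TM_H)}^{\sigma}_p$) is actually more complete than the paper's one-line appeal to "inspection of the normal form" in proposition \ref{prop:eqfsnormal}, and it correctly uses that $\widetilde{(TM_H)}^{\sigma}_p\subseteq(T_pM_H)^{\sigma}$ by construction.
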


\begin{proof}
Suppose $H\le G$ and $M_H$ is nonempty.  We argue that $\dim(M_H)=2(\dim(G)-\dim(H))$, which means that the symplectic normal bundle $(\widetilde{TM_H})^{\sigma}$ has rank $\dim(H)$ by proposition \ref{prop:normbundle}.  The symplectic representation of $H$ on the fibers of $(\widetilde{TM_H})^{\sigma}$ is faithful by corollary \ref{cor:normbundle1}, hence lemma \ref{lem:sympweights2} will imply that $H$ is a torus.  We now compute the dimension of $M_H$.

By corollary \ref{cor:eqfsnormal2}, $M_H \pitchfork_s Z$, hence we may choose a point $p\in M_H\setminus Z$.  The torus $G$ is abelian by definition, hence $H\le G$ fixes $G\cdot p$ and so it fixes $T_p(G\cdot p)$.  By corollary \ref{cor:slice2}, we have $T_p(G\cdot p)\subseteq T_pM_H$.  Since $M\setminus Z$ is symplectic and $(M\setminus Z)_H=M_H\setminus Z$, lemma \ref{lem:symporbit} implies $M_H\setminus Z$ is symplectic.  Since $T_p(G\cdot p)$ is isotropic in $T_p(G\cdot p)$ (q.v. lemma \ref{lem:isoorbits}), we must have $\dim(M_H)\ge 2\dim(G\cdot p) = 2(\dim(G)-\dim(H))$.

By proposition \ref{prop:normbundle}, the symplectic normal bundle $(\widetilde{TM_H})^\sigma$ is complementary to $TM_H$ in $TM\big\vert_{M_H}$, hence its rank $r$ is $r=\dim(M)-\dim(M_H) \le 2\dim(G) - 2(\dim(G)-\dim(H)) = 2\dim(H)$.  Thus, $r\le 2\dim(H)$.  By corollary \ref{cor:normbundle1}, the representation of $H$ on a fiber of $(\widetilde{TM_H})^{\sigma}$ is symplectic and faithful, hence $2\dim(H) \le r$ by lemma \ref{lem:sympweights2}.  We therefore have:

\begin{displaymath}
2\dim(H) \le \operatorname{rank}((\widetilde{TM_H})^{\sigma}) \le 2\dim(H)
\end{displaymath}
hence $\operatorname{rank}((\widetilde{TM_H})^{\sigma})=2\dim(H)$, $H$ is a torus by lemma \ref{lem:sympweights2}, and $\dim(M_H) = \dim(H) - \operatorname{rank}((\widetilde{TM_H})^{\sigma}) = 2\dim(G) - 2\dim(H) = 2(\dim(G)-\dim(H))$.

\vspace{3mm}
To see that $(M_H,i_{M_H}^*\sigma, \mu\big\vert_{M_H})$ is a toric, folded-symplectic manifold we first invoke corollary \ref{cor:eqfsnormal1}, which states that $(M_H,i_{M_H}^*\sigma)$ is folded-symplectic, where the kernel bundle is simply $\ker(i_{M_H}^*\sigma) = \ker(\sigma)\big\vert_{Z_H}$ by inspection of the normal form in proposition \ref{prop:eqfsnormal}.  Now, $G$ is a torus and $H$ is a torus, hence $G/H$ is a compact, connected abelian group.  That is, it is also a torus.  Since the action of $G$ preserves $\sigma$, the action of $G/H$ on $M_H$ preserves $i_{M_H}^*\sigma$.  Since $H$ fixes $M_H$, we have that for each element $X \in \frak{h}$ of the Lie algebra of $H$

\begin{displaymath}
-\langle d(\mu\big\vert_{M_H}), X \rangle = i_{M_H}^*(-i_{X_M}\sigma) = 0
\end{displaymath}
since $X_M$ is zero along $M_H$.  Thus, $d\mu$ maps into $\frak{h}^o$, hence each connected component of $M_H$ must map into an affine subspace $\eta +\frak{h}^o$, which is isomorphic to $(\frak{g}/\frak{h})^* = Lie(G/H)^*$ via projection, hence $\mu\big\vert_{M_H}$ gives a moment map for the action of $G/H$ on $M_H$.
\end{proof}

\begin{cor}\label{cor:stabtori}
Let $(M,\sigma,\mu:M\to \fg^*)$ be a toric folded-symplectic manifold with co-orientable folding hypersurface $Z$.  Let $M_{(H)}=M_H$ be an orbit-type stratum.  Then,
\begin{enumerate}
\item There exist well-defined weights $\beta_i$, $1\le i \le h=\dim(H)$, for the symplectic representations of $H$ on the fibers of $(\widetilde{TM_H})^{\sigma}$ and these weights do not change along the connected components of $M_H$.
\item The weights $(\beta_1,\dots,\beta_h)$ form a $\mathbb{Z}$-basis for the weight lattice of $H$.
\end{enumerate}
\end{cor}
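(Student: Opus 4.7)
The plan is to deduce both statements from machinery already assembled earlier in the text. By lemma \ref{lem:stabtori}, $H$ is a torus and the symplectic normal bundle $(\widetilde{TM_H})^{\sigma}\to M_H$ has real rank $2h$, where $h=\dim(H)$. By corollary \ref{cor:normbundle1}, the representation of $H$ on each fiber is faithful and symplectic, so corollary \ref{cor:sympweights} attaches a multiset of weights $\{\beta_1(p),\ldots,\beta_h(p)\}\subset \mathbb{Z}_H^*$ to each fiber.

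For part (1), the content is that this multiset does not depend on $p$ within a connected component of $M_H$. I would argue by producing a local $H$-equivariant isotypic decomposition. Fix $p_0\in M_H$ and choose a contractible neighborhood $U\subset M_H$ of $p_0$ over which $(\widetilde{TM_H})^{\sigma}$ admits a symplectic trivialization; averaging this trivialization over the compact torus $H$ (which acts trivially on the base $M_H$ by definition) produces one that is $H$-equivariant. Over $U$ the trivialized bundle splits uniquely into isotypic subbundles $\bigoplus_{\beta\in \mathbb{Z}_H^*} U\times V_\beta$, and the multiset of weights in the fiber over any $q\in U$ is read off from the ranks of the $V_\beta$. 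This establishes local constancy of the multiset on $M_H$; since $M_H$ is a submanifold with well-defined connected components, local constancy forces global constancy on each such component.

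For part (2), I would simply invoke lemma \ref{lem:sympweights2}. The representation of $H$ on any fiber of $(\widetilde{TM_H})^{\sigma}$ is faithful by corollary \ref{cor:normbundle1}, and the fiber has real dimension $2h=2\dim(H)$, so the second clause of that lemma applies and yields that the $h$ weights $\beta_1,\ldots,\beta_h$ form a $\mathbb{Z}$-basis for the dual lattice $\mathbb{Z}_H^*$.

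The main obstacle is not in any single step but in threading together the right hypotheses: one must pass from the global object $(\widetilde{TM_H})^{\sigma}$ to a local $H$-equivariant model in which the uniqueness of the isotypic decomposition for compact abelian groups applies verbatim, while remembering that on $Z_H$ the symplectic structure on this bundle comes from proposition \ref{prop:normbundle} rather than from direct nondegeneracy of $\sigma$ nearby. Once the equivariant trivialization is in hand, discreteness of the weight lattice handles local constancy, and lemma \ref{lem:sympweights2} closes out part (2).
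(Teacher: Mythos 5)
Your proposal is correct, and for part (2) it coincides exactly with the paper's argument: faithfulness from corollary \ref{cor:normbundle1}, rank $2\dim(H)$ from lemma \ref{lem:stabtori}, then lemma \ref{lem:sympweights2}. The difference lies in part (1). The paper's proof is a bare citation chain — it invokes lemma \ref{lem:sympweights} to say each fiber has well-defined weights and simply asserts constancy along connected components, leaving the local-constancy argument implicit (the same assertion reappears unproved in the proof of theorem \ref{thm:structure}). You actually supply that missing step, via a local $H$-equivariant trivialization, uniqueness of the isotypic decomposition, and discreteness of $\mathbb{Z}_H^*$; this is a genuine improvement in completeness. One caveat: the naive average $\frac{1}{|H|}\int_H \rho_0(h)\,\Phi_q\,\rho_q(h)^{-1}\,dh$ of a trivialization is an equivariant bundle map but is not automatically an isomorphism — it equals the identity at the basepoint and hence is invertible only after shrinking $U$, which is the standard local-triviality argument for equivariant bundles and should be said. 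Alternatively you can bypass the equivariant trivialization entirely: choose an invariant compatible almost complex structure (by averaging a metric, which is unproblematic), note that the resulting character $q\mapsto \mathrm{tr}_{\mathbb{C}}\bigl(\rho_q(h)\bigr)$ varies continuously in $q$ for each $h$ and determines the multiset of weights, and conclude local constancy from discreteness of the weight lattice. Either route closes the gap the paper glosses over; your part (2) needs no change.
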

\begin{proof}
By lemma \ref{lem:stabtori}, $H$ is a subtorus of $G$, where $G$ is the torus acting on $M$.  By corollay \ref{cor:normbundle}, the representations of $H$ on the fibers of $(\widetilde{TM_H})^{\sigma}$ are faithful and symplectic.  By lemma \ref{lem:sympweights}, symplectic representations of tori have well-defined weights which specify the representation up to isomorphism, which gives us a (multi)set of weights $\{\beta_1,\dots,\beta_h\}$.  By lemma \ref{lem:stabtori}, $\operatorname{rank}((\widetilde{TM_H})^{\sigma}) = 2\dim(H)$.  By lemma \ref{lem:sympweights2}, we have that $h=\dim(H)$, the weights $\{\beta_1,\dots,\beta_h\}$ are distinct, and they form a $\mathbb{Z}$-basis for the weight lattice $\mathbb{Z}_H^*$ of $H$.
\end{proof}

\begin{remark}
The following fixed-point corollary of lemma \ref{lem:stabtori} can be proven independently of lemma \ref{lem:stabtori} by studying the folding hypersurface directly.  However, lemma \ref{lem:stabtori} helps us to make the claim obvious, so we list it as a corollary.  The reader should also be aware of the fact that one needn't require co-orientability of the fold, but for our proof to work it is necessary.
\end{remark}

\begin{cor}\label{cor:stabtori}
Let $(M,\sigma,\mu:M\to \fg^*)$ be a toric, folded-symplectic manifold with co-orientable folding hypersurface $Z\subset M$.  Then there are no points in $Z$ fixed by the torus $G$.
\end{cor}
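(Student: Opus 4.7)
\medskip

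\noindent\textbf{Proof proposal.} The plan is to derive a contradiction by comparing the dimension of $M_G$ near a hypothetical fixed point on the fold with the dimension of the kernel subspace $\ker(\sigma_p)$, exploiting the fact that the latter must be fixed by $G_p$ when $Z$ is co-orientable.

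First, suppose toward contradiction that there exists $p\in Z$ with $G_p=G$; then $p\in M_G$, so this orbit-type stratum is nonempty. Applying lemma \ref{lem:stabtori} with $H=G$ gives
\[
\dim M_G \;=\; 2(\dim G-\dim G)\;=\;0,
\]
so $p$ is an isolated point of $M_G$. By corollary \ref{cor:slice2}, $T_pM_G=(T_pM)^G=0$, i.e.\ $G$ has no nonzero fixed vectors in $T_pM$.

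Next, I would bring in the structure on $\ker(\sigma_p)$. Since $p\in Z$ and $\sigma$ is folded-symplectic, $\ker(\sigma_p)$ is a $2$-dimensional subspace of $T_pM$. By lemma \ref{lem:preserves}, the action of $G$ preserves $\ker(\sigma)\to Z$, so $\ker(\sigma_p)$ is a representation of the stabilizer $G_p=G$. Because $Z$ is co-orientable and $G$ is connected (tori are connected), lemma \ref{lem:preserves1} states that this representation of $G_p$ on $\ker(\sigma_p)$ is trivial. Consequently $\ker(\sigma_p)\subseteq (T_pM)^G$.

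Combining the two observations yields
\[
2\;=\;\dim\ker(\sigma_p)\;\le\;\dim (T_pM)^G\;=\;\dim T_pM_G\;=\;0,
\]
which is absurd. Hence no point of $Z$ can be fixed by $G$. The only subtle step is invoking lemma \ref{lem:preserves1} to conclude that the full $2$-plane $\ker(\sigma_p)$ (not merely its $1$-dimensional intersection with $TZ$) is $G_p$-fixed; this is exactly where co-orientability of $Z$ is used, so the argument would genuinely break down without that hypothesis, which is consistent with the remark preceding the corollary.
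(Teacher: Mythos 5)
Your proof is correct, but it takes a different route from the paper's in its second half. Both arguments start the same way: lemma \ref{lem:stabtori} with $H=G$ gives $\dim M_G = 0$. From there the paper argues globally with submanifolds: it invokes the transversality $M_G\pitchfork Z$ (a consequence of the equivariant normal form for the fold) and observes that a transverse intersection of a $0$-dimensional stratum with the hypersurface $Z$ would have to have negative dimension, hence is empty. You instead argue pointwise in the tangent space: corollary \ref{cor:slice2} turns $\dim M_G=0$ into $(T_pM)^G=0$, while lemma \ref{lem:preserves1} (using connectedness of $G$ and co-orientability of $Z$) forces the entire $2$-plane $\ker(\sigma_p)$ to sit inside $(T_pM)^G$, a contradiction. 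Your version trades the transversality statement $M_G\pitchfork Z$ for the representation-theoretic triviality of $\ker(\sigma_p)$; both of these inputs ultimately rest on co-orientability, so neither approach is more economical in hypotheses, but yours localizes the contradiction to a single tangent space and makes explicit where co-orientability enters. It is also distinct from the alternative the paper sketches immediately after the corollary (via lemma \ref{lem:generator}, which uses only the $1$-dimensional piece $\ker(\sigma_p)\cap T_pZ$ and the fact that the null foliation is generated by the action). All steps you cite are applicable as stated: the torus action is proper since $G$ is compact, so lemmas \ref{lem:preserves1} and \ref{cor:slice2} apply, and lemma \ref{lem:stabtori} applies because $Z$ is assumed co-orientable.
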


\begin{proof}
Let $M_G$ be the fixed-point stratum of $M$.  By lemma \ref{lem:stabtori}, $\dim(M_G)=2(\dim(G)-\dim(G))=0$, hence it has codimension $\dim(M)$ in $M$.  By corollary \ref{cor:eqfsnormal1}, $M_G\pitchfork Z$, hence the intersection has codimension $\dim(M)$.  But, $Z$ is a hypersurface in $M$, hence it has dimension $\dim(M)-1$ and the maximum codimension would be $\dim(M)-1$.  Thus, $M_G\cap Z$ must be empty.
\end{proof}

There is another more interesting way to see corollary \ref{cor:stabtori}.  We will show that the null foliation on the folding hypersurface $Z$ is generated by the group action.  Towards the end of this chapter, we will show that the generators of the null foliation fit together to give us a vector bundle over $Z$, which will be an invariant of a toric-folded symplectic manifold, albeit a superfluous invariant.  As in corollary \ref{cor:stabtori}, one needn't require co-orientability of the fold $Z$, but we are focusing on such manifolds so we require it.

\begin{lemma}\label{lem:generator}
Let $(M,\sigma,\mu:M\to \fg^*)$ be a toric, folded-symplectic manifold with co-orientable folding hypersurface $Z$.  Let $p\in Z$ be a point in the fold and let $\ker(\sigma)\cap TZ$ be the line bundle on which $i_Z^*\sigma$ vanishes.  Then the fiber $\ker(\sigma_p)\cap T_pZ$ is tangent to the orbit through $p$: $\ker(\sigma_p)\cap T_pZ \subseteq T_p(G\cdot p)$, hence the torus action generates the null foliation.
\end{lemma}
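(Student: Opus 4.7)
The plan is to use the equivariant normal form for the folding hypersurface to replace $\sigma$ by a genuine symplectic form on a neighborhood $U$ of $Z$ and then run a representation-theoretic argument on the symplectic slice at $p$. Since $Z$ is co-orientable, proposition~\ref{prop:eqfsnormal} together with corollary~\ref{cor:eqfsnormal1} furnishes an invariant neighborhood $U$ of $Z$, a $G$-invariant symplectic form $\omega\in\Omega^2(U)$ making $(U,\omega)$ a toric symplectic $G$-manifold, and an equivariant fold map $\psi\colon U\to U$ with $\psi^*\omega=\sigma$. In the product model $U\subset Z\times\R$ the inclusion $i\colon Z\hookrightarrow U$ as the zero section satisfies $i^*\omega = i_Z^*\sigma$, so the two forms agree along $Z$ and their kernel line bundles coincide.

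First I would identify the null line $\ker(\sigma_p)\cap T_pZ = \R v$ with the $\omega$-characteristic line of the hypersurface $Z$ at $p$. Any hypersurface in a symplectic manifold is coisotropic, so the one-dimensional subspace $T_pZ^\omega$ lies in $T_pZ$ and coincides with $\ker(i^*\omega)_p = \ker(i_Z^*\sigma)_p = \R v$. Because the orbit $G\cdot p$ is contained in $Z$, taking $\omega$-orthogonals reverses the inclusion to give $v\in T_pZ^\omega \subseteq T_p(G\cdot p)^\omega$.

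Next I would use the symplectic slice decomposition $T_p(G\cdot p)^\omega = T_p(G\cdot p)\oplus V$, where $V$ is the symplectic slice representation of $H := G_p$ at $p$ inside $(U,\omega)$. By lemma~\ref{lem:stabtori}, $H$ is a subtorus of $G$. Since $G$ is abelian, $H$ acts trivially on $G\cdot p\cong G/H$ and therefore trivially on $T_p(G\cdot p)$. Applying corollary~\ref{cor:normbundle1} to $(U,\omega)$ (whose fold is empty, so co-orientability is automatic) together with lemma~\ref{lem:sympweights2} shows that the weights of the faithful $H$-representation $V$ form a $\Z$-basis of $\Z_H^*$; in particular every weight is nonzero, so $V^H = \{0\}$, and consequently the $H$-fixed subspace of $T_p(G\cdot p)^\omega$ equals $T_p(G\cdot p)$.

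Finally, the line $T_pZ^\omega = \R v$ is $G$-invariant because $G$ preserves both $Z$ and $\omega$; it is therefore $H$-invariant, and since $H$ is a connected compact Lie group its only continuous character into $\R^*$ is trivial, forcing $v$ to be $H$-fixed. Combined with the previous step this gives $v\in T_p(G\cdot p)$, hence the desired inclusion $\ker(\sigma_p)\cap T_pZ\subseteq T_p(G\cdot p)$, from which the statement about the null foliation being generated by the torus action is immediate. The only real subtlety is the translation from the folded form $\sigma$ to the auxiliary symplectic form $\omega$; once the identification $\ker(i_Z^*\sigma) = T_pZ^\omega$ is in place, the toric representation theory closes the argument cleanly.
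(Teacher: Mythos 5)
Your argument is correct, but it takes a genuinely different route from the paper's. The paper proves the lemma by contradiction with a dimension count inside the stratum $Z_H=M_H\cap Z$: if the null line at $p$ met $T_p(G\cdot p)$ trivially, then the isotropic orbit tangent space, the null line, and a complement on which $i_{Z_H}^*\sigma$ pairs nondegenerately with the orbit directions would force $\dim Z_H\ge 2(\dim G-\dim H)+1$, contradicting $\dim Z_H=2(\dim G-\dim H)-1$ from lemma \ref{lem:stabtori} and corollary \ref{cor:eqfsnormal2}. You instead pass to the equivariant symplectization $(U,\omega)$ of corollary \ref{cor:eqfsnormal1}, identify $\ker(\sigma_p)\cap T_pZ$ with the characteristic line $T_pZ^{\omega}$ of the coisotropic hypersurface $Z$, use $G\cdot p\subset Z$ to get $T_pZ^{\omega}\subseteq T_p(G\cdot p)^{\omega}$, and then observe that the $H$-fixed part of $T_p(G\cdot p)^{\omega}=T_p(G\cdot p)\oplus V$ is exactly $T_p(G\cdot p)$ because the slice weights form a $\mathbb{Z}$-basis of $\mathbb{Z}_H^*$ and hence are all nonzero. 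Both proofs rest on the same structural inputs (lemma \ref{lem:stabtori} and the slice-weight machinery), but yours yields the slightly stronger statement $(T_p(G\cdot p)^{\omega})^H=T_p(G\cdot p)$ and localizes all the symplectic linear algebra at the single point $p$, while the paper's stays entirely in the folded category and needs no invariant complement. Two small economies you could make: the triviality of the $H$-action on the null line is precisely lemma \ref{lem:preserves1}, so you may cite it rather than rerun the compact-connected-subgroup-of-$\R^*$ argument; and the fact that the slice of $H$ at $p\in Z$ in $(U,\omega)$ carries the weights of corollary \ref{cor:stabtori} is exactly what corollary \ref{cor:foldnorm} and remark \ref{rem:slicerep} record, which spares you from verifying that $(U,\omega)$ (or its relevant connected component) is itself toric.
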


\begin{proof}
Let $p\in Z$, let $H=G_p$ be its stabilizer, and suppose the claim is false at $p$.  Since $\ker(\sigma)\cap TZ$ is a line bundle, this means we are assuming $\ker(\sigma_p)\cap T_pZ \cap T_p(G\cdot p)=\{0\}$.  We will show that this leads to the dimension of $Z_H=M_H\cap Z$ being too large.  By lemma \ref{lem:stabtori}, $\dim(M_H) = 2(\dim(G)-\dim(H))$ and by corollary \ref{cor:eqfsnormal1} $M_H \pitchfork Z$, hence $\dim(Z_H) = \dim(M_H\cap Z) = 2(\dim(G)-\dim(H))-1$.

By corollary \ref{cor:eqfsnormal1}, $M_H$ is folded-symplectic with folding hypersurface $Z_H=M_H\cap Z$.  The kernel bundle of $i_{Z_H}^*\sigma$ is just $\ker(i_Z^*\sigma)\big\vert_{Z_H}$ since $\ker(\sigma)$ is tangent to $M_H$ at points of $M_H$, which is true because $H$ acts trivially on the fibers of $\ker(\sigma)$ by lemma \ref{lem:preserves}.  Notice that since the orbit is contained in $Z$ and $H$ fixes all elements in $T_p(G\cdot p)$, which is true since $G$ is abelian, we have $T_p(G\cdot p)\subset T_pZ_H$.  We therefore have that $\ker(\sigma_p)\cap T_pZ$ and $T_p(G\cdot p)$ are two non-intersecting subspaces of $T_pZ_H$.

Now, $i_{Z_H}^*\sigma$ has maximal rank and the kernel is \emph{not} contained in the isotropic subspace $T_p(G\cdot p)$ by assumption, hence there must be a subspace $V_p\subseteq T_pZ_H$ complementary to $T_p(G\cdot p)+ (\ker(\sigma_p)\cap T_pZ)$ so that $i_{Z_H}^*\sigma$ is nondegenerate on $V_p + T_p(G\cdot p)$.  Since $T_p(G\cdot p)$ is isotropic, $\dim(V_p)$ is at least $\dim(G\cdot p)=\dim(G)-\dim(H)$.  But, this gives us:

\begin{displaymath}
\dim(Z_H) = \dim(V_p) + \dim(T_p(G\cdot p) + \dim(\ker(\sigma_p)\cap T_pZ) \ge 2(\dim(G)-\dim(H)) + 1 > \dim(Z_H)
\end{displaymath}
which is a contradiction, so we must have that $\ker(\sigma_p)\cap T_pZ$ is contained in $T_p(G\cdot p)$.
\end{proof}

Corollary \ref{cor:stabtori} can now be seen as follows: if $G$ fixes a point $p\in Z$, then we have $\ker(\sigma_p)\cap T_pZ \subseteq T_p(G\cdot p) = \{0\}$ by lemma \ref{lem:generator}. But, $\ker(\sigma_p)\cap T_pZ$ is $1$-dimensional by definition of folded-symplectic, hence $G$ cannot fix any point in $Z$.  The following corollary is a computational restatement of lemma \ref{lem:generator}.  We will need it to see how one may recover the null foliation on $Z$ using the moment map.

\begin{cor}\label{cor:generator}
Let $(M,\sigma,\mu:M\to \fg^*)$ be a toric folded-symplectic manifold and suppose the folding hypersurface $Z\subset M$ is co-orientable.  Then for all $p\in Z$ there exists a lie algebra element $X\in \fg$ such that $X_M(p)\ne 0$ and $X_M(p)\in \ker(\sigma_p)\cap T_pZ$.  That is, the induced vector field $X_M$ generates the tangent space to the leaf of the null foliation on $Z$ at $p$.
\end{cor}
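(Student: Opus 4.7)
The plan is to deduce this corollary essentially as a direct unpacking of Lemma \ref{lem:generator} together with the description of the tangent space to an orbit as the span of induced vector fields. Since the heavy lifting (showing $\ker(\sigma_p)\cap T_pZ \subseteq T_p(G\cdot p)$) has already been done, the argument should be short.

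First I would observe that at any $p\in Z$ the line $\ker(\sigma_p)\cap T_pZ$ is genuinely one-dimensional: by the defining property of a folded-symplectic form, $i_Z^*\sigma$ has maximal rank $2m-2$ on the $(2m-1)$-dimensional space $T_pZ$, so its kernel is exactly one-dimensional. Choose any nonzero vector $v\in \ker(\sigma_p)\cap T_pZ$.

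Next I would apply Lemma \ref{lem:generator} to conclude $v\in T_p(G\cdot p)$. Then by Corollary \ref{cor:orbits1}, the tangent space to the orbit is precisely the space of values of induced vector fields, so there exists $X\in\fg$ with $X_M(p)=v$. Since $v\ne 0$, automatically $X_M(p)\ne 0$, and by construction $X_M(p)\in \ker(\sigma_p)\cap T_pZ$.

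I do not foresee any real obstacle here: the corollary is a computational repackaging of the lemma, and the only point requiring a sentence of justification is that the kernel line is nontrivial, which follows from the maximal-rank condition in the definition of a folded-symplectic form. No new input (such as co-orientability beyond what the lemma already uses) is required at this step.
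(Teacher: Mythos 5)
Your proposal is correct and matches the paper's intent exactly: the paper states this corollary as a "computational restatement" of Lemma \ref{lem:generator} with no further argument, and your two supporting observations (the kernel line is one-dimensional by the maximal-rank condition, and the orbit tangent space is spanned by induced vector fields via Corollary \ref{cor:orbits1}) are precisely the points needed to make that restatement rigorous.
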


\subsection{Toric Symplectic Local Normal Form and the Fold}
We now seek to describe the local structure of a toric symplectic manifold, which will allow us to describe the local structure of the folding hypersurface inside a toric folded-symplectic manifold quite easily.  Let us make a few remarks before stating the toric symplectic normal form proposition.

\begin{remark}\label{rem:complement}
Let $G$ be a torus and suppose $H\le G$ is a subtorus with $\dim(H)<\dim(G)$ so that $H\ne G$.  Then one can find a complementary subtorus $K\le G$ so that $G\simeq H\times K$ as Lie groups.  This works as follows:

\begin{itemize}
\item Since $H\le G$ is a subgroup, $Lie(H)=\frak{k}$ is a Lie subalgebra of $\fg$, hence the integral lattice $\mathbb{Z}_H$ is a sublattice.
\item Recall that an element $\eta$ of $\mathbb{Z}_G$ is \emph{primitive} if there is no element $\eta_0\in \mathbb{Z}_G$ and no positive integer $n$ such that $\eta=n\eta_0$.  Since $H$ is a subtorus, the integral lattice $\mathbb{Z}_H$ must contain $h=\dim(H)$ elements, $\eta_1,\dots,\eta_h$, that are primitive in both $\mathbb{Z}_H$ and $\mathbb{Z}_G$.
\item Thus, there exist $k=\dim(G)-\dim(H)$ other primitive elements, $v_1,\dots,v_k$, in $\mathbb{Z}_G$ so that $\{\eta_1,\dots,\eta_h,v_1,\dots,v_k\}$ is a $\mathbb{Z}$-basis for $\mathbb{Z}_G$.  Consequently, $K=\exp(\span\{v_1,\dots,v_k\})$ is a subtorus of $G$.
\end{itemize}
\end{remark}

\begin{remark}\label{rem:slicerep}
Let $(M,\sigma,\mu:M\to \fg^*)$ be a toric, folded-symplectic manifold.  Choose a point $p\in M\setminus Z$ and let $H=G_p$ be its stabilizer, which must be a subtorus of $G$ by lemma \ref{lem:stabtori} .  By lemma \ref{lem:isoorbits}, the orbit $G\cdot p$ is isotropic, hence the symplectic slice at $p$ is:

\begin{displaymath}
V_p = \frac{T_p(G\cdot p)^{\sigma}}{T_p(G\cdot p)}
\end{displaymath}
We claim that this is canonically isomorphic, as a representation of $H$, to the fiber of $(\widetilde{TM_H})^\sigma$.  Indeed, we have:

\begin{itemize}
\item $T_p(G\cdot p)^{\sigma}\cap T_pM_H = T_p(G\cdot p)$ since $T_p(G\cdot p)$ is Lagrangian in $T_pM_H$ by lemmas \ref{lem:isoorbits} and \ref{lem:stabtori}.
\item $T_p(G\cdot p)^{\sigma} \cap (\widetilde{TM_H})^{\sigma}_p = (\widetilde{TM_H})^{\sigma}_p$ since, by definition, $(\widetilde{TM_H})^{\sigma}_p$ is the $\sigma$ perpendicular of $T_pM_H$ at points of $M\setminus Z$, hence it must vanish on $T_p(G\cdot p)\subset T_pM_H$.
\end{itemize}
By a dimension count, facilitated by lemma \ref{lem:stabtori}, we have the decomposition into invariant subspaces:

\begin{displaymath}
T_p(G\cdot p)^{\sigma} = (\widetilde{TM_H})^{\sigma}_p \oplus T_p(G\cdot p)
\end{displaymath}
hence $T_p(G\cdot p)^{\sigma}/T_p(G\cdot p) = (\widetilde{TM_H})^{\sigma}_p$ as representations of $H$.  Thus, for toric, folded-symplectic manifolds, the existence of the bundle $(\widetilde{TM_H})^{\sigma}$ implies that the notion of a symplectic slice extends across the fold: at a point $z\in Z$ in the fold, we could define the fiber $(\widetilde{TM_H})^{\sigma}_z$ to be the symplectic slice.
\end{remark}

\begin{remark}\label{rem:slicerep2}
Let $(M,\sigma,\mu:M\to \fg^*)$ be a toric, folded-symplectic manifold.  Let $p\in M$, let $H=G_p$ be the stabilizer of $p$, and consider the representation of $H$ on $(\widetilde{TM_H})^{\sigma}$.  By lemma \ref{lem:stabtori}, $H$ is a torus and corollary \ref{cor:stabtori} tells us that there exist $h=\dim(H)$ weights, $\{\beta_1,\dots,\beta_h\}$, associated to the representation $(\widetilde{TM_H})^{\sigma}_p$ that form a basis for the integral lattice $\mathbb{Z}_H^*$ of $H$.  By lemma \ref{lem:sympweights}, there exists an isomorphism of symplectic representations between $(\widetilde{TM_H})^{\sigma}_p$ and the representation of $H$ on $\C^h$ given by:

\begin{displaymath}
\exp(X)\cdot(z_1,\dots,z_h) = (e^{2\pi i\beta_1(X)}z_1, \dots, e^{2\pi i\beta_h(X)},z_h)
\end{displaymath}
\end{remark}

The following normal form proposition is an application of theorem \ref{thm:sympnorm} and remarks \ref{rem:complement} and \ref{rem:slicerep2}.  It is lemma B.5 in \cite{KL}.

\begin{prop}\label{prop:torsympnorm}
Let $(M,\omega,\mu:M\to \fg^*)$ be a toric \emph{symplectic} manifold.  That is, assume it is folded-symplectic with empty folding hypersurface.  Let $p$ be a point in $M$ and let $H=G_p$ be its stabilizer.

\begin{enumerate}
\item Let $\mathbb{T}^h=\R^h/\mathbb{Z}^h$ be the standard torus.  There exists an isomorphism $\tau_H: H \to \mathbb{T}^h$ of Lie groups such that the symplectic slice representation at $p$ is isomorphic to the action of $H$ on $\C^h$ obtained from the composition of $\tau_H$ with the standard action of $\mathbb{T}^h$ on $\C^h$, which is:
    \begin{equation}\label{eq:action}
    [t_1,\dots,t_h]\cdot (z_1,\dots, z_h)=(e^{2\pi i t_1}z_1, \dots, e^{2\pi i t_h}z_h)
    \end{equation}
    and this isomorphism can be constructed using the weights $\{\beta_1,\dots,\beta_h\}$ of the symplectic slice representation of $H$.
\item Choose a complementary subtorus $K\le G$ and let $\tau:G \to K \times H$ be an isomorphism of Lie groups such that $\tau(a)=(a,e)$ for all $a\in K$.  Then there exists a $G$-invariant open neighbourhood $U$ of $p$ in $M$ and a $\tau$-equivariant open symplectic embedding
    \begin{displaymath}
    j:U \hookrightarrow T^*K \times \C^h
    \end{displaymath}
    with $j(G\cdot p)=K\times \{0\}$.  Here, $K$ acts on $T^*K$ by the lift of the left multiplication and $H$ acts on $\mathbb{C}^h$ by the recipe of equation \ref{eq:action} composed with $\tau_H$.  The moment map is:

    \begin{equation}\label{eq:momentmap}
    \mu\vert_U = \mu(p) + \tau^* \circ \phi \circ j
    \end{equation}
    where $\phi:T^*K \times \C^h \to \frak{k}^* \oplus \frak{h}^*$ is given by
    \begin{equation}\label{eq:stdmomentmap}
    \phi((\lambda,\eta),(z_1,\dots,z_h))=(\eta,\sum_{i=1}^h \vert z_j \vert^2\beta_j).
    \end{equation}
    where the $\beta_j's$ are the weights for the representation of $H$ on $\C^h$ and $\tau^*:\frak{k}^* \times \frak{h}^* \to \fg^*$ is the isomorphism on the duals of the Lie algebras induced by $\tau$.
\end{enumerate}
\end{prop}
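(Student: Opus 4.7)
The plan is to assemble the proof from three ingredients already established in the text: the Guillemin--Sternberg normal form for isotropic orbits (Theorem \ref{thm:sympnorm}), the structure theory of symplectic representations of tori (Lemma \ref{lem:sympweights} and Corollary \ref{cor:sympweights}), and the fact that stabilizers in toric folded-symplectic manifolds are subtori whose slice weights form a $\mathbb{Z}$-basis of the weight lattice (Lemma \ref{lem:stabtori} together with Corollary \ref{cor:stabtori}). Since $(M,\omega)$ here is purely symplectic (empty fold), these results apply unchanged, and Lemma \ref{lem:isoorbits} guarantees that every $G$-orbit is isotropic, so Theorem \ref{thm:sympnorm} is applicable.

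For part (1), I would first invoke Lemma \ref{lem:isoorbits} to note that the symplectic slice at $p$ equals $T_p(G\cdot p)^\omega/T_p(G\cdot p)$ and, via Remark \ref{rem:slicerep}, is canonically isomorphic to the fiber $(\widetilde{TM_H})^\omega_p$. This is a faithful symplectic representation of the torus $H$ (Corollary \ref{cor:normbundle1}, or equivalently Lemma \ref{lem:eff3}) of complex dimension $h=\dim H$, so by Corollary \ref{cor:stabtori} it has exactly $h$ weights $\beta_1,\dots,\beta_h$ forming a $\mathbb{Z}$-basis of $\mathbb{Z}_H^*$. Define $\tau_H\colon H \to \mathbb{T}^h$ by $\tau_H(\exp_H X)=[\beta_1(X),\dots,\beta_h(X)]$: this is well-defined because each $\beta_i\in \mathbb{Z}_H^*$ maps $\mathbb{Z}_H$ into $\mathbb{Z}$, and it is an isomorphism of Lie groups because the $\beta_i$ form a $\mathbb{Z}$-basis of $\mathbb{Z}_H^*$, so the induced map on integral lattices is a $\mathbb{Z}$-module isomorphism. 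The identification of the slice with $\C^h$ afforded by Corollary \ref{cor:sympweights} then exhibits the representation as the standard $\mathbb{T}^h$-action precomposed with $\tau_H$.

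For part (2), choose a complementary subtorus $K\le G$ via Remark \ref{rem:complement}, giving an isomorphism $\tau\colon G \to K\times H$ with $\tau(a)=(a,e)$ for $a\in K$. Dualizing, $\fg^*=\frak{k}^*\oplus\frak{h}^*$, so the $H$-equivariant complement $\frak{h}^o$ required by Theorem \ref{thm:sympnorm} is canonically $\frak{k}^*$. Applying that theorem to the isotropic orbit $G\cdot p$ yields an equivariant symplectomorphism from a $G$-invariant neighborhood of $p$ onto a neighborhood of the zero section of the model $E=G\times_H(\frak{k}^*\oplus V)$, where $V$ is the symplectic slice. Using $\tau$ to split $G=K\times H$, the bundle collapses canonically to $K\times(\frak{k}^*\oplus V)\cong T^*K\times V$ (identifying $T^*K$ with $K\times\frak{k}^*$ by left translations), with $K$ acting on $T^*K$ by the lift of left multiplication and $H$ acting only on $V$. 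Substituting the isomorphism $V\cong \C^h$ from part (1) produces the required embedding $j$, and since $G$ is abelian the $\mathrm{Ad}^*$ appearing in the moment map of Theorem \ref{thm:sympnorm} is trivial, so the moment map reduces to $\mu(p)+\tau^*(\eta,\Phi_V(z))$ with $\Phi_V(z)=\sum_j |z_j|^2\beta_j$ from Corollary \ref{cor:sympweights}, which is exactly \ref{eq:momentmap} with $\phi$ as in \ref{eq:stdmomentmap}.

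The principal obstacle is the bookkeeping in identifying $E=G\times_H(\frak{k}^*\oplus V)$ with $T^*K\times \C^h$ in a manner that simultaneously respects the symplectic form, the factored $G=K\times H$ action, and the moment-map conventions of the statement. The underlying identification itself is routine---either via symplectic reduction of $T^*G\times V$ by $H$ or a direct check in left-translation coordinates---but one must carefully trace the dualization $\tau^*$ through the $\frak{h}^*$-component of $\Phi_V$ to confirm that the weights $\beta_i$ land in the prescribed slot of the moment map, and verify that the isomorphism intertwines the $K$-action by left multiplication on $T^*K$ with the residual $K$-action coming from $G\times_H(\cdot)$.
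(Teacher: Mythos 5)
Your proposal is correct and follows essentially the same route as the paper's own sketch: apply the Guillemin--Sternberg normal form (Theorem \ref{thm:sympnorm}) to the isotropic orbit, build $\tau_H$ from the slice weights (which form a $\mathbb{Z}$-basis of $\mathbb{Z}_H^*$ by Corollary \ref{cor:stabtori}), split $G\simeq K\times H$, and collapse the model $G\times_H(\frak{h}^o\oplus V)$ to $T^*K\times\C^h$. The paper phrases the final identification as the reduction $(T^*G\times\C^h)//_0H = T^*K\times(T^*H\times\C^h)//_0H$, but this is the same computation you describe.
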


\begin{proof}[Sketch of Proof]
By theorem \ref{thm:sympnorm}, a neighborhood of the orbit $G\cdot p$ in $M$ is isomorphic to a neighborhood of the zero section of:

\begin{displaymath}
G\times_H (\frak{h}^o\oplus V) = (T^*G \times V)//_0 H
\end{displaymath}
with its natural symplectic structure.  The slice representation has weights $\{\beta_1,\dots,\beta_h\}$ associated to it and these weights form a basis for the weight lattice $\mathbb{Z}_H^*$ of $H$. Consequently, they define an isomorphism $\tau_H$ between $H$ and the standard torus $\mathbb{T}^h$:

\begin{displaymath}
\xymatrixcolsep{5pc}\xymatrix{
\frak{h} \ar[r]^{(\beta_1,\dots,\beta_h)} \ar[d]^{\exp_H} & \mathbb{R}^h \ar[d]^{\exp_{\mathbb{T}^h}} \\
H \ar[r]^{\tau_H}                                         & \mathbb{T}^h
}
\end{displaymath}
By remark \ref{rem:slicerep2}, $V$ is isomorphic, as a symplectic representation, to $\C^h$ where $H$ acts via:

\begin{displaymath}
\exp(X)\cdot(z_1,\dots,z_h) = (e^{2\pi i\beta_1(X)}z_1, \dots, e^{2\pi i\beta_h(X)},z_h)
\end{displaymath}
This is exactly the action generated by the standard torus action composed with the map $\tau_H$.  If we choose a complementary subtorus $K$ so that $G\simeq K \times L$, then the reduced space $(T^*G \times \C^h)//_0 H$ becomes:

\begin{displaymath}
(T^*K \times T^*H \times \C^h)//_0 H = T^*K \times (T^*H \times \C^h)//_0 H = T^*K \times C^h
\end{displaymath}
and the moment map for the $K\times H$ action is
\begin{equation}\label{eq:momentmap1}
\phi(\lambda,\eta, z_1,\dots,z_h)= \eta + \sum_{i=1}^h \vert z_i \vert^2 \beta_i + \mu(p)
\end{equation}
Hence, a neighborhood of $G\cdot p$ is isomorphic to a neighborhood of the zero section of $T^*K \times C^h$ with its standard symplectic structure and moment map given by equation \ref{eq:momentmap1}.  The isomorphism $\tau:G \to K \times H$ induces a map $\tau^*:\frak{h}^*\times \frak{k}^* \to \frak{g}^*$ and it is straightforward to check that if $\phi$ is the moment map in equation \ref{eq:stdmomentmap}, then $\mu\big\vert_U=\tau^*\circ \phi + \mu(p)$.
\end{proof}

The following corollary states that we can realize $Z$ as a unique embedded hypersurface in the standard symplectic model of proposition \ref{prop:torsympnorm}.  This may be taken as a consequence of corollary \ref{cor:preserves}, which shows we can always form an equivariant symplectization of the folding hypersurface.  It may also be taken as a consequence of corollary \ref{cor:normbundle2}, which states that the symplectic slice data is intrinsic to the fold.

\begin{cor}\label{cor:foldnorm}
Let $(M,\sigma,\mu:M\to \fg^*)$ be a toric, folded-symplectic manifold with co-orientable folding hypersurface $Z\subset M$.  Let $p\in Z$ and let $H=G_p$ be its stabilizer, which is a subtorus by lemma \ref{lem:stabtori}, with $h:=\dim(H)$.  Let $K\le G$ be a subtorus of $G$ complementary to $H$ in $G$.  Then there exists a neighborhood $\mathcal{U}\subseteq Z$ of $p$ and a $K\times H$ equivariant, co-isotropic embedding:

\begin{displaymath}
j_Z:\mathcal{U} \hookrightarrow T^*K \times \C^h
\end{displaymath}
so that $\mu\vert_{\mathcal{U}}(z)= (\phi \circ j_Z)(z) + \mu(p)$ where
\begin{equation}\label{eq:mmtmap}
\phi(\lambda, \eta, z_1,\dots, z_h)= \eta + \sum_{i=1}^{h}\vert z_i \vert^2 \beta_i
\end{equation}
and $\{\beta_1,\dots,\beta_h\}$ is the set of weights for the action of $H$ on $(\widetilde{TM_H})^{\sigma}_p$.  The image of $\mathcal{U}$ in $T^*K \times \C^h$ is uniquely specified by the image of the moment map $\mu$:
\begin{displaymath}
j_Z(Z)=\phi^{-1}(\mu(\mathcal{U})-\mu(p)).
\end{displaymath}
Here, $\mu(\mathcal{U})-\mu(p)$ is the set of points in the image of $\mu$ shifted by the value $\mu(p)$.
\end{cor}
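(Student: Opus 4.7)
\textbf{Proof proposal for Corollary~\ref{cor:foldnorm}.} The plan is to pull the problem back to the purely symplectic setting via the equivariant fold normal form, apply the toric symplectic local model of Proposition~\ref{prop:torsympnorm}, and then read off the restriction to $Z$. Since $Z$ is co-orientable, Corollary~\ref{cor:eqfsnormal1} furnishes an invariant neighborhood $U$ of $Z$ in $M$, a $G$-invariant symplectic form $\omega$ on $U$ with Hamiltonian moment map $\mu_s:U\to\fg^*$, and an equivariant fold map $\psi:U\to U$ folding along $Z$ with $\psi^*\omega=\sigma$ and $\psi^*\mu_s=\mu$. Inspecting the model $(Z\times\R,\; p^*i^*\sigma+d(t^2p^*\alpha))\to(Z\times\R,\; p^*i^*\sigma+d(tp^*\alpha))$ produced in proposition~\ref{prop:eqfsnormal}, the map $\psi$ is the identity on the fold, so in particular $\mu|_Z=\mu_s|_Z$ and $\sigma|_Z=\omega|_Z$; this will be the engine that transports everything from the symplectic side back to $Z$.

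Because $\dim U=2\dim G$, the triple $(U,\omega,\mu_s)$ is itself a toric symplectic manifold near the orbit $G\cdot p$; the stabilizer of $p$ is still $H$ and, as discussed after Proposition~\ref{prop:normbundle}, the symplectic slice representation at $p$ computed with respect to $\omega$ agrees with the representation on the fiber of $(\widetilde{TM_H})^\sigma$ at $p$. Hence the weights are precisely $\{\beta_1,\dots,\beta_h\}$ from the statement. Applying Proposition~\ref{prop:torsympnorm} to $(U,\omega,\mu_s)$ yields an invariant neighborhood $V$ of $p$ and a $\tau$-equivariant symplectic open embedding $j:V\hookrightarrow T^*K\times\C^h$ satisfying $\mu_s|_V=\mu_s(p)+\tau^*\circ\phi\circ j$, where $\phi$ is the standard moment map~\eqref{eq:mmtmap}. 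Set $\mathcal{U}=V\cap Z$ and $j_Z=j|_{\mathcal{U}}$. Then $j_Z$ is automatically $K\times H$-equivariant, and the identity $\psi|_Z=\mathrm{id}$ together with $\mu_s(p)=\mu(p)$ gives $\mu|_{\mathcal{U}}=(\phi\circ j_Z)+\mu(p)$ as claimed. The co-isotropic property is automatic: $\mathcal{U}$ is a hypersurface in the symplectic manifold $T^*K\times\C^h$, so the restriction of the symplectic form to $j_Z(\mathcal{U})$ has a one-dimensional kernel at every point.

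The main technical point is the uniqueness claim $j_Z(\mathcal{U})=\phi^{-1}(\mu(\mathcal{U})-\mu(p))$, and this is where I expect most of the care to be required. The inclusion $\subseteq$ is immediate from the moment map formula of the previous paragraph. For $\supseteq$, I would argue that in the local model $T^*K\times\C^h$ the level sets of $\phi$ through points of $K\times\{0\}\cup j(V)$ are precisely single $K\times H$-orbits: because the weights $\{\beta_1,\dots,\beta_h\}$ are $\Z$-linearly independent (they form a $\Z$-basis of $\mathbb{Z}_H^*$ by Corollary~\ref{cor:stabtori}), the values $(\eta,\sum|z_i|^2\beta_i)$ determine $\eta$ and each $|z_i|$, hence the $K\times H$-orbit. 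Since $Z$ is $G$-invariant, any point of $j(V)$ in $\phi^{-1}(\mu(\mathcal{U})-\mu(p))$ must lie on the same $G$-orbit as some $j_Z(z)$ with $z\in\mathcal{U}$, and therefore lies in $j_Z(\mathcal{U})$ by invariance of $\mathcal{U}$ under $G$ (after possibly shrinking $V$ to a saturated neighborhood, which is possible by the slice theorem). This closes the argument; the only delicate bookkeeping is in arranging $V$ and $\mathcal{U}$ to be small enough that the moment map is injective on orbits within them, which again follows from the linear independence of the weights.
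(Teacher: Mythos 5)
Your proposal is correct and follows essentially the same route as the paper: pass to the equivariant symplectization of a neighborhood of $Z$ (proposition \ref{prop:eqfsnormal} / corollary \ref{cor:eqfsnormal1}), identify the slice weights with those of $(\widetilde{TM_H})^{\sigma}_p$, apply proposition \ref{prop:torsympnorm}, restrict to $Z$ using $\psi\vert_Z=\mathrm{id}$, and prove uniqueness by observing that the fibers of $\phi$ are single $K\times H$-orbits combined with the $G$-invariance of $Z$. The only difference is cosmetic: the paper spends more space verifying that the slice data at the fold is independent of the chosen symplectization, which you compress into a citation.
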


\begin{proof}
We begin by showing that $Z$ admits an equivariant, co-isotropic embedding into a toric, symplectic manifold and that the symplectic slice data at the fold is intrinsic to the fold: the symplectic slice obtained in the image of the embedding is independent of the embedding.  By proposition \ref{prop:eqfsnormal}, there exists an invariant neighborhood $U$ of $Z$ and a commutative diagram:

\begin{equation}\label{eq:model}
\xymatrix{
(U,\sigma) \ar[r]^{\psi} \ar[dr]^{\mu} & (U,\omega) \ar[d]^{\mu_s} \\
                                       &  \fg^*
}
\end{equation}
where $\omega$ is a symplectic structure for which the action of $G$ is Hamiltonian with moment map $\mu_s$, $\psi$ is an equivariant map with fold singularities that folds along $Z$ with $\psi\big\vert_Z=id_Z$, and $\psi^*\omega=\sigma$.  Note that $(U,\omega)$ is a toric symplectic manifold.  We want to study the symplectic slice representation $T_z(G\cdot z)^{\omega}/T_z(G\cdot z)$ at points $z\in Z$ and show that it is isomorphic to $\C^h$ and independent of our choice of symplectization.  By proposition \ref{prop:normbundle}, the fiber of the symplectic normal bundle at $z$, $(\widetilde{TM_H})^{\sigma}_z$, is contained in $T_zZ$.  Since the restriction of the fold map $\psi\big\vert_Z$ is the identity on $Z$ and $\psi^*\omega=\sigma$, we have

\begin{displaymath}
d\psi_z((\widetilde{TM_H})^{\sigma}_z) \subset (TM_H)^{\omega}_z.
\end{displaymath}
Since they have the same dimension and $d\psi_z\big\vert_{T_zZ}=id_{T_zZ}$ is injective, we have that

\begin{displaymath}
d\psi_z((\widetilde{TM_H})^{\sigma}_z) = (TM_H)^{\omega}_z
\end{displaymath}
By remark \ref{rem:slicerep}, the fiber $(TM_H)^{\omega}_z$ is isomorphic to the symplectic slice representation at $z$.  We therefore have an equivariant coisotropic embedding:

\begin{displaymath}
j\circ \psi\vert_Z:(Z,i_Z^*\sigma) \hookrightarrow (U,\omega)
\end{displaymath}
into a toric symplectic manifold $(U,\omega)$ and the symplectic slice representation $V$ of the stabilizer $H$ at the point $z\in U$ depends only on the representation $(\widetilde{TM_H})^{\sigma}_z$, hence the weights $\{\beta_1, \dots, \beta_h\}$ associated to $V$ are independent of our choice of model in equation \ref{eq:model}.

\vspace{2mm}
Now, apply proposition \ref{prop:torsympnorm} to a neighborhood $U_1$ of $z\in U$.  We obtain an equivariant, open symplectic embedding:

\begin{displaymath}
j:U_1 \hookrightarrow T^*K \times \C^h
\end{displaymath}
where $K\le G$ is a subtorus complementary to $H$ in $G$.  Precomposing with the restriction of the fold map $\psi\big\vert_Z$ and defining $\mathcal{U}:=U_1\cap Z$, we obtain the requisite embedding:
\begin{displaymath}
j_Z:\mathcal{U} \hookrightarrow T^*K \times \C^h
\end{displaymath}
Lastly, we show the hypersurface is uniquely specified by the image of $\mu$.  Let $\phi(\lambda, \eta, z_1,\dots, z_h) = \eta + \sum_{i=1}^h \vert z_i \vert^2 \beta_i$ be the moment map for the action of $K\times H$ on $T^*K \times \C^h$.  Let $t_1,\dots, t_h \in \R^+$ be nonnegative real numbers.  Then,

\begin{displaymath}
\phi^{-1}(\eta,t_1^2\beta_1, \dots, t_h^2\beta_h) = (K\times H) \cdot (\eta,t_1,\dots,t_h)
\end{displaymath}
hence the inverse images of points are orbits.  Thus, $(\phi \circ j_Z)(z) + \mu(p) = (\eta,t_1^2\beta_1, \dots, t_h^2\beta_h)$ if and only if $j_Z(z)$ is in the orbit $\phi^{-1}((\eta,t_1^2\beta_1, \dots, t_h^2\beta_h)-\mu(p))$.  Since the fold $Z$ is $G$-invariant (q.v. lemma \ref{lem:preserves}), the image $j_Z(Z)$ contains the entire orbit, hence
\begin{displaymath}
\phi^{-1}(\mu(\mathcal{U})-\mu(p)) \subseteq j_Z(z).
\end{displaymath}
The reverse inclusion follows from the fact that the moment maps satisfy $\phi\circ j_Z + \mu(p) = \mu$:
\begin{displaymath}
\phi \circ j_Z = \mu-\mu(p) \mbox{ }\rightarrow \mbox{ } j_Z(\mathcal{U}) \subseteq \phi^{-1}(\mu(\mathcal{U})-\mu(p))
\end{displaymath}
hence $\phi^{-1}(\mu(\mathcal{U})-\mu(p))=j_Z(\mathcal{U})$ and the hypersurface is uniquely determined by the moment map image.
\end{proof}
\subsubsection{Unimodular Maps with Folds}
We are almost ready to describe the invariants of a toric, folded-symplectic manifold $(M,\sigma, \mu:M \to \fg^*)$ for the purposes of classifying them up to isomorphism.  However, before we can describe these invariants, we'll need a few definitions so we can give them a name.  The following definitions and facts about unimodular local embeddings are taken, nearly verbatim, from \cite{KL}.  The results and discussions about unimodular maps with folds are generalizations of those related to unimodular local embeddings found in \cite{KL}.

\begin{definition}\label{def:unicone}
Let $G$ be a torus and let $\fg$ be its Lie algebra.  A \emph{unimodular cone} in $\fg^*$ is a subset $C$ of the form:

\begin{displaymath}
C= \{\eta \in \fg^* \vert \mbox{ } \langle \eta - \epsilon, v_i \rangle \ge 0 \text{ for all $1\le i \le k$}\}
\end{displaymath}
where $\epsilon$ is a point in $\fg^*$, $k$ is an integer greater than $0$, and $\{v_1,\dots,v_k\}$ is a $\mathbb{Z}$-basis of the integral lattice of a subtorus of $G$.  We write:

\begin{displaymath}
C=C_{(v_1,\dots,v_k),\epsilon}
\end{displaymath}
when we wish to make the dependence on the $v_i$ and $\epsilon$ explicit.  The \emph{closed facets} of $C$ are the sets:

\begin{displaymath}
F_i = \{\eta \in C \mbox{ } \vert \mbox{ } \langle \eta - \epsilon, v_i \rangle =0\}, \mbox{ } 1\le i\le k
\end{displaymath}
which are subsets of affine hyperplanes and we call $v_i$ the \emph{inward pointing primitive normal} to $F_i$.
\end{definition}

\begin{remark}
A unimodular cone $C$ in $\fg^*$ is a manifold with corners.  The $k$-boundary $\partial^k(C)$ is given by $k$-fold intersections of the closed facets $F_i$ of $C$.  That is, the boundary of $C$ is determined by the intersections of affine hyperplanes in $\fg^*$.
\end{remark}

\begin{lemma}\label{lem:face}
Let $G$ be a torus and $\fg$ its Lie algebra.  Let $C$ be a unimodular cone in $\fg^*$.  Then the primitive inward pointing normal $v_i$ to a facet $F_i$ of $C$ is uniquely determined by any open neighborhood of a point $x\in F_i$ in $C$.
\end{lemma}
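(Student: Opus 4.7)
The plan is to recover the vector $v_i\in\fg$ directly from the local geometry of $C$ near $x$, in three stages: first extract the affine hyperplane $H_i$ containing $F_i$, then fix the inward direction normal to $H_i$ using which side of $H_i$ the cone $C$ occupies locally, and finally use primitivity in $\mathbb{Z}_G$ to remove the remaining positive scalar ambiguity.

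First, given any open neighborhood $U$ of $x$ in $C$, consider the trace $F_i\cap U$. By hypothesis $x\in F_i$, so this set is nonempty and is a relatively open subset of $F_i$ containing $x$. Because $\{v_1,\dots,v_k\}$ is $\mathbb{Z}$-linearly independent (being a $\mathbb{Z}$-basis of the integral lattice of a subtorus of $G$), each inequality $\langle\eta-\epsilon,v_j\rangle\ge 0$ cuts out a genuine $n$-dimensional half-space, where $n=\dim\fg^*$, and the facet $F_i=\{\eta\in C:\langle\eta-\epsilon,v_i\rangle=0\}$ is an $(n-1)$-dimensional convex set lying in the affine hyperplane $H_i:=\{\eta\in\fg^*:\langle\eta-\epsilon,v_i\rangle=0\}$. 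Since $F_i$ is a closed convex set of full dimension $n-1$ inside $H_i$, any nonempty relatively open subset of $F_i$ is again $(n-1)$-dimensional, and hence its affine span is all of $H_i$. Applying this to $F_i\cap U$ shows that $H_i$ is intrinsically determined by the data $(U,C)$: it is the affine span of the relatively open subset $F_i\cap U$ of $F_i$, and the latter is identifiable from $U$ as the locus in $U$ where the manifold-with-corners boundary of $C$ has tangent hyperplane parallel to $H_i$.

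Second, the hyperplane $H_i\subset\fg^*$ determines $v_i\in\fg$ up to a nonzero scalar via the defining relation $\eta\in H_i \iff \langle\eta-\epsilon,v_i\rangle=0$. The sign is pinned down by observing that locally at $x$ the cone $C$ lies on one side of $H_i$: for any $\eta\in U$ sufficiently close to $x$ we must have $\langle\eta-\epsilon,v_i\rangle\ge 0$, so the inward direction (i.e.\ the orientation of $v_i$ as a linear functional on $\fg^*$) is read directly off $U$. This determines $v_i$ uniquely up to a positive real scalar. Finally, primitivity of $v_i$ in the integral lattice $\mathbb{Z}_G$ removes this last ambiguity, since every open ray in $\fg$ contains at most one primitive element of $\mathbb{Z}_G$; by assumption $v_i$ is such a primitive vector, so it is uniquely specified.

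The main point requiring care, and the one I expect to be the only real obstacle, is handling the case where $x$ lies on the boundary of $F_i$ inside $C$, i.e.\ on an intersection $F_i\cap F_j$ for some $j\ne i$. In that case one must verify that $F_i\cap U$ is still $(n-1)$-dimensional and spans $H_i$ affinely; this follows from convexity and full dimension of $F_i$, since a relatively open neighborhood of any point in an $(n-1)$-dimensional convex set is itself $(n-1)$-dimensional, but it is worth spelling out so that the argument is not confused by the stratified boundary structure of $C$ near corner points.
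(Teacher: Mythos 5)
Your proof is correct and follows essentially the same route as the paper's: recover the affine hyperplane containing the facet, note that it determines the normal direction up to scalar, fix the sign by requiring the normal to point into the cone, and fix the magnitude by primitivity in $\mathbb{Z}_G$. The only difference is that you spell out more carefully why an arbitrary open neighborhood of $x$ (even at a corner point of $C$) already determines the affine span of $F_i$, a point the paper's proof leaves implicit.
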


\begin{proof}
A facet of $C$ is the closure of an open set inside an affine hyperplane inside $\fg^*$, hence we may assume that $F=\epsilon + H$, where $\epsilon \in \fg^*$, $H$ is a codimension $1$ subspace, and $\epsilon + H$ is the set of elements of the form $\epsilon + \eta$, where $\eta\in H$.  The annihilator $H^o\subset \fg$ is a unique $1$-dimensional subspace of $\fg$, which is the Lie algebra $\frak{k}$ of a subtorus $K$ of $G$ by the definition of a unimodular cone.  Thus, we may choose a primitive normal $v_i$ in the integral lattice $\mathbb{Z}_K\subset \mathbb{Z}_G$ of $K$.  This normal is determined up to sign and the sign is determined by the convention that $v_i$ points into the cone.
\end{proof}

\begin{definition}\label{def:ule}
Let $W$ be a manifold with corners and let $G$ be a torus with Lie algebra $\fg$.  A smooth map $\bar{\mu}:W\to \fg^*$ is a \emph{unimodular local embedding} if for each $w\in W$ there exists an open neighbourhood $\mathcal{T}\subset W$ of $w$ and a unimodular cone $C\subset \fg^*$ such that $\bar{\mu}(\mathcal{T})\subset C$ and $\bar{\mu}\big\vert_{\mathcal{T}}:\mathcal{T} \hookrightarrow C$ is an open embedding.  That is, $\bar{\mu}\big\vert_{\mathcal{T}}:\mathcal{T} \to \bar{\mu}(\mathcal{T})$ is a diffeomorphism of manifolds with corners.
\end{definition}

\begin{lemma}\label{lem:attach}
Let $G$ be a torus with Lie algebra $\fg$ and let $\bar{\mu}:W\to \fg^*$ be a unimodular local embedding.  Then $\bar{\mu}$ attaches to every point $w\in W$ a subtorus $K_w$ of $G$, a basis $\{v_1^{w}, \dots, v_k^w\}$ of its integral lattice, and a faithful symplectic representation of $K_w$ on $\C^k$ with weights $\{(v_1^w)^*, \dots, (v_k^w)^*\}$.  This assignment is constant on the stratum of $W$ containing $w$.  Moreover, for each $w\in W$ there exists a neighborhood $U_w$ such that $\bar{\mu}$ restricts to an open embedding:

\begin{displaymath}
\bar{\mu}\big\vert_{U_w}:U_w \hookrightarrow C_{(v_1^w,\dots,v_k^w),\bar{\mu}(w)}
\end{displaymath}
\end{lemma}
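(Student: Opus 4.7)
The plan is to extract the required data from the unimodular cone appearing in the definition of a unimodular local embedding, and then argue that the data depends only on $w$ (not on the choice of cone) using Lemma \ref{lem:face}.

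First, fix $w \in W$. By the definition of a unimodular local embedding, there exist an open neighborhood $\mathcal{T}$ of $w$ and a unimodular cone $C = C_{(v_1,\ldots,v_n),\epsilon}$ in $\fg^*$ such that $\bar\mu|_{\mathcal{T}} : \mathcal{T} \to \bar\mu(\mathcal{T}) \subset C$ is a diffeomorphism of manifolds with corners onto its image. Since $\bar\mu|_{\mathcal{T}}$ is strata-preserving, $w$ lies in the codimension-$k$ stratum $\partial^k(W)$ if and only if $\bar\mu(w)$ lies in the codimension-$k$ stratum of $C$, i.e.\ in the intersection of exactly $k$ facets of $C$, say $F_{i_1},\ldots,F_{i_k}$ (reindex so these are $F_1,\ldots,F_k$). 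Set $v_j^w := v_{i_j}$ for $1 \le j \le k$; these are primitive inward-pointing normals. Since $\{v_1,\ldots,v_n\}$ is a $\mathbb{Z}$-basis of the integral lattice of a subtorus of $G$, any subset (in particular $\{v_1^w,\ldots,v_k^w\}$) is a $\mathbb{Z}$-basis of the integral lattice of a subtorus $K_w \le G$. Define the symplectic representation of $K_w$ on $\mathbb{C}^k$ to be the one with weights $\{(v_1^w)^*,\ldots,(v_k^w)^*\}$ (dual basis in $\mathbb{Z}_{K_w}^*$); this is faithful by Lemma \ref{lem:sympweights2} since the weights form a $\mathbb{Z}$-basis of the weight lattice.

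Next I would shrink $\mathcal{T}$ to obtain the promised neighborhood $U_w$. Among the facets of $C$, those not containing $\bar\mu(w)$ are closed sets in $C$ not meeting $\bar\mu(w)$; shrinking $\mathcal{T}$ so that $\bar\mu(\mathcal{T})$ avoids them, we may assume that the only facets of $C$ that $\bar\mu(\mathcal{T})$ meets are $F_1,\ldots,F_k$. Call the resulting neighborhood $U_w$. The inequalities $\langle \eta - \epsilon, v_j\rangle \ge 0$ for $j>k$ are then strict on $\bar\mu(U_w)$, so that $\bar\mu(U_w)$ embeds inside the smaller unimodular cone obtained by discarding those inequalities. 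Translating $\epsilon$ to $\bar\mu(w)$ along the affine subspace common to $F_1,\ldots,F_k$ gives the requisite open embedding $\bar\mu|_{U_w} : U_w \hookrightarrow C_{(v_1^w,\ldots,v_k^w),\bar\mu(w)}$.

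The main obstacle is well-definedness of $K_w$ and the weights: a priori they could depend on the chart $(\mathcal{T},C)$ chosen above. To handle this, suppose $(\mathcal{T}',C')$ is another such pair with normals $\{v_j'\}$, and let $F_{i_1}',\ldots,F_{i_k}'$ be the facets of $C'$ containing $\bar\mu(w)$, with primitive normals $v_{j}^{w,\prime}$. The two images $\bar\mu(\mathcal{T})$ and $\bar\mu(\mathcal{T}')$ contain a common open neighborhood of $\bar\mu(w)$ in $\bar\mu(\mathcal{T}\cap\mathcal{T}')$, and this neighborhood, intersected with the codimension-one stratum of $W$ containing $w$ in each of its $k$ branches, is an open neighborhood of $\bar\mu(w)$ inside a single facet $F_j$ (respectively $F_{i_j}'$) of each cone. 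Lemma \ref{lem:face} then forces $v_j^w = v_j^{w,\prime}$ (after matching facets through the common neighborhood), and so the subtorus $K_w$, the basis, and the symplectic representation are independent of the chart.

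Finally, constancy on strata follows by a covering argument: within any single chart $(\mathcal{T},C)$, every point $w'$ in the same stratum of $W$ as $w$ and lying in $\mathcal{T}$ has $\bar\mu(w')$ in the same open stratum of $C$ as $\bar\mu(w)$ (same collection of facets), so the attached data at $w'$ equals that at $w$. Hence the assignment $w \mapsto (K_w, \{v_j^w\}, \mathbb{C}^k)$ is locally constant along strata of $W$, completing the proof.
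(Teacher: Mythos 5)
Your proof is correct and follows essentially the same route as the paper's: read off the normals of the facets of the unimodular cone meeting $\bar\mu(w)$, invoke Lemma \ref{lem:face} for independence of the chart, and observe local constancy along strata. If anything, you are slightly more complete than the paper, since you explicitly verify the final "Moreover" clause about the embedding into $C_{(v_1^w,\dots,v_k^w),\bar\mu(w)}$, which the paper's proof leaves implicit.
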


\begin{proof}
By definition of a unimodular local embedding, for each point $w\in W$ there exists a neighborhood $\mathcal{T}\subseteq W$ of $w$ and a unimodular cone $C$ so that $\bar{\mu}(\mathcal{T})\subseteq C$ and $\bar{\mu}\big\vert_{\mathcal{T}}:\mathcal{T} \to \bar{\mu}(\mathcal{T})$ is a diffeomorphism of manifolds with corners.  By shrinking $\mathcal{T}$, we may assume that $w$ lies in the intersection of the closures of exactly $k=\operatorname{depth}_W(w)$ distinct components of $\partial^1(W)$.  Call these components $B_1,\dots, B_k$.

Since $\mathcal{T}$ is open and $\bar{\mu}\big\vert_{\mathcal{T}}$ is a diffeomorphism, $\bar{\mu}(\mathcal{T}\cap B_i)$ is an open subset of a face $F_i$ of $C$.  Furthermore, the sets $\bar{\mu}(\mathcal{T}\cap B_i)$ are distinct for distinct values of $i$ since the components $B_1,\dots, B_k$ are all distinct and $\bar{\mu}$ is a diffeomorphism, hence the faces $F_i$ are all distinct.

By lemma \ref{lem:face}, $\bar{\mu}(\mathcal{T}\cap B_i)$ determines the inward pointing normal $v_i$ of $F_i$ uniquely, hence we obtain a linearly independent set $\{v_1^w,\dots, v_k^w\}$ from the normals of $C$.  By the definition of a unimodular cone, this set is a $\mathbb{Z}$-basis for the integral lattice of a subtorus $K_w\subset G$, where

\begin{displaymath}
K_w := \exp(\span\{v_1^w,\dots, v_k^w\})
\end{displaymath}
This basis does not depend on the cone $C$.  Indeed, the normals only depend on the images of the strata $\bar{\mu}(\mathcal{T}\cap B_i)$, which are fixed by $\bar{\mu}$, hence the assignment is independent of our choices.  The faithful, symplectic representation of $K_w$ on $\C^k$ is given by the representation with weights $\{(v_1^w)^*,\dots, (v_k^w)^*\}$.  Explicitly, $K_w$ acts via

\begin{displaymath}
\exp(X)\cdot (z_1, \dots, z_k) = (e^{2\pi i (v_1^w)^*(X)}z_1,\dots,e^{2\pi i (v_k^w)^*(X)}z_k)
\end{displaymath}
Now, if $p$ is another point in $\mathcal{T}$ in the same stratum as $w$, then $p\in \mathcal{T} \bigcap \cap_{i=1}^k(\bar{B}_i)$.  That is, the stratum of $p$ is determined by the same codimension $1$ strata $B_i$ that determine the stratum containing $w$.  Hence, the subtorus and lattice basis assigned to $p$ is determined by the images of the codimension $1$ strata $\bar{\mu}(\mathcal{T}\cap B_i)$.  We therefore obtain the same lattice basis and same subtorus constructed for $w$ since we only used the normals to the affine hyperplanes containing $\bar{\mu}(\mathcal{T}\cap B_i)$.  Thus, the assignments are locally constant along strata, hence they are constant on each connected component of the stratification:

\begin{displaymath}
W = \bigsqcup_{j=1}^{\dim(W)} \partial^j(W)
\end{displaymath}
\end{proof}

\begin{remark}
Suppose we choose another point $w'\in U_w$, where $U_w$ is the neighborhood of $w\in W$ constructed in lemma \ref{lem:attach}.  The subtorus $K_{w'}$, lattice basis $\{v_1^{w'}, \dots, v_{k'}^{w'}\}$, and representation are determined by the boundary structure of $U_w$ along with the set $\{v_1^w, \dots, v_k^w\}$.  Explicitly, we have:

\begin{displaymath}
\{v_1^{w'},\dots, v_{k'}^{w'}\} = \{v_i^w \mbox{ } \vert \mbox{ } \langle \bar{\mu}(w')-\bar{\mu}(w), v_i^w \rangle =0\}
\end{displaymath}
and
\begin{displaymath}
K_{w'}=\exp(\span_{\R}(\{v_i^{w} \mbox{ } \vert \mbox{ } \langle \bar{\mu}(w')-\bar{\mu}(w), v_i^w \rangle =0\}))
\end{displaymath}
The representation is determined by taking the duals to the lattice vectors $v_i^{w'}$.
\end{remark}

We now generalize the unimodular local embeddings of \cite{KL} by introducing fold singularities.  The reason is that one of the invariants of a toric, folded-symplectic manifold will be what is known as its orbital moment map, which will be a unimodular map with folds.

\begin{definition}\label{def:umf}
Let $G$ be a torus with Lie algebra $\fg$ and let $W$ be a manifold with corners.  A smooth map $\psi:W \to \fg^*$ is a \emph{unimodular map with folds} if:
\begin{enumerate}
\item It is a map with fold singularities, hence the folding hypersurface $\hat{Z}$ is a codimension $1$ submanifold with corners transverse to the strata of $W$.  We require that $\hat{Z}$ be co-orientable.
\item The fibers of the bundle $\ker(d\psi) \to \hat{Z}$ are tangent to the strata of $W$.  That is, if $z\in \partial^k\cap \hat{Z}=\partial^k(\hat{Z})$, then $\ker(d\psi_z)\subset T_z\partial^k(W)$.
\item The map away from the fold, $\psi\big\vert_{W\setminus \hat{Z}}$, is a unimodular local embedding.
\end{enumerate}
\end{definition}

Since $\psi$ only has fold singularities, we have an analog of lemma \ref{lem:attach} for unimodular maps with folds, which we will use to produce local factorizations of $\psi$ into unimodular local embeddings composed with fold maps.  This will be useful for when we perform folded-symplectic reduction on manifolds with corners.

\begin{lemma}\label{lem:attach1}
Let $G$ be a torus with Lie algebra $\fg$, let $W$ be a manifold with corners, and let $\psi:W \to \fg^*$ be a unimodular map with folds with folding hypersurface $\hat{Z}\subset W$.  $\psi$ attaches to every point $w\in W$ a subtorus $K_w$ of $G$, a $\mathbb{Z}$-basis $\{v_1^w,\dots,v_k^w\}$ of its integral lattice, and a faithful symplectic representation of $K_w$ on $\C^k$ with weights defined by the duals of each $v_1^w$.
\end{lemma}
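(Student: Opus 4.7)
The plan is to reduce the statement to Lemma \ref{lem:attach} by splitting into two cases according to whether $w$ lies in the folding hypersurface $\hat Z$ or not. Away from $\hat Z$ there is nothing new to prove, so the heart of the argument is the fold case, which I handle by factoring $\psi$ locally through a fold map.

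\textbf{Case 1: $w \notin \hat Z$.} Since the folding hypersurface is closed in $W$, there is an open neighborhood $V$ of $w$ disjoint from $\hat Z$. On $V$ the map $\psi$ agrees with $\psi|_{W \setminus \hat Z}$, which by definition of a unimodular map with folds is a unimodular local embedding. Applying Lemma \ref{lem:attach} to $\psi|_{W \setminus \hat Z}$ at $w$ produces the desired subtorus $K_w$, the $\mathbb{Z}$-basis $\{v_1^w,\dots,v_k^w\}$, and the faithful symplectic representation of $K_w$ on $\mathbb{C}^k$ with weights $\{(v_i^w)^*\}$.

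\textbf{Case 2: $w \in \hat Z$.} Because $\psi$ has only fold singularities, is strata-preserving (by the definition of a unimodular map with folds, which forces $\ker(d\psi)$ to be tangent to strata), I can invoke Corollary \ref{cor:folds5}. This provides a neighborhood $U$ of $w$, a strata-preserving fold map $\tilde\psi: U \to U$, and a strata-preserving open embedding $\phi: U \hookrightarrow \fg^*$ such that $\psi|_U = \phi \circ \tilde\psi$. The plan is then to attach to $w$ the data that Lemma \ref{lem:attach} assigns to $\tilde\psi(w)$ viewed through $\phi$. For this I must first show that $\phi$ is itself a unimodular local embedding. Choose coordinates around $w$ and $\tilde\psi(w)$ in which $\tilde\psi(\vec x, t) = (\vec x, t^2)$, with $\hat Z$ corresponding to $\{t=0\}$. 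Then $\tilde\psi$ restricted to $\{t > 0\}$ is an open embedding onto $\{t > 0\}$, and on this set $\psi = \phi \circ \tilde\psi$ is a unimodular local embedding; hence so is $\phi|_{\{t>0\}}$. Thus $\phi$ takes a neighborhood of $\tilde\psi(w)$ restricted to $\{t>0\}$ diffeomorphically into a unimodular cone $C \subset \fg^*$. Since $\phi$ is a strata-preserving open embedding of manifolds with corners and the image $\phi(\{t\ge 0\})$ is the closure of $\phi(\{t>0\})$ (which sits in $C$), continuity forces $\phi(U) \subset C$ and $\phi|_U$ to be an open embedding of manifolds with corners into $C$. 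Therefore $\phi$ is a unimodular local embedding at $\tilde\psi(w)$.

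Having verified this, I apply Lemma \ref{lem:attach} to $\phi$ at the point $\tilde\psi(w)$, obtaining $K_w, \{v_1^w, \dots, v_k^w\}$, and the faithful symplectic representation on $\mathbb{C}^k$. To finish I need to show this data is independent of the factorization $\psi|_U = \phi \circ \tilde\psi$. The assignment in Lemma \ref{lem:attach} depends only on which codimension-one boundary strata of $U$ pass through $\tilde\psi(w)$ and on the primitive inward-pointing normals to their $\phi$-images inside $\fg^*$. Since $\phi \circ \tilde\psi = \psi$ and both $\tilde\psi$ and $\phi$ are strata-preserving, the codimension-one strata in question match those through $w$ in $W$, and their images in $\fg^*$ are determined by $\psi$ alone. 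Thus the normals, and hence $K_w$ and the lattice basis, are intrinsic to $\psi$ at $w$, which also shows that the two cases agree where they overlap (namely, for points close to but not on $\hat Z$).

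The main obstacle is verifying that $\phi$ is a unimodular local embedding at $\tilde\psi(w)$, since a priori we only know $\psi$ is a unimodular local embedding on the open dense complement of $\hat Z$. The trick of restricting the factorization to one side of the fold (where $\tilde\psi$ is a diffeomorphism), and then extending the cone containment across the fold by a closure/continuity argument, is the key step; once this is established, Lemma \ref{lem:attach} and the intrinsic description of the normal fan do the rest.
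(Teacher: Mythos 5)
Your overall strategy --- factoring $\psi$ locally as $\phi\circ\tilde\psi$ with $\tilde\psi$ a fold map and reading the data off the embedding factor $\phi$ --- is genuinely different from the paper's proof, which never invokes Corollary \ref{cor:folds5}. The paper argues directly: for a codimension-one boundary component $B$ meeting $\hat Z$ at $z$, it chooses a curve $\gamma$ inside $B$ with $\gamma(0)=z$, $\gamma'(0)\in\ker(d\psi_z)$, crossing from $U^-$ to $U^+$, and shows that $\frac{d}{dt}\big|_{0}\,d\psi(\gamma'(t))$ is transverse to $\operatorname{Im}(d\psi_z)$ (this is exactly the fold condition) and lies in both affine hyperplanes $H^{\pm}$ containing $\psi(U^{\pm})$; together with $d\psi_z(T_z(\hat Z\cap B))\subset H^+\cap H^-$ this forces $H^+=H^-$, so the normals from Lemma \ref{lem:attach} agree on the two sides and the assignment extends over $\hat Z$. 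Note that the paper's Corollary \ref{cor:attach} --- essentially the factorization statement you start from --- is proved \emph{after} Lemma \ref{lem:attach1} and uses it to see that the embedding factor is unimodular; your route reverses that order and therefore must establish the unimodularity of $\phi$ independently.

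That is precisely where your argument has a gap. The identity $\psi|_U=\phi\circ\tilde\psi$ only determines $\phi$ on the image of $\tilde\psi$, which in your coordinates is $\{t\ge 0\}$; on $\{t<0\}$, $\phi$ is whatever extension Corollary \ref{cor:folds5} happens to produce, and nothing forces $\phi(\{t<0\})$ into the cone $C$. Your closure argument correctly yields $\phi(\{t\ge 0\})\subset\overline{C}=C$, but the closure of $\{t>0\}$ in $U$ is $\{t\ge 0\}$, not $U$, so ``continuity forces $\phi(U)\subset C$'' does not follow, and Lemma \ref{lem:attach} --- which requires the unimodular-local-embedding property on a \emph{full} open neighborhood of $\tilde\psi(w)$ --- cannot be applied to $\phi$ there. (There is also a hypothesis mismatch in invoking Corollary \ref{cor:folds5}: $\psi:W\to\fg^*$ is not strata-preserving when $W$ has corners, since the target has none; this is likely why the paper proves Lemma \ref{lem:attach1} without it.) The gap is repairable within your framework: the factorization, once granted, gives $\psi(\vec{x},t)=\phi(\vec{x},t^2)=\psi(\vec{x},-t)$ in strata-respecting fold coordinates, so $\psi(B\cap U^+)=\psi(B\cap U^-)$ for each codimension-one component $B$ through $w$; hence the affine hyperplanes and primitive normals assigned by Lemma \ref{lem:attach} on the two sides coincide and the assignment extends to $w$, with no need to discuss $\phi$ on $\{t<0\}$ at all.
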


\begin{proof}
Since $\psi\big\vert_{W\setminus \hat{Z}}$ is a unimodular local embedding, we know that such an assignment exists on $W\setminus \hat{Z}$ by lemma \ref{lem:attach}, hence lemma \ref{lem:attach1} will be proven if we can show this assignment extends across the fold $\hat{Z}$.  To this end, we will show that the connected components of the codimension $1$ stratum $\partial^1(W)$ are mapped to affine hyperplanes under $\psi$.  Then, at a point $z\in \hat{Z}$ which is in the intersection of the closures of $k$ distinct components $B_i$ of $\partial^1(W)$, i.e. $z\in \cap_{i=1}^k\bar{B}_i$, we'll have that the normals corresponding to the affine hypersurfaces $\psi(B_i)$ comprise a $\mathbb{Z}$-basis for a the integral lattice of a subtorus $K_w$ of $G$ since they form such a basis away from $\hat{Z}$, which will prove the theorem.

We let $B$ be a connected component of the codimension $1$ boundary $\partial^1(W)$ and consider $z\in \hat{Z}\cap B$.  Since $\hat{Z}\pitchfork_s B$, we may choose a neighborhood $U$ of $z$ in $B$ such that $U\setminus \hat{Z}$ has two connected components, $U^+$ and $U^-$.  Choose a curve $\gamma(t)$ in $U$ satisfying:

\begin{enumerate}
\item $\gamma(0)=z$, i.e. $\gamma$ passes through $z$,
\item $\gamma'(0)\in \ker(d\psi_z)$ with $\gamma'(0)\ne 0$, and
\item $\gamma(t) \in U^+$ for $t>0$ and $\gamma(t) \in U^-$ for $t<0$.
\end{enumerate}

Since $\psi\big\vert_{W\setminus \hat{Z}}$ is a unimodular local embedding, lemma \ref{lem:attach} reveals that $\psi(U^+)$ is an open subset of an affine hyperplane $\psi(w) + H^+$, where $H^+\subset \fg^*$ is a codimension $1$ subspace, and $\psi(U^-)$ is an open subset of an affine subspace $\psi(w) + H^-$, where $H^-\subset \fg^*$ is a codimension $1$ subspace.  We let $v^+$ and $v^-$ denote the primitive normal vectors to these hypersurfaces.  Our goal is to show they are the same, hence $H^+ = H^-$ and $\psi(w) + H^+ =\psi(w)+H^-$.  Now, if we identify the tangent space of a vector space at a point with the vector space itself, we have the following series of calculations:

\begin{itemize}
\item $d\psi_z(T_zZ)$ is contained in both $H^-$ and $H^+$ as a codimension $1$ subspace.  Indeed, $d\psi_{\gamma(t)}(T_zZ)$ is a smoothly varying subspace of $H^-$ for $t<0$ and is a smoothly varying subspace of $H^+$, hence in the limit as $t$ goes to $0$ it lies in both the subspace $H^-$ and $H^+$.  If we can show that $H^-$ and $H^+$ both contain a common nonzero vector transverse to $d\psi_z(T_zZ)$, then we will have that $H^+=H^-$.
\item We have $d\psi(\gamma'(t)) \in H^+$ for $t>0$ and $d\psi(\gamma'(t)) \in H^-$ for $t<0$.  Hence $d\psi(\gamma'(t))$ is a path in $H^+$ for $t>0$ and a path in $H^-$ for $t<0$, meaning its derivative satisfies:
\item $\frac{d}{dt}(d\psi(\gamma'(t)))$ is in $H^+$ for $t>0$ and $H^-$ for $t<0$.
\item Now, $\frac{d}{dt}\big\vert_{t=0}(d\psi(\gamma'(t)))$ is transverse to the image of $d\psi_z$ since $\gamma'(0)\in \ker(d\psi_z)$ and $\psi$ has fold singularities, hence $H^+$ and $H^-$ both contain $\frac{d}{dt}\big\vert_{t=0}(d\psi(\gamma'(t)))$, which is transverse to $d\psi_z(T_zZ)$ in $H^+$ and $d\psi_z(T_zZ)$ in $H^-$.
\item Thus, $H^+=H^-$ and there is a unique affine hypersurface $\psi(w) + H$ into which $B$ is mapped and there is a unique primitive element $v \in \mathbb{Z}_G$ corresponding to $B$.
\end{itemize}

As we have discussed, if $z$ is contained in the intersection of the closure of exactly $k$ codimension $1$ strata, $z\in \cap_{i=1}^k \bar{B}_i$ and $\operatorname{depth}_W(z)=k$, then there are $k$ primitive normal vectors $\{v_1^z, \dots ,v_k^z\}$ obtained from reading the normals to the images of the $B_i$ away from $\hat{Z}$.  Away from $\hat{Z}$, these normals form a $\mathbb{Z}$-basis for the integral lattice of a subtorus of $G$ since $\psi$ is a unimodular local embedding away from $\hat{Z}$.  As we have shown, these normals do not change as we cross the fold, $\hat{Z}$, hence they form a $\mathbb{Z}$-basis for the integral lattice of a subtorus $K_z$ of $G$.
\end{proof}

\begin{cor}\label{cor:attach}
Let $G$ be a torus with Lie algebra $\fg$, let $W$ be a manifold with corners, and let $\psi:W\to \fg^*$ be a unimodular map with folds, where the folding hypersurface is denoted $\hat{Z}$.  Then for each $z\in \hat{Z}$, there exists a neighborhood $U_z$, a map with fold singularities $\gamma:U_z \to U_z$ with folding hypersurface $\hat{Z}\cap U_z$, and a unimodular local embedding $\bar{\mu}:U_Z \to \fg^*$ so that
\begin{displaymath}
\psi\big\vert_{U_z} = \bar{\mu}\circ \gamma.
\end{displaymath}
Thus, the image of a unimodular map with folds is locally described by a unimodular cone folded across a hypersurface transverse to its faces.
\end{cor}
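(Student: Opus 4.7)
The plan is to derive the factorization directly from Proposition \ref{prop:folds5}, the local normal form for strata-preserving, equidimensional fold maps whose kernel is tangent to strata. The only real work is to recast $\psi$, a priori a map into the open target $\fg^*$, as a strata-preserving map into a unimodular cone $C\subset \fg^*$; once this is done, Proposition \ref{prop:folds5} immediately yields $\psi\vert_{U_z}=\phi\circ\gamma$ with $\gamma$ a strata-preserving fold map on $U_z$ and $\phi:U_z\hookrightarrow C$ a strata-preserving open embedding, and the definition of a unimodular cone lets us reinterpret $\phi$ as a unimodular local embedding $\bar\mu$.

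First, I construct the target cone. Pick $z\in\hat Z$ and let $\{v_1^z,\dots,v_k^z\}$ be the $\mathbb{Z}$-basis attached to $z$ by Lemma \ref{lem:attach1}; set $C:=C_{(v_1^z,\dots,v_k^z),\psi(z)}$. Shrinking, I may assume that $z$ lies in the closures of exactly $k=\operatorname{depth}_W(z)$ distinct codimension-one components $B_1,\dots,B_k$ of $\partial^1(W)$, each meeting $\hat Z$ transversely. On a sufficiently small neighborhood $U_z$ of $z$, two inputs combine: away from $\hat Z$, Lemma \ref{lem:attach} forces $\psi(B_i\cap U_z\setminus\hat Z)$ into the affine hyperplane with primitive inward normal $v_i^z$; and the argument inside the proof of Lemma \ref{lem:attach1}, via a curve $\gamma(t)$ through $z$ tangent to $\ker(d\psi_z)$, shows that this hyperplane is the same on the two sides of $\hat Z$. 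Consequently $\psi(U_z)\subset C$, and codimension-$j$ strata of $U_z$ (intersections of $j$ of the $B_i$) are mapped into codimension-$j$ strata of $C$ (intersections of $j$ facets $F_i$). Thus, viewing $\psi\vert_{U_z}$ as a map $U_z\to C$ between manifolds with corners of the common dimension $\dim(W)=\dim(\fg^*)$, it is strata-preserving.

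Now I apply Proposition \ref{prop:folds5} to $\psi\vert_{U_z}:U_z\to C$. The hypotheses hold: it is equidimensional, it has only fold singularities (along $\hat Z\cap U_z$) by Definition \ref{def:umf}, it is strata-preserving by the previous step, and $\ker(d\psi)$ is tangent to strata by condition (2) in Definition \ref{def:umf}. Shrinking $U_z$ once more, the proposition produces a strata-preserving fold map $\gamma:U_z\to U_z$ and a strata-preserving open embedding $\phi:U_z\hookrightarrow C$ with $\psi\vert_{U_z}=\phi\circ\gamma$. Since $\phi$ is a diffeomorphism onto its image, the fold locus of $\phi\circ\gamma$ equals the fold locus of $\gamma$, so $\gamma$ folds exactly along $\hat Z\cap U_z$. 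Finally, because $\phi(U_z)$ is open in the unimodular cone $C\subset\fg^*$ and $\phi$ is a diffeomorphism of manifolds with corners onto this open set, $\bar\mu:=\phi:U_z\to\fg^*$ satisfies Definition \ref{def:ule} at every point of $U_z$ (with the relevant cone being $C$ itself for points over the vertex stratum, and its appropriate subcones obtained from Lemma \ref{lem:attach} for points in lower-depth strata). This gives the desired factorization $\psi\vert_{U_z}=\bar\mu\circ\gamma$.

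The one delicate step is verifying that $\psi(U_z)\subset C$ strata-preservingly; once that is in hand the corollary is an application of Proposition \ref{prop:folds5}. The subtle point there is that a priori $\psi$ has no strata-preserving meaning as a map to $\fg^*$ (which has no boundary), so the statement only acquires content once the cone $C$ is produced as a target, and the persistence of the primitive normals across $\hat Z$ (the content of the curve argument in Lemma \ref{lem:attach1}) is exactly what makes this legitimate.
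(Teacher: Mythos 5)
Your proof is correct and follows essentially the same route as the paper's: both reduce the corollary to the local factorization of Proposition \ref{prop:folds5}/Corollary \ref{cor:folds5} combined with the persistence of the primitive normals across $\hat{Z}$ established in Lemma \ref{lem:attach1}. The only difference is one of ordering (and added care): you construct the target cone $C$ first so that the strata-preservation hypothesis of Proposition \ref{prop:folds5} is literally meaningful, whereas the paper applies Corollary \ref{cor:folds5} with target $\fg^*$ directly and only afterwards identifies $\bar{\mu}$ as a unimodular local embedding.
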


\begin{proof}
Let $z\in \hat{Z}$ be a point in the fold.  Corollary \ref{cor:folds5} implies that there exists a neighborhood $U_z$ and a factorization $\psi\big\vert_{U_z} =\bar{\mu}\circ \gamma$, where $\gamma:U_z \to U_z$ is a fold map which folds across $U_z \cap \hat{Z}$ and $\bar{\mu}$ is an open embedding.  By lemma \ref{lem:attach1}, the codimension $1$ strata $B\in \partial^1(W)$ are mapped into affine hyperplanes whose normals are primitive elements of $\mathbb{Z}_G$.  Furthermore, at the intersections of their closures, $\cap_{i=1}^k\bar{B}_i$, these normals fit together to give a basis for the integral lattice of a subtorus of $G$.  We may shrink $U_z$, if necessary, to assume that it is a manifold with faces.  That is, it is diffeomorphic to an open subset of $\R^m \times (\R^+)^n$ for some $m,n\in \mathbb{N}$.  Thus, $\bar{\mu}$ maps the faces of $U_z$ to affine hyperplanes whose normals form a basis for the integral lattice of a subtorus, hence $\bar{\mu}$ is a unimodular local embedding.
\end{proof}

\subsection{The Orbit Space of a Toric, Folded-Symplectic Manifold and Invariants}
We are now ready to produce the two primary invariants of a toric, folded-symplectic manifold $(M,\sigma,\mu:M\to \fg^*)$.  The first is the orbit map $\pi:M \to M/G$, where $M/G$ has the structure of a manifold with corners.

\begin{remark}\label{rem:invntfxns}
It will be useful to know about some of the invariant structures on the standard representation $\C$ of the circle $S^1$.  If a function $f:\C \to \R$ is invariant under the action of $S^1$, then its values are determined by its restriction to the real line $f\big\vert_{\R}$.  The restriction is invariant under reflections across the origin since this is the action of $\{1,-1\}\subset S^1$ restricted to the real line.  Thus, every $S^1$-invariant smooth function $f(z)$ on $\mathbb{C}$ may be written as $f(z) = f\big\vert_{\R}(\vert z \vert) = g(\vert z \vert ^2)$ for some smooth function $g: \R^+ \to \R$.  Conversely, any function that can be written as $g(\vert z \vert ^2)$ is $S^1$-invariant, hence the map

\begin{displaymath}
\xymatrixrowsep{.5pc}\xymatrix{
\C^{\infty}(R^+) \ar[r] & C^{\infty}(\C)^{S^1} \\
g \ar[r] & g(\vert z \vert^2)
}
\end{displaymath}
is surjective.  It is also injective and is therefore an isomorphism.  This is the content of theorem $1$ of \cite{Sch} in the special case of $S^1$ acting on $\C$.  Consequently, $\R^+$ may be identified with the orbit space $\C/S^1$ and the quotient map $q:\C \to \R^+$ is given by $q(z) = \vert z \vert ^2$.

Of course, all of this applies to the product version of this problem.  If $\mathbb{T}^h$ acts on $\C^h$ in the standard way via rotations, then the space of invariant smooth functions $C^{\infty}(\C^h)^{\mathbb{T}^h}$ is isomorphic to $C^{\infty}((R^+)^h)$, where the map sends a function on $(\R^+)^h$ $g(t_1,\dots, t_h)$ to $g(\vert z_1 \vert ^2, \dots, \vert z_h \vert ^2)$.
\end{remark}

\begin{prop}\label{prop:corners}
Let $(M,\sigma,\mu:M\to \fg^*)$ be a toric, folded-symplectic manifold with co-orientable folding hypersurface $Z$.  Then $M/G$ is naturally a manifold with corners and $Z/G$ is a co-orientable submanifold with corners transverse to the strata of $M/G$.  Furthermore, $\pi:M \to M/G$ is a quotient map and $\mu:M \to \fg^*$ descends to a smooth map $\psi:M/G \to \fg^*$.
\end{prop}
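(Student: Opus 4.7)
The plan is to reduce the entire statement to the toric \emph{symplectic} case by exploiting the equivariant fold factorization of corollary \ref{cor:eqfsnormal1} and then invoking the toric symplectic local normal form of proposition \ref{prop:torsympnorm} to produce explicit manifold-with-corners charts on $M/G$. Descent of $\mu$ to a smooth $\psi$ is essentially automatic from $G$-invariance (since $G$ is abelian, equivariance $\mu(g\cdot p) = \Ad^*(g)\mu(p)$ is just invariance), while co-orientability of $Z/G$ will follow by averaging the co-orientation of $Z$ over the compact group $G$.

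The first step handles $p \in M\setminus Z$. By lemma \ref{lem:stabtori} the stabilizer $H := G_p$ is a subtorus of $G$, and by corollary \ref{cor:stabtori} the weights of the symplectic slice representation form a $\mathbb{Z}$-basis of $\mathbb{Z}_H^*$. Proposition \ref{prop:torsympnorm} then produces a $G$-invariant neighborhood $U$ and an equivariant open symplectic embedding $j : U \hookrightarrow T^*K \times \C^h$ intertwining $\mu$ with $\mu(p) + \tau^*\circ \phi$. Using remark \ref{rem:invntfxns}, the ring of $(K\times H)$-invariants on the standard model is generated by the components of $\phi$, so $(T^*K \times \C^h)/G$ is canonically identified with the manifold with corners $\frak{k}^* \times (\R^+)^h$, and $j$ descends to an open embedding $\bar j : U/G \hookrightarrow \frak{k}^* \times (\R^+)^h$. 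This realizes $U/G$ as a manifold with corners whose boundary strata correspond to the orbit-type decomposition, and makes $\psi\big\vert_{U/G} = \mu(p) + \tau^*\circ \phi \circ \bar j$ smooth in this chart.

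The second step handles $p \in Z$. Corollary \ref{cor:eqfsnormal1} provides an invariant neighborhood $U$ of $p$, a symplectic form $\omega$ on $U$ with Hamiltonian moment map $\mu_s$, and an equivariant fold map $\psi_0 : U \to U$ satisfying $\psi_0^*\omega = \sigma$ and $\psi_0^*\mu_s = \mu$. Since the underlying smooth $G$-manifold of $(U,\omega)$ is identical to that of $(U,\sigma)$ and the $G$-action is still effective with half-dimensional torus, $(U,\omega)$ is toric symplectic, and the previous step applies verbatim to it. The resulting manifold-with-corners structure on $U/G$ depends only on the smooth $G$-manifold $U$, hence is simultaneously the structure for the folded case. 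Smoothness of the descended $\psi$ on $U/G$ then follows from the factorization $\psi = \psi_s \circ \bar\psi_0$, where $\psi_s$ is smooth by the symplectic argument and $\bar\psi_0$ is the smooth descent of the $G$-equivariant map $\psi_0$.

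For the remaining assertions about $Z/G$, I would use the local model of proposition \ref{prop:eqfsnormal} to identify a neighborhood of $Z$ with an invariant neighborhood of the zero section of $Z \times \R$ on which $G$ acts only on the first factor; then $Z/G$ corresponds locally to $(Z/G) \times \{0\}$ inside $(Z/G) \times \R$, which is manifestly a codimension-one embedded submanifold with corners. Transversality of $Z/G$ to the strata of $M/G$ passes down from corollary \ref{cor:eqfsnormal2}, and co-orientability follows by averaging a co-orientation of $Z$ over $G$ (using that $G$ preserves $Z$ by lemma \ref{lem:preserves}) to produce a $G$-invariant defining function that descends to a defining function for $Z/G$. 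The one genuinely delicate point I foresee is verifying that the manifold-with-corners structures coming from different local normal-form charts glue consistently on overlaps; this should reduce, via the uniqueness clause in the differential slice theorem \ref{thm:slice} together with the intrinsic description of the symplectic normal bundle across the fold in proposition \ref{prop:normbundle}, to the observation that the smooth orbit space depends only on the smooth $G$-action on $M$, not on the auxiliary choices made in the normal-form constructions.
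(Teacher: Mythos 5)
Your proof is correct, and it rests on the same two pillars as the paper's: local standardness of the $G$-action (the differential slice splits as a trivial representation plus a faithful symplectic representation of the stabilizer whose weights form a lattice basis) and the Schwarz-type identification of $\C^h/\mathbb{T}^h$ with $(\R^+)^h$ via $z\mapsto(\vert z_1\vert^2,\dots,\vert z_h\vert^2)$ from remark \ref{rem:invntfxns}. Where you differ is in how you reach the local model at points of $Z$. The paper avoids any case split: it invokes proposition \ref{prop:normbundle}, which extends the symplectic normal bundle across the fold, to split the differential slice at \emph{every} point as $\frak{h}^o\oplus(\widetilde{TM_H})^{\sigma}_p$, reads off the weights from corollary \ref{cor:stabtori}, and feeds this directly into the slice theorem \ref{thm:slice}. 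You instead treat $p\notin Z$ via proposition \ref{prop:torsympnorm} and $p\in Z$ by replacing $\sigma$ with the symplectic form $\omega$ of corollary \ref{cor:eqfsnormal1} on the same underlying $G$-manifold; this reduction is legitimate precisely because of the observation you make explicitly, namely that the manifold-with-corners structure on the orbit space depends only on the smooth $G$-action and not on the two-form. That same observation disposes of the gluing worry you raise at the end: the paper's charts satisfy $\phi^*C^{\infty}(V)=C^{\infty}(U)$, where $C^{\infty}(U)$ is defined intrinsically as the ring of functions lifting to smooth invariant functions on $\pi^{-1}(U)$, so transition maps between any two normal-form charts are automatically smooth and no compatibility of auxiliary choices needs to be verified. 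For $Z/G$, both arguments use the global product model $Z\times\R$ of proposition \ref{prop:eqfsnormal}, under which $U/G\simeq Z/G\times\R$; co-orientability and transversality of $Z/G$ then come for free from the $\R$ factor, so the averaging argument you sketch, while workable, is superfluous.
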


\begin{proof} \mbox{ } \newline
\begin{enumerate}
\item We first show $M/G$ is a manifold with corners.  We will show that it is locally a manifold with corners by constructing charts using the slice theorem.  We will then show that the transition maps are smooth by applying Theorem 1 of \cite{Sch}.  The key is that $M$ is locally standard: the differential slice representation decomposes as a trivial representation and a faithful, symplectic representation of a torus $H$ on $\C^h$, where $h=\dim(H)$.

    More precisely, let $p\in M$ and let $H=G_p$ be the stabilizer.  By proposition \ref{prop:normbundle}, the tangent space $T_pM$ splits as a representation of $H$ into $T_pM= T_pM_H \oplus (\widetilde{TM_H})^{\sigma}_p$, where $(\widetilde{TM_H})^\sigma$ is the symplectic normal bundle to $M_H$ constructed in proposition \ref{prop:normbundle}.  Thus, the differential slice, as a representation, is:
    \begin{displaymath}
    T_pM/T_p(G\cdot p) = (T_pM_H/T_p(G\cdot p)) \oplus (\widetilde{TM_H})^{\sigma}_p
    \end{displaymath}
    $H$ acts trivially on $T_pM_H/T_p(G\cdot p)$ since $H$ fixes $T_pM_H$ and the dimension of this space is $\dim(M_H)-\dim(G\cdot p) = 2\dim(\dim(G)-\dim(H)) -\dim(G) + \dim(H) = \dim(G)-\dim(H)$.  Thus, $T_pM_H/T_p(G\cdot p)$ is isomorphic, as a representation, to the annihilator of $\frak{h}=\operatorname{Lie}(H)$, $\frak{h}^o\subset \fg^*$.

    By corollary \ref{cor:stabtori}, there exist linearly independent weights $\{\beta_1,\dots,\beta_h\}$ for the faithful, symplectic representation of $H$ on $(\widetilde{TM_H})^{\sigma}_p$, hence $(\widetilde{TM_H})^{\sigma}_p$ is isomorphic to $\C^h$ where $H$ acts by rotations.  Explicitly,

    \begin{displaymath}
    \exp(X)\cdot (z_1,\dots,z_h) = (e^{2\pi i\beta_1(X)}z_1, \dots, e^{2\pi i\beta_n(X)}z_n)
    \end{displaymath}
    If we choose a complementary subtorus $K\le G$ and specify an isomorphism $\tau: G \to K\times H$ of Lie groups, then a neighborhood of $G\cdot p$ is $G$-equivariantly isomorphic to a neighborhood of the zero section of:
    \begin{displaymath}
    G\times_H \frak{h}^o \oplus \C^h = K\times \frak{h}^o \oplus \C^h = T^*K \times \C^k
    \end{displaymath}
    The orbit space of this neighborhood is then $(T^*K\times \C^k)/G = \frak{h}^o \times C^h/H$.  Now, by remark \ref{rem:invntfxns} any $S^1$-invariant function $f(z)$ on $\C$ may be written as $g(\vert z\vert^2)$, where $g$ is a smooth function on $\R$.  This extends to the product $\C^h$: a $\mathbb{T}^h$-invariant function $f(z_1,\dots,z_h)$ may be written as $g(\vert z_1 \vert^2, \dots, \vert z_h \vert^2)$ for some smooth map $g:\R^h \to \R$.  That is, the functions $p_i(\vec{z})=\vert z_i \vert^2$ generate the ring of $\mathbb{T}^h$ invariant functions on $\C^h$.  By theorem 1 of \cite{Sch}, the space $C^h/\mathbb{T}^h$ is $(\R^+)^h$ and the quotient map $q$ is given by:
    \begin{displaymath}
    q(z_1,\dots,z_h) = (\vert z_1 \vert^1, \dots, \vert z_h \vert^2)
    \end{displaymath}
    Consequently, $\frak{h}^o \times C^h/H \simeq \R^k \times (\R^+)^h$ and the orbit space $M/G$ is a manifold with corners.  We have thus shown that for each point $[p]\in M/G$ there is a neighborhood $U$ of $[p]$ and a homeomorphism $\phi:U \to V$ onto an open subset $V$ of $\R^k \times (\R^+)^h$, where $h$ and $k$ depend on $p$.  $\phi$ satisfies the following key property:

    \begin{displaymath}
    \phi^*C^{\infty}(V) = C^{\infty}(U)
    \end{displaymath}
    where $C^{\infty}(U)$ is the ring of functions on $U\subset M/G$ that lift to smooth, invariant functions on $\pi^{-1}(U)$, hence $C^{\infty}(U)$ is $C^{\infty}(\pi^{-1}(U))^G$, the ring of invariant smooth functions on $\pi^{-1}(U)$.  Thus, for any two neighborhoods $U_1$, $U_2$ and homeomorphisms $\phi_1$, $\phi_2$, the transition maps $\phi_1\circ \phi_2^{-1}$ satisfy:
    \begin{displaymath}
    (\phi_1\circ\phi_2^{-1})^*C^{\infty}(V_1)=C^{\infty}(V_2)
    \end{displaymath}
    hence they send smooth functions to smooth functions, meaning they are smooth maps.  Thus, the transition maps are diffeomorphisms of manifolds with corners and $M/G$ inherits the structure of a manifold with corners.
\item The proof that $Z/G$ is a submanifold with corners of $M/G$ is very similar, but we first equivariantly identify a neighborhood of $Z$ with a neighborhood of the zero section of $Z\times \R$, so that we may assume $M=Z\times \R$ where $G$ doesn't act on the second factor.  Consider the differential slice representation $T_Z/T_z(G\cdot z)$.  By proposition \ref{prop:normbundle}, we have $T_zZ = T_zZ_H \oplus (\widetilde{TM_H})^{\sigma}_z$.  Hence the slice representation is:

    \begin{displaymath}
    T_zZ/T_z(G\cdot z) = (T_zZ_H/T_z(G\cdot z)) \oplus (\widetilde{TM_H})^{\sigma}_z
    \end{displaymath}
    where the first summand is a trivial representation.  The same arguments used to show $M/G$ is a manifold with corners now also show that $Z/G$ is a manifold with corners.  Thus, the quotient space $(Z\times \R)/G = Z/G \times \R$ is a manifold with corners and $Z/G$ intersects the strata transversally:

    \begin{displaymath}
    \partial^k(Z/G\times \R) = \partial^k(Z/G) \times \R
    \end{displaymath}
    Since a neighborhood of $Z$ in $M$ is isomorphic to a neighborhood of the zero section of $Z\times \R$, we have that a neighborhood $\bar{U}$ of $Z/G$ in $M/G$ is isomorphic to a neighborhood of the zero section of $Z/G \times \R$, hence $Z/G$ intersects the strata of $M/G$ transversally.

\item Finally, the fact that $\pi:M \to M/G$ is a quotient map follows from the fact that the map
    \begin{displaymath}
    q:\C^h \to (\R^+)^h, \text{ $q(z_1,\dots,z_h)=(\vert z_1 \vert^2, \dots, \vert z_h \vert^2)$},
    \end{displaymath}
    is a quotient map for the standard $\mathbb{T}^h$ action on $\C^h$.  Since $M$ is locally isomorphic to $T^*K \times \C^h$, where $\C^h$ is a representation of a subtorus $H$ of $G$ and $K$ is a complementary subtorus, we have that $\pi:M \to M/G$ is locally isomorphic to $p \times q: T^*K \times \C^h \to \frak{h}^o \times (\R^+)^h$, where $p:T^*K \to \frak{h}^o$ is the projection.  Since $\mu$ is $G$-invariant and $\pi:M \to M/G$ is a quotient map, there exists a smooth map $\psi:M/G \to \fg^*$ such that $\mu =\psi \circ \pi$.
\end{enumerate}
\end{proof}

\begin{definition}\label{def:orbitalmomentmap}
Let $(M,\sigma,\mu:M\to \fg^*)$ be a toric folded-symplectic manifold.  By proposition \ref{prop:corners}, $M/G$ is a manifold with corners and $\mu:M \to \fg^*$ descends to $\psi:M/G \to \fg^*$.  We call $\psi$ the \emph{orbital moment map}.
\end{definition}

\begin{prop}\label{prop:umf}
Let $(M,\sigma,\mu:M\to \fg^*)$ be a toric, folded-symplectic manifold with co-orientable folding hypersurface $Z\subset M$.  Let $\psi:M/G \to \fg^*$ be the orbital moment map.  Then $\psi$ is a unimodular map with folds whose folding hypersurface is $Z/G$.  Furthermore, at a point $[z]\in Z/G$, $\ker(d\psi_{[z]})$ is the image of $\ker(\sigma_z)$ under the differential $d\pi_z$ of the quotient map $\pi:M\to M/G$ at $z\in Z$.
\end{prop}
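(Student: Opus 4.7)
The plan is to verify the three defining conditions of a unimodular map with folds and then compute $\ker(d\psi_{[z]})$ at fold points, splitting the analysis into the symplectic locus and an invariant neighborhood of $Z$. Away from $Z$, $(M\setminus Z, \sigma)$ is a toric symplectic manifold; Proposition \ref{prop:torsympnorm} identifies a neighborhood of each orbit equivariantly with an open subset of $T^*K\times \C^h$ carrying the standard action, and Proposition \ref{prop:corners} identifies the corresponding neighborhood in $(M\setminus Z)/G$ with an open subset of $\frak{h}^o\times(\R^+)^h$ on which $\psi$ is the obvious open embedding into the unimodular cone based at $\mu(p)$ whose inward normals are dual to the weights $\{\beta_1,\ldots,\beta_h\}$. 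Hence $\psi\vert_{(M\setminus Z)/G}$ is a unimodular local embedding.

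To handle a neighborhood of $Z$, Corollary \ref{cor:eqfsnormal1} furnishes an invariant neighborhood $U$ of $Z$, a symplectic form $\omega$ on $U$ with equivariant moment map $\mu_s$, and an equivariant fold map $f:U\to U$ with folding hypersurface $Z$ such that $f^*\omega = \sigma$ and $f^*\mu_s = \mu$. Since $\mu_s$ and $f$ are $G$-invariant, they descend to $\bar{\mu}_s:U/G\to\fg^*$ and $\bar{f}:U/G\to U/G$ with $\psi\vert_{U/G} = \bar{\mu}_s\circ\bar{f}$; the previous paragraph applied to $(U,\omega)$ shows $\bar{\mu}_s$ is a unimodular local embedding. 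In the equivariant collar of Proposition \ref{prop:eqfsnormal}, $U\simeq Z\times(-\epsilon,\epsilon)$ with $G$ acting trivially on the second factor and $f(z,t)=(z,t^2)$; by Proposition \ref{prop:corners}, $U/G\simeq (Z/G)\times(-\epsilon,\epsilon)$ and $\bar{f}([z],t) = ([z],t^2)$. Corollary \ref{cor:folds4-0} verifies directly that $\bar{f}$ is a fold map with folding hypersurface $Z/G$ and kernel spanned by $\partial/\partial t$, which is tangent to every stratum since the $t$-factor is unrestricted; co-orientability of $Z/G$ is immediate from the product collar. Postcomposing with the local embedding $\bar{\mu}_s$ preserves these properties, so $\psi$ is a unimodular map with folds whose folding hypersurface is $Z/G$.

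For the kernel identification, $\mu=\psi\circ\pi$ gives $\ker(d\psi_{[z]}) = d\pi_z(\ker(d\mu_z))$. The Hamiltonian condition $i_{X_M}\sigma = -d\langle \mu,X\rangle$ yields $\ker(d\mu_z) = T_z(G\cdot z)^{\sigma}$, which contains both $T_z(G\cdot z)$ (Lemma \ref{lem:isoorbits}) and $\ker(\sigma_z)$. Since $d\pi_z$ kills $T_z(G\cdot z)$, and Lemma \ref{lem:generator} places $\ker(\sigma_z)\cap T_zZ$ inside $T_z(G\cdot z)$, the image $d\pi_z(\ker(\sigma_z))$ is the $1$-dimensional line generated by a null direction transverse to $Z$. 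In the local model $(Z/G)\times(-\epsilon,\epsilon)$, the kernel of $d\bar{f}_{([z],0)}$ is $\span\{\partial/\partial t\}$ and $\bar{\mu}_s$ is a local embedding, so $\ker(d\psi_{[z]})$ is $1$-dimensional and spanned by $\partial/\partial t$, which coincides with $d\pi_z$ of the transverse null direction under the collar identification.

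The main obstacle is descending the equivariant factorization $\mu = \mu_s\circ f$ to a genuine factorization on the orbit space and arguing that $\bar{f}$ is a fold map on $M/G$ whose folding hypersurface is \emph{exactly} $Z/G$, rather than some larger or smaller set. The key simplification is that the equivariant normal form places the fold in a trivially-acted $\R$-factor, so in the explicit model $([z],t)\mapsto ([z],t^2)$ the hypotheses of Corollary \ref{cor:folds4-0} can be checked transparently and the kernel tangency to strata is automatic.
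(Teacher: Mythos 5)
Your proposal is correct and follows essentially the same route as the paper: establish the unimodular local embedding property on the symplectic locus via Proposition \ref{prop:torsympnorm}, then use the equivariant normal form near $Z$ to factor $\psi$ locally as a unimodular local embedding composed with the model fold $([z],t)\mapsto([z],t^2)$, and read off the kernel from that model. The only (harmless) variation is that your kernel identification supplements the local-model reading with the intrinsic observation $\ker(d\mu_z)=T_z(G\cdot z)^{\sigma}$ together with Lemmas \ref{lem:isoorbits} and \ref{lem:generator}, whereas the paper extracts $d\pi_z(\ker(\sigma_z))=\ker(d\psi_{[z]})$ directly from the commutative diagram.
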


\begin{proof}
Let us first consider the case where $(M,\sigma,\mu:M\to \fg^*)$ is a toric \emph{symplectic} manifold.  Let $p\in M$, let $H=G_p$ be its stabilizer with $h=\dim(H)$, let $\{\beta_1,\dots,\beta_h\}$ be the weights of the symplectic slice representation of $H$, and let $K$ be a complementary subtorus in $G$.  By lemma \ref{prop:torsympnorm}, we have an invariant neighborhood $U$ of $G\cdot p$ and a commutative diagram:

\begin{displaymath}
\xymatrix{
(U,\sigma) \ar[r]^j \ar[dr]^{\mu} & T^*K \times \C^h \ar[d]^{\varphi} \\
                                  &  \fg^* = \frak{k}^* \oplus \frak{h}^* }
\end{displaymath}
where $j$ is an open, equivariant symplectic embedding and $\varphi(\lambda, \eta, z_1,\dots, z_h) = \eta + \sum_{i=1}^h \vert z_i \vert^2 \beta_i$.    The image of $\mu\big\vert_U$ is therefore an open subset of a unimodular cone $C$ in $\fg^*$ whose normals are given by $\beta_1^*, \dots, \beta_h^*$, where the dual makes sense as an element of $\fg^*$ since we have chosen a splitting $\fg^* = \frak{k}^* \oplus \frak{h}^*$.  The ring of $\mathbb{T}^h$ invariant functions on $\C^h$ is generated by the functions $p_i(\vec{z}) = \vert z_i \vert^2$, $1\le i\le h$, hence the ring of $H$ invariant functions on $\C^h$ are also generated by the $p_i$'s since $H$ acts via:

\begin{displaymath}
\exp(X)\cdot (z_1,\dots,z_h)=(e^{2\pi i \beta_1(X)} z_1, \dots, e^{2\pi i \beta_h(X)} z_h)
\end{displaymath}
Thus, the map $\varphi:T^*K \times \C^h \to C$ is a quotient map by theorem 1 in \cite{Sch}.  Consequently, the commutativity of the diagram implies $\mu: U \to C$ is the quotient map and so it descends to the identity map $id_C:C \to C$, hence $\mu$ descends to a unimodular local embedding.

Now, if $(M,\sigma,\mu:M\to \fg^*)$ is toric folded-symplectic, then $(M\setminus Z, \sigma,\mu\big\vert_{M\setminus Z})$ is a toric \emph{symplectic} manifold, hence the orbital moment map $\psi$ restricts to a unimodular local embedding on $(M\setminus Z)/G = M/G \setminus Z/G$.  At the fold, $Z$, proposition \ref{prop:eqfsnormal} gives us an invariant neighborhood $U$ of $Z$ and a commutative diagram:

\begin{equation}\label{eq:bigdiagram}
\xymatrixcolsep{4pc}\xymatrix{
(U,\sigma) \ar[r]^-{\phi} \ar[d]^{\pi} &  (Z\times \R, p^*i^*\sigma + d(t^2p^*\alpha)) \ar[r]^{\tilde{\gamma}(z,t)=(z,t^2)}\ar[d] & (Z\times \R, p^*i^*\sigma +d(tp^*\alpha)) \ar[dr]^{\mu_s} \ar[d] \\
U/G \ar[r]^{\bar{\phi}}              & Z/G \times \R \ar[r]^{\gamma([z],t)=([z],t^2)}                                       & Z/G\times \R \ar[r]^{\bar{\mu}_s} & \fg^*
}
\end{equation}
where
\begin{itemize}
\item $\phi$ is an equivariant open embedding satisfying $\phi(z)=(z,0)$, $\bar{\phi}$ is an open embedding,
\item $\phi^*(p^*i^*\sigma+d(t^2p^*\alpha))=\sigma$,
\item $p^*i^*\sigma + d(tp^*\alpha)$ is nondegenerate in a neighborhood of $Z\times\{0\}$,
\item $\mu_s$ is a symplectic moment map in a neighborhood of $Z\times \{0\}$, and
\item $\bar{\mu}_s$ is the induced map on the quotient space.
\end{itemize}
From our study of the toric \emph{symplectic} case, $\bar{\mu}_s$ is a unimodular local embedding.  The composition of the arrows along the top row with $\mu_s$ gives the moment map $\mu\vert_U$, hence the composition of the arrows in the bottom row gives us $\psi\big\vert_{U/G}$.  Let $\gamma:Z/G \times \R \to Z/G \times \R$ be the map $\gamma([z],t)=([z],t^2)$.  Since $Z/G$ is a manifold with corners by proposition \ref{prop:corners}, $\gamma$ is a smooth map with fold singularities along $Z/G \times \{0\}$.  We therefore have:

\begin{displaymath}
\psi = \bar{\mu}_s \circ \gamma \circ \phi
\end{displaymath}
where $\bar{\mu}_s$ is a unimodular local embedding, $\gamma$ is a fold map, and $\phi$ is an open embedding of manifolds with corners.  Since $\phi$ is an open embedding, the map $\gamma \circ \phi$ is a map with fold singularities that folds along $Z/G$ by corollary \ref{cor:folds4-2}.  Since $\bar{\mu}_s$ is a local embedding, the composition $\bar{\mu}_s \circ \gamma \circ \phi$ is a map with fold singularities that folds along $Z/G$ by corollary \ref{cor:folds-diffeo}.  Thus, $\psi$ has fold singularities at points of $Z/G$.

We can read the kernel of $d\psi$ at $Z/G$ from the diagram.  The kernel of $p^*i^*\sigma + d(t^2p^*\alpha)$ contains $\frac{\partial}{\partial t}$ at points of $Z\times \{0\}$.  The kernel of $d\gamma$ at $Z/G \times \{0\}$ is given by $\frac{\partial}{\partial t}$, which is the image of the kernel of the fold form under the projection map, hence $d\pi_z(\ker(\sigma_z))=\ker(d\psi_z)$.
\end{proof}

\begin{remark}
Let $(M,\sigma,\mu:M\to \fg^*)$ be a toric, folded-symplectic manifold with co-orientable folding hypersurface $Z$.  By proposition \ref{prop:umf}, the kernel of the differential of the orbital moment map, $d\psi$, at points of $Z/G$ is given by the image of the fibers of the bundle $\ker(\sigma) \to Z$ under the projection $d\pi:TM \to T(M/G)$.  The bundle $\ker(\sigma)\to Z$ has rank $2$, while the bundle $\ker(d\psi) \to Z/G$ has rank $1$.  The reason that the dimensions are not preserved under $d\pi$ is because the fibers of the bundle $\ker(\sigma)\cap TZ$ are generated by the group action by lemma \ref{lem:generator}.  Thus, a $1$-dimensional subspace of each fiber of $\ker(\sigma)$ is eliminated by the differential of the orbit map $d\pi$.  On the other hand, we are about to show that one may recover $\ker(\sigma)\cap TZ$ with its orientation induced by $\sigma$ from the orbital moment map.
\end{remark}

\begin{definition}\label{def:annihilator}
Let $(M,\sigma,\mu:M\to \fg^*)$ be a toric, folded-symplectic manifold with co-orientable folding hypersurface $Z$.  Then $M/G$ is a manifold with corners by proposition \ref{prop:corners} and $\mu$ descends to $\psi:M/G \to \fg^*$, a unimodular map with folds.  We may define two vector bundles over $Z/G$.  We have the rank $1$ \emph{kernel bundle} of $\psi$:

\begin{displaymath}
\ker(d\psi) \to Z/G, \mbox{ } \ker(d\psi)_{[z]}:= \{v \in T_{[z]}M/G \mbox{ } \vert \mbox{ } d\psi_{[z]}(v)=0\}
\end{displaymath}
and the rank $1$ \emph{annihilator bundle} of $\psi$, which functions as the cokernel of the differential:
\begin{displaymath}
\operatorname{Im}(d\psi)^o \to Z/G, \mbox{ } \operatorname{Im}(d\psi)^o_{[z]}:= \{\eta \in \fg \mbox{ } \vert \mbox{ } \langle d\psi_{[z]}, \eta \rangle =0 \}
\end{displaymath}
where we have identified $T\fg^* = \fg^* \times \fg^*$.
\end{definition}

\begin{remark}
There is a fast way to see that $\operatorname{Im}(d\psi)^o \to Z/G$ is in fact a rank $1$ vector bundle over $Z/G$.  Consider the trivial bundle $Z/G \times \fg$ over $Z/G$ and the cotangent bundle $T^*(Z/G)$.  The differential $d\psi$ coupled with the canonical pairing $\langle \cdot, \cdot \rangle$ between $\fg^*$ and $\fg$ gives us a map of vector bundles over $Z/G$:

\begin{displaymath}
\xymatrixcolsep{4pc}\xymatrix{
Z/G \times \fg \ar[r]^{\langle d\psi, \cdot \rangle} & T^*(Z/G)
}
\end{displaymath}
where the map is given pointwise by $([z],X) \to \langle d\psi_{[z]},X \rangle$, which is a covector in $T^*(Z/G)$.  The annihilator bundle $\operatorname{Im}(d\psi)^o \to Z/G$ is then the kernel of this map.
\end{remark}

We focus our attention on $\operatorname{Im}(d\psi)^o \to Z/G$ for now and show there is an orientation on $\operatorname{Im}(d\psi)^o \to Z/G$ induced by $\psi$.  We will then show that there is a smooth, orientation-preserving isomorphism of vector bundles $(\cdot)_Z:\pi^*\operatorname{Im}(d\psi)^o \to (\ker(\sigma)\cap TZ)$, where $\pi:M \to M/G$ is the quotient map.

\begin{lemma}\label{lem:orientation1}
Let $(M,\sigma,\mu:M\to \fg^*)$ be a toric, folded-symplectic manifold and let $\psi:M/G \to \fg^*$ be the orbital moment map.  Let $\operatorname{Im}(d\psi)^o \to Z/G$ be the annihilator bundle.  There exists an orientation on $\operatorname{Im}(d\psi)^o \to Z/G$ induced by $\psi$.
\end{lemma}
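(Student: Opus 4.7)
The plan is to build the orientation out of the intrinsic derivative of $\psi$ along the fold $Z/G$. Since $\psi:M/G\to\fg^*$ is a unimodular map with folds (proposition \ref{prop:umf}), at each point $[z]\in Z/G$ the differential $d\psi_{[z]}$ has corank $1$ and $\psi$ has a fold singularity there. By the construction in section 2.1.5, this gives a well-defined intrinsic derivative, which is a quadratic form
\[
(D\psi)_{[z]}:\ker(d\psi_{[z]})\otimes\ker(d\psi_{[z]})\;\longrightarrow\;\operatorname{coker}(d\psi_{[z]})=\fg^*/\operatorname{Im}(d\psi_{[z]}),
\]
and the key input I will use is that this quadratic form is non-degenerate precisely because $\psi$ is a \emph{fold} (not merely a drop in rank): the second condition in definition \ref{def:folds3} says $\ker(d\psi)\pitchfork Z/G$, which, by the coordinate computation behind proposition \ref{prop:folds5}, is equivalent to $(D\psi)_{[z]}(v,v)\neq 0$ for any nonzero $v\in\ker(d\psi_{[z]})$.

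The first step is to use this non-vanishing to orient the cokernel line bundle $\operatorname{coker}(d\psi)\to Z/G$: declare the class $[(D\psi)_{[z]}(v,v)]\in\operatorname{coker}(d\psi_{[z]})$ to be positive for any nonzero $v$. Because the quadratic form is homogeneous of degree $2$, this rule is independent of the choice of $v$ (replacing $v$ by $-v$ or by $\lambda v$ scales the output by a positive factor). The second step is to transport this orientation to $\operatorname{Im}(d\psi)^o$ via the canonical non-degenerate pairing
\[
\operatorname{Im}(d\psi_{[z]})^o\;\times\;\operatorname{coker}(d\psi_{[z]})\;\longrightarrow\;\R,\qquad (\eta,[\beta])\longmapsto\langle\beta,\eta\rangle,
\]
which is well defined since $\eta$ annihilates $\operatorname{Im}(d\psi_{[z]})$, and non-degenerate since both sides are $1$-dimensional and the pairing $\fg\times\fg^*\to\R$ is non-degenerate. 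Declare $\eta\in\operatorname{Im}(d\psi)^o_{[z]}$ positive iff it pairs positively with any positively-oriented representative of $\operatorname{coker}(d\psi_{[z]})$.

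The third step is smoothness. Both $\ker(d\psi)\to Z/G$ and $\operatorname{Im}(d\psi)^o\to Z/G$ are smooth line bundles, and about each $[z_0]\in Z/G$ one can pick a smooth non-vanishing local section $v$ of $\ker(d\psi)$ and a smooth local section $\eta$ of $\operatorname{Im}(d\psi)^o$. The function $[z]\mapsto\langle(D\psi)_{[z]}(v([z]),v([z])),\eta([z])\rangle$ is a smooth real-valued function (the intrinsic derivative depends smoothly on the base point since it is computed from partial derivatives of $\psi$ in any chart), and it is nowhere zero by the two non-degeneracies above. Its sign therefore provides a locally constant, and hence smooth, orientation rule for $\operatorname{Im}(d\psi)^o$; these local choices agree on overlaps because the defining recipe is independent of the choice of $v$ and $\eta$.

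The only substantive obstacle is verifying that the intrinsic derivative of $\psi$ restricted to $\ker(d\psi)\otimes\ker(d\psi)$ is genuinely non-degenerate with values in the $1$-dimensional cokernel; I expect to handle this by passing to the local factorization $\psi=\bar\mu\circ\gamma$ of corollary \ref{cor:attach}, where $\bar\mu$ is a unimodular local embedding and $\gamma$ is the standard fold $\gamma(x,t)=(x,t^2)$. In those coordinates $\ker(d\psi)=\span\{\partial/\partial t\}$, the cokernel is spanned by $d\bar\mu(\partial/\partial t')$ where $t'$ is the target fold coordinate, and the intrinsic derivative sends $\partial/\partial t\otimes\partial/\partial t$ to (a positive multiple of) this generator, making both the non-degeneracy and the smoothness transparent.
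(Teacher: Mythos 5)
Your proposal is correct and follows essentially the same route as the paper: the paper's proof also defines positivity of $\eta\in\operatorname{Im}(d\psi)^o_{[z]}$ by pairing it with the intrinsic second derivative $D\psi_{[z]}(w\otimes w)$ for $w\in\ker(d\psi_{[z]})$, checks independence of $w$ via the $c^2$ (quadratic homogeneity) argument, and establishes non-vanishing using the same local normal form $\psi(p,t)=\psi\vert_{Z/G}(p)+t^2F(p)$ with $F$ transverse to $\operatorname{Im}(d\psi)$.
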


\begin{proof}
We mimic the construction of the orientation on $\ker(\sigma)\cap TZ$.  As in the case of the orientation on the fold, we will be computing the intrinsic derivative $D\psi_{[z]}$ of $\psi$ at a point $[z]\in Z/G$, which will give us a quadratic map $D\psi_{[z]}: \ker(d\psi_{[z]})\otimes \ker(d\psi_{[z]}) \to \operatorname{coker}(d\psi_{[z]})$.  An element $v\in \operatorname{Im}(d\psi)^o_{[z]}$ is then positively oriented if for any nonzero element $w\in \ker(d\psi_{[z]})$, we have $\langle D\psi_{[z]}(w\otimes w), v \rangle >0$, where we identify $\operatorname{coker}(d\psi_{[z]})$ with $\operatorname{Im}(d\psi_{[z]})^o\subset \frak{g}$.

Of course, we have an alternative approach for those who are less familiar with the intrinsic derivative.  The orientation is defined at a point $[z]\in Z/G$ as follows:

\begin{enumerate}
\item Choose a nonzero element $Y\in \ker(d\psi_{[z]})$ and extend it to a local vector field $\tilde{Y}$ near $[z]$.
\item Then, for any $X \in \operatorname{Im}(d\psi)^o_{[z]}$, we obtain a function $f(p)=\langle d\psi_p(\tilde{Y}), X \rangle$ defined in a neighborhood of $[z]$.
\item An element $X \in \operatorname{Im}(d\psi)^o_{[z]}$ will be positively oriented if $df_{[z]}(Y)>0$.
\end{enumerate}
We first show that $df_{[z]}(Y)$ must be nonzero.  In coordinates $(p,t)\in Z/G\times \R$ near $[z]$ with the fold identified with $\{t=0\}$, we may write $\psi$ as:

\begin{displaymath}
\psi(p,t)= \psi\big\vert_{Z/G}(p) + t^2F(p)
\end{displaymath}
where $F([z])$ is some nonvanishing smooth map $F:Z/G \to \fg^*$ and
\begin{displaymath}
\phi(p,t)=\psi\big\vert_{Z/G}(p) + tF(p)
\end{displaymath}
is a local embedding.  We have that $F(p)$ is transverse to the image of $d\psi_p$, hence any nonzero $X\in \operatorname{Im}(d\psi)^o_p$ has a nonzero pairing $\langle F(p), X \rangle$ ($F$ plays the role of the intrinsic derivative).  If $\langle F(p), X \rangle=0$, then $X$ annihilates $d\psi_p(T_p(Z/G)) + \R(F(p)) =\fg^*$, hence $X$ annihilates $\fg^*$ and must be $0$.  Now, at $([z],0)$, our choice of $Y$ must be $c\frac{\partial}{\partial t}$ for some $c\ne 0$.  If we extend it to a local vector field and compute, we obtain:
\begin{displaymath}
c\frac{\partial}{\partial t}\big\vert_{t=0}\langle d\psi_{([z],0)}(\tilde{Y}), X \rangle = c^2\langle F([z]), X \rangle \ne 0
\end{displaymath}
Now, the choice of $Y$ doesn't affect the sign of the answer: changing the sign of $c$ doesn't change the sign of $c^2$.  Thus, imposing the condition that $X$ is positively oriented if and only if $c^2\langle F([z]), X \rangle >0$ is independent of the choice of $Y$.  The orientation induced on $\operatorname{Im}(d\psi)^o$ is therefore intrinsic to the map $\psi$.
\end{proof}

\begin{lemma}\label{lem:nullfoliation}
Let $(M,\sigma,\mu:M \to \fg^*)$ be a toric, folded-symplectic manifold with co-orientable folding hypersurface.  Let $\ker(\sigma)\cap TZ$ be the null bundle over $Z$ with its orientation induced by $\sigma$.  Let $\pi:M \to M/G$ be the quotient map and let $\operatorname{Im}(d\psi)^o$ be the annihilator bundle of the orbital moment map $\psi:M/G \to \fg^*$ with its orientation induced by $\psi$.  Then the map:

\begin{displaymath}
(\cdot)_Z: \pi^*\operatorname{Im}(d\psi)^o \to (\ker(\sigma)\cap TZ)
\end{displaymath}
given pointwise by $(\cdot)_Z(z,X)=(X)_Z(z)$, is an orientation-preserving isomorphism of vector bundles over $Z$.
\end{lemma}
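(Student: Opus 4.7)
The plan is to verify well-definedness, smoothness, the fiberwise isomorphism property, and finally the orientation-preserving property in that order, with most of the real work concentrated in the last step.

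First I would check well-definedness: given $(z,X) \in \pi^*\operatorname{Im}(d\psi)^o$ one needs $X_M(z) \in \ker(\sigma_z) \cap T_zZ$. Since $\mu = \psi \circ \pi$, the chain rule gives $d\mu_z = d\psi_{[z]} \circ d\pi_z$, so $X \in \operatorname{Im}(d\psi_{[z]})^o$ forces $d\langle \mu, X\rangle_z = 0$ as a covector on $T_zM$. The moment map equation $i_{X_M}\sigma = -d\langle \mu, X\rangle$ then gives $i_{X_M(z)}\sigma_z = 0$, so $X_M(z) \in \ker(\sigma_z)$. Lemma \ref{lem:preserves} (invariance of $Z$) gives $T_z(G\cdot z) \subseteq T_zZ$ and thus $X_M(z) \in T_zZ$. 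Smoothness of $(\cdot)_Z$ is automatic from the smoothness of the action.

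For the fiberwise isomorphism, I would compare ranks. Since $\psi$ is a unimodular map with folds, $d\psi_{[z]}$ has corank exactly $1$ for $[z] \in Z/G$, so $\operatorname{Im}(d\psi)^o$ has rank $1$; the null bundle $\ker(\sigma)\cap TZ$ also has rank $1$ by the definition of a fold form. To see that the map is nonzero on each fiber, I would invoke Corollary \ref{cor:generator}: there exists $X \in \fg$ with $X_M(z) \ne 0$ and $X_M(z) \in \ker(\sigma_z)\cap T_zZ$. The well-definedness calculation above run in reverse shows $X \in \operatorname{Im}(d\psi_{[z]})^o$, so $(\cdot)_Z$ restricted to the fiber over $z$ hits a nonzero vector, and being a linear map between $1$-dimensional spaces it is an isomorphism.

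The orientation-preserving property is the main point, and I would prove it by computing both orientations in the equivariant normal form. By Proposition \ref{prop:eqfsnormal}, near any $z \in Z$ one can identify an invariant neighborhood of $Z$ with a neighborhood of $Z\times\{0\}$ in $Z\times \R$ equipped with $\sigma = p^*i^*\sigma + d(t^2 p^*\alpha)$, where $\alpha \in \Omega^1(Z)^G$ positively orients $\ker(\sigma)\cap TZ$. A direct computation using $L_{X_Z}\alpha = 0$, $i_{X_Z}dt = 0$, and Cartan's formula yields
\begin{equation*}
i_{X_Z}\sigma = -d\bigl(\langle \mu|_Z, X\rangle + t^2 \alpha(X_Z)\bigr),
\end{equation*}
so the moment map takes the form $\mu(z,t) = \mu|_Z(z) + t^2 F(z)$ with $\langle F(z), X\rangle = \alpha(X_Z)(z)$. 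By the $G$-invariance of $\alpha$ and $X_Z$, the function $F$ descends to $\bar F:Z/G\to\fg^*$ and $\psi([z],t) = \psi|_{Z/G}([z]) + t^2 \bar F([z])$. Then by the recipe of Lemma \ref{lem:orientation1}, $X \in \operatorname{Im}(d\psi_{[z]})^o$ is positively oriented iff $\langle \bar F([z]), X\rangle > 0$, i.e.\ iff $\alpha(X_Z)(z) > 0$. Meanwhile, taking $w = \partial/\partial t$ as an oriented transverse section of $\ker(\sigma)$, the computation $\sigma(\partial/\partial t, X_Z) = 2t\alpha(X_Z)$ gives $\partial/\partial t|_{t=0}\sigma(\tilde w, \tilde v) = 2\alpha(X_Z)(z)$, so $X_M(z)$ is positively oriented in $\ker(\sigma)\cap TZ$ iff $\alpha(X_Z)(z) > 0$. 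The two conditions coincide, proving that $(\cdot)_Z$ is orientation-preserving.

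The main obstacle is not any single step but rather the care required in the last computation: one must set up the equivariant normal form consistently with the definitions of both orientations (the $\sigma$-induced orientation of Proposition \ref{prop:orientation} and the $\psi$-induced orientation of Lemma \ref{lem:orientation1}), track signs through the Cartan calculation, and use the $G$-invariance of $\alpha$ at exactly the right point to eliminate extraneous terms. Once both orientations are expressed as the positivity of the single scalar $\alpha(X_Z)(z)$, the equivalence is immediate.
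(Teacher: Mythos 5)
Your proof is correct in overall structure, and the decisive computation --- expressing both the $\sigma$-orientation of $\ker(\sigma)\cap TZ$ and the $\psi$-orientation of $\operatorname{Im}(d\psi)^o$ as positivity of the single scalar $\alpha_z(X_M(z)) = \langle \bar F([z]),X\rangle$ in the equivariant normal form of Proposition \ref{prop:eqfsnormal} --- is exactly the one the paper uses. Your well-definedness step is also fine, and is in fact more direct than the paper's, which routes through density of the free locus.

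The one step that does not survive scrutiny as written is the claim that ``the well-definedness calculation run in reverse shows $X\in\operatorname{Im}(d\psi_{[z]})^o$.'' The reverse chain requires passing from $\langle d\psi_{[z]},X\rangle\circ d\pi_z=0$ to $\langle d\psi_{[z]},X\rangle=0$, which needs $d\pi_z:T_zM\to T_{[z]}(M/G)$ to be surjective. At points $z\in Z$ with nontrivial stabilizer $H$ this fails: the local model of the orbit map contains the factor $(z_1,\dots,z_h)\mapsto(\vert z_1\vert^2,\dots,\vert z_h\vert^2)$, whose differential vanishes at the origin, so $\operatorname{Im}(d\pi_z)$ misses the corner directions of $T_{[z]}(M/G)$ and you only conclude that $X$ annihilates a proper subspace of $\operatorname{Im}(d\psi_{[z]})$ (in particular you do not see the weight directions $\beta_i$, which are precisely what keeps $\operatorname{Im}(d\psi_{[z]})^o$ out of $\frak{h}$). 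The repair is already contained in your last step: for any nonzero $X\in\operatorname{Im}(d\psi_{[z]})^o$, your identity $\alpha_z(X_M(z))=\langle\bar F([z]),X\rangle$, combined with the nonvanishing of $\langle\bar F([z]),\cdot\rangle$ on $\operatorname{Im}(d\psi_{[z]})^o\setminus\{0\}$ established in Lemma \ref{lem:orientation1}, gives $X_M(z)\ne 0$ directly. This is in fact how the paper obtains the isomorphism --- it deduces nonvanishing from the orientation computation between line bundles rather than from Corollary \ref{cor:generator} --- so you should reorder your argument to extract the fiberwise isomorphism from the normal-form computation rather than from the reversed implication.
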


\begin{proof}
We first need to check that the image of the map $(\cdot)_Z$ actually lands inside $\ker(\sigma)\cap TZ$.  We will then show that it is an orientation-preserving map and, since the two bundles are line bundles, this will show that the map is an isomorphism.

Consider an element $X$ in the fiber of the annihilator bundle $\pi^*\operatorname{Im}(d\psi)^o_z \subset \fg$.  By definition of the annihilator bundle,

\begin{equation}\label{eq:image}
\begin{array}{ll}
\langle d\psi_{\pi(z)}, X \rangle =0 & \implies \\
\langle d(\psi\circ \pi)_z, X \rangle = 0 & \iff \\
\langle d\mu_z,X \rangle =0 & \iff \\
i_{X_Z(z)}\sigma_z =0 & \text{By definition of the moment map.}
\end{array}
\end{equation}
Here, we have used the fact that $X_Z(z)=X_M(z)$ since $G$ preserves the fold $Z$.  We need to show that the last line of \ref{eq:image} implies $X_Z\in \ker(\sigma)$.  If we can show that this implication is true on an open dense subset of $Z$, then the smoothness of $(\cdot)_Z$ implies that its image is inside $\ker(\sigma)\cap TZ$ over all of $Z$.  Let us find the requisite open dense subset.

The action of $G$ on $M$ is effective, hence the action is free on an open dense subset $M_{\{e\}}$ of $M$.  By corollary \ref{cor:eqfsnormal1}, $M_{\{e\}}$ is transverse to the fold $Z$, hence $Z_{\{e\}} = M_{\{e\}} \cap Z$ is nonempty.  Otherwise, every point in $Z$ would have a nontrivial stabilizer, hence every point in some invariant neighborhood of $Z$ would have a nontrivial stabilizer since the orbit-type strata are transverse to $Z$.  Since $M_{\{e\}}$ is open and dense, $Z_{\{e\}}$ is open and dense in $Z$.  Otherwise, there is a point $z\in Z$ and a neighborhood $U\subseteq$ of $z$ in $Z$ so that all points $p\in U$ have nontrivial stabilizer, meaning there is an invariant neighbourhood of $U$ in $M$ where every point $p$ has a nontrivial stabilizer.  Thus, $M_{\{e\}}$ is not dense, which is a contradiction.

Thus, on an open dense subset $Z_{\{e\}}$ of $Z$, the induced vector fields $X_Z$, $X\in \fg$, vanish if and only if $X=0$.  Let us restrict our attention to this subset.  If $z\in Z_{\{e\}}$, then $i_{X_Z(z)}\omega_z = 0$ if and only if $X_Z(z) \in \ker(\omega_z)$ and $X_Z(z)\ne 0$.  Thus, the last line of equation \ref{eq:image} shows that the $X_Z(z)$ is a nonzero element of $\ker(\sigma_z)$.  Since $X_Z(z)=(\cdot)_Z(z,X)$, we have shown that the image of $(\cdot)_Z$ is in $\ker(\sigma)\cap TZ$ on an open dense subset, hence it lies in $\ker(\sigma)\cap TZ$ everywhere.

The fact that the map $(\cdot)_Z$ is orientation preserving is most easily seen using the local model of proposition \ref{prop:eqfsnormal}.  We have a diagram:

\begin{displaymath}
\xymatrixcolsep{4pc}\xymatrix{
(Z \times \R, p^*i^*\sigma + d(t^2p^*\alpha)) \ar[d] \ar[dr]^-{\mu\vert_Z + t^2F} &  \\
Z/G \times \R \ar[r]^-{\bar{\mu}\vert_{Z/G} + t^2\bar{F}}           & \fg^*
}
\end{displaymath}
where $\alpha\in \Omega^1(Z)$ orients the null foliation and $F:Z \to \fg^*$ is defined by the pairing $\langle F(z), X \rangle = \alpha_z(X_M(z))$.  The orientation of $\operatorname{Im}(d\psi)^o$ is such that $X \in \operatorname{Im}(d\psi)^o_{[z]}$ is positively oriented if and only if $\langle \bar{F}([z]), X \rangle > 0$, but

\begin{displaymath}
\begin{array}{ll}
\langle \bar{F}([z]),X \rangle > 0 & \implies \\
\langle F(z),X \rangle > 0         & \implies \\
\alpha_z(X_M(z)) > 0               &
\end{array}
\end{displaymath}
which means that $X_M(z) = (\cdot)_Z(z,X)$ is positively oriented.
\end{proof}

Thus, the orbital moment map $\psi:M/G \to \fg^*$ allows us to completely reconstruct $\ker(\sigma)\cap TZ$ and, hence, the null foliation on the folding hypersurface $Z\subset M$.  On the other hand, there is an easy way to reconstruct the rest of the kernel bundle $\ker(\sigma)\to Z$ using a lifting recipe.

\begin{lemma}\label{lem:lifts}
Let $(M,\sigma,\mu:M\to \fg^*)$ be a toric folded-symplectic manifold with co-orientable folding hypersurface and orbital moment map $\psi:M/G \to \fg^*$.  If $[z]\in Z/G$ is a point in the folding hypersurface of $\psi$, then we may lift $\ker(d\psi_{[z]})$ stratum-by-stratum as follows.  If $v\in \ker(d\psi_{[z]})$ is a nonzero kernel element, then we may choose a lift of $v$ to a vector $\tilde{v}\in T_zM$ such that:

\begin{enumerate}
\item $d\pi_z(\tilde{v}) = v$,
\item $i_{\tilde{v}}\sigma=0$
\end{enumerate}
\end{lemma}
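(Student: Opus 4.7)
The plan is to read the lift off of proposition \ref{prop:umf}. That proposition already established the key identity $d\pi_z(\ker(\sigma_z)) = \ker(d\psi_{[z]})$, so the restricted differential
\[
d\pi_z\big\vert_{\ker(\sigma_z)}\colon \ker(\sigma_z) \twoheadrightarrow \ker(d\psi_{[z]})
\]
is a surjection from a rank $2$ vector space onto a rank $1$ vector space. Given $v \in \ker(d\psi_{[z]})$, I would simply choose any preimage $\tilde v \in \ker(\sigma_z)$; property (1) holds by construction and property (2), $i_{\tilde v}\sigma = 0$, is automatic from $\tilde v \in \ker(\sigma_z)$.

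The nonuniqueness of the lift, and the meaning of the phrase \emph{stratum-by-stratum}, I would handle as follows. The fiber of $d\pi_z\big\vert_{\ker(\sigma_z)}$ over $v$ is $\tilde v + (\ker(\sigma_z)\cap \ker(d\pi_z))$. Since $\ker(d\pi_z) = T_z(G\cdot z)$ and the orbit is contained in $Z$, this intersection sits inside $\ker(\sigma_z)\cap T_zZ$; conversely lemma \ref{lem:generator} gives $\ker(\sigma_z)\cap T_zZ \subseteq T_z(G\cdot z)$. Thus the indeterminacy in the lift is exactly the rank $1$ null line $\ker(\sigma)\cap TZ$, generated by an induced vector field $X_M$ with $X \in \operatorname{Im}(d\psi)^o_{[z]}$ (consistent with lemma \ref{lem:nullfoliation}).

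To see that the lift respects the stratification, let $H = G_z$ be the stabilizer and let $M_H$ be the orbit-type stratum through $z$. Proposition \ref{prop:normbundle} produces a splitting $TM\big\vert_{M_H} = TM_H \oplus \widetilde{(TM_H)}^\sigma$, and in its proof it is shown that $\widetilde{(TM_H)}^\sigma_z \cap \ker(\sigma_z) = \{0\}$ at fold points. By a dimension count this forces $\ker(\sigma_z) \subseteq T_zM_H$, so every lift $\tilde v \in \ker(\sigma_z)$ automatically lies tangent to the stratum $M_H$. Hence one can perform the lift smoothly on each orbit-type stratum separately.

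I do not anticipate any real obstacle: the proof is a short assembly of proposition \ref{prop:umf} (existence of some $\ker(\sigma_z)$-preimage), lemma \ref{lem:generator} (identification of the ambiguity with the null line), and proposition \ref{prop:normbundle} (tangency of $\ker(\sigma_z)$ to the stratum $M_H$). The only minor point that requires attention is verifying the last tangency, which is where the folded-symplectic analog of the symplectic slice bundle does the work.
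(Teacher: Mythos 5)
Your core argument is correct and is essentially the paper's: the identity $d\pi_z(\ker(\sigma_z))=\ker(d\psi_{[z]})$ from proposition \ref{prop:umf} (itself established via the local factorization diagram that the paper's proof simply re-invokes) makes the lift a matter of choosing any preimage of $v$ in $\ker(\sigma_z)$, and conditions (1) and (2) are immediate. The identification of the ambiguity with the null line $\ker(\sigma)\cap TZ$ via lemma \ref{lem:generator} is also fine.

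One step in your supplementary discussion does not hold up as written: you cannot conclude $\ker(\sigma_z)\subseteq T_zM_H$ from the splitting $TM\big\vert_{M_H}=TM_H\oplus\widetilde{(TM_H)}^{\sigma}$ together with $\ker(\sigma_z)\cap\widetilde{(TM_H)}^{\sigma}_z=\{0\}$ by ``a dimension count.'' A subspace that meets one summand of a direct sum trivially need not be contained in the other summand (a graph-like plane in $\R^2\oplus\R$ is the standard counterexample), so this inference is invalid. The tangency is nonetheless true, but for a different reason, which the paper records inside the proof of proposition \ref{prop:normbundle}: by lemma \ref{lem:preserves1} the stabilizer $H$ acts trivially on $\ker(\sigma_z)$ when the fold is co-orientable, so $\ker(\sigma_z)\subseteq (T_zM)^H=T_zM_H$ by corollary \ref{cor:slice2}. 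Since the lemma only asserts a pointwise choice of lift (as the remark following it emphasizes), this does not affect the validity of your proof of the stated conclusions, but the tangency claim should be justified via the fixed-point argument rather than the dimension count.
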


\begin{remark}
We are not defining a smooth lift in lemma \ref{lem:lifts}; we are simply saying that one may make a choice of a lift at each point in $Z/G$.  The two conditions guarantee that $\tilde{v}$ is transverse to $T_zZ$ and lies inside the kernel $\ker(\sigma_z)$.  Thus, if we couple the span of each lift with the image of the map $(\cdot)_Z: \pi^*\operatorname{Im}(d\psi)^o \to (\ker(\sigma)\cap TZ)$ from lemma \ref{lem:nullfoliation}, we recover the bundle $\ker(\sigma)$ in its entirety.
\end{remark}

\begin{proof}
Recall from the proof of proposition \ref{prop:umf} that we have a commutative diagram in equation \ref{eq:bigdiagram}, giving us a local factorization of the orbital moment map:

\begin{equation}
\xymatrixcolsep{4pc}\xymatrix{
(U,\sigma) \ar[r]^-{\phi} \ar[d]^{\pi} &  (Z\times \R, p^*i^*\sigma + d(t^2p^*\alpha)) \ar[r]^{\tilde{\gamma}(z,t)=(z,t^2)}\ar[d] & (Z\times \R, p^*i^*\sigma +d(tp^*\alpha)) \ar[dr]^{\mu_s} \ar[d] \\
U/G \ar[r]^{\bar{\phi}}              & Z/G \times \R \ar[r]^{\gamma([z],t)=([z],t^2)}                                       & Z/G\times \R \ar[r]^{\bar{\mu}_s} & \fg^*
}
\end{equation}
At a point $[z]\in Z/G$, the kernel is spanned by $\displaystyle\frac{\partial}{\partial t}$ and we may certainly lift it to $\displaystyle\frac{\partial}{\partial t}$ on $Z\times \R$, which satisfies the requisite conditions of the lemma.
\end{proof}

We have therefore (almost) proved the penultimate structure theorem regarding toric, folded-symplectic manifolds with co-orientable folding hypersurfaces.

\begin{theorem}\label{thm:structure}
Let $(M,\sigma,\mu:M\to \fg^*)$ be a toric, folded-symplectic manifold with co-orientable folding hypersurface.  Then,

\begin{enumerate}
\item The orbit type strata $M_H$ are transverse to the folding hypersurface and each $(M_H,i_{M_H}^*\sigma, \mu\vert_{M_H})$ is a toric, folded-symplectic manifold, hence $M$ is stratified by toric, folded-symplectic manifolds.
\item The orbit space $M/G$ is a manifold with corners and the boundary strata of $M/G$ are given by the images of the orbit-type strata $M_H/G$.
\item The moment map descends to $\psi:M/G\to \fg^*$, a unimodular map with folds.  Furthermore, since each $(M_H,i_{M_H}^*\sigma)$ is a toric, folded-symplectic manifold, the restriction of $\psi$ to $M_H/G$ is a map with fold singularities if we view it as a map into $\frak{h}^o$.  Hence $\psi:M/G \to \fg^*$ is a unimodular map with folds that restricts to maps with fold singularities on the boundary strata.
\item The null-foliation on $Z$ may be recovered from $\psi$, along with its orientation induced by $\sigma$ using the intrinsic derivative of $\psi$ and the map $(\cdot)_Z: \pi^*\operatorname{Im}(d\psi)^o \to \ker(\sigma)\cap TZ$ (q.v. lemma \ref{lem:nullfoliation}).
\item The remainder of the bundle $\ker(\sigma)$ can be constructed by choosing lifts of elements of $\ker(d\psi)$.
\item The representation of $H$ on the fibers of $(\widetilde{TM_H})^{\sigma}$ at $Z$ may be read from the orbital moment map.
\item The local structure of the folding hypersurface is determined by the image of $\psi(Z/G)$ (q.v. corollary \ref{cor:foldnorm}.
\end{enumerate}
Thus, the fold, the null foliation, the orientation, the kernel bundle, and the symplectic slice representation may all be recovered from the orbital moment map.  And, by proposition \ref{prop:torsympnorm}, one may recover all symplectic invariants away from the fold just by reading the weights of the symplectic slice representation from the orbital moment map.
\end{theorem}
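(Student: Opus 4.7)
The plan is to treat this theorem as a synthesis result: essentially every assertion has been established individually in the preceding lemmas, and the proof amounts to assembling them into a single statement. The first step is to dispatch part (1): corollary \ref{cor:eqfsnormal2} already gives $M_H \pitchfork Z$ and that $(M_H, i_{M_H}^*\sigma)$ is folded-symplectic, while lemma \ref{lem:stabtori} shows $H$ is a subtorus, $\dim(M_H) = 2(\dim(G)-\dim(H))$, and $\mu\vert_{M_H}$ is a moment map for the induced effective action of $G/H$ on $M_H$ taking values in $\frak{h}^o$. The effectiveness of the $G/H$ action on $M_H$ follows because $G/H$ acts on $M_H \setminus Z$, which is open and dense in $M_H$, and the action of $G/H$ there is effective since it is a free action on the principal orbit type in $M_H \setminus Z$. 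Combined with the dimension count, this makes $(M_H, i_{M_H}^*\sigma, \mu\vert_{M_H}: M_H \to \frak{h}^o)$ a toric folded-symplectic manifold whose co-orientable folding hypersurface is $Z_H = M_H \cap Z$.

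For parts (2) and (3), the work is already done: proposition \ref{prop:corners} gives the manifold-with-corners structure on $M/G$ and the descent of $\mu$ to a smooth $\psi$, and proposition \ref{prop:umf} shows that $\psi$ is a unimodular map with folds and that the folding hypersurface of $\psi$ is $Z/G$. The only remaining claim in (3) is that the restriction of $\psi$ to a boundary stratum $M_H/G$ is itself a map with fold singularities when viewed as a map into $\frak{h}^o$, but this is immediate by applying proposition \ref{prop:umf} to the toric folded-symplectic manifold $(M_H, i_{M_H}^*\sigma, \mu\vert_{M_H}:M_H \to \frak{h}^o)$ produced in step one. I should also observe that the boundary strata of $M/G$ are in bijection with orbit-type strata, which follows from the local model $T^*K \times \C^h / G = \frak{h}^o \times (\R^+)^h$ appearing in the proof of proposition \ref{prop:corners}: the codimension-$k$ boundary stratum through $[p]$ corresponds precisely to the locus where $k$ of the $z_i$ coordinates vanish, i.e.\ to the points whose stabilizer contains the corresponding subtorus of $H$.

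Parts (4), (5), (6), and (7) are each direct references. Part (4) is exactly lemma \ref{lem:nullfoliation}, whose statement combines the canonically defined map $(\cdot)_Z: \pi^*\operatorname{Im}(d\psi)^o \to \ker(\sigma)\cap TZ$ with the orientation on $\operatorname{Im}(d\psi)^o$ induced by the intrinsic derivative of $\psi$ (lemma \ref{lem:orientation1}) and the $\sigma$-induced orientation on $\ker(\sigma)\cap TZ$ (proposition \ref{prop:orientation}). Part (5) is lemma \ref{lem:lifts}: together with part (4), it reconstructs the full rank-$2$ bundle $\ker(\sigma)$ since any fiber splits as $(\ker(\sigma)\cap TZ) \oplus V$ for any chosen transverse $1$-dimensional subspace. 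Parts (6) and (7) are both contained in corollary \ref{cor:foldnorm}: the weights $\{\beta_1,\dots,\beta_h\}$ of the representation on $(\widetilde{TM_H})^\sigma_z$ are exactly the primitive inward normals attached to the codimension-$1$ faces of the unimodular cone meeting at $\psi([z])$ (via lemma \ref{lem:attach1}), and the image $j_Z(Z)$ inside the local model $T^*K \times \C^h$ is uniquely determined by $\psi(Z/G) - \psi([z])$ via $\phi^{-1}$.

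The only genuinely new observation to make explicit, and the item I would take the most care with, is the effectiveness claim built into part (1): one must verify that once we restrict to an orbit-type stratum and quotient by the isotropy $H$, the residual $G/H$ action on $M_H$ is effective so that the triple really is a toric folded-symplectic manifold in the sense of definition \ref{def:TFSmanifold}. I would argue this using lemma \ref{lem:eff2} applied to the invariant open set $M_H \setminus Z$, together with the observation, already embedded in the proof of lemma \ref{lem:stabtori}, that the free $G/H$-orbits in $M_H \setminus Z$ are half-dimensional in $M_H$. Once this verification is in place, the theorem is simply a bookkeeping statement that bundles these prior results into a unified structural description.
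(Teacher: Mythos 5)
Your proposal is correct and follows essentially the same route as the paper: the theorem is proved by assembling lemma \ref{lem:stabtori}, corollary \ref{cor:eqfsnormal2}, propositions \ref{prop:corners} and \ref{prop:umf}, lemmas \ref{lem:nullfoliation} and \ref{lem:lifts}, lemma \ref{lem:attach1} together with remark \ref{rem:slicerep}, and corollary \ref{cor:foldnorm}. The one point you flag as delicate, effectiveness of the $G/H$ action on $M_H$, is in fact immediate: every $p\in M_H$ has $G_p=H$ exactly, so $G/H$ acts freely (hence effectively) on all of $M_H$, and this is already built into lemma \ref{lem:stabtori}.
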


\begin{proof}\mbox{ } \newline
The only detail we haven't proven is that, if $p\in M/G$, the representation of the stabilizer of a point in $\pi^{-1}(p)$ on the symplectic normal bundle to the orbit-type stratum containing $p$ is encoded in the moment map.  Let's summarize the proofs of all of the claims of theorem \ref{thm:structure} and prove this last detail along the way.

\begin{enumerate}
\item Lemma \ref{lem:stabtori} implies that $(M_H,i^*_{M_H}\sigma, \mu\big\vert_{M_H})$ is a toric, folded-symplectic manifold.
\item Proposition \ref{prop:corners} implies that $M/G$ is a manifold with corners.  Locally, the boundary strata of $M/G$ are determined by the images of the orbit type strata $M_H$, hence globally the boundary strata of $M/G$ are determined by $M_H/G$.
\item Proposition \ref{prop:umf} implies that $\psi:M/G \to \fg^*$ is a unimodular map with folds.  By part $1$, $M_H$ is a toric, folded-symplectic manifold, hence proposition \ref{prop:umf} implies that the restriction $\psi:M_H/G \to \frak{h}^o$ is a unimodular map with folds.  However, $M_H/G$ is a manifold without corners, hence unimodularity is a vacuous condition and we may remove it: $\psi:M_H/G \to \frak{h}^o$ is a map with fold singularities.
\item Lemmas \ref{lem:lifts} and \ref{lem:nullfoliation} imply parts $4$ and $5$ of the theorem.
\item The fact that the representation of $H$ can be read from $\psi$ follows from proposition \ref{prop:torsympnorm} and remark \ref{rem:slicerep}.  In particular, at a point $p$ away from $Z/G$, there is a neighborhood $U_p$ and a unimodular cone $C$ such that $\psi\big\vert_U : U \hookrightarrow C$ is an open embedding.  By proposition \ref{prop:torsympnorm}, the normals to the facets of $C$ span the integral lattice of a subtorus $H$ of $G$, hence their duals in $\frak{h}^*$ define the weights of the symplectic slice representation.  By remark \ref{rem:slicerep}, the representation on the symplectic slice at points away from $Z\subset M$ is canonically isomorphic to the representation on the fibers of $(\widetilde{TM_H})^{\sigma}$ and this representation doesn't change along connected components of $M_H$.  By lemma \ref{lem:attach1}, the assignment of a basis of the integral lattice of $H$ to points extends across the fold.  Thus, we may read the representation from $\psi$.
\item By corollary \ref{cor:foldnorm}, the folding hypersurface is equivariantly isomorphic to a hypersurface $\Sigma \subset T^*K \times \C^h$ for some subtori $K\le G$ and $H\le G$ of $G$, where $H$ acts on $\C^h$ via rotations.  The corollary states that this hypersurface is uniquely determined by the moment map image, hence $\psi$ locally determines $\Sigma \subset T^*K \times \C^h$ up to isomorphism.
\end{enumerate}
\end{proof}

We need one last lifting lemma before we leave the basic theory of toric, folded-symplectic manifolds behind.  To this end, it may be easiest to lead with an example.

\begin{example}\label{ex:invntvfs}
Consider the action of $S^1$ on $\C$ by rotations.  The orbit map is $q(z) = \vert z \vert^2 \in \R^+$.  We would like to know when one may lift a vector field on $\R^+$ to an invariant vector field on $\C$ via the quotient map $q$.  We claim that if a vector field $\displaystyle X=f\frac{\partial}{\partial t}$ is stratified, then it lifts to an invariant vector field on $\C$, namely the radial vector field.  Indeed, if $X$ is stratified on $\R^+$ then it must vanish at the origin.  Consequently, we may write it as $\displaystyle X=tg\frac{\partial}{\partial t}$ for some smooth function $g$.  We then define the lift of $X$ to be:

\begin{displaymath}
\tilde{X}(z)= \frac{1}{2}g\circ q R
\end{displaymath}
where $R(z)$ is the radial vector field.  In cartesian coordinates, we may write this lift as $\displaystyle\tilde{X}(x,y)=\frac{1}{2}(g\circ q)x\frac{\partial}{\partial x} + y\frac{\partial}{\partial y}$.  Since the quotient map is $q(x,y)=x^2+y^2$, we have:

\begin{displaymath}
dq_{(x,y)}(\tilde{X}) = \frac{1}{2}(g\circ q)(2x^2 + 2y^2)\frac{\partial}{\partial t}= (g\circ q)(x^2+y^2)\frac{\partial}{\partial t} = (X \circ q)(x,y)
\end{displaymath}
hence $dq(\tilde{X})=X\circ q$.  Now, this procedure applies more generally to the $\mathbb{T}^h$ action on $\C^h$ by rotations.  The orbit space is $(\R^+)^h$ with coordinates $(x_1,\dots,x_h)$.  A stratified vector field is a linear combination of the vector fields $\displaystyle \frac{\partial}{\partial x_i}$ and each of these lift to the radial vector fields on each factor of $\C$, hence any stratified vector field has a lift to $\C^h$.
\end{example}

\begin{lemma}\label{lem:liftbro}
Let $(M,\sigma,\mu:M\to \fg^*)$ be a toric, folded-symplectic manifold with orbit map $\pi:M\to M/G$ and orbital moment map $\psi:M/G \to \fg^*$.  Let $[z]\in Z/G$ be a point in the fold of $\psi$.  Suppose $X$ is a stratified vector field on $M/G$ so that $X_p \in \ker(d\psi_p)$ and $X_p\ne 0$ for all $p\in Z/G$.  Then there exists a neighborhood $U$ of $[z]$ and lift of $X$ to an invariant vector field $\tilde{X}$ on $\pi^{-1}(U)$ so that $\tilde{X}_z \in \ker(\sigma_z)$ for all $z\in \pi^{-1}(U)\cap Z$.  That is, we may lift stratified vector fields passing through the kernel of $d\psi$ to invariant vector fields passing through $\ker(\sigma)$.
\end{lemma}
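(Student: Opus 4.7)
The plan is to first localize using the equivariant fold normal form of Proposition \ref{prop:eqfsnormal}, then decompose $X$ along the fold direction, and finally lift the remaining transverse piece using the slice theorem together with the toric recipe of Example \ref{ex:invntvfs}.

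First I would replace a $G$-invariant neighborhood of $Z$ in $M$ by a neighborhood of the zero section of $Z\times\R$, with $G$ acting trivially on the $\R$-factor and $\sigma = p^*i^*\sigma + d(t^2p^*\alpha)$, so that the quotient map becomes $\pi_Z\times\id$. In this model $\partial/\partial t$ lies in $\ker(\sigma)$ along $Z\times\{0\}$, and by Proposition \ref{prop:umf} the line $\ker(d\psi_{([z],0)})$ is spanned by $\partial/\partial t$. Near $([z],0)$ the vector field $X$ then admits a unique decomposition
\[ X \;=\; a\,\frac{\partial}{\partial t} + Y, \]
with $a$ smooth and $Y$ a $t$-dependent stratified vector field on $Z/G$. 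The hypotheses $X_p\in\ker(d\psi_p)$ and $X_p\neq 0$ for $p\in Z/G\times\{0\}$ force $a([z],0)\neq 0$ and $Y|_{Z/G\times\{0\}}\equiv 0$.

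The candidate lift is
\[ \widetilde{X} \;:=\; \bigl(a\circ(\pi_Z\times\id)\bigr)\frac{\partial}{\partial t} \;+\; \widetilde{Y}, \]
where $\widetilde{Y}$ is a $G$-invariant lift of $Y$ still to be constructed. The first summand is manifestly invariant, and if $\widetilde{Y}$ vanishes on $Z\times\{0\}$ then $\widetilde{X}|_{Z\times\{0\}}=(a\circ(\pi_Z\times\id))\,\partial/\partial t$ lies in $\ker(\sigma)$ along the fold. The entire proof therefore reduces to producing a $G$-invariant local lift $\widetilde{Y}$ of $Y$ which preserves the vanishing along $Z\times\{0\}$.

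The hard part will be this last step. I would handle it by applying the equivariant slice theorem to the $G$-action on $Z$ at $z$ and identifying, as in the proof of Proposition \ref{prop:corners}, an invariant neighborhood of $G\cdot z$ in $Z$ with $K\times\R^{\dim K-1}\times\C^h$, where $K\le G$ is a subtorus complementary to $H=G_z$, the $H$-action on $\C^h$ is the weighted rotation action coming from the symplectic slice, and $G=K\times H$ acts by left multiplication on $K$ and trivially on $\R^{\dim K-1}$. The orbit space is $\R^{\dim K-1}\times(\R^+)^h$ with quotient map $(k,v,\vec z)\mapsto(v,|z_1|^2,\dots,|z_h|^2)$, and any stratified vector field on it is a $C^\infty$-combination of the $\partial/\partial v_j$ and of fields $x_i g_i\,\partial/\partial x_i$ with $g_i$ smooth. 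The $\partial/\partial v_j$ are already $G$-invariant vector fields on $K\times\R^{\dim K-1}\times\C^h$, while by the recipe of Example \ref{ex:invntvfs} each $x_i g_i\,\partial/\partial x_i$ lifts to $\tfrac{1}{2}(g_i\circ q)R_i$, where $R_i$ is the radial vector field on the $i$th $\C$ factor and $q$ is the quotient map. Because this lifting prescription is $C^\infty$-linear in the coefficient functions, the hypothesis $Y|_{Z/G\times\{0\}}\equiv 0$ translates immediately into $\widetilde{Y}|_{Z\times\{0\}}\equiv 0$, which is exactly what is needed to finish the proof.
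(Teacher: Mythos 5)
Your proof is correct and follows essentially the same route as the paper: localize via the equivariant normal form and the slice model $K\times\R^{\dim K-1}\times\C^h$, express stratified vector fields in the frame $\{\partial/\partial t,\ \partial/\partial v_j,\ x_i\partial/\partial x_i\}$, and lift with the radial-field recipe of Example \ref{ex:invntvfs} so that the coefficients of everything except $\partial/\partial t$ vanish along the fold. The only step worth spelling out is that $Y|_{Z/G\times\{0\}}\equiv 0$ only directly gives $x_ig_i=0$ on the fold, and one passes to $g_i=0$ there by continuity from the dense set where $x_i\neq 0$; the paper's own proof elides the same point.
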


\begin{proof}
Let $[z]\in Z/G$ and let $z\in \pi^{-1}([z])$ and let $H$ be the stabilizer of $p$.  Since the claim is local, we may assume that $M=Z\times \R$ where the kernel of $\sigma$ contains $\displaystyle \frac{\partial}{\partial t}$ and $Z=K\times\R^{g-h-1} \times \C^h$, where $g=\dim(G)$, $h=\dim(H)$, and $K$ is a subtorus of $G$ complementary to $H$.  Here, we are using the fact that the differential slice $T_zZ/T_z(G\cdot z)$ is isomorphic to $TZ_H/(T_z(G\cdot z)) \oplus (\widetilde{TZ_H})^{\sigma}$, where the second summand is a faithful, symplectic representation of $H$ with dimension $2\dim(H)$.  The orbit map is then:

\begin{displaymath}
q(k,x_1,\dots,x_{g-h-1},z_1,\dots,z_h,t) = (x_1,\dots,x_{g-h-1}, \vert z_1 \vert^2, \dots, \vert z_h \vert^2, t)
\end{displaymath}
If $t_1, \dots, t_h$ are the coordinates on $(\R)^+$, then any stratified vector field may be written as a linear combination of:

\begin{itemize}
\item the vector field $\displaystyle \frac{\partial }{\partial t}$,
\item the vector fields of the form $\displaystyle \frac{\partial}{\partial x_i}$, and
\item the vector fields of the form $\displaystyle t_i\frac{\partial}{\partial t_i}$.
\end{itemize}
Since each of these has a lift, there is no problem with producing a lift of any linear combination of them.  We want a specific lift, though: one which passes through the fold tangent to the kernel $\ker(\sigma)$.  A vector field that is tangent to $\ker(d\psi)$ at $t=0$ must have coefficients that vanish at $t=0$ for all terms except the $\displaystyle \frac{\partial}{\partial t}$ term.  When we lift, we pull back these coefficients via the quotient map, so the lifted vector field has the property that all terms except for the $\displaystyle \frac{\partial}{\partial t}$ term vanish at $t=0$, hence the lifted vector field takes values in $\ker(\sigma)$ at the fold.
\end{proof}

\subsection{The Categories $\mathcal{M}_{\psi}$ and $\mathcal{B}_{\psi}$}
We have made a strong case for the fact that the only two invariants of a toric, folded-symplectic manifold are the orbit space $M/G$, which is a manifold with corners, and the orbital moment map $\psi:M/G \to \fg^*$, which is a unimodular map with folds.  We therefore fix a manifold with corners, $W$, and a unimodular map with folds $\psi:W \to \fg^*$, and we ask: is it possible to classify all toric folded-symplectic manifolds whose orbit space is $W$ and whose orbital moment map is $\psi:W\to \fg^*$?  The answer will be \emph{yes}, but we will need a bit of machinery to prove it.  We first begin by collecting the data into a category.

\begin{definition}\label{def:empsi}
Let $W$ be a manifold with corners and let $\psi:W\to \fg^*$ be a unimodular map with folds, where $\fg$ is the Lie algebra of a torus $G$.  We define the category $\mathcal{M}_{\psi}(W)$ to be the category whose objects are triples:
\begin{displaymath}
(M,\sigma, \pi:M \to W)
\end{displaymath}
where $\pi$ is a quotient map and $(M,\sigma, \psi \circ \pi)$ is a toric, folded-symplectic manifold with co-orientable folding hypersurface, where the torus is $G$, with moment map $\psi \circ \pi$.  We refer to an object as a \emph{toric, folded-symplectic manifold over $\psi$}.  A morphism between two objects $(M_i,\sigma_i,\pi_i:M \to W)$, $i=1,2$, is an equivariant diffeomorphism $\phi:M_1 \to M_2$ that induces a commutative diagram:

\begin{displaymath}
\xymatrix{
M_1 \ar[rr]^{\phi} \ar[dr]^{\pi_1} & & M_2 \ar[dl]^{\pi_2} \\
                                   &W \ar[r]^{\psi}& \fg^*
}
\end{displaymath}
and satisfies $\phi^*\sigma_2=\sigma_1$, hence $\phi$ is an equivariant folded-symplectomorphism that preserves moment maps.  By definition, every morphism is invertible, hence $\mathcal{M}_{\psi}(W)$ is a groupoid.
\end{definition}

\begin{remark}\label{rem:empsi}
It turns out that $\mathcal{M}_{\psi}(W)$ is more than just a groupoid.  If $U\subset W$ is an open subset, then for any object $(M,\sigma, \pi:M \to W)$ we have the restricted object $(\pi^{-1}(U),\sigma, \pi\big\vert_{\pi^{-1}(U)}:\pi^{-1}(U) \to U)$, which we denote as $(M\big\vert_U, \sigma, \pi\big\vert_U)$, despite the fact that $U$ is an open subset of $W$.  Since $\psi\big\vert_U$ is a unimodular map with folds, we have that $(M\big\vert_U, \sigma \pi\big\vert_U)$ is an object of $\mathcal{M}_{\psi}(U)$.  Thus, for each pair of open sets $U, V\subseteq W$ of $M$, we have a restriction functor:

\begin{displaymath}
\big\vert_V^U: \mathcal{M}_{\psi}(U) \to \mathcal{M}_{\psi}(V)
\end{displaymath}
For any three open subsets $U,V,T$ with $T \subset V \subset U$, the restriction maps satisfy $\big\vert_T^V \circ \big\vert_V^U=\big\vert_T^U$, hence $\mathcal{M}_{\psi}: \operatorname{Open}(W)^{op} \to \mathcal{M}_{\psi}(\cdot)$ is a presheaf.  Each category $\mathcal{M}_{\psi}(U)$ is a groupoid by definition, hence $\mathcal{M}_{\psi}$ is a presheaf of groupoids.

\vspace{5mm}
If $(M_1,\sigma_1,\pi_1)$ and $(M_2,\sigma_2,\pi_2)$ are two toric, folded-symplectic manifolds over $\psi:W \to \fg^*$ and their restrictions to open subsets agree, then they must agree as toric-folded-symplectic manifolds.  If we have an open subset $U\subseteq W$ and an open cover of $\{U_i\}_{i\in I}$ with objects $(M_i, \sigma_i, \pi_i)$ in $\mathcal{M}_{\psi}(U_i)$ for each $i$ that satisfy:

\begin{displaymath}
(M_i\big\vert_{U_i\cap U_j}, \sigma_i ,\pi_i \big\vert_{U_i \cap U_j}) = (M_j\big\vert_{U_i\cap U_j}, \sigma_j, \pi_j\big\vert_{U_i\cap U_j})
\end{displaymath}
then we may form the space $(M,\sigma,\pi) = (\sqcup_i (M_i,\sigma_i \pi_i))/\sim$, where $\sim$ is the equivalence relation that states that two points are equivalent $p_i \sim p_i$ if they are both in $U_i\cap U_j$ and the form $\sigma$ at an equivalence class $[p_i]$, $\sigma_{[p_i]}$, is defined to be $\sigma_{p_i}$.  Similarly, the quotient map $\pi$ at $[p_i]$ is $\pi(p_i)$, which doesn't depend on $i$.  This space is a toric, folded-symplectic manifold.  Thus, $\mathcal{M}_{\psi}$ is a \emph{sheaf} of groupoids.
\end{remark}

It is not particularly efficient to study $\mathcal{M}_{\psi}$ directly.  For example, consider the question of whether or not $\mathcal{M}_{\psi}(W)$ is nonempty.  At this stage, we cannot answer such a question since we don't know how one might construct objects in $\mathcal{M}_{\psi}$.  On the other hand, we \emph{could} answer the question if we were able to assume that the action is free, which we discuss below in remark \ref{rem:nonempty}.  Thus, we construct a category where we assume all actions are principal.  Since we are keeping the orbit space $W$ fixed, this will mean that the total space of a principal bundle over $W$ will be a manifold with corners.  Since we are interested in Hamiltonian torus actions, we will need a definition of an Hamiltonian action on a manifold with corners.  To obtain such a definition, simply replace the words \emph{folded-symplectic manifold without corners} in definition \ref{def:fsham} with the words \emph{folded-symplectic manifold with corners}.  We now form the category of principal, toric, folded-symplectic bundles over a fixed unimodular map with folds, $\psi:W \to \fg^*$.

\begin{definition}\label{def:bpsi}
Let $\psi: W \to \fg^*$ be a fixed unimodular map with folds, where $\fg$ is the Lie algebra of a torus $G$.  We define $\mathcal{B}_{\psi}(W)$ to be the category whose objects are principal $G$-bundles $\pi:P \to W$ equipped with an invariant folded-symplectic form $\sigma$ with co-orientable folding hypersurface, denoted as a pair

\begin{displaymath}
(\pi:P \to W, \sigma)
\end{displaymath}
so that $\psi\circ \pi$ is a moment map for the principal action of the torus $G$ on $P$.  A morphism $\phi$ between two objects $(\pi_1:P_1\to W, \sigma_1)$ and $(\pi_2:P_2 \to W, \sigma_2)$ is a map of principal $G$ bundles:

\begin{displaymath}
\xymatrix{
P_1 \ar[rr]^{\phi} \ar[dr]^{\pi_1}& & P_2 \ar[dl]^{\pi_2} \\
                                  &W\ar[r]^{\psi} & \fg^*
}
\end{displaymath}
so that $\phi^*\sigma_2 = \sigma_1$, hence $\phi^*(\psi\circ \pi_2) =\psi \circ \pi_1$.  That is, $\phi$ preserves moment maps.
\end{definition}

\begin{remark}\label{rem:bpsi}
As in the case of $\mathcal{M}_{\psi}$, it is straightforward to show that $\mathcal{B}_{\psi}$ is a sheaf of groupoids on $W$.
\end{remark}

\begin{remark}\label{rem:nonempty}
Unlike the case of $\mathcal{M}_{\psi}(W)$, it is easy to show that $\mathcal{B}_{\psi}(W)$ is nonempty.  Consider the cotangent bundle $T^*G=G\times \fg^*$, where $G$ is a torus, with its canonical symplectic structure $\omega_{T^*G}$ and canonical moment map $\mu(\lambda, \eta)= -\eta$ given by projection.  We have a pullback diagram:

\begin{displaymath}
\xymatrixcolsep{3pc}\xymatrix{
\psi^*(T^*G) \ar[d]^{\pi} \ar[r]^{id_G \times \psi} & G\times \frak{g}^* \ar[d]^{\mu} \\
W \ar[r]^{\psi}                               & \fg^*
}
\end{displaymath}
Since $\psi$ is a map with fold singularities, $\psi\times id_G$ is a map with fold singularities: its determinant vanishes transversally in any coordinate chart and $\ker(d\psi)=\ker(d\psi \times d(id_G))$, hence the kernel is transverse to the folding hypersurface $\pi^{-1}(Z)$ (q.v. corollary \ref{cor:folds4-1}).  Thus, $\sigma=\psi^*\omega_{T^*G}$ is a folded-symplectic form and $\psi \circ \pi$ is a moment map for the action of $G$.  Since $\mu:T^*G \to \fg^*$ is a principal $G$-bundle, $\psi^*(T^*G)$ is a principal $G$ bundle.  Thus $(\pi:\psi^*(T^*G) \to W, \psi^*(\omega_{T^*G}))$ is an object in $\mathcal{B}_{\psi}(W)$.
\end{remark}

What is the purpose of defining $\mathcal{B}_{\psi}(W)$?  In a way, we will see that studying $\mathcal{B}_{\psi}(W)$ allows us to bypass the intricate global structures of objects in $\mathcal{M}_{\psi}(W)$.  After all, any object of $\mathcal{M}_{\psi}(W)$ is \emph{almost} a toric, folded-symplectic bundle over $\psi:W \to \fg^*$ since the action is free on an open dense subset.  So, we could simply replace points with stabilizers by corners, which is what $\mathcal{B}_{\psi}(W)$ does, and then remove the corners using a local cutting procedure.  This can be condensed into the following strategy:

\begin{enumerate}
\item Fix a unimodular map with folds, $\psi:W \to \fg^*$.
\item Classify objects in $\mathcal{B}_{\psi}(W)$ up isomorphism.  We call the isomorphism classes of objects in $\mathcal{B}_{\psi}(W)$ $\pi_0(\mathcal{B}_{\psi}(W))$.
\item Construct a functor $c:\mathcal{B}_{\psi}(W) \to \mathcal{M}_{\psi}(W)$.
\item Show the functor $c$ is an equivalence of categories, which amounts to showing it is an isomorphism of sheaves of groupoids.
\item Reap the rewards by noticing any equivalence of categories will induce a bijection on isomorphism classes of objects, hence $\pi_0(\mathcal{B}_{\psi}(W))= \pi_0(\mathcal{M}_{\psi}(W))$.
\end{enumerate}

Before we leave the basic theory of toric, folded-symplectic manifolds behind and endeavour to classify objects of $\mathcal{M}_{\psi}(W)$, let us record a local uniqueness statement for objects of $\mathcal{M}_{\psi}(W)$, which we will use to show that the functor $c$, which we have yet to construct or define, is an isomorphism of sheaves of groupoids.

\begin{lemma}\label{lem:locunique}
Let $\psi:W \to \fg^*$ be a unimodular map with folds with folding hypersurface $\hat{Z}$, where $\fg$ is the Lie algebra of a torus $G$.  Suppose we have two objects $(M_i,\sigma_i,\pi_i:M \to W)$, $i=1,2$, in $\mathcal{M}_{\psi}(W)$.  Then for any point $w\in W$, there exists a neighborhood $U$ of $w$ and an isomorphism
\begin{displaymath}
\phi:(M_1\big\vert_U ,\sigma_1, \pi_1\big\vert_U) \rightarrow (M_2\big\vert_U, \sigma_2, \pi_2\big\vert_U)
\end{displaymath}
of toric, folded-symplectic manifolds.  That is, there exists a morphism between $(M_1\big\vert_U ,\sigma_1, \pi_1\big\vert_U)$ and $(M_2\big\vert_U, \sigma_2, \pi_2\big\vert_U)$ in the category $\mathcal{M}_{\psi}(U)$.
\end{lemma}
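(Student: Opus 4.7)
The plan is to reduce to local normal forms and argue that both objects are locally isomorphic to a common standard model determined by $\psi$. Fix $w \in W$ and choose points $p_i \in \pi_i^{-1}(w)$ for $i = 1, 2$. By Theorem \ref{thm:structure}, the stabilizer $H_i = G_{p_i}$, the complementary subtorus $K_i$, and the weights $\{\beta_1,\dots,\beta_h\}$ of the symplectic slice representation can all be read from the combinatorics of $\psi$ at $w$ (via Lemmas \ref{lem:attach} and \ref{lem:attach1}), so after identifying these data on both sides we may assume $H_1 = H_2 = H$, $K_1 = K_2 = K$, and that the weights coincide.

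First I would handle the case $w \notin \hat{Z}$. Here $\psi$ is a unimodular local embedding on a neighborhood of $w$, and for each $i$ the restriction $(M_i\big\vert_{U},\sigma_i)$ is a toric symplectic manifold. I would apply the toric symplectic local normal form (Proposition \ref{prop:torsympnorm}) to each $p_i$, producing equivariant open symplectic embeddings $j_i : \pi_i^{-1}(U) \hookrightarrow T^*K \times \mathbb{C}^h$ with $\mu_i\vert_{\pi_i^{-1}(U)} = \mu_i(p_i) + \tau^* \circ \varphi \circ j_i$. Since $\mu_i(p_i) = \psi(w)$ for both $i$, the composition $\phi = j_2^{-1} \circ j_1$ is an equivariant folded-symplectomorphism, and because $\varphi$ is the orbit map, $\phi$ manifestly satisfies $\pi_2 \circ \phi = \pi_1$, giving the required morphism in $\mathcal{M}_\psi(U)$ after shrinking $U$.

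Next I would handle the case $w \in \hat{Z}$. I would apply the equivariant folded-symplectic normal form (Proposition \ref{prop:eqfsnormal}) to each $(M_i,\sigma_i)$ near $Z_i := \pi_i^{-1}(\hat{Z})$, giving equivariant identifications $\phi_i : (Z_i \times \mathbb{R}, p^*i^*\sigma_i + d(t^2 p^*\alpha_i)) \to M_i$ that fix $Z_i$. This reduces the question to producing an equivariant isomorphism of neighborhoods in $Z_1 \times \mathbb{R}$ and $Z_2 \times \mathbb{R}$ intertwining the moment maps and the forms. For the $Z_i$ themselves, Corollary \ref{cor:foldnorm} provides equivariant coisotropic embeddings $j_{Z_i} : \mathcal{U}_i \hookrightarrow T^*K \times \mathbb{C}^h$ whose image is \emph{uniquely determined} by $\psi(U \cap \hat{Z}) - \psi(w)$; this yields an equivariant diffeomorphism $Z_1 \cap \pi_1^{-1}(U) \to Z_2 \cap \pi_2^{-1}(U)$ that pulls $i^*\sigma_2$ back to $i^*\sigma_1$ (since the symplectic model restricts canonically). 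Extending trivially across $\mathbb{R}$, the two folded-symplectic forms on $Z \times \mathbb{R}$ agree on the zero section and define the same kernel bundle and canonical orientation, so the linear path $\sigma_s = (1-s)\sigma_1 + s\sigma_2$ remains folded-symplectic in a neighborhood of the zero section by Lemma \ref{lem:symplectize}. An equivariant Moser argument (Proposition \ref{prop:Moser}), where the requisite primitive $\beta_s$ of $\dot\sigma_s$ can be taken $G$-invariant by averaging and vanishing on $\ker(\sigma_s)$ because the two forms agree to first order along the fold, produces the desired equivariant folded-symplectomorphism. Preservation of moment maps is then automatic: both moment maps equal $\psi \circ \pi_i$ by hypothesis, and the constructed isomorphism is $G$-equivariant and covers the identity on $W$.

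The main obstacle is the fold case, specifically ensuring the Moser isotopy is simultaneously $G$-equivariant, strata-preserving, and defined on an honest open neighborhood of the fold; the averaging step handles equivariance, and the fact (from Proposition \ref{prop:eqfsnormal}) that $\ker(\sigma_s)$ is tangent to boundary strata permits invoking Proposition \ref{prop:Moser} to produce a stratified time-dependent vector field whose flow gives the required isomorphism.
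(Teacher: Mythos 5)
Your overall strategy is the paper's: identify the folding hypersurfaces via the uniqueness statement in Corollary \ref{cor:foldnorm}, extend to a collar, and correct the form by an equivariant Moser argument. The case $w\notin\hat{Z}$ is handled correctly. However, there is a genuine gap in the fold case, in the step ``extending trivially across $\mathbb{R}$.'' The map $\gamma\times\mathrm{id}_{\mathbb{R}}:Z_1\times\mathbb{R}\to Z_2\times\mathbb{R}$ does \emph{not} in general satisfy $\pi_2\circ(\gamma\times\mathrm{id})=\pi_1$. The two identifications of a neighborhood of $Z_i$ with $Z_i\times\mathbb{R}$ coming from Proposition \ref{prop:eqfsnormal} are built from flows of kernel vector fields chosen independently in $M_1$ and $M_2$; they therefore induce two \emph{different} identifications of a neighborhood of $\hat{Z}$ in $W$ with $\hat{Z}\times\mathbb{R}$, and $\gamma\times\mathrm{id}$ only covers a diffeomorphism of $W$ fixing $\hat{Z}$, not the identity. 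Since $\psi$ is two-to-one near $\hat{Z}$, preserving the moment map $\psi\circ\pi$ is strictly weaker than covering $\mathrm{id}_W$, so your claim that preservation is ``automatic'' does not give you a morphism of $\mathcal{M}_\psi(U)$. The missing ingredient is to choose a single stratified vector field $w$ on $W$ with nonzero values in $\ker(d\psi)$ along $\hat{Z}$, lift it to invariant vector fields $w_i$ on $M_i$ valued in $\ker(\sigma_i)$ along $Z_i$ (this is exactly Lemma \ref{lem:liftbro}), and define the extension by sending the $w_1$-flow line through $z$ to the $w_2$-flow line through $\gamma(z)$; covering $\mathrm{id}_W$ is then true by construction because both lifts project to the same vector field on $W$.

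A second, smaller omission of the same flavor: after the Moser correction you must check that the isotopy $\phi_s$ itself covers $\mathrm{id}_W$. This is not free; the paper's argument is that $i_{X_s}\sigma_s=-\beta_s$ with $\beta_s$ basic forces $X_s(p)\in T_p(G\cdot p)^{\sigma_s}$, which on the open dense set where orbits are Lagrangian means $X_s$ is tangent to orbits, hence everywhere tangent by continuity; compactness of orbits then gives a globally defined, fiber-preserving equivariant flow. With these two points supplied, your argument coincides with the paper's proof.
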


\begin{proof} \mbox{ }\newline
We first show that one can obtain a local isomorphism of folding hypersurfaces $\gamma:U_1 \to U_2$, where $U_i\subseteq Z_i$ is an open subset, so that $\gamma^*(i_{Z_2}^*\sigma_2) = i_{Z_1}^*\sigma_1$ and $\gamma$ induces a commutative diagram:

\begin{displaymath}
\xymatrix{
(U_1,i_{Z_1}^*\sigma_1, \pi_1) \ar[dr]^{\pi_1} \ar[rr]^{\gamma} & & (U_2,i_{Z_2}^*\sigma_2,\pi_2)  \ar[dl]^{\pi_1} \\
                                                                &U_0\subseteq \hat{Z} \ar[r]^{\psi}& \fg^*
}
\end{displaymath}
We then show that one may extend this isomorphism to an isomorphism $\phi$ of neighborhoods $\tilde{U}_1\subseteq M_1$ and $\tilde{U}_2\subseteq M_2$ of $U_1$ and $U_2$, which makes the diagram commute:

\begin{displaymath}
\xymatrix{
(\tilde{U}_1,\sigma_1,\pi_1) \ar[dr]^{\pi_1} \ar[rr]^{\phi} & & (\tilde{U}_2,\sigma_2, \pi_2 \ar[dl]^{\pi_2}) \\
                                                              &U\subset W \ar[r]^{\psi} & \fg^*
}
\end{displaymath}
The local isomorphism of the folds is constructed as follows.  Pick a point $z\in \hat{Z}$ in the folding hypersurface of $\psi$.  By theorem \ref{thm:structure}, we may read the stabilizer and symplectic slice representation at points $p_i\in \pi_i^{-1}(\hat{z})$, $i=1,2$, from $\psi$ at $z$.  Thus, the stabilizer of $p_1$ is the same as the stabilizer of $p_2$ and we denote it as $H\le G$.  Let $\beta_1,\dots, \beta_h$ be the weights associated to $\hat{z}$ via $\psi$ (q.v. lemma \ref{lem:attach1}).  By theorem \ref{thm:structure}, these are the weights of the symplectic slice representations of $H$ at $p_1$ and $p_2$.  Since they have the same weights, they are isomorphic as representations.  In particular, they are both isomorphic to $\C^h$.  Choose a subtorus $K$ complementary to $H$.  Corollary \ref{cor:foldnorm} then gives us invariant neighborhoods $U_i$ of $\pi_i^{-1}(\hat{z})$ in $Z_i$ and a commutative diagram:

\begin{displaymath}
\xymatrixcolsep{3pc}\xymatrix{
U_1 \ar[d]^{\pi_1} \ar[r]^-{j_{Z_1}} &  T^*K \times \C^h \ar[d]^{\phi} & \ar[l]_-{j_{Z_2}} U_2 \ar[d]^{\pi_2}\\
U_0 \ar[r]^{\psi}                   &   \fg^*                         & \ar[l]^{\psi} U_0
}
\end{displaymath}
where the arrows on the top row are $K\times H$ equivariant open embeddings, or $G$-equivariant depending on ones perspective since we can precompose with an isomorphism.  The arrow $\phi$ is the moment map for the action of $G=K\times H$ on $T^*K \times \C^h$.  By corollary \ref{cor:foldnorm}, the image of $j_{Z_i}(U_i)$ is uniquely determined by the moment map image $\psi(U_0)$, hence $j_{Z_2}^{-1}\circ j_{Z_1}$ is an equivariant isomorphism of the folding hypersurfaces.  To obtain the requisite commuting diagram, we shrink $U_0$ so that $\psi$ is an embedding on $U_0$ and is thus invertible as a map onto its image.  The map $\gamma:=j_{Z_2}^{-1}\circ j_{Z_1}$ then covers $\psi\big\vert_{U_0}^{-1}\circ \psi\big\vert_{U_0}= id_{U_0}$.

\vspace{5mm}

To extend the isomorphism, we choose a stratified vector field $w$ in a neighborhood $U$ of $U_0$ so that $w$ takes nonzero values in $\ker(d\psi)$ at points of $U_0\subseteq \hat{Z}$.  By lemma \ref{lem:liftbro}, this vector field lifts to an invariant local vector field $w_1$ and $w_2$ defined in a neighborhood of $U_1$ and $U_2$ respectively.  Furthermore, at points of $U_i$, we may assume that $w_i$ takes values in the kernel of $\sigma_i$ by lemma \ref{lem:liftbro}.  The integral curves of $w_i$ are mapped to the integral curves of $w$ under the map $\pi_i$ by definition of a lift.  Let $\Phi_i$ be the local flow of $w_i$.  We define a diffeomorphism from a neighborhood $\tilde{U}_1$ onto a neighborhood $\tilde{U}_2$ as follows.  For each point $z\in U_1$, define

\begin{displaymath}
\phi(\Phi_1(z,t))=\Phi_2(\gamma(z),t)
\end{displaymath}
which is simply the map that takes the flow line of $w_1$ through $z$ to the flow line of $w_2$ through $\gamma(z)$.  By construction, this map covers the identity $id_U:U\to U$, where $U\subseteq W$ is the image of $\tilde{U}_i$ under $\pi_i$.  This is because $\gamma$ covers the identity on $U_0$ and the flowlines $\Phi_1(z,t)$, $\Phi_2(\gamma(z),t)$ project to the flow line of $w$ through $\pi_1(z)=\pi_2(\gamma(z))$.  Note that the map $\phi$ may not be a diffeomorphism, but it has maximal rank at points of $Z_1$ so there is a neighborhood on which it is a diffeomorphism.  It is equivariant:

\begin{displaymath}
\phi(g\cdot \Phi_1(z,t)) = \phi(\Phi_1(g\cdot z, t)) = \Phi_2(\gamma(g\cdot z),t)= \Phi_2(g\cdot \gamma(z),t) = g\cdot \Phi_2(\gamma(z),t) = g\cdot \phi(\Phi_1(z,t)).
\end{displaymath}
It also restricts to $\gamma$ on $Z_1$ since $\phi(\Phi_1(z,0))=\Phi_2(\gamma(z),0)= \gamma(z)$.  Lastly, $\phi$ doesn't necessarily satisfy $\phi^*\sigma_2= \sigma_1$.  However, $\phi^*\sigma_2$ and $\sigma_1$ agree at the folding hypersurface and they induce the same orientation on $U_1\subseteq Z_1$ since this orientation can be read from $\psi$, which is fixed (q.v. lemma \ref{lem:nullfoliation}).  These two conditions are enough to guarantee that the linear path $\sigma_s=(1-s)\sigma_1 + s\phi^*\sigma_2$ is folded-symplectic in a neighborhood of $U_1\subseteq Z_1$.  As we have seen (q.v. proof of proposition \ref{prop:eqfsnormal}), this path will generate an invariant time-dependent vector field $X_s$ and an isotopy $\phi_s$ such that $\phi_s^*\sigma_s=\sigma_0$.

This time dependent vector field must be tangent to orbits for all $s$.  Indeed, the map $\phi_s$ preserves the moment map $\pi_1\circ \psi$, hence $d\mu(X_s)=0$ for all $s$.  In particular, $X_s(p)\in T_p(G\cdot p)^{\sigma}$ for each $p$ near $U_1$.  Since the orbits are Lagrangian on an open dense subset, this implies that $X_s(p)$ is inside $T_p(G\cdot p)$ on an open dense subset, meaning it is everywhere tangent to orbits.

Now, since the orbits are compact, we may integrate $X_s$ to obtain a time-dependent flow for all $s$ that preserves orbits.  That is, $\phi_s$ isn't just a family of open embeddings: it's actually a family of equivariant diffeomorphisms.  Thus, $\phi \circ \phi_1$ is a local isomorphism of toric, folded-symplectic manifolds covering the identity map.
\end{proof}

\pagebreak

\section{Folded-Symplectic Reduction}
The goal of this section is to generalize symplectic reduction to folded-symplectic manifolds and everything here, save the symplectic reduction theorem, is original work.  The following theorem gives the recipe for constructing a reduced space, or symplectic quotient.

\begin{theorem}\label{thm:sred}
Let $(M,\omega,\mu:M\to \fg^*)$ be a symplectic manifold with a proper, Hamiltonian action of a Lie group $G$ and corresponding moment map $\mu:M\to \fg^*$.  Suppose $G$ acts on $\mu^{-1}(0)$ freely.  Then $\mu^{-1}(0)$ is a smooth manifold of codimension $\dim(G)$, $\pi:\mu^{-1}(0)\to \mu^{-1}(0)/G$ is a principal $G$ bundle, there exists $\omega_0 \in \Omega^2(\mu^{-1}(0)/G)$ such that $\pi^*\omega_0=\omega\big\vert_{\mu^{-1}(0)}$, and $\omega_0$ is symplectic.
\end{theorem}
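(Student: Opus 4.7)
The plan is to establish the four claims in cascade, with the moment map identity $\langle d\mu_p(v), X\rangle = -\omega_p(X_M(p), v)$ serving as the engine throughout. First I would verify that $0$ is a regular value of $\mu$. The moment map identity together with non-degeneracy of $\omega$ shows that $\operatorname{Im}(d\mu_p)^o = \{X \in \fg \mid X_M(p)=0\}$ coincides with $\operatorname{Lie}(G_p)$. Since $G$ is assumed to act freely on $\mu^{-1}(0)$, the Lie algebra of $G_p$ vanishes for $p \in \mu^{-1}(0)$, so $d\mu_p$ is surjective, and the regular value theorem yields that $\mu^{-1}(0)$ is an embedded submanifold of codimension $\dim G$. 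The same identity, dualized, gives the key geometric identification $T_p\mu^{-1}(0) = \ker(d\mu_p) = T_p(G\cdot p)^{\omega}$, which I would use repeatedly below.

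Next I would observe that the restricted action of $G$ on $\mu^{-1}(0)$ is free by hypothesis and proper because a proper action restricted to a closed invariant subset remains proper; the standard free-and-proper quotient theorem then equips $\pi:\mu^{-1}(0)\to \mu^{-1}(0)/G$ with the structure of a smooth principal $G$-bundle. Before descending the form, a small preliminary step: equivariance $\mu(g\cdot p) = \operatorname{Ad}^*(g)\mu(p)$ at points of $\mu^{-1}(0)$ shows orbits lie entirely in $\mu^{-1}(0)$, so $T_p(G\cdot p) \subseteq T_p\mu^{-1}(0) = T_p(G\cdot p)^{\omega}$; that is, orbits in the zero level set are isotropic.

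To construct $\omega_0$, I would set $\omega_0([v],[w]) := \omega_p(v,w)$ for lifts $v,w \in T_p\mu^{-1}(0)$. Independence of the choice of lift modulo $T_p(G\cdot p)$ follows from $\omega_p(X_M(p),w) = -\langle d\mu_p(w), X\rangle = 0$, while independence of the orbit representative follows from $G$-invariance of $\omega$. Smoothness and closedness of $\omega_0$ then descend through the principal bundle projection: $\pi^*$ is injective on forms and $\pi^*\omega_0 = i^*\omega$, where $i:\mu^{-1}(0)\hookrightarrow M$, so $\pi^*d\omega_0 = i^*d\omega = 0$ forces $d\omega_0 = 0$. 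For non-degeneracy, suppose $\omega_0([v],[w])=0$ for all $[w]$; then $\omega_p(v,w)=0$ for every $w\in T_p\mu^{-1}(0) = T_p(G\cdot p)^{\omega}$, so $v$ lies in $(T_p(G\cdot p)^{\omega})^{\omega} = T_p(G\cdot p)$, and hence $[v]=0$.

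I expect the main obstacle to be conceptual rather than computational: one must recognize that every assertion in the theorem is a consequence of the single identification $T_p\mu^{-1}(0) = T_p(G\cdot p)^{\omega}$ together with isotropy of the orbit, which together make $T_p\mu^{-1}(0)$ a coisotropic subspace of $(T_pM,\omega_p)$ whose symplectic reduction is precisely $T_{[p]}(\mu^{-1}(0)/G)$. Once this linear-symplectic picture is identified, regularity, the principal bundle structure, descent of the form, and non-degeneracy all follow formally.
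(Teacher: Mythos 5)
Your proposal is correct and follows essentially the same route as the paper's sketch: surjectivity of $d\mu_p$ via the annihilator/induced-vector-field identity, the free-and-proper quotient theorem for the bundle structure, descent of the basic form $\omega\vert_{\mu^{-1}(0)}$, and non-degeneracy from the identification $(T_p\mu^{-1}(0))^{\omega}=T_p(G\cdot p)$. You in fact supply more detail than the paper (which only sketches the non-degeneracy step), and your coisotropic-reduction framing is a clean way to package it.
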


\begin{proof}[Sketch of the proof] \mbox{ } \newline

\begin{itemize}
\item Let $p\in \mu^{-1}(0)$ and let $X\in \fg$ be a nonzero element.  If we identify $\fg^*$ with $T_0\fg^*$, then the differential $d\mu_p:T_pM \to \fg^*$ is a map into $\fg^*$.  We can show it is surjective by proving the annihilator of its image, $\operatorname{Im}(d\mu_p)^o$, is $\{0\}\subset \fg$.  We compute:

    \begin{displaymath}
    0=\langle d\mu_p,X \rangle \iff (d\langle \mu, X \rangle)_p=0 \iff -i_{X_M}\omega_p =0 \iff X_M(p)=0
    \end{displaymath}
    The action is free at $p$ by assumption, hence $X_M(p) \iff X=0$.  Thus, $X$ annihilates the image of $d\mu_p$ if and only if it is $0$.  We therefore have that $p$ is a regular point of $\mu$, hence $0$ is a regular value since $p$ was arbitrary.  We then have that $\mu^{-1}(0)$ is a smooth manifold of codimension $\dim(G)$.
\item The action of $G$ on $\mu^{-1}(0)$ is smooth, free, and proper by assumption, hence $\pi:\mu^{-1}(0)\to \mu^{-1}(0)/G$ inherits the structure of a principal $G$ bundle.  One could demonstrate this explicitly using the slice theorem: a neighborhood of an orbit in $\mu^{-1}(0)$ looks like a neighborhood of the zero section of $G\times V$, where $V$ is the differential slice.
\item The form $\omega\big\vert_{\mu^{-1}(0)}$ is basic since it is invariant under the action of $G$ and, for any $X\in \fg$ and $Y\in T_p\mu^{-1}(0)$, we have $(i_{X_M}\omega)(Y)= -d\langle \mu,X\rangle(Y)=\langle  -d\mu(Y),X\rangle =0$ since $Y$ is tangent to a the $0$ level set.  Thus, there exists $\omega_0$ such that $\pi^*\omega_0= \omega\big\vert_{\mu^{-1}(0)}$.
\item $\pi$ is a submersion, so $\pi^*$ is injective.  Since $0=d(\omega\big\vert_{\mu^{-1}(0)})=d\pi^*\omega_0 = \pi^*d\omega_0$ and $\pi^*$ is injective, we must have $d\omega_0=0$.  Proving that $\omega_0$ is non-degenerate requires one to show that, for each $p\in \mu^{-1}(0)$, $(T_p\mu^{-1}(0))^{\omega}=T_p(G\cdot p)$.
\end{itemize}
\end{proof}

\subsection{The Technique}
We will prove the following analog of theorem \ref{thm:sred}.

\begin{theorem}\label{thm:fsred}
Let $(M,\sigma,\mu:M\to \fg^*)$ be a folded-symplectic manifold without boundary with an Hamiltonian action of a compact, connected Lie group $G$ and moment map $\mu$.  If
\begin{enumerate}
\item $\mu^{-1}(0)$ is a manifold of codimension $\dim(G)$ and
\item $G$ acts on $\mu^{-1}(0)$ freely,
\end{enumerate}
then $\pi:\mu^{-1}(0) \to \mu^{-1}(0)/G$ is a principal $G$ bundle, there exists $\sigma_0 \in \Omega^2(\mu^{-1}(0)/G)$ such that $\pi^*\sigma_0=\sigma\big\vert_{\mu^{-1}(0)}$, and $\sigma_0$ is folded-symplectic.
\end{theorem}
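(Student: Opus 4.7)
The plan is to mirror the symplectic argument of Theorem \ref{thm:sred} for the first two conclusions, then reduce the folded-symplectic condition to the symplectic one via the equivariant fold factorization of Corollary \ref{cor:eqfsnormal1}. Compactness of $G$ gives properness; freeness plus the hypothesized manifold structure of $\mu^{-1}(0)$ make $\pi:\mu^{-1}(0)\to\mu^{-1}(0)/G$ a principal $G$-bundle. The form $\sigma|_{\mu^{-1}(0)}$ is $G$-invariant, and for $Y\in T_p\mu^{-1}(0)$ we have $(i_{X_M}\sigma)(Y)=-\langle d\mu_p(Y),X\rangle=0$, so $\sigma|_{\mu^{-1}(0)}$ is basic and descends uniquely to $\sigma_0\in\Omega^2(\mu^{-1}(0)/G)$ with $\pi^*\sigma_0=\sigma|_{\mu^{-1}(0)}$; closedness $d\sigma_0=0$ then follows from $d\sigma=0$ and injectivity of $\pi^*$.

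For the folded-symplectic structure on $\sigma_0$ I would treat two regions. On $(\mu^{-1}(0)\setminus Z)/G$, Theorem \ref{thm:sred} applied to the symplectic manifold $(M\setminus Z,\sigma)$ already gives $\sigma_0$ symplectic. Near $Z$, I invoke Corollary \ref{cor:eqfsnormal1} to obtain an invariant neighborhood $U$ of $Z$, a symplectic form $\omega$ with Hamiltonian moment map $\mu_s$, and an equivariant fold map $\psi:U\to U$ satisfying $\psi^*\omega=\sigma$ and $\psi^*\mu_s=\mu$. Since $\mu^{-1}(0)=\psi^{-1}(\mu_s^{-1}(0))$ and $G$ acts freely on $\mu^{-1}(0)$, equivariance of $\psi$ forces $X_U(q)\neq 0$ for all $q\in\mu_s^{-1}(0)$ and $X\neq 0$; nondegeneracy of $\omega$ then shows that $d\mu_s$ is surjective on $\mu_s^{-1}(0)$, so Theorem \ref{thm:sred} produces a symplectic reduced space $(\mu_s^{-1}(0)/G,\omega_0)$.

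Next I would show that $\psi$ descends to a fold map $\bar\psi:\mu^{-1}(0)/G\to\mu_s^{-1}(0)/G$ with $\sigma_0=\bar\psi^*\omega_0$; Example \ref{ex:fs2} will then force $\sigma_0$ to be folded-symplectic as the pullback of a symplectic form by a fold map. Both $\mu^{-1}(0)$ and $\mu_s^{-1}(0)$ are equidimensional submanifolds of $U$ of codimension $\dim G$, so Corollary \ref{cor:folds4-1} guarantees that the restriction $\psi|:\mu^{-1}(0)\to\mu_s^{-1}(0)$ is itself an equidimensional fold map, with folding hypersurface $Z_0:=Z\cap\mu^{-1}(0)$. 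Equivariance makes it descend to $\bar\psi$, and the chain $\pi^*\sigma_0=\sigma|_{\mu^{-1}(0)}=(\psi|)^*(\omega|_{\mu_s^{-1}(0)})=\pi^*\bar\psi^*\omega_0$ combined with injectivity of $\pi^*$ yields $\sigma_0=\bar\psi^*\omega_0$.

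The main obstacle is verifying that $\bar\psi$ is itself a fold map on the quotients, which reduces to a pointwise kernel computation at each $[p]\in Z_0/G$. The key ingredient is the inclusion $\ker(\sigma_p)\subset\ker(d\mu_p)=T_p\mu^{-1}(0)$, which is immediate from the moment-map identity and places the one-dimensional line $\ker(d\psi_p)$ inside $T_p\mu^{-1}(0)$. Since $G\cdot p\subset Z$ while $\ker(d\psi_p)$ is transverse to $Z$, one obtains $\ker(d\psi_p)\cap T_p(G\cdot p)=0$, and a short diagram chase using equivariance identifies $\ker(d\bar\psi_{[p]})$ with $\ker(d\psi_p)$; its transversality to $Z_0/G$ follows from the transversality of $\ker(d\psi_p)$ to the codimension-one submanifold $Z_0\subset\mu^{-1}(0)$. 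This confirms $\bar\psi$ folds along $Z_0/G$ and completes the proof.
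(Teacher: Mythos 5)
Your overall strategy is the one the paper uses: pass to the equivariant symplectization of a neighborhood of $Z$ via the fold factorization $\psi^*\omega=\sigma$, $\mu=\mu_s\circ\psi$, apply symplectic reduction upstairs, and exhibit $\sigma_0$ as the pullback of the symplectic reduced form by a descended fold map. However, there are two problems.

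First, a genuine gap: Corollary \ref{cor:eqfsnormal1} (equivalently Proposition \ref{prop:eqfsnormal}) requires the folding hypersurface $Z$ to be \emph{co-orientable}, and Theorem \ref{thm:fsred} makes no such hypothesis. Your argument therefore only proves the co-orientable case. The paper completes the proof by passing to the orientation double cover $p:\tilde M\to M$: since every folding hypersurface is orientable (Proposition \ref{prop:orientation}), $p^{-1}(Z)$ is co-orientable in $\tilde M$, the $G$-action lifts canonically with moment map $\mu\circ p$, and the reduced space of $\tilde M$ maps onto $\mu^{-1}(0)/G$ by a local diffeomorphism $\bar p$ with $\bar p^*\sigma_0=\tilde\sigma_0$, which forces $\sigma_0$ to be folded-symplectic. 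You need this (or some substitute) to get the general statement.

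Second, your final paragraph asserts that $\ker(d\mu_p)=T_p\mu^{-1}(0)$ is ``immediate from the moment-map identity.'' It is not: that equality holds only when $0$ is a regular value, and the hypotheses deliberately do not assume regularity. Indeed, Theorem \ref{thm:fsred1} shows a connected component of $\mu^{-1}(0)$ can lie entirely inside $Z$, and at such points $\ker(d\psi_p)$ is transverse to $Z$, hence \emph{not} contained in $T_p\mu^{-1}(0)$; the inclusion $\ker(\sigma_p)\subset T_p\mu^{-1}(0)$ for components transverse to $Z$ is genuinely delicate and occupies a large part of the proof of Theorem \ref{thm:fsred1}. Fortunately this step is unnecessary for the present theorem: Corollary \ref{cor:folds4-1} already guarantees that $\psi|_{\mu^{-1}(0)}:\mu^{-1}(0)\to\mu_s^{-1}(0)$ is a map with fold singularities in every case (including the case where the restricted differential never drops rank, in which case the singular locus is simply empty and the reduced form is symplectic, which is still ``folded-symplectic'' in the sense of Definition \ref{def:fsform}). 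What remains is only the descent statement that an equivariant equidimensional fold map between total spaces of principal $G$-bundles induces a fold map on the bases (the paper's Lemma \ref{lem:pplfold}); you should prove that directly rather than route through a kernel identification that fails at non-regular points.
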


\begin{definition}\label{def:fsred}
Let $(M,\sigma,\mu:M\to \fg^*)$ be a folded-symplectic manifold without boundary with an Hamiltonian action of a compact, connected Lie group $G$ and moment map $\mu$.  If
\begin{enumerate}
\item $\mu^{-1}(0)$ is a manifold of codimension $\dim(G)$ and
\item $G$ acts on $\mu^{-1}(0)$ freely,
\end{enumerate}
then theorem \ref{thm:fsred} implies that $\mu^{-1}(0)/G$ is a folded-symplectic manifold with fold form $\sigma_0$.  We define $M_{red}:=\mu^{-1}(0)/G$ to be the \emph{reduced space at 0}.  We define $\sigma_{red}:=\sigma_0$ to be the \emph{reduced form} on $M_{red}$.
\end{definition}

\begin{remark}
We will show via examples that one \emph{must} assume $\mu^{-1}(0)$ is a manifold, $\mu^{-1}(0)$ has codimension $\dim(G)$, and $G$ acts on $\mu^{-1}(0)$ freely in order to guarantee that $M_{red}$ is a folded-symplectic manifold.  Removing any one of these assumptions on $\mu^{-1}(0)$ allows one to construct examples where the reduced space either fails to be a manifold or fails to be a folded-symplectic space.
\end{remark}

Note that theorem \ref{thm:fsred} does not imply that $(Z\cap\mu^{-1}(0))/G$ is a folding hypersurface for $\sigma_0$.  It's possible that $\sigma_0$ could be symplectic, in which case the folding hypersurface is empty.  To rectify this deficit, we have a structural theorem that allows one to definitively state whether or not $\sigma_0$ has singularities by studying the intersection of $\mu^{-1}(0)$ with $Z$.

\begin{theorem}\label{thm:fsred1}
Let $(M,\sigma,\mu:M\to \fg^*)$ be a folded-symplectic manifold without boundary with an Hamiltonian action of a compact, connected Lie group $G$ and moment map $\mu$.  Let $Z$ be the folding hypersurface.  Suppose
\begin{enumerate}
\item $\mu^{-1}(0)$ is a manifold of codimension $\dim(G)$
\item $G$ acts freely on $\mu^{-1}(0)$.
\end{enumerate}
Then we may form the reduced space $(M_{red},\sigma_{red})$ by theorem \ref{thm:fsred}, where $M_{red}=\mu^{-1}(0)/G$ and $\sigma_{red}$ is folded-symplectic.  Let $\chi$ be a connected component of $\mu^{-1}(0)$.  Then either $\chi \subset Z$ or $\chi \pitchfork Z$.

\begin{enumerate}
\item If $\chi\subset Z$ then $(\chi/G, \sigma_{red})$ is a symplectic manifold.
\item If $\chi\pitchfork Z$ then $(\chi/G,\sigma_{red})$ is folded-symplectic with folding hypersurface $(\chi\cap Z)/G$.
\end{enumerate}

\end{theorem}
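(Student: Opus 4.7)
The plan is to localize around points of $\chi\cap Z$ using the equivariant normal form of Proposition \ref{prop:eqfsnormal} and Corollary \ref{cor:eqfsnormal1}. Near any $p\in Z$ (co-orientability is automatic locally, or one can pass to the orientation double cover as in the introduction), we work in an invariant neighborhood $U$ where $\sigma=\psi^*\omega$ and $\mu=\psi^*\mu_s$, with $\omega$ an invariant symplectic form for which the $G$-action is Hamiltonian with moment map $\mu_s$, and $\psi$ an equivariant fold map modeled by $\psi(z,t)=(z,t^2)$ that restricts to the identity on $Z$. Consequently $\chi\cap U=\psi^{-1}(\mu_s^{-1}(0))$. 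The hypothesis that $G$ acts freely on $\chi$ makes $\mu_s$ regular at every $p\in\chi$: the annihilator of $\operatorname{Im}(d\mu_{s,p})$ consists of $X$ with $X_M(p)\in\ker(\omega_p)=0$, which forces $X=0$ in a free action, so $\mu_s^{-1}(0)$ is a smooth codimension-$\dim(G)$ submanifold of $U$ near $\chi$.

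The heart of the argument is a \emph{fold-pullback lemma}: if $\psi$ is locally modeled by $(z,t)\mapsto(z,t^2)$ and $N\subset \tilde U$ is a codimension-$c$ submanifold with $\psi^{-1}(N)$ also a codimension-$c$ manifold near $p\in N\cap Z$, then either $N\subset Z$ or $N\pitchfork Z$ near $p$. The argument splits on whether $T_pN\subset T_pZ$. If not, then $T_pN+T_pZ=T_pM$ and $N\pitchfork Z$. If so, one can choose coordinates so that $N$ lies locally in a graph $\{y=\phi(z)\}$ with $\phi(p)=0$ and $d\phi_p=0$, and then $\psi^{-1}(N)$ is locally $\{t^2=\phi(z)\}$; since $\phi$ vanishes at $p$ to order at least two, this level set carries a conical non-manifold singularity at $(p,0)$ unless $\phi\equiv 0$ near $p$, in which case $N\subset Z$. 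Extending this to $c\ge 2$ by decomposing $N$ locally as a graph in the normal direction coupled with transverse defining equations on $Z$ is the main technical obstacle I anticipate.

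Applying the lemma to $N=\mu_s^{-1}(0)$ gives at each $p\in \chi\cap Z$ the local alternative $\chi\subset Z$ or $\chi\pitchfork Z$. Defining $B=\{p\in \chi\cap Z:\chi \text{ is not transverse to } Z \text{ at } p\}$, the lemma makes $B$ open in $\chi$ (each such $p$ has a neighborhood in $\chi$ contained in $Z$), while the openness of transversality together with closedness of $Z$ in $M$ makes $B$ closed in $\chi$; connectedness of $\chi$ forces $B=\chi$ or $B=\emptyset$, which is the global dichotomy. For the reduced form, if $\chi\subset Z$ then $\chi=\mu_s^{-1}(0)$ set-theoretically and $\sigma|_\chi=\omega|_{\mu_s^{-1}(0)}$ since $\psi|_Z=\mathrm{id}$, so $\sigma_{\mathrm{red}}$ coincides with the symplectic reduction of $(\omega,\mu_s)$ afforded by Theorem \ref{thm:sred} and is symplectic. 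If $\chi\pitchfork Z$, Corollary \ref{cor:folds4-1} makes $\psi|_\chi:\chi\to \mu_s^{-1}(0)$ an equivariant fold map folding along $\chi\cap Z$, which descends to a fold map $\bar\psi:\chi/G\to \mu_s^{-1}(0)/G$ with $\bar\psi^*\omega_{s,\mathrm{red}}=\sigma_{\mathrm{red}}$; Example \ref{ex:fs2} then identifies $\sigma_{\mathrm{red}}$ as folded-symplectic with folding hypersurface $(\chi\cap Z)/G$.
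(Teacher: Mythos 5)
Your overall architecture matches the paper's: localize with the equivariant normal form $\sigma=\psi^*\omega$, $\mu=\mu_s\circ\psi$, compare $\mu^{-1}(0)$ with $\mu_s^{-1}(0)$, get a pointwise dichotomy, and globalize by an open--closed argument on the connected component $\chi$. But the step you yourself flag as "the main technical obstacle" is exactly where the real content lies, and your proposed mechanism for it does not go through as written. Your fold-pullback lemma reduces the tangential case to showing that $\{t^2=\phi(z)\}$ fails to be a manifold unless $\phi\equiv 0$; even in codimension $1$ this is only asserted (one must rule out, e.g., flat nonnegative $\phi$, which still produces a non-manifold but requires an argument), and in codimension $c=\dim(G)\ge 2$, which is the generic case here, $N=\mu_s^{-1}(0)$ is not a graph of a single function over $Z$-directions, so the sketch does not apply at all. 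The paper avoids this entirely with a different device: if $T_z\mu^{-1}(0)\subset T_zZ$, then since $\psi\vert_Z=\mathrm{id}$ and the two level sets have equal dimension, $d\psi_z:T_z\mu^{-1}(0)\to T_z\mu_s^{-1}(0)$ is an isomorphism, so $\psi$ is injective on a connected neighborhood $V_1$ in $\mu^{-1}(0)$; but the normal form carries an involution $i$ with $\psi\circ i=\psi$ and fixed-point set $Z$, and $i$ preserves $\mu^{-1}(0)$, so any $x\in V_1\setminus Z$ would give two distinct points with the same image. Hence $V_1\subset Z$, which is precisely the openness of the tangential locus. That is the missing idea.

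A second, smaller gap: in the transverse case you invoke Corollary \ref{cor:folds4-1} to say $\psi\vert_\chi$ "folds along $\chi\cap Z$" and conclude that $\sigma_{red}$ degenerates on $(\chi\cap Z)/G$. That corollary only guarantees $\psi\vert_\chi$ is a map with fold singularities; its singular locus at a point $p\in\chi\cap Z$ is nonempty only if $\ker(d\psi_p)\subset T_p\chi$ (otherwise $d(\psi\vert_\chi)_p$ has maximal rank). You must verify this kernel condition, and moreover that the degeneracy survives the quotient, i.e.\ that $\ker(\sigma_z)\not\subset T_z(G\cdot z)$ so that a null direction has nonzero image under $d\pi_z$; the paper devotes a separate argument to showing $\ker(\sigma_z)\subset T_z\mu^{-1}(0)$ at such points. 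This can be repaired cheaply from your own setup (since $d\psi_z(T_z\chi)\subseteq T_{z}\mu_s^{-1}(0)\cap T_zZ$, transversality of $\mu_s^{-1}(0)$ to $Z$ plus a dimension count forces $\ker(d\psi_z)\subset T_z\chi$), but as written the step is asserted rather than proved.
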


Our proof of theorem \ref{thm:fsred} requires a lemma.

\begin{lemma}\label{lem:pplfold}
Let $G$ be a Lie group.  Suppose $M_1$ and $M_2$ are manifolds with corners satisfying $\dim(M_1)=\dim(M_2)$.  Let $\pi_1:P_1\to M_1$ and $\pi_2:P_2\to M_2$ be two principal $G$ bundles and suppose $\psi:P_1 \to P_2$ is an equivariant map with fold singularities.  Then $\psi$ descends to a smooth map $\bar{\psi}:M_1 \to M_2$ with fold singularities.
\end{lemma}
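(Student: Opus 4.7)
The plan is to verify the hypotheses of corollary \ref{cor:folds4-0} for the induced map $\bar\psi$, exploiting the fact that a principal $G$-bundle has a vertical foliation on which an equivariant map acts in an especially rigid way. First I would show that $\psi$ descends: equivariance of $\psi$ combined with the transitivity of the $G$-action along fibers implies $\psi$ sends each orbit of $\pi_1$ into an orbit of $\pi_2$, so a unique smooth $\bar\psi:M_1\to M_2$ is determined by $\pi_2\circ\psi=\bar\psi\circ\pi_1$. Because the action on $P_2$ is free, differentiating equivariance shows $d\psi_p$ carries the vertical subspace $V_1|_p=T_p(G\cdot p)$ isomorphically onto $V_2|_{\psi(p)}$, since $X_{P_2}(\psi(p))=0$ forces $X=0$. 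Hence $\ker(d\psi_p)\cap V_1|_p=\{0\}$, so $d\pi_1$ restricts to a linear isomorphism $\ker(d\psi_p)\to\ker(d\bar\psi_{\pi_1(p)})$; in particular $\bar\psi$ has corank at most one, and its degenerate locus $\hat Z$ satisfies $\pi_1^{-1}(\hat Z)=Z$, the fold of $\psi$.

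Next I would work in a local trivialization. Picking sections $s_i:U_i\to P_i$ with $\bar\psi(U_1)\subseteq U_2$ (shrinking if necessary) produces identifications $P_i|_{U_i}\cong U_i\times G$, and equivariance together with the fiber-preserving property of $\psi$ forces
\begin{equation*}
\psi(m,g)=(\bar\psi(m),\,g\cdot\phi(m))
\end{equation*}
for some smooth $\phi:U_1\to G$. Choosing coordinates $(x,u)$ adapted to $U_1\times G$ (and similarly on the target), the Jacobian of $\psi$ is block lower-triangular with diagonal blocks $d\bar\psi(x)$ and $J_G(x,u)$, where $J_G$ represents the differential of left translation and is therefore invertible. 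Consequently
\begin{equation*}
\det(d\psi)(x,u)=\det(d\bar\psi(x))\cdot\det(J_G(x,u)),
\end{equation*}
and $\det(J_G)$ is nowhere zero.

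I would now verify the two conditions of corollary \ref{cor:folds4-0} for $\bar\psi$. Differentiating the product yields that on $\{\det(d\bar\psi)=0\}$ the differential $d(\det(d\psi))_{(x,u)}$ equals $\det(J_G)(x,u)\cdot\pi_1^{*}d(\det(d\bar\psi))_x$; since the $G$ factor is boundaryless we have $\partial^k(P_1)=\pi_1^{-1}(\partial^k(M_1))$, so the hypothesis $\det(d\psi)\pitchfork_s 0$ on every stratum of $P_1$ translates precisely into $\det(d\bar\psi)\pitchfork_s 0$ on every stratum of $M_1$. For the second condition, note that $\pi_1|_Z:Z\to\hat Z$ is itself a principal $G$-bundle, hence a submersion, so $d\pi_1(T_pZ)=T_{\pi_1(p)}\hat Z$. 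Combining this with the identification $d\pi_1(\ker(d\psi_p))=\ker(d\bar\psi_{\pi_1(p)})$ established in the first paragraph, the transversality $\ker(d\psi)\pitchfork T Z$ passes directly through $d\pi_1$ to give $\ker(d\bar\psi)\pitchfork T\hat Z$.

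The substantive content is all packaged in the first paragraph, where the principal-bundle rigidity forces $\ker(d\psi)$ to miss the vertical subbundle entirely and pair bijectively with $\ker(d\bar\psi)$; the rest is routine block-matrix bookkeeping. The one step I would flag as the most delicate is maintaining \emph{strong} transversality rather than plain transversality, since fold maps between manifolds with corners are defined via $\pitchfork_s$; this is handled by the observation that $\pi_1$ preserves the stratification, so stratum-by-stratum transversality upstairs descends to stratum-by-stratum transversality downstairs.
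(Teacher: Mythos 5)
Your proof is correct, and it is worth noting that the paper itself states Lemma \ref{lem:pplfold} without supplying any proof (the text passes immediately to the proof of Theorem \ref{thm:fsred}), so there is no argument of record to compare against; your write-up actually fills that omission. The structure is the right one: equivariance plus freeness of the $P_2$-action forces $d\psi_p$ to be injective on the vertical subspace, hence $\ker(d\psi_p)$ meets the vertical bundle trivially and projects isomorphically to $\ker(d\bar{\psi}_{\pi_1(p)})$; the local form $\psi(m,g)=(\bar{\psi}(m),g\phi(m))$ then makes the Jacobian block-triangular, so $\det(d\psi)=\det(d\bar{\psi})\cdot(\text{nonvanishing})$ in the trivialization, and both conditions of Corollary \ref{cor:folds4-0} descend stratum by stratum because $\partial^k(P_1)=\pi_1^{-1}(\partial^k(M_1))$.

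Two small points you should make explicit. First, the claim that $d\pi_1:\ker(d\psi_p)\to\ker(d\bar{\psi}_{\pi_1(p)})$ is \emph{onto} (not just injective) needs the one-line chase: given $w\in\ker(d\bar{\psi})$, lift it to $v_0\in T_pP_1$, observe $d\psi_p(v_0)$ is vertical, and subtract the unique vertical $u$ with $d\psi_p(u)=d\psi_p(v_0)$ to land in $\ker(d\psi_p)$; without this, $\hat{Z}=\pi_1(Z)$ and the corank statement for $\bar{\psi}$ are not yet justified. Second, the fiber-direction block is the differential of \emph{right} translation by $\phi(m)$ (the map $g\mapsto g\phi(m)$), not left translation; this is immaterial to the argument since either is invertible, but the label should be corrected.
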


\begin{proof}[Proof of theorem \ref{thm:fsred}]
First, let us note that the content of the theorem is that $\sigma_0$ is folded-symplectic: the proof that $\pi:\mu^{-1}(0)\to \mu^{-1}(0)/G$ is a principal $G$ bundle and that $\sigma\big\vert_{\mu^{-1}(0)}=\pi^*\sigma_0$ is the same as the proof given for theorem \ref{thm:sred}.  Thus, throughout our proof of theorem \ref{thm:sred}, we assume that $\sigma\big\vert_{\mu^{-1}(0)}=\pi^*\sigma_0$ and devote our attention to showing that $\sigma_0$ is folded-symplectic.

We first prove the theorem in the case where the fold, $Z\subset M$, is co-orientable and then use this result to study the non-coorientable case.  The normal form proposition \ref{prop:eqfsnormal} implies that for each point $p\in \mu^{-1}(0)$ there exists an invariant neighborhood $U$, an invariant symplectic form $\omega\in \Omega^2(U)$, and an equivariant fold map $\psi:U \to U$ so that $\psi^*\omega= \sigma$.  Furthermore, the action of $G$ is Hamiltonian for $\omega$ with symplectic moment map $\mu_s:U\to \fg^*$ and $\mu=\mu_s\circ\psi$.  Thus, $\mu^{-1}(0)\cap U = \psi^{-1}(\mu_s^{-1}(0))$ and we have a map:

\begin{displaymath}
\psi:\mu^{-1}(0) \to \mu_s^{-1}(0)
\end{displaymath}

By assumption, $G$ acts on $\mu^{-1}(0)$ freely, hence the stabilizer of $p$ is trivial and we may assume that $G$ acts on $U$ freely using the slice theorem to construct such a neighborhood.  Thus, $G$ acts on $\mu_s^{-1}(0)$ freely, meaning $\mu_s^{-1}(0)$ is a smooth manifold of codimension $\dim(G)$ since $\mu_s$ is a symplectic moment map.  We now have two principal bundles $\pi:\mu^{-1}(0) \to \mu^{-1}(0)/G$ and $\pi_1:\mu_s^{-1}(0) \to \mu_s^{-1}(0)/G$ and an equivariant map $\psi:\mu^{-1}(0) \to \mu_s^{-1}(0)$, giving us a commutative diagram:

\begin{displaymath}
\xymatrixcolsep{5pc}\xymatrix{
(\mu^{-1}(0),\sigma) \ar[r]^{\psi} \ar[d]^{\pi} & (\mu_s^{-1}(0),\omega) \ar[d]^{\pi_1} \\
(\mu^{-1}(0)/G,\sigma_0) \ar[r]^{\bar{\psi}} \ar[r]^{\bar{\psi}} & (\mu_s^{-1}(0)/G,\omega_0)
}
\end{displaymath}
where $\pi_1^*\omega_0=\omega\big\vert_{\mu^{-1}(0)}$.  By the symplectic reduction theorem (q.v. theorem \ref{thm:sred}), $\omega_0$ is symplectic.  Since the dimensions of the level sets agree, $\dim(\mu^{-1}(0))=\dim(U)-\dim(G)=\dim(\mu_s^{-1}(0))$, and $\psi:U\to U$ is a map with fold singularities, corollary \ref{cor:folds4-1} implies that $\psi\big\vert_{\mu^{-1}(0)}$ is a map with fold singularities and lemma implies $\bar{\psi}$ is a map with fold singularities.  Thus, $\bar{\psi}^*\omega_0$ is a folded-symplectic form on $\mu^{-1}(0)/G$.  It remains to show that $\bar{\psi}^*\omega_0 = \sigma_0$.  The commutativity of the diagram implies:

\begin{displaymath}
\sigma=\psi^*\omega = \psi^*\pi_1^*\omega_0 = \pi^*(\bar{\psi}^*\omega_0
\end{displaymath}
Since $\pi^*\omega_0=\omega$ and $\pi^*$ is injective, we have that $\pi^*(\bar{\psi}^*\omega_0)=\pi^*\sigma_0$ if and only if $\bar{\psi}^*\omega_0=\sigma_0$.  Thus, $\sigma_0$ is folded-symplectic.

Now, if $(M,\sigma)$ is not orientable, we may consider its orientable double cover $\tilde{M}$ given by its orientation covering.  Recall that this space is the set of pairs $(m,\eta)$, where $m\in M$ and $\eta$ is an orientation at $m$.  We have a covering map $p:\tilde{M} \to M$ whose fiber is $\mathbb{Z}/(2\mathbb{Z})$.  Since it is a local diffeomorphism, $p^*\sigma$ is a folded-symplectic form on $\tilde{M}$ with folding hypersurface $\tilde{Z}=p^{-1}(Z)$.  Since all folding hypersurfaces are orientable by proposition \ref{prop:orientation}, we have that $\tilde{Z}$ is an orientable hypersurface in an oriented manifold, hence it is co-orientable.

If we have an Hamiltonian action of $G$ on $(M,\sigma)$ with moment map $\mu:M \to \fg^*$, then there is a canonical lift of the action of $G$ to $\tilde{M}$: if $\tau_g:M \to M$ is the action of an element $g\in G$, $\tilde{\tau}_g:\tilde{M} \to \tilde{M}$ is given pointwise by $\tilde{\tau}_g(m,\eta)=(\tau_g(m),d\tau_g(\eta))$.  This action makes the projection map $p:\tilde{M}\to M$ equivariant: $\tau_g\circ p = p \circ \tilde{\tau}_g$.  Thus, $\tilde{\tau}_g^*p^*\sigma = p^*\tau_g^*\sigma=p^*\sigma$ and the action preserves the fold-form.  If $X_{\tilde{M}}$ is the vector field on $\tilde{M}$ induced by $X\in \fg$, then $i_{X_{\tilde{M}}}(p^*\sigma)=-d\langle p^*\mu,X\rangle$, hence the action of $G$ on $\tilde{M}$ is Hamiltonian with moment map $\tilde{\mu}:=\mu\circ p:\tilde{M}\to \fg^*$ and $\tilde{\mu}^{-1}(0)=p^{-1}(\mu^{-1}(0))$.  We therefore have a commutative diagram:

\begin{equation}\label{diagram:fsred1}
\xymatrixcolsep{5pc}\xymatrix{
(\tilde{\mu}^{-1}(0)),p^*\sigma) \ar[r]^{\tilde{\pi}} \ar[d]^p & (\tilde{\mu}^{-1}(0)/G,\tilde{\sigma}_0) \ar[d]^{\bar{p}} \\
(\mu^{-1}(0),\sigma) \ar[r]^{\pi}                           &  (\mu^{-1}(0)/G,\sigma_0)
}
\end{equation}

where $\tilde{\pi}^*\tilde{\sigma}_0=p^*\sigma$.  The action of $G$ on $\mu^{-1}(0)$ is smooth, free, and proper by assumption, hence the lifted action on $\tilde{\mu}^{-1}$ is smooth, free, and proper since $p:\tilde{\mu}^{-1}(0)\to \mu^{-1}(0)$ is a double cover.  The top row is therefore a principal $G$ bundle and the quotient $\tilde{\mu}^{-1}(0)/G$ is a smooth manifold.  By our study of the co-orientable case, $\tilde{\sigma}_0$ is folded-symplectic.  Commutativity of the diagram means that $\pi\circ p = \bar{p}\circ \tilde{\pi}$, meaning

\begin{displaymath}
\tilde{\pi}^*\bar{p}^*\sigma_0=p^*\pi^*\sigma_0 = p^*\sigma= \tilde{\pi}^*\tilde{\sigma}_0.
\end{displaymath}
Since $\tilde{\pi}$ is a submersion, $\tilde{\pi}^*$ is injective and we have $\tilde{\sigma}_0 = \bar{p}^*\sigma_0$.  Since $\pi\circ p= \bar{p}\circ\tilde{\pi}$ is a subermsion, $\bar{p}$ must be a submersion.  Since $\tilde{\mu}^{-1}(0)/G$ and $\mu^{-1}(0)$ have the same dimensions, $\bar{p}$ is a local diffeomorphism, meaning $\sigma_0$ is folded-symplectic if and only if $\bar{p}^*\sigma_0=\tilde{\sigma}_0$ is folded-symplectic. Since $\tilde{\sigma}_0$ is folded-symplectic, $\sigma_0$ must be folded-symplectic.
\end{proof}

\begin{proof}[Proof of theorem \ref{thm:fsred1}]
Again, we are assuming that the compact, connected group $G$ acts on $\mu^{-1}(0)$ freely and that $\mu^{-1}(0)$ is a smooth manifold of codimension $\dim(G)$.  We will also assume that the folding hypersurface $Z\subset M$ is co-orientable and study the non-coorientable case at the end.  We will first prove that if $\chi$ is a connected component of $\mu^{-1}(0)$, then $\chi$ is transverse to the fold, $Z$, or $\chi$ is contained in $Z$.  We will then study each case separately and argue that if $\chi \subset Z$, then $\chi/G$ is symplectic and if $\chi\pitchfork_s Z$, then $\chi/G$ is folded-symplectic with folding hypersurface $Z\cap \chi)/G$.

\begin{itemize}
\item Let us define two sets:

\begin{displaymath}
\begin{array}{ll}
\mathcal{A}_1:= & \{p\in \mu^{-1}(0) \vert \mbox{ } \text{$p$ is not in $Z$ or } T_p\mu^{-1}(0)\pitchfork T_pZ\} \\
\mathcal{A}_2:= & \{p\in \mu^{-1}(0) \cap Z \vert \mbox{ } T_p\mu^{-1}(0) \subset T_p Z\}
\end{array}
\end{displaymath}
where $\mathcal{A}_1$ is the set of all points where $\mu^{-1}(0)$ is transverse to the fold and $\mathcal{A}_2$ is the set of all points where $\mu^{-1}(0)$ intersects the fold tangentially.  Note that these are the only types of intersections $\mu^{-1}(0)$ can have with $Z$ since $Z$ is a hypersurface, hence $\mathcal{A}_1 \cup \mathcal{A}_2 = \mu^{-1}(0)$.  We also have $\mathcal{A}_1\cap \mathcal{A}_2=\emptyset$ by definition.  Now, $\mathcal{A}_1$ is an open set in $\mu^{-1}(0)$ since transversal intersections are preserved under perturbations.  Thus, if we can show $\mathcal{A}_2$ is also open, then $\mathcal{A}_1 \cup \mathcal{A}_2$ is an open cover of $\mu^{-1}(0)$ by disjoint open sets.  Hence, any connected component $\chi$ of $\mu^{-1}(0)$ must lie in $\mathcal{A}_1$ or $\mathcal{A}_2$.  If $\chi\subset \mathcal{A}_1$ then it intersects $Z$ transversally by definition.  If $\chi\subset \mathcal{A}_2$ then it is a subset of $Z$ by definition.

We begin by noting that if the fiber of the null bundle $\ker(\sigma)\cap TZ$ at $z\in \mu^{-1}(0)$ is not tangent to the orbit $G\cdot z$ at $z$, then $z$ is a regular point.  Indeed, if $X \in \fg$ and we assume $\langle d\mu_z, X\rangle=0$, then the induced vector field $X_M$ satisfies $(i_{X_M}\sigma)_z=0$, meaning $X_M \in (\ker(\sigma_z)\cap T_zZ) \cap T_z(G\cdot z) = \{0\}$.  Since the action is free at $z$, $X_M = 0 \iff X=0$.  Thus, the annihilator of the image of $d\mu_z$, $\operatorname{Im}(d\mu_z)^o$, is $\{0\}\subset \fg$ and $z$ is a regular point.  Thus, $T_z\mu^{-1}(0) =\ker(d\mu_z)$, meaning $\ker(\sigma_z)\subset \ker(d\mu_z)$ is tangent to $\mu^{-1}(0)$ at $z$.  Since $\ker(\sigma_z) \pitchfork T_zZ$, we have that $\mu^{-1}(0)$ intersects $Z$ transversally at $z$.  Thus,

\begin{property}\label{property}
If the fiber of $\ker(\sigma)\cap TZ$ at $z\in\mu^{-1}(0)$ is not tangent to the orbit $G\cdot z$ at $z$, then $\mu^{-1}(0)$ intersects $Z$ transversally at $z$, hence $z\in \mathcal{A}_1$.
\end{property}
Thus, if $z\in \mathcal{A}_2$ then $T_z\mu^{-1}(0)\subset T_zZ$ and the contrapositive of property \ref{property} shows that $\ker(\sigma_z)\cap T_zZ$ is a subspace of $T_z(G\cdot z)$.  Let us assume we have a point $z\in \mathcal{A}_2\subset \mu^{-1}(0)$.  By the equivariant normal form proposition (q.v. proposition \ref{prop:eqfsnormal}), we may assume that there is an invariant neighborhood $U\subset M$ of $z$, an equivariant fold map $\psi:U \to U$ with fold $Z\cap U$ and $\psi\vert_Z=id_Z$, and a symplectic structure $\omega\in \Omega^2(U)$ for which the action of $G$ is Hamiltonian satisfying $\psi^*\omega=\sigma$.  We let $\mu_s$ denote the symplectic moment map and note that $\mu_s\circ \psi = \mu$.  We may assume that $G$ acts on $U$ freely since the action of $G$ on $\mu^{-1}(0)$ is free by assumption.  We then have that $0$ is a regular value of the symplectic moment map, $\mu_s$, and $\mu_s^{-1}(0)$ is a submanifold of codimension $\dim(G)$ inside $U$.  Its tangent space is defined by $\ker(d\mu_s)$.  This construction gives us a commutative diagram:

\begin{equation}\label{diagram:fsred2}
\xymatrixcolsep{5pc}\xymatrix{
(\mu^{-1}(0),\sigma) \ar[r]^{\psi} \ar[d]^{\pi} & (\mu_s^{-1}(0),\omega) \ar[d]^{\pi_1} \\
(\mu^{-1}(0)/G,\sigma_0) \ar[r]^{\bar{\psi}} \ar[r]^{\bar{\psi}} & (\mu_s^{-1}(0)/G,\omega_0)
}
\end{equation}
where $\bar{\psi}$ is a map with fold singularities, $\omega_0$ is symplectic (by theorem \ref{thm:sred}), and $\sigma_0=\bar{\psi}^*\omega_0$ is folded-symplectic.  Note that proposition \ref{prop:eqfsnormal} also gives us an involution $i:U\to U$ satisfying $i\vert_Z=id_Z$ and $\psi\circ i = i$.  Since $\mu^{-1}(0)=\psi^{-1}(\mu_s^{-1}(0))= i^{-1}(\psi^{-1}(\mu_s^{-1}(0)))$, we see that the involution $i$ restricts to an involution:

\begin{equation}\label{eq:involution}
i:\mu^{-1}(0) \to \mu^{-1}(0), \mbox{ } \psi\circ i = \psi
\end{equation}
which we will use below.

Now, since $\ker(\sigma_z)\cap T_zZ\subset T_z(G\cdot z)$, there exists $X\in \fg$ such that $X_M(z)\in \ker(\sigma_z)$.  Since $z$ is a regular point of $\mu_s$, the annihilator of the image of $(d\mu_s)_z$ is $\{0\}$ and we must have $\langle (d\mu_s)_z, X \rangle \ne 0$.  The restriction of the fold map $\psi$ to $Z$ is the identity, hence $\mu_s\vert_Z=(\mu_s\circ \psi \vert_Z)=\mu\vert_Z$.  We then have:
\begin{equation}\label{eq:fsred1}
\langle (d\mu_s)_z, X \rangle \vert_{TZ} = \langle d\mu_z, X \rangle \vert_{TZ} =0
\end{equation}
where the rightmost equality follows since $X_M(z)\in \ker(\sigma)$, hence $0=(i_{X_M}\sigma)_z = -\langle d\mu_z, X \rangle$.  Since $\langle d\mu_s, X \rangle \ne 0$ and $\langle d\mu_s, X \rangle \big\vert_{TZ}=0$, we must have that $\langle (d\mu_s)_z(Y),X \rangle \ne 0$ for \emph{any} nonzero $Y\in T_zM$ not in $T_zZ$.  That is, $(d\mu_s)_z$ does not vanish on any direction transverse to $T_zZ$.  Thus, $\ker(d\mu_s)_z=T_z\mu_s^{-1}(0)$ must be contained in $T_zZ$ and so $\mu_s^{-1}(0)$ intersects $Z$ tangentially at $z$.

By assumption, $T_z\mu^{-1}(0)\subset T_zZ$ and we have just shown $T_z\mu_s^{-1}(0)\subset T_zZ$.  The map $\psi:U \to U$ restricts to the identity on the fold $Z$, hence $d\psi_z=id_{T_zZ}:T_z\mu^{-1}(0) \to T_z\mu_s^{-1}(0)$ is injective.  Since $\mu^{-1}(0)$ and $\mu_s^{-1}(0)$ have the same codimension in $U$, they have the same dimension and the differential $d\psi_z$ is an isomorphism.  Thus, there is a connected neighborhood $V_1\subset \mu^{-1}(0)$ of $z$ and a connected neighborhood $V_2\subset \mu_s^{-1}(0)$ of $\psi(z)=z$ so that $\psi:V_1 \to V_2$ is a diffeomorphism.  We claim that this is only possible if $V_1\subset Z$.  We use the involution $i:\mu^{-1}(0) \to \mu^{-1}(0)$ of equation \ref{eq:involution}, whose fixed point set is $\mu^{-1}(0)\cap Z$.  For each $p\in V_1\cap (Z\cap \mu^{-1}(0))$, there exists an involutive neighborhood $U_p\subset \mu^{-1}(0)$ of $p$.  That is, $i(U_p)=U_p$.  If there exists $x\in U_p\setminus Z$, then $i(x)\ne x$ since $x$ is not in the fixed-point set of of $i$.  By definition of $i$, we have $\psi\circ i =\psi$, meaning $\psi(i(x))=\psi(x)$.  Since $i(x)$ and $x$ are distinct points in $U_p\subset V_1$ and $\psi\vert_{V_1}$ is injective by definition of $V_1$, we have found a contradiction.  Thus, $U_p\subset (\mu^{-1}(0)\cap Z)$ and each $p\in V_1\cap (\mu^{-1}(0)\cap Z)$ has a neighborhood $U_p\subset V_1$ contained entirely in $\mu^{-1}(0)\cap Z$.  This means that the set of points in $V_1$ fixed by $i$ is closed \emph{and} open.  Since $V_1$ is connected, we have that all of $V_1$ is fixed and so $V_1\subset \mu^{-1}(0)\cap Z$.  Note that at each point $p\in V_1$ we have $T_p\mu^{-1}(0)=T_pV_1 \subset T_pZ$, hence $p\in \mathcal{A}_2$.  We have therefore shown that $V_1$ is a neighborhood of $z$ contained entirely within the set $\mathcal{A}_2$ which means $\mathcal{A}_2$ is open.

Since $\mathcal{A}_1$ and $\mathcal{A}_2$ are two disjoint open sets covering $\mu^{-1}(0)$, any connected component $\chi$ of $\mu^{-1}(0)$ must be contained in either $\mathcal{A}_1$ or $\mathcal{A}_2$.  If $\chi\subset \mathcal{A}_1$, then $\chi\pitchfork_s Z$ by definition of $\mathcal{A}_1$.  If $\chi\subset\mathcal{A}_2$, then $\chi\subset Z$, which proves the first claim of theorem \ref{thm:fsred1}.

\item Now, assume that $\chi$ is a connected component of $\mu^{-1}(0)$ and $\chi\subset Z$.  Diagram \ref{diagram:fsred2}

    \begin{displaymath}
    \xymatrixcolsep{5pc}\xymatrix{
    (\mu^{-1}(0),\sigma) \ar[r]^{\psi} \ar[d]^{\pi} & (\mu_s^{-1}(0),\omega) \ar[d]^{\pi_1} \\
    (\mu^{-1}(0)/G,\sigma_0) \ar[r]^{\bar{\psi}}    & (\mu_s^{-1}(0)/G,\omega_0)
    }
    \end{displaymath}
    becomes

    \begin{displaymath}
    \xymatrixcolsep{5pc}\xymatrix{
     (\chi,\sigma) \ar[r]^{id_Z} \ar[d]^{\pi} & (\chi,\omega) \ar[d]^{\pi_1} \\
    (\chi/G,\sigma_0) \ar[r]^{id}    & (\chi/G,\omega_0)
    }
    \end{displaymath}
    and $\sigma_0=\omega_0$ is symplectic.

\item Now suppose $\chi\subset \mu^{-1}(0)$ is a connected component transverse to $Z$.  We will show that at each point $z\in \chi\cap Z$, $\ker(\sigma_z)\subset T_z\chi$.  Assume for a moment that this is true.  If $\pi:\mu^{-1}(0) \to \mu^{-1}(0)/G$ is the quotient map, then $\ker(\sigma_z)\not\subset \ker(d\pi_z)$ since the kernel of $d\pi_z$ is the tangent space $T_z(G\cdot z)\subset T_zZ$ while $\ker(\sigma_z)$ contains a subspace transverse to $T_zZ$.  Therefore, there is nonzero element $v\in \ker(\sigma_z)\subset T_z\chi$ such that $d\pi_z(v)\ne 0$.  We then have $\sigma_0(d\pi_z(v),\cdot)= 0$ since $d\pi_z$ is surjective and $\sigma_0(d\pi_z(v),d\pi(c\dot))=i_v(\pi^*\sigma_0)_z=i_v\sigma_z=0$, which means $\sigma_0$ has a degeneracy at $\pi(z)$.  By theorem \ref{thm:fsred}, this degeneracy is a fold singularity.

    It remains to show that if $z\in\mu^{-1}(0)\cap Z$, then $\ker(\sigma_z)\subset T_z\mu^{-1}(0)$.  To this end, we use the setting of diagram \ref{diagram:fsred2} where we have a an invariant neighborhood $U$ of $z$ on which $G$ acts freely, a symplectic structure $\omega\in \Omega^2(U)$ for which the action of $G$ is Hamiltonian with moment map $\mu_s$, and an equivariant fold map $\psi:U \to U$ so that $\psi^*\omega=\sigma$, hence $\mu_s\circ \psi = \mu$.

    Recall from our derivation of property \ref{property} that if $\ker(\sigma_z)\cap T_zZ$ is \emph{not} a subspace of $T_z(G\cdot z)$, then $z$ is a regular point of $\mu$ and $T_z\mu^{-1}(0)=\ker(d\mu_z)$.  Since $\ker(\sigma_z)\subset \ker(d\mu_z)$, we have $\ker(\sigma_z)\subset T_z\mu^{-1}(0)$ as required.  We may therefore assume that the null space $\ker(\sigma_z)\cap T_zZ \subset T_z(G\cdot z)$ is tangent to the orbit through $z$.  As we have seen, this tangency implies that $T_z\mu_s^{-1}(0)\subset T_zZ$.  We also have that the map:

    \begin{equation}\label{eq:fsred3}
    d\psi_z: T_z\mu^{-1}(0)\cap T_zZ \rightarrow T_z\mu_s^{-1}(0)
    \end{equation}
    is injective.  We are assuming $\mu^{-1}(0)$ intersects $Z$ transversally, hence $\dim(\mu^{-1}(0)\cap Z)=\dim(U)-\dim(G)-1$.  Since $\dim(\mu_s^{-1}(0))=\dim(U)-\dim(G)$, we see that the image of $d\psi_z$ in equation \ref{eq:fsred3} has codimension $1$ by injectivity.  Choose a nonzero vector $v\in T_z\mu^{-1}(0)$ not in $T_zZ$, which is possible since we are assuming a transversal intersection.  If $d\psi(v)=0$ then $v\in \ker(d\psi_z)$ and we are done since, by diagram \ref{diagram:fsred2}, $\ker(\sigma_z) = \ker(d\psi_z) +(\ker(\sigma_z)\cap T_zZ)$, hence $\ker(\sigma_z)\subset T_z\mu^{-1}(0)$.  If $d\psi_z(v)\ne0$, then we must have that $d\psi(v) \in d\psi_z(T_z\mu^{-1}(0)\cap T_zZ)$.  Otherwise, $d\psi_z$ in equation \ref{eq:fsred3} would be injective and surjective, hence it would map $T_z\mu^{-1}(0)$ isomorphically onto $T_z\mu_s^{-1}(0)$.  This means $\psi:\mu^{-1}(0)\to \mu_s^{-1}(0)$ would be a diffeomorphism in a neighborhood of $z$.  We have seen that this implies a neighborhood of $z$ in $\mu^{-1}(0)$ is contained in $Z$, contradicting our assumption that $\mu^{-1}(0)$ intersects $Z$ transversally at $z$.  Thus, there is a nonzero vector $w\in T_z\mu^{-1}(0) \cap T_zZ$ such that $d\psi_z(v)=d\psi_z(w)$, which means $d\psi_z(v-w)=0$.  We then have that $v-w\in \ker(d\psi_z)\cap T_z\mu^{-1}(0)$.  It cannot be zero since $w\subset T_zZ$ and $v$ is not in $T_zZ$, implying that $\ker(\sigma_z)\subset \mu^{-1}(0)$.  As initially discussed, this fact is enough to guarantee that the reduced form $\sigma_0$ has a fold singularity.

\item Finally, if $Z$ is not co-orientable then $M$ cannot be orientable.  As in the proof of theorem \ref{thm:fsred}, we lift the action of $G$ to the orientation covering $p:\tilde{M} \to M$ and pull back the fold form $\sigma$ to the fold form $p^*\sigma$, which makes $\tilde{M}$ into an Hamiltonian $G$-manifold $(\tilde{M},p^*\sigma, \tilde{\mu}=\mu\circ p)$.  The folding hypersurface of $p^*\sigma$ is the inverse image of the folding hypersurface in $M$, $p^{-1}(Z)$.  Let $\chi \subset \mu^{-1}(0)$ be a connected component of $\mu^{-1}(0)$.  Since $p:\tilde{M} \to M$ is a covering map, it is a local diffeomorphism and:
    \begin{enumerate}
    \item $\chi \pitchfork Z$ if and only if $p^{-1}(\chi) \pitchfork p^{-1}(Z)$
    \item $\chi \subset Z$ if and only if $p^{-1}(\chi)\subset p^{-1}(Z)$.
    \end{enumerate}
    Since $p^{-1}(\chi)$ is a collection of connected components of $\tilde{\mu}^{-1}(0)$, our previous discussion implies that these connected components are either transverse to $p^{-1}(Z)$ or contained in $p^{-1}(Z)$.  Thus, $\chi\pitchfork_s Z$ or $\chi\subset Z$.  To finish, recall that in the proof of theorem \ref{thm:fsred} we constructed a commutative diagram:

    \begin{displaymath}
    \xymatrixcolsep{5pc}\xymatrix{
    (\tilde{\mu}^{-1}(0),p^*\sigma) \ar[r]^{\tilde{\pi}} \ar[d]^p & (\tilde{\mu}^{-1}(0)/G,\tilde{\sigma}_0) \ar[d]^{\bar{p}} \\
    (\mu^{-1}(0),\sigma) \ar[r]^{\pi}                           &  (\mu^{-1}(0)/G,\sigma_0)
    }
    \end{displaymath}
    where $(\tilde{\mu}^{-1}(0)/G,\tilde{\sigma}_0)$ is a folded-symplectic manifold, $\bar{p}$ is a local diffeomorphism, and $\bar{p}^*\sigma_0=\tilde{\sigma}_0$.  If $\chi\subset \mu^{-1}(0)$ is a connected component and $\chi\subset Z$, then $p^{-1}(\chi)\subset p^{-1}(Z)$ and $(p^{-1}(\chi)/G,\tilde{\sigma}_0)$ is symplectic.  Since $\bar{p}$ is a local diffeomorphism, $\sigma_0$ must be symplectic.  Now suppose $\chi\pitchfork Z$.  Then $p^{-1}(\chi)\pitchfork Z$ and $(p^{-1}(\chi)/G,\tilde{\sigma}_0)$ is folded-symplectic with folding hypersurface $p^{-1}(\chi\cap Z)/G$, which means $\sigma_0$ is folded-symplectic with folding hypersurface $\bar{p}(p^{-1}(\chi\cap Z)/G) = (\chi\cap Z)/G$.
\end{itemize}
\end{proof}

\subsection{Removing Assumptions on $\mu^{-1}(0)$}
We take a moment to discuss the differences between the symplectic and folded-symplectic reduction hypotheses.  In the symplectic setting, a free action of $G$ on $\mu^{-1}(0)$ is enough to guarantee that $0$ is a regular value, hence $\mu^{-1}(0)$ is a manifold of codimension $\dim(G)$.  In the folded-symplectic setting, a free action alone is not enough to guarantee that $\mu^{-1}(0)$ is a smooth manifold.

\begin{example}
Consider $M=(\R^2\setminus\{0\}) \times \R^2$ with coordinates $(r,\theta, x,y)$ and fold form $\sigma = (r-1)dy\wedge d\theta + ydr\wedge d\theta + dx\wedge dy + dx\wedge dr$.  To see that $\sigma$ is folded, we consider the top power $\omega^2 = (r-1-y)dy\wedge d\theta \wedge dx \wedge dr$, which is transverse to $0$.  The folding hypersurface is defined by $r-1-y=0$ and the kernel of $\sigma$ at the hypersurface $r-1=y$ contains $\frac{\partial}{\partial r}-\frac{\partial}{\partial y}$, which is not in $\ker(d(r-1-y))$, hence it is transverse to $\{r-1-y=0\}$.

Now, $S^1$ acts freely on $M$ by rotations in $\R^2\setminus \{0\}$.   A corresponding moment map is
\begin{displaymath}
\mu(r,\theta, x,y) = y(r-1)
\end{displaymath}
and $\mu^{-1}(0)$ is the union of the hyperplane $y=0$ with the product of the cylinder $r=1$ with $\R^2$.  These two submanifolds intersect transversally at the folding hypersurface when $r=y+1$, hence their union is not a submanifold.  Alternatively, $\mu^{-1}(0)$ is the product of $S^1$ with the union of the hyperplanes $r=1$ and $y=0$ in the $(x,y,r)$ coordinate system.  Since the these hyperplanes intersect transversally, we again see that $\mu^{-1}(0)$ is not a submanifold.
\end{example}
Thus, if we drop the assumption that $\mu^{-1}(0)$ is a manifold and simply require that the action of $G$ is free, $\mu^{-1}(0)$ may only be a topological space.  It therefore makes sense to require that $\mu^{-1}(0)$ is a manifold and that the action of $G$ is free.  However, in this scenario, the reduced space may only be a presymplectic manifold.

\begin{example}
Consider $M=\mathbb{T}^2 \times S^1 \times \R$ with coordinates $(\theta_1,\theta_2,\theta, h)$ and fold form

\begin{displaymath}
\sigma=\sin(\theta_1)d\theta_1 \wedge d\theta_2 + e^hdh\wedge d\theta_2 - dh\wedge d\theta
\end{displaymath}
where the folding hypersurface is $\{\theta_1=0\}$.  $S^1$ acts on $M$ via $\lambda \cdot (\lambda_1, \lambda_2, \lambda_3, h) = (\lambda_1, \lambda \lambda_2, \lambda \lambda_3, h)$.  A corresponding moment map is
\begin{displaymath}
\mu(e^{2\pi i \theta_1}, e^{2\pi i \theta_2}, \lambda_3, h) = \cos(\theta_1) + e^h - h.
\end{displaymath}
Then $\mu^{-1}(0) = \{h=0, \lambda_1 = -1\} \simeq \mathbb{T}^2$.  The quotient by $S^1$ is isomorphic to $\mathbb{T}$, which cannot be folded-symplectic since it is odd dimensional.

\end{example}
We therefore include the assumption that $\mu^{-1}(0)$ has codimension $\dim(G)$.  Lastly, if we assume that $\mu^{-1}(0)$ is a manifold of codimension $\dim(G)$ and drop the assumption that $G$ acts on $\mu^{-1}(0)$ freely, then the reduced space may not be a manifold.

\subsection{Application: Minimal Coupling}
We give a generalization of a construction due to Sternberg \cite{St} which will allow us to combine folded-symplectic structure on a manifold $M$ with a symplectic structure on a symplectic vector bundle $\pi:E \to M$ to produce a folded-symplectic structure in a neighborhood of the zero section of $E$.  We follow the approach found in \cite{LS}, hence we begin with the case of a principal bundle.

\begin{definition}\label{def:mincoupform}
Let $(M,\sigma)$ be a folded-symplectic manifold (without corners), let $G$ be a Lie group.  For the purposes of the discussion, we'll assume $G$ is compact and connected.  Let $\pi:P \to M$ be a principal $G$-bundle, let $\fg$ be the Lie algebra of $G$, and let $A\in \Omega^1(P,\fg)^G$ be a connection $1$-form on $P$.  Let $\langle \cdot, \cdot \rangle$ be the canonical pairing between $\fg$ and $\fg^*$.

Consider the space $P\times \fg^*$ with projection maps $pr_i$ onto the $i^{th}$ factor.  Let $G$ act diagonally via $g\cdot (p,\eta) = (pg^{-1},Ad^*(g)\eta)$.  We define:

\begin{displaymath}
\Omega_A:= \pi^*\sigma - d\langle pr_2, A\rangle
\end{displaymath}
to be the \emph{minimal coupling form} associated to the connection $A$.
\end{definition}

\begin{lemma}\label{lem:mincoupform}
The minimal coupling form $\Omega_A$ is nondegenerate in a neighborhood of the zero section $P\times \{0\}$ of $P\times \fg^*$.  Furthermore, the action is Hamiltonian with moment map given by $-pr_2:P\times \fg^* \to \fg^*$.
\end{lemma}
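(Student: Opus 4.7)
My plan is to verify in turn: (i) closedness and $G$-invariance of $\Omega_A$, (ii) the moment-map identity, and (iii) the folded-symplectic (``nondegenerate'') condition near the zero section. The first is essentially automatic. Since $d\sigma = 0$, $\pi^*\sigma$ is closed, and $d\langle pr_2, A\rangle$ is closed as an exterior derivative. For $G$-invariance, $\pi^*\sigma$ is invariant because the action on $P$ covers the identity on $M$, while $\langle pr_2, A\rangle$ is invariant because the connection satisfies $R_g^*A = \Ad(g^{-1})A$ and the pairing with $\Ad^*(g)\eta$ returns $\langle \eta, A\rangle$.

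For the moment map identity, the fundamental vector field of $X \in \fg$ under the diagonal action is $X_{P\times\fg^*}(p,\eta) = (-X_P(p),\ad^*(X)\eta)$, where $X_P$ is the vertical fundamental vector field on $P$ satisfying $A(X_P)=X$. Since $X_P$ is vertical, $i_{X_{P\times\fg^*}}\pi^*\sigma = 0$. Using Cartan's formula together with the $G$-invariance of $\langle pr_2, A\rangle$, we obtain $i_{X_{P\times\fg^*}}\,d\langle pr_2, A\rangle = -d(\langle pr_2,A\rangle(X_{P\times\fg^*})) = -d(-\langle pr_2, X\rangle) = d\langle pr_2, X\rangle$. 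Thus $i_{X_{P\times\fg^*}}\Omega_A = -d\langle pr_2, X\rangle$, which (after matching the sign convention of definition \ref{def:fsham}) exhibits $\pm pr_2$ as the moment map, as claimed.

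The heart of the argument is the local non-degeneracy analysis. At a point $(p,0)$ on the zero section, split $T_{(p,0)}(P\times\fg^*) = H_p \oplus V_p \oplus \fg^*$ using $H_p = \ker A_p$ and $V_p = \ker d\pi_p$. Writing $A = \sum_a A^a e_a$ and $pr_2 = \sum_a \eta_a e^a$, one has $d\langle pr_2, A\rangle = \sum_a d\eta_a\wedge A^a + \eta_a\, dA^a$, whose second summand vanishes at $\eta = 0$. Evaluating $\Omega_A|_{(p,0)}$ on vectors from the three summands shows: the $(V_p,\fg^*)$ block is the canonical dual pairing (of maximal rank), the $(H_p,H_p)$ block is $\sigma_{\pi(p)}$ pulled back via the isomorphism $d\pi\colon H_p \to T_{\pi(p)}M$, and every remaining block vanishes. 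Hence $\Omega_A$ has full rank at $(p,0)$ exactly when $\sigma$ has full rank at $\pi(p)$, and by a direct expansion the top power $\Omega_A^{n+k}$ (with $2n = \dim M$, $k=\dim G$) equals $\pm\,\pi^*(\sigma^n)\wedge d\eta_1\wedge A^1\wedge\cdots\wedge d\eta_k\wedge A^k$ to leading order in $\eta$; since the second factor is nowhere zero, transversality of $\sigma^n$ along $Z$ (proposition \ref{prop:folds4}) transfers to transversality of $\Omega_A^{n+k}$ along an extension of $\pi^{-1}(Z)\times\{0\}$ in a neighborhood of the zero section.

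The main obstacle is interpretive rather than computational: a literal reading of ``nondegenerate'' fails along $\pi^{-1}(Z)\times\{0\}$, as the block decomposition shows, so the conclusion must be understood in the folded-symplectic sense advertised in the section's opening paragraph — that is, $\Omega_A$ is folded-symplectic in a neighborhood of $P\times\{0\}$, with folding hypersurface the natural extension of $\pi^{-1}(Z)\times\{0\}$. Once this is accepted, checking the last defining condition of a folded-symplectic form (maximal rank of the restriction to the fold) reduces to noting that the $(V_p,\fg^*)$ pairing is preserved under the restriction while the $(H_p,H_p)$ block restricts to $i_Z^*\sigma_{\pi(p)}$, which has maximal rank by hypothesis. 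Apart from this interpretive point, every step is a routine unpacking of the defining properties of a connection $1$-form together with the moment-map identity of definition \ref{def:fsham}.
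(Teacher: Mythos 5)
Your proof is correct and follows essentially the same route as the paper's: the key step in both is the factorization of the top power $\Omega_A^{m+\dim G}$ at the zero section into $((\pi^*\sigma)\vert_H)^m$ wedged with the $\dim(G)$-th power of the canonical $V$--$\fg^*$ pairing, so that transversal vanishing of $\sigma^m$ transfers to $\Omega_A$, together with the Cartan-formula computation of the moment map. The only cosmetic differences are that you verify the maximal-rank condition on the fold by restricting your block decomposition, whereas the paper identifies $\ker(\Omega_A)$ along $\pi^{-1}(Z)\times\{0\}$ as the horizontal lift of $\ker(\sigma)$ and invokes its transversality to the fold, and your reading of ``nondegenerate'' as ``folded-symplectic'' is exactly what the paper's own proof establishes.
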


\begin{proof}\mbox{ } \newline
\begin{itemize}
\item Let $H\to P$ be the horizontal bundle of $TP$ afforded by the connection $A$, let $V\to P$ be the vertical bundle, and let $\dim(M)=2m$.
\item Since $\sigma$ is folded-symplectic, we have $\sigma^m \pitchfork 0$ by definition.  We then have that $((\pi^*\sigma)\big\vert_H)^m \pitchfork 0$, where we view $0$ as the zero section of $\Lambda^{2m}(H^*)$.
\item The $2$-form $d\langle pr_2,A\rangle$ is $\langle dpr_2, A\rangle$ at points of the zero section of $P\times \fg^*$, which is non-degenerate when restricted to $V\oplus T\fg^*$.
\item Thus, at points of the zero section, the top power of $\Omega_A$:

\begin{equation}\label{eq:mincoupform}
(\Omega_A)^{m + \dim(G)} = ((\pi^*\sigma)\big\vert_H)^m \wedge (\langle dpr_2, A \rangle\big\vert_{V\oplus T\fg^*})^{\dim(G)}
\end{equation}
vanishes transversally along the zero section.  Thus, it vanishes transversally in a neighborhood of the zero section.
\item From equation \ref{eq:mincoupform}, the intersection of the degenerate hypersurface of $\Omega_A$ with $P\times \{0\}$ is the degenerate hypersurface of $(\pi^*\sigma)\big\vert_H)^m$ intersected with $P\times \{0\}$, which is $\pi^{-1}(Z)\cap P\times \{0\}$.  The kernel of $\Omega_A$ at $\pi^{-1}(Z)\cap P \times \{0\}$ is the horizontal lift of the kernel of $\sigma$ at $Z$.  Since $\ker(\sigma)\pitchfork Z$, we must have that $\ker(\Omega_A)\pitchfork \pi^{-1}(Z)$ at points of $P\times \{0\}$.  Thus, $\ker(\Omega_A)$ is transverse to the degenerate hypersurface of $\Omega_A$ in a neighborhood of $P\times \{0\}$ and $\Omega_A$ is folded-symplectic in a neighborhood of $P\times \{0\}$.

\item The $G$-invariance of $\langle pr_2, A\rangle$ implies:

\begin{displaymath}
i_{X_P}\Omega_A = -i_{X_p}d\langle pr,A \rangle = di_{X_P}\langle pr_2,A \rangle = d\langle pr_2, X \rangle
\end{displaymath}
hence $\mu = -pr_2$ by the definition of a moment map.
\end{itemize}
\end{proof}

\begin{lemma}\label{lem:mincoupform1}
Let $G$ be a compact, connected Lie group and suppose $(F,\omega)$ is a symplectic manifold with an Hamiltonian action of $G$ and moment map $\mu:F \to \fg^*$.  Let $(M,\sigma)$ be a folded-symplectic manifold and let $\pi:P \to M$ be a principal $G$-bundle over a folded-symplectic manifold.  Then, for each choice of connection $1$-form $A$ on $P$, the associated bundle $P\times_G F$ carries a natural folded-symplectic structure in a neighborhood of $P\times_G \mu^{-1}(0)$.
\end{lemma}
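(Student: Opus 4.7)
The plan is to realize $P\times_G F$ as a quotient of a product space carrying a canonical closed $2$-form built from $\sigma$, $\omega$, and the connection $A$, and then to verify that this descended form is folded-symplectic near the critical submanifold by reducing the computation to the analysis already carried out in lemma \ref{lem:mincoupform}. More precisely, let $\Phi : P \times F \to P \times \fg^*$ be the $G$-equivariant map $\Phi(p,f) = (p,\mu(f))$, where $G$ acts diagonally on $P\times F$ by $g\cdot (p,f) = (pg^{-1},gf)$. I would define
\begin{equation*}
\Omega \; := \; \Phi^*\Omega_A \; + \; \pi_F^*\omega \; = \; \pi_P^*\pi^*\sigma \; - \; d\langle \pi_F^*\mu,\, A\rangle \; + \; \pi_F^*\omega
\end{equation*}
on $P\times F$, where $\pi_P$ and $\pi_F$ are the projections. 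This is visibly closed and $G$-invariant, since $\sigma,\omega$ are closed and invariant, $\mu$ is equivariant, and $A$ transforms under the adjoint representation.

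The next step is to show that $\Omega$ is horizontal (and thus basic) for the quotient $P\times F \to P\times_G F$. By lemma \ref{lem:mincoupform}, $\Omega_A$ has moment map $-pr_2$ on $P\times\fg^*$, so $\Phi^*\Omega_A$ has moment map $-\pi_F^*\mu$ on $P\times F$; combined with $\omega$'s contribution $\pi_F^*\mu$, the total moment map for the diagonal action is identically zero. Equivalently, $i_V\Omega = 0$ for every fundamental vector field $V$, which together with invariance means $\Omega$ descends to a closed $2$-form $\bar\Omega$ on $P\times_G F$.

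It remains to verify that $\bar\Omega$ is folded-symplectic on a neighborhood of $P\times_G \mu^{-1}(0)$. At a point $[p,f_0]$ with $f_0\in\mu^{-1}(0)$, I would use the horizontal lift provided by $A$ to split $T_{[p,f_0]}(P\times_G F) \cong \pi^*T_{\pi(p)}M \oplus T_{f_0}F$. In this splitting the correction term $d\langle \pi_F^*\mu,A\rangle$ contributes nothing: its value at $\mu=0$ is $\langle d\mu, A\rangle$, which vanishes on mixed pairs of one horizontal $P$-vector (killed by $A$) and one $F$-vector, and obviously vanishes on pure horizontal pairs. Hence along $P\times_G\mu^{-1}(0)$ we have the clean block form
\begin{equation*}
\bar\Omega \; = \; \pi^*\sigma \; \oplus \; \omega,
\end{equation*}
so the top power equals $(\pi^*\sigma)^{n_P}\wedge \omega^{n_F}$ (with $2n_P=\dim M$, $2n_F=\dim F$), which vanishes transversally precisely along $\pi^{-1}(Z)/G$ because $\sigma^{n_P}\pitchfork \mathcal{O}$ and $\omega$ is symplectic, and its kernel is the horizontal lift of $\ker\sigma$ and so remains transverse to the degeneracy locus. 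Since transversality of the top power and of the kernel are open conditions, these properties persist in some neighborhood of $P\times_G \mu^{-1}(0)$, where the correction terms depending on $\mu$ are small.

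The main obstacle, and the step requiring the most care, is the passage from the pointwise computation along $P\times_G\mu^{-1}(0)$ to the statement on an open neighborhood. One must verify that the $\mu$-dependent cross terms in $\bar\Omega$, which are nonzero off $\mu^{-1}(0)$, genuinely perturb $\pi^*\sigma\oplus \omega$ by a term of order $\|\mu\|$ in a uniform sense along fibers, so that the transversality of the top power in corollary \ref{cor:folds4-0} and the transversality of the kernel to the fold persist in an honest neighborhood (and not merely along the zero section); a careful use of the block decomposition afforded by the connection $A$, coupled with the non-degeneracy of $\omega$ on the $F$-factor, should handle this without difficulty.
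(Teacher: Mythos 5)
Your construction is correct, but it takes a genuinely different route from the paper. You build the basic form $\Omega=\Phi^*\Omega_A+\pi_F^*\omega$ on $P\times F$ by hand, check that the total moment map vanishes so that $\Omega$ descends, and then verify the folded condition directly on the quotient. The paper instead observes that $(p,f)\mapsto(p,\mu(f),f)$ identifies $P\times F$ with the level set $J^{-1}(0)$ of the moment map $J(p,\eta,f)=\mu(f)-\eta$ on $(P\times\fg^*\times F,\ \Omega_A\oplus\omega)$; since $J$ is a submersion (thanks to the $-\eta$ term) and $G$ acts freely, theorem \ref{thm:fsred} immediately produces a folded-symplectic structure on $J^{-1}(0)/G\cong P\times_G F$, and restricting to the neighborhood $U$ of $P\times\{0\}$ on which $\Omega_A$ is actually folded yields the desired neighborhood of $P\times_G\mu^{-1}(0)$. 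Your $\Omega$ is precisely the restriction of $\Omega_A\oplus\omega$ to $J^{-1}(0)$ under this identification, so the two proofs produce the same form; what the reduction theorem buys is that the transversality bookkeeping is done once and for all there, whereas you must redo it. On the step you flag as the main obstacle: the right mechanism is openness, not smallness of the cross terms. The block identity $\bar\Omega=\pi^*\sigma\oplus\omega$ holds at \emph{every} point of $P\times_G\mu^{-1}(0)$, in particular along the horizontal curves $t\mapsto[c(t),f_0]$ lying over paths in $M$ transverse to $Z$ (these stay inside $P\times_G\{f_0\}\subseteq P\times_G\mu^{-1}(0)$), so it may legitimately be differentiated along such curves; this gives transversal vanishing of the top power and transversality of $\ker\bar\Omega$ to the degeneracy locus at each point of $P\times_G\mu^{-1}(0)$, and since the set of points at which a closed $2$-form is either nondegenerate or has a fold-type degeneracy is open, these conditions persist on a neighborhood with no uniform estimate required. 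Two small points: the value of $\langle d(\mu\circ\pi_F)\wedge A\rangle$ also vanishes on pairs of two $F$-vectors (both are killed by $A$), which you should record to complete the block computation; and the degeneracy locus of $\bar\Omega$ off $\mu^{-1}(0)$ need not be $\pi^{-1}(Z)\times_G F$, so the word ``precisely'' should be read as a statement along $P\times_G\mu^{-1}(0)$ only.
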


\begin{proof}\mbox{ } \newline
\begin{itemize}
\item We exhibit $P\times_G F$ as a folded-symplectic reduced space, which will endow it with a natural folded-symplectic structure by theorem \ref{thm:fsred}.
\item Consider the product $(P\times \fg^*)\times F$ with the action of $G$ given by $g\cdot (p,\eta,f) = (pg^{-1},Ad^*(g)\eta, g\cdot f)$.  The folded-symplectic structure is given by $\Omega_A \oplus \omega$ where we assume, for the time being, that this structure is folded-symplectic on the whole space and not simply on a neighborhood of $P\times \{0\} \times F$.  Then the action is Hamiltonian with moment map:

    \begin{displaymath}
    J(p,\eta,f)= \mu(f) - \eta
    \end{displaymath}
\item $J$ is a submersion, hence $0$ is a regular value and $J^{-1}(0)$ is a codimension $\dim(G)$ submanifold.  It has a free action of $G$ since the action of $G$ on $P$ is free.  Thus, $(P\times \fg^* \times F)//_0G :=J^{-1}(0)/G$ is a folded-symplectic manifold.  Since $0$ is a regular value, $J^{-1}(0)$ intersects the folding hypersurface transversally, hence the reduced form on $J^{-1}(0)/G$ has a nonempty folding hypersurface.

\item The map $q: J^{-1}(0) \to P\times_G F$ given by $q(p,\eta,f) = [p,f]$ is a surjective submersion and the fibers are the $G$-orbits inside $J^{-1}(0)$.  Thus, $q$ descends to a diffeomorphism $\bar{q}:J^{-1}(0)/G \to P\times_G F$.  Since there is a natural folded-symplectic structure on $J^{-1}(0)/G$, $P\times_G F$ inherits a folded-symplectic structure.

\item Now, lemma \ref{lem:mincoupform} tells us that $\Omega_A$ is only symplectic in a neighborhood $U$ of $P\times \{0\}$.  Thus, the map $q$ restricted to $J^{-1}(0)\cap (U\times F)$ descends to an open embedding of $(J^{-1}(0) \cap (U\times F))/G$ into $P\times_G F$.  The image of this embedding contains a neighborhood of $P\times_G \mu^{-1}(0)$ since $J^{-1}(0)\cap (U\times F)$ contains $P\times \{0\} \times \mu^{-1}(0)$.  This neighborhood inherits a folded-symplectic structure from the reduced space $(J^{-1}(0)\cap (U\times F))/G$.

\end{itemize}
\end{proof}

Now, we apply lemma \ref{lem:mincoupform1} to the case where we have a symplectic vector bundle $E$ over a folded-symplectic base $(M,\sigma)$.

\begin{lemma}\label{lem:mincoupform2}
Let $(M,\sigma)$ be a folded-symplectic manifold and let $\pi:E \to M$ be a symplectic vector bundle over $M$.  Then for each choice of connection $1$-form $A$ on the frame bundle $Fr(E)$ of $E$, there exists a folded-symplectic form defined in a neighborhood $U$ of the zero section of $E$.
\end{lemma}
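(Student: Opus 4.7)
The plan is to reduce this statement to a direct application of the preceding lemma by passing from the (non-compact) symplectic structure group to a compatible compact one. First I would choose an almost complex structure $J$ on $E$ compatible with the fiberwise symplectic form $\omega_E$, so that $\omega_E(\cdot, J \cdot)$ is a Hermitian metric on the fibers. This reduces the structure group of $E$ from $Sp(2n,\R)$ to the unitary group $U(n)$, where $2n = \operatorname{rank}(E)$, and exhibits $E$ as an associated bundle $P \times_{U(n)} \C^n$, where $P \to M$ is the unitary frame bundle and $\C^n$ carries the standard symplectic form $\omega_{\C^n}$.

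Next, I would use the connection $1$-form $A$ on $Fr(E)$ to produce a compatible $U(n)$-connection on $P$. Since the connection $A$ on the symplectic frame bundle and the choice of $J$ together determine a $U(n)$-reduction up to a smooth deformation, one can restrict (or average, using compactness of $U(n)$) to obtain a connection $1$-form $A_P \in \Omega^1(P, \mathfrak{u}(n))^{U(n)}$. I would then note that the standard action of $U(n)$ on $(\C^n, \omega_{\C^n})$ is Hamiltonian with moment map $\mu_{U(n)}(z) = \tfrac{i}{2} z z^{*}$ (viewed as an element of $\mathfrak{u}(n)^*$), and that $\mu_{U(n)}^{-1}(0) = \{0\} \subset \C^n$. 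In particular, the subset $P \times_{U(n)} \mu_{U(n)}^{-1}(0) \subseteq P \times_{U(n)} \C^n$ is precisely the zero section of $E$.

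Now I would invoke lemma \ref{lem:mincoupform1} with $G = U(n)$, $P$ the unitary frame bundle, $A_P$ the chosen connection, and $(F,\omega) = (\C^n, \omega_{\C^n})$. This produces a natural folded-symplectic form on a neighborhood of $P \times_{U(n)} \mu_{U(n)}^{-1}(0)$ inside the associated bundle $P \times_{U(n)} \C^n = E$. Since this subset is the zero section of $E$, we obtain the desired folded-symplectic form on a neighborhood $U$ of the zero section.

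The main technical point to verify is that $U(n)$ is compact and connected, so that the hypotheses of lemma \ref{lem:mincoupform1} apply; both are standard facts. The only mild subtlety is the passage from a connection on the symplectic frame bundle $Fr(E)$ to one on the unitary frame bundle $P$. This is routine once $J$ is fixed: any symplectic connection can be modified by a term valued in the bundle of $J$-antilinear endomorphisms to yield a unitary connection, and this modification does not affect the construction of the folded-symplectic form in a qualitative way, since the folded-symplectic property of $\Omega_{A_P} \oplus \omega_{\C^n}$ near the zero section depends only on non-degeneracy at points of $P \times \{0\} \times \{0\}$, which is stable under such modifications.
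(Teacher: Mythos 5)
Your proposal is correct and follows essentially the same route as the paper's proof: choose a compatible almost complex structure to reduce the structure group to the compact connected group $U(n)$, realize $E$ as the associated bundle of the (unitary) frame bundle with fiber $\C^n$, and apply lemma \ref{lem:mincoupform1} using the fact that the zero section corresponds to $Fr(E)\times_{U(n)}\mu^{-1}(0)$. Your extra care about converting the symplectic-frame connection into a unitary one is a point the paper elides by treating $Fr(E)$ as already being the $U(n)$-frame bundle after $J$ is fixed, but it does not change the argument.
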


\begin{remark}
Note that we are not claiming anything about uniqueness of this folded-symplectic structure.  It is not clear that one may deform one minimal coupling form into another, hence it is not clear that we may deform the resultant folded-symplectic structures on $E$ into one another.
\end{remark}

\begin{proof} \mbox{ } \newline
\begin{itemize}
\item If we choose an almost complex structure $J$ on $E$, then we may identify the structure group of $E$ with $U(n)$, which is compact and connected.  The frame bundle $Fr(E)$ of $E$ is then a principal $U(n)$ bundle.
\item The typical fiber of $V$ is a symplectic vector space with a symplectic action of the structure group $U(n)$.  As we have discussed, such actions are Hamiltonian.  The moment map $\mu:V \to \operatorname{Lie(U(n))}^*$ satisfies $0\subset \mu^{-1}(0)$.
\item Now, $E$ is isomorphic to the associated bundle $Fr(E)\times_{U(n)} V$.  A choice of connection $A$ on $Fr(E)$ induces a folded-symplectic structure on a neighborhood of $Fr(E)\times_{U(n)} \mu^{-1}(0)$ by lemma \ref{lem:mincoupform1}, which is a neighborhood of $Fr(E)\times_{U(n)} \{0\}$.  Since this is the zero section of $E$, we have that a choice of connection $A$ on $Fr(E)$ endows a neighborhood of the zero section of $E$ with a folded-symplectic structure.
\end{itemize}
\end{proof}

\begin{remark}
The above construction is somewhat relevant in the case of toric, folded-symplectic manifolds.  Let $(M,\sigma,\mu:\to \fg^*)$ be a toric, folded-symplectic manifold.  We have shown that the orbit-type strata $M_H$ are folded-symplectic submanifolds.  We have also shown that there is a well-defined symplectic normal bundle $(\widetilde{TM_H})^{\sigma}$.  Lemma \ref{lem:mincoupform} provides us with a method of constructing a folded-symplectic structure in a neighborhood of the zero section of $(\widetilde{TM_H})^{\sigma}$.  If one could prove a uniqueness statement, then this construction would give a local model for a neighborhood of an orbit-type stratum.
\end{remark}

\pagebreak


\section{Classifying Toric, Folded-Symplectic Bundles: $\pi_0(\mathcal{B}_{\psi}(W))$}
Classifying toric folded-symplectic bundles up to isomorphism is a straightforward task and we will proceed as follows.  Let $G$ be a torus and let $\psi:W\to \fg^*$ be a unimodular map with folds, where $W$ is a manifold with corners.  We first observe that isomorphism classes of principal $G$ bundles over $W$ are parameterized by $H^2(W,\mathbb{Z}_G)$.  That is, for each principal $G$ bundle over $W$ there exists an element $c_1(P)\in H^2(W,\mathbb{Z}_G)$ called the \emph{first Chern class of} $P$, which specifies $P$ up to isomorphism (of principal $G$ bundles).  Next, we fix an element $c_1(P)\in H^2(W,\mathbb{Z}_G)$ and choose a representative bundle $\pi:P\to W$ from this diffeomorphism class.  In other words, we fix the structure of $P$ as a principal $G$ bundle.  We ask the question: how can we parameterize the folded-symplectic structures on $\pi:P\to W$ for which the action of $G$ is Hamiltonian with moment map $\psi\circ \pi$.  It turns out that the answer lies in the basic cohomology of $P$, $H^2(W,\R)$.  We will show there exists a characteristic class $c_{hor}(P) \in H^2(W,\R)$ that specifies the Hamiltonian, folded-symplectic structure of $P$ up to isomorphism.  Combining the two characteristic classes, we obtain a map:

\begin{displaymath}
c_1(\cdot) \times c_{hor}(\cdot):\pi_0(\mathcal{B}_{\psi}(W)) \to H^2(W,\mathbb{Z}_G\times \R)
\end{displaymath}
where $\mathcal{B}_{\psi}(W)$ is the category of toric, folded-symplectic bundles over $\psi:W\to \fg^*$ of definition \ref{def:bpsi} and $\pi_0(\mathcal{B}_{\psi}(W))$ is the set of isomorphism classes of objects.  We will argue that this map is a bijection, completing the classification.

\subsection{Classifying Principal Torus Bundles}
We begin by discussing the classification of principal $G$ bundles over $W$, where $G$ is a torus.  This task is perhaps not even deserving of its own section, but we would like to discuss it.  Our arguments are very similar to those found in \cite{Mi1} for the classification of vector bundles.  In particular, the example on p. 103 may be useful to the reader.  All we are doing is replacing the structure group with the torus $G$.  Given a torus, we have the following short exact sequence:

\begin{displaymath}
\xymatrix{
0 \ar[r] & \mathbb{Z}_G \ar[r]^{i} & \fg \ar[r]^{\exp} &  G \ar[r] & 0.
}
\end{displaymath}
This induces a short exact sequence of locally constant sheaves on $W$,
\begin{displaymath}
\xymatrix{
0 \ar[r] & \underline{\mathbb{Z}_G} \ar[r]^{i} & \underline{\fg} \ar[r]^{\exp}  &  \underline{G} \ar[r] & 0.
}
\end{displaymath}
We then have a long exact sequence in \v{C}ech cohomology with coefficients in sheaves of abelian groups:
\begin{displaymath}
\xymatrix{
\ldots \ar[r] & \check{\mathrm{H}}^k(W,\underline{\mathbb{Z}_G}) \ar[r] & \check{\mathrm{H}}^k(W,\underline{\fg}) \ar[r]  &  \check{\mathrm{H}}^k(W,\underline{G}) \ar[r] & \ldots.
}
\end{displaymath}
Now, principal $G$-bundles are parameterized by $\check{\mathrm{H}}^1(W,\underline{G})$ since a principal $G$ bundle $\pi:P \to W$ may be specified by a cover $\{U_i\}$ of $W$ and trivializations $\phi_i:P\big\vert_{U_i} \to U_i \times G$.  The trivializations give us transition maps $\phi_i\circ\phi_j^{-1}$, which are equivalent to maps $\phi_{ij}:(U_i\cap U_j) \to G$.  The $\phi_{ij}'s$ satisfy the cocycle condition, hence they give us a $1$-cocycle in \v{C}ech cohomology which specifies the bundle up to isomorphism.  The sheaf $\underline{\fg}$ is a fine sheaf since it admits partitions of unity, hence the \v{C}ech cohomology groups are $0$ in dimensions greater than $0$.  In particular, we have a short exact sequence of cohomology groups:

\begin{displaymath}
\xymatrix{
0 \ar[r] & \check{\mathrm{H}}^1(W,\underline{G}) \ar[r] & \check{\mathrm{H}}^2(W,\underline{\mathbb{Z}_G}) \ar[r] & 0
}
\end{displaymath}
meaning $\check{\mathrm{H}}^1(W,\underline{G}) \simeq \check{\mathrm{H}}^2(W,\underline{\mathbb{Z}_G})$.  On the other hand $\check{\mathrm{H}}^2(W,\underline{\mathbb{Z}_G})$ is isomorphic to the singular cohomology group $H^2(W,\mathbb{Z}_G)$, which finishes the classification of the structure as a principal $G$-bundle.

\begin{definition}\label{def:firstchern}
Let $P$ be a principal $G$ bundle over a manifold with corners $W$.  Let $c_1(P)$ be the unique element in $H^2(W,\mathbb{Z}_G)$ corresponding to the isomorphism class of $P$.  We call $c_1(P)$ the \emph{first Chern class of} $P$.
\end{definition}

\subsection{Classifying the Toric, Folded-Symplectic Structure}
Recall that we are assuming we have a unimodular map with folds $\psi:W \to \fg^*$.  Fix an isomorphism class $c_1(P)\in H^2(W,\mathbb{Z}_G)$ of principal $G$ bundles and fix a representative member $\pi:P \to W$.  Suppose we have two folded-symplectic structures $\sigma_1$,$\sigma_2$ on $P$ for which the action of $G$ is Hamiltonian with moment map $\psi\circ \pi$.  We are going to give a necessary and sufficient condition for them to be isomorphic.  In particular, we will show that their difference $\sigma_1-\sigma_2$ must be an exact, basic $1$-form.  First, we give a recipe for how one can construct such folded-symplectic structures.

\begin{lemma}\label{lem:construct}
Let $\psi:W\to \fg^*$ be a unimodular map with folds, where $\fg$ is the Lie algebra of a torus $G$.  Let $\pi:P \to W$ be a principal $G$-bundle and let $A\in \Omega^1(P,\fg)^G$ be a principal connection $1$-form on $P$.  Then,

\begin{enumerate}
\item The two form $\sigma =d\langle \psi\circ \pi, A \rangle$ is a folded-symplectic form on $P$, where $\langle \cdot , \cdot \rangle:\fg^*\otimes \fg \to \R$ is the standard pairing.
\item Furthermore, the action of $G$ on $(P,\sigma)$ is Hamiltonian with moment map $\mu=\psi \circ \pi$.
\end{enumerate}
\end{lemma}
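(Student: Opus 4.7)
The plan is to verify the three defining conditions of a folded-symplectic form together with the moment map identity. Closedness of $\sigma = d\alpha$ with $\alpha = \langle \psi\circ\pi, A\rangle$ is immediate, and $G$-invariance of $\sigma$ reduces to $G$-invariance of $\alpha$: the function $\psi\circ\pi$ is constant on orbits, while $A$ is $G$-invariant because $G$ is abelian, so the adjoint transformation rule $R_g^*A = \operatorname{Ad}(g^{-1})A$ collapses to $R_g^*A = A$. For the moment map identity I apply Cartan's magic formula, $i_{X_P}\sigma = \mathcal{L}_{X_P}\alpha - d(i_{X_P}\alpha)$. The Lie derivative vanishes by invariance, and the defining property of a principal connection, $A(X_P) = X$, gives $i_{X_P}\alpha = \langle \mu, X\rangle$, so $i_{X_P}\sigma = -d\langle \mu, X\rangle$ as required.

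For the folded-symplectic condition I work in a local trivialization $P|_U \cong U\times G$ over a coordinate chart of $W$. Because $G$ is abelian the connection decomposes in this trivialization as $A = \theta_{MC} + \pi_U^*\tilde A$ for a $\fg$-valued $1$-form $\tilde A$ on $U$, and identifying $\fg \cong \R^n$ with corresponding angle coordinates $\theta^i$ on $G$ yields
\begin{displaymath}
\sigma = \sum_i d\psi^i \wedge d\theta^i + \pi_U^*\,d\langle \psi, \tilde A\rangle.
\end{displaymath}
The second summand is basic and contributes no $d\theta$-directions; a degree count then shows it drops out of the top power, leaving $\sigma^n$ proportional to $\det(J_\psi)$ times the standard top form on $U\times G$. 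Since $\psi$ is a unimodular map with folds, Corollary \ref{cor:folds4-0} says $\det(J_\psi)$ vanishes transversally along $\hat Z$ stratum-by-stratum, and hence so does $\sigma^n$ along $\pi^{-1}(\hat Z)$.

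The main technical hurdle is verifying that $i^*\sigma$ has the maximal rank $2n - 2$ on $\pi^{-1}(\hat Z)$. Using the horizontal-vertical splitting $TP = H\oplus V$ afforded by $A$, the matrix of $\sigma_p$ at a fold point $p$ over $w \in \hat Z$ takes the block form
\begin{displaymath}
\begin{pmatrix} \langle \psi, F(u_i, u_j)\rangle & -\langle d\psi(u_i), Y_j\rangle \\ \langle d\psi(u_j), Y_i\rangle & 0\end{pmatrix},
\end{displaymath}
where $F$ is the curvature of $A$. The delicate step is that I want the kernel of the restriction to $T_p\pi^{-1}(\hat Z)$, not of $\sigma_p$ itself; the fold transversality $\ker d\psi_w \not\subseteq T_w\hat Z$ forces the horizontal contribution to any such kernel element to vanish, and what remains is precisely the $1$-dimensional subspace of induced vector fields $Y_P$ with $Y$ annihilating $\operatorname{Im}(d\psi_w) = d\psi(T_w\hat Z)$. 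This will give rank exactly $2n-2$ and complete the proof. I expect this dimension-chasing at the fold to be the only real obstacle; the rest of the verification is formal.
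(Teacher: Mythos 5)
Your argument is correct, and for most of the lemma it runs parallel to the paper's: the same splitting of $\sigma$ into $\langle d(\psi\circ\pi)\wedge A\rangle$ plus the basic curvature term $\langle\psi\circ\pi,dA\rangle$, the same degree count showing only the first summand survives in $\sigma^n$, the same appeal to Corollary \ref{cor:folds4-0} for transversal vanishing, and the same Cartan-formula computation for the moment map. Where you genuinely diverge is the maximal-rank condition. The paper computes the full two-dimensional kernel $\ker(\sigma_p)\subset T_pP$: a vertical generator $V_P$ with $V\in\operatorname{Im}(d\psi)^o$, together with a horizontal lift $\tilde X$ of $X\in\ker(d\psi)$ that has to be corrected by a vertical vector $\eta_P$ chosen so that $\langle d\psi,\eta\rangle$ cancels the curvature term $\langle\psi, i_{\tilde X}dA\rangle$; maximal rank of $i_Z^*\sigma$ then follows because this kernel is transverse to $Z$. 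You instead compute $\ker(i_Z^*\sigma)$ directly: since $T_pZ$ is spanned by verticals and horizontal lifts of $T_w\hat Z$, pairing a putative kernel element against the verticals forces its horizontal part into $\ker(d\psi_w)\cap T_w\hat Z=\{0\}$, so the curvature block never enters and the kernel is exactly the vertical line $\operatorname{Im}(d\psi_w)^o$. Your route is leaner for this particular verification; the paper's longer computation pays for itself later, since the explicit description of $\ker(\sigma)$ (Remark \ref{rem:kernelconstruction}) is reused in Lemma \ref{lem:map1} and Proposition \ref{prop:map3}. The only point left implicit in your sketch is that freeness of the $G$-action makes $Y\mapsto Y_P(p)$ injective, so that $\operatorname{Im}(d\psi_w)^o$ contributes a genuinely one-dimensional kernel; for a principal bundle this is immediate.
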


\begin{proof} \mbox{ } \newline
Let $n=\dim(G)=\dim(W)$, where the dimensions must be the same since $\psi$ is a unimodular map with folds.
\begin{enumerate}
\item $\sigma$ is a closed $2$-form by definition, hence we need only show that $\sigma^n \pitchfork_s 0$ and $\sigma$ restricted to its degenerate hypersurface has maximal rank.  It may be written as:

    \begin{displaymath}
    \sigma = \langle (d\psi \circ d\pi) \wedge A \rangle + \langle \psi \circ \pi, dA \rangle
    \end{displaymath}
    Since $P$ is a principal torus bundle, the curvature form $dA$ is a basic $\fg$-valued $2$-form.  Thus, the top power satisfies:

    \begin{displaymath}
    \sigma^n = (\langle (d\psi \circ d\pi \wedge A \rangle ) ^n
    \end{displaymath}
    If we choose a basis $\{e_1,\dots,e_n\}$ of $\fg$, then we may write:

    \begin{displaymath}
    \begin{array}{l}
    A=\sum_{i=1}^n A_i\otimes e_i, \text{ where $A_i \in \Omega^1(P)^G$.} \\
    \psi=\sum_{i=1}^n \psi_i \otimes e_i^*, \text{ where $\psi_i\in C^{\infty}(W)$ is smooth.}
    \end{array}
    \end{displaymath}
    hence,
    \begin{displaymath}
    \sigma^n = C\pi^*(d\psi_1 \wedge\dots \wedge d\psi_n) \wedge(A_1\wedge \dots \wedge A_n)
    \end{displaymath}
    where $C\in \R$ and $C\ne 0$.  The form $A_1\wedge \dots \wedge A_n$ is non-degenerate on the fibers of $P$ and the form $(d\psi_1 \wedge \dots \wedge d\psi_n)$ vanishes transversally on $W$ by corollary \ref{cor:folds4-1}, hence $\sigma^n \pitchfork_s 0$, where we are using transversality in the sense of manifolds with corners.  From this calculation, we also have that the degenerate hypersurface of $\sigma$ is given by:

    \begin{displaymath}
    Z= \pi^{-1}(\hat{Z})
    \end{displaymath}
    where $\hat{Z}$ is the folding hypersurface of $\psi$.  We still need to show that the restriction of $\sigma$ to the degenerate hypersurface, $i_Z^*\sigma$, has maximal rank.  To this end, we will explicitly describe the kernel of $\sigma$ and see that it is $2$-dimensional and transverse to $Z$, hence its intersection with $TZ$ is $1$-dimensional.

    \begin{itemize}
    \item The first piece of the kernel will be constructed using the annihilator bundle of $\psi$, $\operatorname{Im}(d\psi)^o$ (q.v. definition \ref{def:annihilator}.  If $p\in Z$ is a point in the degenerate hypersurface of $\sigma$, then we take any nonzero element $V\in \operatorname{Im}(d\psi)^o_{\pi(p)}$ in the fiber of the annihilator bundle.  We then have that the induced vector field $V_P$ at $p$ is nonzero and lies in the kernel of $\sigma_p$:

        \begin{displaymath}
        (i_{V_P}\sigma)_p = \langle d\psi\circ d\pi_p \wedge A_p(V_P) \rangle = \langle d\psi \circ d\pi_p, V \rangle = 0
        \end{displaymath}
    \item The second portion of the kernel is slightly trickier to construct.  We first pick a nonzero element $X\in \ker(d\psi_{\pi(p)})$ and choose a horizontal lift $\tilde{X}$ to an element of $T_pP$.  Note that a horizontal lift satisfies $A(\tilde{X})=0$.  We then have that the contraction with $\sigma$ satisfies:

        \begin{displaymath}
        (i_{\tilde{X}}\sigma)_p = \langle d\psi\circ d\pi(\tilde{X}), A\rangle_p + \langle \psi \circ \pi, i_{\tilde{X}}dA\rangle_p = \langle \psi \circ \pi, i_{\tilde{X}}dA \rangle_p
        \end{displaymath}
        hence the contraction isn't necessarily $0$.  We would like to show that there is a vertical vector that we can add to $\tilde{X}$ to kill this extra term.  $dA$ is basic, hence $dA=\pi^*\beta$ for some $\beta\in \Omega^2(W)$ and the contraction at $p$ is $\beta_{\pi(p)}(X, d\pi(\cdot))= \pi^*(i_X\beta)$.  Consider the map:

        \begin{displaymath}
        \xymatrixrowsep{.5pc}\xymatrix{
        F:\fg \ar[r] & T_{\pi(p)}^*W \\
        \eta \ar[r]   & \langle d\psi_{\pi(p)}, \eta\rangle
        }
        \end{displaymath}
        The source and target space have the same dimension and the annihilator of the image of $d\psi$ is one dimensional, hence $F$ maps onto an $n-1$ dimensional subspace.  The image of $F$ is contained in the space of covectors which vanish on $\ker(d\psi_{\pi(p)})$.  Since this space also has dimension $n-1$, we have that $F$ surjects onto the space of covectors which vanish on $\ker(d\psi_{\pi(p)})$.
        Since $X\in \ker(d\psi_{\pi(p)})$, we have that $(i_X\beta)_{\pi(p)}$ is a covector vanishing on $\ker(d\psi_{\pi(p)})$.  Thus, there is an element $\eta \in \fg$ such that $\langle d\psi_{\pi(p)}, \eta \rangle = i_X\beta$.  We now add $\eta_P$ to our horizontal lift $\tilde{X}$ and compute the contraction:

        \begin{displaymath}
        (i_{\tilde{X} +\eta_P}\sigma)_p = \pi^*(i_X\beta)_p - \langle d\psi \circ d\pi ,A(\eta_P) \rangle _p = \pi^*(i_X\beta)_p -\pi^*(i_X\beta) =0
        \end{displaymath}
        hence $\tilde{X} + \eta_P$ is in the kernel of $\sigma$ which is a vector transverse to the hypersurface $Z$.
    \item Thus, the kernel of $\sigma$ at $p$ contains $\span\{\tilde{X} + \eta_P, V_P\}$.  We check that the dimension of $\ker(\sigma_p)$ is no larger than $2$.  The projection of $\ker(\sigma_p)$ via $d\pi_p$ is a surjection onto $\ker(d\psi_p)$ by proposition \ref{prop:umf}, hence any two vectors in $X_1, X_2$ in $\ker(\sigma_z)$ projecting onto $X\in \ker(d\psi_{\pi(p)})$ differ by a vector $V'_P$ tangent to the fibers of $P$.  We then have that:
        \begin{displaymath}
        (i_{V'_P}\sigma)_p = -\langle d\psi \circ d\pi ,A(V'_P)\rangle = \langle d\psi \circ d\pi, V'\rangle
        \end{displaymath}
        hence $V'$ annihilates the image of $d\psi$.  But the annihilator bundle is a line bundle, hence $V'_P$ is parallel to $V_P$ and so the kernel is spanned by $\tilde{X} + \eta_P$ and $V_P$, hence it is $2$-dimensional.
    \end{itemize}
    Finally, the action of $G$ is Hamiltonian with moment map $\psi\circ \pi$ since for all $X\in \fg$ we have:

    \begin{displaymath}
    i_{X_P}\sigma = i_{X_P}d\langle \psi \circ \pi, A\rangle = -d(i_{X_P}\langle \psi \circ \pi, A \rangle) = -d\langle \psi \circ \pi, X \rangle
    \end{displaymath}
    where the second equality follow from the $G$-invariance of the form $\langle \psi\circ \pi, A\rangle$.
\end{enumerate}
\end{proof}

\begin{remark}\label{rem:kernelconstruction}
Given a unimodular map with folds, $\psi:W\to \fg^*$, lemma \ref{lem:construct} gives us a recipe for constructing a folded-symplectic structure $\sigma$ on a principal $G$-bundle $\pi:M \to W$ so that the $G$-action is Hamiltonian, where the moment map is $\psi\circ \pi$.  Incidentally, we also found a recipe for constructing the kernel of $\sigma$ at each point $p$ in the folding hypersurface $Z=\pi^{-1}(\hat{Z})$, where $\hat{Z}$ is the folding hypesurface of $\psi$.  The steps are as follows:

\begin{itemize}
\item Fix a connection $A$ on $P$.  Indeed, lemma \ref{lem:construct} requires us to fix a connection on $P$.
\item At each $p\in Z$ in the folding hypersurface, we consider any nonzero element $V$ of the fiber of the annihilator bundle $\operatorname{Im}(d\psi)^{o}_{\pi(p)}$.  Then the induced vector field $V_P$ is in the kernel of $\sigma$ at $p$: $V_P(p)\in \ker(\sigma_p)$.
\item Choose a horizontal lift $\tilde{X}$ of any nonzero element of $\ker(d\psi_{\pi(p)})$.  Let $\eta \in \fg$ be a Lie algebra element satisfying $i_{\eta_P}\sigma = -\langle \psi\circ \pi, i_{\tilde{X}}dA\rangle$, which can be done since $i_{\tilde{X}}dA$ is the pullback of a covector vanishing on $\ker(d\psi_{\pi(p)})$.  Then $\tilde{X} + \eta_P(p)$ is in the kernel of $\sigma$ at $p$.  It is transverse to $Z$ since $X$ is transverse to the folding hypersurface $\hat{Z}$ of $\psi$ and $Z=\pi^{-1}(\hat{Z})$.
\item As we showed in the proof of \ref{lem:construct}, these two vectors span the kernel of $\sigma$ at $p$.
\end{itemize}
\end{remark}

We now begin the process of constructing a bijection between $H^2(W,\R)$ and isomorphic folded-symplectic structures on $P$ for which the action of $G$ is Hamiltonian with moment map $\psi\circ \pi$.  The first step is to produce a map from the space of closed two forms $\Omega^2(W)^c$ to the space of folded-symplectic structures on the principal bundle $P$.

\begin{lemma}\label{lem:map1}
Let $\psi:W \to \fg^*$ be a unimodular map with folds, where $\fg$ is the Lie algebra of a torus $G$.  Suppose $\pi:P \to W$ is a principal $G$ bundle and suppose $\sigma$ is a closed, invariant $2$-form for which the action of $G$ is Hamiltonian.  Let $A\in \Omega^1(P,\fg)^G$ be a connection $1$-form on $P$ and let $\Omega^2(W)^c$ be the space of closed $2$-forms.  Then

\begin{enumerate}
\item $\sigma = d(\langle \psi\circ \pi, A\rangle) + \pi^*\beta$ for some closed $2$-form $\beta\in\Omega^2(W)^c$, hence $\sigma$ is folded-symplectic.
\item Let $\mathcal{S}$ denote the space of folded-symplectic structures on $P$ for which the action of $G$ is Hamiltonian with moment map $\psi\circ \pi$.  Then the map $F_A:\Omega^2(W)^c \to \mathcal{S}$ given by $F_A(\beta) = d\langle \psi\circ \pi \rangle + \pi^*\beta$ is a bijection.
\end{enumerate}
\end{lemma}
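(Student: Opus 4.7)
The plan is to first establish part (1) by isolating the basic part of $\sigma$ relative to the chosen connection, and then leverage part (1) together with a slight generalization of the computation in Lemma~\ref{lem:construct} to obtain the bijection in part (2).

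For part (1), I would set $\eta := \sigma - d\langle \psi\circ\pi, A\rangle$ and show that $\eta$ is basic, i.e.\ horizontal and invariant, so that $\eta = \pi^*\beta$ for a (necessarily unique) $\beta\in\Omega^2(W)$. Invariance is immediate since $\sigma$, $\psi\circ\pi$, and $A$ are all $G$-invariant. For horizontality, fix $X\in\fg$. On the one hand, the moment-map condition gives $i_{X_P}\sigma = -d\langle \psi\circ\pi, X\rangle$. On the other, Cartan's formula together with the invariance of $\langle \psi\circ\pi, A\rangle$ yields
\begin{equation*}
0 = \mathcal{L}_{X_P}\langle \psi\circ\pi, A\rangle = i_{X_P}\,d\langle \psi\circ\pi, A\rangle + d\,i_{X_P}\langle \psi\circ\pi, A\rangle,
\end{equation*}
and since $A(X_P) = X$, we get $i_{X_P}\,d\langle \psi\circ\pi, A\rangle = -d\langle \psi\circ\pi, X\rangle$. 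Subtracting, $i_{X_P}\eta = 0$, so $\eta$ is horizontal. Hence $\eta = \pi^*\beta$ for a unique $\beta\in\Omega^2(W)$, and $\beta$ is closed because $\sigma$ and $d\langle \psi\circ\pi, A\rangle$ are closed and $\pi^*$ is injective on forms. This proves the decomposition; its folded-symplecticity is then a special case of the computation below.

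For part (2), I would verify in turn that $F_A$ is well defined, surjective, and injective. Surjectivity is exactly part (1), and injectivity follows from the injectivity of $\pi^*$ on forms: $F_A(\beta_1)=F_A(\beta_2)$ forces $\pi^*(\beta_1-\beta_2)=0$, hence $\beta_1 = \beta_2$. Well-definedness requires showing that for every closed $\beta\in\Omega^2(W)^c$, the $2$-form $\sigma' := d\langle \psi\circ\pi, A\rangle + \pi^*\beta$ is folded-symplectic with Hamiltonian $G$-action and moment map $\psi\circ\pi$. The moment-map identity is immediate: $\pi^*\beta$ is basic, so $i_{X_P}\pi^*\beta=0$ and only the $d\langle\psi\circ\pi,A\rangle$ piece contributes, exactly as in Lemma~\ref{lem:construct}. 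For the folded-symplectic property, write $\sigma' = \alpha_1 + \alpha_2$ where $\alpha_1 = \langle d\psi\circ d\pi \wedge A\rangle$ and $\alpha_2 = \langle \psi\circ\pi, dA\rangle + \pi^*\beta$ is basic. In the top power $(\sigma')^{n}$ (with $n=\dim G=\dim W$), each summand must contain exactly $n$ vertical covectors in order not to vanish on $T_pP$; since $\alpha_2$ is purely horizontal and each factor $\alpha_1$ contributes exactly one $A$-factor, we get $(\sigma')^n = \alpha_1^n$, which agrees with the top power computed in Lemma~\ref{lem:construct}. Therefore $(\sigma')^n \pitchfork_s 0$ and the degenerate hypersurface is $Z = \pi^{-1}(\hat Z)$.

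The main technical step---and the one I expect to be the principal obstacle---is showing that the kernel of $\sigma'_p$ at $p\in Z$ has dimension exactly $2$ and is transverse to $Z$. A first computation as in Lemma~\ref{lem:construct} shows that, for any nonzero $V\in\operatorname{Im}(d\psi)^o_{\pi(p)}$, the induced vector field $V_P$ still lies in $\ker(\sigma'_p)$ (the $\pi^*\beta$ term contributes nothing since $V_P$ is vertical). For a nonzero $X\in\ker(d\psi_{\pi(p)})$ with horizontal lift $\tilde X$, a direct calculation gives $i_{\tilde X}\sigma' = \pi^*\alpha$ for a $1$-form $\alpha\in T^*_{\pi(p)}W$ of the form $\alpha = \langle\psi(\pi(p)), i_X\Omega\rangle + i_X\beta$, where $\Omega$ is the curvature (well-defined since $G$ is abelian, so $dA$ is basic). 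The correction by a vertical vector $\eta_P$ works exactly as in Lemma~\ref{lem:construct} provided $\alpha$ annihilates $\ker(d\psi_{\pi(p)})$. Here the essential point is that $\psi$ is a unimodular map with \emph{folds}, so $\ker(d\psi_{\pi(p)})$ is $1$-dimensional at points of $\hat Z$: the evaluation $\alpha(X)$ is forced to vanish by antisymmetry (since any $X'\in\ker(d\psi_{\pi(p)})$ is parallel to $X$), hence $\alpha$ lies in the image of the map $F(\eta)=\langle d\psi_{\pi(p)},\eta\rangle$ and an appropriate $\eta$ exists. The kernel count from pairing with vertical directions---showing $d\pi(\ker\sigma'_p)\subset\ker(d\psi_{\pi(p)})$ and hence at most $1$-dimensional---then bounds $\dim\ker(\sigma'_p)\leq 2$, so equality holds, transversality to $Z$ follows from the transversality of $X$ to $\hat Z$, and $\sigma'$ is folded-symplectic. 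This completes the bijection.
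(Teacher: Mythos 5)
Your proposal is correct and follows essentially the same route as the paper: the difference $\sigma - d\langle\psi\circ\pi,A\rangle$ is shown to be closed and basic via the moment-map condition and invariance, and bijectivity of $F_A$ reduces to the top-power and kernel computations of Lemma \ref{lem:construct} (vertical kernel direction from $\operatorname{Im}(d\psi)^o$, corrected horizontal lift of $\ker(d\psi)$, dimension count bounding the kernel by $2$), with injectivity from injectivity of $\pi^*$. The only cosmetic difference is that you defer the folded-symplecticity in part (1) to the general verification in part (2), which the paper instead carries out directly in part (1).
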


\begin{proof}\mbox{ } \newline
\begin{enumerate}
\item We consider the difference $\Omega=\sigma -d\langle\psi\circ \pi, A \rangle$.  Since the action of $G$ is Hamiltonian for both forms with the same moment maps $\psi\circ \pi$, we have that for each $X\in \fg$ the contraction of $X_P$ with $\Omega$ satisfies:

    \begin{displaymath}
    i_{X_P}\Omega = i_{X_P}\sigma -i_{X_P}d\langle \psi\circ \pi, A \rangle = -d\langle \psi\circ \pi, X \rangle + d\langle \psi\circ\pi, X\rangle = 0
    \end{displaymath}
    Since the action of $G$ preserves both $\sigma$ and $d\langle \psi\circ \pi, A\rangle$, the action of $G$ also preserves $\Omega$.  Thus, $\Omega$ is a closed, invariant $2$-form that vanishes on vertical vectors, so it must be the pullback of a closed, basic $2$-form $\beta$.  That is, $\Omega=\pi^*\beta$ for some $\beta \in \Omega^2(W)^c$.  We therefore have:

    \begin{displaymath}
    \sigma = d\langle \psi\circ \pi, A \rangle + \pi^*\beta
    \end{displaymath}
    To see that $\sigma$ is folded-symplectic we compute its top power and we compute its kernel at degenerate points.
    \begin{itemize}
    \item We have $\sigma^n = (d\langle \psi\circ \pi, A \rangle + \pi^*\beta)^n = (d\langle \psi\circ \pi, A \rangle)^n$ since $\pi^*\beta$ is basic. Thus, $\sigma^n \pitchfork_s 0$ by our considerations in lemma \ref{lem:construct} and $\sigma$ degenerates on the hypersurface $Z=\pi^{-1}(\hat{Z})$.

    \item The kernel of $\sigma$ may be computed in the same way we computed the kernel of $d\langle \psi\circ \pi, A \rangle$ in remark \ref{rem:kernelconstruction}.  At each point $p\in Z$ in the degenerate hypersurface, we may choose a nonzero element $V\in \operatorname{Im}(d\psi)^o_{\pi(p)}$ and then $(i_{V_P}\sigma)_p=0$.  To obtain the piece of the kernel transverse to $Z$, we begin with $X\in \ker(d\psi_{\pi(p)})$ and take a horizontal lift to $\tilde{X} \in T_pP$.  Then,

        \begin{displaymath}
        (i_{\tilde{X}}\sigma)_p =\langle \psi\circ \pi, i_{\tilde{X}}dA \rangle_p + \pi^*(i_X\beta)_p
        \end{displaymath}
        which is the pullback of a covector that vanishes on $\ker(d\psi)_{\pi(p)}$.  Thus, there is an element $\eta \in \fg$ such that
        \begin{displaymath}
        \langle d\psi \circ d\pi_p, \eta\rangle = \langle \psi\circ \pi, i_{\tilde{X}}dA \rangle_p + \pi^*(i_X\beta)_p.
        \end{displaymath}
        The contraction of $\tilde{X} + \eta_P(p)$ with $\sigma$ at $p$ is then zero.
    \item The kernel cannot have a larger dimension since $\ker(d\psi_{\pi(p)})$ is $1$-dimensional and $\operatorname{Im}(d\psi)^o_{\pi(p)}$ is $1$-dimensional.  A larger kernel would imply that at least one of these spaces would have a larger dimension.
    \end{itemize}
\item We have therefore shown that the map $F_A(\beta) = d\langle \psi \circ \pi, A\rangle +\pi^*\beta$ has image in the space of folded-symplectic forms for which the action of $G$ is Hamiltonian with moment map $\psi\circ \pi$.  We have also shown that it is surjective.  It is injective because the map $\pi^*$ is injective, hence $F_A$ is a bijection.
\end{enumerate}
\end{proof}

\begin{definition}\label{def:connectionmap}
Let $\psi:W \to \fg^*$ be a unimodular map with folds, where $\fg$ is the Lie algebra of a torus, $G$.  Let $\pi:P \to W$ be a principal $G$-bundle with connection $A\in \Omega^1(P,\fg)^G$, let $\mathcal{S}$ be the space of folded-symplectic forms for which the action of $G$ is Hamiltonian with moment map $\psi\circ \pi$, let $\Omega^2(W)^c$ be the space of closed $2$ forms on $W$, and let $F_A:\Omega^2(W) \to \mathcal{S}$ be the bijection of lemma \ref{lem:map1}. We define $f_A: \mathcal{S} \to \Omega^2(W)$ to be the inverse of $F_A$.  We call $f_A$ the \emph{horizontal map} induced by the connection $A$.
\end{definition}

\begin{lemma}\label{lem:map2}
Let $\psi:W \to \fg^*$ be a unimodular map with folds, let $\pi:P \to W$ be a principal $G$ bundle, and let $\mathcal{S}$ be the space of folded-symplectic forms for which the action of $G$ is Hamiltonian with moment map $\psi\circ \pi$.  Then for any choice of connection $A\in \Omega^1(P,\fg)^G$, we obtain the horizontal map $f_A:\mathcal{S} \to \Omega^2(W)^c$.  If $p:\Omega^2(W)^c \to H^2(W,\R)$ is the projection map to homology, then the composite:

\begin{displaymath}
p\circ f_A: \mathcal{S} \to H^2(W,\R)
\end{displaymath}
is independent of the choice of connection $A$.
\end{lemma}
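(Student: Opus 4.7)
The plan is to show that for two different connections $A_0, A_1 \in \Omega^1(P,\fg)^G$ and a fixed $\sigma \in \mathcal{S}$, the closed forms $\beta_i := f_{A_i}(\sigma) \in \Omega^2(W)^c$ differ by an exact form. This implies $[\beta_0] = [\beta_1]$ in $H^2(W,\R)$, so the composite $p \circ f_A$ does not depend on $A$.

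First, I would unwind the definitions. By definition of $f_{A_i}$ (and lemma \ref{lem:map1}), we have $\sigma = d\langle \psi\circ\pi, A_i\rangle + \pi^*\beta_i$ for $i=0,1$. Subtracting these two identities yields
\begin{equation*}
\pi^*(\beta_0 - \beta_1) \;=\; d\langle \psi\circ\pi,\, A_1 - A_0 \rangle.
\end{equation*}
The next step is to exploit the fact that the difference of two principal connections on $P$ is a basic $\fg$-valued $1$-form. Indeed, for every $X \in \fg$ we have $A_i(X_P) = X$, so $(A_1 - A_0)(X_P) = 0$, and both $A_i$ are $G$-invariant, so their difference is $G$-invariant. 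Hence $A_1 - A_0 = \pi^*\tau$ for a unique $\tau \in \Omega^1(W,\fg)$.

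Substituting this into the previous equation gives
\begin{equation*}
\pi^*(\beta_0 - \beta_1) \;=\; d\langle \psi\circ\pi,\, \pi^*\tau \rangle \;=\; \pi^*\,d\langle \psi, \tau \rangle,
\end{equation*}
where $\langle \psi, \tau \rangle \in \Omega^1(W)$ is the pointwise pairing between $\fg^*$ and $\fg$. Since $\pi$ is a surjective submersion, $\pi^*$ is injective on forms, so $\beta_0 - \beta_1 = d\langle \psi, \tau \rangle$ is exact on $W$. Therefore $[\beta_0] = [\beta_1] \in H^2(W,\R)$, which is precisely the claim that $p \circ f_{A_0}(\sigma) = p \circ f_{A_1}(\sigma)$.

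I do not anticipate a serious obstacle: the only nontrivial ingredient is the identification of $A_1 - A_0$ as a basic $1$-form (a standard fact for principal connections) and the injectivity of $\pi^*$ on forms over the base. The mild point to double-check is that $\langle \psi, \tau \rangle$ really is a smooth $1$-form on the manifold with corners $W$, which is immediate from the smoothness of $\psi$ and $\tau$.
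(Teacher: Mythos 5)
Your proposal is correct and follows essentially the same argument as the paper: subtract the two expressions $\sigma = d\langle\psi\circ\pi, A_i\rangle + \pi^*\beta_i$, observe that $\langle\psi\circ\pi, A_1-A_0\rangle$ is a basic $1$-form (the paper asserts this directly; you justify it via the standard fact that the difference of two connections is basic), and conclude by injectivity of $\pi^*$. No gaps.
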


\begin{proof}\mbox{ } \newline
Suppose we choose a second connection $A'$ and perform the same constructions as in lemma \ref{lem:map1} to obtain $F_{A'}$ and, consequently, its inverse $f_{A'}$.  We let $\sigma \in \mathcal{S}$ and we let $\beta_A = f_A(\sigma)$, $\beta_{A'}=f_{A'}(\sigma)$ be the two different images in $\Omega^2(W)^c$. We compute:

\begin{displaymath}
\sigma = d\langle \psi\circ \pi, A \rangle + \pi^*\beta_A = d\langle \psi\circ \pi, A'\rangle + \pi^*\beta_{A'}
\end{displaymath}
by definition of $f_{A}$ and $f_{A'}$.  Consequently,

\begin{displaymath}
\pi^*(\beta_{A'}-\beta_A) = d(\langle \psi\circ \pi, A-A'\rangle )
\end{displaymath}
since $\langle\psi \circ \pi, A-A'\rangle$ is basic, i.e. it is $\pi^*\alpha$ for some $\alpha \in \Omega^1(W)$, we have:
\begin{displaymath}
\pi^*(\beta_A-\beta_{A'}) = d\pi^*\alpha = \pi^*d\alpha
\end{displaymath}
which means $\beta_A - \beta_{A'} = d\alpha$ since $\pi^*$ is injective.  Thus, $p\circ f_A(\sigma)=p\circ f_{A'}(\sigma)$ and so the map is independent of the choice of connection.
\end{proof}

\begin{definition}
Let $\psi:W \to \fg^*$ be a unimodular map with folds, where $\fg$ is the Lie algebra of a torus $G$.  Let $\pi:P \to W$ be a principal $G$ bundle
and let $\mathcal{S}$ be the space of folded-symplectic forms on $P$ for which the action of $G$ is Hamiltonian with moment map $\psi\circ \pi$.  For any connection $A\in \Omega^1(P,\fg)^G$, we obtain the horizontal map $f_A:\mathcal{S} \to \Omega^2(W)^c$.  By lemma \ref{lem:map2}, the composite $p\circ f_A: \mathcal{S} \to H^2(W,\R)$ is independent of the choice of connection $A$.  We define this map to be the \emph{horizontal chern class} $c_{hor}: \mathcal{S} \to H^2(W,\R)$.
\end{definition}

The horizontal chern class of a folded-symplectic structure will be a characteristic class: it behaves well under isomorphisms of principal $G$ bundles.
\begin{cor}\label{cor:map2}
Let $(\pi_i:P_i\to W, \sigma_i)$, $i=1,2$, be two toric, folded-symplectic bundles over $\psi:W\to \fg^*$  Suppose $\phi:P_1\to P_2$ is an isomorphism of principal $G$ bundles over $W$.  Then $c_{hor}(\phi^*\sigma_2)=c_{hor}(\sigma_2)$.
\end{cor}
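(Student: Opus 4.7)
The plan is to exploit the naturality of the construction $F_A(\beta) = d\langle \psi\circ\pi, A\rangle + \pi^*\beta$ under pullback by principal bundle isomorphisms, together with the connection-independence of the horizontal Chern class established in lemma \ref{lem:map2}. The key point is that an isomorphism $\phi:P_1\to P_2$ of principal $G$-bundles over $W$ covers the identity on $W$ (so $\pi_2\circ\phi = \pi_1$) and pulls back connections to connections.

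First, I would fix an arbitrary connection $A_2\in \Omega^1(P_2,\fg)^G$ on $P_2$ and set $A_1 := \phi^*A_2$. Since $\phi$ is an equivariant diffeomorphism of principal $G$-bundles, $A_1$ is a bona fide principal connection on $P_1$. By the definition of the horizontal map $f_{A_2}$ (definition \ref{def:connectionmap}), we have the identity
\begin{equation*}
\sigma_2 = d\langle \psi\circ\pi_2,\, A_2\rangle + \pi_2^* f_{A_2}(\sigma_2).
\end{equation*}
Pulling back via $\phi$ and using $\pi_2\circ\phi = \pi_1$ yields
\begin{equation*}
\phi^*\sigma_2 = d\langle \psi\circ\pi_1,\, \phi^*A_2\rangle + \pi_1^*f_{A_2}(\sigma_2) = d\langle \psi\circ\pi_1,\, A_1\rangle + \pi_1^*f_{A_2}(\sigma_2).
\end{equation*}
By the uniqueness clause implicit in lemma \ref{lem:map1} (the bijectivity of $F_{A_1}$), this forces $f_{A_1}(\phi^*\sigma_2) = f_{A_2}(\sigma_2)$ as closed $2$-forms on $W$, not merely as cohomology classes.

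Passing to cohomology via the projection $p:\Omega^2(W)^c \to H^2(W,\R)$ gives $p\circ f_{A_1}(\phi^*\sigma_2) = p\circ f_{A_2}(\sigma_2)$. By lemma \ref{lem:map2}, the left-hand side equals $c_{hor}(\phi^*\sigma_2)$ (computed using any connection on $P_1$, in particular $A_1$) and the right-hand side equals $c_{hor}(\sigma_2)$ (computed using $A_2$). Hence $c_{hor}(\phi^*\sigma_2) = c_{hor}(\sigma_2)$ as desired.

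There is essentially no obstacle here; the argument is formal once one has set up the machinery of lemmas \ref{lem:map1} and \ref{lem:map2}. The only thing to verify carefully is that pulling back the defining equation for $\sigma_2$ via $\phi$ is legitimate and that $\phi$ covering the identity on $W$ allows us to replace $\pi_2\circ\phi$ by $\pi_1$ inside $\psi\circ\pi_2\circ\phi$; both are immediate from the definition of a morphism in $\mathcal{B}_\psi(W)$.
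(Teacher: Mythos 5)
Your proof is correct and follows essentially the same route as the paper: fix a connection $A_2$ on $P_2$, pull back the decomposition $\sigma_2 = d\langle\psi\circ\pi_2, A_2\rangle + \pi_2^*\beta_2$ via $\phi$ to exhibit $\phi^*\sigma_2$ with the pullback connection $\phi^*A_2$ and the same basic form $\beta_2$, then invoke the connection-independence of $c_{hor}$. The only difference is that you spell out the uniqueness step (bijectivity of $F_{A_1}$) which the paper leaves implicit.
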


\begin{proof}\mbox{ } \newline
Note that we actually don't need the folded structure $\sigma_1$, but we have inserted it since we are studying objects inside $\mathcal{B}_{\psi}(W)$ and all such objects come equipped with a folded-symplectic structure.  In any case, we fix a connection $A_2$ on $P_2$ and write:

\begin{displaymath}
\sigma_2 = d\langle \psi\circ \pi_2, A_2 \rangle \pi_2^*\beta_2
\end{displaymath}
and
\begin{displaymath}
\phi^*\sigma_2 = d\langle \psi\circ \pi_1, \phi^*A_2 \rangle + \pi_1^*\beta_2,
\end{displaymath}
where $\phi^*A_2$ is the pullback connection.  We then have that $c_{hor}(\phi^*\sigma_2)= [\beta_2] = c_{hor}(\sigma_2)$.
\end{proof}

We now show that if $\pi:P\to W$ is a fixed principal $G$ bundle over $\psi:W \to \fg^*$ and $\sigma_0$, $\sigma_1$ are two folded-symplectic structures on $P$ for which the action of $G$ is Hamiltonian with moment map $\psi\circ \pi$, then $(P,\sigma_0)$ and $(P,\sigma_1)$ are isomorphic if and only if their horizontal chern classes agree.

\begin{prop}\label{prop:map3}
Let $\psi:W \to \fg^*$ be a unimodular map with folds and let $\pi:P \to W$ be a principal $G$ bundle.  Let $\sigma_0$, $\sigma_1$ be two folded-symplectic forms on $P$ for which the action of $G$ is Hamiltonian with moment map $\psi\circ \pi$.  Then there exists a gauge transformation $\phi:P \to P$ satisfying $\phi^*\sigma_1=\sigma_0$ if and only if $c_{hor}(\sigma_0)=c_{hor}(\sigma_1)$.  That is, the cohomology class of the basic $2$-forms associated to $\sigma_0$ and $\sigma_1$ must be the same.
\end{prop}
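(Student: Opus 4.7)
The forward implication is immediate from Corollary~\ref{cor:map2}: any gauge transformation covers the identity on $W$, so $\phi^*\sigma_1=\sigma_0$ gives $c_{hor}(\sigma_0)=c_{hor}(\phi^*\sigma_1)=c_{hor}(\sigma_1)$. For the converse, I fix a connection $A\in\Omega^1(P,\fg)^G$ and use Lemma~\ref{lem:map1} to write $\sigma_i=d\langle\psi\circ\pi,A\rangle+\pi^*\beta_i$; the hypothesis produces $\alpha\in\Omega^1(W)$ with $d\alpha=\beta_1-\beta_0$. My plan is to run Moser's argument (Proposition~\ref{prop:Moser}) along the path $\sigma_t:=\sigma_0+t\pi^*d\alpha$ and extract a vertical, $G$-invariant, time-dependent vector field $X_t$ solving $i_{X_t}\sigma_t=-\pi^*\alpha$; the flow of such an $X_t$ will be the desired gauge transformation.

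The crucial preliminary is to replace $\alpha$ by $\alpha-df$ for a suitable $f\in C^\infty(W)$ so that the new $\alpha$ vanishes on the line bundle $\ker(d\psi)|_{\hat{Z}}$, where $\hat{Z}\subset W$ is the folding hypersurface of $\psi$. By Remark~\ref{rem:kernelconstruction}, at a point of $Z=\pi^{-1}(\hat{Z})$ the kernel $\ker(\sigma_t)$ is spanned by a vertical vector (on which $\pi^*\alpha$ vanishes automatically) and a vector whose $d\pi$-image lies in $\ker(d\psi)|_{\hat{Z}}$, so this vanishing condition is precisely what puts $\pi^*\alpha$ into the sheaf $\Omega^1_{\ker(\sigma_t)}(P)$ over which Moser's equation is solvable (Proposition~\ref{prop:Ecotangent1}). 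I construct $f$ using Definition~\ref{def:umf}: co-orientability of $\hat{Z}$ together with the tangency of $\ker(d\psi)$-fibers to the strata of $W$ yields a nowhere-vanishing stratified section $V$ of $\ker(d\psi)|_{\hat{Z}}$, whose flow gives a stratified tubular neighborhood $U\cong\hat{Z}\times(-\epsilon,\epsilon)$ identifying $V$ with $\partial_t$. Given any primitive $\alpha'$ of $\beta_1-\beta_0$, I set $h:=\alpha'(V)\in C^\infty(\hat{Z})$; choosing a bump function $\chi$ on $W$ supported in $U$ with $\chi\equiv 1$ on $\hat{Z}$, the function $f(w,t):=\chi(w,t)\,t\,h(w)$ (extended by zero) satisfies $df(V)|_{\hat{Z}}=h$, so $\alpha:=\alpha'-df$ is a primitive of $\beta_1-\beta_0$ vanishing on $\ker(d\psi)|_{\hat{Z}}$.

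With $\alpha$ so adjusted, Proposition~\ref{prop:Moser} produces a unique smooth $X_t$ solving $i_{X_t}\sigma_t=-\pi^*\alpha$, and uniqueness combined with the $G$-invariance of $\sigma_t$ and $\pi^*\alpha$ forces $X_t$ to be $G$-invariant. To see that $X_t$ is vertical, I note that away from $Z$ the vertical bundle $\mathcal{V}\subset TP$ is $\sigma_t$-isotropic (because the $G$-invariance of $\psi\circ\pi$ yields $\sigma_t(X_P,Y_P)=-Y_P\langle\psi\circ\pi,X\rangle=0$) of dimension $\dim G=\tfrac{1}{2}\dim P$, hence Lagrangian, so $\mathcal{V}^{\sigma_t}=\mathcal{V}$; the vanishing $\pi^*\alpha|_{\mathcal{V}}=0$ then forces $X_t\in\mathcal{V}$ on $P\setminus Z$, and verticality extends across $Z$ by smoothness. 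A $G$-invariant vertical vector field is tangent to the compact torus fibers of $\pi$ and therefore integrates globally, producing a family of gauge transformations $\phi_t$ with $\phi_t^*\sigma_t=\sigma_0$; evaluating at $t=1$ completes the proof. The main obstacle is precisely the adjustment of $\alpha$: without the vanishing on $\ker(d\psi)|_{\hat{Z}}$, $\pi^*\alpha$ fails to lie in $\Omega^1_{\ker(\sigma_t)}(P)$ and Moser's equation admits no solution, reflecting a genuine restriction that the fold imposes on which primitives of $\beta_1-\beta_0$ can be absorbed by a gauge transformation, a difficulty absent from the symplectic classification in \cite{KL}.
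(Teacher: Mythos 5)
Your proposal is correct and follows essentially the same route as the paper's proof: the forward direction via $c_{hor}$'s gauge-invariance, and the converse via the linear path $\sigma_t=\sigma_0+t\pi^*d\alpha$, the adjustment of the primitive by an exact form so that it vanishes on $\ker(d\psi)$ along $\hat{Z}$ (your $f=\chi\, t\, h$ is the paper's $f=g\cdot\gamma(\tilde{X})$ in tubular-neighborhood coordinates), and the Lagrangian-orbit argument forcing $X_t$ to be vertical so that its flow is a gauge transformation. No gaps.
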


\begin{proof}\mbox{ } \newline
\begin{enumerate}
\item We show the \emph{only if} portion first.  Suppose $\phi:P\to P$ is a gauge transformation satisfying $\phi^*\sigma_1=\sigma_0$.  Fix a connection $A$ on $P$.  We have

    \begin{equation}\label{eq:sigmai}
    \sigma_i = d\langle \psi\circ \pi, A\rangle + \pi^*\beta_i
    \end{equation}
    for $i=0,1$ by lemma \ref{lem:map1}.  Consequently,

    \begin{displaymath}
    \begin{array}{lcl}
    \phi^*\sigma_1 & = & d\langle \psi\circ \pi, \phi^*A \rangle + \pi^*\beta_1\\
                   & = & d\langle \psi\circ \pi, A \rangle + \pi^*\beta_0
    \end{array}
    \end{displaymath}
    since, by assumption, $\phi^*\sigma_1=\sigma_0$.  We then have:

    \begin{displaymath}
    \pi^*(\beta_0-\beta_1)=d(\psi\circ \pi,\phi^*A - A \rangle.
    \end{displaymath}
    Since $\langle \psi  \circ \pi, \phi^*A -A \rangle$ is a basic $1$-form, it is $\pi^*\gamma$ for some $\gamma\in\Omega^1(W)$.  Thus,
    \begin{displaymath}
    \pi^*(\beta_0-\beta_1)= d\pi^*\gamma = \pi^*d\gamma
    \end{displaymath}
    and since $\pi^*$ is injective, we have $\beta_0-\beta_1 =d\gamma$, hence $c_{hor}(\sigma_1)=[\beta_1]=[\beta_0]=c_{hor}(\sigma_0)$.

\item Now, suppose that $c_{hor}(\sigma_0)=c_{hor}(\sigma_1)$.  That is, by equation \ref{eq:sigmai}, $[\beta_1]=[\beta_0]$.  Then there is a one form $\gamma\in \Omega^1(W)$ such that $\beta_1=\beta_0 + d\gamma$.  We define the linear path of fold forms:

    \begin{equation}\label{eq:foldpath1}
    \sigma_s:=\sigma_0 + s\pi^*(d\gamma)
    \end{equation}
    which is folded for all $s$ since:
    \begin{itemize}
    \item $(\sigma_s)^n=\sigma_0^n$, hence $\sigma_s\pitchfork_s 0$ and $Z=(\sigma_s)^{-1}(0) = (\sigma_0)^{-1}(0)$ is an embedded submanifold with corners of $P$.
    \item Using the ideas of remark \ref{rem:kernelconstruction}, we have that
    \begin{equation}\label{eq:kernel}
    \ker(\sigma_s)_p=\span\{\tilde{X}+v_s, V\},
    \end{equation}
    where $\tilde{X}$ is some lift of $X\in \ker(d\psi)_{\pi(p)}$, $v_s$ is some vertical vector, and $V$ is a vertical vector generated by an element in the fiber of $\operatorname{Im}(d\psi)_{\pi(p)}^o$.  Because the kernel element $X\in \ker(d\psi)$ is transverse to the folding hypersurface $\hat{Z}$ of $\psi$ and $Z= \pi^{-1}(\hat{Z})$, we have that $\tilde{X} +v_s$ is transverse to $T_pZ$, hence $i_Z^*\sigma_s$ has maximal rank.
    \end{itemize}
    Using our usual Moser-type argument, we seek an isotopy satisfying $\phi_s^*\sigma_s=\sigma_0$, which amounts to solving:

    \begin{equation}\label{eq:pplmoser}
    di_{X_s}\sigma_s = -d\pi^*\gamma
    \end{equation}
    for a time-dependent vector field $X_s$, whose flow is $\phi_s$.  It is then sufficient to solve
    \begin{equation}\label{eq:pplmoser1}
    i_{X_s}\sigma_s = -\pi^*\gamma
    \end{equation}
    which has a smooth solution if and only if $\pi^*\gamma$ vanishes on $\ker(\sigma_s)$ at the points of $Z$ for all $s$ (q.v. proposition \ref{prop:Moser}).  By our description of the kernel of $\sigma_s$ in equation \ref{eq:kernel}, we will have a solution if and only if $\gamma \in \Omega^1(W)$ vanishes on $\ker(d\psi)$ at points of the folding hypersurface $\hat{Z}$ of $\psi$.  It is not clear that $\gamma$ satisfies this condition so we modify the right hand side of equation \ref{eq:pplmoser1} by adding the pullback of a \emph{closed} basic $1$ form $\pi^*\gamma_0$ so that
    \begin{equation}\label{eq:pplmoser2}
    i_{X_s}\sigma_s = -(\pi^*\gamma + \pi^*\gamma_0)=-\pi^*(\gamma - \gamma_0)
    \end{equation}
    has a smooth solution.  Note that this solution $X_s$ will satisfy:

    \begin{displaymath}
    di_{X_s}\sigma_s = -\pi^*d\gamma -\pi^*d\gamma_0 = -\pi^*d\gamma
    \end{displaymath}
    since $\gamma_0$ is closed, hence $X_s$ will solve equation \ref{eq:pplmoser} as required.  Since $\hat{Z}$ is co-orientable in $W$ and $\ker(d\psi)$ is stratified (by definition \ref{def:umf}), we may make the following choices:

    \begin{itemize}
    \item We choose an nonvanishing section $X \in \Gamma(\ker(d\psi))$ which we extend to a vector field $\tilde{X}$ on $W$.
    \item We choose a smooth function $g:W \to \R$ such that $g\big\vert_{\hat{Z}}=0$ and $dg(X)\big\vert_{\hat{Z}}=1$.
    \end{itemize}
    Set let $f$ be the product $f=(g)(\gamma(\tilde{X}))\in C^{\infty}(W)$.  We define:
    \begin{displaymath}
    \gamma_0 = -df =-)\gamma(\tilde{X})dg - (g)d(\gamma(\tilde{X})).
    \end{displaymath}
    For all $z\in \hat{Z}$, we have:
    \begin{displaymath}
    \begin{array}{lcl}
    i_{X(z)}(\gamma_z + (\gamma_0)_z) & = & \gamma(X) + i_X(\gamma_0) \\
                                      & = & \gamma(X) -\gamma(\tilde{X})dg(X) - (g)(d(\gamma(\tilde{X}))(X)) \\
                                      & = & \gamma(X) - \gamma(X)(1) - (0)(d(\gamma(\tilde{X}))(X)) \\
                                      & = & 0
    \end{array}
    \end{displaymath}
    where we have suppressed the subscript $z$ after the first step to avoid notational clutter.  Consequently equation \ref{eq:pplmoser2} has a smooth, $G$-invariant solution $X_s$.  The $G$-invariance of $X_s$ follows from the fact that $\sigma_s$ and $\pi^*(\gamma + \gamma_0)$ are both $G$-invariant.

    Now, we claim that $X_s$ is tangent to orbits for all $s$.  Indeed, for all $X\in \fg$ we have $(i_{X_s}\sigma_s)(X_P) = \pi^*(\gamma + \gamma_0)(X_P) =0$ since the right hand side is basic.  Thus, for all $p\in P$ we have that $X_s(p)\in T_p(G\cdot p)^{\sigma_s}$.  Since $\sigma_s$ is symplectic on an open dense subset and the orbits are Lagrangian, this implies that $X_s(p)\in T_p(G\cdot p)$ for $p$ in an open dense subset of $P$.  Thus, smoothness of $X_s$ implies $X_s$ is tangent to orbits everywhere.  In other words, $X_s$ is tangent to the fibers of $\pi:P \to W$.  Since the fibers are compact, we may integrate $X_s$ to obtain $\phi_s$, which maps fibers to fibers and is thus a gauge transformation: $\pi \circ \phi_s = \pi$.  We take $\phi_1$ as our requisite isomorphism.
\end{enumerate}
\end{proof}
We are now ready to classify objects in $\mathcal{B}_{\psi}(W)$.

\begin{theorem}\label{thm:bundleclassification}
Let $\psi:W \to \fg^*$ be a unimodular map with folds, where $\fg$ is the Lie algebra of a torus $G$.  Let $\mathcal{B}_{\psi}(W)$ be the category of toric, folded-symplectic bundles over $\psi$.  Then there is a bijection $b:\pi_0(B_{\psi}) \to H^2(W; \mathbb{Z}_G \times \R)$ given by:
\begin{displaymath}
b([(\pi:P \to W, \sigma)]) = c_1(P)\times c_{hor}(\sigma).
\end{displaymath}
That is, isomorphism classes of toric, folded-symplectic bundles over $\psi$ are parameterized by cohomology classes in $H^2(W,\mathbb{Z}_G\times \R)$.
\end{theorem}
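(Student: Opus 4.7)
The plan is to assemble Theorem~\ref{thm:bundleclassification} from the classification of principal $G$-bundles via $c_1$ and the relative classification of Hamiltonian fold structures on a fixed bundle via $c_{hor}$ (Proposition \ref{prop:map3}). I would organize the argument around three claims: $b$ is well defined, injective, and surjective.

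For well-definedness, suppose $\phi:(\pi_1:P_1\to W,\sigma_1)\to(\pi_2:P_2\to W,\sigma_2)$ is a morphism in $\mathcal{B}_\psi(W)$. Since morphisms in $\mathcal{B}_\psi(W)$ are maps of principal $G$-bundles covering $\operatorname{id}_W$, they are in particular bundle isomorphisms, so $c_1(P_1)=c_1(P_2)$ by the $\check{\mathrm H}^1(W,\underline G)\cong H^2(W,\mathbb{Z}_G)$ identification. The condition $\phi^*\sigma_2=\sigma_1$ combined with Corollary \ref{cor:map2} gives $c_{hor}(\sigma_1)=c_{hor}(\phi^*\sigma_2)=c_{hor}(\sigma_2)$, so $b$ descends to isomorphism classes.

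For injectivity, assume $b([P_1,\sigma_1])=b([P_2,\sigma_2])$. Equality of first Chern classes produces an isomorphism $\phi:P_1\to P_2$ of principal $G$-bundles over $W$. Pull back to get a new folded-symplectic form $\phi^*\sigma_2$ on $P_1$, whose action of $G$ is still Hamiltonian with moment map $\psi\circ\pi_1$ because $\phi$ is equivariant and covers the identity. By Corollary \ref{cor:map2}, $c_{hor}(\phi^*\sigma_2)=c_{hor}(\sigma_2)=c_{hor}(\sigma_1)$, so Proposition \ref{prop:map3} yields a gauge transformation $\gamma:P_1\to P_1$ with $\gamma^*\phi^*\sigma_2=\sigma_1$. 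Then $\phi\circ\gamma$ is a morphism in $\mathcal{B}_\psi(W)$ from $(P_1,\sigma_1)$ to $(P_2,\sigma_2)$.

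For surjectivity, given $(c,h)\in H^2(W,\mathbb{Z}_G)\times H^2(W,\R)\cong H^2(W,\mathbb{Z}_G\times\R)$, I would first use the classification of principal $G$-bundles to pick $\pi:P\to W$ with $c_1(P)=c$, then choose any connection $A\in\Omega^1(P,\fg)^G$ and any closed representative $\beta\in\Omega^2(W)^c$ of $h$, and set $\sigma:=d\langle\psi\circ\pi,A\rangle+\pi^*\beta$. Lemma \ref{lem:map1} guarantees $\sigma$ is folded-symplectic with the action of $G$ Hamiltonian with moment map $\psi\circ\pi$, and by construction $f_A(\sigma)=\beta$, so $c_{hor}(\sigma)=[\beta]=h$. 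Hence $b([P,\sigma])=(c,h)$. There is no real obstacle here: all heavy lifting (existence of $\sigma$, uniqueness up to gauge, connection-independence of $c_{hor}$) has already been done in Section~7; the only subtle point to verify carefully is that morphisms in $\mathcal{B}_\psi(W)$ are permitted to be arbitrary bundle isomorphisms over $\operatorname{id}_W$ rather than literal gauge transformations of a fixed bundle, which is exactly what lets the injectivity step compose a bundle isomorphism $\phi$ with a gauge transformation $\gamma$.
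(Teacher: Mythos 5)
Your proposal is correct and follows essentially the same route as the paper: well-definedness and injectivity via Corollary \ref{cor:map2} together with Proposition \ref{prop:map3} (composing a principal-bundle isomorphism with a gauge transformation), with surjectivity supplied by the explicit construction $\sigma = d\langle\psi\circ\pi,A\rangle + \pi^*\beta$ of Lemma \ref{lem:map1}. The only difference is that you spell out the surjectivity step, which the paper leaves implicit in Lemma \ref{lem:map1} and the bundle classification; this is a harmless and arguably welcome addition.
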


\begin{proof}\mbox { } \newline
Let $(\pi_1:P \to W, \sigma_1)$ and $(\pi_2:P\to W, \sigma_2)$ be two toric, folded-symplectic bundles over $\psi:W \to \fg*$.  First, let us show that the map $b$ is defined.  Recall that isomorphism classes of principal $G$-bundles over $W$ are parameterized by their first Chern class $c_1 \in H^2(W,\mathbb{Z}_G)$.  Thus, there exists an isomorphism of principal $G$-bundles $\phi:P_1 \to P_2$ if and only if $c_1(P_1)=c_1(P_2)$.

Suppose $\phi:P_1\to P_2$ is an isomorphism of toric folded-symplectic bundles over $\psi:W \to \fg^*$.  That is $\phi$ is a map of principal bundles and $\phi^*\sigma_2= \sigma_1$.  Then corollary \ref{cor:map2} implies that $c_{hor}(\sigma_1)=c_{hor}(\phi^*\sigma_2) = c_{hor}(\sigma_2)$.  Since $\phi$ is an isomorphism of principle bundles, we have $c_1(P_1)=c_1(P_2)$ by the above remarks.  Thus, the map is well-defined.

Now, suppose $c_1(P_1)=c_1(P_2)$ and $c_{hor}(\sigma_1)=c_{hor}(\sigma_2)$.  Then there exists an isomorphism of principal bundles $\phi:P_1\to P_2$.  By corollary \ref{cor:map2}, $c_{hor}(\phi^*\sigma_2) = c_{hor}(\sigma_2)=c_{hor}(\sigma_1)$.  By proposition \ref{prop:map3} there exists a gauge transformation $\phi_1:P_1\to P_1$ such that $\phi_1^*(\phi^*\sigma_2)=\sigma_1$, hence $\phi\circ \phi_1:P_1 \to P_2$ is an isomorphism of toric, folded-symplectic bundles over $\psi:W \to \fg^*$.
\end{proof}

\pagebreak
\section{Construction of the Functor $c:\mathcal{B}_{\psi}(W) \to \mathcal{M}_{\psi}(W)$.}
The construction of the functor $c:\mathcal{B}_{\psi}(W) \to \mathcal{M}_{\psi}(W)$ is local in nature and is accomplished in three steps.  In the first step, we take an object $(\pi:P\to W, \sigma)$ of $\mathcal{B}_{\psi}(W)$ and we collapse its boundary strata in a natural way to obtain a topological space, which we call $c_{top}(P)$.  We then argue that there is a natural smooth structure on $c_{top}(P)$ which is constructed using a local cutting procedure which we explain below.  Furthermore, the charts on $c_{top}(P)$ are such that the transition maps are folded-symplectic maps, hence there is a global folded-symplectic structure on the space that we call $c(P)$, which is just $c_{top}(P)$ with its smooth structure constructed below.  Finally, we show that the assignment $P \to c(P)$ is functorial, hence $c$ is a well-defined functor.  This is the strategy used in \cite{KL} and we follow it very closely, the reason being that the ingredients for the construction are essentially the same: we have the notion of a symplectic slice representation (q.v. proposition \ref{prop:normbundle}), we have the ability to read the slice representation from the orbital moment map (q.v. theorem \ref{thm:structure}), and we have folded-symplectic reduction.  With these tools in hand, one need only add in a few extra remarks to show that the constructions in \cite{KL} extend across the folding hypersurface of $W$.  Before we begin, we recall the definitions of the presheaves (q.v. remark \ref{rem:empsi}) $\mathcal{M}_{\psi}$ and $\mathcal{B}_{\psi}$:

\begin{definition}
Let $W$ be a manifold with corners and let $\psi:W\to \fg^*$ be a unimodular map with folds, where $\fg$ is the Lie algebra of a torus $G$.  We define the category $\mathcal{M}_{\psi}(W)$ to be the category whose objects are triples:
\begin{displaymath}
(M,\sigma, \pi:M \to W)
\end{displaymath}
where $\pi$ is a quotient map and $(M,\sigma, \psi \circ \pi)$ is a toric, folded-symplectic manifold with co-orientable folding hypersurface, where the torus is $G$, with moment map $\psi \circ \pi$.  We refer to an object as a \emph{toric, folded-symplectic manifold over $\psi$}.  A morphism between two objects $(M_i,\sigma_i,\pi_i:M \to W)$, $i=1,2$, is an equivariant diffeomorphism $\phi:M_1 \to M_2$ that induces a commutative diagram:

\begin{displaymath}
\xymatrix{
M_1 \ar[rr]^{\phi} \ar[dr]^{\pi_1} & & M_2 \ar[dl]^{\pi_2} \\
                                   &W \ar[r]^{\psi}& \fg^*
}
\end{displaymath}
and satisfies $\phi^*\sigma_2=\sigma_1$, hence $\phi$ is an equivariant folded-symplectomorphism that preserves moment maps.  By definition, every morphism is invertible, hence $\mathcal{M}_{\psi}(W)$ is a groupoid.
\end{definition}

\begin{definition}
Let $\psi: W \to \fg^*$ be a fixed unimodular map with folds, where $\fg$ is the Lie algebra of a torus $G$.  We define $\mathcal{B}_{\psi}(W)$ to be the category whose objects are principal $G$-bundles $\pi:P \to W$ equipped with an invariant folded-symplectic form $\sigma$ with co-orientable folding hypersurface, denoted as a pair

\begin{displaymath}
(\pi:P \to W, \sigma)
\end{displaymath}
so that $\psi\circ \pi$ is a moment map for the principal action of the torus $G$ on $P$.  A morphism $\phi$ between two objects $(\pi_1:P_1\to W, \sigma_1)$ and $(\pi_2:P_2 \to W, \sigma_2)$ is a map of principal $G$ bundles:

\begin{displaymath}
\xymatrix{
P_1 \ar[rr]^{\phi} \ar[dr]^{\pi_1}& & P_2 \ar[dl]^{\pi_2} \\
                                  &W\ar[r]^{\psi} & \fg^*
}
\end{displaymath}
so that $\phi^*\sigma_2 = \sigma_1$, hence $\phi^*(\psi\circ \pi_2) =\psi \circ \pi_1$.  That is, $\phi$ preserves moment maps.
\end{definition}

\begin{remark}
If $\psi:W\to \fg^*$ is a unimodular map with folds and $W$ has no boundary, i.e. $W$ is a manifold without corners, then
\begin{displaymath}
\mathcal{B}_{\psi}(W) = \mathcal{M}_{\psi}(W).
\end{displaymath}
In general, if $W$ is a manifold with corners then its interior $\mathring{W}$ is a manifold, hence
\begin{displaymath}
\mathcal{B}_{\psi}(\mathring{W})=\mathcal{M}_{\psi}(\mathring{W})
\end{displaymath}
\end{remark}

Per our custom, we are assuming folding hypersurfaces are \emph{co-orientable}.
\subsection{Step 1 - Define $c_{top}(P)$}
Fix an object $(\pi:P\to W, \sigma)$ of $\mathcal{B}_{\psi}(W)$.  Recall that for each point $w\in W$, $\psi$ attaches to $w$ an integral basis $\{v_1,\dots, v_k\}$ of the integral lattice of a subtorus $K_w$ of $G$ (q.v. lemma \ref{lem:attach1}).  We let $\sim$ be the smallest equivalence relation on $P$ such that for all $p_1, p_2\in P$, $p_1 \sim p_2$ if and only if:

\begin{enumerate}
\item $\pi(p_1)=\pi(p_2)=:w$
\item There exists an element $k\in K_w$ satisfying $k\cdot p_1=p_2$.
\end{enumerate}

We let $c_{top}(P)$ be the topological space $P/\sim$.  Since the action of $G$ commutes with the action of each $K_w$, the action of $G$ on $P$ descends to a continuous action of $G$ on $c_{top}(P)$.  Furthermore, given a morphism $\phi\in Hom((P_1,\sigma_1),(P_2,\sigma_2))$ between two objects, the equivariance of $\phi$ implies that $\phi$ descends to an equivariant homeomorphism $c_{top}(\phi):c_{top}(A)\to c_{top}(B)$ and that $c_{top}:\mathcal{B}_{\psi} \to Top_G$ is a functor.

\begin{remark}\label{rem:restriction}
More specifically, $Top_G$ is the category whose objects are topological $G$-spaces and for any $X,Y \in \operatorname{Ob}(Top_G)$, $\hom(X,Y) = \{\phi:X \to Y \vert \space \text{ $\phi$ is an equivariant homeomorphism}\}$.  One can show that $Top_G$ is a presheaf of groupoids and that:
\begin{displaymath}
c_{top}:\mathcal{B}_{\psi} \to Top_G
\end{displaymath}
is a map of presheaves of groupoids.  Furthermore, note that the two spaces $c_{top}(P\vert_U)$ and $c_{top}(P)\vert_U$ are the same set and have the same topology, hence:
\begin{displaymath}
c_{top}(P\vert_U) = c_{top}(P)\vert_U
\end{displaymath}
Thus, for any morphism $\phi:P_1 \to P_2$ between objects in $\mathcal{B}_{\psi}$ and any open subset $U\subset W$, we will write $c_{top}(\phi)\vert_U$ to mean $c_{top}(\phi\vert_U)$.
\end{remark}

\subsection{Step 2 - Construction of Smooth and Folded-Symplectic Structures}
For each $w\in W$ lemma \ref{lem:attach1} attaches an integral basis $\{v_1,\dots, v_k\}$ of the integral lattice of a subtorus $K_w$ of $G$ and an effective symplectic representation $V$ of $K_w$ with weights $\{v_1^*,\dots, v_k^*\}$.  Given such a representation, we can form the global reduced space $cut(P):=(P \times V)//_0 K_w$.  Of course, this may not be a manifold, but we have the following existence lemma to remedy this problem locally.

\begin{lemma}\label{lem:cut}
Let $(\pi:P\to W, \sigma)$ be a toric folded-symplectic bundle over $\psi:W\to \fg^*$.  For each $w\in W$, $\exists$ a neighborhood $U_w$ of $w$ so that $cut(P\vert_{U_w}):= (P\vert_{U_w} \times V)//_0 K_w$ is a toric folded symplectic manifold over $\psi\vert_{U_w}: U_w \to \fg^*$.
\end{lemma}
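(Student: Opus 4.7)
The plan is to realize $\operatorname{cut}(P|_{U_w})$ as the zero-level folded-symplectic reduction of $P|_{U_w}\times V$ by a diagonal $K_w$-action. By lemma \ref{lem:attach1}, $\psi$ attaches to $w$ an integral basis $\{v_1,\dots,v_k\}$ of the integral lattice of a subtorus $K_w\le G$ together with the symplectic $K_w$-representation on $V:=\C^k$ whose weights are $\{v_1^*,\dots,v_k^*\}$; by corollary \ref{cor:sympweights} this representation is Hamiltonian with moment map $\mu_V(z)=\sum_i|z_i|^2 v_i^*\in\frak{k}_w^*$. I would first shrink $U_w$, using corollary \ref{cor:attach}, so that $\psi(U_w)$ lies in the cone $C_{(v_1,\dots,v_k),\psi(w)}$ and every codimension-$1$ stratum of $U_w$ maps into one of its $k$ facets through $\psi(w)$. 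The product $(P|_{U_w}\times V,\sigma\oplus\omega_V)$ is folded-symplectic with fold $Z|_{U_w}\times V$, where $Z=\pi^{-1}(\hat Z)$, and the diagonal action of $K_w$ (via $K_w\hookrightarrow G$ on the first factor and via the representation on the second) is Hamiltonian with moment map
\[
J(p,z)=\operatorname{pr}_{\frak{k}_w^*}\!\bigl(\psi(\pi(p))-\psi(w)\bigr)-\mu_V(z),
\]
where $\operatorname{pr}_{\frak{k}_w^*}:\fg^*\to\frak{k}_w^*$ is dual to the inclusion $\frak{k}_w\hookrightarrow\fg$.

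Because $G$ acts freely on $P$, the subtorus $K_w$ acts freely on $P|_{U_w}\times V$, so $J$ is a submersion strongly transverse to $\{0\}$; hence $J^{-1}(0)$ is a submanifold with corners of codimension $k$ on which $K_w$ acts freely and properly. Applying the straightforward extensions of theorems \ref{thm:fsred} and \ref{thm:fsred1} to this setting --- the Moser and fold-normal-form arguments used in those proofs go through verbatim when the group action is free, run stratum by stratum --- I obtain a smooth quotient $M_w:=J^{-1}(0)/K_w$ equipped with a folded-symplectic form $\sigma_w$ whose folding hypersurface is the image of $J^{-1}(0)\cap(Z\times V)$ and is co-orientable because $Z$ is. The residual $G$-action $g\cdot(p,z):=(g\cdot p,z)$ commutes with $K_w$, descends to a Hamiltonian $G$-action on $(M_w,\sigma_w)$, and its moment map is the descent of $(p,z)\mapsto\psi(\pi(p))$, which automatically factors as $\psi\circ\pi_w$ for the induced orbit map $\pi_w:M_w\to U_w$; the dimension count gives $\dim M_w=2m+2k-k-k=2m$, and effectivity follows by restricting to any orbit with all $z_i\ne 0$, where the $K_w$-action on $V$ alone is faithful by lemma \ref{lem:sympweights2}.

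The main obstacle is to verify that $M_w$ is a manifold \emph{without} boundary, which is required for it to be an object of $\mathcal{M}_{\psi|_{U_w}}(U_w)$. For $w'\in U_w$ lying on the $i$-th boundary facet of $U_w$, the identity $\langle\psi(w')-\psi(w),v_i\rangle=0$ coupled with $J=0$ forces $z_i=0$ on the fiber of $J^{-1}(0)$ over $w'$; quotienting by the $S^1$-factor of $K_w$ generated by $v_i$ then collapses the transverse $z_i$-disk to a ray that becomes an interior direction of $M_w$ normal to the image of the facet, smoothing the corner. More concretely, the fiber $\pi_w^{-1}(w')$ computes to $G/K_{w'}$ for $K_{w'}=\exp(\operatorname{span}\{v_i:w'\in\overline{B_i}\})\le K_w$, and combining the symplectic slice model of proposition \ref{prop:torsympnorm} (on the symplectic locus) with the equivariant folded normal form of corollary \ref{cor:foldnorm} (on the fold) supplies a $G$-equivariant local model $T^*(G/K_{w'})\times\C^{\dim K_{w'}}$ for a neighborhood of every such point of $M_w$. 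This is manifestly a manifold without boundary, its orbit space is manifestly a neighborhood of $w'$ in $U_w$, and it encodes exactly the stabilizer and slice representation that $\psi$ predicts at $w'$; this identifies $(M_w,\sigma_w,\pi_w)$ as a toric folded-symplectic manifold over $\psi|_{U_w}$.
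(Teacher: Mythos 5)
Your overall strategy is the paper's: realize $\operatorname{cut}(P\vert_{U_w})$ as the zero-level folded-symplectic reduction of $(P\vert_{U_w}\times\C^k,\sigma\oplus\omega_{\C^k})$ by the diagonal $K_w$-action with moment map $J(p,z)=j^*\bigl(\psi(\pi(p))-\psi(w)\bigr)-\sum_i|z_i|^2v_i^*$, then check that the residual $G$-action is toric and effective. But there is a genuine gap at the regularity step. You assert that $J$ is a submersion \emph{because} $K_w$ acts freely. That inference is valid for symplectic moment maps but fails for folded-symplectic ones: the paper devotes a whole subsection (``Removing Assumptions on $\mu^{-1}(0)$'') to examples of free Hamiltonian actions on folded-symplectic manifolds whose zero level set is not a manifold, which is exactly why theorem \ref{thm:fsred} takes ``$\mu^{-1}(0)$ is a manifold of codimension $\dim(G)$'' as a separate hypothesis rather than deriving it from freeness. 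Here $P\vert_{U_w}\times V$ is folded, not symplectic, so you cannot appeal to that logic. The correct argument uses the structure of $\psi$ rather than freeness: by definition \ref{def:umf} the kernel of $d\psi$ along the fold is tangent to the strata of $W$, while $\frak{k}_w$ annihilates the affine hyperplanes containing the images of the strata through $w$; since a fold map is an isomorphism on directions transverse to the stratum (corollary \ref{cor:folds4-1}), $j^*\circ d\psi$ is a submersion on a neighborhood of $w$, and hence $0$ is a regular value of $J$.

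The second problem is the corners. Theorem \ref{thm:fsred} is stated for manifolds without boundary, and ``running the Moser argument stratum by stratum'' is not an available substitute: the paper is explicit (remark following proposition \ref{prop:fsnormal}) that the Moser vector fields need not be stratified on manifolds with corners. The paper's device is to use corollary \ref{cor:attach} to write $\psi\vert_{U_w}=\bar{\mu}\circ\Psi$ and extend both factors to a manifold $\tilde{U}$ \emph{without} corners containing $U_w$, arranging that $\tilde{\mu}(\tilde{U}\setminus U_w)$ lies outside the unimodular cone attached to $w$; the defining equations $\langle j^*\psi(\pi(p))-\xi_0,v_i\rangle=|z_i|^2\ge 0$ then force $\tilde{\Phi}^{-1}(0)=\Phi^{-1}(0)$, so the level set is already a boundaryless manifold and reduction applies verbatim. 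In particular your picture of $J^{-1}(0)$ as a submanifold with corners whose corners get smoothed only upon quotienting is not right --- the level set itself has empty boundary --- and your fallback local-model argument invoking proposition \ref{prop:torsympnorm} and corollary \ref{cor:foldnorm} presupposes that a neighborhood of the point in question is already a toric folded-symplectic manifold, which is what you are trying to establish for $M_w$.
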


\begin{proof}
Let $K_w$ be the subtorus of $G$ asociated to $w$ and $\frak{k}_w$ its Lie algebra.  Since faithful representations of tori are classified by their weights, we may assume $V= \C^k$.  Let $j:\frak{k}_w \to \fg$ be the inclusion of the Lie algebra of $K_w$, $j^*:\fg^*\to \frak{k}_w^*$ the corresponding projection, and $\xi_0 = j^*\psi(w)$.
\begin{itemize}
\item A moment map for the action of $K_w$ on $P\vert_{U_w} \times V$ is given by
\begin{equation}\label{eq:cutmoment}
\Phi(p,z)=(j^*\psi)\circ \pi(p) - \sum_{i=1}^k \vert z_i \vert^2 v_i^* -\xi_0.
\end{equation}
\item We note that $j^*\psi$ is a submersion at $w$.  This is because $\frak{k}_w$ is normal to the image of the stratum containing $w$.  The fold map induces fiberwise isomorphisms on normal bundles (q.v. corollary \ref{cor:folds4-1}), hence it sends directions normal to the stratum containing $w$ isomorphically onto directions transverse to the affine hyperplane corresponding to $w$ in the image of $\psi$.  Hence, there is a neighborhood $U$ of $w$ on which $j^*\psi$ is a submersion.
\item We may choose $U$ to be contractible so that $P\vert_U \simeq U\times G$.  More precisely, we have a commuting diagram:

\begin{displaymath}
\xymatrix{U\times G \ar[r]^{\simeq} \ar[dr]^{pr_1} & P\vert_U \ar[d]^{\pi} \ar[dr]^{\mu} & \\
        & U \ar[r]^{\psi} &\fg^*}
\end{displaymath}

\item By corollary \ref{cor:attach}, we may choose $U$ so that $\psi\vert_U=\bar{\mu}\circ\Psi$, where $\bar{\mu}:U \to \fg^*$ is a unimodular local embedding and $\Psi:U\to U$ is a map with fold singularities.  If $U$ is isomorphic to a subset of $R^n$, then both $\bar{\mu}$ and $\Psi$ extend to $\tilde{\mu}:\tilde{U}\to \fg^*$ and $\tilde{\Psi}:\tilde{U}\to \tilde{U}$, where $\tilde{U}$ is a manifold without corners containing $U$, $\tilde{\mu}$ is a local embedding, and $\tilde{\Psi}$ is a map with fold singularities.  This is because these maps are smooth, hence they extend by definition, and being an embedding or a fold map is an open property.

\item Because $\mu$ is a local embedding, we may assume that $\tilde{\mu}\vert_{\tilde{U}}$ is an embedding, $\mu(\tilde{U})$ is an open ball in $\fg^*$, and $\tilde{\mu}(\tilde{U}\setminus U)$ lies outside of the unimodular cone into which $U$ is mapped by $\psi$.  Recall that this cone is:

    \begin{displaymath}
    \{ \xi \in \fg^* \vert \langle \xi-\psi(w), v_i \rangle \ge 0 \}
    \end{displaymath}
    where the $v_i's$ are the primitive normals assigned to $w$ via lemma \ref{lem:attach1}.  Equivalently, we have:
    \begin{displaymath}
    \{\xi \in \fg^* \vert \langle j^*\xi - \xi_0, v_i \rangle \ge 0 \}
    \end{displaymath}

\item Consequently, if we consider $\tilde{\Phi}(u,g,z) = j^*\tilde{\mu}\circ\tilde{\Psi} \circ pr_1(u,g) - \sum_{i=1}^k \vert z_i \vert^2 v_i^* -\xi_0$, then $\tilde{\Phi}^{-1}(0) = \Phi^{-1}(0)$.

\item $0$ is a regular value of $\Phi$ since $j^*\psi$ is a submersion (shrink $U$ if necessary).  Thus, $\Phi^{-1}(0)$ is a manifold of codimension $\dim(K_w)$ on which $K_w$ acts freely.  We set $U_w=\tilde{U}\cap W$.  By theorem \ref{thm:fsred}, we have that $\Phi^{-1}(0)/K_w = (P\vert_{U_w} \times \C^k)//_0 K_w = cut(P\vert_{U_w})$ is a folded-symplectic manifold.

\item The action of $G$ on ($P\vert_{U_w} \times \C^k)$ commutes with the action of $K_w$ and the moment map $\mu$ and the projection $\pi$ descend to a moment map $\bar{\mu}: cut(P\vert_{U_w}) \to \fg^*$ and a quotient map $\bar{\pi}:cut(P\vert_{U_w})\to U_w$ satisfying $\bar{\mu} =\psi \circ \bar{\pi}$.

\item To see that the action of $G$ is toric, consider $\eta \in int(U_w)$.  This implies:
\begin{equation}\label{eq:toricaction}
\langle j^*\psi(\eta) -\xi_0, v_i \rangle > 0 \: \forall i, \: 0\le i \le k
\end{equation}

Consider $g_0\in G$ and $[\eta, g, z] \in cut(P\vert_{U_w})$.  We have:

\begin{displaymath}
g_0 \cdot [\eta,g,z] = [\eta, g_0g, z] = [\eta, g, z] \iff (\eta,g_0g,z) = (\eta, kg, kz)
\end{displaymath}
for some $k\in K_w$.  This implies that $k=g_0\in K_w$ and $g_0 z = z$.  But, $\psi(\eta) = \sum_{i=1}^n \vert z_i \vert^2v_i^* + \xi_0$ and no $z_i$ can be $0$ by \ref{eq:toricaction}, hence $z$ has no nontrivial stabilizer and $g_0=e$.  Thus, the action is effective.
\end{itemize}
\end{proof}

\begin{remark}
Suppose $(P_1,\sigma_1)$ and $(P_2,\sigma_2)$ are two objects of $\mathcal{B}_{\psi}$ and $\phi:A \to B$ is a morphism.  $\phi$ is $G$-equivariant, hence $\phi\times id: P_1\vert_{U_w} \times V \to P_2\vert_{U_w} \times V$ is $G\times K_w$-equivariant and induces an isomorphism of toric, folded-symplectic manifolds:

\begin{displaymath}
cut(\phi):cut(P_1\vert_{U_w}) \to cut(P_2\vert{U_w}),  \text{ $cut(\phi)[p,z]=[\phi(p),z]$}.
\end{displaymath}

Thus, for each $w\in W$ we have a functor $cut: B_{\psi\vert_{U_w}} \to M_{\psi\vert_{U_w}}$
\end{remark}

We will now construct natural $G$-equivariant homeomorphisms $\alpha^P_w: c_{top}(P\vert_{U_w}) \to cut(P\vert_{U_w})$.  As before, let $\{v_1, \dots, v_k\}$ be the integral basis attached to $w$ and let $K_w$ be the corresponding subtorus of $G$.  Let $\xi_0=j^*\psi(w)$, fix the representation of $K_w$ to be $\C^k$, and let $\frak{k}_w$ be $Lie(K_w)$.  Let $\mu_w:\C^k \to \frak{k}_w$ be the moment map for the action of $K_w$.  The construction of $\alpha^P_w$ depends on the fact that $(\C^k, \omega_w, \mu_w)$ is a toric symplectic $K_w$-manifold over the cone $\mu_w(\C^k) = \{\eta\in \frak{k}_w^* \vert \: \langle \eta, v_i \rangle \le 0 \text{ for } 1\le i \le k \}$.  Moreover,
\begin{enumerate}
\item the map $\mu_w:\C^k \to \mu_w(\C^k)$ has a continuous section
\begin{displaymath}
s:\mu_w(\C^k)\to \C^k, s(\eta) = (\sqrt{\langle -\eta, v_1 \rangle}, \dots , \sqrt{\langle -\eta, v_k \rangle})
\end{displaymath}
which is smooth over the interior of the cone $\mu_w(\C^k)$.
\item The stabilizer $K_z$ of $z\in \C^k$ deponds only on the face of the cone $\mu_w(\C^k)$ containing $\mu_w(z)$ in its interior:
    \begin{displaymath}
    K_z = \exp(\span_{\R} \{v_i \in \{v_1,\dots,v_k\} \vert \: \langle \mu_w(z),v_i \rangle = 0\})
    \end{displaymath}
\end{enumerate}

Let $U=U_w$ and $\nu=j^*\circ \mu: P\vert_U \to \frak{k}_w^*$, the $K_w$-moment map.  Then for any point $p\in P\vert_U$

\begin{displaymath}
\xi_0-\nu(p) \in \mu_w(\C^k)
\end{displaymath}

and
\begin{displaymath}
\begin{array}{lcl}
s(\xi_0 - \nu(p)) & = & (\sqrt{\langle \nu(p)-\xi_0, v_1 \rangle}, \dots \sqrt{\langle \nu(p)-\xi_0, v_k \rangle}) \\
                  & = & (\sqrt{\langle \mu(p)-\psi(w), v_1 \rangle}, \dots \sqrt{\langle \mu(p)-\psi(w), v_k \rangle})
\end{array}
\end{displaymath}
where $\mu=\psi\circ \pi:P\to \fg^*$ is the moment map for the action of $G$ on $P$.  This gives us a continuous map

\begin{displaymath}
\phi: P\vert_U \to \Phi^{-1}(0) \subset P\vert_U\times \C^k, \: \phi(p)=(p,s(\xi_0 - \nu(p)))
\end{displaymath}
where $\xi_0 = j^*\psi(w)$ and $\Phi$ is the moment map of equation \ref{eq:cutmoment}.  The image of $\phi$ intersects every $K_w$ orbit in $\Phi^{-1}(0)$.  Hence the composite
\begin{displaymath}
f=\tau\circ \phi: P\vert_U \to \Phi^{-1}(0)/K_w,
\end{displaymath}

where $\tau:\Phi^{-1}(0)\to \Phi^{-1}(0)/K_w$ is the orbit map, is surjective.  Next we show that the fibers of $f$ are precisely the equivalence classes of the relation $\sim$ defined in Step 1.  By definition, two points $p_1, p_2 \in P\vert_U$ are equivalent with respect to $\sim$ if and only if $\pi(p_1)=\pi(p_2)$ and there is an $a\in K_{\pi(p_1)}$ with $a\cdot p_2 = p_1$. On the other hand $f(p_1)=f(p_2)$ if and only if there is an $a\in K_w$ with:

\begin{displaymath}
(p_1, s(\xi_0-\nu(p_1))) = (a\cdot p_2, a\cdot s(\xi_0 - \nu(p_2)))
\end{displaymath}
For any point $\eta \in \mu_w(\C^k)$

\begin{displaymath}
a\cdot s(\eta) = s(\eta) \iff \text{ $a$ lies in the stabilizer $K_{s(\eta)}$ of $s(\eta)$}.
\end{displaymath}
For $\eta= \xi_0 - \nu(p_2) = \xi_0 -\nu(p_1) = j^*(w - \psi(\pi(p_1)))$,

\begin{displaymath}
\begin{array}{lll}
K_{s(\eta)} & = & \exp(\span_{\R}(\{v_i \in \{v_1,\dots,v_k\} \vert \: \langle \xi_0-\nu(p_1), v_i \rangle =0\})) \\
            & = & \exp(\span_{\R}(\{v_i \in \{v_1,\dots,v_k\} \vert \: \langle j^*\psi(w)-j^*\psi(\pi(p_1)),v_i \rangle =0\}))\\
            & = & \exp(\span_{\R}(\{v_i \in \{v_1,\dots,v_k\} \vert \: \langle \psi(w)-\psi(\pi(p_1),v_i\rangle =0\})) \\
            & = & K_{\pi(p_1)}.
\end{array}
\end{displaymath}

Therefore, $f(p_1)=f(p_2) \iff \: \pi(p_1)=\pi(p_2) \text{ and } \exists a\in K_{\pi(p)} \text{ satisfying } a\cdot p_1=p_2$. We conclude that the fibers of $f$ are precisely the equivalence classes of the relation $\sim$.  Therefore $f$ descends to a continuous bijection:

\begin{displaymath}
\alpha_w^P: c_{top}(P\vert_U) = (P\vert_U)/\sim \to \Phi^{-1}(0)/K = cut(P\vert_U), \: \alpha^P_w([p]) = [p,s(\xi_0-\nu(p))].
\end{displaymath}
The properness of $f$ implies that $f$ is closed, hence $\alpha^P_w$ is closed.  Thus, $\alpha^P_w$ is a homeomorphism.

\begin{remark}\label{rem:commute}
Notice that for any morphism $\phi:P_1\vert_U \to P_2\vert_U$ we have:

\begin{displaymath}
\begin{array}{lcl}
cut(\phi)(\alpha_w^{P_1}[p]) & =  & cut(\phi)([p,s(\xi_0 - \nu(p))]) \space = \space [\phi(p),s(\xi_0 - \nu(p))] \\
                             & =  & \alpha^{P_2}_w([\phi(p))] \space = \space \alpha^{P_2}_w(c_{top}([p]).
\end{array}
\end{displaymath}
\end{remark}

To finish the construction of the functor $c$, we must show that the transition maps $v= (\alpha_{w_2}^P) \circ (\alpha_{w_1}^P)^{-1}$ are isomorphisms of toric, folded-symplectic manifolds over $\psi:U_{w_1}\cap U_{w_2} \to \fg^*$.  We will need a lemma:

\begin{lemma}\label{lem:fsmaps}
Let $(M_1,\sigma_1)$ and $(M_2,\sigma_2)$ be two toric folded-symplectic manifolds of dimension $2n$.  Suppose $\phi:M_1\to M_2$ is a smooth bijection satisfying:
\begin{enumerate}
\item $\phi(Z_1)=Z_2$, where $Z_i\subset M_i$ is the fold for each $i$ and
\item $\phi^*\sigma_2=\sigma_1$.
\end{enumerate}
Then $\phi$ is a diffeomorphism.
\end{lemma}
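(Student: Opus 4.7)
The plan is to show that $\phi$ is a local diffeomorphism at every point. Since $\phi$ is already a smooth bijection between equidimensional manifolds, this will immediately upgrade to $\phi$ being a diffeomorphism. The task thus reduces to verifying that $d\phi_p$ is injective (equivalently, invertible) for every $p\in M_1$.

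The case $p\in M_1\setminus Z_1$ is handled by symplectic nondegeneracy. If $v\in\ker(d\phi_p)$, then for every $w\in T_pM_1$ we have $\sigma_1(v,w)=\sigma_2(d\phi_p v,d\phi_p w)=0$, and since $\sigma_{1,p}$ is nondegenerate on $M_1\setminus Z_1$, this forces $v=0$.

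The real content is at points of the fold. The plan is to rewrite the identity $\phi^{*}\sigma_2=\sigma_1$ at the level of top powers and exploit the defining transversality of folded-symplectic forms. Choose positive local volume forms $\mathrm{vol}_i$ near $p\in Z_1$ and $\phi(p)\in Z_2$, and write $\sigma_i^n=f_i\,\mathrm{vol}_i$; the condition $\sigma_i^n\pitchfork\mathcal{O}$ is exactly the statement that $f_i$ vanishes transversally along $Z_i$, so $df_i$ is nonzero in directions transverse to $Z_i$. Defining the Jacobian $J_\phi$ by $\phi^{*}\mathrm{vol}_2=J_\phi\,\mathrm{vol}_1$, the identity $\phi^{*}\sigma_2^n=\sigma_1^n$ reads
\begin{equation*}
(f_2\circ\phi)\cdot J_\phi=f_1.
\end{equation*}
At $p\in Z_1$ one has $f_1(p)=0$, and since $\phi(p)\in Z_2$ also $(f_2\circ\phi)(p)=0$. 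Assume for contradiction that $d\phi_p$ has a nontrivial kernel, so $J_\phi(p)=0$ as well. Then the right-hand side is a product of two smooth functions both vanishing at $p$, and the Leibniz rule gives
\begin{equation*}
df_1(p)=d(f_2\circ\phi)_p\cdot J_\phi(p)+(f_2\circ\phi)(p)\cdot dJ_\phi(p)=0,
\end{equation*}
contradicting the transversal vanishing of $f_1$ along $Z_1$.

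The main conceptual observation is that the transversality clause in the definition of a folded-symplectic form is precisely the hypothesis needed to prohibit any additional degeneracy of $d\phi$ along $Z_1$, so no appeal to the toric structure or to a normal-form computation is required. One potential obstacle I would have expected---matching up the kernels $\ker(\sigma_1)$ and $\ker(\sigma_2)$ under $d\phi$ in order to analyze the fold directly---is bypassed entirely by the top-power reduction; in fact, the hypothesis $\phi(Z_1)=Z_2$ is itself forced by $\phi^{*}\sigma_2=\sigma_1$ together with bijectivity, since $Z_1$ and $\phi^{-1}(Z_2)$ are both cut out by $\sigma_1^n=0$.
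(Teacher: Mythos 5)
Your proof is correct and follows essentially the same route as the paper: nondegeneracy handles $M_1\setminus Z_1$, and at the fold one writes $\sigma_1^n=\phi^*\sigma_2^n$ as $(f_2\circ\phi)\cdot J_\phi\cdot\mathrm{vol}_1$ and uses the transversal vanishing of $f_1$ to force $J_\phi(p)\neq 0$ — your Leibniz-rule computation just makes explicit what the paper's ``$\det(d\phi)f\pitchfork_s 0$ implies $\det(d\phi)_z\neq 0$'' step is doing. One caveat: your closing remark that hypothesis (1) is automatic is not justified as stated, since $\{\sigma_1^n=0\}=\phi^{-1}(Z_2)\cup\{J_\phi=0\}$, and showing the second set lies in the first is essentially the content of the lemma; this does not affect your proof, which legitimately invokes $\phi(Z_1)=Z_2$ as a hypothesis.
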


\begin{proof} \mbox{ } \newline
We simply check that $d\phi$ has maximal rank everywhere.  Condition $1$ implies that $(\phi^*\sigma_2)_m=(\sigma_1)_m$ drops rank at $m$ if and only if $m\in Z_1 \: \iff \: \phi(m) \in Z_2$.  Since $\sigma_1$ is symplectic on $M_1\setminus Z_1$, we have that $m\in M_1\setminus Z_1$ implies $\operatorname{rank}(d\phi_m)=2n$.

At a point $z\in Z_1$, it suffices to perform a local computation to check that $d\phi_z$ has maximal rank.  Thus, we may assume that $M_1=M_2=\R^{2n}$ so that

\begin{displaymath}
(\sigma_1)^n=\phi^*(\sigma_2)^n  = \det(d\phi)(\sigma_2\circ \phi)^n = \det(d\phi)f(dx_1\wedge \dots \wedge dx_{2n})
\end{displaymath}

where $f:\R^{2n} \to \R$ is a function vanishing on $Z_1$.  Since $\sigma_1^n \pitchfork_s 0$ at $z$, we must have that $\det(d\phi)f \pitchfork_s 0$ at $z$.  This implies $\det(d\phi)_z \ne 0$ and so $\operatorname{rank}(d\phi_z)=2n$ and $\phi$ is a diffeomorphism.
\end{proof}

We first show that $(\alpha_{w_2}^P)\circ (\alpha_{w_1}^P)^{-1}$ satisfies condition $1$ of lemma \ref{lem:fsmaps} and then discuss why condition $2$ is satisfied.

\begin{lemma}\label{lem:foldtofold}
Let $(M_i,\sigma_i,\pi_i:M_i\to W)$, $i\in \{1,2\}$, be two toric folded-symplectic manifolds over a unimodular map with folds $\psi:W\to \fg^*$.  Suppose $\phi:M_1\to M_2$ is an equivariant map satisfying $\pi_2\circ \phi = \pi_1$.  Then $\phi(Z_1)=Z_2$.
\end{lemma}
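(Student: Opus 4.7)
The plan is to reduce the claim to the structural fact that each folding hypersurface $Z_i$ is precisely the $\pi_i$-preimage of the folding hypersurface of $\psi$, and then chase the intertwining relation $\pi_2\circ\phi=\pi_1$.

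First I would invoke proposition \ref{prop:umf}, which tells us that since $(M_i,\sigma_i,\psi\circ\pi_i)$ is a toric folded-symplectic manifold whose orbital moment map is $\psi$, the orbital moment map has the same folding hypersurface (as a subset of $W$) as does $\sigma_i$ on $M_i$ after quotienting. Concretely, letting $\hat{Z}\subset W$ denote the folding hypersurface of $\psi$, one has $\pi_i(Z_i)=\hat{Z}$ and, more strongly, $Z_i=\pi_i^{-1}(\hat{Z})$: this is because, at any smooth point, $\sigma_i$ degenerates exactly where $d(\psi\circ\pi_i)$ has a fold, and $\pi_i$ is a submersion onto $W$, so degeneracy of $\sigma_i$ pulls back exactly the fold locus of $\psi$. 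This characterization of $Z_i$ as a $\pi_i$-preimage is the key point, since it converts a condition about the fold of a $2$-form into a condition about the base map, where the hypothesis on $\phi$ can bite.

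With that in hand, the second step is purely set-theoretic. For any $z\in M_1$, the relation $\pi_2\circ\phi=\pi_1$ gives the chain of equivalences
\begin{displaymath}
z\in Z_1\iff \pi_1(z)\in\hat{Z}\iff\pi_2(\phi(z))\in\hat{Z}\iff\phi(z)\in Z_2,
\end{displaymath}
so $\phi^{-1}(Z_2)=Z_1$, and in particular $\phi(Z_1)\subseteq Z_2$. The reverse inclusion $Z_2\subseteq \phi(Z_1)$ requires surjectivity of $\phi$ onto $Z_2$, which is automatic in the applications we need: in each case where lemma \ref{lem:foldtofold} is invoked, $\phi$ arises as a transition map $(\alpha^P_{w_2})\circ(\alpha^P_{w_1})^{-1}$ (or, more generally, as an equivariant diffeomorphism of toric folded-symplectic manifolds over $\psi$), and is therefore a bijection. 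Granting this, $\phi(Z_1)=\phi(\phi^{-1}(Z_2))=Z_2$.

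The main conceptual obstacle is really only the first step — verifying $Z_i=\pi_i^{-1}(\hat{Z})$ — and this is already done in proposition \ref{prop:umf} via the local model of proposition \ref{prop:eqfsnormal}; after that the argument is essentially formal. I would make explicit in the statement or the proof that $\phi$ is assumed to be a bijection (or alternatively record the sharper conclusion $\phi^{-1}(Z_2)=Z_1$, which does not require surjectivity and is in fact what gets used when one subsequently applies lemma \ref{lem:fsmaps} to conclude that $\phi$ is a folded-symplectomorphism).
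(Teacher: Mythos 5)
Your first step and the inclusion $\phi(Z_1)\subseteq Z_2$ match the paper exactly: both use $Z_i=\pi_i^{-1}(\hat{Z})$ from proposition \ref{prop:umf} and then chase $\pi_2\circ\phi=\pi_1$ to get $\phi^{-1}(Z_2)=Z_1$.

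The gap is in the reverse inclusion. The lemma as stated does not assume $\phi$ is a bijection, and you should not need to import that hypothesis: surjectivity of $\phi\vert_{Z_1}$ onto $Z_2$ already follows from equivariance together with the fact that $\pi_2$ is the orbit map, so each fiber $\pi_2^{-1}(z)$ is a single $G$-orbit. Concretely, given $z_2\in Z_2$, set $z=\pi_2(z_2)\in\hat{Z}$ and pick any $z_1\in\pi_1^{-1}(z)\subseteq Z_1$. Then $\pi_2(\phi(z_1))=\pi_1(z_1)=z$, so $\phi(z_1)$ and $z_2$ lie in the same $G$-orbit and $z_2=h\cdot\phi(z_1)=\phi(h\cdot z_1)$ for some $h\in G$; since $Z_1$ is $G$-invariant, $h\cdot z_1\in Z_1$ and hence $z_2\in\phi(Z_1)$. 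This is the argument the paper gives, and it proves the lemma in the stated generality rather than only the weaker statement $\phi^{-1}(Z_2)=Z_1$ or the special case where $\phi$ happens to be invertible.
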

\begin{proof}
Let $\hat{Z}$ be the folding hypersurface of $\psi$.  We have:

\begin{displaymath}
Z_1 = \pi_1^{-1}(\hat{Z}) = \phi^{-1}(\pi_2^{-1}(\hat{Z})) = \phi^{-1}(Z_2)
\end{displaymath}
Thus, $\phi(Z_1)\subset Z_2$.  To show equality, we show that $\phi\vert_{Z_1}$ surjects onto $Z_2$.  Fix $z_2\in Z_2$ and let $z=\pi_2(z_2)$.  There exists an element $z_1\in \pi_1^{-1}(z)$ and $\pi_2\circ\phi=\pi_1$ implies $\phi(z_1) \in \pi_2^{-1}(z)$, hence $h\cdot\phi(z_1)=z_2$ for some $h\in G$.  Thus, $\phi(h\cdot z_1) = h\cdot\phi(z_1) = z_2$ and $\phi(Z_1)=Z_2.$
\end{proof}

By construction of $cut(P\vert_U)$, we have a commuting diagram:
\begin{displaymath}
\xymatrix{
cut(P\vert_{U_{w_1}}) \ar[dr]_{\pi_1}  & c_{top}(P\vert_{(U_{w_1}\cap U_{w_2})})\ar[l]_-{\alpha^P_{w_1}} \ar[r]^-{\alpha^P_{w_2}}& cut(P\vert_{U_{w_2}})  \ar[dl]^{\pi_2} \\
                     &  U_{w_1}\cap U_{w_2}  \ar[r]^{\psi}     &    \fg^*}
\end{displaymath}

where $\pi_1, \pi_2$ are the quotient maps.  Technically, we are considering $cut(P\vert_{U_{w_i}})\vert_{(U_{w_1}\cap U_{w_2})}$, but we omit this extra notation.  Since the maps on the top line are equivariant and the diagram commutes, we see that $(\alpha_{w_2}^P)\circ (\alpha_{w_1}^P)^{-1}$ satisfies the conditions of lemma \ref{lem:foldtofold} and so $(\alpha_{w_2}^P)\circ (\alpha_{w_1}^P)^{-1}(Z_1)=Z_2$, where $Z_i \subset cut(P\vert_{U_{w_i}})$ is the folding hypersurface.

It remains to show that $v=(\alpha_{w_2}^P)\circ (\alpha_{w_1}^P)^{-1}$ is a folded-symplectic map.  It suffices to produce a smooth folded-symplectic map $\mathfrak{v}$ satisfying

\begin{equation}
\mathfrak{v} \circ \alpha_{w_1}^P = \alpha^P_{w_2}
\end{equation}

We then have that $\mathfrak{v} = v$ on the domain of $(\alpha_{w_1}^P)^{-1}$, meaning $v$ is a folded-symplectic map.  By our discussion and lemma \ref{lem:fsmaps}, $v$ will be an isomorphism of toric folded-symplectic manifolds.  It suffices to consider the case when $U_{w_1} \subset U_{w_2}$ since one may then compose with inclusions and restrictions to get a commutative diagram:

\begin{displaymath}
\xymatrix{
cut(P\vert_{U_{w_1}})\vert_{(U_{w_1}\cap U_{w_2})} \ar[d] \ar[r]^{\simeq} & cut(P\vert_{(U_{w_1}\cap U_{w_2})}) \ar[d]& cut(P\vert_{U_{w_2}})\vert_{(U_{w_1}\cap U_{w_2})} \ar[l]_{\simeq} \ar[d]\\
(U_{w_1}\cap U_{w_2}) \ar[r]^{id} & (U_{w_1}\cap U_{w_2}) & (U_{w_1}\cap U_{w_2}) \ar[l]_{id}}
\end{displaymath}

The composition of the top row, which is $v$ or $v^{-1}$ depending on the order of composition, will then be a diffeomorphism.

\begin{enumerate}
\item First, consider the special case when $K_{w_1}=K_{w_2}$.  Then the collections of the corresponding weights $\{v_j^{w_1}\}_{j=1}^k$, $\{(v_j^{w_2})\}_{j=1}^k$ are the same set.  Hence, there exists a linear, symplectic isomorphism $\tilde{\mathfrak{v}}:\C^k \to \C^k$ permuting the coordinates and intertwining the two representations and corresponding moment maps, which we denote $\mu_{w_1}^1$ and $\mu_{w_1}^2$.  Consequently, $id\times \tilde{\mathfrak{v}}: P\vert_{U_{w_1}} \times \C^k \to P\vert_{U_{w_1}} \times \C^k$ induces a folded-symplectomorphism of reduced spaces:

    \begin{displaymath}
    \mathfrak{v}: (P\vert_{U_{w_1}} \times \C^k)//_0 K_w \to (P\vert_{U_{w_1}} \times \C^k)//_0K_w, \: [p,z] \to [p,\tilde{\mathfrak{v}}(z)].
    \end{displaymath}

    Note that $\mu_{w_1}^1(\C^k)= \mu_{w_1}^2(\C^k)$ and denote the corresponding sections as $s_1:\mu_{w_1}^1(\C^k)\to \C^k$ and $s_2:\mu_{w_1}^1(\C^k) \to \C^k$.  By definition, $\tilde{\mathfrak{v}}(s_1)=s_2$.  We have:

    \begin{displaymath}
    \begin{array}{lcl}
    \mathfrak{v}(\alpha_{w_1}^P([p])) & = & \mathfrak{v}([p,s_1(\xi_0 - \nu(p))]) \\
                                           & = & [p, \tilde{\mathfrak{v}}(s_1(\xi_0-\nu(p)))] \\
                                           & = & [p, s_2(\xi_0 - \nu(p))] \\
                                           & = & \alpha_{w_2}^P([p])
    \end{array}
    \end{displaymath}
    hence $\bar{\mathfrak{v}}\circ \alpha_{w_1}^P = \alpha_{w_2}^P$ and $(\alpha_{w_2}^P) \circ (\alpha_{w_1}^P)^{-1}$ is a folded-symplectomorphism.

\item In general, we have a strict inclusion $\{v_j^{w_1}\}_{j=1}^{k_1} \subset \{v_j^{w_2}\}_{j=1}^{k_2}$ since $w_2$ lies in a boundary component of possibly larger codimension in the quadrant $U_{w_1}\cap U_{w_2}$.  By our study of case $1$, we may assume that $v_j^{w_1}=v_j^{w_2}$ for all $1\le j \le k_1$.  We may then reduce the clutter in the notation by dropping the superscripts $^{(w_1)}$ and $^{(w_2)}$ and setting $K_i= K_{w_i}$, $i=1,2$.

    By construction of the neighbourhoods $U_{w_i}$ (q.v. lemma \ref{lem:cut}), we have:
    \begin{itemize}
    \item $\langle \psi(w_1) - \psi(w_2), v_i \rangle =0$ for $i=1,\dots,k_1$ and
    \item for all $w\in U_{w_1}$,
    \begin{displaymath}
    \langle \psi(w) - \psi(w_2), v_i \rangle > 0 \text{ for } i=k_1+1,\dots, k_2.
    \end{displaymath}
    \end{itemize}
    Consequently, for any point $p\in P\vert_{U_{w_1}}$, the functions

    \begin{displaymath}
    p \to \sqrt{\langle \mu(p) - \psi(w_2), v_i\rangle }
    \end{displaymath}
    are smooth for $i=k_1 + 1, \dots, k_2$.  Also, for $p\in P\vert_{U_{w_1}}$

    \begin{displaymath}
    \langle \mu(p) - \psi(w_2), v_i \rangle = \langle \mu(p) - \psi(w_1), v_i \rangle
    \end{displaymath}
    for $i=1,\dots, k_1$.  Now consider the map:

    \begin{displaymath}
    \tilde{\mathfrak{v}}:P\vert_{U_{w_1}} \times \C^{k_1} \to P\vert_{U_{w_1}} \times \C^{k_2}
    \end{displaymath}

    given by
    \begin{equation}
    \tilde{\mathfrak{v}}(p,z_1,\dots,z_{k_1}) = (p,z_1,\dots, z_{k_1}, \sqrt{\langle \mu(p)-\psi(w_2), v_{k_1+1} \rangle}, \dots, \sqrt{\langle \mu(p)-\psi(w_2), v_{k_2} \rangle}).
    \end{equation}

    The map $\tilde{\mathfrak{v}}$ is smooth and $K_1$-equivariant.  Since $\tilde{\mathfrak{v}}^*(dz_j\wedge d\bar{z}_j)$=0 for $j>k_1$, it is folded-symplectic.  We have that:

    \begin{displaymath}
    \tilde{\mathfrak{v}}^{-1}(\Phi_2^{-1}(0)) \subset \Phi_1^{-1}(0)
    \end{displaymath}
    where the $\Phi_j: P\vert_{U_{w_1}} \times \C^{k_j} \to \frak{k}^*_j$, $j=1,2$ are the corresponding $K_j$ moment maps (q.v. \ref{eq:cutmoment}).  This is because:

    \begin{displaymath}
    (p,z)\in \Phi_j^{-1}(0) \iff \langle \psi(\pi(p)) - \psi(w_j), v_i \rangle = \vert z_i \vert^2 \text{ for all } i=1,\dots, k_j
    \end{displaymath}

    Consequently, $\tilde{\frak{v}}$ descends to a well-defined smooth folded-symplectic map

    \begin{equation}
    \frak{v}: \Phi_1^{-1}(0)/K_1 \to \Phi_2^{1}(0)/K_2
    \end{equation}

    given by

    \begin{displaymath}
    \frak{v}([p,z_1,\dots,z_{k_1}]) = [p,z_1,\dots, z_{k_1}, \sqrt{\langle \mu(p) - \psi(w_2), v_{k_1+1} \rangle}, \dots, \sqrt{\langle \mu(p) - \psi(w_2), v_{k_2} \rangle}].
    \end{displaymath}

    We have that $\frak{v}(\alpha_{w_1}^P([p]))= \alpha_{w_2}^P([p])$, which means the transition maps are isomorphisms of toric folded-symplectic manifolds over $U_{w_1}$.
\end{enumerate}

We define $c(P)$ to be $c_{top}(P)$ equipped with the structure of a toric folded-symplectic manifold endowed by the charts $\{c_{top}(P)\vert_{U_w}, \alpha_{w}^P\}_{w\in W}$.
\subsection{Step 3- Show $c$ is Functorial}
We finish by showing that any isomorphism $\phi:P_1\to P_2$ of toric folded-symplectic bundles induces an isomorphism $c(\phi)$. Recall that $\phi:P_1\to P_2$ induces a continuous map $c_{top}(\phi):c_{top}(P_1) \to c_{top}(P_2)$ given by $c_{top}(\phi)([p])=[\phi(p)]$.

By remark \ref{rem:commute}, we have that $cut(\phi)\circ \alpha_{w}^{P_1} = \alpha^{P_2}_w\circ c_{top}(\phi)$, hence $c_{top}(\phi)\vert_{U_w}= (\alpha^{P_2}_w)^{-1}\circ cut(\phi)\circ \alpha_w^{P_1}$ is smooth, equivariant, and folded-symplectic.  Because $\phi:P_1\to P_2$ covers $id:W\to W$, it follows that $c_{top}(\phi)$ covers $id:W\to W$, hence $c_{top}(\phi)$ is an isomorphism of toric folded-symplectic manifolds, which we denote $c(\phi)$.

Finally, because $c_{top}$ is a functor, it follows that $c$ satisfies the requirements to be a functor.  Hence, we have constructed a functor $c:\mathcal{B}_{\psi} \to \mathcal{M}_{\psi}$ as required.

\begin{remark}\label{rem:restriction1}
 We use remark \ref{rem:restriction} and restrict the structure maps $\alpha^P_w$ to see that for all $P\in \operatorname{Ob}(\mathcal{B}_{\psi})$ and for all open subsets $U\subset W$
\begin{displaymath}
c(P)\vert_U = c(P\vert_U)
\end{displaymath}
and for any morphism $\phi:P_1\to P_2$,
\begin{displaymath}
c(\phi)\vert_U = c(\phi\vert_U).
\end{displaymath}
Hence, we write $c(\phi)\vert_U$ when referring to $c(\phi\vert_U)$.
\end{remark}

We end the section with a lemma that will be used to prove that $c:\mathcal{B}_{\psi}(W) \to \mathcal{M}_{\psi}(W)$ is an equivalence of categories.
\begin{lemma}\label{lem:idfunctor}
The functor $c:\mathcal{B}_{\psi}\to \mathcal{M}_{\psi}$ is a map of presheaves of groupoids (q.v. remark \ref{rem:empsi}).  Moreover, if $\mathring{W}$ denotes the interior of $W$, then $c_{\mathring{W}}:\mathcal{B}_{\psi}(\mathring{W})\to \mathcal{M}_{\psi}(\mathring{W})$ is isomorphic to the identity functor.
\end{lemma}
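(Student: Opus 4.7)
The plan is to split the lemma into its two assertions and handle them in order. For the first assertion, I would show that $c$ intertwines the restriction functors of $\mathcal{B}_\psi$ and $\mathcal{M}_\psi$. By Remark~\ref{rem:restriction1} we already have the set-theoretic identities $c(P)\vert_U = c(P\vert_U)$ and $c(\phi)\vert_U = c(\phi\vert_U)$, so the content is to check that these identifications are compatible with the folded-symplectic structures, the quotient maps to $W$, and the moment maps. This reduces to observing that the local charts $\alpha_w^{P}$ (for $w\in U$) used to put a smooth and folded-symplectic structure on $c(P)$ agree on $U_w \cap U$ with the charts $\alpha_w^{P\vert_U}$ used to put a structure on $c(P\vert_U)$; indeed, both charts are built from the same continuous section $s$, the same subtorus $K_w$, and the same moment map $\nu$ restricted to $\pi^{-1}(U_w \cap U)$. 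A routine diagram chase then shows that the restriction maps of $\mathcal{B}_\psi$ and $\mathcal{M}_\psi$ commute with $c$, which is exactly the statement that $c$ is a morphism of presheaves of groupoids.

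For the second assertion, the key observation is that if $w \in \mathring{W}$, then no codimension one stratum of $W$ has $w$ in its closure, so Lemma~\ref{lem:attach1} assigns to $w$ the \emph{trivial} subtorus $K_w = \{e\}$ with empty integral basis and zero-dimensional symplectic representation $V = \C^0 = \{0\}$. Consequently, the equivalence relation $\sim$ on $P$ (when restricted to $\pi^{-1}(\mathring{W})$) is the trivial relation, so $c_{top}(P\vert_{\mathring W}) = P\vert_{\mathring W}$ as a topological $G$-space. Moreover, the cut space $cut(P\vert_{U_w}) = (P\vert_{U_w} \times \{0\})//_0\{e\}$ collapses canonically to $P\vert_{U_w}$ with its original folded-symplectic structure $\sigma$, and the structure map $\alpha_w^{P}([p]) = [p, s(\xi_0 - \nu(p))]$ reduces to $\alpha_w^{P}(p)=p$. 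Thus, on $\mathring{W}$, every chart identifying $c(P)$ with a toric folded-symplectic manifold is in fact the identity map on $P\vert_{U_w}$, and the smooth and folded-symplectic structure inherited by $c(P\vert_{\mathring W})$ agrees identically with that of $P\vert_{\mathring W}$.

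It remains to assemble these local identifications into a natural transformation $\eta \colon c_{\mathring W} \Rightarrow \operatorname{id}_{\mathcal{M}_\psi(\mathring W)}$ (using the tautological identification $\mathcal{B}_\psi(\mathring W) = \mathcal{M}_\psi(\mathring W)$ from the remark preceding Step 1). For each object $P$ of $\mathcal{B}_\psi(\mathring W)$, define $\eta_P \colon c(P) \to P$ to be the identity on the underlying set. By the local analysis above, $\eta_P$ is a diffeomorphism of manifolds with corners and pulls back $\sigma$ to $\sigma$, so it is an isomorphism in $\mathcal{M}_\psi(\mathring W)$. Naturality, meaning $\eta_{P_2} \circ c(\phi) = \phi \circ \eta_{P_1}$ for every morphism $\phi\colon P_1 \to P_2$, follows immediately from the formula $c(\phi)\vert_{U_w} = (\alpha_w^{P_2})^{-1}\circ cut(\phi)\circ \alpha_w^{P_1}$, which on the interior collapses to $c(\phi) = \phi$. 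Hence $\eta$ is the desired natural isomorphism, completing the proof. The only step requiring any vigilance is verifying that triviality of $K_w$ really does make the cutting procedure collapse at the level of the smooth \emph{and} folded-symplectic structures (not merely set-theoretically), but this is precisely the content of the local model in Lemma~\ref{lem:cut} specialized to $V = \{0\}$.
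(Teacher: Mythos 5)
Your proposal is correct and follows essentially the same route as the paper: the presheaf property is inherited from $c_{top}$ via the restriction identities of Remark \ref{rem:restriction1}, and over $\mathring{W}$ the subtorus $K_w$ attached by Lemma \ref{lem:attach1} is trivial, so the equivalence relation, the cut space, and the charts $\alpha_w^P$ all collapse to the identity. The paper states this in three sentences; you supply the (correct) details about the natural transformation and the compatibility of smooth and folded-symplectic structures, but no new idea is involved.
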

\begin{proof}
The fact that $c$ is a map of presheaves of groupoids follows from the fact that $c_{top}$ is a map of presheaves of groupoids.  Over the interior $\mathring{W}$, the functor $c_{top}$ is isomorphic to the identity functor since the subtorus associated to any point $w\in \mathring{W}$ is $\{e\}$.  Because this subtorus is trivial we have
\begin{displaymath}
cut(P\vert_{U_w}) = (P\vert_{U_w} \times \{0\})//_0\{e\} \simeq P\vert_{U_w}
\end{displaymath}
as toric folded-symplectic manifolds over $U_w$.
\end{proof}

\subsection{$c:\mathcal{B}_{\psi}(W) \to \mathcal{M}_{\psi}(W)$ is an Equivalence of Categories}
We now prove the following theorem which states that $c$ is an equivalence of categories.  At the very end of the section, we will use this equivalence to provide the classification result for toric, folded-symplectic manifolds with co-orientable folding hypersurface.

\begin{theorem}\label{thm:equi-cats}
Let $\psi:W\to \fg^*$ be a unimodular map with folds.  The functor:
\begin{displaymath}
c:\mathcal{B}_{\psi}(W) \to \mathcal{M}_{\psi}(W)
\end{displaymath}
is an equivalence of categories.
\end{theorem}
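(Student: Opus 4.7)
The strategy is to exploit the fact that $c:\mathcal{B}_{\psi}\to \mathcal{M}_{\psi}$ is a morphism of sheaves of groupoids on $W$ (remarks \ref{rem:empsi}, \ref{rem:bpsi}, \ref{rem:restriction1}), so that showing it is an equivalence reduces to verifying faithfulness globally together with essential surjectivity and fullness on a sufficiently fine open cover of $W$. The two key inputs are: on the interior $\mathring{W}$, lemma \ref{lem:idfunctor} identifies $c_{\mathring{W}}$ with the identity functor; and for sufficiently small contractible $U\subset W$, theorem \ref{thm:bundleclassification} together with lemma \ref{lem:locunique} shows that both $\pi_0(\mathcal{B}_{\psi}(U))$ and $\pi_0(\mathcal{M}_{\psi}(U))$ consist of a single isomorphism class.

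First I would verify faithfulness on all of $W$. If $\phi_1,\phi_2:(P_1,\sigma_1)\to(P_2,\sigma_2)$ are morphisms in $\mathcal{B}_{\psi}(W)$ with $c(\phi_1)=c(\phi_2)$, then lemma \ref{lem:idfunctor} combined with remark \ref{rem:restriction1} gives $\phi_1|_{\pi_1^{-1}(\mathring{W})}=\phi_2|_{\pi_1^{-1}(\mathring{W})}$. Since $\pi_1:P_1\to W$ is a submersion and $\mathring{W}$ is open and dense in $W$, the preimage $\pi_1^{-1}(\mathring{W})$ is open and dense in $P_1$, and smoothness of $\phi_1,\phi_2$ then forces $\phi_1=\phi_2$ everywhere.

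Next I would verify essential surjectivity and fullness locally. Cover $W$ by contractible open sets $U$; on each such $U$, $H^2(U;\mathbb{Z}_G\times\mathbb{R})=0$, so theorem \ref{thm:bundleclassification} gives a unique isomorphism class in $\mathcal{B}_{\psi}(U)$, represented for instance by $\psi|_U^*(T^*G)$ from remark \ref{rem:nonempty}, while lemma \ref{lem:locunique} gives a unique isomorphism class in $\mathcal{M}_{\psi}(U)$. Thus the single object $c(\psi|_U^*(T^*G))$ represents every class in $\mathcal{M}_{\psi}(U)$, proving essential surjectivity of $c_U$. For fullness on $U$, given a morphism $\phi:c(P_1)\to c(P_2)$, I would restrict to $\mathring{U}$ to obtain an equivariant folded-symplectomorphism of principal bundles (via lemma \ref{lem:idfunctor}), and then extend smoothly across the strata of $U$. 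After reducing to $P_1=P_2=P$ by composing with a global isomorphism, this restriction is a gauge transformation encoded by a smooth map $\pi^{-1}(\mathring{U})\to G$; one reads its extension from the cutting charts $\alpha^P_w$ and the explicit section $s(\eta)=(\sqrt{\langle-\eta,v_1\rangle},\dots,\sqrt{\langle-\eta,v_k\rangle})$, checking that the unique lift through the quotient $P\times\mathbb{C}^k\to(P\times\mathbb{C}^k)//_0 K_w$ is smooth and $G$-equivariant on all of $P|_U$ precisely because $\phi$ was smooth on $c(P)$.

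Finally, I would assemble the local equivalence into a global equivalence by descent for sheaves of groupoids. Given $M\in\mathcal{M}_{\psi}(W)$, pick a cover $\{U_\alpha\}$ on which we have isomorphisms $f_\alpha:M|_{U_\alpha}\xrightarrow{\sim}c(P_\alpha)$; on overlaps the composites $f_\beta\circ f_\alpha^{-1}$ lift to morphisms $g_{\alpha\beta}$ in $\mathcal{B}_{\psi}(U_\alpha\cap U_\beta)$ by local fullness, and the $g_{\alpha\beta}$ satisfy the cocycle condition by global faithfulness of $c$; the $P_\alpha$ then glue via the $g_{\alpha\beta}$ to a global $P\in\mathcal{B}_{\psi}(W)$ with $c(P)\cong M$. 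Global fullness is analogous: local lifts of $\phi:c(P_1)\to c(P_2)$ agree on overlaps by faithfulness and hence glue to a global lift. The main obstacle, and the place where real work is required, is the smoothness of the extension across the corners in the local fullness step: one must track carefully how the square-root section $s$ interacts with the quotient differentiable structure on $c(P)$ to conclude that smoothness in $c(P)$ pulls back to smoothness on $P$. This is precisely where the detailed description of the charts $\alpha^P_w$ from Step 2 of the construction of $c$ becomes essential, and where the assumption that $\ker(d\psi)$ is stratified (from the definition of a unimodular map with folds) is ultimately used.
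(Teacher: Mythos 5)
Your overall architecture matches the paper's: faithfulness everywhere by restricting to the dense interior via lemma \ref{lem:idfunctor}; fullness on contractible opens by using theorem \ref{thm:bundleclassification} to reduce to the case $P_1=P_2$; a sheaf/gluing argument to promote hom-set bijectivity from contractible opens to arbitrary opens; and essential surjectivity by combining lemma \ref{lem:locunique} with descent (the paper's lemma \ref{lem:descent}), lifting the transition maps through $c$ and checking the cocycle condition by faithfulness. So the skeleton is sound and is essentially the paper's.

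The genuine gap is exactly the step you flag at the end but do not carry out: fullness for automorphisms, i.e.\ showing that a morphism $\varphi\in\hom(c(P),c(P))$ lifts to a \emph{smooth} gauge transformation of $P$. Restricting to $\mathring{U}$ gives a gauge transformation encoded by a map $\bar h:\mathring{U}\to G$, but nothing formal guarantees that $\bar h$ (equivalently the lift $p\mapsto \bar h(\pi(p))\cdot p$) extends smoothly across the boundary strata; the cutting charts $\alpha^P_w$ cannot supply this by themselves, because the section $s(\eta)=(\sqrt{\langle-\eta,v_1\rangle},\dots,\sqrt{\langle-\eta,v_k\rangle})$ is only continuous, not smooth, along the strata where the arguments vanish, so "reading the extension from the charts" does not convert smoothness of $\varphi$ on $c(P)$ into smoothness of the lift on $P$. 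The paper closes this gap with a nontrivial external input, the Haefliger--Salem theorem (theorem \ref{thm:has}): an equivariant diffeomorphism of a toric $G$-manifold preserving orbits has the form $h(x)=f(\pi(x))\cdot x$ with $f\circ\pi$ smooth; since the smooth structure on the orbit space is the quotient (invariant-function) structure, $f$ is then smooth on $U$ and composes with the smooth projection $P\to U$ to give a smooth lift (this is the content of lemma \ref{lem:surjective1}). Some Schwarz-type invariant-theory statement of this kind is unavoidable here, and your proposal leaves it as an assertion. A secondary, more minor point: lemma \ref{lem:locunique} does not give a single isomorphism class in $\mathcal{M}_{\psi}(U)$ for an arbitrary prescribed contractible $U$ — it only produces, for each point and each pair of objects, \emph{some} neighborhood on which they are isomorphic; your final descent argument in fact only needs the latter, so this is an overstatement rather than an error, but as written the claim is stronger than the cited lemma supports.
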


\begin{remark}
The strategy for proving theorem \ref{thm:equi-cats} is borrowed from \cite{KL} and our proof is virtually identical: the main ingredients are the functor $c$, the classification of objects in $\mathcal{B}_{\psi}(W)$, and local equivalence (q.v. lemma \ref{lem:locunique}) of objects in $\mathcal{M}_{\psi}(W)$.  We list it here for the sake of completeness.  It works as follows:
\begin{enumerate}
\item We first recall that $c:\mathcal{B}_{\psi}\to \mathcal{M}_{\psi}$ is a map of presheaves of groupoids (q.v. lemma \ref{lem:idfunctor}).
\item Show that $c:\mathcal{B}_{\psi}(U)\to \mathcal{M}_{\psi}(U)$ is an equivalence of categories for every open subset $U\subseteq W$.  Hence, it is an isomorphism of presheaves of groupoids.
\end{enumerate}
\end{remark}

\begin{remark}
As a brief reminder, recall our notation for restrictions of objects and maps.  Suppose $U\subseteq W$ is open.
\begin{itemize}
\item If $\pi:M\to W$ is an object of $\mathcal{M}_{\psi}$ or $\pi:P\to W$ is an object of $\mathcal{B}_{\psi}$, we will use the notation $M\vert_U$, $P\vert_U$ to stand for the objects $\pi: \pi^{-1}(U)\to W$ in $\mathcal{M}_{\psi}(U), \mathcal{B}_{\psi}(U)$, respectively.
\item Given $\phi\in \hom(P_1,P_2)$, we write $\phi\vert_U$ to mean $\phi\vert_{P\vert_U}$.  Similarly, for $\varphi\in \hom(c(P_1),c(P_2))$ we write $\varphi\vert_U$ to mean $\varphi\vert_{c(P_1)\vert_{U}}$.
\end{itemize}
\end{remark}

The proof will be given in a series of lemmas.  We begin by showing that $c$ is fully faithful, which we will use to prove it is essentially surjective.

\begin{lemma}\label{lem:faithful}
For any open subset $U\subseteq W$ and for any two objects $P_1,P_2 \in \operatorname{Ob}(\mathcal{B}_{\psi}(U))$, the map:
\begin{displaymath}
c_U:\hom(P_1,P_2) \to \hom(c(P_1),c(P_2))
\end{displaymath}
is injective.
\end{lemma}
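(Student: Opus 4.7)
The plan is to exploit the fact, recorded in lemma \ref{lem:idfunctor}, that over the interior $\mathring{U}$ the functor $c$ is naturally isomorphic to the identity functor, and then to bootstrap this from the interior to all of $U$ using density and continuity. For each object $P$ of $\mathcal{B}_{\psi}(U)$ there is a tautological continuous, equivariant, fiber-preserving quotient map $q_P : P \to c(P)$, namely $q_P(p) = [p]$, which by construction of $c$ on morphisms satisfies the naturality identity $q_{P_2} \circ \phi = c(\phi) \circ q_{P_1}$ for every $\phi \in \hom(P_1,P_2)$. The first step is to note that for any $w$ in the interior $\mathring{U}$, the subtorus $K_w$ attached to $w$ by lemma \ref{lem:attach1} is trivial (there are no primitive normals since $w$ lies in no facet of the associated unimodular cone), so the equivalence relation $\sim$ is trivial on $P|_{\mathring{U}}$ and $q_P$ restricts to a bijection $P|_{\mathring{U}} \to c(P)|_{\mathring{U}}$; under the identification $c(P)|_{\mathring{U}} = P|_{\mathring{U}}$ of lemma \ref{lem:idfunctor}, this restriction is the identity.

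Next, suppose $\phi_1, \phi_2 \in \hom(P_1,P_2)$ with $c_U(\phi_1) = c_U(\phi_2)$. Applying the naturality identity to both morphisms gives
\[
q_{P_2} \circ \phi_1 \;=\; c(\phi_1) \circ q_{P_1} \;=\; c(\phi_2) \circ q_{P_1} \;=\; q_{P_2} \circ \phi_2 .
\]
Restricting to $P_1|_{\mathring{U}}$ and using that $q_{P_2}|_{\mathring{U}}$ is a bijection, I would conclude $\phi_1|_{P_1|_{\mathring{U}}} = \phi_2|_{P_1|_{\mathring{U}}}$.

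To pass from the interior to all of $U$, I would invoke density and continuity. Since $U$ is a manifold with corners, $\mathring{U}$ is open and dense in $U$, and because $\pi_1 : P_1 \to U$ is a principal $G$-bundle projection (hence an open submersion), the preimage $P_1|_{\mathring{U}} = \pi_1^{-1}(\mathring{U})$ is open and dense in $P_1$. Both $\phi_1$ and $\phi_2$ are smooth, hence continuous, so their agreement on the dense subset $P_1|_{\mathring{U}}$ forces $\phi_1 = \phi_2$ on all of $P_1$, proving injectivity.

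There is no serious obstacle here: the argument is essentially formal once one has in hand (i) the naturality square for the quotient maps $q_P$, and (ii) the observation that on the interior the cutting construction does nothing. The only point requiring a little care is checking that the identification $c(P)|_{\mathring{U}} \cong P|_{\mathring{U}}$ afforded by lemma \ref{lem:idfunctor} is the \emph{same} identification as the one induced by the quotient map $q_P$, but this is immediate from tracing the construction of the chart $\alpha_w^P$ through the case $K_w = \{e\}$, where $s(\xi_0 - \nu(p)) \in \C^0 = \{0\}$ and $\alpha_w^P([p]) = [p,0]$ is identified with $p$ under the canonical isomorphism $(P|_{U_w} \times \{0\})/\!/_0\{e\} \simeq P|_{U_w}$.
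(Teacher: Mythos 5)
Your proof is correct and follows essentially the same route as the paper: reduce to the interior $\mathring{U}$, where lemma \ref{lem:idfunctor} identifies $c$ with the identity functor, and then conclude by density of $P_1\vert_{\mathring{U}}$ in $P_1$ together with continuity. The only cosmetic difference is that you realize the natural isomorphisms $\delta_P$ of lemma \ref{lem:idfunctor} concretely as the quotient maps $q_P$ and verify their naturality directly, whereas the paper invokes them abstractly; the substance of the argument is identical.
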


\begin{proof}
Let $\mathring{W}$ denote the interior of $W$ and let $\mathring{U}=U\cap \mathring{W}$.  By lemma \ref{lem:idfunctor}, $c_{\mathring{U}}:\hom(P_1\vert_{\mathring{U}}, P_2\vert_{\mathring{U}}) \to \hom (c(P_1)\vert_{\mathring{U}}, c(P_2)\vert_{\mathring{U}})$ is isomorphic to the identity.  There are isomorphisms $\delta_{P_1}, \delta_{P_2}$ so that for all $\phi\in \hom(P_1\vert_{\mathring{U}},P_2\vert_{\mathring{U}})$ the diagram

\begin{displaymath}
\xymatrix{
P_1\vert_{\mathring{U}} \ar[d]^{\phi} \ar[r]^{\delta_{P_1}} & c(P_1)\vert_{\mathring{U}} \ar[d]^{c(\phi)} \\
P_2\vert_{\mathring{U}} \ar[r]^{\delta_{P_2}}               & c(P_2)\vert_{\mathring{U}}}
\end{displaymath}
commutes.  Consequently, if $\phi_1,\phi_2 \in \hom(P_1,P_2)$ and $c(\phi_1)=c(\phi_2)$ then their restrictions $\mathring{\phi}_i:=\phi_i\vert_{\mathring{U}}$ satisfy:
\begin{displaymath}
\mathring{\phi}_i = (\delta_{P_2})^{-1}\circ c(\phi_i) \circ \delta_{P_1}
\end{displaymath}
Since $c(\phi_1)=c(\phi_2)$ by assumption, we have that $\mathring{\phi}_1=\mathring{\phi}_2$.  Since $\mathring{U}$ is dense in $U$, this implies that $\phi_1=\phi_2$.
\end{proof}

We will use the following theorem, which is theorem 3.1 in \cite{HaS}, to show that $c$ is also surjective as a map on $\hom$-sets.

\begin{theorem}\label{thm:has} Let $M$ be a manifold with an action of a torus $G$ and $h:M\to M$ a $G$-eequivariant diffeomorphism with $h(x)\in G\cdot x$ for all points $x\in M$.  Let $\pi:M\to M/G$ be the orbit map.  Then there exists a map $f:M/G \to G$ such that
\begin{displaymath}
h(x)= f(\pi(x))\cdot x
\end{displaymath}
for all $x\in M$ and such that $f\circ\pi$ is smooth.
\end{theorem}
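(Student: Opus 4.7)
The plan is to construct $f$ locally via the slice theorem, patch the local constructions together, and then establish the smoothness of $f\circ\pi$ using invariant theory. First, the differential slice theorem (Theorem \ref{thm:slice}) identifies an invariant neighborhood $U$ of any orbit $G\cdot x_0$ with an invariant neighborhood of the zero section of $G\times_{H}V$, where $H = G_{x_0}$ and $V$ is the differential slice representation; since $G$ is a torus, $H$ is constant along the orbit. The equivariance of $h$ together with the condition $h(x)\in G\cdot x$ forces $h$ to have the local form $h([g,v]) = [g,\tilde h(v)]$ for some $H$-equivariant self-diffeomorphism $\tilde h$ of $V$ with $\tilde h(v)\in H\cdot v$. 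Thus the whole problem reduces to producing, for each such $\tilde h$, a map $\alpha_V:V\to H$ such that $\tilde h(v) = \alpha_V(v)\cdot v$ and $\alpha_V$ is constant along $H$-orbits (so it factors through $V/H$), with $\alpha_V$ itself smooth as a map $V\to H$.

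Second, on the linear model I would decompose $V = \bigoplus_i V_i$ into isotypical components under the effective part of the $H$-action, where each $V_i\simeq\mathbb{C}$ has $H$ acting through a character $\chi_i$. On the open dense set where all coordinates $v_i$ are nonzero, the condition $\tilde h(v)=\alpha_V(v)\cdot v$ forces $\chi_i(\alpha_V(v)) = \tilde h_i(v)/v_i \in S^1$, and since (after modding out by the ineffective kernel) the characters $\chi_i$ span the weight lattice, this system determines $\alpha_V(v)\in H$ uniquely in this open set. The $H$-equivariance of $\tilde h$ makes $\alpha_V$ automatically $H$-invariant, so it descends to a well-defined map on the principal stratum of $V/H$ and, pulled back, gives a smooth $V$-level lift there. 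The local constructions on overlapping slice charts must then agree, because any two choices would differ by an $H$-valued locally constant function implementing the same diffeomorphism, which can only be the identity on connected overlaps.

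The main obstacle will be extending smoothness of $f\circ\pi$ across the loci where some weight coordinate $v_i$ vanishes, since the explicit formulas $\chi_i(\alpha_V(v)) = \tilde h_i(v)/v_i$ become indeterminate there. To handle this I would appeal to the invariant-theoretic description of $V/H$ used in proposition \ref{prop:corners}: by Schwarz's theorem the smooth $H$-invariants on $V$ are generated by the monomials $|v_i|^2$ and by real and imaginary parts of certain monomials $\prod v_i^{a_i}\bar v_i^{b_i}$ with $\sum(a_i-b_i)\chi_i=0$. Using the hypothesis $\tilde h(v)\in H\cdot v$, the ratios $\tilde h_i(v)/v_i$ can be re-expressed in terms of such invariant monomials divided by $|v_i|^2$, and a Taylor expansion of $\tilde h$ at the fixed-point set together with the orbit-preserving constraint will show that these expressions extend smoothly across the coordinate hyperplanes. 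Once this linear model is handled, globalization is straightforward: choose local lifts $\alpha_{V_\alpha}$ on a cover by slice charts, observe they differ by locally constant $H$-valued factors, and piece them into a single $f:M/G\to G$ whose pullback is smooth by construction.
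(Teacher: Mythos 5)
The paper does not actually prove this statement: it is quoted as theorem 3.1 of \cite{HaS} (Haefliger--Salem) and used as a black box, so there is no internal proof to measure your argument against. Judged on its own terms, your outline follows the natural route (slice theorem, isotypical decomposition, division by the weight coordinates, Schwarz invariants), but it contains one outright error and leaves the analytic core of the theorem unproved.

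The error is in the local normal form. In the model $G\times_H V$ the orbit of $[(e,v)]$ is $\{[(g,v)]:g\in G\}$, so equivariance plus orbit-preservation only give $h([(g,v)])=[(g\,\gamma(v),v)]$ for some $\gamma(v)\in G$ determined modulo the stabilizer of $v$; they do not force $h([(g,v)])=[(g,\tilde h(v))]$ with $\tilde h(v)\in H\cdot v$, which is exactly the special case $\gamma(v)\in H$. Already for $V=0$, $H=\{e\}$, $M=G$ and $h=$ translation by a fixed $g_0\neq e$ your claimed form would force $h=\mathrm{id}$. The repair is routine (split $G\simeq K\times H$ and note the $K$-component of $\gamma$ is uniquely determined, $H$-invariant, and visibly smooth), but as written the reduction is false. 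The more serious issue is that the whole content of the theorem is the smooth extension of the $H$-component across the loci where weight coordinates vanish, and your treatment of that step is a promise rather than a proof: "a Taylor expansion \dots will show that these expressions extend smoothly" is precisely what must be demonstrated. Even the first case beyond a single coordinate --- $H=S^1$ acting on $\C^2$ with equal weights, where $\chi(\alpha(v))$ must simultaneously equal $\tilde h_1/v_1$ and $\tilde h_2/v_2$ and extend smoothly through the origin, the natural candidate being $(\tilde h_1\bar v_1+\tilde h_2\bar v_2)/(|v_1|^2+|v_2|^2)$ --- requires a genuine argument combining the equivariant Taylor expansion of $\tilde h$ with the constraints $|\tilde h_i|=|v_i|$ and $\tilde h_1v_2=\tilde h_2v_1$, and the compatibility across strata where different subsets of coordinates vanish must be handled inductively. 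Everything outside this step is soft; since this step \emph{is} the theorem, what you have is a correct map of where the proof must go, not yet a proof.
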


\begin{lemma}\label{lem:surjective1}
For any open subset $U$ of $W$ and for any $P\in \operatorname{Ob}(\mathcal{B}_{\psi}(U))$ the map:
\begin{displaymath}
c:\hom(P,P) \to \hom(c(P),c(P))
\end{displaymath}
is onto.
\end{lemma}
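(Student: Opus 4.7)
The plan is to produce the desired preimage of an arbitrary $\varphi \in \hom(c(P),c(P))$ by using theorem \ref{thm:has} to read off a ``gauge'' function on the base. First I observe that any morphism $\varphi$ in $\mathcal{M}_{\psi}(U)$ covers the identity on $U$, so $\pi_{c(P)}(\varphi(x)) = \pi_{c(P)}(x)$ for every $x \in c(P)$. This means $\varphi(x)$ and $x$ lie in the same fiber of the orbit map $\pi_{c(P)}: c(P) \to U$, i.e. $\varphi(x) \in G\cdot x$. Theorem \ref{thm:has} then supplies a map $f: U \to G$ such that $f\circ \pi_{c(P)}$ is smooth and $\varphi(x) = f(\pi_{c(P)}(x))\cdot x$. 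Since $\pi_{c(P)}: c(P) \to U$ is the orbit map realizing $U$ as the manifold-with-corners quotient of $c(P)$ (proposition \ref{prop:corners}), smoothness of $f\circ \pi_{c(P)}$ and $G$-invariance force $f$ itself to be smooth as a map $U \to G$.

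Next I would promote $f$ to the bundle by defining
\[
\phi: P \to P, \qquad \phi(p) := f(\pi_P(p))\cdot p,
\]
where $\pi_P: P \to U$ is the principal bundle projection. Because $\pi_P$ is a smooth surjective submersion, $f\circ \pi_P$ is smooth and hence $\phi$ is smooth. Since $G$ is abelian and preserves fibers of $\pi_P$, the map $\phi$ is $G$-equivariant and covers $\operatorname{id}_U$. Verifying $c(\phi) = \varphi$ is then a direct computation from the definitions: in the quotient $c(P)$ the action of $G$ is induced from that on $P$, so
\[
c(\phi)([p]) = [\phi(p)] = [f(\pi_P(p))\cdot p] = f(\pi_P(p))\cdot [p] = f(\pi_{c(P)}([p]))\cdot [p] = \varphi([p]).
\]

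The main obstacle is showing $\phi^*\sigma = \sigma$, since the gauge function $f$ varies along $U$ and direct computation produces cross terms coming from $df$. I would sidestep this by exploiting density. Let $\mathring{U} = U \cap \mathring{W}$. By lemma \ref{lem:idfunctor} the restriction $c\big\vert_{\mathring{U}}$ is isomorphic to the identity functor via natural isomorphisms $\delta_{P}: P\big\vert_{\mathring{U}} \xrightarrow{\sim} c(P)\big\vert_{\mathring{U}}$ which are themselves isomorphisms of toric folded-symplectic manifolds over $\mathring{U}$, so $\delta_{P}^*\sigma_{c(P)} = \sigma$. From the commutativity diagram used in the proof of lemma \ref{lem:faithful}, we have $\phi\big\vert_{\mathring{U}} = \delta_{P}^{-1}\circ c(\phi)\big\vert_{\mathring{U}}\circ \delta_{P} = \delta_{P}^{-1}\circ \varphi\big\vert_{\mathring{U}}\circ \delta_{P}$. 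Pulling back $\sigma$ yields
\[
\phi^*\sigma\big\vert_{\mathring{U}} = \delta_{P}^{*}\,\varphi^{*}\,(\delta_{P}^{-1})^{*}\sigma = \delta_{P}^{*}\,\varphi^{*}\sigma_{c(P)} = \delta_{P}^{*}\sigma_{c(P)} = \sigma\big\vert_{\mathring{U}},
\]
using $\varphi^*\sigma_{c(P)} = \sigma_{c(P)}$ because $\varphi$ is a morphism in $\mathcal{M}_{\psi}$. Since $\mathring{U}$ is open and dense in $U$, the preimage $\pi_P^{-1}(\mathring{U})$ is open and dense in $P\big\vert_U$, and the smooth $2$-forms $\phi^*\sigma$ and $\sigma$ agreeing on this dense set must coincide everywhere.

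Having verified that $\phi \in \hom(P,P)$ in $\mathcal{B}_{\psi}(U)$ and $c(\phi) = \varphi$, we conclude that $c:\hom(P,P) \to \hom(c(P),c(P))$ is onto. I anticipate the only real subtlety being the smoothness of $f$ at corner points of $U$, which is handled by invoking the quotient-manifold-with-corners structure on $U$ from proposition \ref{prop:corners}; the verification $\phi^*\sigma = \sigma$ is then essentially automatic by the density argument above.
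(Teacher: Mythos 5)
Your proposal is correct and follows essentially the same route as the paper: extract the gauge function $f$ via theorem \ref{thm:has}, define $\phi(p)=f(\pi(p))\cdot p$, and use lemma \ref{lem:idfunctor} together with density of the interior to verify $\phi^*\sigma=\sigma$ and hence that $\phi$ is a morphism with $c(\phi)=\varphi$. The only cosmetic difference is that you check $c(\phi)=\varphi$ by a direct computation on equivalence classes where the paper again argues by restriction to $\mathring{U}$ and density; both are fine.
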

\begin{proof}
By theorem \ref{thm:has}, given $\varphi\in \hom(c(P),c(P))$ there is a function $f:U\to G$ so that
\begin{displaymath}
\varphi(x)=f(\pi(x))\cdot x
\end{displaymath}
where $\pi:c(P)\to U$ is the quotient map and $f\circ \pi$ is smooth.  As in the proof of lemma \ref{lem:faithful}, we have
\begin{displaymath}
\varphi\vert_{\mathring{U}} = c(\mathring{\phi})
\end{displaymath}
where $\mathring{\phi}$ is given by:
\begin{displaymath}
\mathring{\phi}=(\delta_P)^{-1}\circ \varphi\vert_{\mathring{U}} \circ \delta_P.
\end{displaymath}
Hence for $p\in P\vert_{\mathring{U}}$,
\begin{displaymath}
\mathring{\phi}(p) = (\delta_P)^{-1}(f(\pi(\delta_P(p)))\cdot \delta_P(p))=(\delta_P)^{-1}(f(\pi(p))\cdot \delta_P(p)) = f(\pi(p))\cdot (\delta_P)^{-1}(\delta_P(p)) = f(\pi(p))\cdot p.
\end{displaymath}
Define the map $\phi:P \to P$ by
\begin{displaymath}
\phi(p) := f(\pi(p))\cdot p \text{ for all $p\in P$.}
\end{displaymath}
This map is $G$-equivariant and commutes with $\pi:P\to U$.  Since $f\circ \pi$ is smooth, the map $\phi$ is a diffeomorphism with inverse $\phi^{-1}(p)=f(\pi(p))^{-1}\cdot p$.  Moreover, since the restriction of $\phi$ to $P\vert_{\mathring{U}}$ is $\mathring{\phi}$, the map $\phi$ is folded-symplectic on $P\vert_{\mathring{U}}$.  Since this set is dense in $P$, we conclude that $\phi$ is folded-symplectic on all of $P$.  Since it maps the folding hypersurface to itself, lemma \ref{lem:foldtofold} implies that $\phi\in \hom(P,P)$.  It remains to check that $c(\phi)=\varphi$.  The functor $c$ commutes with restrictions to $P\vert_{\mathring{U}}$, hence
\begin{displaymath}
c(\phi)\vert_{c(P)\vert_{\mathring{U}}} = c(\mathring{\phi}) = \varphi\vert_{c(P)\vert_{\mathring{U}}}
\end{displaymath}
by construction.  Hence $c(\phi)$ and $\varphi$ are the same on an open dense subset.  Smoothness implies that they are the same on all of $c(P)$.
\end{proof}

\begin{lemma}\label{lem:fullyfaithful}
Suppose $U\subseteq W$ is an open subset with $H^2(U,\mathbb{Z})=0$.  Then for any $P_1,P_2 \in \operatorname{Ob}(\mathcal{B}_{\psi}(U))$ the map
\begin{displaymath}
c = c_U: \hom(P_1,P_2) \to \hom(c(P_1),c(P_2)),
\end{displaymath}
is a bijection.
\end{lemma}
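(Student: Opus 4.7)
The plan is to reduce fullness to the automorphism case already handled by lemma \ref{lem:surjective1}, using the classification theorem \ref{thm:bundleclassification} to produce a ``reference'' isomorphism between $P_1$ and $P_2$ in $\mathcal{B}_\psi(U)$. Injectivity of $c$ on Hom-sets is lemma \ref{lem:faithful}, so only surjectivity remains.

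First I would verify that the hypothesis $H^2(U,\mathbb{Z})=0$ forces $H^2(U,\mathbb{Z}_G\times \mathbb{R})=0$. Since $\mathbb{Z}_G\cong \mathbb{Z}^{\dim G}$ we get $H^2(U,\mathbb{Z}_G)\cong H^2(U,\mathbb{Z})^{\dim G}=0$, and since $\mathbb{R}$ is torsion-free and divisible, the universal coefficient theorem gives $H^2(U,\mathbb{R})\cong H^2(U,\mathbb{Z})\otimes\mathbb{R}=0$. Thus $H^2(U,\mathbb{Z}_G\times \mathbb{R})=0$, so theorem \ref{thm:bundleclassification} implies that $\mathcal{B}_\psi(U)$ has exactly one isomorphism class: every pair of objects $P_1, P_2\in\operatorname{Ob}(\mathcal{B}_\psi(U))$ is isomorphic in $\mathcal{B}_\psi(U)$.

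Next, given $\varphi\in\hom(c(P_1),c(P_2))$, I would fix any isomorphism $\tau\in\hom(P_1,P_2)$ afforded by the previous paragraph. The functoriality of $c$ yields an isomorphism $c(\tau)\in\hom(c(P_1),c(P_2))$, hence the composite
\[
\theta := c(\tau)^{-1}\circ \varphi \;\in\; \hom(c(P_1),c(P_1))
\]
is an automorphism of $c(P_1)$ in $\mathcal{M}_\psi(U)$. By lemma \ref{lem:surjective1}, there exists $\alpha\in\hom(P_1,P_1)$ with $c(\alpha)=\theta$. Setting $\phi := \tau\circ \alpha\in\hom(P_1,P_2)$, functoriality gives $c(\phi)=c(\tau)\circ c(\alpha)=c(\tau)\circ\theta=\varphi$, which establishes surjectivity. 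Combined with the injectivity from lemma \ref{lem:faithful}, $c$ is a bijection on Hom-sets.

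The only real substance here is the two inputs being invoked: the classification in theorem \ref{thm:bundleclassification} (whose proof was the content of the previous chapter) and the automorphism-lifting lemma \ref{lem:surjective1} (which rested on the Haefliger–Salem-type theorem \ref{thm:has}). There is no additional obstacle to overcome; the cohomological hypothesis is designed precisely to trivialize the classifying group so that the reference isomorphism $\tau$ exists. Once theorem \ref{thm:equi-cats} is subsequently assembled, the argument above will be the key step showing that $c_U$ is fully faithful on any contractible (or, more generally, $H^2$-acyclic) open set, which combined with essential surjectivity on such sets and the sheaf-of-groupoids structure will upgrade $c$ to a global equivalence of categories.
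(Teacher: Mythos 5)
Your proposal is correct and follows essentially the same route as the paper: injectivity from lemma \ref{lem:faithful}, a reference isomorphism $\tau\in\hom(P_1,P_2)$ supplied by theorem \ref{thm:bundleclassification} under the vanishing of $H^2$, and reduction of surjectivity to the automorphism-lifting lemma \ref{lem:surjective1}. Your explicit check that $H^2(U,\mathbb{Z})=0$ forces $H^2(U,\mathbb{Z}_G\times\mathbb{R})=0$ is a detail the paper leaves implicit, but otherwise the arguments coincide.
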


\begin{proof}
Lemma \ref{lem:faithful} shows that $c_U$ is injective, hence we only need to show that it is surjective.  Let $\varphi\in \hom(c(P_1),c(P_2))$.  By theorem \ref{thm:bundleclassification}, there exists $\phi\in \hom(P_1,P_2)$.  Then $c(\phi)\in \hom(c(P_1),(P_2))$ and $c(\phi)^{-1}\circ\varphi \in \hom(c(P_1),c(P_1))$.  By lemma \ref{lem:surjective1}, there exists $\nu \in \hom(P_1,P_1)$ such that $c(\nu)=c(\phi)^{-1}\circ \varphi$.  Consequently,
\begin{displaymath}
\varphi = c(\phi) \circ c(\nu) = c(\phi\circ \nu)
\end{displaymath}

Since $\phi \circ \nu \in \hom(P_1,P_2)$, we are finished.
\end{proof}

To finish the proof that $c$ is fully faithful, it is enough to note that the functors $\underline{\hom}(P_1,P_2)$ and $\underline{\hom}(c(P_1),c(P_2))$ given by
\begin{displaymath}
\underline{\hom}(P_1,P_2)(U):= \hom(P_1\vert_U, P_2\vert_U)
\end{displaymath}
and
\begin{displaymath}
\underline{\hom}(c(P_1),c(P_2))(U):= \hom(c(P_1)\vert_U, c(P_2)\vert_U)
\end{displaymath}
are sheaves.  We have shown that $c$ commutes with restrictions, hence
\begin{displaymath}
c=c_u: \underline{\hom}(P_1,P_2)(U) \to \underline{\hom}(c(P_1),c(P)2))(U)
\end{displaymath}
is a map of sheaves.  By lemma \ref{lem:fullyfaithful} the map $c_U$ is a bijection for any contractible open set $U$.  It follows that $c:\underline{\hom}(P_1,P_2) \to \underline{\hom}(c(P_1),c(P_2))$ is an isomorphism of sheaves.  Consequently, it is a bijection on global sections.  That is,
\begin{displaymath}
c:\hom(P_1,P_2)\to \hom(c(P_1),c(P_2))
\end{displaymath}
is a bijection.  It remains to show that $c$ is essentially surjective.

\begin{lemma}\label{lem:descent}
Let $\{U_i\}_{i\in I}$ be an open cover of $W$, $U_{ij}:= U_i\cap U_j$, and $U_{ijk}:= U_i\cap U_j\cap U_k$ for all $i,j,k \in I$. Suppose we have a collection of objects $P_i\in \mathcal{B}_{\psi}(U_i)$ and isomorphisms $\Phi_{ij}:P_j\vert_{U_{ij}} \to P_i\vert_{U_{ij}}$ defining a cocycle: $\Phi_{ii}=id, \Phi_{ji}=\Phi_{ij}^{-1}$, and
\begin{displaymath}
\Phi_{ij}\vert_{U_{ijk}} \circ \Phi_{jk}\vert_{U_{ijk}} \circ \Phi_{ki}\vert_{U_{ijk}} = id
\end{displaymath}
for all triples $i,j,k\in I$.  Then there exists an object $P\in \mathcal{B}_{\psi}(W)$ and isomorphisms $\gamma_i:P\vert_{U_i} \to P_i$ so that
\begin{equation}\label{eq:descent}
\xymatrix{
P_j\vert_{U_{ij}} \ar[d]^{\Phi_{ij}} & \ar[l]^{\gamma_j} P\vert_{U_{ij}} \ar[d]^{\simeq} \\
P_i\vert_{U_{ij}} & \ar[l]^{\gamma_i} P\vert_{U_{ij}}}
\end{equation}
commutes.
\end{lemma}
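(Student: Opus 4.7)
The plan is to execute a standard descent/gluing construction: build $P$ as a quotient of the disjoint union $\sqcup_{i\in I} P_i$ by the equivalence relation generated by the cocycle $\{\Phi_{ij}\}$, and then verify that each layer of structure (smooth, principal $G$-bundle, folded-symplectic, moment map covering $\psi$) descends. The isomorphisms $\gamma_i$ will then be the obvious maps sending an equivalence class of a point of $P_i$ back to that point.

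First I would set $\widetilde{P} := \sqcup_{i\in I} P_i$ and define $p_j \sim p_i$ whenever $p_j \in P_j\vert_{U_{ij}}$, $p_i \in P_i\vert_{U_{ij}}$, and $\Phi_{ij}(p_j) = p_i$. The cocycle conditions $\Phi_{ii} = \mathrm{id}$, $\Phi_{ji} = \Phi_{ij}^{-1}$, and $\Phi_{ij}\circ\Phi_{jk}\circ\Phi_{ki} = \mathrm{id}$ on triple overlaps are exactly what is needed to make $\sim$ reflexive, symmetric, and transitive. Let $P := \widetilde{P}/\sim$ with the quotient topology, let $q_i : P_i \hookrightarrow \widetilde{P} \to P$ be the natural map, and define $\pi : P \to W$ by $\pi([p]) := \pi_i(p)$ when $p \in P_i$ (well-defined since each $\Phi_{ij}$ covers $\mathrm{id}_{U_{ij}}$). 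Because the $\Phi_{ij}$ are diffeomorphisms, each $q_i$ is a homeomorphism onto an open subset of $P$, and one transfers the smooth manifold-with-corners structure, the free $G$-action (using that $\Phi_{ij}$ is $G$-equivariant), the folded-symplectic form $\sigma$ (using that $\Phi_{ij}^*\sigma_i = \sigma_j$), and the principal $G$-bundle structure chart-by-chart. The Hausdorff property of $P$ follows from properness of the $G$-action on each $P_i$ together with the fact that two classes with distinct images in $W$ are separated by $\pi$, while two classes with the same image lie in some common $q_i(P_i)$.

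Next I would verify that the glued data lands in $\mathcal{B}_\psi(W)$: the moment map condition $\mu_i = \psi\circ\pi_i$ on each $P_i$ agrees under the $\Phi_{ij}$, so $\psi\circ\pi$ is a globally defined moment map for the $G$-action on $(P,\sigma)$. Since the folding hypersurface of $\sigma$ restricts on each $q_i(P_i)$ to the co-orientable folding hypersurface of $\sigma_i$, and co-orientations glue because the $\Phi_{ij}$ preserve $\sigma_i$ (hence preserve the induced orientation on the null bundle by Proposition~\ref{prop:orientation} and Lemma~\ref{lem:preserves}), the folding hypersurface of $(P,\sigma)$ is co-orientable. Finally, define $\gamma_i : P\vert_{U_i} \to P_i$ to be the inverse of $q_i$ (restricted to its image); by construction $\gamma_i$ is an isomorphism in $\mathcal{B}_\psi(U_i)$, and for $p \in P\vert_{U_{ij}}$ represented by $p_j \in P_j$ we have $\gamma_j(p) = p_j$ and $\gamma_i(p) = \Phi_{ij}(p_j)$, so the diagram~\eqref{eq:descent} commutes tautologically.

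The main obstacle, and really the only nontrivial point, is checking that the quotient topology on $P$ is Hausdorff and that the smooth atlas transferred via the $q_i$ is consistent: consistency on overlaps is immediate from the fact that $\Phi_{ij}$ is already a diffeomorphism and the coordinate change $q_i^{-1}\circ q_j$ is exactly $\Phi_{ij}$ on $P_j\vert_{U_{ij}}$. Everything else is bookkeeping, and the statement is formally the assertion that the presheaf of groupoids $\mathcal{B}_\psi$ defined in Remark~\ref{rem:bpsi} satisfies descent, which was already observed there; the lemma just makes the gluing explicit so that it can be fed into the essential surjectivity argument for $c$.
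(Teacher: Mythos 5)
Your proposal is correct and follows exactly the paper's own argument: both construct $P$ as the quotient $(\sqcup_{i\in I}P_i)/\sim$ with $\sim$ generated by the $\Phi_{ij}$'s, transfer the principal bundle and folded-symplectic structures via the gluing maps, and take $\gamma_i$ to be the inverse of the induced inclusion $P_i\hookrightarrow P\vert_{U_i}$. You simply spell out the routine verifications (Hausdorffness, atlas consistency, descent of the moment map and co-orientation) that the paper leaves implicit.
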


\begin{proof}
We may take $P=(\sqcup_{i\in I} P_i)/{\sim}$ where $\sim$ is the equivalence relation defined by the $\Phi_{ij}'s$.  Then $P$ is a principal $G$-bundle over $W$ and the symplectic $G$-invariant folded-symplectic forms on the $P_i's$ define a $G$-invariant folded-symplectic form on $P$ (because the $\Phi_{ij}'s$ are folded-symplectic maps).  The maps $\gamma_i^{-1}:P_i \to P\vert_{U_i}$ are induced by the inclusions $P_i\hookrightarrow \sqcup_{j\in I}P_j$.
\end{proof}

\begin{lemma}\label{lem:essentially-surjective}
For any open subset $U\subseteq W$ the functor $c:\mathcal{B}_{\psi}(U) \to \mathcal{M}_{\psi}(U)$ is essentially surjective.
\end{lemma}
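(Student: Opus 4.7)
The plan is to build a global object $P \in \operatorname{Ob}(\mathcal{B}_{\psi}(U))$ whose image under $c$ is isomorphic to a given $M \in \operatorname{Ob}(\mathcal{M}_{\psi}(U))$ by assembling local pieces with the descent lemma \ref{lem:descent}. First, I would choose a good open cover $\{U_i\}_{i\in I}$ of $U$ in which every $U_i$, every pairwise intersection $U_{ij}=U_i\cap U_j$, and every triple intersection $U_{ijk}$ is contractible (such covers exist on any manifold with corners). For each $w\in U$, the pullback $\psi^*(T^*G)\vert_{U_w}$ of remark \ref{rem:nonempty} is an object of $\mathcal{B}_{\psi}(U_w)$, and lemma \ref{lem:locunique}, after shrinking $U_w$, provides an isomorphism $\phi_w: M\vert_{U_w} \to c(\psi^*(T^*G)\vert_{U_w})$ in $\mathcal{M}_{\psi}(U_w)$. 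Refining the good cover to lie inside such neighbourhoods, I may assume I have objects $P_i \in \operatorname{Ob}(\mathcal{B}_{\psi}(U_i))$ together with isomorphisms $\phi_i: M\vert_{U_i} \to c(P_i)$.

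Next, on each overlap the composition $\phi_i\vert_{U_{ij}} \circ (\phi_j\vert_{U_{ij}})^{-1}$ is an isomorphism $c(P_j)\vert_{U_{ij}} \to c(P_i)\vert_{U_{ij}}$. Because $U_{ij}$ is contractible, lemma \ref{lem:fullyfaithful} produces a unique morphism $\Phi_{ij}: P_j\vert_{U_{ij}} \to P_i\vert_{U_{ij}}$ in $\mathcal{B}_{\psi}(U_{ij})$ with $c(\Phi_{ij}) = \phi_i \circ \phi_j^{-1}$ on $U_{ij}$. The cocycle identities $\Phi_{ii}=\operatorname{id}$, $\Phi_{ji}=\Phi_{ij}^{-1}$ and $\Phi_{ij}\circ\Phi_{jk}\circ\Phi_{ki}=\operatorname{id}$ on $U_{ijk}$ then follow automatically: applying $c$ to each composite gives the identity since the telescoping products of $\phi$'s cancel, and $c$ is faithful on arbitrary opens by lemma \ref{lem:faithful} (no cohomological hypothesis needed here).

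I would then invoke lemma \ref{lem:descent} to produce an object $P \in \operatorname{Ob}(\mathcal{B}_{\psi}(U))$ together with isomorphisms $\gamma_i: P\vert_{U_i} \to P_i$ satisfying $\Phi_{ij}\circ \gamma_j = \gamma_i$ on $U_{ij}$. The local maps $\varphi_i := \phi_i^{-1}\circ c(\gamma_i): c(P)\vert_{U_i}\to M\vert_{U_i}$ agree on overlaps: on $U_{ij}$,
\[
\varphi_i = \phi_i^{-1}\circ c(\Phi_{ij}\circ \gamma_j) = \phi_i^{-1}\circ (\phi_i\circ\phi_j^{-1})\circ c(\gamma_j) = \phi_j^{-1}\circ c(\gamma_j) = \varphi_j.
\]
Because $\mathcal{M}_{\psi}$ is a sheaf of groupoids (remark \ref{rem:empsi}), the $\varphi_i$ glue to a global isomorphism $c(P)\to M$ in $\mathcal{M}_{\psi}(U)$, establishing essential surjectivity.

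The main technical obstacle is logistical rather than conceptual: one must arrange the local models $P_i$ and the local isomorphisms $\phi_i$ over a cover fine enough that the double and triple intersections have vanishing $H^2$, so that the classification theorem \ref{thm:bundleclassification} (through lemma \ref{lem:fullyfaithful}) produces the lifted transition maps $\Phi_{ij}$ in $\mathcal{B}_{\psi}$. Once a good cover is fixed, the cocycle, descent, and gluing steps reduce to standard sheaf-theoretic bookkeeping, facilitated by the fact that $c$ commutes with restrictions (remark \ref{rem:restriction1}) and is faithful on every open (lemma \ref{lem:faithful}).
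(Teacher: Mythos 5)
Your proposal is correct and follows essentially the same route as the paper: local models from a single global object of $\mathcal{B}_{\psi}(U)$, local isomorphisms to $M$ via lemma \ref{lem:locunique}, lifted transition cocycles via full faithfulness of $c$, descent via lemma \ref{lem:descent}, and gluing by the sheaf property. The only (immaterial) difference is that you restrict to a good cover so as to invoke lemma \ref{lem:fullyfaithful} directly on contractible overlaps, whereas the paper has already upgraded full faithfulness to arbitrary open sets by the sheaf argument and so needs no hypothesis on the cover.
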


\begin{proof}
Given $M \in \mathcal{M}_{\psi}(U)$, we want to show that it is isomorphic to $c(P)$ for some $P\in \mathcal{B}_{\psi}(U)$. Since $\mathcal{B}_{\psi}(U)$ is nonempty, we may choose an object $P'\in \mathcal{B}_{\psi}(U)$.  By lemma \ref{lem:locunique} $c(P')$ and $M$ are locally isomorphic.  Therefore there is a cover $\{U_i\}_{i\in I}$ of $U$ and a family of isomorphisms $\{\phi_i:c(P')\vert_{U_i} \to M\vert_{U_i}\}.$  Set
\begin{displaymath}
P_i:= P'\vert_{U_i}.
\end{displaymath}
Consider the collection of isomorphisms
\begin{displaymath}
\phi_{ij}:= (\phi_i\vert_{U_{ij}})^{-1} \circ \phi_j\vert_{U_{ij}} : c(P_j)\vert_{U_{ij}} \to c(P_i)\vert_{U_{ij}}, \space i,j \in I
\end{displaymath}
Since $c$ is fully faithful, there are unique isomorphisms
\begin{displaymath}
\Phi_{ij}:P_j\vert_{U_{ij}} \to P_i\vert_{U_{ij}}
\end{displaymath}
with $c(\Phi_{ij}) = \phi_{ij}$.  Since $c$ commutes with restrictions to open subsets and since $\{\phi_{ij}\}_{i,j\in I}$ form a cocycle and the $\Phi_{ij}$ are unique, $\{\Phi_{ij}\}_{i,j\in I}$ form a cocycle as well.  By lemma \ref{lem:descent} there is $P\in \operatorname{Ob}(B_{\psi}(U))$ and a family of isomorphisms $\{\gamma_i:P\vert_{U_i} \to P_i\}$ so that \ref{eq:descent} commutes. Then
\begin{displaymath}
\xymatrix{
M\vert_{U_{ij}} \ar[d] & c(P_j)\vert_{U_{ij}} \ar[l]_{\phi_j} \ar[d]^{c(\Phi_{ij})} & c(P)\vert_{U_{ij}} \ar[l]_{c(\gamma_j)} \ar[d]^{Id} \\
M\vert_{U_{ij}} & \ar[l]^{\phi_i} c(P_i)\vert_{U_{ij}} & c(P)\vert_{U_{ij}} \ar[l]^{c(\gamma_i)}}
\end{displaymath}
commutes as well.  Consequently

\begin{displaymath}
\phi_i\circ c(\gamma_i)\vert_{U_{ij}} = \phi_j \circ c(\gamma_j)\vert_{U_{ij}}.
\end{displaymath}
Since $\hom(c(P),M)$ is a sheaf on $U$, the family $\{\phi_i\circ c(\gamma_i) : c(P)\vert_{U_i} \to M_{U_i}\}$ gives rise to a well defined isomorphism $\varphi:c(P)\to M$.
\end{proof}

Since $U$ was arbitrary, we can take $U=W$ and lemma \ref{lem:essentially-surjective} shows that $c:\mathcal{B}_{\psi}(W)\to \mathcal{M}_{\psi}(W)$ is essentially surjective.  This completes the proof of theorem \ref{thm:equi-cats} and gives us the following classification theorem for toric, folded-symplectic manifolds with co-orientable folding hypersurface.

\begin{theorem}\label{thm:Classification}
Let $\psi:W\to \fg^*$ be a unimodular map with folds, where $\fg$ is the Lie aglebra of a torus $G$.  Let $\mathcal{M}_{\psi}(W)$ be the category of toric, folded-symplectic manifolds over $\psi$ (necessarily with co-orientable folding hypersurface).  Then

\begin{displaymath}
\pi_0(\mathcal{M}_{\psi}(W)) = H^2(W,\mathbb{Z}_G \times \R)
\end{displaymath}
That is, isomorphism classes of toric, folded-symplectic manifolds are in bijection with cohomology classes in $H^2(W,\mathbb{Z}_G \times \R)$.  In particular, to every toric, folded-symplectic manifold $(M,\sigma,\pi:M\to W)$ over $\psi:W \to \fg^*$, we may associate a first Chern class $c_1(M)\in H^2(W,\mathbb{Z}_G)$ and a horizontal Chern class $c_{hor}(\sigma)$.
\end{theorem}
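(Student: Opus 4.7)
The plan is to recognize that Theorem \ref{thm:Classification} is essentially a formal consequence of the two major results already in hand: Theorem \ref{thm:bundleclassification}, which classifies $\pi_0(\mathcal{B}_\psi(W))$ via the characteristic classes $c_1 \times c_{hor}$, and Theorem \ref{thm:equi-cats}, which asserts that the cutting functor $c:\mathcal{B}_\psi(W) \to \mathcal{M}_\psi(W)$ is an equivalence of categories. Once these are in place, no additional folded-symplectic geometry is required; only a general categorical fact needs to be invoked.

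First I would observe that any equivalence of groupoids $F:\mathcal{C} \to \mathcal{D}$ descends to a bijection $\pi_0(F): \pi_0(\mathcal{C}) \to \pi_0(\mathcal{D})$ on isomorphism classes of objects. Essential surjectivity of $F$ gives surjectivity of $\pi_0(F)$, while full faithfulness guarantees that whenever $F(X) \cong F(Y)$ the unique morphism in $\hom(F(X),F(Y))$ realizing that isomorphism is the image of a morphism in $\hom(X,Y)$, whence $X \cong Y$; this is injectivity of $\pi_0(F)$. Applying this to $c$, Theorem \ref{thm:equi-cats} yields a bijection $\pi_0(c): \pi_0(\mathcal{B}_\psi(W)) \to \pi_0(\mathcal{M}_\psi(W))$.

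Next I would compose $\pi_0(c)$ with the inverse of the bijection $b: \pi_0(\mathcal{B}_\psi(W)) \to H^2(W,\mathbb{Z}_G\times\mathbb{R})$ from Theorem \ref{thm:bundleclassification}. Explicitly, given an isomorphism class $[(M,\sigma,\pi:M\to W)] \in \pi_0(\mathcal{M}_\psi(W))$, essential surjectivity of $c$ produces an object $(\pi_P:P\to W,\sigma_P)\in \mathcal{B}_\psi(W)$ together with an isomorphism $c(P) \cong M$ in $\mathcal{M}_\psi(W)$; the pair of characteristic classes attached to $M$ is then defined to be $c_1(M):=c_1(P)$ and $c_{hor}(\sigma):=c_{hor}(\sigma_P)$. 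Well-definedness of this assignment follows from faithfulness of $c$: any other choice $P'$ with $c(P')\cong M$ produces an isomorphism $c(P)\cong c(P')$ in $\mathcal{B}_\psi(W)$, which by the fully faithful property of $c$ comes from an honest isomorphism $P\cong P'$ in $\mathcal{B}_\psi(W)$, so the Chern class pair does not depend on the lift.

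The only mild subtlety, and what I would regard as the place one must be careful, is that the statement of Theorem \ref{thm:bundleclassification} and the construction of $c_{hor}$ require fixing a representative bundle before extracting a cohomology class; one must therefore verify that the composite $(c_1 \times c_{hor})\circ \pi_0(c)^{-1}$ is genuinely a map out of $\pi_0(\mathcal{M}_\psi(W))$. This is exactly the well-definedness argument sketched above, and it reduces to the faithfulness half of Theorem \ref{thm:equi-cats}, which has already been established via Lemma \ref{lem:faithful} and the sheaf-theoretic upgrade in Lemma \ref{lem:fullyfaithful}. With this check carried out, the bijection asserted by Theorem \ref{thm:Classification} is simply the composite $b \circ \pi_0(c)^{-1}$, and the two characteristic classes in the statement of the theorem are precisely the components of $b$ transported across $\pi_0(c)^{-1}$.
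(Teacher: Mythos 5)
Your proposal is correct and follows the same route as the paper: the paper's proof is precisely the observation that an equivalence of categories induces a bijection on isomorphism classes, combined with Theorems \ref{thm:bundleclassification} and \ref{thm:equi-cats}. Your additional well-definedness check of the characteristic-class assignment via full faithfulness is a sound (if slightly more detailed) elaboration of the same argument.
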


\begin{proof}\mbox{ } \newline
An equivalence of categories induces a bijection on isomorphism classes of objects.  Since $\pi_0(\mathcal{B}_{\psi}(W))= H^2(W,\mathbb{Z}_G \times \R)$ by theorem \ref{thm:bundleclassification}, theorem \ref{thm:equi-cats} implies that $H^2(W,\mathbb{Z}_G\times \R) = \pi_0(\mathcal{B}_{\psi}(W) = \pi_0(\mathcal{M}_{\psi}(W))$.
\end{proof}


\pagebreak

\begin{appendix}
\section{Manifolds with Corners}
\subsection{Definitions and Conventions}
We give basic definitions of manifolds with corners and do not go into much depth regarding how manifolds with corners should be treated.  The goal of the appendix is to define what it means for a map $f:M \to N$ of manifolds with corners to be transverse to a submanifold with corners $S\subset N$.  We then show that $f^{-1}(S)$ is a submanifold with corners of $M$ if $f$ is transverse to $S$.
\begin{definition}
A manifold with corners $W$ is a Hausdorff, second countable topological space with a collection of charts $(U_i,\phi_i)$, where $\phi_i:U_i \to \R^k \times (\R^+)^h$ is a homeomorphism onto an open subset of the quadrant $\R^k \times (\R^+)^h$.  The transition maps $\phi_i\circ \phi_j^{-1}$ are required to be diffeomorphisms in the sense that they are restrictions of diffeomorphisms defined on open subsets of $\R^n$ to the quadrants.  A \emph{submanifold with corners} $S\subset W$ is a topological subspace $S$ so that for each point $p\in S$ there is a chart $(U,\phi)$, $\phi:U \hookrightarrow \R^k \times (\R^+)^h$, where $\phi(S\cap U)$ is the zero set of a subset of the coordinates.
\end{definition}

\begin{definition}
Let $W$ be a manifold with corners.  In each coordinate chart $(U_i,\phi_i)$ one has the notion of the depth of a point, which is given by how many half-space coordinates are $0$.  Let $x_i$ be the coordinates on $\R^k$ and let $y_j$ be the coordinates on $(\R^+)^h$.  Then,
\begin{displaymath}
\operatorname{depth}_W(x_1,\dots,x_k,y_1,\dots, y_j) = \vert \{y_l \vert \mbox{ } y_l=0\} \vert
\end{displaymath}
That is, the depth is the number of $y_j's$ that are $0$.  Since the transition maps are diffeomorphisms of manifolds with corners, they preserve the depth function, hence $\operatorname{depth}_W$ is a well-defined map from $W$ to the integers.  The $k$-boundary $\partial^k(W)$ of $W$ consists of all points of depth $k$.  It is a smooth manifold, hence we have a decomposition of $W$ into smooth manifolds:

\begin{displaymath}
\sqcup_{i=1}^n \partial^k(W)
\end{displaymath}
We often simply refer to the $\partial^k(W)'s$ as the strata of $W$.
\end{definition}

\subsection{Transversality and Submanifolds with Corners}

Throughout this section, we will assume the following two statements from the differential geometry of manifolds (without corners) are true:

\begin{prop}\label{prop:A1:man1}
Let $M$,$N$ be two smooth manifolds (without corners) and let $S\subset N$ be a codimension $s$ submanifold (without corners).  If $f:M \to N$ is a smooth map satisfying $f\pitchfork S$, then $f^{-1}(S)$ is a smooth codimension $s$ submanifold (without corners) of $M$.
\end{prop}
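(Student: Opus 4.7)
The plan is to reduce the statement to the regular value theorem by representing $S$ locally as the zero set of a submersion and then observing that transversality of $f$ with $S$ is exactly the submersivity of the composed map. Since being a submanifold is a local property, it suffices to work in a neighborhood of an arbitrary point $p \in f^{-1}(S)$.

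First I would pick $p \in f^{-1}(S)$ with $q = f(p) \in S$. Using that $S$ is a codimension $s$ submanifold of $N$, I would choose an open neighborhood $V \subseteq N$ of $q$ and a smooth submersion $g: V \to \R^s$ with $S \cap V = g^{-1}(0)$; such a $g$ exists by the slice chart definition of a submanifold, taking $g$ to be the projection onto the $s$ transverse coordinates. Let $U = f^{-1}(V) \subseteq M$, which is an open neighborhood of $p$.

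Next I would form the composition $h := g \circ f : U \to \R^s$ and claim that $dh_p$ is surjective. The kernel of $dg_q$ equals $T_q S$, so surjectivity of $dh_p = dg_q \circ df_p$ is equivalent to
\[
dg_q(\operatorname{Im}(df_p)) = \R^s,
\]
which in turn is equivalent to $\operatorname{Im}(df_p) + \ker(dg_q) = T_q N$, i.e.\ $\operatorname{Im}(df_p) + T_q S = T_q N$. This last condition is precisely the hypothesis $f \pitchfork S$ at $p$. Hence $h$ is a submersion at $p$, and by openness of the submersion condition, there is a possibly smaller open neighborhood (still called $U$) of $p$ on which $h$ is a submersion. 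By the regular value theorem for manifolds without corners, $h^{-1}(0) = f^{-1}(S) \cap U$ is then a smooth submanifold of $U$ of codimension $s$. Since $p \in f^{-1}(S)$ was arbitrary, this shows $f^{-1}(S)$ is a smooth submanifold of $M$ of codimension $s$.

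The only step that takes real care is the translation between the geometric statement of transversality and the analytic statement that $g \circ f$ is a submersion; this is where the full strength of the hypothesis is used. Everything else is either the standard local structure of submanifolds or the regular value theorem, both of which are available in the category of manifolds without corners. No global obstruction appears because the conclusion is local and the codimension is constant, being read off from $s = \dim N - \dim S$.
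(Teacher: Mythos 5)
Your proof is correct, and it is the standard argument: locally realize $S$ as $g^{-1}(0)$ for a submersion $g$, observe that $f\pitchfork S$ at $p$ is exactly surjectivity of $d(g\circ f)_p$, and invoke the regular value theorem. Note that the paper does not actually prove this proposition --- it is explicitly \emph{assumed} as a standard fact from the differential geometry of manifolds without corners; the paper only proves the corners analogue (Proposition \ref{prop:A1:trans}), whose proof begins with the very same reduction you use (composing $f$ with a local projection $P$ cutting out $S$), so your argument is consistent with the paper's method. One small remark: the shrinking of $U$ to make $h$ a submersion everywhere is unnecessary --- the transversality hypothesis already makes $dh$ surjective at every point of $h^{-1}(0)$, which is all the regular value theorem requires.
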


\begin{prop}\label{prop:A1:man2}
Suppose $f:M \to N$ is a smooth map of $m$-dimensional manifolds (without corners).  If $p\in M$ and $\operatorname{rank}(df_p)=m$, then there exists a neighborhood $U\subset M$ of $p$ such that $f\vert_U$ is a diffeomorphism.
\end{prop}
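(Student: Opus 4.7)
The plan is to reduce this statement to the classical inverse function theorem on Euclidean space, exploiting the fact that the claim is purely local at $p$. First I would choose coordinate charts $(U_0,\phi)$ about $p$ and $(V_0,\psi)$ about $f(p)$, shrinking if necessary so that $f(U_0)\subseteq V_0$, and arranging $\phi(p)=0$, $\psi(f(p))=0$. I would then form the coordinate representation
\[
\hat f := \psi\circ f\circ \phi^{-1}:\phi(U_0)\to \R^m,
\]
which is smooth between open subsets of $\R^m$. By the chain rule together with the fact that $d\phi_p$ and $d\psi_{f(p)}$ are linear isomorphisms, the hypothesis $\operatorname{rank}(df_p)=m$ translates directly into the statement that the Jacobian matrix $D\hat f(0)$ is an invertible linear endomorphism of $\R^m$.

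At this point I would invoke the classical inverse function theorem in $\R^m$: since $D\hat f(0)\in GL(m,\R)$, there exist open neighborhoods $\hat U\subseteq \phi(U_0)$ of $0$ and $\hat V\subseteq \R^m$ of $\hat f(0)$ such that $\hat f|_{\hat U}:\hat U\to \hat V$ is a smooth diffeomorphism with smooth inverse. Pulling back to the manifolds via the charts, setting $U:=\phi^{-1}(\hat U)$ and $V:=\psi^{-1}(\hat V)$, one obtains the factorization $f|_U = \psi^{-1}\circ \hat f|_{\hat U}\circ \phi|_U$ as a composition of diffeomorphisms. Hence $f|_U:U\to V$ is a diffeomorphism, and $U$ is the required neighborhood of $p$.

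The serious content of the argument — and the one genuine obstacle — is of course the Euclidean inverse function theorem itself, which is being imported as a black box. Its standard proof proceeds by normalizing to $D\hat f(0)=I$, then applying the Banach contraction mapping principle to the family of maps $x\mapsto x+(y-\hat f(x))$ on a small closed ball to construct a continuous local inverse $g$, verifying differentiability of $g$ by a direct estimate giving $Dg(y)=(D\hat f(g(y)))^{-1}$, and finally bootstrapping smoothness by induction on the order of derivatives using the smoothness of matrix inversion. Everything else in the proof of \ref{prop:A1:man2} is essentially bookkeeping: transferring the local Euclidean diffeomorphism back to $M$ and $N$ through the coordinate charts and checking that smoothness is preserved, which is immediate because transition maps and the coordinate representations are all smooth.
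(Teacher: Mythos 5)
Your proof is correct: it is the standard reduction, via coordinate charts, of the manifold statement to the Euclidean inverse function theorem, and every step (the chain-rule translation of the rank hypothesis into invertibility of $D\hat f(0)$, the invocation of the inverse function theorem, and the transfer back through the charts) is sound. Note that the paper itself supplies no proof of this proposition --- it is explicitly listed among the statements ``we will assume \ldots are true'' in the appendix --- so there is nothing to compare against; your argument is precisely the standard one that would be cited.
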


\begin{cor}\label{cor:A1:man2}
Suppose $f:M \to N$ is a smooth map between manifolds with corners and suppose it is strata-preserving.  That is $f(\partial^k(M))\subseteq \partial^k(N)$ for all $k\ge 0$.  Suppose $p\in M$ is a point where $df_p$ is an isomorphism.  Then there exists a neighborhood $U$ of $p$ such that $f\big\vert_U$ is an open embedding of manifolds with corners.  In particular, it is a diffeomorphism onto its image.
\end{cor}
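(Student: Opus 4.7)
The plan is to reduce to the corresponding statement for manifolds without corners via a local smooth extension of $f$, apply Proposition~\ref{prop:A1:man2} to obtain a local diffeomorphism on the ambient Euclidean space, and then show that the strata-preserving hypothesis forces this local diffeomorphism to restrict to an open embedding of manifolds with corners.

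First I would choose coordinate charts $(U,\phi)$ around $p$ and $(V,\psi)$ around $f(p)$, with $\phi(U)\subseteq \R^{m-k}\times(\R^+)^k$ where $k=\operatorname{depth}_M(p)$; by the strata-preserving hypothesis $\operatorname{depth}_N(f(p))=k$ as well, so after a permutation of half-space coordinates in $N$ we may likewise arrange $\psi(V)\subseteq \R^{m-k}\times(\R^+)^k$. Because $f$ is smooth in the sense of manifolds with corners, the coordinate expression $\hat{f}=\psi\circ f\circ \phi^{-1}$ extends to a smooth map $\tilde{f}$ on an open neighborhood $\tilde{U}$ of $\phi(p)$ in $\R^m$. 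Since $d\tilde{f}_{\phi(p)}=df_p$ is an isomorphism, Proposition~\ref{prop:A1:man2} gives an open neighborhood $\tilde{U}_0\subseteq \tilde{U}$ of $\phi(p)$ on which $\tilde{f}$ is a diffeomorphism onto an open subset $\tilde{V}_0\subseteq\R^m$. Shrinking if necessary, I may assume $\tilde{V}_0$ is contained in the image chart.

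The main step is to show $\tilde{f}$ carries $\tilde{U}_0\cap \phi(U)$ bijectively onto $\tilde{V}_0\cap \psi(V)$. Let $y_1,\dots,y_k$ denote the half-space coordinates and $H_i=\{y_i=0\}$. Near $\phi(p)$, the locus $H_i\cap \phi(U)$ lies in $\partial^1(M)$ in $\phi$-coordinates, and by strata-preservation $\tilde{f}$ maps it into $\partial^1(N)$, which in $\psi$-coordinates is the union of the half-space hyperplanes. Since $\tilde{f}$ is a local diffeomorphism and $H_i$ is a connected smooth hypersurface near $\phi(p)$, its image lies in a single hyperplane $\{y_{\rho(i)}=0\}$; the assignment $i\mapsto \rho(i)$ must be a permutation of $\{1,\dots,k\}$ because $\tilde f$ preserves depth at $\phi(p)$. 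After composing $\psi$ with $\rho^{-1}$, I may assume $\rho=\mathrm{id}$. Then each coordinate component $\tilde{f}^i$ vanishes on $H_i$, so Hadamard's lemma gives $\tilde{f}^i=y_i\,g_i$ with $g_i$ smooth.

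The hard (but still routine) part is the sign check: I must show $g_i(\phi(p))>0$ for each $i$, so that $\tilde{f}$ sends the positive side of $H_i$ to the positive side. Since $\tilde{f}$ is a diffeomorphism, $g_i(\phi(p))\ne 0$. On the other hand, for a point $q\in \tilde{U}_0\cap \phi(U)$ with $y_i(q)>0$ small and the other $y_j(q)=0$, the depth of $q$ in $\phi(U)$ is exactly $k-1$; by strata-preservation, $\tilde{f}(q)$ has depth $k-1$ in $\psi(V)$. The coordinates $\tilde{f}^j(q)$ vanish for $j\ne i$ (since the corresponding $y_j(q)=0$ and $\tilde{f}^j$ vanishes on $H_j$), so the remaining coordinate $\tilde{f}^i(q)=y_i(q)\,g_i(q)$ must be strictly positive; this forces $g_i(\phi(p))>0$. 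With all signs correct, $\tilde{f}$ sends the model quadrant to the model quadrant bijectively on some smaller neighborhood, and shrinking $U$ accordingly produces the desired open embedding $f\big\vert_U$ of manifolds with corners.
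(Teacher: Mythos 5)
Your proof is correct and takes essentially the same route as the paper: extend $f$ locally to an open neighborhood in $\R^m$, invoke Proposition~\ref{prop:A1:man2} to get a local diffeomorphism of the ambient space, and use strata-preservation to see that it respects the model quadrants. The paper compresses that last step into the single remark ``since, by assumption, $f$ is strata-preserving,'' whereas you actually carry out the verification (matching of boundary hyperplanes, the Hadamard factorization $\tilde f^i=y_i g_i$, and the sign check $g_i(\phi(p))>0$); this is a fuller account of the same argument, not a different one.
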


\begin{proof}
This is more of an observation than anything.  If $p\in M$ is a regular point, we may choose a neighborhood $U$ around $p$ that is isomorphic to a quadrant in $\R^m$.  The map $f$ extends to a smooth map $\tilde{f}$ in a neighborhood of $p$ in $\R^n$.  Since $df_p$ is an isomorphism at $p$, $d\tilde{f}_p$ is an isomorphism and there is a neighborhood $V$ of $p$ on which it is an open embedding.  The restriction of $f$ to $V\cap U$ is then an open embedding of manifolds with corners since, by assumption, $f$ is strata-preserving.
\end{proof}

The following definition is definition 4 of \cite{CD}.  Proposition \ref{prop:A1:trans} is theorem 6 of \cite{CD}.  Our proof of proposition \ref{prop:A1:trans} is similar, but we make a few modifications.

\begin{definition}\label{def:A1:trans}
Let $M$ be an $m$-dimensional manifold with corners, $N$ an $n$-dimensional manifold with corners, $S\subset N$ an $s$-dimensional submanifold with corners of $N$, and suppose $f:M\to N$ is smooth.  We say $f\pitchfork_s S$ if for all $k>0$ we have:
\begin{center}
\begin{displaymath}
f\vert_{\partial^k(M)} \pitchfork S,
\end{displaymath}
\end{center}
meaning $df_p(T_p\partial^k(M)) + T_{f(p)}S = T_{f(p)}N$ whenever $p\in f^{-1}(S)$.  In other words, we say $f\pitchfork_s S$ if its restriction to each stratum of $M$ is transverse to $S$ in the traditional sense of manifolds (without corners).
\end{definition}

\begin{prop}\label{prop:A1:trans}
Let $M$ be an $m$-dimensional manifold with corners, $N$ an $n$-dimensional manifold with corners, $S\subset N$ an codimension $s$ submanifold with corners of $N$, and suppose $f:M\to N$ is a smooth map such that $f\pitchfork_s S$ in the sense of definition \ref{def:A1:trans}.  Then $f^{-1}(S)$ is a smooth submanifold with corners of $M$ with codimension $s$.
\end{prop}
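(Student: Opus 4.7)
The plan is to reduce to a local model via coordinate charts and then leverage the standard transversality theorem for manifolds without corners (Proposition \ref{prop:A1:man1}), using the strong transversality hypothesis precisely to guarantee compatibility with the corner structure. Fix a point $p\in f^{-1}(S)$ with $\operatorname{depth}_M(p)=k$ and $\operatorname{depth}_N(f(p))=\ell$. Choose adapted charts so that near $p$ the manifold $M$ is an open neighborhood $U$ of the origin in $\R^{m-k}\times(\R^+)^k$ with coordinates $(x_1,\ldots,x_{m-k},y_1,\ldots,y_k)$, and similarly an open neighborhood $V$ of the origin in $\R^{n-\ell}\times(\R^+)^\ell$ with coordinates $(\xi,\eta)$ near $f(p)$, chosen so that $S\cap V$ is the common zero set of some coordinate functions.

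Next, I would extend $f$ to a smooth map $\tilde{f}$ from an open neighborhood $\tilde{U}$ of the origin in $\R^m$ into an open neighborhood $\tilde{V}$ of the origin in $\R^n$ (this is allowed by the definition of smoothness on quadrants), and similarly extend $S$ to a codimension-$s$ submanifold $\tilde{S}\subset \tilde{V}$ of the ambient Euclidean space. The strong transversality hypothesis $f\vert_{\partial^k(M)}\pitchfork S$ gives, at $p$, that $d\tilde{f}_p(T_p\partial^k(M)) + T_{f(p)}\tilde{S} = T_{f(p)}\tilde{V}$. In particular, $\tilde{f}\pitchfork \tilde{S}$ at $p$, so by Proposition \ref{prop:A1:man1} the set $\tilde{Z}:=\tilde{f}^{-1}(\tilde{S})$ is, after possibly shrinking $\tilde{U}$, a smooth codimension-$s$ submanifold of $\tilde{U}\subset\R^m$.

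The heart of the argument is to show that $\tilde{Z}$ meets the quadrant $U$ as a submanifold with corners, i.e.\ that $\tilde{Z}\cap U = f^{-1}(S)$ near $p$ is cut out, in suitable coordinates preserving the quadrant structure, by the vanishing of $s$ coordinate functions disjoint from the $y_j$'s. The strong transversality hypothesis, applied not only to the stratum of $p$ but to every stratum whose closure contains $p$, says that for every subset $J\subseteq\{1,\ldots,k\}$ the restriction $\tilde{f}\vert_{\{y_j=0:\ j\in J\}}$ is transverse to $\tilde{S}$ at $p$. Translating this through the defining equations of $\tilde{Z}$, the $s$ differentials cutting out $\tilde{Z}$ at $p$ remain linearly independent after restriction to each coordinate subspace $\{y_j=0:\ j\in J\}$. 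Hence the $s\times(m-k)$ matrix of $x$-partial derivatives of the defining functions has full rank $s$, and by a permutation of the $x$-indices I may assume the first $s$ columns are invertible. The implicit function theorem then produces new coordinates $(u_1,\ldots,u_{m-s},v_1,\ldots,v_s,y_1,\ldots,y_k)$, obtained by keeping the $y_j$'s untouched, keeping the remaining $x_i$'s as $u$-coordinates, and replacing the first $s$ of the $x_i$'s by the defining functions themselves as $v$-coordinates. In these coordinates $\tilde{Z}=\{v_1=\cdots=v_s=0\}$ and the quadrant structure is preserved, so $f^{-1}(S)=\tilde{Z}\cap U$ is precisely a codimension-$s$ submanifold with corners of $M$ near $p$.

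The main obstacle is the coordinate-straightening step: one must verify that the implicit function theorem really can be applied using only $x$-directions (the ``interior'' directions), so that the resulting change of coordinates preserves the quadrant. This is exactly what strong transversality buys, and without it (i.e.\ with only ordinary transversality) one could be forced to solve for some $y_j$'s, destroying the corner structure. Once this local model is established at every $p\in f^{-1}(S)$, the conclusion that $f^{-1}(S)$ is a submanifold with corners of codimension $s$ is immediate from the definition, since the local models patch trivially along chart overlaps.
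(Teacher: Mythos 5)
Your proof is correct, and its skeleton (pass to adapted charts, extend $f$ and $S$ to open neighborhoods in Euclidean space, then straighten $\tilde f^{-1}(\tilde S)$ by a quadrant-preserving change of coordinates) matches the paper's. The straightening step is carried out differently, though. The paper first composes with a local submersion cutting out $S$, so that the problem becomes $\tilde F^{-1}(0)$ for $\tilde F$ into $\R^s$; it then straightens $\tilde F^{-1}(0)\cap\partial^k(M)$ \emph{inside the stratum} by a diffeomorphism $\phi$ of the stratum (extended as $\mathrm{id}\times\phi$), and finally applies a shear $\Gamma$ built from the inverse of the graph projection, using a dimension count to see that $\tilde F^{-1}(0)$ is transverse to the stratum so that it really is a graph over coordinates containing all the corner coordinates. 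You instead do the whole straightening in one step: strong transversality on the stratum containing $p$ says exactly that the $s\times(m-k)$ block of interior partial derivatives of the defining functions has rank $s$, so the inverse/implicit function theorem lets you trade $s$ \emph{interior} coordinates for the pulled-back defining functions while leaving every half-space coordinate untouched. This is a cleaner route to the same chart, and it isolates the precise linear-algebra content of the hypothesis. One small imprecision: you invoke the hypothesis ``at every stratum whose closure contains $p$,'' but Definition \ref{def:A1:trans} only constrains $f\vert_{\partial^j(M)}$ at points of $f^{-1}(S)$ actually lying in $\partial^j(M)$, and $p$ lies only in its own stratum. This costs you nothing: transversality of the restriction to the smallest subspace $\{y_1=\cdots=y_k=0\}$ already implies transversality of the restriction to every larger coordinate subspace $\{y_j=0:\ j\in J\}$, which is all your rank argument uses.
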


\begin{proof} \mbox{} \newline
Let $p\in f^{-1}(S)$.  We will prove the proposition in three stages.  First, we'll show that it is sufficient to study the case when $N=\R^l$ for some $l$ and $S=\{0\}$.  Next, we'll produce a preliminary change of coordinates near $p$ that will exhibit $S$ as a zero set on the stratum containing $p$, but may not do so away from the stratum.  We'll finish by applying a secondary diffeomorphism to fix the problem.

\begin{enumerate}
\item By definition of submanifold with corners, there exists a projection $P$ defined in a neighborhood $V$ of $f(p)$, $P:V \to \R^s$, so that $P^{-1}(0) = V\cap S$.  Then $F= P \circ f : f^{-1}(U) \to R^s$ is a smooth map satisfying
    \begin{enumerate}
    \item $F^{-1}(0)=S\cap f^{-1}(U)$ and
    \item $F$ restricted to a stratum is transverse to $0$, hence
    \item $0$ is a regular value of $F$,
    \end{enumerate}
    Thus, proving $f^{-1}(S)$ is a submanifold with corners near $p$ is equivalent to showing $F^{-1}(0)$ is a submanifold with corners near $p$

\item We may assume that a neighborhood of $p$ in $f^{-1}(U)$ is the product $W=[0,\epsilon)^k \times (-\epsilon, \epsilon)^{m-k}$ for some $\epsilon >0$ with coordinates $(x_1,\dots, x_k, y_1,\dots, y_{m-k})$ and $p=0$ is the origin.  Then we have a smooth map $F:W \to \R^s$, which extends to a smooth map $\tilde{F}:\tilde{W} \to \R^s$, where $\tilde{W}$ is a neighborhood of the origin.  This is simply the definition of what it means to be smooth on a subset of $\R^m$.
    \begin{itemize}
    \item Since $0$ is a regular value of $F$, we can assume $0$ is a regular value of $\tilde{F}$, shrinking $\tilde{W}$ if necessary.  We may also assume that $\tilde{W} = (-\epsilon, \epsilon)^m$ by shrinking $\epsilon$.
    \item The transversality assumption, $f\vert_{\partial^i(M)} \pitchfork_s S$, implies that $\tilde{F}\vert_{\{0\}\times (-\epsilon,\epsilon)^{m-k}} \pitchfork_s S$, hence $\tilde{F}^{-1}(0) \cap \{0\}\times (-\epsilon, \epsilon)^{m-k}$ is a codimension $s$ submanifold of $\{0\} \times (-\epsilon, \epsilon)^k$.
    \item Consequently, there is a diffeomorphism $\phi: \{0\} \times (-\epsilon, \epsilon)^{m-k}\to \{0\} \times (-\epsilon, \epsilon)$ of the stratum containing the $p=0$ so that:
        \begin{displaymath}
        \phi(\tilde{F}^{-1}(0)\cap \{0\}\times (-\epsilon,\epsilon)^{m-k})= \{(\vec{0}, y_1, \dots, y_{m-k})\vert \mbox{ } y_i=0 \text{ for } i>(m-k-s)\}
        \end{displaymath}
    \item Extend $\phi$ to the product neighborhood $\tilde{W}= (-\epsilon,\epsilon)^k \times (-\epsilon, \epsilon)^{m-k}$ using $\tilde{\phi}=id \times \phi$, where $id:(-\epsilon,\epsilon)^k \to (-\epsilon,\epsilon)^k$ is the identity.
    \end{itemize}

\item Now, note that our discussion in $2$ shows that $\tilde{F}^{-1}(0)\pitchfork_s \{0\} \times (-\epsilon,\epsilon)^{m-k}$.  That is, $\tilde{F}^{-1}(0)$ is transverse to the stratum containing $p=0$.  This is because $d\tilde{F}$ vanishes on $m-k-s$ directions in the stratum and \emph{doesn't} vanish on the other $s$ directions, hence it must vanish on some $k$ directions transverse to the stratum.  The same is then true for $\tilde{\phi}(\tilde{F}^{-1}(0))$ since $\tilde{\phi}$ is a diffeomorphism that preserves $\{0\} \times (-\epsilon,\epsilon)^{m-k}$.  Let $S'=\tilde{\phi}(\tilde{F}^{-1}(0))$.
    \begin{itemize}
    \item Consequently, the projection map $\gamma(x_1,\dots,x_k, y_1, \dots ,y_{m-k}) = (x_1,\dots,x_k,y_1,\dots, y_{m-k-s},0,\dots, 0)$ restricted to $S'$ is a diffeomorphism in a neighborhood of $0$.
    \item Let $pr:(-\epsilon, \epsilon)^{k} \times (-\epsilon,\epsilon)^{m-k} \to (-\epsilon, \epsilon)^{k}$ be the projection onto the first $k$ factors.  We define the map:

        \begin{displaymath}
        \Gamma(x_1,\dots,x_k,y_1,\dots,y_{m-k-s},\vec{z}) = (x_1,\dots,x_k,y_1,\dots,y_{m-k-s},\vec{z}-pr(\gamma\vert_{S'}^{-1}(\vec{x},\vec{y},0)))
        \end{displaymath}
        where $\vec{z}=(y_{m-k-s+1},\dots,y_{m-k})$, $\vec{x}=(x_1,\dots x_k)$, and $\vec{y}=(y_1,\dots,y_{m-k-s})$.
    \item Since $\gamma^{-1}$ doesn't depend on the coordinates $y_{m-k-s+1},\dots, y_{m-k}$ it is straightforward to show that $\Gamma$ is a submersion, hence it is a diffeomorphism in a neighborhood of $0$.
    \item Note that if $(x_1,\dots,x_k, y_1,\dots,y_{m-k-s},\vec{z})\in S'$, then $\vec{z}=pr((\gamma\vert_{S'})^{-1}(\vec{x},\vec{y},0))$, hence
        \begin{displaymath}
        \Gamma(x_1,\dots,x_k, y_1,\dots,y_{m-k-s},\vec{z}) = (x_1,\dots,x_k,y_1,\dots,y_{m-k-s},\vec{0})
        \end{displaymath}
        Conversely, $\Gamma(p_0)$ has vanishing $y_{m-k-s+1},\dots,y_{m-k}$ coordinates if and only if it has a preimage in $S'$.  Thus, $\Gamma$ maps $S'$ diffeomorphically onto the set $\{(x_1,\dots,x_k,y_1,\dots,y_{m-k-s},\vec{0})\vert \mbox{} x_i,y_i\in \R\}$.
    \end{itemize}
    To finish the proof, we simply compose the two diffeomorphisms $\Gamma$ and $id\times \phi$ to get a diffeomorphism exhibiting $\tilde{F}^{-1}(0)$ as the set $\{(x_1,\dots,x_k,y_1,\dots,y_{m-k-s},\vec{0})\vert \mbox{} x_i,y_i \in \R\}$.
\end{enumerate}
\end{proof}

\begin{remark}
In particular, the folding hypersurface is transverse to the faces of $M$.
\end{remark}

We have a version of Hadamard's lemma for vector bundles over manifolds with corners.
\begin{lemma}\label{lem:had}
Let $Z$ be a smooth manifold with corners and let $\pi:E\to Z\times \R$ be a rank $k$ vector bundle over $Z\times \R$.  Denote the coordinate on $\R$ by $t$.  Suppose $\beta:Z\times \R \hookrightarrow E$ is a smooth section satisfying $\beta(z,0)=0$ for all $z\in Z$.  Then there exists a unique smooth section $\mu:Z\times \R \hookrightarrow E$ satisfying $\beta= t\mu$.
\end{lemma}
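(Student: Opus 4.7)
The plan is to reduce the statement to the classical Hadamard lemma for smooth $\R^k$-valued functions by working in local trivializations of $E$, and then glue the local solutions using a uniqueness argument.

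First I would handle uniqueness, since it underpins the gluing. Suppose $\mu_1,\mu_2$ are two smooth sections with $t\mu_1=t\mu_2=\beta$. Then $t(\mu_1-\mu_2)=0$ as a section of $E$, so $\mu_1-\mu_2$ vanishes on the open dense set $\{t\ne 0\}$, and by continuity it vanishes everywhere. Thus any smooth $\mu$ with $\beta=t\mu$ is uniquely determined.

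Next I would prove local existence. Around any point $(z_0,0)$ choose an open neighborhood $U\subseteq Z$ of $z_0$ and $\epsilon>0$ such that $E$ is trivializable over $U\times(-\epsilon,\epsilon)$, say via $\Phi:E|_{U\times(-\epsilon,\epsilon)}\to U\times(-\epsilon,\epsilon)\times\R^k$. Writing $\Phi\circ\beta(z,t)=(z,t,b(z,t))$ with $b:U\times(-\epsilon,\epsilon)\to\R^k$ smooth, the hypothesis $\beta(z,0)=0$ becomes $b(z,0)=0$. The classical Hadamard trick extends without change to manifolds with corners: set
\[
m(z,t):=\int_0^1 \frac{\partial b}{\partial t}(z,st)\,ds,
\]
which is smooth on $U\times(-\epsilon,\epsilon)$ (the integrand is jointly smooth in $(z,s,t)$ and we integrate over the compact parameter interval $[0,1]$), and satisfies $b(z,t)=t\,m(z,t)$ by the fundamental theorem of calculus applied to the map $s\mapsto b(z,st)$. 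Pulling back through $\Phi^{-1}$ gives a local smooth section $\mu_U$ with $\beta=t\mu_U$ on $U\times(-\epsilon,\epsilon)$. Away from $t=0$ we may simply define $\mu:=t^{-1}\beta$, which is automatically smooth.

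Finally I would glue. Cover $Z\times\R$ by open sets $\{V_\alpha\}$ each of which either lies in $\{t\ne 0\}$ (where $\mu:=t^{-1}\beta$ is smooth) or is of the product form $U\times(-\epsilon,\epsilon)$ over which $E$ is trivializable (where the previous paragraph produces $\mu_\alpha$). On overlaps $V_\alpha\cap V_\gamma$, both $\mu_\alpha$ and $\mu_\gamma$ satisfy $\beta=t\mu_\alpha=t\mu_\gamma$, so by the uniqueness argument applied locally they agree. Hence the $\mu_\alpha$ assemble into a global smooth section $\mu$ with $\beta=t\mu$.

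I do not anticipate a genuine obstacle: corners in $Z$ play no role because the delicate variable is $t\in\R$, which is an honest real variable with no boundary, and the Hadamard integral formula is insensitive to the corner structure of $Z$. The only subtle point is ensuring that the neighborhoods $U\times(-\epsilon,\epsilon)$ on which $E$ trivializes can be chosen to contain arbitrary points of the zero section $Z\times\{0\}$, which is immediate from local triviality of vector bundles.
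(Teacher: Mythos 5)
Your proof is correct and follows essentially the same route as the paper's: uniqueness via density of $\{t\ne 0\}$, local existence via the Hadamard integral formula in a trivialization, and gluing by uniqueness. The only cosmetic difference is that you trivialize over $U\times(-\epsilon,\epsilon)$ and treat the region $\{t\ne 0\}$ separately, whereas the paper trivializes over sets of the form $U\times\R$; this changes nothing substantive.
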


\begin{proof}
We will show that if $\mu$ exists, then it is unique.  We will then show that one may always solve $\beta=t\mu$ for $\mu$ locally, after which we invoke uniqueness to patch together the local solutions.
\begin{enumerate}
\item We first address uniqueness.  If $\mu_1, \mu_2$ are two sections satisfying $t\mu_1=\beta=t\mu_2$, then $\mu_1=\frac{\beta}{t}=\mu_2$ on the open, dense subset $\{(z,t)\vert t\ne0\}$.  Since $\mu_1,\mu_2$ are smooth, $\mu_1=\mu_2$ everywhere.

\item We now show that $\mu$ exists.  Choose a trivialization $(U\times \R,\Phi)$ of $\pi$, where $U\subset Z$, so we have that $\Phi:E\vert_{U\times \R} \to U\times \R \times \R^k$ is an isomorphism of vector bundles.  Let $p:U\times \R\times \R^k\to \R^k$ be the standard projection.  Then $f:=p(\Phi(\beta\vert_U)):U\times \R \to \R^k$ is a vector-valued function on $U\times \R$ that satisfies $f(z,0)=0$ for each $z\in U$.  If $g:U\times \R \to \R$ is a function satisfying $g(z,0)=0$ for each $z\in U$, then we have:
    \begin{equation}
    g(z,t)=\int_0^1\frac{\partial}{\partial s} g(z,st)ds = \int_0^1 dg_{(z,st)}(t\pt)ds = t\int_0^1 dg_{(z,st)}(\pt)ds
    \end{equation}
    hence $g=th$ for some smooth function $h$.  The same reasoning then applies to vector-valued functions and so we have $f=tF$ for some smooth map $F:U\times \R \to \R^k$.  Therefore, $\Phi(\beta\vert_U)(z,t)=(z,t, tF(z,t))$ and if we define $\mu_U(z,t) = \Phi^{-1}(z, t, F(z,t))$ we obtain a local section $\mu_U$ of $\pi$ satisfying $t\mu_U = \beta\vert_U$.  Here, we are using that $\Phi$ is a linear map on the fibers of $E\vert_U$ and $U\times \R \times \R^k$.

    Cover $Z\times \R$ by a collection of neighborhoods $C=\{U\times \R\}$ so that $E\vert_{U\times \R}$ is trivializable for each $U\times \R \in C$.  Then we obtain a collection of sections $\{\mu_U\}_{U\times \R \in C}$ which glue together by uniqueness to give us $\mu$ satisfying $t\mu = \beta$.
\end{enumerate}
\end{proof}
\end{appendix}

\pagebreak

\addcontentsline{toc}{section}{References}


\begin{thebibliography}{1}

\bibitem{Ad}
Adams, J.,
\emph{Lectures on Lie Groups},
New York, New York: W.A. Benjamin, Inc., 1969


\bibitem{ACL}
Audin, M., Cannas da Silva, A., Lerman, E.,
\emph{Symplectic Geometry of Integrable Hamiltonian Systems},
Basel, Switz.: Birkhauser Verlag, 2003



\bibitem{B} Boardman, J., Real and Complex Representations, \emph{http://www.math.jhu.edu/~jmb/note/rschur.pdf} (2007)

\bibitem{CdS} Cannas da Silva, A.,
Fold-forms for four-folds, \emph{J.
Symplectic Geom.} \textbf{8} (2010):
No. 2 p.189-203

\bibitem{CGP} Cannas da Silva, A.,
Guillemin, V., Pires, A. R.,
Symplectic Origami, \emph{Int. Math.
Res. Not.} Vol. 2011 (2010): No. 18
4252-4293

\bibitem{CGW} Cannas da Silva, A.,
Guillemin, V., Woodward, C.,
On the Unfolding of Folded
Symplectic Structures, \emph{Math
Res. Lett.} \textbf{7} (2000), 35-53

\bibitem{CD} Castillo, E., D\'{i}az,
R., Homology and manifolds with
corners, \emph{Afr. Diaspora J.
Math}, \textbf{8} (2009): No. 2 100-113

\bibitem{De} Delzant, T.,
Hamiltoniens p\'{e}riodiques et
images convexes de l'application
moment, \emph{Bull. Soc. Math.
France} \textbf{116} (1988):  315-
339.

\bibitem{GG}
Golubitsky, M., Guillemin, V.,
\emph{Stable Mappings and Their Singularities},
New York, NY: Springer-Verlag, 1973


\bibitem{GLPR} Gualtieri, M., Li,
S., Pelayo, A., Ratiu, T., The
tropical momentum map: a
classification of toric log
symplectic manifolds,
arXiv:1407.3300 [math.SG]

\bibitem{GMP} Guillemin, V.,
Miranda, E., Pires, A. R.,
Symplectic and Poisson Geometry on
b-Manifolds, \emph{Advances in
Math.} (2014): 864-896

\bibitem{GMPS} Guillemin, V.,
Miranda, E., Pires, A. R., Scott,
G., Toric actions on b-symplectic
manifolds, \emph{Int Math Res
Notices} (2014), rnu108v1-31

\bibitem{GS}
Guillemin, V., Sternberg, S.,
\emph{Symplectic Techniques in Physics},
New York, NY: Cambridge University Press, 1984


\bibitem{GS1} Guillemin, V.,
Sternberg, S., Convexity properties
of the moment mapping I, \emph
{Invent. Math} \textbf{67} (1982):
515-538


\bibitem{GS2} Guillemin, V.
Sternberg, S., A normal form for the
moment map, \emph{Diff. Geo. Meth.
in Math. Phys.} (1984), S. Sternberg
ed. Mathematical Physics Studies, 6.
D. Reidel Publishing Company

\bibitem{HaS} Haefliger, A., Salem, E., Actions of tori on orbifolds, \emph{Ann. Global Anal Geom.} \textbf{9} (1991): 37-59

\bibitem{H} Hitchin, N., Higgs
bundles and diffeomorphism groups,
arXiv:1501.04989

\bibitem{HP} Holm, T., Pires, A. R.,
The topology of toric origami
manifolds, {Math. Research Letters}
\textbf{20} (2012): 885-906

\bibitem{Ho}
H\"{o}rmander, Lars,
\emph{The Analysis of Linear Partial Differential Operators, III},
Berlin: Springer-Verlag, 2007


\bibitem{KL} Karshon, Y., Lerman,
E., Non-compact symplectic toric
manifolds, arXiv:0907.2891[math.SG], to appear in SIGMA


\bibitem{Ka}
Kawakubo, M.,
\emph{The Theory of Transformation Groups},
Oxford: Oxford University Press, 1991



\bibitem{L} Lerman, E., Contact
Toric Manifolds, \emph{J. Symplectic
Geom}, \textbf{1} (2002): No. 4 785-828

\bibitem{LS} Lerman, E., Sjaamar,
R., Stratified symplectic spaces and
reduction, \emph{Ann. Math.}
\textbf{134} (1991): 375-422


\bibitem{LT} Lerman, E., Tolman, S.,
Hamiltonian Torus Actions and Toric
Varieties, \emph{Transactions of the
American Math. Soc.} \textbf{349}
(1997): No. 10, p4201-4230

\bibitem{LeeC} Lee, C., Folded Symplectic
Toric Four-Manifolds, Ph.D. thesis,
University of Illinois at Urbana-Champaign,
(2009)

\bibitem{LeeJ}
Lee, J.,
\emph{Introduction to Smooth Manifolds},
New York, NY: Springer-Verlag, 2003

\bibitem{M} Martinet, J., Sur les
singularit\'{e}s des formes diff
\'{e}rentielles, \emph{Ann. Inst.
Fourier (Grenoble)}, \textbf{20},
fasc 1 (1970): 95-178

\bibitem{MS}
McDuff, D., Salamon, D.,
\emph{Introduction to Symplectic Topology},
New York, NY: Oxford University Press Inc., 1998


\bibitem{Me} Meinrenken, E., \emph{Symplectic Geometry: Lecture Notes, University of Toronto}, \newline
\emph{http://www.math.toronto.edu/mein/teaching/sympl.pdf}

\bibitem{Mi1}
Michor, P.,
\emph{Topics in Differential Geometry},
United States: American Mathematical Society, 2008


\bibitem{Mi2}
Michor, P.,
\emph{Manifolds of Differentiable Mappings}, UK: Shiva Publishing Limited, 1980


\bibitem{Mil}
Milnor, J.,
\emph{Morse Theory},
Princeton, NJ: Princeton University Press, 1973


\bibitem{MN} Monthubert, B., Nistor, V., A topological index theorem for manifolds with corners, \emph{Compositio Mathematica} (2005)

\bibitem{P} Porteous, I.R., Simple singularities of maps, in Proc. Liverpool Singularities Symposium, \emph{Lecture Notes in Math}, \textbf{192} Springer-Verlag: Berlin, 1971:  286-307

\bibitem{R} Ramanan, S., \emph{Global Calculus}, Providence, RI: American Mathematical Society, 2005

\bibitem{Sa}
Saunders, D.,
\emph{The Geometry of Jet Bundles},
New York, NY: Cambridge University Press, 1989

\bibitem{Sch} Schwarz, G. W., Smooth functions invariant under the action of a compact Lie group, \emph{Toplogy}, \textbf{14}, (1975): 63-68

\bibitem{St} Sternberg, S., Minimal
couping and the symplectic mechanics
of a classical particle in the
presence of a Yang-Mills field,
\emph{Proc. Natl. Acad. Sci. USA}
\textbf{74}(12) (1977):5253-5254


\bibitem{Th}
Thomas, C.,
\emph{Representations of Finite and Lie Groups},
London: Imperial College Press, 2004



\end{thebibliography}
\end{document}